\documentclass[a4paper, 11pt, oneside]{article}
\usepackage[english]{babel}
\usepackage[T1]{fontenc}
\usepackage[utf8]{inputenc}
\usepackage{graphicx} 
\usepackage{hyperref, fancyhdr, amsmath, amssymb, amsthm, enumerate, array, tabularray, float,vmargin, bbm, mathtools,cleveref}
\usepackage{enumitem}
\usepackage{enumerate}
\usepackage[dvipsnames]{xcolor}
\usepackage{float}
\usepackage{subfig}
\usepackage{wrapfig}
\usepackage{geometry}
\usepackage{fancyhdr}
\usepackage{tikz}
\usetikzlibrary{patterns}
\usetikzlibrary{positioning}
\usetikzlibrary{decorations.pathreplacing}
\usetikzlibrary{fit}
\usepackage{mathrsfs}
\usepackage{siunitx}
\usepackage{fontawesome5}
\usepackage{setspace}

\usepackage[toc,page]{appendix}

\usepackage{comment}
\usepackage{soul}

\usepackage{resmes}
\usepackage{titlesec}

\usepackage[
backend=biber,
style=alphabetic,
]{biblatex}
\theoremstyle{definition}
\newtheorem{teorema}{Theorem}[section]
\newtheorem{lemma}[teorema]{Lemma}
\newtheorem{defi}[teorema]{Definition}

\newtheorem{obs}[teorema]{Remark}
\newtheorem{exemple}[teorema]{Example}

\newcommand{\R}{\mathbb{R}}

\makeatletter
\newcommand{\subalign}[1]{%
  \vcenter{%
    \Let@ \restore@math@cr \default@tag
    \baselineskip\fontdimen10 \scriptfont\tw@
    \advance\baselineskip\fontdimen12 \scriptfont\tw@
    \lineskip\thr@@\fontdimen8 \scriptfont\thr@@
    \lineskiplimit\lineskip
    \ialign{\hfil$\m@th\scriptstyle##$&$\m@th\scriptstyle{}##$\hfil\crcr
      #1\crcr
    }%
  }%
}
\makeatother

\title{\textbf{A $Tb$ TYPE THEOREM\\ FOR SUPPRESSED KERNELS}}
\author{MARINA FERNÀNDEZ-VILASECA\thanks{This manuscript is based on the author's Master's thesis, defended in June 2026, in the programme of Advanced Mathematics at Universitat de Barcelona and Universitat Autònoma de Barcelona.}\\[0.1cm] Universitat Autònoma de Barcelona (UAB)}
\date{}

\begin{document}

\pagestyle{plain}

\maketitle
\vspace{-0.2CM}
\begin{abstract}
    In this article, a non-homogeneous $Tb$ type theorem for arbitrary dimensional Calderón-Zygmund singular integral operators is proved. This is an extension of an analogous non-homogeneous $Tb$ theorem for the Cauchy transform, in the planar setting, due to Nazarov, Treil and Volberg. The novelties of the present work are the change of dimension and the fact that the operators to which the theorem applies are not necessarily antisymmetric. The techniques used in the proof include, among others, suppressed kernels, decompositions in $L^2(\mu)$, where $\mu$ is a Radon measure in $\mathbb{R}^d$,  and a probabilistic argument resulting from taking averages of the operators involved.  
\end{abstract}

\setcounter{page}{1}
\tableofcontents

\section*{Introduction}

A classical problem that has led to the development of many techniques in Analysis is Painlevé's problem on the metric or geometric characterization of removable sets for bounded analytic functions. More precisely, given a compact set $E\subset \mathbb{C}$, we say that it is removable for bounded analytic functions, or simply, removable, if for every open set $\Omega\supset E$, every bounded and analytic function on $\Omega\setminus E$ admits an analytic extension to the whole of $\Omega$. For example, if $E$ is a finite collection of points, it is removable, but if we take $E$ to be a disk, it is not. 

A key tool in the description of removable sets is that of analytic capacity, which was introduced by Ahlfors \cite{Ahlfors_1947} and is defined as follows. For a compact set $E\subset \mathbb{C}$, we let its analytic capacity be the number
$$
\gamma (E) := \sup|f'(\infty)| = \sup\bigg|\lim_{|z|\to +\infty}z\left(f(z)-f(\infty)\right)\bigg|,
$$
where the supremum is taken over all $f\in \mathcal{H}(\mathbb{C}\setminus E)$ with $|f|\leq 1$ in $\mathbb{C}\setminus E$. The connection between analytic capacity and removability is described by a result, due to Ahlfors, which states that a compact set $E\subset \mathbb{C}$ is removable if and only if $\gamma (E)=0$. Even though this is a positive result in the characterization of removable sets, since it is a purely analytic characterization, rather than a metric or geometric one, it fails to be the sought answer to Painlevé's problem.

Another capacity, which turns out to have a better geometric interpretation, is the analytic capacity $\gamma_+$ (or capacity $\gamma_+$), which is defined, for $E\subset \mathbb{C}$ compact, 
$$
\gamma_+(E) = \sup\mu(E),
$$
where the supremum is taken over all positive Radon measures $\mu$, supported on $E$, and such that their Cauchy transform,
$$
\mathcal{C}\mu(x) = \int\frac{1}{y-x}\,d\mu(y), 
$$
is in $L^{\infty}(\mathbb{C})$, with modulus bounded above by $1$. One advantage of this new capacity is that it has a precise description, due to Tolsa \cite{Tolsa_duke}, using previous ideas from Melnikov and Verdera \cite{MV}, in terms of curvature of measures, which is, indeed, a geometric notion, which was first introduced by Melnikov \cite{MelnikovMS1995Acda}.

From this, we see that the missing piece to solve Painlevé's problem is how $\gamma$ and $\gamma_+$ are related. This was dealt with by Tolsa in \cite{Tolsa_2003}, in which he proved that there exists an absolute constant $A$ such that for any compact set $E\subset \mathbb{C}$,
$$
\gamma_+(E)\leq \gamma(E)\leq  A\gamma_+(E),
$$
which we write $\gamma_+\approx \gamma$, and we say that the capacities $\gamma$ and $\gamma_+$ are comparable. A key consequence of this result is that a compact set $E\subset \mathbb{C}$ is non-removable if and only if it supports a positive Radon measure that satisfies a certain growth condition and has finite curvature (more precisely, see \cite[Theorem 1.2]{Tolsa_2003}). This description closes Painlevé's problem.

A key ingredient in Tolsa's proof of the comparability between the capacities $\gamma$ and $\gamma_+$ is a $Tb$ type theorem, which is originally due to Nazarov, Treil and Volberg \cite{NTV_preprint}. See Tolsa \cite[Section 5]{llibre_xavi} for a careful explanation of this result in the case of the Cauchy transform. The aim of the present work is to extend this theorem to a broader class of operators. Specifically, in this work it is shown that the reasoning can be adapted for higher dimension and that one can drop the assumption that the kernel that we deal with is antisymmetric. We will comment on the organization of the proof after a brief exposition on the topic of $Tb$ theorems. 

Broadly speaking, the notion of ``a $Tb$ type theorem'' refers to a criterion for the boundedness of a special type of integral operators, which is obtained via their action on a suitable bounded test function, which is usually denoted by $b$. The operators that appear in this context are called singular integral operators (SIOs). More precisely, we consider a function $k$, which we call kernel, defined on $\R^d\times \R^d$ except, perhaps, on the diagonal $\{(x,y)\in \R^d \times \R^d: x=y\}$, and we let
$$
T\mu(x) = \int k(x,y)\,d\mu(y),
$$
where $\mu$ is a Radon measure in $\R^d$, and for $f\in L^1_{\text{loc}}(\mu)$, we set $T_\mu f(x) = T(f\mu)(x)$. Of course, the convergence of these integrals deserves some thought, which will be done in the first section of this work, in which we will also introduce Calderón-Zygmund kernels, a particular case of which is the kernel $\frac{1}{x-y}$ that appears in the definition of the Cauchy transform above. These are the kernels that determine the operators appearing in $Tb$ type theorems.

The family of theorems which we refer to as $Tb$ type was developed as a tool to study the $L^2(\mu)$ boundedness of singular integral operators. For instance, the ones in David, Journé and Semmes \cite{DJS} and in Christ \cite{ChristMichael1990ATtw}. These theorems fall into the category known as classical Calderón-Zygmund theory. The adjective is due to the fact that most of these theorems assume that the underlying measure $\mu$ is doubling, which means that there is some constant $c>0$ such that for all $x\in \text{supp}(\mu)$ and $r>0$,
$$
\mu(B(x,2r))\leq c\,\mu(B(x,r)).
$$
Originally, it was believed that reasonable results for Calderón-Zygmund singular integral operators could only be obtained in spaces for which the underlying measure satisfies the condition above. These kinds of spaces are called homogeneous. However, when working with analytic capacity, it is not uncommon to find a situation where the doubling assumption above fails. For instance, if the measure $\mu$ is the Hausdorff $1$-dimensional measure restricted to a compact set. Nevertheless, in this type of context, an alternative and natural assumption is that $\mu$ has polynomial growth of degree $n\leq d$, meaning that for $r>0$ and $x\in \R^d$,
\begin{equation}
\mu(B(x,r)) \leq c_0\, r^n.\label{polynomial_growth}
\end{equation}
For example, in Tolsa \cite{Tolsa_duke}, a non-doubling version for the $T1$ theorem, that is, for the simpler case that $b=1$, for the Cauchy transform is proved.

The $Tb$ theorem in the present work falls into the non-homogeneous setting. It will concern a finite Radon measure $\mu$ without any doubling assumption. Moreover, the polynomial growth condition (\ref{polynomial_growth}) will not necessarily be true for all the balls $B(x,r)$. Instead, we will allow a controlled family of balls to have bigger measure. In this sense, we will not obtain that our singular integral operator $T$ is bounded in $L^2(\mu)$, but rather in $L^2(\mu\lfloor G)$, where the $\mu$-measure of $G$ is not too small compared to that of $\text{supp}(\mu)$, and $\mu\lfloor G$ will have the desired polynomial growth. Moreover, since the antisymmetric assumption is removed, the theorem deals with two bounded functions $b_1, b_2$, satisfying that
$$
\int T_*(b_1\mu)\,d\mu \leq c_*\, \|\mu\|, \quad \int T_*^*(b_2\mu)\,d\mu \leq c_*\, \|\mu\|,
$$
where the integrals are taken in the complementary of a distinguished set. The complete details about the hypothesis and conclusions can be found in the precise statement, which is Theorem \ref{TEOREMA} from Section \ref{chap:tb}.

Let us remark that the proof of the $Tb$ theorem is going to be highly technical, as is the case in \cite{NTV_preprint} and even in the simpler case \cite{llibre_xavi}. This is the reason why we have decided to split the proof into several sections, which we outline now.

As we mentioned above, the first section is an introduction to the theory of Calderón-Zygmund kernels and singular integral operators. We give the basic definitions of the concepts and statements of results that will be key in the rest of the thesis. Some are very well-known and are the same in both the classical and the non-homogeneous setting, such as the dyadic Hardy-Littlewood differentiation theorem, but not all are valid in both cases. For instance, we will need a Calderón-Zygmund decomposition tailored to the non-homogeneous case. The proof of most of the results from this preliminary section can be found in \cite[Section 2]{llibre_xavi}.

The second section contains the complete statement of the $Tb$ theorem that we prove, which is Theorem \ref{TEOREMA}. After stating it, the following sections are devoted to establishing the necessary tools for the rather long proof. We define what we call an exceptional set, which, morally, controls the portion of $\text{supp}(\mu)$ where the maximal operator associated to $T$ is big. Afterwards, we consider, for a Lipschitz function $\Theta\, \colon \R^d \to [0,+\infty)$, an appropriate modification of the Calderón-Zygmund kernel that defines $T$, which we call $\widetilde{k}_\Theta$, the ``suppressed'' version of $k$, with associated operator $K_{\Theta}$. This notion was introduced by Nazarov, Treil and Volberg \cite{NTV_acta} and is helpful for overcoming the difficulties associated with the non-homogeneous setting.

Still in the second section, we introduce the essential concepts related to dyadic lattices. Instead of fixing one lattice, we will work with its translates, and for each of these we are going to classify its cubes according to their position with respect to some of the sets that are singled out in the statement of our $Tb$ theorem. Later, we will see that this classification, along with a family of operators indexed by the cubes from the lattice, yields a decomposition in $L^2(\mu)$, which is Lemma \ref{lema_descomposicio_L2}, and will be central in the rest of the proof.

In the last section of the second section, we introduce the notion of good and bad cubes, from one dyadic lattice with respect to another. The third section is devoted to proving a lemma concerning the action of $K_{\Theta}$ on good functions, which are essentially the ones in which the $L^2(\mu)$ decomposition only sees the good cubes. The proof of this lemma is long and technical, and consists of using different strategies depending the type of cubes appearing in the $L^2(\mu)$ decomposition.

The fourth section is quite brief and deals with a Cotlar type theorem for a certain type of Calderón-Zygmund operators. Its proof will be independent of the classifications of dyadic cubes mentioned above and it will use the non-doubling Calderón-Zygmund decomposition stated in the first section.

The last section is devoted to concluding the proof of the $Tb$ theorem through a probabilistic argument. It consists of showing that the probability that of being in a bad situation, in the sense of bad cubes and functions, can be made arbitrarily close to zero. Moreover, here we define the set $G$ on which our original operator $T$ is going to be bounded in $L^2$. 

One of the motivations for introducing the non-homogeneous $Tb$ theorem was the proof of the comparability between the capacities $\gamma$ and $\gamma_+$. Of course, this was a result for the planar case, so it did not need the generalization that we prove in this work. Thus, it is reasonable to ask whether the additional work that we have done to prove the theorem in arbitrary dimension has a similar justification. The answer is affirmative and it concerns a reformulation of the problem of removability. 

This reformulation refers to changing the type of functions that we study, switching from analytic functions to Lipschitz harmonic functions. So far, we have considered functions defined on $\Omega\setminus E$, where $\Omega\subset \mathbb{C}$ is open and $E$ is compact. Then, we look for conditions on the set $E$ that ensure that for $f$ bounded and analytic in $\Omega\setminus E$, we actually have that $\overline{\partial}f(z)=0$ for all $z\in \Omega$. We can write this problem in terms of only real functions, as follows. Instead of holomorphic functions, we consider harmonic functions: $u\,\colon \Omega \setminus E\to \R$ such that $\Delta u =0$. Equivalently, such that $\overline{\partial}\partial\, u =0$. We can call $f= \partial u$, and we see that $u$ is harmonic if and only if $\partial u$ is analytic.  Using this, one sees that if $\gamma(E)=0$, then $E$ is removable for Lipschitz harmonic functions. The converse implication is also true, but it requires using the fact that $\gamma\approx \gamma_+$.

In this context, it is natural to look for a Lipschitz harmonic capacity of the set $E$, a quantity that, similarly to analytic capacity, will vanish if and only if $E$ is removable for Lipschitz harmonic functions. By our previous comment, for $E\subset \R^2$, this capacity vanishes if and only if $\gamma (E)=0$. Now, note that this new capacity can be defined in $\R^d$, for $d\geq 2$, that is, it extends the notion of analytic capacity to a higher dimensional setting. Since the planar $Tb$ theorem was useful for studying analytic capacity, it seems reasonable to think that a $d$-dimensional analogue of this theorem would be a useful tool to study Lipschitz harmonic capacity. This is indeed the case, for more details, see Section $2$ of Volberg's book \cite{Volberg_2003} or \cite{Nazarov_Tolsa_Volberg_2014}.

As a final remark, after completing the present work, I was informed by my advisor that Andrea Merlo, Mihalis Mourgoglou, Carmelo Puliatti \cite{MP} notified him that, simultaneously and independently of my work, they have proved a very general local $Tb$ theorem, from which one could derive the $Tb$ theorem explained in this thesis.

\textbf{Acknowledgement.} I would like to thank Dr. Xavier Tolsa for supervising my Master's thesis, for suggesting the topic, and for his guidance and support throughout the development of this work.

\addcontentsline{toc}{section}{Introduction}

\pagestyle{plain}

\pagestyle{plain}

\newpage
\pagestyle{fancy}

\section{Preliminaries}

In this section we introduce the basic notions related to the theory of singular integral operators in $\R^d$. We will discuss Calderón-Zygmund kernels, their adjoints and their maximal versions. We will also introduce some basic aspects of non-doubling Calderón-Zygmund theory and dyadic lattices, which are a key element in the main theorem of this thesis.

\subsection{Calderón-Zygmund kernels and singular integral operators}

\begin{defi}
    We say that $k (\cdot, \cdot )\colon \R^d\times \R^d\setminus\{(x,y)\in \R^d\times \R^d: x=y\}\to \mathbb{C}$ is an $n$-dimensional \textbf{Calderón-Zygmund kernel} if there exist constants $c>0$ and $0<\eta\leq 1$, such that the following inequalities hold for all $x,y\in \R^d, x\neq y$,
    \begin{equation}\label{CZ}
    \begin{aligned}
        |k(x,y)| &\leq \frac{c}{|x-y|^n},\quad \text{and,} \\ 
        \text{if}\,  |x-x'|&\leq \frac{|x-y|}{2},\, \,|k(x,y)-k(x',y)| + |k(y,x)-k(y,x')| \leq \frac{c|x-x'|^{\eta}}{|x-y|^{n+\eta}}.
    \end{aligned}
\end{equation} \label{definicio_CZ_kernel}
\end{defi}

Note that, without the condition that $|x-x'|\leq \frac{1}{2}|x-y|$, the second inequality in the second line above also holds if $|x-y|\approx |x'-y|$, possibly with a different constant $c$.

A Calderón-Zygmund kernel allows us to define a linear operator on the space of Radon measures in $\R^d$ in the following way.

\begin{defi}
    Given an $n$-dimensional Calderón-Zygmund kernel $k$, for $\nu\in M(\R^d)$, we define
    \begin{equation}
    T\nu(x):=\int_{\R^d}k(x,y)\,d\nu(y), \quad \text{for }\,x\in \R^d\setminus \text{supp}(\nu).\label{SIO}
    \end{equation}
    We also define $T\nu(x)$ for any $x\in \R^d$ for which the integral above is well-defined. We say that $T$ is an $n$-dimensional \textbf{singular integral operator} (SIO) with kernel $k(\cdot,\cdot)$.\label{definicio_SIO}
\end{defi}
Note that the integral in (\ref{SIO}) may fail to be absolutely convergent if $x\in \text{supp}(\nu)$. This is why we will also consider the following $\varepsilon$-truncated operators, $T_{\varepsilon}$, for $\varepsilon>0$,
$$
T_{\varepsilon}\nu(x):=\int_{|x-y|>\varepsilon}k(x,y)\,d\nu(y), \quad x\in \R^d.
$$
Now, the integral is absolutely convergent if, for example, $|\nu|(\R^d)<\infty$. Moreover, we define the maximal operator as
$$
T_*\nu(x) := \sup_{\varepsilon>0}|T_\varepsilon\nu(x)|.
$$
Also, the $\delta$-truncated maximal operator is
$$
T_{*,\delta}\nu(x)= \sup_{\varepsilon>\delta}|T_{\varepsilon}\nu(x)|,
$$
and we set, for $f\in L^1_{\text{loc}}(\mu)$, $T_{\mu,*}f = T_*(f\mu)$, and $T_{\mu,*,\delta}f = T_{*,\delta}(f\mu)$.

Given a fixed positive Radon measure $\mu$ on $\R^d$, we can define, for $f\in L^1_{\text{loc}}(\mu)$,
\begin{alignat*}{2}
    T_\mu f(x)\,&:=T(f\mu)(x), \qquad &&x\in \R^d\setminus\text{supp}(f\mu),\\
    T_{\mu,\varepsilon}f(x)\,&:= T_{\varepsilon}(f\mu)(x), \qquad &&x\in\R^d. 
\end{alignat*}
The last integral is absolutely convergent for all $x\in \R^d$ if, for instance, $f\in L^p(\mu)$ and $\mu$ has growth of degree $n$, meaning that there is $c>0$ such that for any $x\in \R^d$ and $r>0$, $\mu(B(x,r))\le c\,r^n$.

We say that the operator $T_{\mu}$ is bounded in $L^p(\mu)$ if the operators $T_{\mu,\varepsilon}$ are bounded in $L^p(\mu)$  uniformly on $\varepsilon>0$ and analogously with respect to the boundedness from $L^1(\mu)$ to $L^{1,\infty}(\mu)$. Moreover, we say that $T$ is bounded from $M(\R^d)$ to $L^{1,\infty}(\mu)$ if there is a constant $C>0$ such that for all $\nu\in M(\R^d)$ and all $\lambda>0$,
$$
\mu(\{x\in \R^d: |T_{\varepsilon}\nu(x)|>\lambda\})\leq C\frac{\|\nu\|}{\lambda},
$$
uniformly on $\varepsilon>0$. Lastly, we say that a singular integral operator is a \textbf{Calderón-Zygmund operator} (CZO) if it is bounded in $L^2(\mu)$.

Another notion that will be useful for our theorem is the following. Given a SIO $T$, originated from a kernel $k(\cdot,\cdot)$, we denote by $T^*$ the SIO that arises from the kernel $\widetilde{k}(x,y)=k(y,x)$. That is, for $\nu\in M(\R^d)$, we put
$$
T^*\nu(x) := \int_{\R^d}k(y,x)\,d\nu(y), \quad \text{for }\,x\in \R^d\setminus \text{supp}(\nu).
$$

We will call $T^*$ the \textbf{adjoint} of $T$. The reason for this notation is the following. Recall that if $H$ is a Hilbert space with inner product $\langle\cdot, \cdot \rangle$ and $S\colon H \to H$ is a continuous linear operator, the adjoint of $S$, which we denote by $S^*$, is the continuous linear operator $S^*\colon H\to H$ determined by the relation
$$
\langle Sf, g\rangle = \langle f, S^*g\rangle, \quad \text{for all }\,f,g\in H.
$$
It is straightforward to check, using Fubini's theorem, that for $f,g\in \mathcal{C}_c^0(\R^d)$, we have
$$
\langle T_{\mu,\varepsilon}f, g\rangle = \int T_{\mu, \varepsilon}f(x)g(x)\,d\mu(x) = \int f(x)T^*_{\mu,\varepsilon}g(x)\,d\mu(x) = \langle f, T^*_{\mu,\varepsilon}g\rangle, 
$$
which is precisely the relation that characterizes the adjoint operator.

In the special case that we have $k(y,x)=-k(x,y)$, we say that the kernel $k(\cdot, \cdot)$ is \textbf{antisymmetric}.

Lastly, we give two examples of singular integral operators.

\begin{exemple}
    The Cauchy transform is the SIO in $\mathbb{C}$ obtained from the $1$-dimensional antisymmetric Calderón-Zygmund kernel
    $$
    k(x,y) := \frac{1}{y-x}, \quad x,y\in \mathbb{C}.
    $$
    If instead we consider 
    $$
    k(x,y) = \frac{1}{|x-y|},
    $$
    we obtain a $1$-dimensional Calderón-Zygmund kernel which is not antisymmetric.
\end{exemple}

\begin{exemple}
    In $\R^d$, for an integer $0<n\leq d$, we consider the Riesz kernels,
    $$
    k(x,y) = \frac{x_j-y_j}{|x-y|^{n+1}}, \quad j=1,\dots, d,
    $$
    where we write $x\in \R^d$ as $x=(x_1,\dots, x_d)$ and analogously for $y\in \R^d$. The $n$-dimensional Riesz kernels are the SIOs originated by these kernels. Note that these kernels are also antisymmetric.
\end{exemple}

\subsection{The Calderón-Zygmund decomposition}

As mentioned in the Introduction, the $Tb$ theorem in the present work belongs to the ambit of non-homogeneous Calderón-Zygmund theory, that is, we work with measures $\mu$ in $\R^d$ such that the doubling condition
$$
\mu(B(x,2r)) \leq c\,\mu(B(x,r)), \quad \text{for any }x\in \R^d, \,r>0,
$$
with a constant $c$, independent of the balls, is not necessarily true for all balls $B(x,r)$ or cubes. This is why we introduce, the following notion. For $\alpha, \beta>1$, we say that a cube $Q\subset \R^d$ is $(\alpha,\beta)$\textbf{-doubling} if 
$$
\mu(\alpha Q) \leq \beta\mu(Q),
$$
where $\alpha Q$ denotes the cube concentric with $Q$ with diameter $\alpha\, \text{diam}(Q)$. We claim that, although $\mu$ may not satisfy any growth condition like the one above, there are a lot of small doubling cubes. For a proof, see \cite[Lemma 2.8]{llibre_xavi}.

\begin{lemma}\label{lema_cubs_doblants}
    Let $\beta>\alpha^d$. If $\mu$ is a Radon measure in $\R^d$, then for $\mu$-a.e. $x\in \R^d$, there exists a sequence of $(\alpha,\beta)$-doubling cubes $\{Q_k\}$ centered at the point $x$ with $\ell(Q_k)\to 0$ as $k\to +\infty$.
\end{lemma}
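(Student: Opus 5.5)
We argue by contradiction, using a density comparison with Lebesgue measure. Fix $\alpha>1$ and $\beta>\alpha^d$, and set $Q(x,r)$ to be the cube centered at $x$ with side length $r$. Let $E$ be the set of points $x\in\operatorname{supp}\mu$ for which the conclusion fails. For such $x$ there is $r_x>0$ such that no cube $Q(x,r)$ with $0<r\le r_x$ is $(\alpha,\beta)$-doubling. Points outside the support form a $\mu$-null set, so it suffices to show $\mu(E)=0$. Writing $E=\bigcup_{m} E_m$ with $E_m=\{x\in E: r_x\ge 1/m\}$ and intersecting with bounded sets, it is enough to prove that $\mu(E_m\cap B)=0$ for every $m$ and every ball $B$.

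The key step is iterating the failure of doubling downward. Fix $x\in E_m$ and let $r_0=1/m$. Since $Q(x,\alpha^{-j}r_0)$ is not doubling for every $j\ge1$, we have $\mu(Q(x,\alpha^{-j+1}r_0))>\beta\,\mu(Q(x,\alpha^{-j}r_0))$. Iterating gives
$$
\mu(Q(x,\alpha^{-j}r_0))<\beta^{-j}\mu(Q(x,r_0))\le \beta^{-j}M,
$$
where $M=\mu(B')$ for a fixed bounded set $B'$ containing all the cubes $Q(x,r_0)$ with $x\in B$. Comparing with side length $s=\alpha^{-j}r_0$, so that $\alpha^{-jd}=(s/r_0)^d$, we obtain
$$
\frac{\mu(Q(x,s))}{s^d}\le C\Big(\frac{\alpha^d}{\beta}\Big)^{j}\longrightarrow 0 \quad (j\to\infty).
$$
This is where $\beta>\alpha^d$ is used. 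For intermediate radii $\alpha^{-j-1}r_0\le s\le\alpha^{-j}r_0$, monotonicity in $s$ gives the same conclusion up to a factor $\alpha^d$. Hence $\lim_{s\to0}\mu(Q(x,s))/s^d=0$ for every $x\in E_m\cap B$.

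To conclude we use a covering argument. Fix $\varepsilon>0$ and an open set $U\supset E_m\cap B$ with $\mathcal L^d(U)<\mathcal L^d(E_m\cap B)+1$; this is finite. For each $x\in E_m\cap B$ choose small cubes $Q(x,s)\subset U$ with $\mu(Q(x,s))\le\varepsilon s^d$. By the Besicovitch covering theorem, which applies to centered cubes, we extract a subfamily with bounded overlap $N_d$ that covers $E_m\cap B$. Then
$$
\mu(E_m\cap B)\le\sum_i\mu(Q_i)\le\varepsilon\sum_i s_i^d\le \varepsilon N_d\,\mathcal L^d(U),
$$
and letting $\varepsilon\to0$ gives $\mu(E_m\cap B)=0$. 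Since the doubling cubes found along the way can be taken from the sequence $\alpha^{-j}r$, their side lengths tend to zero, which yields the sequence $\{Q_k\}$ in the statement.

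The main obstacle is this last step: passing from zero upper density with respect to Lebesgue measure to $\mu$-nullity. Besicovitch is needed because $\mu$ is arbitrary, and a Vitali-type covering would require doubling. The role of the strict inequality $\beta>\alpha^d$ in the middle step also has to be tracked carefully.
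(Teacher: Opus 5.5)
Your proof is correct and follows essentially the standard argument behind \cite[Lemma 2.8]{llibre_xavi}, which the paper cites rather than proves. You iterate the failure of doubling along $\alpha^{-j}r_0$ and use $\beta>\alpha^d$ to get $\mu(Q(x,s))/s^d\to 0$ at every exceptional point, then show via Besicovitch that a set on which $\mu$ has vanishing density with respect to $\mathcal{L}^d$ is $\mu$-null. (The reduction to the sets $E_m$ and the uniform bound $M$ are harmless but not needed, since the covering step only uses pointwise vanishing density; your final sentence is also superfluous, because the conclusion off $E$ holds by the definition of $E$.)
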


A key tool in many results concerning SIOs is the following lemma, called the \textbf{Calderón-Zygmund decomposition}. Morally, it says that if we consider a Radon measure $\mu$ on $\R^d$, then for any finite measure $\nu$ with compact support, we can split it into a ``good'' part, where it is controlled by the ambient measure $\mu$ and a ``bad'' part, in which $|\nu|$ is bigger than $\mu$. Moreover, the latter is a union of cubes with finite overlap. For a proof, see \cite[Lemma 2.14]{llibre_xavi}.

\begin{lemma}\label{descomposicio_calderon_zygmund}
Let $\mu$ be a Radon measure on $\mathbb{R}^d$. For every
$\nu \in M(\mathbb{R}^d)$ with compact support and every
$\lambda > 2^{d+1}\frac{\|\nu\|}{\|\mu\|}$,
we have:
\begin{enumerate}
\item[(a)] There exists a family of almost disjoint cubes, that is, having bounded overlap, $\{Q_i\}_i$,
and a function $f\in L^1(\mu)$ such that
\begin{equation}
|\nu|(Q_i)
>
\frac{\lambda}{2^{d+1}}\,\mu(2Q_i),
\label{2.13}
\end{equation}
\begin{equation}
|\nu|(\eta Q_i)
\le
\frac{\lambda}{2^{d+1}}\,\mu(2\eta Q_i),
\qquad \text{for } \eta>2,
\label{2.14}
\end{equation}
\begin{equation}
\nu=f\,\mu
\qquad \text{in } \mathbb{R}^d\setminus \bigcup_i Q_i,
\qquad \text{with } |f|\le \lambda
\quad \mu\text{-a.e.}
\label{2.15}
\end{equation}

\item[(b)] For each $i$, let $R_i$ be a $(6,6^{d+1})$-doubling cube
concentric with $Q_i$, with $\ell(R_i)>4\,\ell(Q_i)$,
and let $w_i=\frac{\chi_{Q_i}}{\sum_k \chi_{Q_k}}$. Then, there exists a family of functions $\varphi_i$ with
$\operatorname{supp}(\varphi_i)\subset R_i$, each $\varphi_i$
with constant sign, satisfying
\begin{equation}
\int \varphi_i\,d\mu
=
\int_{Q_i} w_i\,d\nu,
\label{2.16}
\end{equation}
\begin{equation}
\sum_i |\varphi_i|
\le
B\,\lambda,
\label{2.17}
\end{equation}
(where $B$ is some fixed constant depending only on $d$ and $n$),
and
\begin{equation}
\|\varphi_i\|_{L^\infty(\mu)}\,\mu(R_i)
\le
c\,|\nu|(Q_i).
\label{2.18}
\end{equation}
\end{enumerate}
\end{lemma}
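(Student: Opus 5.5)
The proof follows the scheme of \cite[Lemma 2.14]{llibre_xavi}, and it splits into the construction of the cubes $Q_i$ (part (a)) and of the functions $\varphi_i$ (part (b)).

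\textbf{Part (a).} Let $E$ be the set of $x\in\operatorname{supp}(\nu)$ admitting a cube $Q_x$ centered at $x$ with $|\nu|(Q_x)>\frac{\lambda}{2^{d+1}}\mu(2Q_x)$. Since $\lambda>2^{d+1}\|\nu\|/\|\mu\|$, such cubes cannot be arbitrarily large: if $\ell(Q_x)$ is large, then $\mu(2Q_x)$ is close to $\|\mu\|$, and the inequality fails. Hence the side lengths are bounded. Using this, for each $x\in E$ I choose a cube $Q_x$ satisfying (\ref{2.13}) with side length at least half the supremum of admissible sides. Maximality then gives (\ref{2.14}): the cube $\eta Q_x$ with $\eta>2$ is centered at $x$ and has side more than twice the chosen one, so it exceeds the supremum. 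Next I apply the Besicovitch covering theorem to $\{Q_x\}_{x\in E}$, which is legitimate since $E$ is bounded because $\nu$ has compact support. This yields a subfamily $\{Q_i\}$ covering $E$ with bounded overlap. For (\ref{2.15}), write $\nu=g\mu+\nu_s$ by Radon--Nikodym. For $\mu$-a.e. and $\nu_s$-a.e. $x\notin\bigcup Q_i$, every centered cube satisfies $|\nu|(Q)\le\frac{\lambda}{2^{d+1}}\mu(2Q)$. Taking $Q$ to be the doubling cubes given by Lemma \ref{lema_cubs_doblants}, the differentiation theorem gives $|g(x)|\le c\lambda$ and shows that the singular part vanishes there (at $\nu_s$-a.e. point, $|\nu|(Q)/\mu(2Q)\to\infty$). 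This gives the claim with $f=g$, after adjusting the constant.

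\textbf{Part (b).} Set $\alpha_i=\int_{Q_i}w_i\,d\nu$, so that $|\alpha_i|\le|\nu|(Q_i)$. Order the cubes so that the $R_i$ have non-decreasing size, and define inductively $\varphi_i=a_i\chi_{A_i}$ with $A_i\subset R_i$ chosen to avoid the part of $R_i$ where $\sum_{k<i}|\varphi_k|$ is already above $B\lambda/2$. I need to check that $\mu(A_i)\ge\mu(R_i)/2$. By bounded overlap, the $R_k$ meeting $R_i$ with $\ell(R_k)\le\ell(R_i)$ lie in $3R_i$, which is contained in $6R_i$. Condition (\ref{2.14}) and the doubling property of $R_i$ then give
\[
\sum_{k<i}\int_{R_i}|\varphi_k|\,d\mu\le\sum_{k:\,R_k\cap R_i\ne\emptyset}|\nu|(Q_k)\le C|\nu|(6R_i)\le C\lambda\,\mu(R_i),
\]
where the last step uses (\ref{2.14}) with $\eta\approx\ell(6R_i)/\ell(Q_i)>2$. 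Chebyshev's inequality then shows that the bad part of $R_i$ has measure at most $\mu(R_i)/2$ once $B$ is large. Taking $a_i=\alpha_i/\mu(A_i)$ gives (\ref{2.16}) and (\ref{2.18}), and $|a_i|\le 2|\nu|(Q_i)/\mu(R_i)\le c\lambda$, again by (\ref{2.14}) and doubling. Therefore $\sum|\varphi_i|\le B\lambda$ on each $A_i$, which is (\ref{2.17}). If the family is infinite, I handle it by a limiting argument or by well-ordering by size.

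\textbf{Main obstacle.} The key step is the estimate $\sum_{k<i}\int_{R_i}|\varphi_k|\,d\mu\lesssim\lambda\,\mu(R_i)$. It needs the covering geometry (smaller cubes meeting $R_i$ stay inside a fixed dilate of $R_i$), bounded overlap, and a careful choice of dilation factor so that (\ref{2.14}) applies with $\eta>2$ and the $(6,6^{d+1})$-doubling of $R_i$ can absorb it.
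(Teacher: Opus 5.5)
Your route is the standard one from \cite[Lemma 2.14]{llibre_xavi}, which the paper cites instead of giving its own proof. For (a) it is Besicovitch applied to almost-maximal cubes centered in $\operatorname{supp}\nu$; for (b) it is the inductive construction $\varphi_i=a_i\chi_{A_i}$ with the $R_i$ ordered by non-decreasing size. However, the estimate you single out as the crux fails as written. If you enlarge to $6R_i$ and apply (\ref{2.14}) with $\eta Q_i=6R_i$, you get $|\nu|(6R_i)\le\frac{\lambda}{2^{d+1}}\mu(12R_i)$. The quantity $\mu(12R_i)$ is not controlled by the $(6,6^{d+1})$-doubling of $R_i$.

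You must stay at $3R_i$. For $k<i$ with $R_k\cap R_i\neq\varnothing$ you have $\ell(R_k)\le\ell(R_i)$, hence $Q_k\subset R_k\subset 3R_i$. This is pure geometry, not bounded overlap. It also needs the restriction $k<i$, which you drop in your middle sum; larger $R_k$ meeting $R_i$ can have $Q_k$ far away. Since $\int|\varphi_k|\,d\mu=|\alpha_k|\le\int_{Q_k}w_k\,d|\nu|$ and $\sum_k w_k\le 1$, you get
\[
\sum_{k<i}\int_{R_i}|\varphi_k|\,d\mu\le|\nu|(3R_i)\le\frac{\lambda}{2^{d+1}}\,\mu(6R_i)\le 3^{d+1}\lambda\,\mu(R_i).
\]
Here (\ref{2.14}) is used with $\eta=3\ell(R_i)/\ell(Q_i)>12$, so that $2\eta Q_i=6R_i$; this is exactly why the doubling parameter is $6$. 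Likewise, $|a_i|\le 2|\nu|(Q_i)/\mu(R_i)\le\lambda/2^{d}$ follows from $Q_i\subset\frac12 R_i$ and (\ref{2.14}) with $\eta=\ell(R_i)/(2\ell(Q_i))>2$. This is where $\ell(R_i)>4\ell(Q_i)$ enters, and no doubling is needed. With these bounds, $B=4\cdot 3^{d+1}$ works.

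Two further points need repair.

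\textbf{Infinite families.} The sizes of the $R_i$ may contain an infinite strictly decreasing sequence (typically $\ell(R_i)\to 0$). Then no ordering, not even a transfinite well-ordering, is compatible with non-decreasing size, so ``well-ordering by size'' is not available. What works is the following:
\begin{enumerate}
\item Run the finite construction for $Q_1,\dots,Q_N$, keeping the full weights $w_i$ so that $\alpha_i$ does not depend on $N$.
\item Extract weak-$*$ limits in $L^\infty(\mu)$ by a diagonal argument.
\item Check that support, sign, (\ref{2.16}) (test against $\chi_{R_i}\in L^1(\mu)$), (\ref{2.17}) and (\ref{2.18}) survive the limit. They do, because $\varphi_i^N=\frac{\alpha_i}{|\alpha_i|}h_i^N$ with $h_i^N\ge 0$ and a phase independent of $N$.
\end{enumerate}

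\textbf{The constant in (\ref{2.15}).} You cannot ``adjust the constant'' in $|g|\le c\lambda$, because (\ref{2.13}) and (\ref{2.14}) fix the factor $\lambda/2^{d+1}$. Instead take $(2,2^{d+1})$-doubling cubes from Lemma \ref{lema_cubs_doblants}. For these, $|\nu|(Q)\le\frac{\lambda}{2^{d+1}}\mu(2Q)\le\lambda\,\mu(Q)$, and the differentiation theorem gives exactly $|g|\le\lambda$; this is the reason for the factor $2^{d+1}$.

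For the singular part, the claim $|\nu|(Q)/\mu(2Q)\to\infty$ is not automatic along arbitrary small cubes. Justify it along cubes centered at $x$ that are $(2,2^{d+1})$-doubling for $|\nu|$ (Lemma \ref{lema_cubs_doblants} applied to $|\nu|$). Then $|\nu|(Q)\ge 2^{-d-1}|\nu_s|(2Q)$, and $|\nu_s|(2Q)/\mu(2Q)\to\infty$ at $|\nu_s|$-a.e.\ $x$. A single such cube places $x$ in $E\subset\bigcup_i Q_i$.
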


Lastly, we need a standard integral estimate concerning doubling cubes, which is easily proved by splitting the domain of integration adequately. 

\begin{lemma}\label{lema:2.15}
    Let $\mu$ be a Radon measure on $\R^d$. If $Q\subset R$ are concentric cubes such that there are no $(\alpha,\beta)$-doubling cubes (with $\beta>\alpha^d$) of the form $\alpha^kQ$, $k\geq 0$, with $Q\subset \alpha^k Q\subset R$, and $x_Q$ denotes the center of $Q$, then
    $$
    \int_{R\setminus Q} \frac{1}{|x-x_Q|^n}d\mu(x) \leq c_1\frac{\mu(R)}{\ell(R)^n},
    $$
    where $c_1$ depends only on $\alpha, \beta, n$, and $d$.
\end{lemma}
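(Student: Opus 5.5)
We decompose $R\setminus Q$ into the annuli between consecutive dilates $\alpha^kQ$ and use the absence of doubling cubes to show that their masses decay geometrically as we move inward from $R$. Let $N\geq 0$ be the largest integer with $\alpha^N Q\subset R$. By hypothesis, none of the cubes $\alpha^k Q$, $0\le k\le N$, is $(\alpha,\beta)$-doubling. Hence $\mu(\alpha^{k+1}Q)>\beta\,\mu(\alpha^k Q)$ for $0\le k\le N-1$. Iterating from $k$ up to $N$ gives
$$
\mu(\alpha^k Q)\le \beta^{-(N-k)}\mu(\alpha^N Q)\le \beta^{-(N-k)}\mu(R),\qquad 0\le k\le N.
$$
We write
$$
R\setminus Q\subset \bigcup_{k=1}^{N}\left(\alpha^kQ\setminus \alpha^{k-1}Q\right)\,\cup\,\left(R\setminus \alpha^N Q\right).
$$
If $N=0$, the first union is empty and only the last piece appears.

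On the annulus $\alpha^kQ\setminus\alpha^{k-1}Q$, every $x$ satisfies $|x-x_Q|\ge \tfrac12\alpha^{k-1}\ell(Q)$. The contribution of this annulus is therefore at most
$$
c\,\frac{\mu(\alpha^kQ)}{(\alpha^{k}\ell(Q))^n}\le c\,\beta^{-(N-k)}\frac{\mu(R)}{(\alpha^k\ell(Q))^n}.
$$
On the last piece $R\setminus\alpha^NQ$, we have $|x-x_Q|\ge \tfrac12\alpha^N\ell(Q)\gtrsim \ell(R)/\alpha$, because $\alpha^{N+1}Q\not\subset R$ and the cubes are concentric, so $\ell(R)<\alpha^{N+1}\ell(Q)$. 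Its contribution is thus at most $c\,\mu(R)/\ell(R)^n$, with $c$ depending on $\alpha,n$.

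The remaining step is to sum the annuli. Using $\alpha^N\ell(Q)\approx \ell(R)$, the $k$-th annulus term is bounded by
$$
c\,\frac{\mu(R)}{\ell(R)^n}\,\beta^{-(N-k)}\alpha^{n(N-k)}=c\,\frac{\mu(R)}{\ell(R)^n}\Big(\frac{\alpha^n}{\beta}\Big)^{N-k}.
$$
Since $n\le d$ and $\alpha>1$, we have $\beta>\alpha^d\ge\alpha^n$, so the ratio $\alpha^n/\beta$ is less than $1$. The geometric series then converges to a constant depending only on $\alpha,\beta,n$, and adding the last-piece bound yields the claimed estimate with $c_1=c_1(\alpha,\beta,n,d)$.

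The main point requiring care is the last-piece estimate together with the comparison $\alpha^N\ell(Q)\approx\ell(R)$. Both depend on $R$ being concentric with $Q$, so that $R$ is squeezed between $\alpha^NQ$ and $\alpha^{N+1}Q$. Once that comparison is in hand, the rest is the geometric summation, which is routine.
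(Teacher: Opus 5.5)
Your proof is correct and is the argument the paper intends (it only says the lemma follows ``by splitting the domain of integration adequately''). You decompose $R\setminus Q$ into the layers $\alpha^kQ\setminus\alpha^{k-1}Q$, $1\le k\le N$, plus $R\setminus\alpha^NQ$, use the failure of doubling to get $\mu(\alpha^kQ)\le\beta^{-(N-k)}\mu(R)$, and sum a geometric series of ratio $\alpha^n/\beta<1$; you also correctly flag the two places where the hypotheses really enter, namely $\ell(R)<\alpha^{N+1}\ell(Q)$ from concentricity and $n\le d$, so that $\beta>\alpha^d\ge\alpha^n$.
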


\subsection{The Hardy-Littlewood maximal operator}
\label{sec:HL}

Let us introduce briefly an operator which is very well known in Harmonic Analysis: for a Radon measure $\mu$ in $\R^d$, the \textbf{centered maximal Hardy-Littlewood operator} $M_\mu$ applied to a complex Radon measure $\nu$ is defined as
$$
M_\mu\nu(x) = \sup_{r>0}\frac{|\nu|(B(x,r))}{\mu(B(x,r))},
$$
so for $f\in L^1_{\text{loc}}(\mu)$, we have $M_\mu f = M_\mu (f\mu)$. A classical result, which can be proved using the covering theorem for bounded subsets of $\R^d$, due to Besicovitch, is the following.
\begin{teorema}\label{HL}
    Let $\mu$ be a Radon measure in $\R^d$. The centered maximal Hardy-Littlewood operator $M_\mu$ is bounded from $M(\R^d)$ to $L^{1,\infty}(\mu)$ and in $L^p(\mu)$, for $1<p\leq \infty$.
\end{teorema}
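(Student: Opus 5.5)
The plan is to prove the weak type $(1,1)$ estimate for measures first, and then obtain the $L^p$ bounds by interpolation with the trivial $L^\infty$ bound. The case $p=\infty$ is immediate, since for $f\in L^\infty(\mu)$ we have $|f\mu|(B(x,r))\le \|f\|_{L^\infty(\mu)}\,\mu(B(x,r))$ for every ball. This gives $M_\mu f\le \|f\|_{L^\infty(\mu)}$ at every $x$ where the quotient is defined; restricting to $x\in\operatorname{supp}(\mu)$, where all $\mu(B(x,r))>0$, costs nothing $\mu$-a.e.

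For the weak type bound, fix $\nu\in M(\R^d)$ and $\lambda>0$, and let $E_\lambda=\{x\in\operatorname{supp}(\mu): M_\mu\nu(x)>\lambda\}$. It suffices to bound $\mu(E_\lambda\cap B(0,R))$ uniformly in $R$, so we may work with a bounded set $A=E_\lambda\cap B(0,R)$. For each $x\in A$ we pick a radius $r_x>0$ with $|\nu|(B(x,r_x))>\lambda\,\mu(B(x,r_x))$.

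The radii must be bounded in order to apply Besicovitch. If $\sup_x r_x=\infty$ for all admissible choices, then for some $x$ and arbitrarily large $r$ we get $\lambda\mu(B(x,r))<\|\nu\|$, so $\mu(\R^d)\le\|\nu\|/\lambda$ and the estimate is trivial. Otherwise we can choose the $r_x$ bounded; if necessary, we handle this by also truncating $\sup_{0<r<N}$ and letting $N\to\infty$ by monotone convergence.

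We then apply the Besicovitch covering theorem to the family $\{B(x,r_x)\}_{x\in A}$, with centers in the bounded set $A$. This yields a subfamily $\{B_i\}$ that covers $A$ and has overlap bounded by a dimensional constant $P_d$. Hence
$$
\mu(A)\le\sum_i\mu(B_i)\le\frac1\lambda\sum_i|\nu|(B_i)\le\frac{P_d}{\lambda}\|\nu\|.
$$
This is the step where the non-doubling nature of $\mu$ matters. Vitali's covering (enlarging balls) cannot be used, because $\mu(5B)$ is not controlled by $\mu(B)$. Centered balls together with Besicovitch are what avoid any doubling assumption, and this is the main point of the proof.

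Finally, for $1<p<\infty$ we use Marcinkiewicz interpolation, since $M_\mu$ is sublinear and bounded $L^1\to L^{1,\infty}$ (taking $\nu=f\mu$) and $L^\infty\to L^\infty$. Alternatively, we can argue directly: split $f=f\chi_{\{|f|>\lambda/2\}}+f\chi_{\{|f|\le\lambda/2\}}$. This gives $\mu(\{M_\mu f>\lambda\})\le \mu(\{M_\mu(f\chi_{\{|f|>\lambda/2\}})>\lambda/2\})\le \frac{2P_d}{\lambda}\int_{|f|>\lambda/2}|f|\,d\mu$. Integrating against $p\lambda^{p-1}d\lambda$ and using Fubini yields $\|M_\mu f\|_p^p\le C_{p,d}\|f\|_p^p$.
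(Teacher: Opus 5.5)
Your proposal is correct and takes the route the paper indicates: the paper gives no proof beyond noting that the result follows from the Besicovitch covering theorem for bounded subsets of $\mathbb{R}^d$, and you combine Besicovitch for the weak $(1,1)$ bound with the trivial $L^\infty$ bound and interpolation. One phrase in the unbounded-radii step is imprecise: you actually get a sequence of centers $x_k\in B(0,R)$ with $r_k\to\infty$, not one fixed $x$; the conclusion $\mu(A)\le\|\nu\|/\lambda$ still holds, since $B(x_k,r_k)\supset B(0,r_k-R)$, or more simply since a single ball with $r_x>2R$ already covers $A$.
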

For our purposes, this theorem will be useful because it will enable us to relate boundedness of two integral operators whenever their difference is controlled by the maximal Hardy-Littlewood operator.

Another result concerning this maximal operator, which we will use in Section \ref{chap:cotlar}, is Cotlar's inequality, which we state next.

\begin{teorema}\label{cotlar_referenciar}
Let $\mu$ be a positive Radon measure on $\mathbb{R}^d$ and let $T$ be an $n$-dimensional SIO. Let $0<s\le 1$, and $0<\delta\le \varepsilon$. Suppose that for some fixed $x\in\mathbb{R}^d$,
\begin{equation}
\mu(B(x,r)) \le c_0 r^n
\qquad \text{for } r\ge \varepsilon ,
\label{Cotlar:creixement}
\end{equation}
and that $T_\delta$ is bounded from $M(\mathbb{R}^d)$ to
$L^{1,\infty}(\mu)$. Then we have
\begin{equation}
|T_\varepsilon \nu(x)|
\le
c_s \,{M}_{\mu}\!\left(|T_\delta \nu|^s\right)(x)^{1/s}
+
c_{s,T}\, M_\mu \nu(x),
\qquad \text{for } \nu\in M(\mathbb{R}^d).
\label{Cotlar:primera_desigualtat}
\end{equation}

Thus, if $\mu$ has growth of degree $n$, then for all $x\in\mathbb{R}^d$,
\begin{equation}
T_{*,\delta}\nu(x)
\le
c_s \,{M}_{\mu}\!\left(|T_\delta \nu|^s\right)(x)^{1/s}
+
c_{s,T}\, M_\mu \nu(x),
\qquad \text{for } \nu\in M(\mathbb{R}^d).
\label{Cotlar:segona_desigualtat}
\end{equation}

The constant $c_s$ depends only on the constant $c_0$ in \ref{Cotlar:creixement},
$s$, $n$, and $d$, and
\[
c_{s,T}
=
c\left(1+\|T_\delta\|_{M(\mathbb{R}^d)\to L^{1,\infty}(\mu)}\right),
\]
with $c$ depending only on $c_0$, $s$, $n$, and $d$.
\end{teorema}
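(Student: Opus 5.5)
We will prove Cotlar's inequality \eqref{Cotlar:primera_desigualtat} in the classical way. We compare $T_\varepsilon\nu(x)$ with $T_\delta\nu$ evaluated at points $x'$ of a small ball around $x$. Then we control the part of $T_\delta\nu(x')$ coming from near $x$ using the weak type $(1,1)$ hypothesis. Fix $x$ and $\nu$, write $B=B(x,\varepsilon/2)$, and split
$$
\nu=\nu_1+\nu_2,\qquad \nu_1=\nu\lfloor B(x,\varepsilon),\quad \nu_2=\nu\lfloor (\R^d\setminus B(x,\varepsilon)).
$$
Then $T_\varepsilon\nu(x)=T\nu_2(x)$. For $x'\in B$ we have
$$
T_\delta\nu(x')=T_\delta\nu_1(x')+T\nu_2(x')+E(x'),
$$
where $E(x')$ accounts for the truncation. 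Since $\delta\le\varepsilon$ and $|x'-y|\ge\varepsilon/2$ whenever $y\notin B(x,\varepsilon)$, we get $|E(x')|\le \int_{\varepsilon/2\le|x'-y|\le\delta}\dots$. This term is empty once we note $\delta\le\varepsilon/2$; if $\delta\in(\varepsilon/2,\varepsilon]$ it is bounded by $c\,|\nu|(B(x,2\varepsilon))/\varepsilon^n\le c\,M_\mu\nu(x)\,\mu(B(x,2\varepsilon))/\varepsilon^n\le c\,c_0 M_\mu\nu(x)$, using the growth \eqref{Cotlar:creixement} at scale $r=2\varepsilon\ge\varepsilon$.

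The smoothness estimate \eqref{CZ}, together with a dyadic annular decomposition of $\R^d\setminus B(x,\varepsilon)$, gives
$$
|T\nu_2(x')-T\nu_2(x)|\le \sum_{j\ge0}\frac{c\,\varepsilon^\eta}{(2^j\varepsilon)^{n+\eta}}|\nu|(B(x,2^{j+1}\varepsilon))\le c\,c_0\sum_j 2^{-j\eta} M_\mu\nu(x)\le c\, M_\mu\nu(x).
$$
Again this uses $\mu(B(x,2^{j+1}\varepsilon))\le c_0(2^{j+1}\varepsilon)^n$, which is legitimate since all radii are at least $\varepsilon$. Hence, for every $x'\in B$,
$$
|T_\varepsilon\nu(x)|\le |T_\delta\nu(x')|+|T_\delta\nu_1(x')|+c\,M_\mu\nu(x).
$$
Raising to the power $s$ (using $(a+b+c)^s\le a^s+b^s+c^s$ for $s\le1$) and averaging over $x'\in B$ with respect to $\mu$ gives
$$
|T_\varepsilon\nu(x)|^s\le \frac{1}{\mu(B)}\int_B|T_\delta\nu|^s\,d\mu+\frac{1}{\mu(B)}\int_B|T_\delta\nu_1|^s\,d\mu+c\,M_\mu\nu(x)^s.
$$
This requires $\mu(B)>0$. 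If $\mu(B)=0$ we instead pick the smallest ball around $x$ of positive measure, or simply observe that the right side of \eqref{Cotlar:primera_desigualtat} is then infinite by convention. The first term is at most $M_\mu(|T_\delta\nu|^s)(x)$.

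The main step is the middle term. Here we use Kolmogorov's inequality. Since $T_\delta$ is bounded from $M(\R^d)$ to $L^{1,\infty}(\mu)$ with norm $N$ and $s<1$, integrating the distribution function, cut at height $N\|\nu_1\|/\mu(B)$, gives
$$
\frac{1}{\mu(B)}\int_B|T_\delta\nu_1|^s\,d\mu\le c_s\Big(\frac{N\,|\nu|(B(x,\varepsilon))}{\mu(B)}\Big)^s.
$$
The obstacle is that the denominator is $\mu(B(x,\varepsilon/2))$ while the numerator involves $B(x,\varepsilon)$, and $\mu$ is not doubling. We handle this by averaging over $B(x,\varepsilon)$ instead, taking $x'\in B(x,\varepsilon)$ with $\nu_1=\nu\lfloor B(x,2\varepsilon)$ and $\nu_2$ the rest. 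The estimates of the previous paragraph still go through, since $|x'-y|\ge\varepsilon$ for $y\notin B(x,2\varepsilon)$. The ratio then becomes $|\nu|(B(x,2\varepsilon))/\mu(B(x,\varepsilon))$, and we have the same mismatch. The standard remedy is to run the argument with the Hardy–Littlewood maximal operator defined with dilated denominators, or equivalently to choose an $(\alpha,\beta)$-doubling ball. If this ratio is not controlled by $M_\mu\nu(x)$, we accept the bound in the form
$$
\frac{|\nu|(B(x,2\varepsilon))}{\mu(B(x,\varepsilon))}\lesssim M_\mu\nu(x)
$$
whenever $\mu(B(x,2\varepsilon))\le C\mu(B(x,\varepsilon))$. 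In the remaining, non-doubling case, we pass to a larger radius $2^k\varepsilon$ that is doubling. Such a radius exists because growth \eqref{Cotlar:creixement} prevents indefinitely non-doubling chains of balls starting with positive measure. The intermediate annuli are absorbed into the $M_\mu\nu$ term by the estimate of Lemma \ref{lema:2.15} (in its ball version), which is exactly what that lemma provides. Taking $s$-th roots gives \eqref{Cotlar:primera_desigualtat}, with $c_{s,T}=c(1+N)$. Finally, \eqref{Cotlar:segona_desigualtat} follows by taking the supremum over $\varepsilon>\delta$, since global growth implies \eqref{Cotlar:creixement} for every $\varepsilon$ and the constants do not depend on $\varepsilon$.
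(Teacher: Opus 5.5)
Your argument is correct once the doubling radius is fixed at the outset. It is the standard non-doubling proof from the literature the paper cites; the paper itself gives no proof of this theorem and defers to Nazarov--Treil--Volberg and Tolsa.

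Concretely, the proof should be organised as follows.
\begin{enumerate}
\item Fix $\beta>2^n$ and let $j\ge 0$ be least with $\mu(B(x,2^{j+1}\varepsilon))\le\beta\,\mu(B(x,2^{j}\varepsilon))$. Such a $j$ exists by the growth at scales $\ge\varepsilon$ when $\mu(B(x,\varepsilon))>0$.
\item The part of $T_\varepsilon\nu(x)$ coming from $\varepsilon<|x-y|<2^{j+1}\varepsilon$ is at most $c\,c_0\,M_\mu\nu(x)$. This follows from $|\nu|(B(x,2^{i}\varepsilon))\le M_\mu\nu(x)\,\mu(B(x,2^{i}\varepsilon))$ together with $\mu(B(x,2^{i}\varepsilon))\le\beta^{i-j}c_0(2^j\varepsilon)^n$ for $i\le j$, which gives a geometric sum. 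This discrete estimate, not Lemma~\ref{lema:2.15} verbatim (it integrates against $\mu$, not $|\nu|$), is what you need.
\item Run your local comparison on $B(x,2^{j}\varepsilon)$ with $\nu_1=\nu\lfloor B(x,2^{j+1}\varepsilon)$. In Kolmogorov's bound use $|\nu|(B(x,2^{j+1}\varepsilon))\le\beta\,M_\mu\nu(x)\,\mu(B(x,2^{j}\varepsilon))$. There is no truncation error here, since $|x'-y|>2^j\varepsilon\ge\delta$ for $x'$ in the averaging ball and $y$ outside $B(x,2^{j+1}\varepsilon)$.
\end{enumerate}

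Three points need repair.
\begin{enumerate}
\item Kolmogorov's inequality requires $s<1$, whereas the statement allows $s=1$, so that endpoint is not covered by your argument. It follows from the case $s=1/2$, because $M_\mu(g^{1/2})(x)^2\le M_\mu g(x)$ for $g\ge 0$ by Cauchy--Schwarz on each ball.
\item If $\mu(B(x,\varepsilon))=0$, the right-hand side need not be infinite: it is finite if also $|\nu|(B(x,\varepsilon))=0$. So do not appeal to a convention. Instead start the chain at the first dyadic radius $2^{i_0}\varepsilon$ where $\mu$ is positive. Inside $B(x,2^{i_0-1}\varepsilon)$, either $|\nu|$ vanishes or $M_\mu\nu(x)=\infty$.
\item Drop the aside about ``dilated denominators''. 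The troublesome ratio $|\nu|(B(x,2r))/\mu(B(x,r))$ has an enlarged numerator, while a maximal operator with enlarged denominators is pointwise smaller than $M_\mu$. It is therefore neither able to control that ratio nor equivalent to the doubling-ball device.
\end{enumerate}
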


Note that in the theorem above we impose no doubling conditions on the measure $\mu$. Hence, this theorem is valid for the non-homogeneous setting, which will be the situation in the $Tb$ theorem that we will prove. This result is due to Nazarov, Treil and Volberg \cite{NTV_cotlar}, although the form in which we have stated it here is from Tolsa \cite{tolsa_cotlar}.

Lastly, we introduce the one-dimensional radial maximal operator,
$$
M_{r}\nu (x) = \sup_{r>0}\frac{|\nu|(B(x,r))}{r^n}.
$$
Note that if $\mu$ has linear growth, i.e. there is some constant $c_0>0$ such that $\mu(B(x,r))\leq c_0r$ for any $r>0$, then $M_R\nu(x)\leq c_0 M_\mu \nu(x)$, and so $M_R$ is bounded in $L^p(\mu)$ for $1<p\leq \infty$ and from $M(\R^d)$ to $L^{1,\infty}(\mu)$.

\subsection{Dyadic lattices}

In the proof of our main theorem \ref{TEOREMA}, dyadic lattices, which we introduce now, will play a key role. For $j\in \mathbb{Z}$, $\mathcal{D}_j$ is the collection of all cubes of the form
$$
Q^j_k := \left\{(x_1,\dots, x_d)\in \R^d : \frac{k_i}{2^j}\leq x_i< \frac{k_i+1}{2^{j}}, 1\leq i\leq d\right\},  \quad k=(k_1,\dots, k_d)\in \mathbb{Z}^d.
$$
The \textbf{standard dyadic lattice} (from $\R^d$) is the union of all the $\mathcal{D}_j$,
$$
\mathcal{D}_0:= \bigcup_{j\in \mathbb{Z}}\mathcal{D}_j.
$$
Note that for $k,s\in \mathbb{Z}^d$, if $k\neq s$, then $Q^j_k\cap Q^j_s=\varnothing$. For $Q^j_k$, we say that $j$ is the \textbf{generation} of $Q^j_k$ and $\ell(Q^j_k)=2^{-j}$ is its \textbf{side length}. Moreover, for any $w\in \R^d$, we can translate the standard dyadic lattice and we obtain another dyadic lattice,
\begin{equation}
\mathcal{D}(w) := w + \mathcal{D}_0.\label{xarxes_traslladades}
\end{equation}
For any dyadic lattice $\mathcal{D}$ in $\R^d$, we have the following useful properties:
\begin{enumerate}
    \item For any $x\in \R^d$, $j\in \mathbb{Z}$, there is a unique $Q\in \mathcal{D}_j$ that contains $x$.
    \item Given $Q, R \in \mathcal{D}$, either $Q\cap R=\varnothing$ or one is contained inside the other.
    \item Each $Q\in \mathcal{D}_j$ is a disjoint union of $2^d$ cubes from $\mathcal{D}_{j+1}$,
    $$
    Q = \bigcup_{i=1}^{2^d}P_i.
    $$
    We say that $\mathcal{CH}(Q):=\{P_i\}_{i=1}^{2^d}$ are the \textbf{children} of $Q$ and that $Q$ is the \textbf{parent} of each $P_i$, which we denote $\widehat{P}_i=Q$.
\end{enumerate}
A terminology that is often used when working with dyadic lattices and cubes, related to the second property, is the following. Suppose that we have some family $I\subset \mathcal{D}$ of not necessarily disjoint dyadic cubes. We say that a cube in this family is \textbf{maximal} if it is not contained in any other cube from the family. We denote by $I^{\text{max}}\subseteq I$ the subfamily of the maximal cubes. The advantage of considering the maximal family is that the cubes that form it are pairwise disjoint.

Given a Radon measure $\sigma$ on $\R^d$ and some dyadic lattice $\mathcal{D}$ from $\R^d$, we define the associated \textbf{dyadic maximal Hardy-Littlewood operator} (with respect to $\sigma$ and $\mathcal{D}$) by
$$
M_{\sigma, d}\nu(x) = \sup_{Q\in \mathcal{D}: Q\ni x}\frac{|\nu|(Q)}{\sigma(Q)},
$$
for any complex measure $\nu\in M(\R^d)$. For $f\in L^1_{\text{loc}}(\sigma)$, we put $M_{\sigma,d}f(x)=M_{\sigma,d}(f\sigma)(x)$. We show the boundedness properties of this operator in the following theorem.

\begin{teorema}\label{diferenciacio_diadic}
    The operator $M_{\sigma,d}$ is bounded from $M(\R^d)$ to $L^{1,\infty}(\sigma)$ and also in $L^p(\sigma)$, for $1<p\leq \infty$.
\end{teorema}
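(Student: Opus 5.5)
The plan is the standard one for dyadic maximal operators: prove the weak type $(1,1)$ estimate from $M(\R^d)$ to $L^{1,\infty}(\sigma)$ with constant $1$, note the trivial $L^\infty(\sigma)$ bound, and then obtain $1<p<\infty$ by Marcinkiewicz interpolation. The point worth stressing is that, unlike Theorem \ref{HL}, no covering lemma of Besicovitch type and no doubling property of $\sigma$ are needed. The nestedness property 2 of dyadic lattices replaces both.

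For the weak type estimate, I would fix $\nu\in M(\R^d)$ and $\lambda>0$. I would interpret the quotient as $0$ when $\sigma(Q)=0$ and $|\nu|(Q)=0$, and as $+\infty$ when $\sigma(Q)=0<|\nu|(Q)$. Let $I$ be the family of cubes $Q\in\mathcal{D}$ with $|\nu|(Q)>\lambda\sigma(Q)$, so that $\{M_{\sigma,d}\nu>\lambda\}=\bigcup_{Q\in I}Q$. Every $Q\in I$ has $|\nu|(Q)>0$, although for such cubes $\sigma(Q)$ may vanish.

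The obstacle is that arbitrarily large cubes may belong to $I$, so that $I^{\max}$ need not exist. I would handle this by truncation. For $N\in\mathbb{Z}$, let $I_N$ be the subfamily of cubes in $I$ with $\ell(Q)\le 2^{N}$. Each cube of $I_N$ lies in a unique cube of $I_N$ of side length $2^N$, and there are only finitely many generations in between. Hence every cube of $I_N$ is contained in a maximal one, and the family $I_N^{\max}$ consists of pairwise disjoint cubes whose union is $E_N:=\bigcup_{Q\in I_N}Q$. If $\|\nu\|<\infty$, then
$$
\sigma(E_N)=\sum_{Q\in I_N^{\max}}\sigma(Q)\le \frac{1}{\lambda}\sum_{Q\in I_N^{\max}}|\nu|(Q)\le\frac{\|\nu\|}{\lambda}.
$$
Since $E_N\uparrow\{M_{\sigma,d}\nu>\lambda\}$ as $N\to\infty$, monotone convergence gives
$$
\sigma(\{M_{\sigma,d}\nu>\lambda\})\le \frac{\|\nu\|}{\lambda}.
$$

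For $f\in L^1(\sigma)$ we apply this to $\nu=f\sigma$, which gives the weak $(1,1)$ bound. For $p=\infty$, the estimate $|f\sigma|(Q)\le\|f\|_{L^\infty(\sigma)}\sigma(Q)$ gives $M_{\sigma,d}f\le\|f\|_{L^\infty(\sigma)}$ on every cube with $\sigma(Q)>0$. Cubes with $\sigma(Q)=0$ also have $|f\sigma|(Q)=0$, so they contribute nothing. The operator $M_{\sigma,d}$ is sublinear, since $|(f+g)\sigma|\le|f\sigma|+|g\sigma|$ and a supremum of sums is at most the sum of the suprema. Marcinkiewicz interpolation between weak $(1,1)$ and $(\infty,\infty)$ then yields $\|M_{\sigma,d}f\|_{L^p(\sigma)}\le c_p\|f\|_{L^p(\sigma)}$ with $c_p=2\,(p/(p-1))^{1/p}$. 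Alternatively, one can write the layer-cake formula directly, splitting $f=f\chi_{\{|f|>\lambda/2\}}+f\chi_{\{|f|\le\lambda/2\}}$, to obtain the same bound.
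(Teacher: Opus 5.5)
The paper states this theorem without proof, as a standard fact. Your argument is the standard one and is correct: the weak type $(1,1)$ bound with constant $1$ comes from disjoint maximal dyadic cubes, where the truncation $\ell(Q)\le 2^N$ guarantees that maximal cubes exist; this is combined with the trivial $L^\infty$ bound and Marcinkiewicz interpolation. The only slip is the phrase ``a unique cube of $I_N$ of side length $2^N$'': that ancestor lies in $\mathcal{D}$ but need not belong to $I$, so what you actually use is that the finite chain of ancestors of $Q$ up to side length $2^N$ contains a largest element of $I_N$, which is the maximal cube containing $Q$.
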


The boundedness of this maximal operator is the key ingredient in proving the dyadic version of the well-known Lebesgue differentiation theorem. The proof is the same as for centered balls in $\R^d$.

\begin{teorema}
    Let $\mu$ be a Radon measure in $\R^d$ and $f\in L^1_{\text{loc}}(\mu)$. Then, for $\mu$-a.e. $x\in \R^d$,
    $$
    \lim_{n\to+\infty}\frac{1}{\mu(Q_n(x))}\int_{Q_n(x)}|f(y)-f(x)|\,d\mu(y)=0,
    $$
    where for each $n\geq 1$, $Q_n(x)\in \mathcal{D}$ is the only cube in the dyadic lattice $\mathcal{D}$ with side length $2^{-n}$ that contains $x$.
\end{teorema}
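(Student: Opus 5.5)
The plan is the standard route: prove the statement for continuous functions, where it is trivial, and pass to general $f\in L^1_{\text{loc}}(\mu)$ using the weak type $(1,1)$ bound for $M_{\mu,d}$ from Theorem \ref{diferenciacio_diadic}.

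\emph{Localization.} The statement is local, so it suffices to prove it for $\mu$-a.e. $x$ in a fixed bounded dyadic region. Replacing $f$ by $f\chi_{B}$ for a large ball $B$, we may assume $f\in L^1(\mu)$. Define
$$
\Omega f(x)=\limsup_{n\to+\infty}\frac{1}{\mu(Q_n(x))}\int_{Q_n(x)}|f(y)-f(x)|\,d\mu(y).
$$
Since $\mu$-a.e. $x$ lies in $\operatorname{supp}\mu$, we have $\mu(Q_n(x))>0$ for such $x$, so the averages make sense. The goal is to show $\mu(\{\Omega f>\lambda\})=0$ for every $\lambda>0$.

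\emph{Continuous functions.} Continuous compactly supported functions are dense in $L^1(\mu)$, because $\mu$ is Radon. If $g\in\mathcal{C}_c^0(\R^d)$, then uniform continuity gives
$$
\sup_{y\in Q_n(x)}|g(y)-g(x)|\to 0 \quad\text{as } n\to+\infty,
$$
since $\operatorname{diam} Q_n(x)\to0$. Hence $\Omega g\equiv0$.

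\emph{Approximation.} Fix $\varepsilon>0$ and choose $g\in\mathcal{C}_c^0(\R^d)$ with $\|f-g\|_{L^1(\mu)}<\varepsilon$, and set $h=f-g$. Subadditivity of $\Omega$ and $\Omega g=0$ give
$$
\Omega f\le \Omega h\le M_{\mu,d}h(x)+|h(x)|.
$$
Therefore
$$
\mu(\{\Omega f>\lambda\})\le \mu(\{M_{\mu,d}h>\lambda/2\})+\mu(\{|h|>\lambda/2\}).
$$
The first term is at most $C\varepsilon/\lambda$ by Theorem \ref{diferenciacio_diadic}, and the second is at most $2\varepsilon/\lambda$ by Chebyshev. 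Letting $\varepsilon\to0$ gives the claim.

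\emph{Main obstacle.} The only delicate point is the weak type $(1,1)$ estimate for $M_{\mu,d}$ without any doubling assumption; here it is taken from Theorem \ref{diferenciacio_diadic}. If one wanted to prove that estimate directly, one would take the maximal dyadic cubes $Q$ with $|\nu|(Q)>\lambda\mu(Q)$. These are pairwise disjoint by the nesting property of dyadic lattices, and summing over them gives the bound with constant $1$. To guarantee that maximal cubes exist, one first restricts to cubes of side length at most $2^{-N}$ and then lets $N\to\infty$, using monotone convergence.
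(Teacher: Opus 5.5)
Your argument is the standard one the paper has in mind (it says only that the proof is the same as for centered balls, with Theorem \ref{diferenciacio_diadic} as the key ingredient), namely density of $\mathcal{C}_c^0$ plus the weak type $(1,1)$ bound, and it is correct up to two small slips. First, $x\in\operatorname{supp}\mu$ does not give $\mu(Q_n(x))>0$, since $Q_n(x)$ need not be a neighbourhood of $x$; argue instead that the exceptional set lies in the union of the countably many $\mu$-null dyadic cubes. Second, in your sketch of the weak type estimate the truncation should be $\ell(Q)\le 2^{N}$ with $N\to+\infty$, not $2^{-N}$, so that the truncated maximal functions increase to $M_{\mu,d}$.
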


In particular, using the $L^2(\sigma)$-boundedness of $M_{\sigma,d}$, one can prove the following Dyadic Carleson embedding theorem, which we state below. For the proof, see \cite[Theorem 5.8]{llibre_xavi}. Before stating it, we introduce the following notation. For $\sigma$ a Radon measure in $\R^d$, $f\in L^1_{\text{loc}}(\sigma)$ and $Q\subset \R^d$ a cube, we let
$$
\langle f\rangle_{\sigma, Q}=\frac{1}{\sigma (Q)}\int_Q fd\sigma.
$$

\begin{teorema}
    \label{carleson_diadic} Let $\sigma$ be a Radon measure on $\R^d$. Let $\mathcal{D}$ be some dyadic lattice from $\R^d$ and let $\{a_Q\}_{Q\in \mathcal{D}}$ be a family of non-negative numbers. Suppose that for every cube $R\in \mathcal{D}$ we have
    \begin{equation}
    \sum_{Q\in \mathcal{D}: Q\subset R} a_Q \leq c_2\sigma(R).\label{condicio_carleson}
    \end{equation}
    Then, every family of non-negative numbers $\{w_Q\}_{Q\in \mathcal{D}}$ satisfies
    $$
    \sum_{Q\in \mathcal{D}}w_Qa_Q \leq c_2 \int\sup_{Q\ni x}w_Qd\sigma(x).
    $$
    Also, if $f\in L^2(\sigma)$,
    $$
    \sum_{Q\in \mathcal{D}}|\langle f\rangle_{\sigma, Q} |^2 a_Q \leq c c_2\|f\|^2_{L^2(\sigma)},
    $$
    where $c$ is an absolute constant.
\end{teorema}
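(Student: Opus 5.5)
The first inequality is a pure counting statement, which I will prove by rearranging the sum over cubes according to the level sets of the weight. The second inequality then follows by applying the first with the weights $w_Q=|\langle f\rangle_{\sigma,Q}|^2$ and using the $L^2(\sigma)$-boundedness of the dyadic maximal operator from Theorem \ref{diferenciacio_diadic}.

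For the first inequality, by monotone convergence it suffices to treat finitely many nonzero $w_Q$ (equivalently, finitely many $a_Q$). Write $W(x)=\sup_{Q\ni x}w_Q$. By the layer-cake formula,
$$
\sum_Q w_Qa_Q=\int_0^\infty \sum_{Q:\,w_Q>t}a_Q\,dt.
$$
For fixed $t>0$, let $\mathcal{F}_t=\{Q: w_Q>t\}$. Every $Q\in\mathcal{F}_t$ is contained in some maximal cube $R\in\mathcal{F}_t^{\max}$; with finitely many cubes these exist, and they are pairwise disjoint. The Carleson condition (\ref{condicio_carleson}) then gives
$$
\sum_{Q\in\mathcal{F}_t}a_Q\le \sum_{R\in\mathcal{F}_t^{\max}}\sum_{Q\subset R}a_Q\le c_2\sum_{R\in \mathcal{F}_t^{\max}}\sigma(R).
$$
Since the maximal cubes are disjoint and $W>t$ on each of them, this is at most $c_2\,\sigma(\{W>t\})$. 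Integrating in $t$ yields $c_2\int W\,d\sigma$. The only delicate point is the existence of maximal cubes when the family is infinite, and the finite reduction together with monotone convergence handles it.

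For the second inequality, set $w_Q=|\langle f\rangle_{\sigma,Q}|^2$, with the convention that $w_Q=0$ when $\sigma(Q)=0$. For $x\in Q$ we have $|\langle f\rangle_{\sigma,Q}|\le M_{\sigma,d}f(x)$, so $\sup_{Q\ni x}w_Q\le (M_{\sigma,d}f(x))^2$. The first part and the $L^2(\sigma)$-boundedness of $M_{\sigma,d}$ (Theorem \ref{diferenciacio_diadic}) then give
$$
\sum_Q|\langle f\rangle_{\sigma,Q}|^2a_Q\le c_2\int (M_{\sigma,d}f)^2\,d\sigma\le c\,c_2\|f\|_{L^2(\sigma)}^2.
$$
Here $c$ is the square of the operator norm of $M_{\sigma,d}$, which is absolute: the standard stopping-time proof gives weak $(1,1)$ with constant $1$ and, by interpolation with $L^\infty$, an $L^2$ norm independent of $\sigma$ and $d$.
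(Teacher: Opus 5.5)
Your argument is correct and is essentially the standard proof the paper defers to (\cite[Theorem 5.8]{llibre_xavi}), which the paper itself signals by saying the result follows from the $L^2(\sigma)$-boundedness of $M_{\sigma,d}$. The first inequality comes from the layer-cake formula plus the Carleson condition on the disjoint maximal cubes of $\{Q: w_Q>t\}$, with your finite reduction correctly securing their existence; the second comes from taking $w_Q=|\langle f\rangle_{\sigma,Q}|^2$, so that $\sup_{Q\ni x}w_Q\le (M_{\sigma,d}f(x))^2$, and using that $M_{\sigma,d}$ has an absolute $L^2(\sigma)$ norm.
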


\section{A $Tb$ type theorem}

\label{chap:tb}

Below we state the main theorem of the present work. As stated in the Introduction, this is an extension of an analogous result by Nazarov, Treil and Volberg \cite{NTV_preprint}. The theorem concerns a SIO associated to, in the sense of Definition \ref{definicio_SIO}, an $n$-dimensional Calderón-Zygmund kernel $k(\cdot,\cdot)$, defined on $\R^d\times \R^d\setminus \{(x,y)\in \R^2\times \R^d: x=y\}$, with associated constants $C_{CZ}>0$, which, for commodity, will be denoted also by $c$, which will absorb other constants, and $0<\eta \leq 1$ (in the sense of Definition \ref{definicio_CZ_kernel}). Let us fix these constants throughout the rest of the article. 

\begin{teorema}
    Let $\mu$ be a finite measure supported on a compact set $F\subset \R^d$. Suppose that there exist two complex measures $\nu_1, \nu_2$ and, for each $w\in \R^d$, three subsets $H_{\mathcal{D}(w)}, T^1_{\mathcal{D}(w)}, T^2_{\mathcal{D}(w)}\subset \R^d$ made of dyadic cubes from $\mathcal{D}(w)$ such that 
    \begin{enumerate}[label=(\alph*)]
        \item Every ball $B_r$ of radius $r$ such that $\mu(B_r)>c_0r^n$ is contained in $\bigcap_{w\in \R^d}H_{\mathcal{D}(w)}$.
        \item $\nu_1 = b_1\mu$ and $\nu_2 = b_2\mu$, where $b_1, b_2$ are functions in $L^{\infty}(\mu)$, that is, such that $\|b_1\|_{L^{\infty}(\mu)}, \|b_2\|_{L^{\infty}(\mu)}\leq c_b$.
        \item $\int_{\R^d\setminus H_{\mathcal{D}(w)}}T_{*}\nu_1 \,d\mu\leq c_*\mu(F)$, and $\int_{\R^d\setminus H_{\mathcal{D}(w)}}T^*_{*}\nu_2 \,d\mu\leq c_*\mu(F)$ for all $w\in \R^d$.
        \item If $Q\in \mathcal{D}(w)$ is such that $Q\not\subset T^i_{\mathcal{D}(w)}$, then $\mu(Q)\leq c_{\text{acc}}|\nu_i(Q)|$, for $i=1,2$ (we say that $Q$ is an accretive cube).
        \item $\mu(H_{\mathcal{D}(w)}\cup T^1_{\mathcal{D}(w)}\cup T^2_{\mathcal{D}(w)})\leq \delta_0 \mu(F)$, for all $w\in \R^d$ and some $\delta_0<1$.
    \end{enumerate}
    Then, there exists a subset 
    \begin{align}
    G\subset F\setminus \bigcap_{w\in \R^d}\left(H_{\mathcal{D}(w)}\cup T^1_{\mathcal{D}(w)}\cup T^2_{\mathcal{D}(w)}\right)\label{on_viu_G}
    \end{align}
    such that
    \begin{enumerate}[label=(\roman*)]
        \item $\mu(G)\geq c_1^{-1}\mu(F)$.
        \item $\mu\lfloor G$ has $c_0$-linear growth.
        \item The SIO $T$ is bounded in $L^2(\mu\lfloor G)$.
    \end{enumerate}
    The constant $c_1$ and the bound for the $L^2(\mu\lfloor G)$ boundedness depend only on $C_{CZ}, c_0,c_b,c_{*}$, $c_{\text{acc}}$ and $\delta_0$. \label{TEOREMA}
\end{teorema}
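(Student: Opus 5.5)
The plan follows the Nazarov–Treil–Volberg scheme as presented in \cite[Section 5]{llibre_xavi}. The changes are that there are two accretive systems, $b_1$ for $T$ and $b_2$ for $T^*$, and that everything is done in $\R^d$ with $n$-dimensional kernels.

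\textbf{Step 1: suppressed operator and a function $\Theta$.} For each $w$ I define a $1$-Lipschitz function $\Theta_w\ge 0$ with $\Theta_w(x)\ge \operatorname{dist}(x,\R^d\setminus H_{\mathcal{D}(w)})$. I also require that the exceptional set $\{x: T_{*,\Theta}\nu_i(x) \text{ large}\}$ be absorbed into $H$. The suppressed kernel is
$$
\widetilde{k}_\Theta(x,y)=k(x,y)\,\frac{|x-y|^{2n}}{|x-y|^{2n}+\Theta(x)^n\Theta(y)^n}.
$$
I check that it is still a Calderón–Zygmund kernel with constants depending only on $C_{CZ}$. I also check that $|K_\Theta f - T_{\varepsilon}f|$ is controlled by $M_R$ (with $\varepsilon\approx\Theta(x)$) wherever $\Theta(x)>0$. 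Using (a), I verify that $\mu(B(x,r))\le c_0r^n$ for all $r\ge\Theta(x)$. Combining this with (c), $K_\Theta\nu_1$ and $K_\Theta^*\nu_2$ are bounded in the needed average sense off $H$. The substantive claim to prove is that $K_{\Theta,\mu}$ is bounded on $L^2(\mu)$, with norm depending only on the constants. Once that holds, the set $G=F\setminus\{\Theta>0\}$ works on the event where $\mu(\{\Theta=0\})\gtrsim\mu(F)$:
\begin{itemize}
\item on $\{\Theta=0\}$ the kernels $k$ and $\widetilde k_\Theta$ coincide, which gives (iii);
\item (ii) follows from (a) and the construction of $\Theta$;
\item (i) follows from (e), since $\{\Theta>0\}\subset H\cup T^1\cup T^2$ up to a small set;
\item the inclusion \eqref{on_viu_G} holds because, after averaging, we may choose $w$ appropriately.
\end{itemize}

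\textbf{Step 2: adapted martingale decompositions.} For two independent random translates $\mathcal{D}_1=\mathcal{D}(w_1)$ and $\mathcal{D}_2=\mathcal{D}(w_2)$, I build $b_1$-adapted martingale differences $\Delta_Q^{b_1}$ on $\mathcal{D}_1$ and $b_2$-adapted ones on $\mathcal{D}_2$. Their weights are $\langle f\rangle_{\mu,Q}/\langle b_i\rangle_{\mu,Q}$ on accretive cubes (by (d)), and terminal cubes are those contained in $T^i\cup H$. This gives the decomposition of $L^2(\mu)$ with $\|f\|^2\approx\sum_Q\|\Delta_Q f\|^2$ (Lemma \ref{lema_descomposicio_L2}), proved through the Carleson embedding, Theorem \ref{carleson_diadic}. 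A cube is \emph{good} relative to the other lattice if it is not close to the boundary of much larger cubes; the bad ones have probability $\lesssim 2^{-\eta' m}$.

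\textbf{Step 3: main estimate.} For good $f,g$ I prove $|\langle K_\Theta f,g\rangle|\le C\|f\|\|g\|$ by expanding in the two decompositions and splitting pairs $(Q,R)$ as follows:
\begin{itemize}
\item separated cubes are handled by kernel smoothness and goodness;
\item nested cubes produce paraproducts, where I use $K_\Theta b_1$ or $K_\Theta^*b_2$ bounded off $H$ and the Carleson embedding;
\item comparable nearby cubes are handled by the Schur test with the suppressed kernel;
\item terminal and $H$-cubes are handled using $\Theta\gtrsim\ell(Q)$ there, so that suppression gives $|\widetilde k_\Theta|\lesssim\ell(Q)^{-n}$.
\end{itemize}
The non-antisymmetry means the $T$ and $T^*$ paraproducts must be treated separately, each with its own accretive function. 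I expect this case-by-case analysis, in particular the paraproduct and the mixed terminal-cube terms in dimension $n$, to be the main obstacle.

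\textbf{Step 4: weak bound and probability.} A Cotlar-type argument (Theorem \ref{cotlar_referenciar} together with Lemma \ref{descomposicio_calderon_zygmund}) shows that $L^2$ boundedness of $K_\Theta$ on good parts upgrades to the a priori estimates used. Finally, I average over $w_1,w_2$. I write $f=f_{good}+f_{bad}$, where $\mathbb{E}\|f_{bad}\|\le\epsilon\|f\|$, and similarly for $g$. The bad terms are absorbed using an a priori finite norm, obtained by truncating $\delta>0$ and taking limits. Then $\|K_\Theta\|\le C+2\epsilon\|K_\Theta\|$, which gives $\|K_\Theta\|\le C$.
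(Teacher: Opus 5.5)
Your suppression $k(x,y)\,|x-y|^{2n}/(|x-y|^{2n}+\Theta(x)^n\Theta(y)^n)$ is an acceptable substitute for the paper's smooth cutoff, and the overall architecture is right. Two essential steps, however, do not work as you describe them.

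\textbf{Step 3 cannot give a deterministic bound for good functions.} This is exactly where non-antisymmetry bites.
\begin{itemize}
\item For overlapping comparable transit children $P\in\mathcal{CH}(Q)$, $S\in\mathcal{CH}(R)$ you meet $c_P(f)c_S(g)\langle K_\Theta(\chi_{P\cap S}b_1),\chi_{P\cap S}b_2\rangle$. This vanishes in the Cauchy setting (antisymmetric kernel, a single $b$), but not here. The Schur test cannot control it: $\iint_{(P\cap S)^2}|x-y|^{-n}\,d\mu\,d\mu$ diverges for typical measures of growth $n$ (e.g.\ $\mathcal{H}^n$ on an $n$-plane). Suppression does nothing where $\Theta$ is small, which is precisely the future set $G$.
\item A cube that is terminal because $Q\subset T^i_{\mathcal{D}}$ only gives $\Theta(x)\ge\operatorname{dist}(x,\partial Q)$, not $\Theta\gtrsim\ell(Q)$. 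So your treatment of terminal cubes fails near their boundaries.
\item The terms $\langle K_\Theta(\chi_{P\setminus S}\Delta_{1,Q}f),\chi_S\Delta_{2,R}g\rangle$ carry a logarithmic singularity at $\partial S$. This requires the small-boundary condition (b) in the definition of bad cubes, whereas your goodness notion has only the distance-to-boundary condition.
\end{itemize}
The paper does \emph{not} bound these pieces deterministically. The interior of $P\cap S$ is replaced by a rectangle $\Delta_{\varepsilon_a}$ with small boundary, on which $|K_\Theta(\chi_{P\cap S}b_1)|\lesssim\varepsilon_a^{-n}$ (Lemma \ref{lema:arreglo_xavi}); this uses $|T_\delta\nu_1(x)|\le\alpha$ for all $\delta\ge 2\Theta(x)$. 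The boundary layers of width $\varepsilon_a\ell$ or $\gamma\ell$ are controlled only in expectation over the lattices, by $(p_{\varepsilon_a}^{1/2}+\sqrt{\gamma})\sup_{\text{w}}\|K_{\Psi_{\text{w}}}\|$ (Section \ref{sec:acotacio_ai}), and must be absorbed at the end.

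The paraproducts need pointwise information too. The Carleson condition uses $\|K^*_\Theta b_2\|_{L^\infty(\mu)}\lesssim 1$ on all of $F$, inside $H$ included, not an average bound off $H$. That is why the exceptional set is made of balls $B(x,e_i(x))$ with $e_i(x)=\sup\{\varepsilon:|T_\varepsilon\nu_i(x)|>\alpha\}$; your $\{T_{*,\Theta}\nu_i\text{ large}\}$ is circular in $\Theta$. Condition (c) then gives $\mu((S_1\cup S_2)\setminus H_{\mathcal{D}(w)})\le 4c_*\alpha^{-1}\mu(F)$, and $\alpha$ is chosen with $\delta_0+4c_*/\alpha<1$.

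\textbf{Step 4's absorption has no single operator to absorb into.} Lemma \ref{lemma:good_functions} needs $\Theta\ge\max(\Phi_{\mathcal{D}(w_1)},\Phi_{\mathcal{D}(w_2)})$ for the pair of lattices in use.
\begin{itemize}
\item If $\Theta$ is independent of $\text{w}$, then $\Theta\ge\Phi_{\mathcal{D}(w)}$ for a.e.\ $w$, so $\{\Theta=0\}$ misses the interior of $W_{\mathcal{D}(w)}$ for a.e.\ $w$. That set can be $\mu$-null: (e) controls each $W_{\mathcal{D}(w)}$ separately, and different translates can together cover $F$.
\item If $\Theta$ depends on $\text{w}$, the left side is an averaged operator while the bad terms involve $\|K_{\Theta_{\text{w}}}\|$ for varying $\text{w}$. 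Then $\|K_\Theta\|\le C+2\epsilon\|K_\Theta\|$ is meaningless.
\item Choosing a good $w$ does not help, since the smallness of $f_{\text{bad}}$, and of the terms above, holds only on average.
\end{itemize}

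The missing device is the quantile construction.
\begin{itemize}
\item Set $\Phi(x)=\varepsilon_0+\inf_{P^{\Omega^2}(B)=\beta}\sup_{\text{w}\in B}\Phi_{\text{w}}(x)$. It equals $\varepsilon_0$ on $G=\{p_0>(1-\delta_1)/2\}$, and $\mu(G)\gtrsim\mu(F)$ by Chebyshev.
\item Put $\Psi_{\text{w}}=\max(\Phi,\Phi_{\text{w}})$ and $\widetilde K=\mathbb{E}_{\text{w}}K_{\Psi_{\text{w}}}$.
\item Prove $\|K_\Phi\|+\sup_{\text{w}}\|K_{\Psi_{\text{w}}}\|\lesssim\beta^{-1}(1+\|\widetilde K\|)$ (Lemma \ref{desigualtats_tilde}).
\end{itemize}
This last bound rests on two ingredients. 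First, each truncated kernel $k\,\chi_{|x-y|\ge\Psi_{\text{w}}(x)}$ is a combination of truncations of the averaged kernel $\widetilde t=k\,v_x(|x-y|)$, with $v_x\ge\beta$ beyond $\Phi(x)$; hence $|T_{\Psi_{\text{w}}}f|\le 2\beta^{-1}\widetilde T_*f$ (Lemma \ref{lema:desigualtats_maximal}). Second, a Cotlar inequality for $\widetilde K_*$ holds without any global growth assumption, because the averaged kernel is $\lesssim\max(\varepsilon_0,\operatorname{dist}(x,\R^d\setminus H),\operatorname{dist}(y,\R^d\setminus H))^{-n}$ (Lemma \ref{lema:primer_cotlar}). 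That is the actual role of Cotlar here, not the upgrade you describe.

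The absorption is then performed for $\|\widetilde K\|$. Conclusion (iii) follows because $\Phi\equiv\varepsilon_0$ on $G$ and all bounds are uniform in $\varepsilon_0$.
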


Note that from condition (a) above, from condition (\ref{on_viu_G}) we infer that $\mu\lfloor G$ has $c_0$-linear growth. So, to check (ii), it suffices to prove (\ref{on_viu_G}).

Before moving on to the proof, let us remark that it will be highly technical and quite long. This is why it has been divided in three sections. In the present one, we introduce some objects, which will be the right tools to prove Theorem \ref{TEOREMA}, and we describe and prove their useful properties. In Section \ref{chap:good_functions}, we prove a key lemma concerning one of these objects. Section \ref{chap:cotlar} is brief and consists in proving a theorem that is analogous to one of the results in the section containing the preliminaries. Lastly, Section \ref{chap:prob} combines all the work from the previous ones to conclude the proof of Theorem \ref{TEOREMA} via a probabilistic argument.

\subsection{The exceptional set $S$}
\label{sec:exceptional_set}

The long proof of Theorem \ref{TEOREMA} starts by defining a set $S$, which we call \textit{exceptional}, through the identification of the points from the support of $\mu$ in which the maximal functions $T_*\nu_1$ and $T^*_*\nu_2$ are uniformly bounded. In this section, we are going to show that $\mu(S\setminus H_{\mathcal{D}(w)})$ can be controlled by the measure of the set $F$ by choosing a big enough bound for the maximal functions.

For now, let $\alpha\gg c_0c_b>0$ be a fixed constant, to be chosen below. We set
\begin{alignat*}{2}
    S^1_0 &&:= \{x\in F : T_*\nu_1(x)>\alpha\} &= \left\{x\in F : \sup_{\varepsilon>0}|T_\varepsilon \nu_1(x)|>\alpha\right\}, \\
    S^2_0 &&:= \{x\in F : T^*_*\nu_2(x)>\alpha\} &= \left\{x\in F : \sup_{\varepsilon>0}|T^*_\varepsilon \nu_2(x)|>\alpha\right\}.
\end{alignat*}

In addition, we let
\begin{align}
    e_1(x)&:=\begin{cases}
        \sup\{\varepsilon>0: |T_{\varepsilon}\nu_1(x)|>\alpha\}, \quad &x\in S_0^1,\\
        0, &x\in F\setminus S_0^1,
    \end{cases}\label{e1}\\
    e_2(x)&:=\begin{cases}
        \sup\{\varepsilon>0: |T^*_{\varepsilon}\nu_2(x)|>\alpha\}, \quad &x\in S_0^2,\\
        0, &x\in F\setminus S_0^2,
    \end{cases}\label{e2}
\end{align}

We define the \textbf{exceptional sets} $S_1$ and $S_2$ as
$$
S_1 := \bigcup_{x\in S^1_0}B(x,e_1(x)), \quad S_2 := \bigcup_{x\in S^2_0}B(x,e_2(x)).
$$
Moreover, we put $S:=S_1\cup S_2$. In the next lemma, we are going to see that we can make $\mu\left(S_i\setminus H_{\mathcal{D}(w)}\right)$ small by taking $\alpha$ big enough, for $i=1,2$. For simplicity, we denote
$$
A = 2^n C_{CZ}\left(1+ \frac{1}{2^{\eta}-1}\right).
$$

\begin{lemma}
   Let $w\in \R^d$. If $y\in S_1\setminus H_{\mathcal{D}(w)}$, then $T_*\nu_1(y)> \alpha -A c_0c_b$. Thus, if $\alpha> 2Ac_0c_b$, then
    $$
    \mu\left(S_1\setminus H_{\mathcal{D}(w)}\right)\leq \frac{2c_*}{\alpha}\mu(F).
    $$
\end{lemma}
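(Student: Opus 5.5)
Fix $w$ and $y\in S_1\setminus H_{\mathcal{D}(w)}$. By definition of $S_1$ there is $x\in S_0^1$ with $|x-y|<e_1(x)$. By the definition of $e_1(x)$ as a supremum, we can pick $\varepsilon$ with $|x-y|<\varepsilon\le e_1(x)$ and $|T_\varepsilon\nu_1(x)|>\alpha$. If the supremum is attained we take $\varepsilon=e_1(x)$; otherwise we take $\varepsilon$ close to $e_1(x)$, which is possible since $|x-y|<e_1(x)$ is a strict inequality. Set $r=|x-y|$, so that $r<\varepsilon$. The plan is to compare $T_\varepsilon\nu_1(x)$ with $T_{2\varepsilon}\nu_1(y)$ (or with the truncation at scale $\varepsilon+r\le 2\varepsilon$) and to show that the difference is at most $Ac_0c_b$. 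This gives $T_*\nu_1(y)>\alpha-Ac_0c_b$.

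The growth information comes from hypothesis (a). Since $y\notin H_{\mathcal{D}(w)}$ and every ball $B_\rho$ with $\mu(B_\rho)>c_0\rho^n$ is contained in $\bigcap_{w'}H_{\mathcal{D}(w')}\subset H_{\mathcal{D}(w)}$, no such ball contains $y$. Hence every ball $B$ of radius $\rho$ containing $y$ satisfies $\mu(B)\le c_0\rho^n$. In particular this holds for every ball centered at $y$, and also for balls centered at $x$ of radius $\ge r$, which contain $y$. We then write
\[
T_\varepsilon\nu_1(x)-T_{2\varepsilon}\nu_1(y)=\int_{\varepsilon<|x-z|,\,|y-z|\le 2\varepsilon}k(x,z)\,d\nu_1(z)+\int_{|y-z|>2\varepsilon}\bigl(k(x,z)-k(y,z)\bigr)\,d\nu_1(z).
\]
Note that $|y-z|>2\varepsilon$ implies $|x-z|>\varepsilon$. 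For the first term we use the size bound $|k|\le C_{CZ}|x-z|^{-n}\le C_{CZ}\varepsilon^{-n}$, the bound $|b_1|\le c_b$, and the estimate $\mu(B(y,2\varepsilon))\le c_0(2\varepsilon)^n$. This gives at most $2^nC_{CZ}c_0c_b$.

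For the second term, $|x-y|<\varepsilon\le\frac12|y-z|$, so the smoothness condition (\ref{CZ}) applies. We decompose into annuli $2^j\varepsilon<|y-z|\le 2^{j+1}\varepsilon$, $j\ge1$, and obtain
\[
\sum_{j\ge1}C_{CZ}\frac{\varepsilon^\eta}{(2^j\varepsilon)^{n+\eta}}c_b\,c_0(2^{j+1}\varepsilon)^n=2^nC_{CZ}c_0c_b\sum_{j\ge1}2^{-j\eta}=2^nC_{CZ}c_0c_b\frac{1}{2^\eta-1}.
\]
Adding the two estimates gives exactly $A c_0c_b$, which explains the form of $A$. The main point needing care is this bookkeeping of the truncation regions, namely choosing the right truncation at $y$ so that the symmetric difference lies inside $B(y,2\varepsilon)$. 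We also need all the balls used to be centered at $y$, so that (a) gives growth there; this is why the comparison is made with a truncation centered at $y$.

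For the measure estimate, if $\alpha>2Ac_0c_b$ then $\alpha-Ac_0c_b>\alpha/2$. So $S_1\setminus H_{\mathcal{D}(w)}\subset\{y\in\R^d\setminus H_{\mathcal{D}(w)}:T_*\nu_1(y)>\alpha/2\}$. Chebyshev's inequality together with hypothesis (c) then give
\[
\mu\bigl(S_1\setminus H_{\mathcal{D}(w)}\bigr)\le\frac{2}{\alpha}\int_{\R^d\setminus H_{\mathcal{D}(w)}}T_*\nu_1\,d\mu\le\frac{2c_*}{\alpha}\mu(F).
\]
Measurability of $S_1$ is not an issue, since it is a union of open balls.
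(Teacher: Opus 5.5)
Your proof is correct, and its overall structure is the paper's: near field by the size bound plus growth at $y$, far field by the smoothness estimate over dyadic annuli centered at $y$, then Chebyshev with hypothesis (c).

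The one real difference is the truncation you use at $y$, and it matters for the constant. The paper compares $T_{\varepsilon_0}\nu_1(x)$ with $T_{\varepsilon_0}\nu_1(y)$ at the same radius, after splitting $\nu_1$ into $\nu_1\lfloor B(y,2\varepsilon_0)$ and its complement. The near part must therefore be estimated twice, once at $x$ and once at $y$, each time contributing $2^nC_{CZ}c_0c_b$. Its argument thus actually gives $2^nC_{CZ}\bigl(2+\frac{1}{2^\eta-1}\bigr)c_0c_b$, and the paper's final identification of this quantity with $Ac_0c_b$ is an arithmetic slip.

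You compare instead with $T_{2\varepsilon}\nu_1(y)$ and use that $\{|y-z|>2\varepsilon\}\subset\{|x-z|>\varepsilon\}$. This leaves a single near-field region, contained in $\overline{B}(y,2\varepsilon)$, and you obtain exactly $A=2^nC_{CZ}\bigl(1+\frac{1}{2^\eta-1}\bigr)$. So your version proves the lemma with the constant as stated. Nothing downstream depends on the precise value of $A$, since $\alpha$ only needs to be large enough, but your bookkeeping is the one that matches the statement.
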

\begin{proof}
    First, note that if $y\in S_1\setminus H_{\mathcal{D}(w)}$, then $y\in B(x,e_1(x))$ for some $x\in S^1_0$, and so there is $0<r<e_1(x)$ such that $y\in B(x,r)$. By definition of $e_1(x)$, there is some $r<\varepsilon_0(x)\leq e_1(x)$ such that 
    $$
    \left|T_{\varepsilon_0(x)}\nu_1(x)\right|> \alpha \quad \text{and}\quad y\in B(x,\varepsilon_0(x)).
    $$
    To prove the first statement in the lemma, it suffices to show that
    \begin{equation}
    \left|T_{\varepsilon_0(x)}\nu_1(x)-T_{\varepsilon_0(x)}\nu_1(y)\right|\leq Ac_0c_b.\label{desigualtat_A}
    \end{equation}
    Indeed, if the previous inequality holds, we deduce that
    \begin{align*}
        T_{*}\nu_1(y)\geq  \left|T_{\varepsilon_0(x)}\nu_1(y)\right|\geq \left|T_{\varepsilon_0(x)}\nu_1(x)\right|- Ac_0c_b > \alpha - Ac_0c_b. 
    \end{align*}
    Let us prove (\ref{desigualtat_A}). We have that $\left|T_{\varepsilon_0(x)}\nu_1(x)-T_{\varepsilon_0(x)}\nu_1(y)\right|$ is bounded above by
    \begin{align}
        &\left|T_{\varepsilon_0(x)}\left(\nu_1\lfloor B(y,2\varepsilon_0(x))\right)(x)\right|+ \left|T_{\varepsilon_0(x)}\left(\nu_1\lfloor B(y,2\varepsilon_0(x))\right)(y)\right|\label{primera_linia}\\
        &\qquad\qquad  + \left|T_{\varepsilon_0(x)}\left(\nu_1\lfloor B(y,2\varepsilon_0(x))^c\right)(x)-T_{\varepsilon_0(x)}\left(\nu_1\lfloor B(y,2\varepsilon_0(x))^c\right)(y)\right|\label{segona_linia}
    \end{align}
    
    To bound the first term on the right-hand side of (\ref{primera_linia}), we write
    \begin{align*}
        |T_{\varepsilon_0(x)}(\nu_1\lfloor B(y,2\varepsilon_0(x)))(x)| &= \left|\int_{|x-z|\geq \varepsilon_0(x)}k(x,z)d\left(\nu_1\lfloor B(y,2\varepsilon_0(x))\right)(z)\right|\\
        &\leq \int_{|x-z|\geq \varepsilon_0(x)}\frac{C_{CZ}}{|x-z|^n}d\left|\nu_1\lfloor B(y,2\varepsilon_0(x))\right|(z)\\
        &\leq C_{CZ}\frac{|\nu|(B(y,2\varepsilon_0(x)))}{\varepsilon_0(x)^n}\\
        &\leq 2^nC_{CZ}c_bc_0,
    \end{align*}
    where in the last inequality we have used that $y\not\in H_{\mathcal{D}(w)}$, so $\mu(B(y,2\varepsilon_0(y)))\leq c_0(2\varepsilon_0(x))^n$. The second term on the right-hand side of (\ref{primera_linia}) is estimated analogously and it can also be bounded by $2^nC_{CZ}c_bc_0$. The last term, in (\ref{segona_linia}), can be bounded by
    \begin{align}
        \int_{\R^d\setminus B(y,2\varepsilon_0(x))}\left|k(x,z)-k(y,z)\right|d|\nu_1|(z).\label{modul_dins_int}
    \end{align}
    For $z\not\in B(y,2\varepsilon_0(x))$, since $y\in B(x,\varepsilon_0(x))$, we have that
    $$
    |x-y|\leq \varepsilon_0(x)\leq \frac{|z-y|}{2},
    $$
    so (\ref{modul_dins_int}) does not exceed
    \begin{align*}
        &\int_{\R^d\setminus B(y,2\varepsilon_0(x))}C_{CZ}\frac{|x-y|^{\eta}}{|z-y|^{n+\eta}}d|\nu_1|(z) \\
        &\qquad\qquad\leq C_{CZ}c_b\varepsilon_0(x)^{\eta}\sum_{k=1}^{\infty}\int_{2^k\varepsilon_0(x)\leq |z-y|<2^{k+1}\varepsilon_0(x)}\frac{1}{|z-y|^{n+\eta}}d\mu(z)\\
        &\qquad \qquad \leq C_{CZ}c_b\varepsilon_0(x)^{\eta}\sum_{k=1}^{\infty}\frac{\mu(B(y,2^{k+1}\varepsilon_0(x)))}{2^{k(n+\eta)}\varepsilon_0(x)^{n+\eta}}\\
        &\qquad \qquad =2^nC_{CZ}c_bc_0\sum_{k=1}^{\infty}\frac{1}{2^{k\eta}} = \frac{2^n}{2^{\eta}-1}C_{CZ} c_bc_0,
    \end{align*}
    using again that $y\not\in H_{\mathcal{D}(w)}$. So we get
    \begin{align*}
    |T_{\varepsilon_0(x)}\nu_1(x)-T_{\varepsilon_0}\nu_1(y)|&\leq 2\cdot 2^nC_{CZ}c_bc_0+\frac{2^n}{2^{\eta}-1}C_{CZ}c_bc_0\\
    &= 2^nC_{CZ}\left(1+\frac{1}{2^{\eta}-1}\right)c_bc_0 = Ac_0c_b, 
    \end{align*}
    which is (\ref{desigualtat_A}). Now, if $\alpha\geq 2Ac_0c_b$,
    $$
    T_{*}\nu_1(y)> \alpha - Ac_0c_b \geq \frac{\alpha}{2}.
    $$
    Using Chebyshev's inequality,
    \begin{align*}
        \mu(S_1\setminus H_{\mathcal{D}(w)}) &\leq \mu\left(\left\{y\in F\setminus H_{\mathcal{D}(w)}: T_{*}\nu_1(y)\geq \frac{\alpha}{2}\right\}\right)\\
        &\leq \frac{2}{\alpha}\int_{F\setminus H_{\mathcal{D}(w)}}T_{*}\nu_1(y)\,d\mu(y)\\
        &\leq \frac{2}{\alpha}c_*\mu(F),
    \end{align*}
    where in the last inequality we have used the first inequality from assumption (c) in Theorem \ref{TEOREMA}.
\end{proof}

Arguing analogously, we have the following estimate.

\begin{lemma}
    Let $w\in \R^d$. If $y\in S_2\setminus H_{\mathcal{D}(w)}$, then $T^*_*\nu_2(y)> \alpha -A c_0c_b$. Thus, if $\alpha> 2Ac_0c_b$, then
    $$
    \mu\left(S_2\setminus H_{\mathcal{D}(w)}\right)\leq \frac{2c_*}{\alpha}\mu(F).
    $$
\end{lemma}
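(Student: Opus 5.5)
The plan is to mirror the proof of the previous lemma for $S_1$, replacing $T$ by its adjoint $T^*$, the kernel $k(x,z)$ by $k(z,x)$, and $\nu_1$, $e_1$, $S_0^1$ by $\nu_2$, $e_2$, $S_0^2$. Fix $y\in S_2\setminus H_{\mathcal{D}(w)}$. Then $y\in B(x,e_2(x))$ for some $x\in S_0^2$. By the definition of $e_2(x)$ as a supremum, I will choose $\varepsilon_0(x)$ with $|x-y|<\varepsilon_0(x)\le e_2(x)$ and $|T^*_{\varepsilon_0(x)}\nu_2(x)|>\alpha$. It then suffices to prove
$$
\left|T^*_{\varepsilon_0(x)}\nu_2(x)-T^*_{\varepsilon_0(x)}\nu_2(y)\right|\le Ac_0c_b,
$$
since this gives $T^*_*\nu_2(y)>\alpha-Ac_0c_b$.

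To prove this, I split $\nu_2=\nu_2\lfloor B(y,2\varepsilon_0(x))+\nu_2\lfloor B(y,2\varepsilon_0(x))^c$.

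\textbf{Local part.} For each of the points $x$ and $y$, the truncation at $\varepsilon_0(x)$ together with the size bound $|k(z,x)|\le C_{CZ}|x-z|^{-n}$ bounds the contribution by $C_{CZ}\,c_b\,\mu(B(y,2\varepsilon_0(x)))/\varepsilon_0(x)^n$. Since $y\notin H_{\mathcal{D}(w)}$, hypothesis (a) shows that $B(y,2\varepsilon_0(x))$ is not a heavy ball: a ball with $\mu(B_r)>c_0r^n$ would be contained in $H_{\mathcal{D}(w)}$, but this ball contains $y$. Hence its measure is at most $c_0(2\varepsilon_0(x))^n$, and each of the two terms is at most $2^nC_{CZ}c_bc_0$.

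\textbf{Far part.} For the far part I use the smoothness estimate in Definition \ref{definicio_CZ_kernel}, specifically the term $|k(y,x)-k(y,x')|$, which controls regularity in the second variable. This is the one place where the non-antisymmetric setting matters, and it is exactly why that definition includes both terms. For $|z-y|\ge2\varepsilon_0(x)$ we have $|x-y|\le\varepsilon_0(x)\le|z-y|/2$, so
$$
|k(z,x)-k(z,y)|\le C_{CZ}\frac{|x-y|^\eta}{|z-y|^{n+\eta}}.
$$
I then sum over the dyadic annuli $2^k\varepsilon_0(x)\le|z-y|<2^{k+1}\varepsilon_0(x)$. Again using that the balls $B(y,2^{k+1}\varepsilon_0(x))$ are not heavy, this gives the bound $\frac{2^n}{2^\eta-1}C_{CZ}c_bc_0$. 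Adding the three pieces yields exactly $A c_0c_b$.

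Finally, if $\alpha>2Ac_0c_b$, then $T^*_*\nu_2>\alpha/2$ on $S_2\setminus H_{\mathcal{D}(w)}$. Chebyshev's inequality and the second inequality in hypothesis (c) of Theorem \ref{TEOREMA} then give $\mu(S_2\setminus H_{\mathcal{D}(w)})\le \frac{2c_*}{\alpha}\mu(F)$. Here I also use that $S_2\subset F$ up to $\mu$-null sets, since $\mu$ is supported on $F$.

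The only point needing care is confirming that the smoothness hypothesis is available in the second variable with the roles of the points as required. I do not expect any genuine obstacle.
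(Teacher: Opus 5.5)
Your route is the paper's. The paper proves this lemma by ``arguing analogously'' to the $S_1$ case, and your adaptation to $T^*$ is the right one: the same split of $\nu_2$ at $B(y,2\varepsilon_0(x))$, and the fact that every ball centred at $y\notin H_{\mathcal{D}(w)}$ is non-heavy by hypothesis (a). For the far part you correctly use the second term $|k(y,x)-k(y,x')|$ in the definition of a Calder\'on-Zygmund kernel. (Your strict choice $|x-y|<\varepsilon_0(x)$ is also what gives $|z-x|>\varepsilon_0(x)$ whenever $|z-y|\ge 2\varepsilon_0(x)$, so neither truncation acts on the far region. This deserves one explicit line.)

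The one wrong step is the final addition, and it is inherited verbatim from the paper's proof for $S_1$. Your three pieces are $2^nC_{CZ}c_bc_0$, $2^nC_{CZ}c_bc_0$ and $\frac{2^n}{2^\eta-1}C_{CZ}c_bc_0$. Their sum is
\[
2^nC_{CZ}\left(2+\frac{1}{2^\eta-1}\right)c_0c_b \;=\; Ac_0c_b+2^nC_{CZ}c_0c_b,
\]
not $Ac_0c_b=2^nC_{CZ}\bigl(1+\frac{1}{2^\eta-1}\bigr)c_0c_b$. So your argument actually proves $T^*_*\nu_2(y)>\alpha-A'c_0c_b$ with $A':=2^nC_{CZ}\bigl(2+\frac{1}{2^\eta-1}\bigr)$, and the measure estimate only under the threshold $\alpha\ge 2A'c_0c_b$. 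Under the stated threshold $\alpha>2Ac_0c_b$ you only get $T^*_*\nu_2(y)>\alpha/2-2^nC_{CZ}c_0c_b$, which does not feed into Chebyshev at level $\alpha/2$. The repair is to redefine $A$ as $A'$. The constant $A$ is used afterwards only in the choice $\alpha=\max(2Ac_0c_b,8c_*/(1-\delta_0))$, so nothing downstream changes.
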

As a consequence, for $\alpha > 2Ac_0c_b$, we have that
$$
\mu\left(S\setminus H_{\mathcal{D}(w)}\right) \leq \mu\left(S_1\setminus H_{\mathcal{D}(w)}\right) + \mu\left(S_2\setminus H_{\mathcal{D}(w)}\right) \leq \frac{4c_*}{\alpha}\mu(F).
$$

Now, set $\delta_1 = (\delta_0+1)/2$ (so that $\delta_0<\delta_1<1$). Choose $\alpha = \max(2Ac_0c_b, 8c_*/(1-\delta_0))$, so that
\begin{equation}
\mu\left(H_{\mathcal{D}(w)}\cup T^1_{\mathcal{D}(w)}\cup T^2_{\mathcal{D}(w)}\right) + \mu\left(S\setminus H_{\mathcal{D}(w)}\right) \leq \left(\delta_0+\frac{4c_*}{\alpha}\right)\mu(F)\leq \delta_1\mu(F),\label{mu_htt}
\end{equation}
for any $w\in \R^d$.

\subsection{The suppressed operators}
\label{sec:suppressed_operators}

As in \cite{NTV_acta} and \cite{llibre_xavi}, we are going to consider a \textbf{suppressed} version of our kernel $k(\cdot,\cdot)$. That is, we are going to modify it in such a way that we obtain a new kernel with properties that make it easier to work with, while keeping it similar enough to the original one so that it enables us to deduce properties which, without the modification, would be much harder to prove.

For this, let $\Theta\colon \R^d\to [0,\infty)$ be such that
$$
|\Theta(x)-\Theta(y)|\leq |x-y|, \quad \text{for all}\quad x,y\in \R^d,
$$
(i.e. $\Theta$ is $1$-Lipschitz). Moreover, let $\widetilde{\chi}\colon[0,\infty)\to [0,1]$ be a smooth cut-off function which vanishes identically in $[0,1/2]$ and equals $1$ in $[1,\infty)$. We consider a new kernel
\begin{equation}
\widetilde{k}_{\Theta}(x,y) = \widetilde{\chi}\left(\frac{|x-y|^2}{\Theta (x)\Theta(y)}\right)k(x,y), \label{suppressed_kernel}
\end{equation}
and we put $\widetilde{\chi}(\infty)=1$. Similarly to what happened in \cite{NTV_acta} and \cite{llibre_xavi}, we have that $\widetilde{k}_{\Theta}(x,y)=k(x,y)$ if $\Theta(x)$ or $\Theta(y)$ vanish. We now give three useful properties of the new kernel.

\begin{lemma}
    We have that $
    \widetilde{k}_{\Theta}(x,y)=0$ if $|x-y|\leq \frac{1}{2}\,\text{max}(\Theta(x),\Theta(y))$. \label{1/2_max}
\end{lemma}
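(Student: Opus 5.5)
The argument is a direct check using only that $\Theta$ is $1$-Lipschitz and that $\widetilde{\chi}$ vanishes on $[0,1/2]$. By the symmetry of the kernel's definition \eqref{suppressed_kernel} in $\Theta(x)$ and $\Theta(y)$, assume without loss of generality that $\max(\Theta(x),\Theta(y))=\Theta(x)$. Fix $x,y$ with $|x-y|\le \frac12\Theta(x)$. The goal is to show
$$
\frac{|x-y|^2}{\Theta(x)\Theta(y)}\le \frac12,
$$
since then $\widetilde{\chi}$ evaluated at this quotient is $0$, and so $\widetilde{k}_\Theta(x,y)=0$.

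First dispose of the degenerate cases. If $\Theta(x)=0$, the hypothesis forces $x=y$, which lies outside the domain of the kernel, so there is nothing to prove. So assume $\Theta(x)>0$.

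Next, the Lipschitz property gives a lower bound for $\Theta(y)$:
$$
\Theta(y)\ge \Theta(x)-|x-y|\ge \tfrac12\Theta(x)>0.
$$
In particular the quotient is finite, so the convention $\widetilde{\chi}(\infty)=1$ never enters. Combining this with the hypothesis $|x-y|^2\le \frac14\Theta(x)^2$ yields
$$
\frac{|x-y|^2}{\Theta(x)\Theta(y)}\le \frac{\frac14\Theta(x)^2}{\frac12\Theta(x)^2}=\frac12,
$$
which concludes the argument.

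The only point needing care is this lower bound on $\Theta(y)$. A weaker estimate could make the quotient exceed $1/2$, and the constants $1/2$ in the statement and $[0,1/2]$ in the definition of $\widetilde{\chi}$ match exactly so that the Lipschitz constant $1$ gives precisely the bound $1/2$.
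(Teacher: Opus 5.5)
Your proof is correct and follows the same route as the paper. You reduce to $\Theta(x)\ge\Theta(y)$, use the $1$-Lipschitz bound $\Theta(y)\ge\frac12\Theta(x)$, and conclude that $\frac{|x-y|^2}{\Theta(x)\Theta(y)}\le\frac12$. Your explicit handling of the degenerate case $\max(\Theta(x),\Theta(y))=0$, together with your check that the quotient is finite, is a small improvement, since the paper divides by $\Theta(y)$ without comment.
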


\begin{proof}
    Since $\widetilde{\chi}$ vanishes in $[0,1/2]$, it is enough to prove that if $|x-y|\leq \frac{1}{2}\text{max}(\Theta(x),\Theta(y))$, then $\frac{|x-y|^2}{\Theta(x)\Theta(y)}\leq \frac{1}{2}$. This is an easy consequence of the fact that $\Theta$ is $1$-Lipschitz. Indeed, assume first that $\Theta(x)\geq \Theta(y)$. Then, $\text{max}(\Theta(x),\Theta(y))=\Theta(x)$ and so
    $$
    \frac{|x-y|^2}{\Theta(x)\Theta(y)} \leq \frac{1}{4}\frac{\max(\Theta(x),\Theta(y))^2}{\Theta(x)\Theta(y)} = \frac{1}{4}\frac{\Theta(x)}{\Theta(y)}.
    $$
    Moreover, using the Lipschitz property, 
    $$
    \Theta(y) \geq \Theta(x)-\frac{1}{2}\max(\Theta(x),\Theta(y)) = \Theta(x)-\frac{1}{2}\Theta(x) = \frac{1}{2}\Theta(x), 
    $$
    and so we deduce that
    $$
    \frac{|x-y|^2}{\Theta(x)\Theta(y)}\leq \frac{1}{4}\frac{\Theta(x)}{\Theta(y)} \leq \frac{2}{4} = \frac{1}{2},
    $$
    which is precisely what we wanted. Now, if $\Theta(x)<\Theta(y)$, the situation is symmetric and the same arguments yield that $\frac{|x-y|^2}{\Theta(x)\Theta(y)}\leq \frac{1}{2}$.
\end{proof}

\begin{lemma}
    \label{desigualtat_maxim_k_tilde} For any $x,y\in \R^d$, we have the bound
    $$
    |\widetilde{k}_{\Theta}(x,y)|\leq \frac{c}{\max(\Theta(x),\Theta(y))^n}.
    $$
\end{lemma}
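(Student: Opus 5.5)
The plan is a case split according to whether the suppressed kernel vanishes. Write $M:=\max(\Theta(x),\Theta(y))$.

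If $M=0$, the right-hand side is $+\infty$ (with the convention $c/0^n=+\infty$), so there is nothing to prove.

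If $|x-y|\leq \frac{1}{2}M$, Lemma \ref{1/2_max} gives $\widetilde{k}_{\Theta}(x,y)=0$, and the bound holds trivially.

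It remains to treat the case $|x-y|> \frac{1}{2}M$, with $x\neq y$. Since $0\leq\widetilde{\chi}\leq 1$, we have $|\widetilde{k}_{\Theta}(x,y)|\leq |k(x,y)|$. The first inequality in (\ref{CZ}) then gives
$$
|\widetilde{k}_{\Theta}(x,y)|\leq \frac{C_{CZ}}{|x-y|^n} < \frac{2^nC_{CZ}}{\max(\Theta(x),\Theta(y))^n}.
$$
Hence the lemma holds with $c=2^nC_{CZ}$.

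This also covers the degenerate situation in which one of $\Theta(x),\Theta(y)$ vanishes but the other does not. In that case $\widetilde{k}_{\Theta}=k$, and the same two cases apply verbatim, because Lemma \ref{1/2_max} was proved for arbitrary nonnegative values of $\Theta$.

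I do not expect any real obstacle. The only point needing care is the bookkeeping of the degenerate values of $\Theta$. One is the convention $\widetilde{\chi}(\infty)=1$, which is used when $\Theta(x)\Theta(y)=0$. The other is checking that Lemma \ref{1/2_max} applies when exactly one of $\Theta(x),\Theta(y)$ is zero. In that case $|x-y|\leq\frac12\Theta(x)$ with $\Theta(y)=0$ is impossible by the $1$-Lipschitz property, so the first case simply does not occur there and the estimate falls into the second case.
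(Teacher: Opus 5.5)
Your proof is correct and is essentially the paper's argument. You split cases using Lemma \ref{1/2_max} (the kernel vanishes when $|x-y|\leq\frac{1}{2}\max(\Theta(x),\Theta(y))$), and otherwise use $|\widetilde{k}_{\Theta}(x,y)|\leq|k(x,y)|\leq C_{CZ}|x-y|^{-n}<2^nC_{CZ}\max(\Theta(x),\Theta(y))^{-n}$. Your extra bookkeeping for degenerate values of $\Theta$ is accurate but not needed, and the paper omits it.
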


\begin{proof}
    Using the previous lemma, this is an easy computation. Indeed, if $x,y\in \R^d$ are such that $|x-y|\leq \frac{1}{2}\max(\Theta(x),\Theta(y))$, then $\widetilde{k}_{\Theta}(x,y)=0$ and the inequality in the statement is trivially true. Assume that $|x-y|>\frac{1}{2}\max(\Theta(x), \Theta(y))$. Then, since $\|\widetilde{\chi}\|_{\infty}\leq 1$,
    $$
    |\widetilde{k}_{\Theta}(x,y)| \leq |k(x,y)| \leq \frac{c}{|x-y|^n}\leq \frac{c\,2^n}{\max(\Theta(x),\Theta(y))^n}.
    $$
\end{proof}

\begin{lemma}
   Let $\Theta \colon \R^d \to [0,\infty)$ be a $1$-Lipschitz function. Then, the kernel $\widetilde{k}_{\Theta}$, defined in (\ref{suppressed_kernel}), is also a Calderón-Zygmund kernel.
\end{lemma}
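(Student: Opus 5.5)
We need to verify the two conditions of Definition \ref{definicio_CZ_kernel} for $\widetilde{k}_{\Theta}$, possibly with a larger constant $c$ but the same exponent $\eta$. The size condition is immediate: since $0\le\widetilde{\chi}\le 1$, we have $|\widetilde{k}_{\Theta}(x,y)|\le|k(x,y)|\le c|x-y|^{-n}$. The whole work is in the smoothness condition. Fix $x,x',y$ with $|x-x'|\le\frac12|x-y|$; then $|x'-y|\approx|x-y|$. We treat the term $|\widetilde{k}_\Theta(x,y)-\widetilde{k}_\Theta(x',y)|$; the term in the second variable is symmetric, because the cut-off factor $\widetilde{\chi}(|x-y|^2/(\Theta(x)\Theta(y)))$ is symmetric in $x,y$ and $k$ satisfies both regularity estimates. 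Write $\psi(x,y)=\widetilde{\chi}\big(|x-y|^2/(\Theta(x)\Theta(y))\big)$ and split
$$
\widetilde{k}_\Theta(x,y)-\widetilde{k}_\Theta(x',y)=\psi(x,y)\big(k(x,y)-k(x',y)\big)+\big(\psi(x,y)-\psi(x',y)\big)k(x',y).
$$
The first term is bounded by $c|x-x'|^\eta|x-y|^{-n-\eta}$ using $|\psi|\le1$ and \eqref{CZ}.

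For the second term we need $|\psi(x,y)-\psi(x',y)|\lesssim |x-x'|/|x-y|$. Since $|x-x'|/|x-y|\le 1/2$, this gives $|x-x'|/|x-y|\le (|x-x'|/|x-y|)^\eta$, and combined with $|k(x',y)|\lesssim |x-y|^{-n}$ it yields the desired bound. If both $\psi(x,y)$ and $\psi(x',y)$ vanish or both equal $1$ there is nothing to do. So we may assume that one of the arguments $t=|x-y|^2/(\Theta(x)\Theta(y))$ or $t'=|x'-y|^2/(\Theta(x')\Theta(y))$ lies in $(1/2,1)$. As in Lemma \ref{1/2_max}, the Lipschitz property then forces $\Theta(x)\approx\Theta(y)\approx|x-y|$, and similarly for $x'$: the product $\Theta(x)\Theta(y)$ is comparable to $|x-y|^2$. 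For instance, if $t\in(1/2,1)$, then $\Theta(x)\Theta(y)\le 2|x-y|^2$, and $\max(\Theta(x),\Theta(y))\ge|x-y|$; with the Lipschitz bound $|\Theta(x)-\Theta(y)|\le|x-y|$ this gives both $\Theta$'s comparable to $|x-y|$. The same then holds at $x'$, since $|\Theta(x)-\Theta(x')|\le|x-x'|\le\frac12|x-y|$.

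Now apply the mean value theorem to the smooth function $\widetilde{\chi}$, whose derivative is bounded, on the segment between $t$ and $t'$:
$$
|t-t'|\le \frac{\big||x-y|^2-|x'-y|^2\big|}{\Theta(x)\Theta(y)}+|x'-y|^2\,\frac{|\Theta(x)-\Theta(x')|\,\Theta(y)}{\Theta(x)\Theta(x')\Theta(y)^2}.
$$
The first summand is at most $\lesssim|x-x'|\,|x-y|/|x-y|^2$. The second is at most $\lesssim|x-y|^2|x-x'|/|x-y|^3$. Together they give $|t-t'|\lesssim|x-x'|/|x-y|$. This needs the comparabilities above to hold in the regime where the derivative of $\widetilde{\chi}$ is nonzero. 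One subtlety arises when $t\ge1$ but $t'<1$ (or the reverse), and one of $\Theta(x),\Theta(x')$ could be small relative to $|x-y|$. To handle it, we restrict the mean value argument to the portion where the argument lies in $[1/2,1]$, using the monotone path parametrized along the segment $[x,x']$. Alternatively we argue directly that if one argument is in $(1/2,1)$, the comparabilities hold at both points. The case $\Theta(y)=0$ gives $\psi\equiv1$ and is trivial.

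The main obstacle is this last bookkeeping: ensuring that whenever $\psi(x,\cdot)$ differs from $\psi(x',\cdot)$, all quantities $\Theta(x),\Theta(x'),\Theta(y)$ are comparable to $|x-y|$, so that no small $\Theta$ appears in a denominator. I would handle it by a short case analysis modeled on the proof of Lemma \ref{1/2_max}.
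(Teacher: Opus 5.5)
Your decomposition agrees with the paper's, and so does your treatment of the size estimate and of the term $\psi(x,y)\bigl(k(x,y)-k(x',y)\bigr)$. The gap is in the bound for $|\psi(x,y)-\psi(x',y)|$, which rests on a false claim.

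The lower bounds on $\Theta$ do transfer from $x$ to $x'$ when $t<1$: then $\min(\Theta(x),\Theta(y))\ge\frac35|x-y|$, hence $\Theta(x')\ge\frac1{10}|x-y|$. They do not transfer from $x'$ to $x$ when $t'<1$. There you only know $\Theta(x')\ge\frac35|x'-y|\ge\frac3{10}|x-y|$, while $|x-x'|$ may be as large as $\frac12|x-y|$. For example, take $y=0$, $x=e_1$, $x'=\frac35e_1$ and $\Theta(z)=|z-e_1|$. Then $\Theta(x)=0$, $\Theta(x')=\frac25$ and $\Theta(y)=1$, so $t=\infty$ while $t'=\frac9{10}\in(\frac12,1)$, and $|t-t'|$ is not even finite. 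So your claim that ``if one argument is in $(1/2,1)$, the comparabilities hold at both points'' is false.

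With the same $\Theta$ and $x'=\frac12e_1$ you get $t=\infty$ and $t'=\frac12$, so $\psi(x,y)=1\ne0=\psi(x',y)$ while $\Theta(x)=0$. This example does two things. It shows that your reduction to ``one of $t,t'$ lies in $(\frac12,1)$'' drops a case. It also shows that the goal in your last paragraph, all of $\Theta(x),\Theta(x'),\Theta(y)$ comparable to $|x-y|$ whenever the two cut-offs differ, cannot be achieved by any case analysis.

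What is missing is a dichotomy, not more bookkeeping. If $t\ge1>t'$, you still have $\Theta(x'),\Theta(y)\ge\frac3{10}|x-y|$, and there are two possibilities:
\begin{itemize}
\item If $\Theta(x)\ge\frac12\Theta(x')$, all the lower bounds your mean value estimate needs hold.
\item If $\Theta(x)<\frac12\Theta(x')$, then $|x-x'|\ge\Theta(x')-\Theta(x)>\frac12\Theta(x')\ge\frac3{20}|x-y|$, and the crude bound $|\psi(x,y)-\psi(x',y)|\le1\lesssim|x-x'|/|x-y|$ already suffices.
\end{itemize}
This is exactly the paper's Case~5; its restriction to $|x-x'|\le\frac13|x-y|$ in Case~3 serves the same purpose.

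Your path idea can also be turned into a complete, case-free proof, but not as sketched. Set $x_s=x+s(x'-x)$ and work with $u(s)=\Theta(x_s)\Theta(y)/|x_s-y|^2$, which is Lipschitz because $|x_s-y|\ge\frac12|x-y|$. Let $\phi(u)=\widetilde\chi(1/u)$ with $\phi\equiv1$ on $[0,1]$; it is smooth and $\phi'$ is supported in $[1,2]$. Then $\psi(x_s,y)=\phi(u(s))$, and on the set $\{u(s)\in[1,2]\}$ one has $\Theta(y)\le2|x_s-y|$, which gives $|u'(s)|\lesssim|x-x'|/|x-y|$ there. Passing to $u=1/t$ is what removes the need for any lower bound on $\Theta(x_s)$, and no monotonicity is involved.
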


\begin{proof}
   
    First, recall that $k$ being an $n$-dimensional Calderón-Zygmund kernel means that there exist constants $c>0$ and $0<\eta\leq 1$ such that the following inequalities hold for all $x,y\in \R^d$, $x\neq y$,
    \begin{align}
        |k(x,y)| &\leq \frac{c}{|x-y|^n},\quad \text{and,} \label{size_estimate} \\ 
        \text{if}\,  |x-x'|&\leq \frac{|x-y|}{2},\, \,|k(x,y)-k(x',y)| + |k(y,x)-k(y,x')| \leq \frac{c|x-x'|^{\eta}}{|x-y|^{n+\eta}}.\label{gradient_condition}
    \end{align}

    We need to show that the analogous estimates hold for $\widetilde{k}_{\Theta}$. For the size estimate (\ref{size_estimate}), we use that $\|\widetilde{\chi}\|_{\infty}\leq 1$,
    $$
    |\widetilde{k}_{\Theta}(x,y)| = \left|\widetilde{\chi}\left(\frac{|x-y|^2}{\Theta(x)\Theta(y)}\right)k(x,y)\right| \leq |k(x,y)|\leq \frac{c}{|x-y|^n}.
    $$
    
    To check the second condition (\ref{gradient_condition}), first we show that there exists a constant $c'>0$ such that if $|x-x'|\leq \frac{|x-y|}{2}$, 
    \begin{equation}
    |\widetilde{k}_{\Theta}(x,y)-\widetilde{k}_{\Theta}(x',y)| \leq \frac{c'|x-x'|^{\eta}}{|x-y|^{n+\eta}}. \label{volem}
    \end{equation}
    For this, we write,
    \begin{align*}
        |\widetilde{k}_{\Theta}(x,y)-\widetilde{k}_{\Theta}(x',y)| &= \left|\widetilde{\chi}\left(\frac{|x-y|^2}{\Theta(x)\Theta(y)}\right)k(x,y)-\widetilde{\chi}\left(\frac{|x'-y|^2}{\Theta(x)\Theta(y)}\right)k(x',y)\right|.
    \end{align*}
    Now, note that if both
    $$
    \frac{|x-y|^2}{\Theta(x)\Theta(y)}\leq \frac{1}{2} \quad \text{and} \quad \frac{|x'-y|^2}{\Theta(x)\Theta(y)}\leq \frac{1}{2},
    $$
    then $|\widetilde{k}_{\Theta}(x,y)-\widetilde{k}_{\Theta}(x',y)| = 0$ and (\ref{volem}) holds trivially. Moreover, if both
    $$
    \frac{|x-y|^2}{\Theta(x)\Theta(y)}\geq 1 \quad \text{and} \quad \frac{|x'-y|^2}{\Theta(x)\Theta(y)}\geq 1,
    $$
    then $|\widetilde{k}_{\Theta}(x,y)-\widetilde{k}_{\Theta}(x',y)|=|k(x,y)-k(x',y)|$ and we can take $c'=c$ in (\ref{volem}). Thus, we can assume that $\widetilde{k}_{\Theta}(x,y)\neq \widetilde{k}_{\Theta}(x,y)$. With this in mind, we write,
    \begin{align}
        |\widetilde{k}_{\Theta}(x,y)-\widetilde{k}_{\Theta}(x',y)|
        &\leq\left|\widetilde{\chi}\left(\frac{|x-y|^2}{\Theta(x)\Theta(y)}\right)(k(x,y)-k(x',y))\right|\nonumber \\
        &\qquad +\left|\left[\widetilde{\chi}\left(\frac{|x-y|^2}{\Theta(x)\Theta(y)}\right)-\widetilde{\chi}\left(\frac{|x'-y|^2}{\Theta(x)\Theta(y)}\right)\right]k(x',y)\right|\nonumber\\
        &\leq |k(x,y)-k(x',y)| + \|\widetilde{\chi}'\|_{\infty}\left|\frac{|x-y|^2}{\Theta(x)\Theta(y)}-\frac{|x'-y|^2}{\Theta(x')\Theta(y)}\right| \frac{c}{|x-y|^n}.\label{hem_aplicat_TVM}
    \end{align}
    The first term is already in the form that we want and we can bound it by
    $$
    |k(x,y)-k(x',y)| \leq \frac{c|x-x'|^{\eta}}{|x-y|^{n+\eta}}.
    $$
    Now we look at the second term in (\ref{hem_aplicat_TVM}). Since $\widetilde{\chi}$ is smooth and has compact support, the norm $\|\widetilde{\chi}'\|_{\infty}$ is a constant. We are now going to show that there is a constant $\widetilde{c}>0$ such that
    \begin{equation}
    \left|\frac{|x-y|^2}{\Theta(x)\Theta(y)}-\frac{|x'-y|^2}{\Theta(x')\Theta(y)}\right|  \leq \frac{\widetilde{c}|x-x'|}{|x-y|} \leq \frac{\widetilde{c}|x-x'|^{\eta}}{|x-y|^{\eta}},\label{desigualtat_amb_molts_casos}
    \end{equation}
    where in the second inequality we have used that $\eta<1$ and that$|x-x'|/|x-y|\leq 1/2$. This, taking $c'= c\left(1+\|\widetilde{\chi}'\|_{\infty}\widetilde{c}\right)$, gives us (\ref{volem}).

    To show that (\ref{desigualtat_amb_molts_casos}) holds, we denote
    $$
    A(x,y) = \frac{|x-y|^2}{\Theta(x)\Theta(y)}, \quad A(x',y) = \frac{|x'-y|^2}{\Theta(x')\Theta(y)}.
    $$
    Recall that our function $\widetilde{\chi}$ vanishes in $[0,1/2]$, is constant equal to $1$ in $[1,\infty)$ and is smooth with $0\leq \widetilde{\chi}(t)\leq 1$ for $t\in(1/2, 1)$. We will prove (\ref{desigualtat_amb_molts_casos}) distinguishing whether $A(x,y)$ and $A(x',y)$ are in $[0,1/2], (1/2,1)$ or $[1,\infty)$.

    \begin{itemize}
        \item \textbf{Case 1:} $A(x,y)\in (1/2,1)$ or, equivalently, $|x-y|^2<\Theta(x)\Theta(y)<2|x-y|^2$. 
        
        \item \textbf{Case 2:} $A(x,y)\in[0,1/2] $ or, equivalently, $|x-y|^2\leq \frac{1}{2}\Theta(x)\Theta(y)$. 

        \item \textbf{Case 3:} $A(x',y)\in [0,1/2]$ or, equivalently, $|x'-y|^2 \leq \frac{1}{2}\Theta(x')\Theta(y)$. 

        \item \textbf{Case 4:} $A(x',y)\in (1/2,1)$, or, equivalently, $|x'-y|^2<\Theta(x')\Theta(y)<2|x'-y|^2$ and $\Theta(x)\geq \frac{1}{2}\Theta(x')$. 

        \item \textbf{Case 5:} $\Theta(x)\leq \frac{1}{2}\Theta(x')$. 
    \end{itemize}

    Proving the inequality (\ref{desigualtat_amb_molts_casos}) in each case is a matter of computations, writing the same things in different ways according to which case we are in and using that $\Theta$ is $1$-Lipschitz appropriately. For commodity of the reader, we gather all these computations in the five following lemmas. 
    
    Lemmas \ref{lema:cas1} through \ref{lema:cas4} show that in cases $1$ through 4, we have
    \begin{equation*}
    \left|\frac{|x-y|^2}{\Theta(x)\Theta(y)}-\frac{|x'-y|^2}{\Theta(x')\Theta(y)}\right| \leq 40 \frac{|x-x'|}{|x-y|},
    \end{equation*}
    which means that we can take $\widetilde{c}=50$ in (\ref{desigualtat_amb_molts_casos}). Using Lemma \ref{lema:cas5}, there is no need to use the mean value theorem in the last line of (\ref{hem_aplicat_TVM}), because we can simply bound
    \begin{align*}
        |\widetilde{k}_{\Theta}(x,y)-\widetilde{k}_{\Theta}(x',y)|&\leq |k(x,y)-k(x',y)| + |k(x',y)|\frac{20}{3}\frac{|x-x'|}{|x-y|}\\
        &\leq c\frac{|x-x'|^{\eta}}{|x-y|^{n+\eta}} + \frac{20c}{3}\frac{1}{|x-y|^n}\frac{|x-x'|^{\eta}}{|x-y|^\eta}\\
        &=c\left(1+\frac{20}{3}\right)\frac{|x-x'|^{\eta}}{|x-y|^{\eta}},
    \end{align*}
    which is exactly what we wanted. Lastly, the estimate
    $$
    |\widetilde{k}_{\Theta}(y,x)-\widetilde{k}_{\Theta}(y,x')|\leq c'\frac{|x-x'|^{\eta}}{|x-y|^{n+\eta}}
    $$
    is a consequence of the computations that we have already done, since the roles of $x$ and $y$ are interchangeable in $A(x,y)$ and moreover $|x'-y|\approx |x-y|$.
    
\end{proof}

The next five lemmas are purely technical and their sole purpose is to deal with the different cases that we encountered when trying that the kernel $\widetilde{k}_{\Theta}$ is Calderón-Zygmund, in the lemma above. After these details, we define the SIOs associated to this kernel in page \pageref{k_theta_truncat}.

\begin{lemma}
    Assume that $|x-y|^2<\Theta(x)\Theta(y)<2|x-y|^2$. Then, 
    \begin{equation}
    \left|\frac{|x-y|^2}{\Theta(x)\Theta(y)}-\frac{|x'-y|^2}{\Theta(x')\Theta(y)}\right|  \leq\left(\frac{5}{2}+\frac{45}{2}\right) \frac{|x-x'|}{|x-y|}.\label{cas1}
    \end{equation}\label{lema:cas1}
\end{lemma}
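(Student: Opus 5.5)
The plan is to split the difference into a term where only the numerator changes and a term where only the denominator changes:
\begin{align*}
\frac{|x-y|^2}{\Theta(x)\Theta(y)}-\frac{|x'-y|^2}{\Theta(x')\Theta(y)}
&= \frac{|x-y|^2-|x'-y|^2}{\Theta(x)\Theta(y)} \\
&\qquad + |x'-y|^2\,\frac{\Theta(x')-\Theta(x)}{\Theta(x)\Theta(x')\Theta(y)}.
\end{align*}
Each piece is bounded separately, always using the standing assumption $|x-x'|\le \frac12|x-y|$ from the proof of the previous lemma. The hypothesis $\Theta(x)\Theta(y)>|x-y|^2$ (the left half of the assumption of Case 1) is what controls the denominators.

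\textbf{First term.} Factor the numerator as
$$
\bigl||x-y|^2-|x'-y|^2\bigr| = \bigl||x-y|-|x'-y|\bigr|\,\bigl(|x-y|+|x'-y|\bigr).
$$
By the triangle inequality the first factor is at most $|x-x'|$. Since $|x'-y|\le \frac32|x-y|$, the second factor is at most $\frac52|x-y|$. Dividing by $\Theta(x)\Theta(y)>|x-y|^2$ gives the bound $\frac52\,\frac{|x-x'|}{|x-y|}$, which is the first constant in (\ref{cas1}).

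\textbf{Second term.} Use $|\Theta(x')-\Theta(x)|\le |x-x'|$ and $|x'-y|^2\le \frac94|x-y|^2$. Then the term is at most
$$
\frac94\,\frac{|x-y|^2\,|x-x'|}{\Theta(x)\Theta(y)\,\Theta(x')}\le \frac94\,\frac{|x-x'|}{\Theta(x')}.
$$
It therefore suffices to show $\Theta(x')\ge \frac1{10}|x-y|$, which yields the constant $\frac94\cdot 10=\frac{45}{2}$.

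\textbf{Lower bound on $\Theta(x')$ (main step).} We only have control of the product $\Theta(x)\Theta(y)$, not of $\Theta(x')$ itself. First we lower-bound $\Theta(x)$. By the Lipschitz property, $\Theta(y)\le \Theta(x)+|x-y|$. Writing $t=\Theta(x)/|x-y|$, the hypothesis gives $t(t+1)>1$, so
$$
t>\frac{\sqrt5-1}{2}\approx 0.618 .
$$
Using the Lipschitz property once more together with $|x-x'|\le\frac12|x-y|$,
$$
\Theta(x')\ge \Theta(x)-|x-x'| > \Bigl(\tfrac{\sqrt5-1}{2}-\tfrac12\Bigr)|x-y| > \tfrac1{10}|x-y|.
$$
Note that $\Theta(x')$ does not appear in the hypothesis, so this Lipschitz transfer from $\Theta(x)$ to $\Theta(x')$ is the delicate point. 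The numerical slack is small ($0.118>0.1$), so the constants must be tracked carefully. Adding the bounds for the two terms gives (\ref{cas1}).
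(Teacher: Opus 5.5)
Your proof is correct and follows the paper's argument: the same two-term decomposition, the same bounds $\frac52\frac{|x-x'|}{|x-y|}$ and $\frac94\frac{|x-x'|}{\Theta(x')}$, and the same strategy of lower-bounding $\Theta(x)$ from $\Theta(x)\Theta(y)>|x-y|^2$ plus the Lipschitz property before transferring the bound to $x'$. The only difference is cosmetic: you solve $t(t+1)>1$ to get $\Theta(x)>\frac{\sqrt5-1}{2}|x-y|$, whereas the paper argues by contradiction via $\max$ and $\min$ that $\min(\Theta(x),\Theta(y))\ge\frac35|x-y|$, which gives exactly $\frac1{10}$ instead of your slightly better $\frac{\sqrt5-2}{2}$.
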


\begin{proof}
    We can write
    \begin{align}
    \frac{|x-y|^2}{\Theta(x)\Theta(y)} - \frac{|x'-y|^2}{\Theta(x')\Theta(y)} &= \frac{|x-y|^2-|x'-y|^2}{\Theta(x)\Theta(y)} + |x'-y|^2\left(\frac{1}{\Theta(x)\Theta(y)}-\frac{1}{\Theta(x')\Theta(y)}\right) \nonumber \\
    &= \frac{|x-y|^2-|x'-y|^2}{\Theta(x)\Theta(y)} + |x'-y|^2\frac{\Theta(x')-\Theta(x)}{\Theta(x)\Theta(y)\Theta(x')}.\label{dos_termes}
    \end{align}
    We can control the numerator in the first term by
    \begin{align}
    \left||x-y|^2-|x'-y|^2\right| & = \left||x-y|+|x'-y|\right|\, ||x-y|-|x'-y||\nonumber \\
    &\leq ||x-y|+|x'-x|+|x-y|| \, |x-x'| \nonumber\\
    &\leq \frac{5}{2}|x-y||x-x'|.\label{desigualtat_trivial}
    \end{align}
    Thus, using that $\Theta(x)\Theta(y)>|x-y|^2$, 
    $$
    \left|\frac{|x-y|^2-|x'-y|^2}{\Theta(x)\Theta(y)}\right| \leq \frac{5}{2}\frac{|x-y||x-x'|}{|x-y|^2} = \frac{5}{2}\frac{|x-x'|}{|x-y|},
    $$
    which is in the form that we wanted. For the second term in (\ref{dos_termes}), we use, in addition, that $\Theta$ is $1$-Lipschitz, to get
    \begin{equation}
    |x'-y|^2\left|\frac{\Theta(x')-\Theta(x)}{\Theta(x)\Theta(y)\Theta(x')}\right| \leq \frac{|x'-y|^2|x'-x|}{|x-y|^2\Theta(x')}\leq \frac{9}{4}\frac{|x'-x|}{\Theta(x')},\label{4_9}
    \end{equation}
    where in the last inequality we have used that $|x'-y|\leq \frac{3}{2}|x-y|$. We see that we need a lower bound for $\Theta(x')$ in terms of $|x-y|$. To obtain such a bound, we first show that
    \begin{equation}
    \min(\Theta(x),\Theta(y)) \geq \frac{3}{5}|x-y|. \label{condicio_minim_original}
    \end{equation}
    Indeed, if we assume that the reverse inequality holds, we obtain that
    $$
    \max(\Theta(x),\Theta(y)) = \frac{\Theta(x)\Theta(y)}{\min(\Theta(x),\Theta(y))} > \frac{5}{3}\frac{|x-y|^2}{|x-y|} = \frac{5}{3}|x-y|,
    $$
    and so
    $$
    |\Theta(x)-\Theta(y)| = \max(\Theta(x),\Theta(y))-\min(\Theta(x),\Theta(y)) \geq \frac{5}{3}|x-y|-\frac{3}{5}|x-y| > |x-y|,
    $$
    which is not possible since $\Theta$ is $1$-Lipschitz, so (\ref{condicio_minim_original}) holds. Lastly, 
    \begin{align*}
        \Theta(x')\geq \Theta(x)-|x-x'| \geq \Theta(x)-\frac{1}{2}|x-y| &\geq \min(\Theta(x),\Theta(y)) - \frac{1}{2}|x-y|\\
        &\geq \frac{3}{5}|x-y|-\frac{1}{2}|x-y| = \frac{1}{10}|x-y|.
    \end{align*}
    We plug this in (\ref{4_9}) and we get
    $$
    |x'-y|^2\left|\frac{\Theta(x')-\Theta(x)}{\Theta(x)\Theta(y)\Theta(x')}\right| \leq \frac{10\cdot 9}{4}\frac{|x-x'|}{|x-y|} = \frac{45}{2}\frac{|x-x'|}{|x-y|}.
    $$
    This proves that (\ref{cas1}) holds.
\end{proof}

\begin{lemma}
    Assume that $|x-y|^2\leq \frac{1}{2}\Theta(x)\Theta(y)$. Then,
    \begin{equation}
    \left|\frac{|x-y|^2}{\Theta(x)\Theta(y)}-\frac{|x'-y|^2}{\Theta(x')\Theta(y)}\right|  \leq\left(\frac{5}{4}+\frac{9}{8}\right) \frac{|x-x'|}{|x-y|}.\label{cas2}
    \end{equation}
\end{lemma}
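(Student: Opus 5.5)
The plan is to follow the proof of Lemma \ref{lema:cas1}. We split the difference as in (\ref{dos_termes}),
$$
\frac{|x-y|^2}{\Theta(x)\Theta(y)} - \frac{|x'-y|^2}{\Theta(x')\Theta(y)} = \frac{|x-y|^2-|x'-y|^2}{\Theta(x)\Theta(y)} + |x'-y|^2\frac{\Theta(x')-\Theta(x)}{\Theta(x)\Theta(y)\Theta(x')},
$$
and bound the two terms separately, using throughout that $|x-x'|\leq \frac12|x-y|$. The difference from Case 1 is that the hypothesis now gives a \emph{lower} bound $\Theta(x)\Theta(y)\geq 2|x-y|^2$, which is twice as strong as the one used there. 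This factor $\frac12$ is where both constants in (\ref{cas2}) come from.

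For the first term I reuse (\ref{desigualtat_trivial}): $\bigl||x-y|^2-|x'-y|^2\bigr|\leq \frac52|x-y||x-x'|$. Dividing by $\Theta(x)\Theta(y)\geq 2|x-y|^2$ gives $\frac54\frac{|x-x'|}{|x-y|}$, which is the first summand in (\ref{cas2}).

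For the second term, the $1$-Lipschitz property gives $|\Theta(x')-\Theta(x)|\leq |x-x'|$, and $|x'-y|\leq \frac32|x-y|$. Together with $\Theta(x)\Theta(y)\geq 2|x-y|^2$, this bounds the term by
$$
\frac{9}{4}\,\frac{|x-y|^2\,|x-x'|}{2|x-y|^2\,\Theta(x')} = \frac98\,\frac{|x-x'|}{\Theta(x')}.
$$
To reach the stated bound I need the lower bound $\Theta(x')\geq |x-y|$.

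This lower bound is the main obstacle. The first step, in the spirit of (\ref{condicio_minim_original}), is a bound on $\min(\Theta(x),\Theta(y))$. Write $m=\min(\Theta(x),\Theta(y))$ and $d=|x-y|$. The Lipschitz property gives $\max(\Theta(x),\Theta(y))\leq m+d$. Hence $m(m+d)\geq \Theta(x)\Theta(y)\geq 2d^2$, and therefore $m\geq d$. In particular $\Theta(x)\geq |x-y|$ and $\Theta(y)\geq |x-y|$.

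Transferring this to $x'$ via $\Theta(x')\geq \Theta(x)-|x-x'|$ only gives $\Theta(x')\geq \frac12|x-y|$, which yields $\frac94$ rather than $\frac98$. I would therefore first look for a sharper route that avoids $\Theta(x')$. One option is to keep $\Theta(x)\Theta(x')$ together and use $\Theta(y)\geq 2|x-y|^2/\Theta(x)$. Another is to move the $x'$-dependence into the other factor by writing the splitting as
$$
|x-y|^2\Bigl(\frac{1}{\Theta(x)\Theta(y)}-\frac{1}{\Theta(x')\Theta(y)}\Bigr)+\frac{|x-y|^2-|x'-y|^2}{\Theta(x')\Theta(y)}.
$$
I would then check whether the cross terms combine to give $\frac98$.

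However, a direct computation of the extremal configuration ($\Theta(x)=|x-y|$, $|x-x'|=\frac12|x-y|$, $\Theta(x')=\Theta(x)-|x-x'|$) suggests that the constant $\frac98$ cannot be reached by these estimates. If the lower bound $\Theta(x')\geq|x-y|$ cannot be justified, I will record the honest bound $\frac54+\frac94$ (or the sharper combined bound $\frac{(d+t)^2}{2d^2(d-t)}$ with $t=|x-x'|$). Any absolute constant suffices for (\ref{desigualtat_amb_molts_casos}) and for the Calderón-Zygmund property of $\widetilde{k}_\Theta$.
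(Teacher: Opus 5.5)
Your route is the paper's route. You use the same splitting (\ref{dos_termes}), the same bound $\frac54\frac{|x-x'|}{|x-y|}$ for the first term, the same bound $\frac98\frac{|x-x'|}{\Theta(x')}$ for the second, and the same lower bound $\Theta(x)\geq|x-y|$. The paper proves the last one by contradiction from $\Theta(y)\leq\Theta(x)+|x-y|$; your factorization $(m-d)(m+2d)\geq0$ is the same argument and also gives $\Theta(y)\geq|x-y|$. The paper likewise only obtains $\Theta(x')\geq\frac12|x-y|$ and then writes $\frac98$. That is an arithmetic slip: its argument, like yours, proves the bound with $\frac54+\frac94=\frac72$.

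Your suspicion about $\frac98$ holds in a stronger sense: no regrouping can recover it, because (\ref{cas2}) with constant $\frac54+\frac98$ is false. Your extremal configuration is realizable. Take $y=0$, $x=e_1$, $x'=\frac32e_1$ and $\Theta(z)=\max(0,2-z_1)$, which is $1$-Lipschitz. Then $|x-y|=1$, $|x-x'|=\frac12$, $\Theta(x)=1$, $\Theta(y)=2$ and $\Theta(x')=\frac12$. Both the hypothesis and $|x-x'|\leq\frac12|x-y|$ hold with equality, and
\[
\left|\frac{1}{2}-\frac{9/4}{1}\right|=\frac74>\frac{19}{16}=\Bigl(\frac54+\frac98\Bigr)\cdot\frac12 .
\]
In this example both terms of (\ref{dos_termes}) are negative, and they equal $-\frac58$ and $-\frac98$. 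So each of your two estimates is attained and $\frac72$ is sharp.

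Drop the exploratory paragraph about other groupings and state the lemma with $\frac54+\frac94$. As you note, only an absolute constant is needed in (\ref{desigualtat_amb_molts_casos}), and the paper's choice $\widetilde{c}=50$ still covers it.
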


\begin{proof}
    We write the left-hand side of (\ref{cas2}) as in (\ref{dos_termes}). Using also (\ref{desigualtat_trivial}) and our hypothesis, we can bound the first term in (\ref{dos_termes}) by
    $$
    \left|\frac{|x-y|^2-|x'-y|^2}{\Theta(x)\Theta(y)}\right| \leq \frac{5}{2}\frac{|x-y|\,|x-x'|}{\Theta(x)\Theta(y)} \leq \frac{5}{4}\frac{|x-x'|}{|x-y|}.
    $$
    Moreover, the second term is controlled by
    $$
    \left||x'-y|^2\frac{\Theta(x')-\Theta(x)}{\Theta(x)\Theta(x')\Theta(y)}\right| \leq \frac{|x'-y|^2|x-x'|}{\Theta(x)\Theta(x')\Theta(y)} \leq \frac{|x'-y|^2|x-x'|}{2|x-y|^2\Theta(x')}\leq \frac{9}{8}\frac{|x-x'|}{\Theta(x')},
    $$
    because $|x'-y|\leq \frac{3}{2}|x-y|$. Now, we claim that our hypothesis implies that $\Theta(x)\geq |x-y|$. This, combined with the fact that $\Theta$ is $1$-Lipschitz and that $|x-x'|\leq \frac{1}{2}|x-y|$, implies that $\Theta(x')\geq \frac{1}{2}|x-y|$, and so (\ref{cas2}) holds. To prove that our claim is true, we assume that the opposite inequality holds, i.e., $\Theta(x)<|x-y|$. Then,
    $$
    \frac{1}{2}\Theta(x)\Theta(y) <\frac{1}{2}|x-y|\left(\Theta(x)+|x-y|\right)<|x-y|^2,
    $$
    which contradicts our hypothesis. Thus, the claim holds and we conclude the proof.
\end{proof}

\begin{lemma}
    Assume that $|x'-y|^2\leq \frac{1}{2}\Theta(x')\Theta(y)$. Then,
    \begin{equation}
    \left|\frac{|x-y|^2}{\Theta(x)\Theta(y)}-\frac{|x'-y|^2}{|\Theta(x')\Theta(x)|}\right| \leq \left(5+12\right)\frac{|x-x'|}{|x-y|}.
        \label{cas3}
    \end{equation}
\end{lemma}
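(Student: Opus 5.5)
The plan is to copy the proof of the previous lemma (Case 2) with the roles of $x$ and $x'$ exchanged, since the smallness hypothesis is now on $A(x',y)$. I read the denominator $|\Theta(x')\Theta(x)|$ in (\ref{cas3}) as the intended $\Theta(x')\Theta(y)$, i.e. the left-hand side is $|A(x,y)-A(x',y)|$. Throughout, $|x-x'|\le\frac12|x-y|$, so $\frac12|x-y|\le|x'-y|\le\frac32|x-y|$. Instead of (\ref{dos_termes}), I will split
\begin{equation*}
A(x,y)-A(x',y)=\frac{|x-y|^2-|x'-y|^2}{\Theta(x')\Theta(y)}+|x-y|^2\,\frac{\Theta(x')-\Theta(x)}{\Theta(x)\Theta(x')\Theta(y)},
\end{equation*}
so that the hypothesis $\Theta(x')\Theta(y)\ge 2|x'-y|^2$ can be used directly on both denominators.

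\textbf{First term.} By (\ref{desigualtat_trivial}), the hypothesis, and $|x'-y|\ge\frac12|x-y|$, it is at most
\begin{equation*}
\frac{\frac52|x-y||x-x'|}{2|x'-y|^2}\le 5\,\frac{|x-x'|}{|x-y|}.
\end{equation*}

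\textbf{Second term.} Using that $\Theta$ is $1$-Lipschitz and the same two facts, it is at most
\begin{equation*}
\frac{|x-y|^2|x-x'|}{2|x'-y|^2\,\Theta(x)}\le 2\,\frac{|x-x'|}{\Theta(x)}.
\end{equation*}
So everything reduces to a lower bound $\Theta(x)\gtrsim|x-y|$.

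\textbf{Lower bound for $\Theta(x')$.} This is the analogue of the claim in Case 2. Since $\Theta(y)\le\Theta(x')+|x'-y|$, the hypothesis gives $2|x'-y|^2\le\Theta(x')^2+\Theta(x')|x'-y|$. Setting $t=\Theta(x')/|x'-y|$, this reads $t^2+t\ge2$, hence $t\ge1$, i.e. $\Theta(x')\ge|x'-y|\ge\frac12|x-y|$.

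\textbf{Main obstacle: passing from $\Theta(x')$ to $\Theta(x)$.} The Lipschitz property alone only gives $\Theta(x)\ge|x-y|-2|x-x'|$, which may vanish when $|x-x'|$ is close to $\frac12|x-y|$. I will resolve this using the case split already made in the proof that $\widetilde{k}_\Theta$ is Calderón–Zygmund. The situation $\Theta(x)\le\frac12\Theta(x')$ is Case 5, which is handled separately by Lemma \ref{lema:cas5}. So in the present lemma I may assume $\Theta(x)\ge\frac12\Theta(x')\ge\frac14|x-y|$, and then the second term is at most $8|x-x'|/|x-y|$.

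Adding the two bounds gives $13\,|x-x'|/|x-y|\le(5+12)|x-x'|/|x-y|$, which is (\ref{cas3}). If one wants to avoid the Case 5 assumption, the fallback is to treat $\Theta(x)<\frac14|x-y|$ directly. In that regime $\Theta(x)\ge\frac12|x-y|-|x-x'|$ would force $|x-x'|$ to be comparable to $|x-y|$, so the ratio $|x-x'|/|x-y|$ is bounded below. The difference $|A(x,y)-A(x',y)|$ would then have to be bounded crudely by a constant. However, $A(x,y)$ is not obviously bounded when $\Theta(x)$ is small, so this route is less clean, and I would rely on the Case 5 reduction.
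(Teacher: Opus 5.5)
Your proof is correct for the version of the lemma that is actually needed. Your worry about passing from $\Theta(x')$ to $\Theta(x)$ is justified: with only $|x-x'|\le\frac12|x-y|$ the inequality is false. On a line take $x=0$, $x'=\frac12 e_1$, $y=e_1$ and $\Theta(z)=\epsilon+|z|$. Then $|x-x'|=\frac12|x-y|$, and the hypothesis holds because $\frac12(\frac12+\epsilon)(1+\epsilon)\ge\frac14$. Yet the left-hand side is at least $\frac{1}{\epsilon(1+\epsilon)}-\frac12$, while the right-hand side is $\frac{17}{2}$. So your fallback route cannot succeed, and some extra hypothesis is unavoidable.

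The paper uses your splitting and your bound $5|x-x'|/|x-y|$ for the term with $|x-y|^2-|x'-y|^2$, but it repairs the lemma differently. It restricts to $|x-x'|\le\frac13|x-y|$; this is harmless for the Calder\'on--Zygmund estimate, since for larger $|x-x'|$ the size bound already suffices. It then shows $\Theta(x')\ge\frac34|x'-y|$ by contradiction and gets $\Theta(x)\ge\Theta(x')-\frac13|x-y|\ge\frac1{24}|x-y|$ from the Lipschitz property alone. You instead keep the full range and split off $\Theta(x)\le\frac12\Theta(x')$, using the sharper bound $\Theta(x')\ge|x'-y|$ from $t^2+t\ge 2$ and obtaining the better constant $13$. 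Your factor $2$ in the second term is also the right one; the paper's $\frac12$ omits the ratio $|x-y|^2/|x'-y|^2$.

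The paper's fix keeps the lemma self-contained, while yours ties it to the case bookkeeping, and there you owe one line. The proof of Lemma \ref{lema:cas5} borrows $\min(\Theta(x'),\Theta(y))\ge\frac{3}{10}|x-y|$ from the Case 4 hypothesis $A(x',y)\in(\frac12,1)$, which is unavailable when $A(x',y)\le\frac12$. Your own bound supplies what is needed: $\Theta(x')\ge\frac12|x-y|$ gives $|x-x'|\ge\Theta(x')-\Theta(x)\ge\frac12\Theta(x')\ge\frac14|x-y|$. Hence the $\widetilde{\chi}$-difference, being at most $1$, is at most $4|x-x'|/|x-y|$.
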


\begin{proof}
    The proof in this case is similar to the previous case, but now we need to be slightly more careful. Now it will be more convenient to write the left-hand side of (\ref{cas3}) as
    \begin{equation}
        \frac{|x-y|^2}{\Theta(x)\Theta(y)}- \frac{|x'-y|^2}{\Theta(x')\Theta(y)} = |x-y|^2\frac{\Theta(x')-\Theta(x)}{\Theta(x)\Theta(x')\Theta(y)} - \frac{|x'-y|^2-|x-y|^2}{\Theta(x')\Theta(y)}.\label{dos_termes_versio2}
    \end{equation}
    Using (\ref{desigualtat_trivial}) and our hypothesis, we can bound the second term in (\ref{dos_termes_versio2}) by
    $$
    \left|\frac{|x'-y|^2-|x-y|^2}{\Theta(x')\Theta(y)}\right| \leq \frac{5}{2}\frac{|x-y|\,|x-x'|}{\Theta(x')\Theta(y)} \leq \frac{5}{4}\frac{|x-y|\,|x-x'|}{|x'-y|^2}\leq 5\frac{|x-x'|}{|x-y|},
    $$
    where in the last inequality we have used that $|x'-y|\geq \frac{1}{2}|x-y|$. For the first term, we use the hypothesis to get that
    $$
    \left||x-y|^2\frac{\Theta(x')-\Theta(x)}{\Theta(x)\Theta(x')\Theta(y)}\right| \leq \frac{1}{2}\frac{|x-x'|}{\Theta(x)}.
    $$
    We see that we are in a situation similar to what we had in the previous lemma; we should be able to bound $\Theta(x)\geq a|x-y|$ for some constant $a>0$. For this, we will assume that instead of $|x-x'|\leq \frac{1}{2}|x-y|$, we have that $|x-x'|\leq \frac{1}{3}|x-y|$. This extra assumption causes no problems in showing that the suppressed kernel satisfies the inequalities required to be Calderón-Zygmund. First, since $\Theta$ is $1$-Lipshitz, we clearly have that
    $$
    \Theta(x) \geq \Theta(x')-|x-x'|\geq \Theta(x')-\frac{1}{3}|x-y| \geq \Theta(x') - \frac{2}{3}|x'-y|.
    $$
    We now claim that $\Theta(x')\geq \frac{3}{4}|x'-y|$. If this holds, we have that
    $$
    \Theta(x)\geq \frac{1}{12}|x'-y| \geq \frac{1}{24}|x-y|,
    $$
    and so (\ref{cas3}) holds. To see that our claim is true, assume that the opposite inequality holds, that is, $\Theta(x')<\frac{3}{4}|x'-y|$. In this case, 
    $$
    \frac{1}{2}\Theta(x')\Theta(y) < \frac{3}{8}|x'-y|\left(\Theta(x')+ |x'-y|\right) <\frac{21}{32}|x'-y|^2<|x'-y|^2,
    $$
    which contradicts our hypothesis that $|x'-y|^2\leq \frac{1}{2}\Theta(x')\Theta(y)$. This means that $\Theta(x')\geq \frac{3}{4}|x'-y|$, thus finishing the proof.
\end{proof}

\begin{lemma}
    Assume that $|x'-y|^2<\Theta(x')\Theta(y)<2|x'-y|^2$ and $\Theta(x)\geq \frac{1}{2}\Theta(x')$. Then,
    \begin{equation}
    \left|\frac{|x-y|^2}{\Theta(x)\Theta(y)}-\frac{|x'-y|^2}{\Theta(x')\Theta(y)}\right|  \leq\left(10+\frac{80}{3}\right) \frac{|x-x'|}{|x-y|}.\label{cas4}
    \end{equation}\label{lema:cas4}
\end{lemma}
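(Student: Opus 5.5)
We follow the template of Lemma \ref{lema:cas1}, but use the hypotheses attached to $x'$ instead of $x$. We split the left-hand side of (\ref{cas4}) as in (\ref{dos_termes_versio2}):
$$
\frac{|x-y|^2}{\Theta(x)\Theta(y)}-\frac{|x'-y|^2}{\Theta(x')\Theta(y)} = |x-y|^2\frac{\Theta(x')-\Theta(x)}{\Theta(x)\Theta(x')\Theta(y)} - \frac{|x'-y|^2-|x-y|^2}{\Theta(x')\Theta(y)}.
$$
Both denominators then contain $\Theta(x')\Theta(y)$, and this product is bounded below by $|x'-y|^2$ by hypothesis.

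For the second term, we combine (\ref{desigualtat_trivial}) with $\Theta(x')\Theta(y)>|x'-y|^2\geq \frac14|x-y|^2$. This gives
$$
\left|\frac{|x'-y|^2-|x-y|^2}{\Theta(x')\Theta(y)}\right|\leq \frac{5}{2}\cdot 4\,\frac{|x-x'|}{|x-y|}=10\,\frac{|x-x'|}{|x-y|},
$$
which accounts for the constant $10$.

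For the first term, we use the Lipschitz property $|\Theta(x')-\Theta(x)|\leq|x-x'|$ together with $|x-y|^2\leq 4|x'-y|^2<4\Theta(x')\Theta(y)$. This reduces the term to $4|x-x'|/\Theta(x)$. It remains to bound $\Theta(x)$ from below by a multiple of $|x-y|$, and this is the only real step.

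To get that bound, we first redo the argument behind (\ref{condicio_minim_original}) with $x'$ in place of $x$. Since $|x'-y|^2<\Theta(x')\Theta(y)$ and $\Theta$ is $1$-Lipschitz, we obtain $\min(\Theta(x'),\Theta(y))\geq\frac35|x'-y|$. Indeed, otherwise the maximum would exceed $\frac53|x'-y|$, the difference $|\Theta(x')-\Theta(y)|$ would exceed $|x'-y|$, and this contradicts the Lipschitz property. Hence $\Theta(x')\geq \frac35|x'-y|\geq\frac35\cdot\frac12|x-y|=\frac{3}{10}|x-y|$. The second hypothesis $\Theta(x)\geq\frac12\Theta(x')$ then gives $\Theta(x)\geq\frac{3}{20}|x-y|$. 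Consequently the first term is at most $4\cdot\frac{20}{3}\frac{|x-x'|}{|x-y|}=\frac{80}{3}\frac{|x-x'|}{|x-y|}$. Adding the two estimates yields (\ref{cas4}).

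We expect the main obstacle to be precisely this lower bound on $\Theta(x)$. Without the extra assumption $\Theta(x)\geq\frac12\Theta(x')$, nothing prevents $\Theta(x)$ from being small compared with $|x-y|$, and that situation is exactly the one left to Case 5. Throughout we use $\frac12|x-y|\leq|x'-y|\leq\frac32|x-y|$, which follows from $|x-x'|\leq\frac12|x-y|$.
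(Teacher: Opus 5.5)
Your proposal is correct and follows the paper's proof essentially step for step. It uses the same splitting (\ref{dos_termes_versio2}), the same bound $10$ coming from $\Theta(x')\Theta(y)>|x'-y|^2\geq\frac14|x-y|^2$, and the same lower bound $\Theta(x)\geq\frac12\Theta(x')\geq\frac{3}{20}|x-y|$ obtained by rerunning the argument of (\ref{condicio_minim_original}) at $x'$; the only cosmetic difference is that the paper also records the unused upper bound $\Theta(x')\Theta(y)<\frac92|x-y|^2$.
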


\begin{proof}
    We write the left-hand side of (\ref{cas4}) as in (\ref{dos_termes_versio2}).
    Exactly as in the proof of Lemma \ref{lema:cas1}, we have that
    $$
    ||x'-y|^2-|x-y|^2| \leq \frac{5}{2}|x-y|\,|x-x'|.
    $$
    Moreover, since from $|x-x'|\leq \frac{1}{2}|x-y|$ we have that
    $$
    \frac{1}{2}|x-y|\leq |x'-y|\leq \frac{3}{2}|x-y|,
    $$
    we can rewrite our hypothesis as
    $$
    \frac{1}{4}|x-y|^2 <\Theta(x')\Theta(y)<\frac{9}{2}|x-y|^2.
    $$
    Using this, we can bound the second term in (\ref{dos_termes_versio2}) by
    $$
    \left|\frac{|x'-y|^2-|x-y|^2}{\Theta(x')\Theta(y)}\right| \leq \frac{5}{2}\frac{|x-y|\,|x-x'|}{\Theta(x')\Theta(y)} \leq 10\frac{|x-y|\,|x-x'|}{|x-y|^2} = 10\frac{|x-x'|}{|x-y|},
    $$
    which is in the form that we want. For the first term in (\ref{dos_termes_versio2}), we have 
    \begin{equation}
    |x-y|^2\left|\frac{\Theta(x')-\Theta(x)}{\Theta(x)\Theta(x')\Theta(y)}\right| \leq \frac{|x-y|^2|x'-x|}{\Theta(x)\Theta(x')\Theta(y)}.\label{el_primer_terme}
    \end{equation}
    Analogously as we saw in the proof of Lemma \ref{lema:cas1}, we have that
    \begin{equation}
    \min(\Theta(x'),\Theta(y))\geq \frac{3}{5}|x'-y|\geq \frac{3}{10}|x-y|.\label{condicio_minim}
    \end{equation}

    Recall that we assume that $\Theta(x)\geq \frac{1}{2}\Theta(x')$. Combining this with the minimum condition (\ref{condicio_minim}), we have that $\Theta(x)\geq \frac{3}{20}|x-y|$. Hence, returning to (\ref{el_primer_terme}),
    $$
    \frac{|x-y|^2|x'-x|}{\Theta(x)\Theta(x')\Theta(y)} \leq 4\frac{|x-y|^2|x'-x|}{\Theta(x)|x-y|^2} = 4\frac{|x-x'|}{\Theta(x)}\leq \frac{80}{3}\frac{|x-x'|}{|x-y|},
    $$
    and we conclude.
\end{proof}

\begin{lemma}
    Assume that $\Theta(x)\leq \frac{1}{2}\Theta(x')$. Then, 
     $$
    \left|\widetilde{\chi}\left(\frac{|x-y|^2}{\Theta(x)\Theta(y)}\right)-\widetilde{\chi}\left(\frac{|x'-y|^2}{\Theta(x)\Theta(y)}\right)\right|\leq \frac{20}{3}\frac{|x-x'|}{|x-y|}.
    $$\label{lema:cas5}
\end{lemma}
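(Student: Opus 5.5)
The plan is to avoid the mean value theorem entirely and use only the trivial bound $|\widetilde{\chi}(s)-\widetilde{\chi}(s')|\le 1$. The claim then reduces to a dichotomy: either both arguments of $\widetilde{\chi}$ lie in $[1,\infty)$, so the difference vanishes, or $|x-x'|/|x-y|\geq 3/20$, so that $1\le \frac{20}{3}\frac{|x-x'|}{|x-y|}$. Throughout I write $t=|x-x'|/|x-y|\le 1/2$, which is the standing assumption from the Calderón-Zygmund verification.

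The key input is what the hypothesis $\Theta(x)\le\frac12\Theta(x')$ gives when combined with the $1$-Lipschitz property. From $\Theta(x')-\Theta(x)\le |x-x'|$ and $\Theta(x')\geq 2\Theta(x)$ I get
\[
\Theta(x)\le |x-x'| \qquad\text{and}\qquad \Theta(x')\le 2|x-x'|.
\]
Applying the Lipschitz property once more, $\Theta(y)\le \Theta(x)+|x-y|\le (1+t)|x-y|$. So both $\Theta(x)$ and $\Theta(x')$ are small compared with $|x-y|$ whenever $t$ is small.

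\textbf{First argument.} I will show that $\frac{|x-y|^2}{\Theta(x)\Theta(y)}\geq 1$ always holds here. Indeed,
\[
\Theta(x)\Theta(y)\le t(1+t)|x-y|^2\le \tfrac34|x-y|^2<|x-y|^2,
\]
using $t\le 1/2$. Hence $\widetilde{\chi}$ of the first argument equals $1$.

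\textbf{Second argument.} It remains to treat the case where the second argument is $<1$. The denominator there is written $\Theta(x)\Theta(y)$ in the statement, but $\Theta(x')\Theta(y)$ in (\ref{hem_aplicat_TVM}); I will handle both readings. I use $|x'-y|\geq (1-t)|x-y|$.
\begin{itemize}
\item With $\Theta(x')$ in the denominator, the second argument being $<1$ gives
\[
(1-t)^2|x-y|^2<\Theta(x')\Theta(y)\le 2t(1+t)|x-y|^2,
\]
that is, $t^2+4t-1>0$. This forces $t>\sqrt5-2>3/20$.
\item With $\Theta(x)$ in the denominator, the same computation gives $(1-t)^2<t(1+t)$. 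This forces $t>1/3>3/20$.
\end{itemize}
In either reading, if the difference of the $\widetilde{\chi}$ values is nonzero, then $t>3/20$, and the bound $\le 1\le\frac{20}{3}t$ concludes.

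The only delicate point is bookkeeping. One has to extract the smallness of $\Theta(x)$ and $\Theta(x')$ from the hypothesis, and then choose the right lower bound for $|x'-y|$ so that the quadratic inequality in $t$ yields a threshold above $3/20$. The upper bound for $\Theta(y)$ must also be kept as $(1+t)|x-y|$ rather than something cruder, since the cruder versions give thresholds below $3/20$.
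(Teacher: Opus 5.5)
Your proof is correct. It has the same skeleton as the paper's: bound the difference by $1$, then show $|x-x'|\geq\frac{3}{20}|x-y|$. The lower bound on $t$, however, is obtained differently.

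The paper argues in one line. It reuses from Case 4 the estimate $\min(\Theta(x'),\Theta(y))\geq\frac{3}{10}|x-y|$ and concludes $|x-x'|\geq\Theta(x')-\Theta(x)\geq\frac{1}{2}\Theta(x')\geq\frac{3}{20}|x-y|$. That reused estimate only holds when $\Theta(x')\Theta(y)>|x'-y|^2$. The paper inherits this restriction silently from its case split and never states it in the lemma; without it the estimate fails, e.g.\ when $\Theta(x')=0$ (which forces $\Theta(x)=0$).

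You instead bound $\Theta(x)$, $\Theta(x')$ and $\Theta(y)$ from above and check that the first argument of $\widetilde{\chi}$ always exceeds $1$. Hence the regime $|x'-y|^2\geq\Theta(x')\Theta(y)$ gives a zero difference outright. In the remaining regime you get $t^2+4t-1>0$. This proves the lemma exactly as stated, using only $t\leq\frac{1}{2}$, and it covers both readings of the mistyped second denominator. The paper's version is shorter only because it leans on the surrounding case analysis.

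One correction to your closing remark. It is the lower bound $|x'-y|\geq(1-t)|x-y|$ that must not be coarsened: with $\frac{1}{2}|x-y|$ in its place, the threshold drops to about $0.11$. Replacing $(1+t)|x-y|$ by $\frac{3}{2}|x-y|$ is harmless, since it still yields $t>\frac{5-\sqrt{21}}{2}\approx 0.21>\frac{3}{20}$.
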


\begin{proof}
    First, since $0\leq \widetilde{\chi}(t)\leq 1$ for all $t\in \R$,
    $$
    \left|\widetilde{\chi}\left(\frac{|x-y|^2}{\Theta(x)\Theta(y)}\right)-\widetilde{\chi}\left(\frac{|x'-y|^2}{\Theta(x)\Theta(y)}\right)\right|\leq 1.
    $$
    Once again, we have that $\min(\Theta(x'),\Theta(y))\geq \frac{3}{10}|x-y|$. Moreover, using that $\Theta$ is $1$-Lipschitz,
    $$
    |x-x'|\geq |\Theta(x)-\Theta(x')|\geq \Theta(x')-\Theta(x)\geq \frac{1}{2}\Theta(x')\geq \frac{3}{20}|x-y|.
    $$
    Hence, ${|x-x'|}/{|x-y|}\geq 3/20$, which concludes the proof.
\end{proof}

For $\varepsilon\geq 0$, and a complex Radon measure $\nu$, we set
\begin{align}
K_{\Theta,\varepsilon}\nu(x) = \int_{\R^d\setminus B(x,\varepsilon)}\widetilde{k}_{\Theta}(x,y)\,d\nu(y),\quad 
K_{\Theta, \varepsilon}^*\nu(x)=\int_{\R^d\setminus B(x,\varepsilon)}\widetilde{k}_{\Theta}(y,x)\,d\nu(y).\label{k_theta_truncat}
\end{align}
The operator $K_{\Theta, \varepsilon}$ is the ($\varepsilon$-truncated) $\Theta$\textbf{-suppressed} version of the operator $T$ and analogously for $K^{*}_{\Theta,\varepsilon}$ and $T^*$. We also set
$$
K_{\Theta,*}\nu(x) = \sup_{\varepsilon>0}|K_{\Theta,\varepsilon}\nu(x)|, \quad K_{\Theta}\nu(x) = K_{\Theta,0}\nu(x),
$$
assuming that the integral that defines $K_{\Theta,0}\nu(x)$ exists. Given $f\in L^1(\mu)$, to simplify notation, we will write
$$
K_{\Theta,\varepsilon}f:=K_{\Theta,\varepsilon}(f\mu), \quad K_{\Theta,*}f:=K_{\Theta,*}(f\mu), \quad K_{\Theta}f:=K_{\Theta}(f\mu),
$$
and, as usual, we will use the analogous notation for $K^*_{\Theta, \varepsilon}, K^*_{\Theta, *}$ and $K_{\Theta}^*$.

In the following lemma, we are going to relate the truncated operators $T_\varepsilon$ with the truncated suppressed operators $K_{\Theta, \varepsilon}$. We give estimates for both $T$ and $T^*$.

\begin{lemma}\label{diferencia_truncat_suppressed}
    Let $x\in \R^d$ and suppose that $\Theta(x)\leq \varepsilon$. Let $\sigma$ be a complex Radon measure on $\R^d$. Then, we have the following estimates,
    $$
    |T_{\varepsilon}\sigma(x)-K_{\Theta,\varepsilon}\sigma(x)| \leq c\sup_{r\geq \varepsilon}\frac{|\sigma|(B(x,r))}{r^n}, \quad |T^*_{\varepsilon}\sigma(x)-K^*_{\Theta,\varepsilon}\sigma(x)| \leq c\sup_{r\geq \varepsilon}\frac{|\sigma|(B(x,r))}{r^n}.
    $$
\end{lemma}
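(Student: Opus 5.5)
The plan is to write the difference as an integral of the kernel times $(1-\widetilde{\chi})$ and to observe that this factor is supported only at scales comparable to $\varepsilon$. Concretely, for $x$ with $\Theta(x)\le\varepsilon$,
$$
T_\varepsilon\sigma(x)-K_{\Theta,\varepsilon}\sigma(x)=\int_{|x-y|>\varepsilon}\Bigl(1-\widetilde{\chi}\Bigl(\frac{|x-y|^2}{\Theta(x)\Theta(y)}\Bigr)\Bigr)k(x,y)\,d\sigma(y).
$$
The integrand vanishes unless $|x-y|^2<\Theta(x)\Theta(y)$, because $\widetilde{\chi}=1$ on $[1,\infty)$. (If $\Theta(x)=0$, then $\widetilde{k}_\Theta=k$ and the difference is zero.)

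The key step is to localize the integrand using the $1$-Lipschitz property of $\Theta$. Since $\Theta(y)\le\Theta(x)+|x-y|\le\varepsilon+|x-y|$, the condition $|x-y|^2<\Theta(x)\Theta(y)$ gives
$$
|x-y|^2<\varepsilon(\varepsilon+|x-y|).
$$
Setting $t=|x-y|/\varepsilon$, this says $t^2-t-1<0$, so $t<\frac{1+\sqrt5}{2}<2$. Hence the integrand is supported in the annulus $\varepsilon<|x-y|<2\varepsilon$.

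On that annulus we use $|1-\widetilde{\chi}|\le1$ together with the size estimate $|k(x,y)|\le c|x-y|^{-n}\le c\,\varepsilon^{-n}$. This bounds the difference by
$$
c\,\frac{|\sigma|(B(x,2\varepsilon))}{\varepsilon^n}=2^nc\,\frac{|\sigma|(B(x,2\varepsilon))}{(2\varepsilon)^n}\le 2^nc\sup_{r\ge\varepsilon}\frac{|\sigma|(B(x,r))}{r^n}.
$$
That is the first estimate.

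For the adjoint estimate the argument is identical, with $k(y,x)$ in place of $k(x,y)$. The cutoff $\widetilde{\chi}$ is symmetric in $x$ and $y$, and the size bound $|k(y,x)|\le c|x-y|^{-n}$ still holds. The only point needing care is the localization step, which uses the hypothesis $\Theta(x)\le\varepsilon$ at the evaluation point $x$; this is exactly what the hypothesis provides in both cases, so I expect no real obstacle.
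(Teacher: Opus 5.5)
Your proof is correct, but it takes a different and more economical route than the paper's.

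The paper works with a pointwise decay estimate for the difference kernel. It applies the mean value theorem, $|\widetilde{\chi}(A)-1|\le\|\widetilde{\chi}'\|_\infty|A-1|$, and then uses $\Theta(y)\le\Theta(x)+|x-y|$ to get $|\widetilde{k}_\Theta(x,y)-k(x,y)|\lesssim\Theta(x)^2|x-y|^{-n-2}+\Theta(x)|x-y|^{-n-1}$. It integrates these tails over all dyadic annuli $2^k\varepsilon<|x-y|\le2^{k+1}\varepsilon$, sums a geometric series, and only at the end uses $\Theta(x)\le\varepsilon$.

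You use the same Lipschitz inequality for a different purpose. It shows that $\widetilde{k}_\Theta(x,\cdot)-k(x,\cdot)$ is supported in $B\bigl(x,\tfrac{1+\sqrt5}{2}\Theta(x)\bigr)$. After truncation, only the single annulus $\varepsilon<|x-y|<2\varepsilon$ contributes, and there $0\le1-\widetilde{\chi}\le1$ and $|k(x,y)|\le c\,\varepsilon^{-n}$ are enough.

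What your route buys:
\begin{itemize}
\item It needs no smoothness of $\widetilde{\chi}$, so no $\|\widetilde{\chi}'\|_\infty$ appears.
\item It gives the slightly sharper bound $c\,|\sigma|(B(x,2\varepsilon))/\varepsilon^n$, which involves a single scale.
\item It explains why the paper's tails are superfluous. On the support one has $\Theta(x)\Theta(y)/|x-y|^2>1$, so the mean-value bound is pointwise no better than the trivial bound $1$, and only the first annulus is ever nonzero.
\end{itemize}

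What the paper's template buys is robustness. Bounding $|1-\widetilde{\chi}|$ by $\Theta(x)\Theta(y)/|x-y|^2$ and summing tails would still work for a cutoff that tends to $1$ only asymptotically. Your localization relies on $\widetilde{\chi}\equiv1$ on $[1,\infty)$. For the kernel actually defined in the paper, your argument is the cleaner one, and the adjoint case goes through exactly as you say, since the cutoff is symmetric and the localization only uses $\Theta(x)\le\varepsilon$ at the evaluation point.
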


\begin{proof}
    We have
    \begin{align*}
    |\widetilde{k}_{\Theta}(x,y)-k(x,y)| &= \left|\widetilde{\chi}\left(\frac{|x-y|^2}{\Theta(x)\Theta(y)}\right)k(x,y)-k(x,y)\right|\\
    &= \left|\widetilde{\chi}\left(\frac{|x-y|^2}{\Theta(x)\Theta(y)}\right)-1\right||k(x,y)|\leq \left|\widetilde{\chi}\left(\frac{|x-y|^2}{\Theta(x)\Theta(y)}\right)-1\right|\frac{C_{CZ}}{|x-y|^n}.
    \end{align*}
    We need to estimate the difference inside the absolute value. First, note that if $|x-y|^2\geq \Theta(x)\Theta(y)$, then it is zero. Now, assume that $|x-y|^2<\Theta(x)\Theta(y)$. We apply the mean value theorem,
    $$
    \left|\widetilde{\chi}\left(\frac{|x-y|^2}{\Theta(x)\Theta(y)}\right)-1\right| = \left|\widetilde{\chi}\left(\frac{|x-y|^2}{\Theta(x)\Theta(y)}\right)-\widetilde{\chi}(1)\right| \leq \|\widetilde{\chi}'\|_{\infty}\left|\frac{|x-y|^2}{\Theta(x)\Theta(y)}-1\right|.
    $$
    Using that $|x-y|^2<\Theta(x)\Theta(y)$, we get
    \begin{align*}
        \left|\frac{|x-y|^2}{\Theta(x)\Theta(y)}-1\right| &= \left|\frac{|x-y|^2-\Theta(x)\Theta(y)}{\Theta(x)\Theta(y)}\right| = \frac{\Theta(x)\Theta(y)-|x-y|^2}{\Theta(x)\Theta(y)}\\
        &\leq \frac{\Theta(x)\Theta(y)}{|x-y|^2} \leq \frac{\Theta(x)\left(\Theta(x)+|x-y|\right)}{|x-y|^2} =\frac{\Theta(x)^2}{|x-y|^2} + \frac{\Theta(x)}{|x-y|}.
    \end{align*}
    Hence,
    $$
    |K_{\Theta,\varepsilon}\sigma(x)-T_{\varepsilon}\sigma(x)|\leq \|\widetilde{\chi}'\|_{\infty}C_{CZ}\int_{|x-y|>\varepsilon}\left(\frac{\Theta(x)^2}{|x-y|^{n+2}}+\frac{\Theta(x)}{|x-y|^{n+1}}\right)d|\sigma|(y).
    $$
    To estimate the last integrals, we write $\{x\in \R^d: |x-y|>\varepsilon\}$ as a disjoint union of annuli,
    \begin{align*}
        \int_{|x-y|>\varepsilon}\frac{1}{|x-y|^{n+2}}d|\sigma|(y) &= \sum_{k=0}^{\infty}\int_{2^k\varepsilon<|x-y|\leq 2^{k+1}\varepsilon}\frac{1}{|x-y|^{n+2}}d|\sigma|(y)\\
        &\leq \sum_{k=0}^{\infty}\frac{|\sigma|(B(x, 2^{k+1}\varepsilon))}{(2^k\varepsilon)^{n+2}}\\
        &= \frac{2^{n+2}}{\varepsilon^2}\sum_{k=0}^{\infty}\frac{|\sigma|(B(x,2^{k+1}\varepsilon))}{(2^{k+1}\varepsilon)^n}\frac{1}{4^{k+1}}\\
        &=\frac{2^{n+2}}{\varepsilon^2}\sup_{r\geq\varepsilon}\frac{|\sigma|(B(x,r))}{r^n}\sum_{k=0}^{\infty}\frac{1}{4^{k+1}}\\
        &=\frac{C_1}{\varepsilon^2}\sup_{r\geq \varepsilon}\frac{|\sigma|(B(x,r))}{r^n}.
    \end{align*}
    Analogously,
    $$
    \int_{|x-y|>\varepsilon}\frac{1}{|x-y|^{n+1}}d|\sigma|(y) \leq \frac{C_2}{\varepsilon}\sup_{r\geq \varepsilon}\frac{|\sigma|(B(x,r))}{r^n}.
    $$
    Using these two estimates, we can bound the difference
    \begin{align*}
        |K_{\Theta,\varepsilon}\sigma(x)-R^n_{\varepsilon}\sigma(x)|&\leq \|\widetilde{\chi}'\|_{\infty}\left(\frac{C_1}{\varepsilon^2}\Theta(x)^2+\frac{C_2}{\varepsilon}\Theta(x)\right)\sup_{r\geq\varepsilon}\frac{|\sigma|(B(x,r))}{r^n}\\
        &\leq c\sup_{r\geq \varepsilon}\frac{|\sigma|(B(x,r))}{r^n},
    \end{align*}
    where we have used our assumption that $\Theta(x)\leq \varepsilon$. The proof for the estimate involving $T^*_\varepsilon$ and $K^*_{\Theta,\varepsilon}$ is the same, since the role of $x$ and $y$ inside $\widetilde{\chi}$ can be interchanged in the definition of $\widetilde{k}_{\Theta}(x,y)$.
\end{proof}

Continuing in the line of the relation between the truncated operators and both truncated and suppressed operators, in the following lemma we are going to see how knowledge about the size of the first ones gives information about the size of the others.

\begin{lemma}
    Let $x\in \R^d$ and $r_1,r_2\geq 0$, possibly depending on $x$, be such that 
    \begin{itemize}
        \item $\mu(B(x,r))\leq c_0r^n$ for $r\geq \text{min}(r_1,r_2)$,
        \item $|T_\varepsilon\nu_1(x)|\leq \alpha$ for $\alpha \geq r_1$ and $|T^*_{\varepsilon}\nu_2(x)|\leq \alpha$ for $\alpha \geq r_2$.
    \end{itemize}
    Then,
    \begin{enumerate}[label=(\alph*)]
        \item If $\Theta(x)\geq r_1$, we have $ |K_{\Theta,\varepsilon}\nu_1(x)|\leq \alpha + cc_bc_0$ for all $\varepsilon>0$.
        \item If $\Theta(x)\geq r_2$, we have $ |K^*_{\Theta,\varepsilon}\nu_2(x)|\leq \alpha + cc_bc_0$ for all $\varepsilon>0$.
    \end{enumerate}
    In particular, if $\Theta(x)\geq \max(r_1,r_2)$, both estimates hold simultaneously for all $\varepsilon>0$. \label{lema_r0}
\end{lemma}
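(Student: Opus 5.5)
The second hypothesis should be read with $\varepsilon$ in place of $\alpha$: $|T_\varepsilon\nu_1(x)|\le\alpha$ for all $\varepsilon\ge r_1$, and $|T^*_\varepsilon\nu_2(x)|\le\alpha$ for all $\varepsilon\ge r_2$. With that reading, I would prove (a) by fixing $\varepsilon>0$ and splitting according to whether $\varepsilon\ge\Theta(x)$ or $\varepsilon<\Theta(x)$. Part (b) is identical, using the second estimate of Lemma \ref{diferencia_truncat_suppressed} and the adjoint kernel $\widetilde{k}_\Theta(y,x)$. The last sentence of the statement is then immediate.

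\emph{Case $\varepsilon\ge\Theta(x)$.} Here $\Theta(x)\le\varepsilon$, so Lemma \ref{diferencia_truncat_suppressed} gives
$$|K_{\Theta,\varepsilon}\nu_1(x)-T_\varepsilon\nu_1(x)|\le c\sup_{r\ge\varepsilon}\frac{|\nu_1|(B(x,r))}{r^n}.$$
Since $r\ge\varepsilon\ge\Theta(x)\ge r_1\ge\min(r_1,r_2)$, the growth hypothesis together with $|b_1|\le c_b$ gives $|\nu_1|(B(x,r))\le c_b\mu(B(x,r))\le c_bc_0r^n$. The supremum is therefore at most $c_bc_0$. Also $\varepsilon\ge r_1$, so $|T_\varepsilon\nu_1(x)|\le\alpha$, and hence $|K_{\Theta,\varepsilon}\nu_1(x)|\le\alpha+cc_bc_0$.

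\emph{Case $\varepsilon<\Theta(x)$.} I would compare $K_{\Theta,\varepsilon}\nu_1(x)$ with $K_{\Theta,\Theta(x)}\nu_1(x)$, which is already controlled by the previous case with $\varepsilon'=\Theta(x)$. The difference is the integral of $\widetilde{k}_\Theta(x,\cdot)$ over the annulus $\varepsilon\le|x-y|<\Theta(x)$, up to boundary conventions that only cost constants. By Lemma \ref{1/2_max} the kernel vanishes when $|x-y|\le\Theta(x)/2$. By Lemma \ref{desigualtat_maxim_k_tilde}, $|\widetilde{k}_\Theta(x,y)|\le c\,\Theta(x)^{-n}$. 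So the difference is bounded by
$$c\,\Theta(x)^{-n}\,|\nu_1|(B(x,\Theta(x)))\le c\,c_bc_0,$$
where the growth bound is again valid because $\Theta(x)\ge r_1$. Adding this to the first case gives $|K_{\Theta,\varepsilon}\nu_1(x)|\le\alpha+cc_bc_0$, with a larger constant $c$.

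I do not expect a real obstacle. The only points needing care are checking that every radius at which the growth condition is used is at least $\min(r_1,r_2)$, which holds since all such radii are $\ge\Theta(x)\ge r_1$, and handling the open/closed ball conventions in the truncations, which only changes constants.
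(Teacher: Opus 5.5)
Your proof is correct and follows the paper's argument essentially step for step. The paper uses the same two cases: for $\varepsilon\ge\Theta(x)$ it applies Lemma~\ref{diferencia_truncat_suppressed} together with the growth bound at radii $r\ge\varepsilon\ge\Theta(x)\ge r_1$, and for $\varepsilon<\Theta(x)$ it reduces to $K_{\Theta,\Theta(x)}\nu_1(x)$ plus the annulus $\frac12\Theta(x)\le|x-y|<\Theta(x)$, controlled by Lemma~\ref{1/2_max} and the size bound $|\widetilde{k}_\Theta(x,y)|\lesssim\Theta(x)^{-n}$. Your reading of the second hypothesis, with $\varepsilon$ in place of $\alpha$, is also the intended one.
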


\begin{proof}
    For the commodity of the reader, we will only give the details of the proof of (a). A careful inspection of our argument quickly reveals that the proof of (b) is almost identical.
    
    That being said, in order to prove (a) we will treat two cases separately. First, if $\varepsilon\geq \Theta(x)\geq r_1$, we use the previous lemma and we infer that in this situation we can bound
    $$
    |K_{\Theta,\varepsilon}\nu_1(x)| \leq |T_{\varepsilon}\nu_1(x)| + c\sup_{r\geq \varepsilon}\frac{|\nu_1|(B(x,r))}{r^n} \leq \alpha +c c_b\sup_{r\geq\varepsilon}\frac{\mu(B(x,r))}{r^n} \leq \alpha +cc_bc_0.
    $$
    Now, if the opposite inequality holds, that is, if $\varepsilon<\Theta(x)$, we can bound
    \begin{equation}
    |K_{\Theta,\varepsilon}\nu_1(x)| \leq c_b \int_{B(x,\Theta(x))}|\widetilde{k}_{\Theta}(x,y)|\,d\mu(y) + \left|\int_{\R^d\setminus B(x,\Theta(x))}\widetilde{k}_{\Theta}(x,y)\,d\nu_1(y)\right|.\label{dues_integrals}
    \end{equation}
    For the first integral, we write
    $$
    \int_{B(x,\Theta(x))}|\widetilde{k}_{\Theta}(x,y)|d\mu(y)  = \int_{B(x,\frac{1}{2}\Theta(x))}|\widetilde{k}_{\Theta}(x,y)|d\mu(y) + \int_{\frac{1}{2}\Theta(x)\leq |x-y|<\Theta(x)}|\widetilde{k}_{\Theta}(x,y)|d\mu(y).
    $$
    Since $\frac{1}{2}\Theta(x)\leq \frac{1}{2}\max(\Theta(x),\Theta(y))$, by Lemma \ref{1/2_max}, we have that $\widetilde{k}_{\Theta}(x,y)=0$ for all $y\in B(x,\frac{1}{2}\Theta(x))$. Thus, the right-hand side of the previous equation is bounded by
    $$
    \int_{\frac{1}{2}\Theta(x)\leq |x-y|<\Theta(x)}|\widetilde{k}_{\Theta}(x,y)|d\mu(y) \leq 2^n\frac{\mu(B(x,\Theta(x)))}{\Theta(x)^n} \leq 2^nc_0 = cc_0,
    $$
    since as $r_1\leq \Theta(x)$, we have that $\mu(B(x,\Theta(x)))\leq c_0\Theta(x)^n$.
    
    Lastly, the second integral in (\ref{dues_integrals}) equals $K_{\Theta,\Theta(x)}\nu_1(x)$ and, by our computation above, it is bounded by $\alpha + cc_bc_0$.
\end{proof}

Now, recall point (a) of Theorem \ref{TEOREMA}, which is the converse inequality to the one in the first hypothesis from the previous lemma,
\begin{center}
    (a) Every ball $B_r$ of radius $r$ such that $\mu(B_r)>c_0r^n$ is contained in $\bigcap_{w\in \R^d}H_{\mathcal{D}(w)}$.
\end{center}
The behavior of these balls motivates the following definition.
\begin{defi}
    Let $x\in F$. We say that a ball $B(x,r)$ is \textbf{non-}$n$\textbf{-Ahlfors} if 
    $$\mu(B(x,r))>c_0r^n.$$
    In addition, we define the $n$\textbf{-Ahlfors radius} of the point $x$ as
    \begin{align*}
        \mathcal{R}(x) &= \sup\{r>0 : \mu(B(x,r))>c_0r^n\} \\
        &=\sup\{r>0 : B(x,r)\text{\, is a non-$n$-Ahlfors ball\,}\}.
    \end{align*}
    If there does not exist any $r>0$ such that $\mu(B(x,r))>c_0r^n$, we simply set $\mathcal{R}(x)=0$.
\end{defi}
Note that for any $w\in \R^d$, all non-$n$-Ahlfors balls and the balls $B(x,e_1(x)), B(x, e_2(x))$, defined in (\ref{e1}) and (\ref{e2}), are contained in $H_{\mathcal{D}(w)}\cup S$. This observation will be essential in proving the following lemma.

\begin{lemma}
    \label{lema_distancia} Let $w\in \R^d$ and let $\Theta\colon \R^d\to [0,\infty)$ be a $1$-Lipschitz function such that 
    $$
    \Theta(x)\geq \text{dist}(x, \R^d\setminus (H_{\mathcal{D}(w)}\cup S)),
    $$
    for all $x\in \R^d$. Then, $K_{\Theta, *}\nu_1(x)\leq c_4$ and $K_{\Theta,*}^*\nu_2(x)\leq c_4$ for all $x\in F$, with $c_4$ depending on $c_0,c_b, c_*$, and $\delta_0$. 
\end{lemma}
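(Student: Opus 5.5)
The plan is to reduce everything to Lemma \ref{lema_r0}. For $x\in F$, choose radii $r_1(x), r_2(x)$ so that its two hypotheses hold, and then check that $\Theta(x)\ge \max(r_1(x),r_2(x))$. The natural candidate is
$$
r_1(x):=\max\big(\mathcal{R}(x),e_1(x)\big),\qquad r_2(x):=\max\big(\mathcal{R}(x),e_2(x)\big).
$$
These should satisfy the hypotheses of Lemma \ref{lema_r0}, reading the second hypothesis there as ``$|T_\varepsilon\nu_1(x)|\le\alpha$ for $\varepsilon\ge r_1$''.

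The two hypotheses are checked as follows.
\begin{enumerate}
\item For $r>\mathcal{R}(x)$ we have $\mu(B(x,r))\le c_0r^n$ by the definition of the $n$-Ahlfors radius. At $r=\mathcal{R}(x)$ itself the same bound follows by taking limits from the right, since $\mu(B(x,r))$ is left-continuous in $r$ for open balls and the inequality is closed in that direction; if necessary one just uses a slightly larger radius.
\item For $\varepsilon>e_1(x)$ we have $|T_\varepsilon\nu_1(x)|\le\alpha$ by the definition of $e_1$ in (\ref{e1}), both when $x\in S_0^1$ and when $x\notin S_0^1$ (where $e_1(x)=0$). The same holds for $e_2$ and $T^*$.
\end{enumerate}
The borderline $\varepsilon=e_1(x)$ can be absorbed, because $T_\varepsilon$ is a limit of $T_{\varepsilon'}$ as $\varepsilon'\downarrow\varepsilon$ up to a sphere term. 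Alternatively, apply Lemma \ref{lema_r0} with radii $r_i+\tau$ and note that $\Theta(x)\ge r_i$ gives the same bounds up to an additive $cc_bc_0$ through the estimate in Lemma \ref{diferencia_truncat_suppressed}.

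The main step is to show that $\Theta(x)\ge \max(\mathcal{R}(x),e_1(x),e_2(x))$. We use the observation stated before the lemma: every non-$n$-Ahlfors ball $B(x,r)$ lies in $H_{\mathcal{D}(w)}$ by hypothesis (a) of Theorem \ref{TEOREMA}, and $B(x,e_i(x))\subset S_i\subset S$ by definition. Hence $B(x,r)\subset H_{\mathcal{D}(w)}\cup S$ for every $r<\mathcal{R}(x)$ with $\mu(B(x,r))>c_0r^n$, and also for $r=e_i(x)$.

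If $\mathcal{R}(x)>0$, take non-Ahlfors radii $r_k\uparrow\mathcal{R}(x)$. Then $B(x,r_k)\subset H_{\mathcal{D}(w)}\cup S$ gives $\operatorname{dist}(x,\R^d\setminus(H_{\mathcal{D}(w)}\cup S))\ge r_k$, so $\Theta(x)\ge\mathcal{R}(x)$. In the same way $\Theta(x)\ge e_i(x)$. When these quantities vanish the inequality is trivial, since $\Theta\ge0$.

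With $\Theta(x)\ge\max(r_1,r_2)$ established, Lemma \ref{lema_r0} yields $|K_{\Theta,\varepsilon}\nu_1(x)|\le\alpha+cc_bc_0$ and $|K^*_{\Theta,\varepsilon}\nu_2(x)|\le \alpha+cc_bc_0$ for every $\varepsilon>0$. So $c_4=\alpha+cc_bc_0$ works. With the choice $\alpha=\max(2Ac_0c_b,8c_*/(1-\delta_0))$, this $c_4$ depends only on $c_0,c_b,c_*,\delta_0$ and on $C_{CZ}$.

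I expect the main obstacle to be the bookkeeping at endpoint radii. This means the passage from strict inequalities in the definitions of $\mathcal{R}$ and $e_i$ to the non-strict hypotheses of Lemma \ref{lema_r0}. It also means the fact that $\Theta(x)$ is only at least the distance to the complement of an open-ball union, not of a closed one. Both issues are handled by the approximation arguments above, at the cost of harmless constants.
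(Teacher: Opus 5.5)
Your proposal is correct and follows the paper's proof: you show $\Theta(x)\ge\max(\mathcal{R}(x),e_1(x),e_2(x))$ from the inclusions of non-$n$-Ahlfors balls in $H_{\mathcal{D}(w)}$ and of $B(x,e_i(x))$ in $S$, then apply Lemma \ref{lema_r0} to get $c_4=\alpha+cc_bc_0$; the paper uses a single radius $r_0=\max(\mathcal{R}(x),e_1(x),e_2(x))$ where you use $r_1,r_2$. Your endpoint bookkeeping, which the paper skips, is sound, with two small corrections: the case $\varepsilon=e_i(x)$ involves no sphere term, because $\varepsilon\mapsto T_\varepsilon\nu_1(x)$ is right-continuous for the truncation $|x-y|>\varepsilon$; and the bound at $r=\mathcal{R}(x)$ comes from $\mu(B(x,\mathcal{R}(x)))\le\mu(\overline{B}(x,\mathcal{R}(x)))=\lim_{s\downarrow\mathcal{R}(x)}\mu(B(x,s))\le c_0\mathcal{R}(x)^n$, not from left-continuity.
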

\begin{proof}
    Let $x\in F$ and call $r_0 = \max(\mathcal{R}(x), e_1(x), e_2(x))$. We claim that $\Theta(x)\geq r_0$. 
    
    Obviously, if $x\not\in H_{\mathcal{D}(w)}\cup S$, $r_0=\mathcal{R}(x)=e_1(x)=e_2(x)=0$, so the inequality holds trivially. If $x\in H_{\mathcal{D}(w)}\cup S$, we can see that $\Theta(x)\geq \mathcal{R}(x)$ and $\Theta(x)\geq \max( e_1(x),e_2(x))$ separately.
    
    First, we show that $\Theta(x)\geq \mathcal{R}(x)$. Indeed, if $\mathcal{R}(x)=0$, it is clear. If $\mathcal{R}(x)>0$, for any $0<r<\mathcal{R}(x)$, by definition of supremum there is some $r<r'\leq \mathcal{R}(x)$ such that $B(x,r')$ is non-$n$-Ahlfors, so $B(x,r')\subset H_{\mathcal{D}(w)}$. Hence, 
    $$
    \Theta(x)\geq \text{dist}(x,\R^d\setminus (H_{\mathcal{D}(w)}\cup S))\geq r'>r, \quad \text{for all}\quad r<\mathcal{R}(x),
    $$
    from which we deduce that $\Theta(x)\geq \mathcal{R}(x)$. 
    
    Now, to prove that $\Theta(x)\geq \max(e_1(x),e_2(x))$, we will only show the details of the proof of $\Theta(x)\geq e_1(x)$. The other inequality is proved analogously.
    
    If $x\not \in S^1_0$, we have that $e_1(x)=0$ and so the inequality is clear. If $x\in S_0$, then $x\in B(x,e_1(x))\subset S$, so
    $$\text{dist}(x, \R^d\setminus(H_{\mathcal{D}(w)}\cup S))\geq \text{dist}(x, \R^d\setminus (H_{\mathcal{D}(w)}\cup S_1)) \geq e_1(x),$$ 
    and in this case the inequality is also satisfied. Analogously, $\Theta(x)\geq e_2(x)$.

    We can apply the last part of Lemma \ref{lema_r0} with this choice of $r_0$. Indeed, since $r_0\geq \mathcal{R}(x)$, for $r\geq r_0$ we have that $\mu(B(x,r))\leq c_0r^n$. Also, if $\varepsilon\geq r_0\geq \max( e_1(x), e_2(x))$, by definition of $e_1(x),e_2(x)$, we have that $|T_{\varepsilon}\nu_1(x)|, |T^*_{\varepsilon}\nu_2(x)|\leq \alpha$. This allows us to deduce that for all $\varepsilon>0$, $|K_{\Theta,\varepsilon}\nu_1(x)|, |K^*_{\Theta,\varepsilon}\nu_2(x)|\leq \alpha+ cc_bc_0$. We conclude by taking supremum over $\varepsilon>0$.
\end{proof}

\subsection{Dyadic lattices and the martingale decomposition}

\subsubsection{Random dyadic lattices}
\label{sec:random_dyadic_lattices}

As we will see in the end of the proof of Theorem \ref{TEOREMA}, a probabilistic argument will play a key role in showing the $L^2(\mu\lfloor G)$ boundedness of $T$. The motivation for introducing randomness is that we are going to relate boundedness of our original operators with that of other operators arising from taking averages (or, in probabilistic terms, expectations) over the points of a set $\Omega\subset \R^d$ with which we translate the usual dyadic lattice. In this section, we define the set $\Omega$.

Let $N\geq 1$ be some big integer, which we will choose later, and define $S^0=\left[0,2^N\right]^d$. In the rest of the proof of Theorem \ref{TEOREMA}, we assume that
$$
\text{supp}(\mu)= F \subset \frac{1}{8}S^0. 
$$
Moreover, we set
$$
\Omega = \left[-2^{N-4}, 2^{N-4}\right]^d\subseteq \R^d,
$$
and we consider dyadic lattices $\mathcal{D}(w)$ as in (\ref{xarxes_traslladades}), with $w\in \Omega$. We write
\begin{equation}
Q^0(w) = w + S^0,\label{definicio_Q0}
\end{equation}
so that $Q^0(w)\in \mathcal{D}(w)$. Moreover, by our choice of $\Omega$, we have the following.

\begin{lemma}
    For all $w\in \Omega$, we have $F \subseteq \frac{1}{4}Q^0(w)$.\label{cub_1/4}
\end{lemma}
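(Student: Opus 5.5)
The claim is an elementary containment, so I will simply compare coordinates. Recall that $S^0=[0,2^N]^d$, so $\frac18 S^0$ is the cube concentric with $S^0$ of side $2^{N-3}$. That is,
$$
\tfrac18 S^0=\prod_{i=1}^d\left[2^{N-1}-2^{N-4},\,2^{N-1}+2^{N-4}\right].
$$
For $w\in\Omega$, the cube $Q^0(w)=w+S^0$ has center $w+(2^{N-1},\dots,2^{N-1})$ and side $2^N$. Hence $\frac14 Q^0(w)$ is the closed cube with that same center and half-side $2^{N-3}$, which can be written as
$$
\tfrac14 Q^0(w)=\prod_{i=1}^d\left[w_i+2^{N-1}-2^{N-3},\,w_i+2^{N-1}+2^{N-3}\right].
$$
Here "$\lambda Q$" is read as the concentric dilation of side $\lambda\ell(Q)$, which for cubes is the same as dilating the diameter, in accordance with the convention fixed in the preliminaries.

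The key step is to check the inclusion $\frac18 S^0\subset \frac14 Q^0(w)$ coordinate by coordinate. Since $w\in\Omega=[-2^{N-4},2^{N-4}]^d$, we have $|w_i|\le 2^{N-4}$ for each $i$. It therefore suffices to verify, for every $i$, the two inequalities
$$
w_i+2^{N-1}-2^{N-3}\le 2^{N-1}-2^{N-4},\qquad 2^{N-1}+2^{N-4}\le w_i+2^{N-1}+2^{N-3}.
$$
These are equivalent to $w_i\le 2^{N-3}-2^{N-4}=2^{N-4}$ and $w_i\ge 2^{N-4}-2^{N-3}=-2^{N-4}$, and both hold by the bound on $|w_i|$. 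Combined with the standing assumption $F\subset\frac18 S^0$, this gives $F\subset\frac14 Q^0(w)$.

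There is no real obstacle here. The only point needing care is the convention for dilations, namely the concentric dilation of the half-open versus closed cube. The margins match exactly at the boundary $|w_i|=2^{N-4}$, so I will treat $\frac14 Q^0(w)$ as a closed cube. Alternatively, one can note that $F$ is compact and lies in the interior when $|w_i|<2^{N-4}$. If strictness were needed, slightly shrinking $\Omega$ or enlarging the factor would remove any boundary issue without affecting the later arguments.
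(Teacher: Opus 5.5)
Your proof is correct and takes essentially the paper's approach. The paper bounds $\|x-(w+z_{S^0})\|_\infty \le \|x-z_{S^0}\|_\infty+\|w\|_\infty \le 2^{N-4}+2^{N-4}=2^{N-3}$ using the triangle inequality in the sup norm, which is your endpoint comparison written in terms of centers. Your reading of $\frac14 Q^0(w)$ as a closed cube also matches the paper, which writes it as $w+z_{S^0}+[-2^{N-3},2^{N-3}]^d$.
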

\begin{proof}
    Indeed, if we denote by $z_{Q^0(w)}$ and $z_{S^0}$ the centers of the cubes $Q^0(w)$ and $S^0$, respectively, we have that
    $$
    \frac{1}{4}Q^0(w) = w+z_{S^0}+\left[-2^{N-3}, 2^{N-3}\right]^d.
    $$
    Let $x\in F$ be arbitrary. In order to show that $x\in \frac{1}{4}Q^0(w)$, we have to prove that 
    $$
    \|x-(w-z_{S^0})\|_{\infty} \leq \frac{\ell\left(\frac{1}{4}Q^0(w)\right)}{2} = 2^{N-3}.
    $$
    Using that $x\in \frac{1}{8}S^0$, we have that $\|x-z_{S^0}\|_{\infty}\leq \frac{\ell\left(\frac{1}{8}S^0\right)}{2}=2^{N-4}$. Hence,
    $$
    \|x-(w-z_{S^0})\|_{\infty} \leq \|x-z_{S^0}\|_{\infty}+\|w\|_{\infty}\leq 2^{N-4}+2^{N-4},
    $$
    where we have used that $\|w\|_{\infty}\leq 2^{N-4}$, thus completing the proof.
\end{proof}

We will denote by $P^\Omega$ the uniform probability on $\Omega$, that is, the normalized Lebesgue measure on the cube $\Omega$.

\subsubsection{Transit and terminal cubes}

Recall that in the first section we have singled out an exceptional set with the intention of controlling the maximal functions $T_*\nu_1$ and $T^*_*\nu_2$. This will not be the only instance of the use of such a technique in the proof of Theorem \ref{TEOREMA}. In the present section, we are going to classify the dyadic cubes of any given lattice, $\mathcal{D}$, taking into account whether or not they are contained in the special sets $H_\mathcal{D}, T^1_\mathcal{D}$ or $T^2_\mathcal{D}$. We will see that, on the one hand, the cubes that are not contained inside these sets have some desirable properties regarding the growth of the $\mu$-measure of their dilates and their side lengths. On the other hand, by considering the maximal cubes contained inside the special sets, we will obtain a disjoint family of cubes with side lengths comparable to those of the better-behaved ones. We give this classification now.

Fix $w\in \Omega$ and consider the dyadic lattice $\mathcal{D}\equiv \mathcal{D}(w)$. Let $Q\in \mathcal{D}$ be contained in $Q^0_{\mathcal{D}}=Q^0(w)$ with $\mu(Q)>0$. We say that the cube $Q$ is 
\begin{itemize}
    \item \textbf{terminal of the first} (resp. \textbf{second}) \textbf{kind} if $2Q\subset H_{\mathcal{D}}$ or if $Q\subset  T^1_{\mathcal{D}}$ (resp. $2Q\subset H_\mathcal{D}$ or $Q\subset  T^2_\mathcal{D}$),
    \item \textbf{transit of the first} (resp. \textbf{second}) \textbf{kind} if it is not terminal of the first (resp. second) kind.
\end{itemize}

We denote the set of cubes that are terminal of the first kind by $\mathcal{D}^{\text{term},1}$, the set of those that are terminal of the second kind by $\mathcal{D}^{\text{term},2}$ and $\mathcal{D}^{\text{term}} = \mathcal{D}^{\text{term},1}\cup \mathcal{D}^{\text{term},2}$ and analogously with $\mathcal{D}^{\text{tr},1}$ and $\mathcal{D}^{\text{tr},2}$ for transit cubes.

Note that we assume all transit and terminal (of the first and second kind) cubes to be contained in $Q^0_{\mathcal{D}}$. This is because if a cube (or part of it) is outside $Q^0_{\mathcal{D}}$, then it cannot contribute any $\mu$-measure (or the only contribution comes from inside $Q^0_{\mathcal{D}}$).

From assumption (e) in Theorem \ref{TEOREMA}, $Q^0_{\mathcal{D}}$ is always transit of both kinds. Indeed, if it were terminal of any kind, it would be contained in $H_{\mathcal{D}}\cup T^1_{\mathcal{D}}\cup T_{\mathcal{D}}^2$, and so by (e),
$$
\mu(Q^0_{\mathcal{D}})\leq \mu(H_{\mathcal{D}}\cup T^1_{\mathcal{D}}\cup T^2_{\mathcal{D}})\leq \delta_0\mu(F)<\mu(F).
$$
However, by Lemma \ref{cub_1/4},
$$
\mu(F)\leq \mu\left(\frac{1}{4}Q^0_{\mathcal{D}}\right)\leq \mu(Q^0_{\mathcal{D}}),
$$
which contradicts the previous inequality.

\begin{lemma}
    \label{lema:comportament_cubs_transit} Let $Q\in \mathcal{D}^{\text{tr},1}\cup \mathcal{D}^{\text{tr},2}$. Then,
    $$
    \mu(\lambda Q)\leq c_0\ell(\lambda Q)^n, \quad \text{for all }\,\lambda\geq 1.
    $$
    Moreover, if $Q\in \mathcal{D}^{\text{tr},1}$, we have $\mu(Q)\leq c_{\text{acc}}|\nu_1 (Q)|$, and for $Q\in \mathcal{D}^{\text{tr},2}$, the corresponding inequality is $\mu(Q)\leq c_{\text{acc}}|\nu_2(Q)|$.
\end{lemma}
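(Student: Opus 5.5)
The plan is to argue by contradiction for the growth estimate, using hypothesis (a) of Theorem \ref{TEOREMA}, and to read the accretivity statement directly off hypothesis (d).

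Let $Q$ be transit of the first or second kind. By definition, $2Q\not\subset H_{\mathcal{D}}$, and in addition $Q\not\subset T^1_{\mathcal{D}}$ (resp. $Q\not\subset T^2_{\mathcal{D}}$).

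\emph{Accretivity.} Since $Q\in\mathcal{D}(w)$ and $Q\not\subset T^i_{\mathcal{D}}$, hypothesis (d) applies verbatim and gives $\mu(Q)\le c_{\text{acc}}|\nu_i(Q)|$. This settles the second claim, for each kind separately.

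\emph{Growth for $1\le\lambda\le 2$.} Suppose $\mu(\lambda Q)>c_0\ell(\lambda Q)^n$ for some $\lambda\ge1$. We want a ball $B$ of radius $r$ with $\mu(B)>c_0r^n$ and $2Q\subset B$. Hypothesis (a) would then give $2Q\subset B\subset\bigcap_w H_{\mathcal{D}(w)}\subset H_{\mathcal{D}}$, contradicting transit.

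The natural choice is the ball $B$ concentric with $Q$ circumscribing $\lambda Q$, of radius $r=\frac{\sqrt d}{2}\ell(\lambda Q)$. It satisfies $\mu(B)\ge\mu(\lambda Q)>c_0\ell(\lambda Q)^n\ge c_0 r^n$ only when $r\le\ell(\lambda Q)$, i.e.\ in dimension $d\le 4$. In general the constants must be matched, so we instead take the ball $B'$ of radius $\ell(\lambda Q)/2$ inscribed in $\lambda Q$.

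The step I expect to be the main obstacle is exactly this geometric mismatch between cubes and balls. The measure inequality on $\lambda Q$ does not transfer to a ball containing $2Q$ with the same constant $c_0$. The intended reading, consistent with how the paper uses $\ell(\cdot)$ and balls loosely, is that $\lambda Q$ is contained in a ball whose radius is comparable to $\ell(\lambda Q)$. I would therefore state the growth conclusion up to a dimensional constant, or interpret the cube condition via the radius-$\ell(\lambda Q)$ ball $B(z_Q,\ell(\lambda Q))\supset\lambda Q$ when $\sqrt d\le 2$. In any case, the key logical step is: a large-measure region around $Q$ containing $2Q$ is a non-$n$-Ahlfors ball, hence lies in $H_{\mathcal{D}}$.

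\emph{Growth for $\lambda\ge 2$.} Here $\lambda Q\supset 2Q$, and any ball containing $\lambda Q$ also contains $2Q$. So if the ball $B$ of radius $r\approx\ell(\lambda Q)$ containing $\lambda Q$ had $\mu(B)>c_0r^n$, then (a) would force $2Q\subset H_{\mathcal{D}}$, a contradiction. Hence $\mu(\lambda Q)\le\mu(B)\le c_0r^n\lesssim c_0\ell(\lambda Q)^n$.

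\emph{Growth for $1\le\lambda<2$.} This case follows by monotonicity from the bound at $\lambda=2$, since $\mu(\lambda Q)\le\mu(2Q)\le c_0\ell(2Q)^n\le 2^nc_0\ell(\lambda Q)^n$. The constant loss is absorbed into the paper's convention that $c_0$ may change by dimensional factors.
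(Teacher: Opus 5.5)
Your argument is correct and follows the same route as the paper: accretivity comes directly from hypothesis (d), and the growth bound comes from applying the contrapositive of hypothesis (a) to a concentric ball containing $\lambda Q$. Your version is in fact more careful than the paper's one-line proof, for two reasons. First, you use the consequence $2Q\not\subset H_{\mathcal{D}}$ that the definition of transit actually gives, which forces a ball containing $2Q$ and hence your split into $\lambda\ge 2$ plus monotonicity for $1\le\lambda<2$. Second, you rightly observe that the paper's inclusion $\lambda Q\subset B(z_Q,\lambda\ell(Q))$ fails for $d\ge 5$, so the bound holds with $c_0$ replaced by a dimensional multiple such as $c_0d^{n/2}$, which is all that is used later. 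Drop the inscribed-ball aside: $\mu(B')\le\mu(\lambda Q)$ goes the wrong way, and your final argument does not need it.
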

\begin{proof}
    Since $Q$ is transit, $Q\not\subset H_{\mathcal{D}}\cup T^1_{\mathcal{D}}\cup T^2_{\mathcal{D}}$. In particular, $Q\not\subset H_{\mathcal{D}}$ and so by assumption (a) of Theorem \ref{TEOREMA}, any ball containing $Q$ satisfies $\mu(B(x,r))\leq c_0r^n$. Hence, if $z_Q$ is the center of $Q$,
    $$
    \mu(\lambda Q)\leq \mu(B(z_Q, \lambda \ell(Q)))\leq c_0\ell(Q)^n.
    $$
    Also, from $Q\not\subset T^1_{\mathcal{D}}\cup T^2_{\mathcal{D}}$, from assumption (d) in Theorem \ref{TEOREMA}, we have that $\mu(Q)\leq c_{acc}|\nu_i(Q)|$, for $i=1,2$.
\end{proof}

\subsubsection{The martingale decomposition}

So far, our classification of dyadic cubes of a given lattice has only given us information about the measure $\mu$ on the dilates of the cubes and the relation between the $\mu$-measure and the $\nu_i$-measure of said cubes. However, the distinction between transit and terminal cubes has much more to offer. Indeed, in the following pages, we are going to define for each transit cube an operator on $L^2(\mu)$ and with a little work we will be able to express any function from this space as a sum of these operators acting on our function. This is a standard technique in Harmonic Analysis and will involve the averages of our functions over the transit cubes, together with those of the functions $b_1$ and $b_2$.

Let $f\in L^1_{\text{loc}}(\mu)$, $Q$ and any cube with $\mu(Q)\neq 0$, we put
$$
\langle f\rangle_Q = \frac{1}{\mu(Q)}\int_Qf\,d\mu.
$$
\begin{obs}
    An observation which will be useful below is what follows. If $Q\in \mathcal{D}^{\text{tr},i}$, then $|\langle b_i\rangle_Q|\geq c_{\text{acc}}^{-1}$. Indeed, we can write
    $$
    |\langle b_i\rangle_Q| = \frac{1}{|\mu(Q)|}\left|\int_Qb_i\,d\mu\right| = \frac{|\nu_i(Q)|}{|\mu(Q)|}\geq \frac{|\nu_i(Q)|}{c_{\text{acc}}|\nu_i(Q)|}= c_{\text{acc}}^{-1},
    $$
    because if $Q\in \mathcal{D}^{\text{tr},i}$, in particular, $Q\not \subset T^i_{\mathcal{D}}$. \label{obs:martingala_b_transit}
\end{obs}

Moreover, we define the operators $\Xi_1$ and $\Xi_2$ as
$$
\Xi_1 f = \frac{\langle f\rangle_{Q^0_{\mathcal{D}}}}{\langle b_1\rangle_{Q^0_{\mathcal{D}}}}b_1, \quad \Xi_2 f = \frac{\langle f\rangle_{Q^0_{\mathcal{D}}}}{\langle b_2\rangle_{Q^0_{\mathcal{D}}}}b_2,
$$
where $b_1$ and $b_2$ are the functions from Theorem \ref{TEOREMA}. Since $Q^0_{\mathcal{D}}$ is always transit, by Lemma \ref{lema:comportament_cubs_transit}, we have that neither $\langle b_1\rangle_{Q_\mathcal{D}^0}$ nor $\langle b_2\rangle_{Q^0_{\mathcal{D}}}$ vanish. Moreover, the definition of $\Xi_i$ does not depend on the choice of the lattice $\mathcal{D}(w)$, $w\in \Omega$, since $Q^0_\mathcal{D}$ contains the support of $\mu$. We display some more useful facts concerning these operators in the following lemma.

\begin{lemma}
    \label{lema:propietats_xi}For $i=1,2$, we have that
    \begin{enumerate}[label=(\alph*)]
        \item $\Xi_i\,\colon L^2(\mu)\to L^2(\mu)$ are bounded, with norm depending on $\|b_i\|_{\infty}$ and $\langle b_i\rangle_{Q^0_{\mathcal{D}}}$.
        \item $\Xi_i^2 =\Xi_i$, where $\Xi_i^2$ means the composition of $\Xi_i$ with itself.
        \item The adjoint of $\Xi_i$ in $L^2(\mu)$ is
        $$
        \Xi_i^*f = \frac{\langle fb_i\rangle_{Q_\mathcal{D}^0}}{\langle b_i\rangle_{Q^0_{\mathcal{D}}}}.
        $$
    \end{enumerate}
\end{lemma}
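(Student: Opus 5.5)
The plan is to verify the three properties directly from the definition of $\Xi_i$. The only structural inputs are that $\mu$ is finite with support $F\subset Q^0_{\mathcal{D}}$, and that $\langle b_i\rangle_{Q^0_{\mathcal{D}}}\neq 0$. The latter holds because $Q^0_{\mathcal{D}}$ is transit of both kinds, so Remark \ref{obs:martingala_b_transit} gives the quantitative bound $|\langle b_i\rangle_{Q^0_{\mathcal{D}}}|\geq c_{\text{acc}}^{-1}$.

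\textbf{(a) Boundedness.} For $f\in L^2(\mu)$, Cauchy--Schwarz gives
$$
|\langle f\rangle_{Q^0_{\mathcal{D}}}|\leq \mu(Q^0_{\mathcal{D}})^{-1/2}\|f\|_{L^2(\mu)}.
$$
Also $\|b_i\|_{L^2(\mu)}\leq \|b_i\|_{\infty}\,\mu(Q^0_{\mathcal{D}})^{1/2}$, since $\mu$ lives in $Q^0_{\mathcal{D}}$. Multiplying these,
$$
\|\Xi_i f\|_{L^2(\mu)}\leq \frac{\|b_i\|_\infty}{|\langle b_i\rangle_{Q^0_{\mathcal{D}}}|}\|f\|_{L^2(\mu)}\leq c_{\text{acc}}c_b\|f\|_{L^2(\mu)}.
$$

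\textbf{(b) Idempotence.} Averaging $\Xi_i f$ over $Q^0_{\mathcal{D}}$ gives
$$
\langle \Xi_i f\rangle_{Q^0_{\mathcal{D}}}=\frac{\langle f\rangle_{Q^0_{\mathcal{D}}}}{\langle b_i\rangle_{Q^0_{\mathcal{D}}}}\langle b_i\rangle_{Q^0_{\mathcal{D}}}=\langle f\rangle_{Q^0_{\mathcal{D}}},
$$
which implies $\Xi_i(\Xi_i f)=\Xi_i f$.

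\textbf{(c) Adjoint.} I would use the bilinear pairing $\langle f,g\rangle=\int fg\,d\mu$ that the paper uses for adjoints. Since $\operatorname{supp}\mu\subset Q^0_{\mathcal{D}}$,
$$
\int \Xi_i f\, g\,d\mu=\frac{\langle f\rangle_{Q^0_{\mathcal{D}}}}{\langle b_i\rangle_{Q^0_{\mathcal{D}}}}\int b_ig\,d\mu=\frac{\mu(Q^0_{\mathcal{D}})\langle f\rangle_{Q^0_{\mathcal{D}}}\langle b_ig\rangle_{Q^0_{\mathcal{D}}}}{\langle b_i\rangle_{Q^0_{\mathcal{D}}}}.
$$
The right-hand side equals $\int f\,\frac{\langle gb_i\rangle_{Q^0_{\mathcal{D}}}}{\langle b_i\rangle_{Q^0_{\mathcal{D}}}}\,d\mu$, so $\Xi_i^*g$ is the constant $\langle gb_i\rangle_{Q^0_{\mathcal{D}}}/\langle b_i\rangle_{Q^0_{\mathcal{D}}}$, viewed as a function on $\operatorname{supp}\mu$. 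With the Hermitian inner product instead, the same computation gives the complex conjugate of this formula.

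\textbf{Main obstacle.} Nothing here is substantive. The only care needed is to justify that $\langle b_i\rangle_{Q^0_{\mathcal{D}}}\neq0$, which Remark \ref{obs:martingala_b_transit} provides with a lower bound, and to state which pairing defines the adjoint.
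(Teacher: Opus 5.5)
Your proposal is correct and is essentially the intended argument: the paper omits the proof as a routine computation, and your direct check of (a)--(c) is that computation. You use $\text{supp}(\mu)\subset Q^0_{\mathcal{D}}$ together with Remark~\ref{obs:martingala_b_transit} to get $|\langle b_i\rangle_{Q^0_{\mathcal{D}}}|\geq c_{\text{acc}}^{-1}$, and your bilinear pairing $\int fg\,d\mu$ is the convention consistent with the formula stated in (c).
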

These properties are verified by means of simple computations, which we omit here.

Recall that, for a fixed dyadic cube $Q\in \mathcal{D}$, the set of, at most $2^d$, children of $Q$ is denoted by $\mathcal{CH}(Q)$. From now on, we will use the same notation to refer to the set of children whose $\mu$-measure is not zero.

For any cube $Q\in \mathcal{D}^{\text{tr},i}$ and any $f\in L^1_{\text{loc}}(\mu)$, we define two new functions, $\Delta_{i, Q}f$, $i=1,2$, as follows,
\begin{align*}
    \Delta_{i,Q}f = \begin{cases}
        0 \qquad &\text{in }\, \R^d\setminus \bigcup_{P\in \mathcal{CH}(Q)}
P,\\
\left(\frac{\langle f\rangle_{P}}{\langle b_i\rangle_P}-\frac{\langle f\rangle_Q}{\langle b_i\rangle_Q}\right)b_i\qquad &\text{in }\, P\,\text{ if }\, P\in \mathcal{CH}(Q)\cap \mathcal{D}^{\text{tr},i},\\
f-\frac{\langle f\rangle_Q}{\langle b_i\rangle_Q}b_i&\text{in }\, P\,\text{ if }P\in \mathcal{CH}(Q)\cap \mathcal{D}^{\text{term},i}.
\end{cases}
\end{align*}

We collect some properties of the operators $\Delta_{i, Q}$ in the next lemma.
\begin{lemma}\label{lema:propietats_delta}
    For any $f\in L^2(\mu)$ and all $Q\in \mathcal{D}^{\text{tr},i}$,
    \begin{enumerate}[label=(\alph*)]
        \item $\Delta_{i,Q}f\in L^2(\mu)$,
        \item $\int \Delta_{i,Q}f d\mu=0$,
        \item $\Delta_{i,Q}$ is a projection, that is, $\Delta^2_{i,Q}= \Delta_{i,Q}$,
        \item $\Delta_{i,Q}\Xi_i = \Xi_i\Delta_{i,Q}=0$,
        \item if $R\in \mathcal{D}^{\text{tr},i}$, then $\Delta_{i,Q}\Delta_{i,R}=0$,
        \item The adjoint of $\Delta_{i,Q}$ is
        \begin{align*}
    \Delta^*_{i,Q}f = \begin{cases}
        0 \qquad &\text{in }\, \R^d\setminus \bigcup_{P\in \mathcal{CH}(Q)}
P,\\
\frac{\langle fb_i\rangle_{P}}{\langle b_i\rangle_P}-\frac{\langle fb_i\rangle_Q}{\langle b_i\rangle_Q}\qquad &\text{in }\, P\,\text{ if }\, P\in \mathcal{CH}(Q)\cap \mathcal{D}^{\text{tr},i},\\
f-\frac{\langle fb_i\rangle_Q}{\langle b_i\rangle_Q}&\text{in }\, P\,\text{ if }P\in \mathcal{CH}(Q)\cap \mathcal{D}^{\text{term},i}.
\end{cases}
\end{align*}
    \end{enumerate}
\end{lemma}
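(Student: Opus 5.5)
The plan is to reduce everything to a few elementary identities for the ``$b_i$-weighted averaging'' operators. For a cube $P$ with $\mu(P)>0$ and $\langle b_i\rangle_P\neq 0$, set
$$E_P f=\frac{\langle f\rangle_P}{\langle b_i\rangle_P}\,b_i\,\chi_P .$$
Then, for $Q\in\mathcal{D}^{\text{tr},i}$,
$$\Delta_{i,Q}f=\sum_{P\in\mathcal{CH}(Q)\cap\mathcal{D}^{\text{tr},i}}E_Pf+\sum_{P\in\mathcal{CH}(Q)\cap\mathcal{D}^{\text{term},i}}f\chi_P-E_Qf .$$
(Since the children have positive measure and cover $Q$ up to a $\mu$-null set, $\chi_Q=\sum_P\chi_P$ $\mu$-a.e.) The basic facts I will use repeatedly are:
\begin{itemize}
\item $\int_P E_Pf\,d\mu=\int_P f\,d\mu$;
\item $E_P(cb_i)=cb_i\chi_P$ for any constant $c$;
\item $|\langle b_i\rangle_P|\ge c_{\text{acc}}^{-1}$ for transit $P$, by Remark \ref{obs:martingala_b_transit}.
\end{itemize}

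Part (a) then follows from $\|b_i\|_\infty\le c_b$, the lower bound on $|\langle b_i\rangle_P|$, and Cauchy--Schwarz, which gives $|\langle f\rangle_P|\le\mu(P)^{-1/2}\|f\|_{L^2(\mu)}$.

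For (b), I integrate the decomposition child by child. Each child contributes exactly $\int_P f\,d\mu$ (for transit children by the first fact above, for terminal ones trivially). The sum is therefore $\int_Q f\,d\mu$, which cancels with $\int E_Qf\,d\mu=\int_Q f\,d\mu$.

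For (c), set $g=\Delta_{i,Q}f$. It is supported in $Q$ and, by (b), $\langle g\rangle_Q=0$, so the $E_Q$ term vanishes. For a transit child $P$, a direct computation gives
$$\frac{\langle g\rangle_P}{\langle b_i\rangle_P}=\frac{\langle f\rangle_P}{\langle b_i\rangle_P}-\frac{\langle f\rangle_Q}{\langle b_i\rangle_Q},$$
so $E_Pg=g$ on $P$. On terminal children $g\chi_P=g$ as well, hence $\Delta_{i,Q}g=g$.

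For (d), I use two observations:
\begin{itemize}
\item $\Xi_if$ is a constant multiple of $b_i$, and $\Delta_{i,Q}(cb_i)=0$ because every piece of the decomposition reproduces $cb_i$ and then cancels.
\item $\Xi_i\Delta_{i,Q}f$ is proportional to $\langle\Delta_{i,Q}f\rangle_{Q^0_{\mathcal{D}}}$, which is $0$ by (b).
\end{itemize}

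For (e), the statement should read with $R\neq Q$. I split according to the relative position of the two cubes.
\begin{itemize}
\item If $R\cap Q=\varnothing$, the supports are disjoint and there is nothing to prove.
\item If $R\subsetneq Q$, let $P\in\mathcal{CH}(Q)$ be the child with $R\subset P$. I claim $P$ is transit: if $2P\subset H_{\mathcal{D}}$ or $P\subset T^i_{\mathcal{D}}$, then also $2R\subset H_{\mathcal{D}}$ or $R\subset T^i_{\mathcal{D}}$, contradicting that $R$ is transit. Then $g=\Delta_{i,R}f$ is supported in $P$ with zero mean, so $\langle g\rangle_P=\langle g\rangle_Q=0$ and $g$ vanishes on the other children. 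Hence $\Delta_{i,Q}g=0$.
\item If $Q\subsetneq R$, the same hereditary argument shows that the child $P'$ of $R$ containing $Q$ is transit. So $\Delta_{i,R}f=c\,b_i$ on $P'\supset Q$, and $\Delta_{i,Q}$ kills it by the remark in (d).
\end{itemize}
This hereditary property (a cube inside a terminal cube is terminal) is the one genuinely structural point, and I expect it to be the main thing to get right.

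Finally, for (f), I compute $\int(\Delta_{i,Q}f)\,g\,d\mu$ with the bilinear pairing, consistently with Lemma \ref{lema:propietats_xi}(c). For each piece I use
$$\int E_Pf\cdot g\,d\mu=\mu(P)\,\frac{\langle f\rangle_P\,\langle gb_i\rangle_P}{\langle b_i\rangle_P},$$
which is symmetric under exchanging the roles of $f$ and $gb_i$. Rewriting it as $\int f\cdot\frac{\langle gb_i\rangle_P}{\langle b_i\rangle_P}\chi_P\,d\mu$, and treating the terminal pieces trivially, yields exactly the stated formula for $\Delta^*_{i,Q}$.
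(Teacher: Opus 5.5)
Your proof is correct and follows essentially the same route as the paper, which proves (b)--(d) by the same child-by-child computations and sketches (e) by the same case split on the relative position of $Q$ and $R$. Your explicit hereditary argument (the child containing the smaller transit cube must itself be transit), your remark that (e) needs $R\neq Q$, and your bilinear-pairing computation of (f) fill in details the paper leaves implicit, and they are consistent with its conventions.
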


\begin{proof}
    The proof of assertions (a)-(f) is similar to the proof of Lemma \ref{lema:propietats_xi}, which we omitted, but the computations here are somewhat more involved. This is why we show the proofs of (b), (c), (d) and give an indication for (e).

    Starting with (b), for $Q\in \mathcal{D}^{\text{tr}, i}$, we can enumerate its children as $\{P_k\}_{k=1}^{2^d}$. Then,
    \begin{equation}
    \int \Delta_{i,Q}f\,d\mu = \int_{Q} \Delta_{i,Q}f\,d\mu = \sum_{k=1}^{2^d}\int_{P_k}\Delta_{i,Q}f\,d\mu.\label{calcul_b}
    \end{equation}
    First, if $P_k\in \mathcal{CH}(Q)\cap \mathcal{D}^{\text{tr},i}$,
    \begin{align*}
        \int_{P_k}\Delta_{i,Q}f\,d\mu &= \int_{P_k}\left(\frac{\langle f\rangle_{P_k}}{\langle b_i\rangle_{P_k}}-\frac{\langle f\rangle_Q}{\langle b_i\rangle_{Q}}\right)b_i\,d\mu\\
        &= \langle f\rangle_{P_k}\mu(P_k) - \frac{\langle f\rangle_{Q}}{\langle b_i\rangle_Q}\mu(P_k)\langle b_i\rangle_{P_k} = \int_{P_k}f\,d\mu - \frac{\langle f\rangle_Q}{\langle b_i\rangle_Q}\int_{P_k}b_i\,d\mu.
    \end{align*}
    Second, if $P\in \mathcal{CH}(Q)\cap \mathcal{D}^{\text{term},i}$, by definition of $\Delta_{i,Q}$ it is obvious that
    $$
    \int_{P_k}\Delta_{i,Q}f\,d\mu = \int_{P_k}f\,d\mu - \frac{\langle f\rangle_Q}{\langle b_i\rangle_Q}\int_{P_k}b_i\,d\mu.
    $$
    Lastly, if $P_k\in \R^d\setminus \cup_{P\in \mathcal{CH}(Q)}P$,
    $$
    \int_{P_k}\Delta_{i,Q}f\,d\mu=0=\int_{P_k}f\,d\mu - \frac{\langle f\rangle_Q}{\langle b_i\rangle_Q}\int_{P_k}b_i\,d\mu.
    $$
    Hence, we can return to (\ref{calcul_b}) and we find that
    \begin{align*}
        \int \Delta_{i,Q}f \,d\mu = \sum_{k=1}^{2^d}\int_{P_k}f\,d\mu - \frac{\langle f\rangle_Q}{\langle b_i\rangle_Q}\sum_{k=1}^{2^d}\int_{P_k}b_i\,d\mu = \int_Qf\,d\mu - \frac{\langle f\rangle_Q}{\langle b_i\rangle_Q}\int b\,d\mu = 0.
    \end{align*}
    To show (c), note that using (b) we can write
    \begin{align*}
        \Delta^2_{i,Q}f = \Delta_{i,Q}(\Delta_{i,Q}f) = \begin{cases}
            0 \qquad &\text{in }\, \R^d\setminus \bigcup_{P\in \mathcal{CH}(Q)}P,\\
            \frac{\langle \Delta_{i,Q} f\rangle_P}{\langle b_i\rangle_P}, \qquad &\text{in }\, P\,\text{ if }\, P\in \mathcal{CH}(Q)\cap \mathcal{D}^{\text{tr},i},\\
            \Delta_{i,Q}f\qquad &\text{in }\, P\,\text{ if }\, P\in \mathcal{CH}(Q)\cap \mathcal{D}^{\text{term},i}.
        \end{cases}
    \end{align*}
    Hence, we only need to compute, for $P\in \mathcal{CH}(Q)\cap \mathcal{D}^{\text{tr},i}$,
    \begin{align*}
        \langle \Delta_{i,Q}f\rangle_P = \frac{1}{\mu(P)}\left(\frac{\langle f\rangle_P}{\langle b_i\rangle_P}-\frac{\langle f\rangle_Q}{\langle b_i\rangle_Q}\right)b\,d\mu = \langle f\rangle_P- \frac{\langle f\rangle_Q}{\langle b_i\rangle_Q}\langle b_i\rangle_P.
    \end{align*}
    Multiplying by $\frac{1}{\langle b_i\rangle_P}$, we obtain our claim. For (d), first we check that $\Delta_{i,Q}\Xi_if = 0$. We can write
    $$
    \Delta_{i,Q}\Xi_i f = \Delta_{i,Q} \left(\frac{\langle f\rangle_{Q^0_{\mathcal{D}}}}{\langle b_i\rangle_{Q^0_{\mathcal{D}}}}b\right) = \frac{\langle f\rangle_{Q^0_{\mathcal{D}}}}{\langle b_i\rangle_{Q^0_{\mathcal{D}}}}\Delta_{i,Q}b_i.
    $$
    That is, we need to show that $\Delta_{i,Q}b_i=0$. This is verified by a very quick computation that we will omit. Now, using (b), we have that
    $$
    \Xi_i\Delta_{i,Q}f = \frac{\langle \Delta_{i,Q}f\rangle_{Q^0_{\mathcal{D}}}}{\langle b_i\rangle_{Q^0_{\mathcal{D}}}}b_i = 0,
    $$
    which finishes the proof of (d). Lastly, for the proof of (e), one needs to distinguish whether $Q\cap R=\varnothing$ or if they have intersection. Furthermore, one can fix which of $Q$ and $R$ has biggest side length and then use the fact that if two dyadic cubes have intersection and one has smaller side length than the other, the first is contained in only one of the children of the second.
\end{proof}

Now, let us show the main purpose of introducing such operators in $L^2(\mu)$. The following lemma, which will be key in the rest of the proof of Theorem \ref{TEOREMA}, shows that we can write any function $f\in L^2(\mu)$ as the sum of the operators $\Delta_{i,Q}$ and $\Xi_i$ applied to $f$.

\begin{lemma}
    \label{lema_descomposicio_L2} For $i=1,2$, for any $f\in L^2(\mu)$, we have the decomposition
    \begin{equation}
    f = \Xi_i f + \sum_{Q\in \mathcal{D}^{\text{tr},i}}\Delta_{i,Q}f,\label{descomposicio_L2}
    \end{equation}
    where the sum is unconditionally convergent in $L^2(\mu)$. Moreover, there exists some constant $c_3$ depending on $c_b$ and $c_{\text{acc}}$ such that
    \begin{equation}
    c_3^{-1}\|f\|^2_{L^2(\mu)}\leq \|\Xi_i f\|_{L^2(\mu)}^2 + \sum_{Q\in \mathcal{D}^{\text{tr},i}}\|\Delta_{i,Q}f\|^2_{L^2(\mu)}\leq c_3\|f\|^2_{L^2(\mu)}.\label{desigualtats_descomposicio_L2}
    \end{equation}
\end{lemma}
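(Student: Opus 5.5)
We prove the decomposition first on finite truncations and then pass to the limit. Fix $i$ and, for $m\geq 0$, let $\mathcal{D}^{\text{tr},i}_m$ be the transit cubes of the $i$-th kind of generation at most $m$ below $Q^0_{\mathcal{D}}$. A telescoping computation gives
$$
\Xi_i f + \sum_{Q\in \mathcal{D}^{\text{tr},i}_m}\Delta_{i,Q}f = E_m f.
$$
Here $E_m f$ is defined on each stopping cube $P$ as follows.
\begin{itemize}
\item If $P$ is a transit cube of generation $m+1$ all of whose ancestors up to $Q^0_{\mathcal{D}}$ are transit, then $E_m f=\frac{\langle f\rangle_P}{\langle b_i\rangle_P}b_i$ on $P$.
\item If $P$ is a terminal cube whose parent is transit (these are the maximal terminal cubes), then $E_m f=f$ on $P$.
\end{itemize}
The telescoping works because on each child $P$ the term $-\frac{\langle f\rangle_Q}{\langle b_i\rangle_Q}b_i$ cancels the contribution from the parent level. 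We must also check that a terminal cube has no transit descendants contributing. Here we should be careful: the definition of transit does not obviously force the transit family to be a tree. If some transit cube has a terminal ancestor, we would instead restrict the sum to the transit cubes whose ancestors are all transit, or observe that $H_{\mathcal{D}}$ and $T^i_{\mathcal{D}}$ are unions of dyadic cubes, so that containment is inherited by children. (Note that $2Q\subset H$ implies $2P\subset H$ for $P\subset Q$ only approximately; we will use $P\subset Q\subset H$ together with the stopping structure.) The point is that, for $\mu$-a.e. $x$, either $x$ eventually lies in a maximal terminal cube, where $E_mf(x)=f(x)$, or $x$ lies in transit cubes $Q_m(x)$ shrinking to $x$. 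In the second case Lebesgue differentiation, in the dyadic version (Theorem stated in Section 1), gives $\frac{\langle f\rangle_{Q_m}}{\langle b_i\rangle_{Q_m}}b_i\to \frac{f(x)}{b_i(x)}b_i(x)=f(x)$, using $|\langle b_i\rangle_{Q}|\geq c_{\text{acc}}^{-1}$ from Remark \ref{obs:martingala_b_transit}.

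Next we need $L^2$ control of $E_m$. We have $|E_mf|\leq c_{\text{acc}}c_b M_{\mu,d}f+|f|$, so Theorem \ref{diferenciacio_diadic} and dominated convergence give $E_mf\to f$ in $L^2(\mu)$. This yields \eqref{descomposicio_L2} along the natural ordering by generation.

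For the square function bounds \eqref{desigualtats_descomposicio_L2}, we first prove the upper bound. On a transit child $P$ of $Q$ we write
$$
\frac{\langle f\rangle_P}{\langle b_i\rangle_P}-\frac{\langle f\rangle_Q}{\langle b_i\rangle_Q},
$$
and on terminal children we bound $\|\Delta_{i,Q}f\|$ by $\|f\chi_P\|$ plus an average term. The terminal pieces sum to at most $\|f\|^2$, since the maximal terminal cubes are disjoint. The average terms are controlled through the dyadic Carleson embedding (Theorem \ref{carleson_diadic}) with $a_Q=\mu(Q)$ restricted appropriately. This is the main obstacle: $\sum_Q|\langle f\rangle_Q|^2\mu(Q)$ is not bounded without cancellation. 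The standard remedy is to rewrite the differences as martingale differences of the ordinary conditional expectation $\mathbb{E}_Q f$, corrected by differences of $\langle b_i\rangle$. Concretely,
$$
\frac{\langle f\rangle_P}{\langle b_i\rangle_P}-\frac{\langle f\rangle_Q}{\langle b_i\rangle_Q}=\frac{\langle f\rangle_P-\langle f\rangle_Q}{\langle b_i\rangle_P}+\langle f\rangle_Q\frac{\langle b_i\rangle_Q-\langle b_i\rangle_P}{\langle b_i\rangle_P\langle b_i\rangle_Q}.
$$
The first term is bounded via orthogonality of the standard Haar martingale differences. The second term is bounded via Carleson with $a_Q=\sum_{P}|\langle b_i\rangle_P-\langle b_i\rangle_Q|^2\mu(P)$, which satisfies \eqref{condicio_carleson} with $c_2\lesssim c_b^2$ by the $L^2$ orthogonality of the martingale differences of $b_i\chi_R$. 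The $\Xi_i$ term is trivially bounded by Lemma \ref{lema:propietats_xi}.

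The lower bound then follows by duality. The adjoint operators $\Delta^*_{i,Q}$ and $\Xi_i^*$ (Lemma \ref{lema:propietats_delta}(f)) are of the same form with $b_i$ moved, and the same argument gives them the same square function upper bound; we also have the dual decomposition $g=\Xi_i^*g+\sum_Q\Delta^*_{i,Q}g$. Using the projection identities (c)--(e), we get $\langle f,g\rangle=\langle \Xi_if,\Xi_i^*g\rangle+\sum_Q\langle \Delta_{i,Q}f,\Delta_{i,Q}^*g\rangle$. Cauchy--Schwarz, together with the upper bound for the adjoints, then gives $\|f\|^2\lesssim \|\Xi_if\|^2+\sum_Q\|\Delta_{i,Q}f\|^2$. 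Finally, unconditional convergence follows from the upper bound applied to tails combined with this almost-orthogonality.
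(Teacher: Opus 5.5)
Your proposal is correct and follows essentially the paper's route. You use telescoping plus dyadic Lebesgue differentiation to identify the limit. For the upper bound you use orthogonality of the Haar differences $D_Q$ together with the Carleson embedding, with $a_Q=\|D_Qb_i\|^2_{L^2(\mu)}$ playing the role of the paper's $\|\chi_QD_{\widehat Q}b_i\|^2_{L^2(\mu)}$. For the lower bound you use duality against the adjoint system $\Xi_i^*,\Delta^*_{i,Q}$. Unconditional convergence then comes from the lower bound applied to finite sums.

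The main organizational difference is the order of the steps. You first get the generation-ordered convergence by dominated convergence with $M_{\mu,d}f$, and then run the duality argument on $f$ directly. The paper instead proves the lower bound on finite sums, deduces unconditional convergence, and only then identifies the limit along an a.e.\ convergent subsequence.

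The point you left hedged is not actually a problem, and your remark that it holds ``only approximately'' is wrong: the transit cubes form a tree exactly. For any cube $P\subset Q$ one has $2P\subset 2Q$. Also $P\subset Q\subset T^i_{\mathcal{D}}$ whenever $Q\subset T^i_{\mathcal{D}}$. Hence every descendant (with positive measure) of a terminal cube is terminal.

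So no restriction of the sum is needed; restricting it would in fact change the statement. This same fact is also what makes the maximal terminal cubes disjoint and what makes property (e) of Lemma \ref{lema:propietats_delta} valid.
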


\begin{proof}
    We are going to follow four steps to prove the lemma. First, we will prove the second inequality in (\ref{desigualtats_descomposicio_L2}) and the first one for finite sums. Afterwards, we will prove (\ref{descomposicio_L2}) and lastly, the first inequality in (\ref{desigualtats_descomposicio_L2}) in full generality.

    \underline{\textbf{Step 1:} the second inequality in (\ref{desigualtats_descomposicio_L2}).} For $Q\in \mathcal{D}$, we define the operator $D_Q$ as
    $$
    D_Q f := \begin{cases}
        0 \qquad &\text{in }\,\R^d\setminus Q,\\
        \langle f\rangle_P- \langle f\rangle_Q &\text{in }\, P\in \mathcal{CH}(Q),
    \end{cases}
    $$
    and $Ef :=\langle f\rangle_{Q^0_{\mathcal{D}}}$. Then, because of orthogonality,
    \begin{equation}
        \|Ef\|^2_{L^2(\mu)} + \sum_{Q\in \mathcal{D}}\|D_Qf\|^2_{L^2(\mu)} = \|f\|^2_{L^2(\mu)}.\label{ortogonalitat}
    \end{equation}
    This sort of decomposition is widely used in the context of $Tb$ type theorems. This technique was initiated, in the context of the Cauchy transform, by Coifman, Jones and Semmes, see \cite{CoifmanR.R.1989Tepo}.
    
    We can separate the middle term in (\ref{desigualtats_descomposicio_L2}) as
    \begin{equation}
        \|\Xi_i f\|_{L^2(\mu)}^2 + \sum_{Q\in \mathcal{D}^{\text{tr},i}}\sum_{P\in \mathcal{CH}(Q)\cap \mathcal{D}^{\text{tr},i}}\|\Delta_{i,Q}f\|^2_{L^2(\mu\lfloor P)} +\sum_{Q\in \mathcal{D}^{\text{tr},i}}\sum_{P\in \mathcal{CH}(Q)\cap \mathcal{D}^{\text{term},i}}\|\Delta_{i,Q}f\|^2_{L^2(\mu\lfloor P)} \label{primer_pas}
    \end{equation}
    As noted in Lemma \ref{lema:propietats_xi}, the first term is bounded by a constant times $\|f\|^2_{L^2(\mu)}$. We now focus on the second term in (\ref{primer_pas}). If $P\in \mathcal{CH}(Q)\cap \mathcal{D}^{\text{tr},i}$, we have that
    $$
    \Delta_{i,Q}f|_P = \left(\frac{\langle f\rangle_P}{\langle b_i\rangle_P}-\frac{\langle f\rangle_Q}{\langle b_i\rangle_Q}\right)b_i = \left(\frac{1}{\langle b_i\rangle_P}-\frac{1}{\langle b_i\rangle_Q}\right)\langle f\rangle_Pb_i + \frac{1}{\langle b_i\rangle _Q}\underbrace{\left(\langle f\rangle_P-\langle f\rangle_Q\right)}_{D_Qf}b_i.
    $$
    By Remark \ref{obs:martingala_b_transit}, we have that $|\langle b_i\rangle_P\langle b_i\rangle_Q|\geq c_{\text{acc}}^{-2}$. Moreover, using that $b_i$ is bounded, we get that
    \begin{align*}
    (\Delta_{i,Q}f|_P)^2 &\leq 2\left(\frac{1}{\langle b_i\rangle_P}-\frac{1}{\langle b_i\rangle_Q}\right)^2\langle f\rangle_P^2b_i^2 + \frac{2}{\langle b_i\rangle^2_{Q}}|D_Qf|^2b_i^2\\
    &\leq 2c_b^2\left[\frac{(D_Qb_i|_P)^2}{|\langle b_i\rangle_P\langle b_i\rangle_Q|^2}\langle f\rangle^2_P+ \frac{|D_Qf|^2}{|\langle b_i\rangle_Q |^2}\right]\\
    &\leq c\left[\left|D_Qb_i|_P\right|^2\langle f \rangle_P^2 + |D_Qf|^2\right]
    \end{align*}
    Therefore, we may bound
    \begin{align*}
    \sum_{Q\in \mathcal{D}^{\text{tr},i}}\sum_{P\in \mathcal{CH}(Q)\cap \mathcal{D}^{\text{tr},i}}\|\Delta_{i,Q}f\|^2_{L^2(\mu\lfloor P)}&\leq c\sum_{Q\in \mathcal{D}}\sum_{P\in \mathcal{CH}(Q)}\|\chi_PD_Qb_i\|^2_{L^2(\mu)}\langle f\rangle_P^2\\
    &\quad + c\sum_{Q\in \mathcal{D}}\|D_Qf\|^2_{L^2(\mu)} =: (\text{I})  +(\text{II}).
    \end{align*}
    From (\ref{ortogonalitat}), we have that $(\text{II})\leq \|f \|^2_{L^2(\mu)}$. Now, we can rewrite $(\text{I})$ as 
    $$
    (\text{I}) = \sum_{Q\in \mathcal{D}^{\text{tr},i}}\|\chi_QD_{\widehat{Q}}b_i\|^2_{L^2(\mu)}\langle f\rangle^2_Q,
    $$
    where $\widehat{Q}$ is the parent of $Q$. To estimate this quantity, we will apply the dyadic Carleson embedding Theorem \ref{carleson_diadic}. We define $a_Q:=\|\chi_QD_{\widehat{Q}}b_i\|^2_{L^2(\mu)}$. Let us check that these numbers satisfy the hypothesis (\ref{condicio_carleson}) from Theorem \ref{carleson_diadic}. For any cube $R\in \mathcal{D}$, we can write
    $$
    \sum_{Q\subseteq R}\|\chi_QD_{\widehat{Q}}b_i\|^2_{L^2(\mu)} = \|D_{\widehat{R}}b_i\|^2_{L^2(\mu\lfloor R)} + \sum_{Q\subsetneq R}\|D_{\widehat{Q}}b_i\|^2_{L^2(\mu)}.
    $$
    Since $b_i$ is bounded, the first term can be bounded by $c\mu(R)$. For the second, we can apply the decomposition (\ref{ortogonalitat}) to the function $b_i\chi_R$,
    \begin{align*}
        \sum_{Q\subsetneq R}\|D_{\widehat{Q}b_i}\|^2_{L^2(\mu)} = \sum_{Q\subsetneq R}\|D_{\widehat{Q}}(b_i\chi_R)\|^2_{L^2(\mu)} \leq \sum_{Q\subseteq R}\|D_Q(b_i\chi_R)\|^2_{L^2(\mu)} \leq \|b_i\chi_R\|^2_{L^2(\mu)}\leq c_b^2\mu(R).
    \end{align*}
    This shows that the condition (\ref{condicio_carleson}) holds and thus we have that $(\text{I})\leq \|f\|^2_{L^2(\mu)}$.

    Now we will deal with the last term of (\ref{primer_pas}). For $Q\in \mathcal{D}^{\text{tr},i}$ and $P\in\mathcal{D}^{\text{term},i}$, we can write
    $$
    \Delta_{i,Q}f|_P = f-\frac{\langle f\rangle_Qb_i}{\langle b_i\rangle_Q}= \left(f-\frac{\langle f\rangle_Pb_i}{\langle b_i\rangle_Q}\right) + \left(\frac{\langle f\rangle_P}{\langle b_i\rangle_Q}-\frac{\langle f\rangle_Q}{\langle b_i\rangle_Q}\right)b_i.
    $$
    Using again Remark \ref{obs:martingala_b_transit} and the fact that $b_i$ is bounded, we get
    \begin{align*}
        |\Delta_{i,Q}f|_P|\leq c\left[\left(|f|+ \langle |f|\rangle_P\right)+ |\langle f\rangle_P- \langle f\rangle_Q|\right] = c\left[(|f|+\langle |f|\rangle_P)+|D_Qf|_P|\right].
    \end{align*}
    Hence,
    \begin{align*}
        \|\Delta_{i,Q}f\|^2_{L^2(\mu\lfloor P)} & \leq c\int_P \left((|f|+\langle |f|\rangle_P)^2+ |D_Qf|\right)^2\,d\mu\\
        &\leq 2c\int_{P}(|f|+\langle |f|\rangle_P)^2\,d\mu + 2c \int_P |D_Qf|^2\,d\mu\\
        &\leq 4c\int_P|f|^2\,d\mu + 4c\int_P\langle |f|\rangle_P^2\,d\mu + 2c \|D_Qf\|^2_{L^2(\mu)}\\
        & \leq c\|f\|^2_{L^2(\mu\lfloor P)}+ c \|D_Q f\|^2_{L^2(\mu)},
    \end{align*}
    where in the last inequality we have used Jensen's inequality. Therefore,
    \begin{align}
        &\sum_{Q\in \mathcal{D}^{\text{tr},i}}\sum_{Q\in \mathcal{CH}(Q)\cap \mathcal{D}^{\text{term},i}}\|\Delta_{i,Q}f\|^2_{L^2(\mu\lfloor P)}\nonumber \\
        &\qquad \qquad \leq c\sum_{Q\in \mathcal{D}^{\text{tr},i}}\sum_{P\in \mathcal{CH}(Q)\cap \mathcal{D}^{\text{term},i}}\|f\|^2_{L^2(\mu\lfloor P)} + c\sum_{Q\in \mathcal{D}}\|D_Qf\|^2_{L^2(\mu)}\label{ultim_pas_5.17}
    \end{align}
    The second term above can be bounded by $\|f\|^2_{L^2(\mu)}$, using (\ref{ortogonalitat}). Furthermore, note that the cubes $P\in \mathcal{D}^{\text{term},i}$ whose parent is transit are pairwise disjoint. Indeed, if $P_1,P_2\in \mathcal{D}^{\text{term},i}$, $\widehat{P}_1,\widehat{P}_2\in \mathcal{D}^{\text{tr},i}$ and $P_1\cup P_2\neq \varnothing$, we can assume that $P_2\subseteq P_1$. Since all the descendants of $P_1$ must be in $\mathcal{D}^{\text{term},i}$, in particular $\widehat{P}_2\in \mathcal{D}^{\text{term},i}$, which is a contradiction. This allows us to bound the first term in (\ref{ultim_pas_5.17}) by $c\|f\|^2_{L^2(\mu)}$, finishing the proof of the second inequality in (\ref{desigualtats_descomposicio_L2}).

    \underline{\textbf{Step 2:} the first inequality in (\ref{desigualtats_descomposicio_L2}) for finite sums.} Consider an arbitrary finite subset $\mathcal{F}\subseteq \mathcal{D}^{\text{tr},i}$ and, for the fixed function $f$ in the statement of the lemma, define a new function
    $$
    g = \Xi_i f + \sum_{Q\in \mathcal{F}}\Delta_{i,Q}f.
    $$
    From properties (d) and (e) of Lemma \ref{lema:propietats_delta}, we have that $\Xi_i g = \Xi_i f$, and
    $$
    \Delta_{i,Q}g = \begin{cases}
        \Delta_Q f, \qquad &\text{if }\, Q\in \mathcal{F},\\
        0 &\text{if }\,Q\not\in \mathcal{F}.
    \end{cases}
    $$
    Hence, the decomposition (\ref{descomposicio_L2}) is trivially true for $g$. From property (b) of Lemma \ref{lema:propietats_xi} and property (c) of Lemma \ref{lema:propietats_delta}, we have
    $$
    g = \Xi_i g + \sum_{Q\in \mathcal{D}^{\text{tr},i}}\Delta_{i,Q}g = \Xi^2_i g + \sum_{Q\in \mathcal{D}^{\text{tr},i}}\Delta^2_{i,Q}g.
    $$
    Using this decomposition, assuming that the functions that appear take real values,
    \begin{align*}
        \int|g|^2\,d\mu &= \int\left(\Xi^2_ig+\sum_{Q\in \mathcal{D}^{\text{tr},i}}\Delta^2_{i,Q}g\right)g\,d\mu\\
        &=\int\left((\Xi_ig)(\Xi_i^*g) + \sum_{Q\in \mathcal{D}^{\text{tr},i}} (\Delta_{i,Q}g)(\Delta_{i,Q}^*g)\right)\,d\mu\\
        &\leq \int\left(|\Xi_ig|^2+\left|\sum_{Q\in \mathcal{D}^{\text{tr},i}}\Delta_{i,Q}g\right|^2\right)^{\frac{1}{2}}\left(|\Xi^*_ig|^2+ \left|\sum_{Q\in \mathcal{D}^{\text{tr},i}}\Delta_{i,Q}^*g\right|^2\right)^{\frac{1}{2}}d\mu\\
        &\leq \left[\int\left(|\Xi_ig|^2+\left|\sum_{Q\in \mathcal{D}^{\text{tr},i}}\Delta_{i,Q}g\right|^2\right)d\mu\right]^{\frac{1}{2}}\left[\int\left(|\Xi^*_ig|^2+ \left|\sum_{Q\in \mathcal{D}^{\text{tr},i}}\Delta_{i,Q}^*g\right|^2\right)d\mu\right]^{\frac{1}{2}}\\
        &= \left[\|\Xi_ig\|^2_{L^2(\mu)}+ \left\|\sum_{Q\in \mathcal{D}^{\text{tr},i}}\Delta_{i,Q}g\right\|^2_{L^2(\mu)}\right]^{\frac{1}{2}}\left[\|\Xi^*_ig\|^2_{L^2(\mu)}+ \left\|\sum_{Q\in \mathcal{D}^{\text{tr},i}}\Delta^*_{i,Q}g\right\|^2_{L^2(\mu)}\right]^{\frac{1}{2}}\\
        &\leq \left[\|\Xi_ig\|^2_{L^2(\mu)}+ \sum_{Q\in \mathcal{D}^{\text{tr},i}}\|\Delta_{i,Q}g\|^2_{L^2(\mu)}\right]^{\frac{1}{2}}\left[\|\Xi^*_ig\|^2_{L^2(\mu)}+ \sum_{Q\in \mathcal{D}^{\text{tr},i}}\|\Delta^*_{i,Q}g\|^2_{L^2(\mu)}\right]^{\frac{1}{2}},
    \end{align*}
    where we have used the Cauchy-Schwarz inequality two times, first in $\R^2$ with the Euclidean norm and afterwards, in $L^2(\mu)$. From the last inequality, we see that we will complete the second step if we are able to show that
    $$
    \|\Xi_i^*g\|^2_{L^2(\mu)} + \sum_{Q\in \mathcal{D}^{\text{tr},i}}\|\Delta_{i,Q}^*g\|^2_{L^2(\mu)} \leq c\|g\|^2_{L^2(\mu)}.
    $$
    By the expression of $\Xi^*_i$ from Lemma \ref{lema:propietats_xi}, it is clear that
    $$
    \|\Xi_i^*g\|_{L^2(\mu)} \leq c\|g\|_{L^2(\mu)},
    $$
    which means that the only term left to estimate is $\sum_{Q\in \mathcal{D}^{\text{tr},i}}\|\Delta^*_{i,Q}g\|^2_{L^2(\mu)}$. As in the first step, we separate this sum as
    \begin{equation}
    \sum_{Q\in \mathcal{D}^{\text{tr},i}}\sum_{P\in \mathcal{CH}(Q)\cap \mathcal{D}^{\text{tr},i}}\|\Delta^*_{i,Q}g\|^2_{L^2(\mu\lfloor P)} +  \sum_{Q\in \mathcal{D}^{\text{tr},i}}\sum_{P\in \mathcal{CH}(Q)\cap \mathcal{D}^{\text{term},i}}\|\Delta^*_{i,Q}g\|^2_{L^2(\mu\lfloor P)}\label{transit_terminal_pas2}
    \end{equation}
    Our strategy will be once more to relate $\Delta^*_{i,Q}g$ to $D_Qg$. If $P\in \mathcal{CH}(Q)\cap \mathcal{D}^{\text{tr},i}$, using the expression for $\Delta^*_{i,Q}$ from Lemma \ref{lema:propietats_delta}, we have
    \begin{align*}
        \Delta^*_{i,Q}g|_P &= \frac{\langle gb_i\rangle_P-\langle gb_i\rangle_Q}{\langle b_i\rangle_Q} + \langle gb_i\rangle \left(\frac{1}{\langle b_i\rangle_P}-\frac{1}{\langle b_i\rangle_Q}\right)
        =\frac{D_Q(gb_i)|_P}{\langle b_i\rangle_Q}-\frac{\langle gb_i\rangle_{P}D_Qb_i|_P}{\langle b_i\rangle_P\langle b_i\rangle_Q}.
    \end{align*}
    Again, using Remark \ref{obs:martingala_b_transit}, $|\langle b_i\rangle_Q|,|\langle b_i\rangle_P|\geq c_{\text{acc}}^{-1}$, and so
    \begin{align}
        &\sum_{Q\in \mathcal{D}^{\text{tr},i}}\sum_{P\in \mathcal{CH}(Q)\cap \mathcal{D}^{\text{tr},i}}\|\Delta^*_{i,Q}g\|^2_{L^2(\mu)} \nonumber \\
        &\qquad \qquad \leq c\sum_{Q\in \mathcal{D}}\|D_Q(gb_i)\|^2_{L^2(\mu)} + c\sum_{Q\in \mathcal{D}}\sum_{P\in \mathcal{CH}(Q)}\|\langle gb_i\rangle_PD_Qb_i|_P\|^2_{L^2(\mu)}.\label{transit_segon_pas}
    \end{align}
    Again, from the decomposition (\ref{ortogonalitat}), we deduce
    $$
    \sum_{Q\in \mathcal{D}}\|D_Q(gb_i)\|^2_{L^2(\mu)} \leq \|gb_i\|^2_{L^2(\mu)} \leq c\|g\|^2_{L^2(\mu)}.
    $$
    Note that the last term in (\ref{transit_segon_pas}) can be rewritten as
    $$
    \sum_{Q\in \mathcal{D}}|\langle gb_i\rangle_Q|^2\|\chi_QD_{\widehat{Q}}b_i\|^2_{L^2(\mu)},
    $$
    and to estimate it we will apply again the dyadic Carleson embedding Theorem \ref{carleson_diadic}. If we call $a_Q:=\|\chi_QD_{\widehat{Q}}b_i\|^2_{L^2(\mu)}$, from the estimate in the previous step we have that the packing condition (\ref{condicio_carleson}) is satisfied and thus we obtain
    $$
    \sum_{Q\in \mathcal{D}}\sum_{P\in \mathcal{CH}(Q)}\|\langle gb_i\rangle_P D_{Q}b_i|_P\|^2_{L^2(\mu\lfloor P)}\leq c\|gb_i\|^2_{L^2(\mu)} \leq c \|g\|^2_{L^2(\mu)}.
    $$
    Lastly, we need to deal with the terminal cubes in (\ref{transit_terminal_pas2}). For $Q\in \mathcal{D}^{\text{tr},i}$ and $P\in \mathcal{CH}(Q)\cap \mathcal{D}^{\text{term},i}$, we have
    $$
    \Delta^*_{i,Q}g|_P = g-\frac{\langle gb_i\rangle_Q}{\langle b_i\rangle_Q}=  \left(g-\frac{\langle gb_i\rangle_P}{\langle b_i\rangle_Q}\right) + \left(\frac{\langle gb_i\rangle_P}{\langle b_i\rangle_Q}-\frac{\langle gb_i\rangle_Q}{\langle b_i\rangle_Q}\right).
    $$
    As usual, we bound
    $$
    |\Delta^*_{i,Q}g|_P| \leq c\left[\left(|g|+\langle |g|\rangle_P\right)+\left|\langle gb_i\rangle_P-\langle gb_i\rangle_Q\right|\right] = c\left[\left(|g|+\langle |g|\rangle_P\right)+ \left|D_Q(gb_i)\right|\right]
    $$
    This yields that
    \begin{align*}
        &\sum_{Q\in \mathcal{D}^{\text{tr},i}}\sum_{P\in \mathcal{CH}(Q)\cap \mathcal{D}^{\text{term},i}}\|\Delta^*_{i,Q}g\|^2_{L^2(\mu\lfloor P)}\\
        &\qquad\qquad \leq c\sum_{Q\in \mathcal{D}^{\text{tr},i}}\sum_{Q\in \mathcal{CH}(Q)\cap \mathcal{D}^{\text{term},i}}\|g\|^2_{L^2(\mu\lfloor P)} + c\sum_{Q\in \mathcal{D}}\|D_Q(gb_i)\|^2_{L^2(\mu)}. 
    \end{align*}
    Arguing by the same methods as in (\ref{ultim_pas_5.17}), we get that
    $$
    \sum_{Q\in \mathcal{D}^{\text{tr},i}}\sum_{P\in \mathcal{CH}(Q)\cap \mathcal{D}^{\text{term},i}}\|\Delta^*_{i,Q}g\|^2_{L^2(\mu\lfloor P)}\leq c\|g\|^2_{L^2(\mu)},
    $$
    which finishes the proof of the second inequality in (\ref{desigualtats_descomposicio_L2}) for finite sums.

    \underline{\textbf{Step 3:} proof of (\ref{descomposicio_L2}).} We start by showing the unconditional convergence of the series that appears in (\ref{descomposicio_L2}). For this, let $\mathcal{F}\subset \mathcal{D}^{\text{tr},i}$ be a finite subset, and call $g = \sum_{Q\in \mathcal{F}}\Delta_{i,Q}f$. Then, by the previous step,
    $$
    \bigg\|\sum_{Q\in \mathcal{F}}\Delta_{i,Q}f\bigg\|^2_{L^2(\mu)} = \|g\|^2_{L^2(\mu)} \leq c\sum_{Q\in \mathcal
    D^{\text{tr},i}}\|\Delta_{i,Q}f\|^2_{L^2(\mu)}  = c\sum_{Q\in \mathcal{F}} \|\Delta_{i,Q}f\|^2_{L^2(\mu)}.
    $$
    From this, recalling that by the first step,
    $$
    \sum_{Q\in \mathcal{D}^{\text{tr},i}} \|\Delta_{i,Q}f\|^2_{L^2(\mu)} \leq c\,\|f\|^2_{L^2(\mu)}<\infty,
    $$
    we deduce the unconditional convergence in $L^2(\mu)$ of the series $\sum_{Q\in \mathcal{D}^{\text{tr},i}}\Delta_{i,Q}f$.

    Now, since convergence in $L^2(\mu)$ implies $\mu$-a.e. convergence of a subsequence, to prove (\ref{descomposicio_L2}), it suffices to check that
    $$
    \Xi_if(x) + \lim_{n\to+\infty}\sum_{Q\in \mathcal{D}^{\text{tr},i}:\ell(Q)\geq 2^{-n}}\Delta_{i,Q}f(x) = f(x), \quad \text{for }\,\mu\text{-a.e. }\,x\in F.
    $$
    To do so, we distinguish two cases. Assume first that $x\in Q_0\in \mathcal{D}^{\text{term},i}$, with $\ell(Q_0)=2^{n_0}, n_0\in \mathbb{Z}$ and that $Q_0$ is the biggest terminal cube (of the $i$-th kind) that contains $x$. Such a cube $Q_0$ always exists, because $x\in Q^0_{\mathcal{D}}$, which is not terminal of any kind. If $x\in Q$ and $\ell(Q)<\ell(Q_0)$, $Q\subseteq Q_0$, so $Q\in \mathcal{D}^{\text{term},i}$, so we have that
    $$
    \Xi_if(x) + \lim_{n\to+\infty}\sum_{Q\in \mathcal{D}^{\text{tr},i}:\ell(Q)\geq 2^{-n}}\Delta_{i,Q}f(x)  = \Xi_if(x) + \lim_{n\to+\infty}\sum_{Q\in \mathcal{D}^{\text{tr},i}:\ell(Q)> 2^{n_0}}\Delta_{i,Q}f(x).
    $$
    Call $Q_1$ the cube in $\mathcal{D}^{\text{tr},i}$ with minimal side length that contains $Q$. This means that the unique child of $Q_1$ that contains $x$ is in $\mathcal{D}^{\text{term},i}$, so
    $$
    \Delta_{i,Q_1}f(x) = f(x) - \frac{\langle f\rangle_{Q_1}}{\langle b_i\rangle_{Q_1}}b_i(x). 
    $$
    If we denote by $Q_2$ the parent of $Q_1$, since $Q_2\in \mathcal{D}^{\text{tr},i}$,
    $$
    \Delta_{i,Q_2}f(x) = \left(\frac{\langle f\rangle_{Q_1}}{\langle b_i\rangle_{Q_1}}-\frac{\langle f\rangle_{Q_2}}{\langle b_i\rangle_{Q_2}}\right)b_i(x).
    $$
    Hence,
    $$
    \Delta_{i,Q_1}f(x) + \Delta_{i,Q_2}f(x) = f(x)-\frac{\langle f\rangle_{Q_2}}{\langle b_i\rangle_{Q_2}}b_i(x).
    $$
    We iterate this process until we reach $Q^{0}_{\mathcal{D}}$,
    $$
    \Delta_{i,Q_1}f(x)+ \Delta_{i,Q_2}f(x) + \cdots + \Delta_{i,Q^0_{\mathcal{D}}}f(x) = f(x) - \frac{\langle f\rangle_{Q^0_{\mathcal{D}}}}{\langle b_i\rangle_{Q^0_{\mathcal{D}}}}b_i(x) = f(x)-\Xi_if(x).
    $$
    Lastly, since we always assume transit cubes to be contained in $Q^0_{\mathcal{D}}$, the biggest cube that appears in the limit we have to compute is $Q^0_{\mathcal{D}}$, and so the limit is true for such $x$.

    Assume now that $x$ does not belong to any cube in $\mathcal{D}^{\text{term},i}$. Fix any $n\in \mathbb{Z}$. If $n$ is not too negative, there exists only one cube $Q\ni x$ with $\ell(Q)=2^{-n}$ and $Q\in \mathcal{D}^{\text{tr},i}$. If $P$ be the only child of $Q$ that contains $x$ and $Q^1$ is the parent of $Q$,
    $$
    \Delta_{i,Q}f(x) = \left(\frac{\langle f\rangle_P}{\langle b_i\rangle_P}-\frac{\langle f\rangle_Q}{\langle b_i\rangle_Q}\right)b_i(x), \quad \Delta_{i,Q^1}f(x) = \left(\frac{\langle f\rangle_Q}{\langle b_i\rangle_Q}-\frac{\langle f\rangle_{Q^1}}{\langle b_i\rangle_{Q^1}}\right)b_i(x).
    $$
    Thus,
    $$
    \Delta_{i,Q}f(x) + \Delta_{i,Q^1}f(x) = \frac{\langle f\rangle_P}{\langle b_i\rangle_P}b_i(x) - \frac{\langle f\rangle_{Q^1}}{\langle b_i\rangle_{Q^1}}b_i(x).
    $$
    We can iterate this process, similarly to the previous case, and we get that
    $$
    \Delta_{i,Q}f(x) + \cdots + \Delta_{i,Q^0_{\mathcal{D}}}f(x) = \frac{\langle f\rangle_P}{\langle b_i\rangle_P}b_i(x) - \frac{\langle f\rangle_{Q^0_{\mathcal{D}}}}{\langle b_i\rangle_{Q^0_{\mathcal{D}}}}b_i(x) = \frac{\langle f\rangle_P}{\langle b_i\rangle_P}b_i(x)-\Xi_if(x).
    $$
    We now write
    $$
    \frac{\langle f\rangle_P}{\langle b_i\rangle_P}b_i(x) = \frac{1}{\mu(P)}\int_Pf(y)\,d\mu(y)\left(\frac{1}{\mu(P)}\int_P b_i(y)\,d\mu(y)\right)^{-1}b_i(x).
    $$
    By the dyadic Lebesgue differentiation Theorem \ref{diferenciacio_diadic}, we find that
    $$
    \lim_{n\to+\infty}\frac{\langle f\rangle_P}{\langle b_i\rangle_P}b_i(x) = f(x), \quad \text{for }\,\mu\text{-a.e. }\, x\in F, 
    $$
    which is what we needed to prove.

    \underline{\textbf{Step 4:} proof of the first inequality in (\ref{desigualtats_descomposicio_L2}).} In the second step, the advantage of considering only finite sums was that the decomposition (\ref{descomposicio_L2}) was true. Now we know that it holds for any $f\in L^2(\mu)$, so following exactly the same steps as before, we obtain the first inequality in (\ref{desigualtats_descomposicio_L2}) for any $f\in L^2(\mu)$.
\end{proof}

\begin{obs}
    The previous lemma also holds by replacing $\Xi_i$ and $\Delta_{i,Q}$ by $\Xi_i^*$ and $\Delta^*_{i,Q}$, respectively. That is, for $i=1,2$ and $f\in L^2(\mu)$, we also have the inequalities
    \begin{equation}
        c^{-1}\|f\|^2_{L^2(\mu)} \leq \|\Xi_i^*f\|^2_{L^2(\mu)} + \sum_{Q\in \mathcal{D}^{\text{tr},i}}\|\Delta^*_{i,Q}f\|^2_{L^2(\mu)} \leq c\|f\|^2_{L^2(\mu)}.\label{desigualtats_descomposicio_transposat}
    \end{equation}
\end{obs}

\subsection{Good and bad cubes and functions}
\label{sec:good_functions}

In the preceding section we have seen that the transit cubes, which enjoy some desirable properties with respect to the measure $\mu$, can be used to decompose any function $f\in L^2(\mu)$. The next step to study $L^2$ boundedness of our operator $T$ will be to use this decomposition when working with the suppressed operators $K_\Theta$. However, the distinction between transit and terminal cubes will not be enough, and we will need to further classify the transit cubes.

More precisely, we are now going to define what it means for a dyadic cube $Q\in \mathcal{D}^{\text{tr},i}(w_1)$ to be bad (or good) with respect to another dyadic lattice $\mathcal{D}(w_2)$, with $w_1,w_2\in \Omega$. The purpose of doing so is that, using the decomposition in (\ref{descomposicio_L2}), we will be able to write $f\in L^2(\mu)$ as
\begin{align}
f &= f_{\text{good}}(w_1,w_2) + f_{\text{bad}}(w_1,w_2)\nonumber \\
&= \Xi_i f + \sum_{Q\in \mathcal{D}^{\text{tr},i}(w_1)\cap \text{good}(w_2)}\Delta_{i,Q} f + \sum_{Q\in \mathcal{D}^{\text{tr},i}(w_1)\cap \text{bad}(w_2)}\Delta_{i,Q} f.\label{descomposicio_bo/dolent}
\end{align}
Then, denoting by $P^{\Omega^2}=P^{\Omega}\times P^{\Omega}$ the product probability measure in $\Omega\times \Omega$, and $\text{w}=(w_1,w_2)$, for an appropriately chosen $1$-Lipschitz function $\Psi_{\text{w}}$, we will define the operator
$$
\widetilde{K}f(x) = \int_{\Omega\times \Omega}K_{\Psi_{\text{w}}}f(x)dP^{\Omega^2}(\text{w}).
$$
We will later see that we can prove the $L^2(\mu\lfloor G)$-boundedness of our singular integral operator $T$ by proving that $\widetilde{K}$ is bounded in $L^2(\mu)$. To do so, if we denote by $\mathbb{E}_{P^{\Omega^2}}$ the expectation with respect to $P^{\Omega^2}$, and for $f,g\in L^2(\mu)$,
$$
\langle f,g\rangle =\int fg d\mu,
$$
we will see that we need the bound 
\begin{equation}
\left|\mathbb{E}_{P^{\Omega^2}}\left(\langle K_{\Psi_{\text{w}}}f,g\rangle\right)\right| = |\langle \widetilde{K}f, g\rangle| \leq c\|f\|^2_{L^2(\mu)}\|g\|^{2}_{L^2(\mu)}.\label{desigualtat_esperança}
\end{equation}
Using the decomposition (\ref{descomposicio_bo/dolent}), we can write the expectation as
\begin{align}
\mathbb{E}_{P^{\Omega^2}}\left(\langle K_{\Psi_{\text{w}}}f,g\rangle\right) &= \mathbb{E}_{P^{\Omega^2}}(\langle K_{\Psi_{\text{w}}}f_{\text{good}}, g_{\text{good}} \rangle) + \mathbb{E}_{P^{\Omega^2}}(\langle K_{\Psi_{\text{w}}}f_{\text{good}}, g_{\text{bad}}\rangle)\nonumber\\
&\qquad + \mathbb{E}_{P^{\Omega^2}}(\langle K_{\Psi_\text{w}}f_{\text{bad}},g \rangle).\label{esperances_good/bad}
\end{align}
To obtain the bound in (\ref{desigualtat_esperança}), first we are going to show that for the good part we almost have a bound of the type
$$
|\langle K_{\Psi_{\text{w}}}f_{\text{good}}, g_{\text{good}}\rangle| \leq c\|f\|_{L^2(\mu)}\|g\|_{L^2(\mu)},
$$
where \textit{almost} here means that on the right hand side we will get an extra term, which will be dealt with in the final step of the proof of Theorem \ref{TEOREMA}, using a probabilistic argument.

For the other two terms in (\ref{esperances_good/bad}), which contain the bad part, we are going to choose an appropriate definition of bad cubes such that for each fixed $Q\in \mathcal{D}^{\text{tr},i}(w_1)$, the probability that it is bad with respect to the other dyadic lattice $\mathcal{D}(w_2)$ can be made arbitrarily small. Using this, we will be able to make the two last terms in (\ref{esperances_good/bad}) also small, thus proving the $L^2(\mu\lfloor G)$ boundedness of $\widetilde{K}$. 

To define what it means for a cube to be bad, we need the following notion. If $Q\subset \R^d$, we say that $\partial Q$ is $(\mu,M)$-small if 
$$
\mu\left(\left\{x\in \R^d: \text{dist}(x, \partial Q)\leq \lambda \ell(Q)\right\}\right)\leq\lambda M \,\|\mu\|,
$$
for all $\lambda >0$. If $P\subset \R^d$ is another cube, we say that $\partial Q$ is $(\mu,M,P)$-small if $\partial Q$ is $(\mu\lfloor P, M)$-small.

Let $\mathcal{D}_1=\mathcal{D}(w_1)$ and $\mathcal{D}_2=\mathcal{D}(w_2)$, with $w_1,w_2\in \Omega$, be two dyadic lattices and put $\gamma = \frac{\eta}{2(n+\eta)}$. For $i\in \{1,2\}$, we say that a transit cube (of the $i$-th kind) $Q\in \mathcal{D}^{\text{tr},i}_1$ is $i$-\textbf{bad} (with respect to $\mathcal{D}_2$) if either
\begin{enumerate}[label=(\alph*)]
    \item there exists a cube $R\in \mathcal{D}_2$ such that $\text{dist}(Q,\partial R)\leq \ell(Q)^{\gamma}\ell(R)^{1-\gamma}$ and $2^m\ell(Q)\leq \ell(R)\leq 2^N$ (where $m\geq 1$ is some integer that we will fix below), or
    \item there exists a transit cube $R\in \mathcal{D}^{\text{tr}{\color{red}}}_2$ such that $2^{-m}\ell(Q)\leq \ell(R)\leq 2^m \ell(Q)$, $\text{dist}(Q,R)\leq 2^m \ell(Q)$ such that for some $P\in \mathcal{CH}(Q)$, there is $S\in \mathcal{CH}(R)$ such that $\partial P$ is not $(\mu, M,S)$-small.
\end{enumerate}

If $Q$ is not $i$-bad, then we say that it is $i$-\textbf{good}. It is important to remark that we say that a cube $Q\in \mathcal{D}_1$ is good or bad depending on another lattice $\mathcal{D}_2$. This dependence on another dyadic lattice is new; recall that the definition of transit and terminal cubes did not contain any interaction between two different lattices. Moreover, the definition of bad cubes also depends on the constants $m$ and $M$, but for commodity we will not talk about $(m,M)$-bad cubes, only bad cubes.

With this notion, we can define precisely what it means for a function to be good. Recall that given any fixed dyadic lattice $\mathcal{D}_1=\mathcal{D}(w_1)$, by Lemma \ref{lema_descomposicio_L2}, we can write any $f\in L^2(\mu)$ as
$$
f = \Xi_i f + \sum_{Q\in \mathcal{D}^{\text{tr},i}_1}\Delta_{i,Q}f.
$$
We say that $f$ is $i$-$\mathcal{D}_1$-\textbf{good with respect to} $\mathcal{D}_2$ (or simply, $i$-good) if $\Delta_{i,Q} f =0$ for all $i$-bad cubes $Q\in \mathcal{D}^{\text{tr},i}_1$ (with respect to $\mathcal{D}_2$). 

\section{Estimates for good functions}

\label{chap:good_functions}

\subsection{The main lemma for good functions}
\label{subsec:main_lemma_good_functions}

Recall that in Section \ref{sec:suppressed_operators}, we considered $\Theta\,\colon \R^d \to [0,+\infty)$ and we defined a new Calderón-Zygmund kernel $\widetilde{k}_\Theta$, which had some properties that the original kernel $k$ did not necessarily satisfy (see Lemma \ref{1/2_max} and Lemma \ref{desigualtat_maxim_k_tilde}). Our goal now will be to estimate the action of the SIO $K_\Theta$, which arises from the kernel $\widetilde{k}_{\Theta}$, on good functions, which we defined in the last section of the preceding section. In this direction, we will have to restrict the function $\Theta$ a little more, in the sense that we will need it to be sensible to the sets that are singled out in the statement of Theorem \ref{TEOREMA} and in Section \ref{sec:exceptional_set}. We give the details of this connection between the function and said sets below.

For each dyadic lattice $\mathcal{D}$, we define the \textbf{total exceptional set} as 
$$
W_{\mathcal{D}}=H_{\mathcal{D}}\cup T_{\mathcal{D}}^1\cup T_\mathcal{D}^2\cup S.
$$
Moreover, we define the function
$$
\Phi_\mathcal{D}(x) = \text{dist}(x,\R^d\setminus W_{\mathcal{D}}).
$$
This is a $1$-Lipschitz function that vanishes on $\R^d\setminus W_{\mathcal{D}}$. Moreover, $\Phi_{\mathcal{D}}(x)\geq \text{dist}(x,\R^d\setminus(H_{\mathcal{D}}\cup S))$, so that from Lemma \ref{lema_distancia}, we obtain that
$$
K_{\Phi_{\mathcal{D}},*}\nu_1(x) \leq c_4, \quad  K_{\Phi_{\mathcal{D}},*}^*\nu_2(x)\leq c_4, \quad \text{for all }\,x\in F.
$$
In the next lemma, we show that an inequality of the type in (\ref{desigualtat_esperança}) almost holds for good functions $f,g\in L^2(\mu)$ and an appropriate $1$-Lipschitz function $\Theta$.

\begin{lemma}
    Let $\mathcal{D}_1=\mathcal{D}(w_1)$ and $\mathcal{D}_2=\mathcal{D}(w_2)$, with $w_1,w_2\in \Omega$, be two dyadic lattices. Given $\varepsilon>0$, let $\Theta\colon \R^d\to [\varepsilon,\infty)$ be a $1$-Lipschitz function such that $\Theta(x)\geq \max(\Phi_{\mathcal{D}_1}(x),\Phi_{\mathcal{D}_2}(x))$ for all $x\in \R^d$. If $f$ is $1$-$\mathcal{D}_1$-good with respect to $\mathcal{D}_2$ and $g$ is $2$-$\mathcal{D}_2$-good with respect to $\mathcal{D}_1$, then
    $$
    |\langle K_{\Theta}f,g\rangle|\leq c_5\|f\|_{L^2(\mu)}\|g\|_{L^2(\mu)}+ \sum_{\substack{Q\in \mathcal{D}^{\text{tr},1}_1, \,R\in \mathcal{D}^{\text{tr},2}_2\\ \text{dist}(Q,R)<\text{max}(\ell(Q),\ell(R))\\ 2^{-m}\ell(R)\leq \ell(Q)\leq 2^m\ell(R)}}|\langle K_{\Theta}(\Delta_{1,Q}f), \Delta_{2,R}g\rangle|,
    $$
    where $c_5$ is a constant that depends on $c_0,c_b,c_*$ and $\delta_0$, but not on $\varepsilon$.\label{lemma:good_functions}
\end{lemma}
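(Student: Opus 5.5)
We expand both functions with Lemma \ref{lema_descomposicio_L2}: since $f$ is $1$-good and $g$ is $2$-good,
$$
f=\Xi_1 f+\sum_{Q\in \mathcal{D}_1^{\text{tr},1}\ \text{good}}\Delta_{1,Q}f,\qquad g=\Xi_2 g+\sum_{R\in \mathcal{D}_2^{\text{tr},2}\ \text{good}}\Delta_{2,R}g .
$$
Because $\Theta\ge\varepsilon$, the kernel $\widetilde k_\Theta$ is bounded (Lemma \ref{desigualtat_maxim_k_tilde}), so $K_\Theta$ is bounded on $L^2(\mu)$ with an $\varepsilon$-dependent norm. This only justifies the bilinear expansion $\langle K_\Theta f,g\rangle=\sum\langle K_\Theta(\cdot),\cdot\rangle$; it is never used quantitatively.

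The terms involving $\Xi_1 f$ or $\Xi_2 g$ are handled through the suppressed maximal estimates. We have $\Theta\ge\Phi_{\mathcal{D}_i}\ge \mathrm{dist}(x,\R^d\setminus(H\cup S))$, and the proof of Lemma \ref{lema_distancia} depends only on this lower bound. Hence $|K_\Theta\nu_1|\le c_4$ and $|K^*_\Theta\nu_2|\le c_4$ on $F$. It follows that
$$
|\langle K_\Theta\Xi_1 f,g\rangle|\le |\langle f\rangle_{Q^0}|\,|\langle b_1\rangle_{Q^0}|^{-1}c_4\|g\|_{L^1(\mu)}\lesssim \|f\|_2\|g\|_2,
$$
using $\mu(F)\le\mu(Q^0)$. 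The term with $\Xi_2 g$ is treated symmetrically through $K_\Theta^*\nu_2$.

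The remaining double sum over pairs $(Q,R)$ is split according to relative size and position. Assume $\ell(Q)\le \ell(R)$; the other case is symmetric, using the adjoint kernel together with the two-sided regularity of $\widetilde k_\Theta$. The cases are the following.

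(i) \emph{Separated pairs}, $\mathrm{dist}(Q,R)\ge\ell(R)$, or $\ell(Q)\ll\ell(R)$ with $Q$ far from $R$ relative to $\ell(Q)^\gamma\ell(R)^{1-\gamma}$. Here $\int\Delta_{1,Q}f\,d\mu=0$ (Lemma \ref{lema:propietats_delta}(b)) and the Hölder smoothness of $\widetilde k_\Theta$ give
$$
|\langle K_\Theta\Delta_{1,Q}f,\Delta_{2,R}g\rangle|\lesssim \frac{\ell(Q)^{\eta/2}\ell(R)^{\eta/2}}{D(Q,R)^{n+\eta}}\,\mu(Q)^{1/2}\mu(R)^{1/2}\|\Delta_{1,Q}f\|_2\|\Delta_{2,R}g\|_2,
$$
where $D(Q,R)=\ell(Q)+\ell(R)+\mathrm{dist}(Q,R)$. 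Goodness (condition (a), with $\gamma=\eta/(2(n+\eta))$) is what makes the exponent bookkeeping work in the case $\ell(Q)\ll\ell(R)$. Summing with Schur's test and the growth $\mu(\lambda Q)\le c_0\ell(\lambda Q)^n$ of transit cubes (Lemma \ref{lema:comportament_cubs_transit}) yields $c\|f\|_2\|g\|_2$ via (\ref{desigualtats_descomposicio_L2}).

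(ii) \emph{Nested pairs}, $\ell(Q)<2^{-m}\ell(R)$ with $Q$ inside a child $S$ of $R$. By goodness, $Q$ lies deep inside $S$. We write $\Delta_{2,R}g$ on $S$ as the constant $c_S\,b_2$ (if $S$ is transit) plus the remainder. Adjacent terms are again estimated as in (i). The constant parts, summed over all $R\supset Q$, telescope into a paraproduct: $\langle \Delta_{1,Q}f, K^*_\Theta(b_2\mu)\rangle$ times coefficients $\langle g\rangle_S/\langle b_2\rangle_S$. We then use $|K^*_\Theta\nu_2|\le c_4$ and a Carleson estimate: the measures $\sum_Q\|\Delta_{1,Q}^*(K^*_\Theta\nu_2)\|^2$ pack, by (\ref{desigualtats_descomposicio_transposat}) applied to $\chi_R K^*_\Theta\nu_2$ modulo a localisation error, and Theorem \ref{carleson_diadic} bounds the result by $\|g\|_2$. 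If $S$ is terminal, then $S$ is contained in $W_{\mathcal{D}_2}$ (after checking $2S\subset H$ or $S\subset T^2$), so $\Theta\gtrsim$ the distance to the boundary there. Combined with goodness this gives $\Theta\gtrsim \ell(Q)^\gamma\ell(S)^{1-\gamma}$ near $Q$, and Lemma \ref{desigualtat_maxim_k_tilde} makes these terms summable.

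(iii) \emph{Close pairs of comparable size}. These are exactly the pairs left on the right-hand side of the statement.

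The main obstacle is case (ii), especially the localisation error in the paraproduct when cutting $K^*_\Theta\nu_2$ to a cube, and the terminal-child subcase, which is where the suppression $\Theta\ge\Phi_{\mathcal{D}_2}$ is essential. Condition (b) of badness ($(\mu,M,S)$-small boundaries) is not needed here; it is only relevant for the diagonal pairs deferred to the probabilistic step.
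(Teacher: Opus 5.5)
Your overall architecture is the paper's: the $\Xi$-terms are handled via Lemma \ref{lema_distancia}, distant pairs via Lemma \ref{lema:phi_psi} and Schur's test, and nested pairs are split according to whether the child $S\supset Q$ is transit or terminal, with a telescoped paraproduct and Theorem \ref{carleson_diadic} in the transit case. Goodness is not needed for distant pairs. Its role is to rule out disjoint non-distant pairs of very different sizes (Remark \ref{remark:not_distant_good}), which is what makes your three cases exhaustive.

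\textbf{The gap: the terminal-child case.} There you have $\Theta\ge\rho_Q:=\ell(Q)^\gamma\ell(S)^{1-\gamma}$ on $Q$, so Lemma \ref{desigualtat_maxim_k_tilde} only gives
\[
|\langle K_\Theta(\Delta_{1,Q}f),\Delta_{2,\widehat S}g\rangle|\lesssim \rho_Q^{-n}\,\|\Delta_{1,Q}f\|_{L^1(\mu)}\|\Delta_{2,\widehat S}g\|_{L^1(\mu)},\qquad \rho_Q^{-n}=\ell(S)^{-n}\Big(\frac{\ell(S)}{\ell(Q)}\Big)^{\gamma n}.
\]
This coefficient \emph{grows} as $\ell(Q)\to 0$, whereas the Schur argument of Lemma \ref{lema:s11} needs the decaying coefficient $\ell(Q)^{\eta/2}\ell(S)^{\eta/2}\ell(S)^{-n-\eta}$. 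With yours, the Schur sums over $Q\subset S$ pick up a factor $2^{k\gamma n}$ at scale $\ell(Q)=2^{-k}\ell(S)$ and are not controlled. Applying the size bound only on a $\rho_Q$-neighbourhood of $Q$ (with H\"older decay elsewhere) does not rescue it. At each scale it yields a bound of order $\|\Delta_{2,\widehat S}g\|_{L^2(\mu)}\big(\sum_{\ell(Q)=2^{-k}\ell(S)}\|\Delta_{1,Q}f\|^2_{L^2(\mu)}\big)^{1/2}$ with no decay in $k$, and this cannot be summed over $k$. So the claim that Lemma \ref{desigualtat_maxim_k_tilde} ``makes these terms summable'' is false as stated.

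\textbf{How the paper closes it.} It uses a Whitney decomposition of the terminal cube. For a Whitney cube $S'$ one has $\Theta\ge\ell(S')/2$ on $2S'$, hence pointwise $|K^*_\Theta(\chi_{2S'}\Delta_{2,\widehat S}g)|\lesssim\ell(S')^{-n}\|\chi_{2S'}\Delta_{2,\widehat S}g\|_{L^1(\mu)}$. All $Q$ with $z_Q\in S'$, of every scale, are then treated at once: the pairing is written as $\langle\Delta_{1,Q}f,\Delta^*_{1,Q}(\chi_{2S'}K^*_\Theta(\chi_{2S'}\Delta_{2,\widehat S}g))\rangle$ and (\ref{desigualtats_descomposicio_transposat}) is applied. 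This orthogonality across scales replaces the missing decay. The remainder $\chi_{\widehat S\setminus 2S'}\Delta_{2,\widehat S}g$ is a distant-type term, and bounded overlap of the $2S'$ concludes.

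\textbf{A simpler repair of your own step.} Your observation $\Theta\ge\rho_Q$ on $Q$ does suffice if you use Lemma \ref{1/2_max} instead of the size bound. Since $\rho_Q\ge 2^{m(1-\gamma)}\ell(Q)$, for $m$ large the function $K_\Theta(\Delta_{1,Q}f)$ vanishes on $B(z_Q,\rho_Q/4)$. Off that ball, the zero mean of $\Delta_{1,Q}f$ and the H\"older regularity of $\widetilde k_\Theta$ give $|K_\Theta(\Delta_{1,Q}f)|\lesssim\ell(Q)^\eta\rho_Q^{-n-\eta}\|\Delta_{1,Q}f\|_{L^1(\mu)}$. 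Because $\gamma(n+\eta)=\eta/2$, this is exactly the distant-pair coefficient with $D(Q,\widehat S)\approx\ell(S)$. Lemma \ref{lema:s11} then sums it, with no Whitney cubes needed.

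\textbf{A smaller point in the transit case.} The Carleson packing has no localisation error, since $\Delta^*_{1,Q}h$ depends only on $h|_Q$ and each $Q$ lies inside the $\mathcal{D}_2$-cube it is attached to. What does need its own estimate is the term $c_S\langle\Delta_{1,Q}f,K^*_\Theta(\chi_{\R^d\setminus S}b_2)\rangle$. It arises when you replace $\chi_S b_2$ by $b_2$ before telescoping, and your sketch leaves it unaddressed. The paper bounds it using $|c_S|\lesssim\|\Delta_{2,R}g\|_{L^2(\mu)}\mu(S)^{-1/2}$ (accretivity of the transit child) together with an annular H\"older estimate based on the growth of the dilates of $S$.
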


\begin{obs}
    Note that the assumption that the function $\Theta$ is bounded below by some positive constant $\varepsilon>0$ is only for technical reasons, as it ensures that the operator $K_{\Theta}$ is bounded in $L^p(\mu)$ for all $1\leq p\leq \infty$ (with norm depending on $\varepsilon$). Indeed, if $\Theta(x)\geq \varepsilon$ for all $x\in \R^d$, then we have that
    $$
    \frac{|x-y|^2}{\Theta(x)\Theta(y)}\leq \frac{|x-y|^2}{\varepsilon^2},
    $$
    from which we see that if $|x-y|\leq \frac{\varepsilon}{\sqrt{2}}$, $\widetilde{k}_{\Theta}(x,y)=0$. Hence, 
    \begin{align*}
        \left|K_{\Theta}f(x)\right| &= \left|\int_{|x-y|>\frac{\varepsilon}{\sqrt{2}}}\widetilde{k}_{\Theta}(x,y)f(y)\,d\mu(y)\right|\\
        &\leq \left(\int_{|x-y|>\frac{\varepsilon}{\sqrt{2}}}\left|\widetilde{k}_{\Theta}(x,y)\right|^{p'}\,d\mu(y)\right)^{\frac{1}{p'}}\left(\int_{|x-y|>\frac{\varepsilon}{\sqrt{2}}}|f(y)|^p\,d\mu(y)\right)^{\frac{1}{p}}\\
        &\leq \frac{C_{CZ}2^{\frac{n}{2}}}{\varepsilon^n}\mu(\R^d)^\frac{1}{p'}\|f\|_{L^p(\mu)}.
    \end{align*}
    In particular, $K_{\Theta}f$ is well defined for any $f\in L^2(\mu)$. The main point in the lemma is that the estimate does not depend on $\varepsilon$, unlike above.
\end{obs}

\subsubsection{Beginning of the proof of Lemma \ref{lemma:good_functions}}

In the next few pages, we are going to introduce the strategy for the proof of Lemma \ref{lemma:good_functions}. We are going to use the decomposition in $L^2(\mu)$, given by Lemma \ref{lema_descomposicio_L2}, in terms of the operators $\Delta_{1,Q}$ and $\Delta_{2,R}$ for $Q\in\mathcal{D}^{\text{tr},1}_1$ and $R\in \mathcal{D}^{\text{tr},2}_2$, to show that the bound for $\langle K_{\Theta}f,g\rangle$ reduces to studying a sum of terms of the form $\langle K_{\Theta}(\Delta_{1,Q}f), \Delta_{2,R}g\rangle$. Once we have shown this, we are going to separate the aforementioned sum according to the relation between the cubes $Q$ and $R$. The bounds for each of the terms arising from this separation will be obtained through different arguments for each one and for the most part will be highly technical.

Let $f,g \in L^2(\mu)$ be as in Lemma \ref{lemma:good_functions}; $f$ is $1$-$\mathcal{D}_1$-good with respect to $\mathcal{D}_2$ and $g$ is $2$-$\mathcal{D}_2$ good with respect to $\mathcal{D}_1$. Then, we can write them as
\begin{equation}
    \label{descomposicio_f_g} f = \Xi_1f + \sum_{\substack{Q\in \mathcal{D}_1^{\text{tr},1}\\ 1\text{-}\mathcal{D}_1\text{-good w.r.t. }\mathcal{D}_2}}\Delta_{1,Q}f, \qquad g = \Xi_2g + \sum_{\substack{R\in \mathcal{D}_2^{\text{tr},2}\\ 2\text{-}\mathcal{D}_2\text{-good w.r.t. }\mathcal{D}_1}}\Delta_{2,R}g
\end{equation}
Since we also assume that $\Theta(x)\geq \varepsilon>0$ for all $x\in \R^d$, by our previous remark we have that $\|K_{\Theta}\|_{L^2(\mu)\to L^2(\mu)}<\infty$ (depending on $\varepsilon$), and thus we can also apply Lemma \ref{lema_descomposicio_L2} to $K_{\Theta}f$,
\begin{equation}
    \label{descomposicio_k_theta} K_{\Theta}f = K_{\Theta}(\Xi_1f) + \sum_{\substack{Q\in \mathcal{D}_1^{\text{tr},1}\\ 1\text{-}\mathcal{D}_1\text{-good w.r.t. }\mathcal{D}_2}}K_{\Theta}(\Delta_{1,Q}f),
\end{equation}
with the sum being unconditionally convergent in $L^2(\mu)$, and the analogous identity also holds for $g$.

To prove the lemma, we will assume that $\Delta_{1,Q}f=0$ except for a finite family of cubes $Q\in \mathcal{D}^{\text{tr},1}_1$ and $\Delta_{2,R}g=0$ except for another finite family of cubes $R\in \mathcal{D}^{\text{tr},2}_2$. To our advantage, the estimates that we will obtain will not depend on the number of non-zero terms. This assumption will help us because there will be no problems with convergence and will allow us to change the order of summation in the sums involving the cubes $Q,R$. The lemma in the general case will then follow by taking $L^2(\mu)$ limits, using the fact that $\|K_{\Theta}\|_{L^2(\mu)\to L^2(\mu)}<\infty$ and the identity (\ref{descomposicio_k_theta}).

We will say that $f$ is \textbf{very} $1$-$\mathcal{D}_1$\textbf{-good with respect to} $\mathcal{D}_2$ (or just $1$-very good or even very good) if it is $1$-$\mathcal{D}_1$-good with respect to $\mathcal{D}_2$ and moreover $\Delta_{1,Q}f=0$ except for a finite family of cubes $Q\in \mathcal{D}^{\text{tr},1}_1$. We also consider the analogous notation interchanging $\mathcal{D}_1$ and $\mathcal{D}_2$.

To simplify notation, from now on we are going to write $Q^0=Q^0_{\mathcal{D}_1}$ and $R^0=Q^0_{\mathcal{D}_2}$ (as defined in (\ref{definicio_Q0})). Recall that our goal now is to bound $\langle K_{\Theta}f,g\rangle$. To do so, using the decompositions (\ref{descomposicio_f_g}), we can write
\begin{align}
    \langle K_{\Theta}f, g\rangle &= \langle K_{\Theta}(\Xi_1f + \sum_{Q\in \mathcal{D}^{\text{tr},1}_1}\Delta_{1,Q}f), \Xi_2 g + \sum_{R\in \mathcal{D}^{\text{tr},2}}\Delta_{2,R}g\rangle\nonumber\\
    & = \langle K_\Theta\left(\Xi_1 f\right), g\rangle + \langle K_{\Theta}(\sum_{Q\in \mathcal{D}^{\text{tr},1}_1}\Delta_{1,Q}f), \Xi_2g \rangle  + \langle K_{\Theta}(\sum_{Q\in \mathcal{D}^{\text{tr},1}_1}\Delta_{1,Q}f), \sum_{R\in \mathcal{D}^{\text{tr},2}_2}\Delta_{2,R}g\rangle\nonumber \\
    & = \langle K_{\Theta}(\Xi_1f), g\rangle + \langle K_{\Theta}f, \Xi_2g\rangle -\langle K_{\Theta}(\Xi_1f), \Xi_2g\rangle\nonumber \\[0.5em]
    &\qquad + \langle K_{\Theta}(\sum_{Q\in \mathcal{D}^{\text{tr},1}_1}\Delta_{1,Q}f), \sum_{R\in \mathcal{D}^{\text{tr},2}_2}\Delta_{2,R}g\rangle.\label{cosa_xunga}
\end{align}

We now show that the first two terms in (\ref{cosa_xunga}) can be estimated easily. Indeed, by Lemma \ref{lema_distancia}, $K_{\Theta,*}\nu_1(x)\leq c_4$ and $K_{\Theta, *}^*\nu_2(x)\leq c_4$, for all $x\in F$, with $c_4$ depending on $c_0,c_b, c_*$ and $\delta_0$, so we have
$$
\left|K_{\Theta}b_1(x)\right| = \left| K_{\Theta}\nu_1(x)\right|\leq c_4, \quad \left|K_{\Theta}^*b_2(x)\right| = \left| K_{\Theta}^*\nu_2(x)\right|\leq c_4, \quad x\in F.
$$
Thus, since both $Q^0$ and $R^0$ are transit cubes, we can use Remark \ref{obs:martingala_b_transit} to bound $|\langle b_1\rangle_{Q^0}|$ and $|\langle b_2\rangle_{R^0}|$ from below, and by the previous bounds,
\begin{align*}
    \|K_{\Theta}(\Xi_1f)\|_{L^2(\mu)} &= \frac{\left|\langle f\rangle_{Q^0}\right|}{\left|\langle b_1\rangle_{Q^0}\right|}\|K_{\Theta}b_1\|_{L^2(\mu)} \leq c\left|\langle f\rangle_{Q^0}\right|\mu(Q^0)^\frac{1}{2}\leq c \|f\|_{L^2(\mu)},\\
    \|K_{\Theta}(\Xi_2g)\|_{L^2(\mu)} &= \frac{\left|\langle f\rangle_{R^0}\right|}{\left|\langle b_2\rangle_{R^0}\right|}\|K_{\Theta}b_2\|_{L^2(\mu)} \leq c\left|\langle g\rangle_{R^0}\right|\mu(R^0)^\frac{1}{2}\leq c \|g\|_{L^2(\mu)}.
\end{align*}
Therefore, on the one hand we have that
$$
\left|\langle K_{\Theta}(\Xi_1f), g\rangle \right|\leq c\|f\|_{L^2(\mu)}\|g\|_{L^2(\mu)}.
$$
On the other hand, by duality,
$$
\left|\langle  K_{\Theta}f, \Xi_2g\rangle \right| = \left|\langle f, K_{\Theta}^*(\Xi_2g) \rangle  \right| \leq c \|f \|_{L^2(\mu)}\|g\|_{L^2(\mu)}.
$$
Now, for the third term in (\ref{cosa_xunga}), we can use the same estimate for $f$ and the fact that $\Xi_2$ is bounded in $L^2(\mu)$, and we obtain that
$$
\left|\langle K_{\Theta}(\Xi_1f), \Xi_2g\rangle \right| \leq c \| f\|_{L^2(\mu)}\|g\|_{L^2(\mu)}.
$$
Hence, to prove the lemma it is enough to estimate the quantity
\begin{equation}
    \label{el_que_hem_dacotar} \sum_{Q\in \mathcal{D}^{\text{tr},1}_1, R\in \mathcal{D}^{\text{tr},2}_2}\langle K_{\Theta}(\Delta_{1,Q}f), \Delta_{2,R}g\rangle.
\end{equation}
We are going to use different approaches to bound this last sum depending on the relative positions and sizes of the cubes that are involved. This is why we introduce the following notion. We say that two cubes $Q\in \mathcal{D}_1, R\in \mathcal{D}_2$ are \textbf{distant} if 
$$
\text{dist}(Q,R)\geq \min(\ell(Q), \ell(R))^{\gamma}\max(\ell(Q), \ell(R))^{1-\gamma}.
$$
Recall that $\gamma = \frac{\eta}{2(n+\eta)}$, from the definition of bad cubes.

\begin{obs}
    \label{remark:not_distant_good} If $Q\in \mathcal{D}_1^{\text{tr},1}$ and $R\in \mathcal{D}_2^{\text{tr},2}$ are disjoint cubes which are not distant and both are good (each with respect to the lattice to which the other cube belongs), it turns out that
    $$
    2^{-m}\ell(R)<\ell(Q)<2^m\ell(R),
    $$
    where $m$ is the integer from the definition of bad cubes. Indeed, assume first that $\ell(Q)\leq \ell(R)$. Then, since $Q$ and $R$ are disjoint and not distant, 
    $$
    \text{dist}(Q,R) = \text{dist}(Q, \partial R)\leq \ell(Q)^{\gamma}\ell(R)^{1-\gamma}.
    $$
    Moreover, since $Q$ is good, there does not exist any $\widetilde{R}\in \mathcal{D}_2$ such that
    $$
    \text{dist}(Q, \partial \widetilde{R}) \leq \ell(Q)^{\gamma}\ell(\widetilde{R})^{1-\gamma}\quad \text{and}\quad \ell(\widetilde{R})\geq 2^m\ell(Q).
    $$
    Hence, since $R$ satisfies the first condition, it must be that $\ell(R)<2^m\ell(Q)$, that is, $2^{-m}\ell(R)<\ell(Q)$, which is the first inequality in our claim. Regarding the second inequality, since $m\geq 1$, we have that $\ell(Q)\leq \ell(R)<2^m\ell(R)$. Assume now that $\ell(Q)>\ell(R)$. Then,
    $$
    \text{dist}(Q,R) = \text{dist}(R, \partial Q) \leq \ell(R)^{\gamma}\ell(Q)^{1-\gamma}.
    $$
    Since $R$ is good, arguing analogously as before we find that $\ell(Q)<2^m\ell(R)$ and since $m\geq 1$, $2^{-m}\ell(R)<\ell(R)<\ell(Q)$.
\end{obs}

This new notion is what motivates us to separate the sum in (\ref{el_que_hem_dacotar}) as follows,
\begin{align}
    &\sum_{Q\in \mathcal{D}^{\text{tr},1}_1, R\in \mathcal{D}^{\text{tr},2}_2}\langle K_{\Theta}(\Delta_{1,Q}f), \Delta_{2,R}g\rangle  = \sum_{\substack{Q\in \mathcal{D}^{\text{tr},1}_1, R\in \mathcal{D}^{\text{tr},2}_2\\ Q\cap R = \varnothing }}\cdots + \sum_{\substack{Q\in \mathcal{D}^{\text{tr},1}_1, R\in \mathcal{D}^{\text{tr},2}_2\\ Q\cap R \neq  \varnothing }}\cdots\nonumber \\
    &= \sum_{\substack{Q\in \mathcal{D}^{\text{tr},1}_1, R\in \mathcal{D}^{\text{tr},2}_2\\ Q,R\text{ distant} }} + \sum_{\substack{Q\in \mathcal{D}^{\text{tr},1}_1, R\in \mathcal{D}^{\text{tr},2}_2\\ Q\cap R = \varnothing\\Q,R\text{ not distant} }} + \sum_{\substack{Q\in \mathcal{D}^{\text{tr},1}_1, R\in \mathcal{D}^{\text{tr},2}_2\\ Q\cap R \neq \varnothing\\ 2^{-m}\ell(R)\leq \ell(Q)\leq 2^m\ell(R) }} + \sum_{\substack{Q\in \mathcal{D}^{\text{tr},1}_1, R\in \mathcal{D}^{\text{tr},2}_2\\ Q\cap R = \varnothing\\ \ell(Q)<2^{-m}\ell(R)\text{ or }\ell(R)<2^{-m}\ell(Q)}} \nonumber \\
    & =: S_1 + S_2 + S_3 + S_4.\label{4_termes}
\end{align}
Our strategy now to complete the proof of Lemma \ref{lemma:good_functions} will be to show that sum of the above is bounded by $c\|f\|_{L^2(\mu)}\|g\|_{L^2(\mu)}$. We are going to do this in sections \ref{sec:s1} through \ref{sec:s2s3}.

\subsection{Estimate of the sum $S_1$: distant cubes}

\label{sec:s1}

We can split the sum $S_1$ in two by considering
\begin{align*}
    \left|\sum_{\substack{Q\in \mathcal{D}^{\text{tr},1}_1, R\in \mathcal{D}^{\text{tr},2}_2\\Q,R \text{ distant }}}\langle K_{\Theta}(\Delta_{1,Q}f),\Delta_{2,R}g\rangle \right|&\leq \sum_{\substack{Q\in \mathcal{D}^{\text{tr},1}_1, R\in \mathcal{D}^{\text{tr},2}_2\\Q,R \text{ distant }\\ \ell(Q)\leq \ell(R)}}|\langle K_{\Theta}(\Delta_{1,Q}f), \Delta_{2,R}g\rangle |\\
    &\quad + \sum_{\substack{Q\in \mathcal{D}^{\text{tr},1}_1, R\in \mathcal{D}^{\text{tr},2}_2\\Q,R \text{ distant }\\ \ell(Q)> \ell(R)}}|\langle K_{\Theta}(\Delta_{1,Q}f), \Delta_{2,R}g\rangle |\\
    &=: S_{1,1} + S_{1,2}.
\end{align*}

First we will deal with the sum $S_{1,1}$ and once we have obtained the right bound for it we will argue that by analogous arguments, we obtain the same bound for $S_{1,2}$.

A quantity that will be useful to us is the following,
$$
D(Q,R):= \text{dist}(Q, R) + \ell(Q) + \ell(R),
$$
which we call the \textbf{long distance} between $Q$ and $R$. Using this notion, we have the following lemma.

\begin{lemma}
    \label{lema:phi_psi} Let $Q, R \subset \R^d$ be disjoint cubes and let $\varphi_Q, \psi_R\in L^2(\mu)$ be functions supported on $Q$ and $R$, respectively. If $\text{dist}(Q,\text{supp}(\psi_R))\geq \ell(Q)$ and $\int \varphi_Q\,d\mu=0$, then
    \begin{equation}
        |\langle K_{\Theta}\varphi_Q, \psi_R\rangle |\leq c\frac{\ell(Q)^{\eta}}{\text{dist}(Q, \text{supp}(\psi_R))^{n+\eta}}\|\varphi_Q\|_{L^1(\mu)}\|\psi_R\|_{L^1(\mu)}. \label{primera_estimacio_phi_psi}
    \end{equation}
    If, in addition, $\ell(Q)\leq \ell(R)$ and for some constant $c_6>0$,
    \begin{equation}
        \text{dist}(Q,\text{supp}(\psi_R))\geq c_6\ell(Q)^{\gamma}\ell(R)^{1-\gamma}, \label{segona_condicio}
    \end{equation}
    then we also have
    \begin{equation}
        |\langle K_{\Theta}\varphi_Q, \psi_R \rangle| \leq c\frac{\ell(Q)^{\frac{\eta}{2}}\ell(R)^{\frac{\eta}{2}}}{D(Q,R)^{n+\eta}}\|\varphi_Q\|_{L^1(\mu)}\|\psi_R\|_{L^1(\mu)}.\label{segona_estimacio_phi_psi}
    \end{equation}
\end{lemma}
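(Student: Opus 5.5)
The plan is the standard cancellation argument, using the mean-zero property of $\varphi_Q$ together with the Hölder regularity of $\widetilde{k}_\Theta$ from the lemma showing that $\widetilde{k}_\Theta$ is a Calderón-Zygmund kernel. Let $z_Q$ be the center of $Q$. Since $\int \varphi_Q\,d\mu=0$, Fubini gives
\begin{equation*}
\langle K_\Theta\varphi_Q,\psi_R\rangle=\iint \bigl(\widetilde{k}_\Theta(x,y)-\widetilde{k}_\Theta(x,z_Q)\bigr)\varphi_Q(y)\psi_R(x)\,d\mu(y)\,d\mu(x).
\end{equation*}
Fubini is legitimate because $\Theta$ may be taken bounded below (as in the remark after Lemma \ref{lemma:good_functions}), and in any case the supports are separated, so the kernel is bounded on $\text{supp}\,\psi_R\times Q$.

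\textbf{Proof of (\ref{primera_estimacio_phi_psi}).} For $y\in Q$ and $x\in\text{supp}(\psi_R)$ we have $|y-z_Q|\le \frac{\sqrt d}{2}\ell(Q)$ and $|x-z_Q|\ge \text{dist}(Q,\text{supp}\,\psi_R)=:\delta\ge \ell(Q)$. Thus $|y-z_Q|\le \frac{\sqrt d}{2}|x-z_Q|$. The smoothness condition requires the ratio $\le 1/2$, so to handle the dimensional constant I will use the remark after Definition \ref{definicio_CZ_kernel}. That remark says the smoothness estimate persists whenever $|x-y|\approx|x-z_Q|$. If this is not available directly, I would chain through intermediate points along the segment from $z_Q$ to $y$, at a cost depending only on $d$. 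This yields
\begin{equation*}
|\widetilde{k}_\Theta(x,y)-\widetilde{k}_\Theta(x,z_Q)|\le c\,\frac{\ell(Q)^\eta}{|x-z_Q|^{n+\eta}}\le c\,\frac{\ell(Q)^\eta}{\delta^{n+\eta}}.
\end{equation*}
Integrating against $|\varphi_Q(y)||\psi_R(x)|$ gives (\ref{primera_estimacio_phi_psi}).

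\textbf{Proof of (\ref{segona_estimacio_phi_psi}).} By (\ref{primera_estimacio_phi_psi}), it suffices to show
\begin{equation*}
\frac{\ell(Q)^\eta}{\delta^{n+\eta}}\lesssim \frac{\ell(Q)^{\eta/2}\ell(R)^{\eta/2}}{D(Q,R)^{n+\eta}},
\end{equation*}
which is equivalent to $D(Q,R)^{n+\eta}\ell(Q)^{\eta/2}\lesssim \delta^{n+\eta}\ell(R)^{\eta/2}$. Note $\delta\ge\text{dist}(Q,R)$, $\delta\ge\ell(Q)$, and $\ell(Q)\le\ell(R)$, so $D(Q,R)\le 3\max(\delta,\ell(R))$. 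I split into two cases.

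\emph{Case $\delta\ge\ell(R)$.} Then $D\lesssim\delta$, and the needed inequality reduces to $\ell(Q)^{\eta/2}\le\ell(R)^{\eta/2}$, which holds.

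\emph{Case $\delta<\ell(R)$.} Then $D\lesssim\ell(R)$, and the needed inequality reduces to $\ell(R)^{n+\eta/2}\ell(Q)^{\eta/2}\lesssim\delta^{n+\eta}$. By (\ref{segona_condicio}),
\begin{equation*}
\delta^{n+\eta}\ge c_6^{n+\eta}\,\ell(Q)^{\gamma(n+\eta)}\ell(R)^{(1-\gamma)(n+\eta)}.
\end{equation*}
With $\gamma=\frac{\eta}{2(n+\eta)}$ we get $\gamma(n+\eta)=\eta/2$ and $(1-\gamma)(n+\eta)=n+\eta/2$. So the right-hand side is exactly $c_6^{n+\eta}\ell(Q)^{\eta/2}\ell(R)^{n+\eta/2}$, which closes this case with a constant depending on $c_6$.

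The only delicate point is the geometric constant in the first step, where $\sqrt d/2$ may exceed the ratio $1/2$ required by the kernel smoothness. This is handled by the comparability remark or by subdividing the segment, and every other step is elementary. The exponent bookkeeping in the last case is precisely where the choice of $\gamma$ is designed to work.
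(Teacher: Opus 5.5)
Your proof is correct and follows the paper's argument essentially step for step. The first estimate comes from cancellation against $\widetilde{k}_\Theta(x,z_Q)$ using $\int\varphi_Q\,d\mu=0$ and the Hölder regularity of the suppressed kernel; the second uses the same split according to whether $\text{dist}(Q,\text{supp}(\psi_R))\ge \ell(R)$, where this distance is comparable to $D(Q,R)$, or $<\ell(R)$, where $D(Q,R)\le 3\ell(R)$ and the choice $\gamma=\frac{\eta}{2(n+\eta)}$ makes the exponents match. Your treatment of the dimensional factor $\sqrt d/2$ against the ratio $1/2$ in the smoothness condition addresses a point the paper passes over with ``$\approx$'', and both of your fixes are valid: the size bound when $|x-y|\approx|x-z_Q|$, or chaining along the segment from $z_Q$ to $y$.
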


\begin{proof}
    Let us denote by $z_Q$ the center of the cube $Q$. From the assumption, we infer that if $s\in Q$ and $t\in \text{supp}(\psi_R)$ are arbitrary points, then
    $$
    |z_Q-t|\approx |s-t| \geq \text{dist}(Q,\text{supp}(\psi_R))\geq \ell(Q)\geq c_d |s-z_Q|,
    $$
    where $c_d$ is a constant that only depends on the dimension $d$, because $s\in Q$. Thus, we can use the Calderón-Zygmund estimate we know for the kernel $\widetilde{k}_{\Theta}$ to obtain
    \begin{align*}
        |\langle K_{\Theta}\varphi_Q,\psi_R\rangle | &= \left|\iint \widetilde{k}_{\Theta}(t,s)\varphi_Q(s)\psi_R(t)\,d\mu(s)\,d\mu(t)\right|\\
        &= \left|\iint\left(\widetilde{k}_{\Theta}(t,s)-\widetilde{k}_{\Theta}(t,z_Q)\right)\varphi_Q(s)\psi_R(t)\,d\mu(s)\,d\mu(t)\right|\\
        &\leq c\iint\frac{|s-z_Q|^\eta}{|t-z_Q|^{n+\eta}}\varphi_Q(s)\psi_R(t)\,d\mu(s)\,d\mu(t)\\
        &\leq c\frac{\ell(Q)^{\eta}}{\text{dist}(Q,\text{supp}(\psi_R))^{n+\eta}}\|\varphi_Q\|_{L^1(\mu)}\|\psi_R\|_{L^1(\mu)},
    \end{align*}
    which is exactly (\ref{primera_estimacio_phi_psi}). Suppose now that $\ell(Q)\leq \ell(R)$ and that the condition (\ref{segona_condicio}) holds. If $\text{dist}(Q,\text{supp}(\psi_R))\geq \ell(R)$, the previous inequality becomes
    \begin{equation}
    |\langle K_{\Theta}\varphi_Q, \psi_R\rangle| \leq c\frac{\ell(Q)^{\frac{\eta}{2}}\ell(R)^{\frac{\eta}{2}}}{\text{dist}(Q,\text{supp}(\psi_R))^{n+\eta}}\|\varphi_Q\|_{L^1(\mu)}\|\psi_R\|_{L^1(\mu)}.\label{plug}
    \end{equation}
    We claim that in this situation, the following three quantities are comparable,
    \begin{equation}
    \text{dist}(Q,\text{supp}(\psi_R))\approx \text{dist}(Q,\text{supp}(\psi_R)) + \ell(Q) + \ell(R) \approx D(Q,R).\label{coses_comparables}
    \end{equation}
    Indeed, for the first one, one inequality is obvious, and for the other, just notice that by our hypothesis, we have
    $$
    3\, \text{dist}(Q, \text{supp}(\psi_R)) \geq \text{dist}(Q,\text{supp}(\psi_R)) + \ell(Q) + \ell(R).
    $$
    To see the second comparability in (\ref{coses_comparables}), since $\text{supp}(\psi_R)\subseteq R$, it is obvious that
    $$
    \text{dist}(Q,\text{supp}(\psi_R))+ \ell(Q)+\ell(R) \geq \text{dist}(Q,R) + \ell(Q)+\ell(R)= D(Q,R).
    $$
    For the other inequality, note that 
    $$
    \text{dist}(Q,\text{supp}(\psi_R)) \leq \text{dist}(Q,R) + \text{diam}(R) \leq c(\text{dist(Q,R)} + \ell(R)),
    $$
    where $c$ depends only on the dimension, $d$. Hence, 
    $$
    \text{dist}(Q,\text{supp}(\psi_R)) + \ell(Q)+\ell(R) \leq c' D(Q,R).
    $$ 
    If we plug this in (\ref{plug}), we get (\ref{segona_estimacio_phi_psi}). Now, if $\text{dist}(Q,\text{supp}(\psi_R))\leq \ell(R)$, then
    \begin{align*}
        \frac{\ell(Q)^{\eta}}{\text{dist}(Q, \text{supp}(\psi_R))^{n+\eta}}\leq \frac{\ell(Q)^{\eta}}{(c_6\ell(Q)^{\gamma}\ell(R)^{1-\gamma})^{n+\eta}} = \frac{c_6^{-n-\eta}\ell(Q)^{\frac{\eta}{2}}\ell(R)^{\frac{\eta}{2}}}{\ell(R)^{n+\eta}}\leq c\frac{\ell(Q)^{\frac{\eta}{2}}\ell(R)^{\frac{\eta}{2}}}{D(Q,R)^{n+\eta}},
    \end{align*}
    where we have used that in this case $D(Q,R)\leq 3 \ell(R)$.
\end{proof}

This lemma, along with the following, will be the essential ingredient in dealing with the term $S_{1,1}$.

\begin{lemma}
    \label{lema:schur} (Schur's lemma) Let $I$ be some set of indices, and for each $i\in I$ a number $w_i>0$. Suppose that, for some constant $a\geq 0$, the matrix $\{T_{i,j}\}_{i,j\in I}$ satisfies
    $$
    \sum_{j\in I}|T_{i,j}|\,w_j\leq a\,w_i, \quad \text{for each }\,i,
    $$
    and 
    $$
    \sum_{i\in I}|T_{i,j}|\, w_i \leq a\,w_j, \quad \text{for each }\,j.
    $$
    Then, the matrix $\{T_{i,j}\}_{i,j\in I}$ defines a bounded operator in $\ell^2(I)$ with norm bounded above by $a$.
\end{lemma}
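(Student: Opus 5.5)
The plan is the classical weighted Cauchy--Schwarz argument (Schur's test). It suffices to show $\left|\sum_{i,j}T_{i,j}x_j\overline{y_i}\right|\le a\|x\|_{\ell^2(I)}\|y\|_{\ell^2(I)}$ for all finitely supported $x,y\in\ell^2(I)$. By density this gives a bounded operator with norm at most $a$. Convergence issues for infinite $I$ are handled by monotone convergence, since after taking absolute values every term is nonnegative.

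The key step is to split each term symmetrically using the weights:
\[
|T_{i,j}|\,|x_j|\,|y_i| = \Bigl(|T_{i,j}|^{1/2}\,\bigl(\tfrac{w_j}{w_i}\bigr)^{1/2}|y_i|\Bigr)\Bigl(|T_{i,j}|^{1/2}\,\bigl(\tfrac{w_i}{w_j}\bigr)^{1/2}|x_j|\Bigr).
\]
Summing over $i,j$ and applying Cauchy--Schwarz in $\ell^2(I\times I)$ bounds the bilinear form by
\[
\Bigl(\sum_i \frac{|y_i|^2}{w_i}\sum_j |T_{i,j}|w_j\Bigr)^{1/2}\Bigl(\sum_j \frac{|x_j|^2}{w_j}\sum_i |T_{i,j}|w_i\Bigr)^{1/2}.
\]
The row hypothesis bounds the inner sum in the first factor by $a w_i$, so the first factor is at most $(a\|y\|^2)^{1/2}$. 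The column hypothesis likewise bounds the second factor by $(a\|x\|^2)^{1/2}$. Multiplying gives $a\|x\|\|y\|$. Here all the sums are of nonnegative terms, so Tonelli justifies the interchange of summation.

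Having bounded absolute convergence, define $(Tx)_i=\sum_j T_{i,j}x_j$, which converges absolutely for every $x\in\ell^2(I)$. Duality then gives $\|Tx\|\le a\|x\|$.

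There is no real obstacle here. The only point needing care is choosing the square-root splitting of the weights so that the row condition feeds one factor and the column condition the other, and checking the measure-theoretic justification when $I$ is uncountable. In that case the sums are over at most countably many nonzero terms, because each row sum is finite.
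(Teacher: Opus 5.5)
Your proof is correct and is essentially the paper's weighted Cauchy--Schwarz argument (Schur's test): it uses the same weights, with the row condition controlling one factor and the column condition the other. The difference is cosmetic. You bound the bilinear form $\sum_{i,j}|T_{i,j}||x_j||y_i|$ with a symmetric splitting and conclude by duality, whereas the paper applies Cauchy--Schwarz in $j$ to each $|(Tx)_i|^2$ and then sums over $i$. Your version also makes explicit the absolute convergence of $\sum_j T_{i,j}x_j$, which the paper leaves implicit.
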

\begin{proof}
    Let $\{x_{i}\}_{i\in I}\subset \ell^2(I)$ and set $y_i = \sum_{j\in I }T_{i,j}x_j$. We have to show that 
    $$
    \sum_{i\in I}|y_i|^2 \leq a^2 \sum_{i\in I}|x_i|^2.
    $$
    It will be useful to write
    $$
    |T_{i,j}||x_j| = \left(|T_{i,j}|^{\frac{1}{2}}w_j^{\frac{1}{2}}\right)\left(|T_{i,j}|^{\frac{1}{2}}w_j^{-\frac{1}{2}}|x_j|\right),
    $$
    to which we apply Cauchy-Schwarz and we get
    $$
    |y_i|^2 \leq \left(\sum_{j\in I}|T_{i,j}|w_j\right)\left(\sum_{j\in I}|T_{i,j}|w_j^{-1}|x_j|^2\right)\leq a\,w_i\sum_{j\in I}|T_{i,j}|w_j^{-1}|x_j|^2,
    $$
    where we have used the first hypothesis. Now, summing over $i\in I$ and interchanging the order of the sums, the second hypothesis yields
    $$
    \sum_{i\in I}|y_i|^2 \leq a\sum_{j\in I}\left(\sum_{i\in I}|T_{i,j}|w_i\right)w_j^{-1}|x_j|^2\leq a^2 \sum_{j\in I}|x_j|^2,
    $$
    as desired.
\end{proof}

Combining the two previous lemmas we will be able to bound the term $S_{1,1}$.

\begin{lemma}
    \label{lema:s11} If $f,g\in L^2(\mu)$ are as in Lemma \ref{lemma:good_functions} and they are very good, then we have
    \begin{align}
        S_{1,1} &\leq c \sum_{\substack{Q\in \mathcal{D}^{\text{tr},1}_1, R\in \mathcal{D}^{\text{tr},2}_2\\ \ell(Q)\leq \ell(R)}}\frac{\ell(Q)^{\frac{\eta}{2}}\ell(R)^{\frac{\eta}{2}}}{D(Q,R)^{n+\eta}}\mu(Q)^{\frac{1}{2}}\mu(R)^{\frac{1}{2}}\|\Delta_{1,Q}f\|_{L^2(\mu)}\|\Delta_{2,R}g\|_{L^2(\mu)}\nonumber \\
        &\leq c\,\|f\|_{L^2(\mu)}\|g\|_{L^2(\mu)}\label{seogona_desigualtat_s11}
    \end{align}
\end{lemma}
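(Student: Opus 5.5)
The first inequality is obtained term by term from Lemma \ref{lema:phi_psi}; the second reduces, via Schur's lemma (Lemma \ref{lema:schur}), to bounded sums of the kernel $\ell(Q)^{\eta/2}\ell(R)^{\eta/2}D(Q,R)^{-n-\eta}$ weighted by $\mu(Q)^{1/2}\mu(R)^{1/2}$.

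\textbf{Step 1: the first inequality.} Fix a pair $Q\in\mathcal{D}^{\text{tr},1}_1$, $R\in\mathcal{D}^{\text{tr},2}_2$ that are distant with $\ell(Q)\le\ell(R)$. Take $\varphi_Q=\Delta_{1,Q}f$, which has mean zero by Lemma \ref{lema:propietats_delta}(b) and is supported on $Q$, and take $\psi_R=\Delta_{2,R}g$, supported on $R$. Distance gives $\text{dist}(Q,R)\ge \ell(Q)^\gamma\ell(R)^{1-\gamma}\ge\ell(Q)$, so both hypotheses of Lemma \ref{lema:phi_psi} hold with $c_6=1$, using $\text{dist}(Q,\text{supp}\,\psi_R)\ge\text{dist}(Q,R)$. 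The lemma then yields (\ref{segona_estimacio_phi_psi}). Cauchy–Schwarz gives $\|\Delta_{1,Q}f\|_{L^1(\mu)}\le\mu(Q)^{1/2}\|\Delta_{1,Q}f\|_{L^2(\mu)}$, and the same for $R$. Since $f,g$ are very good, all sums are finite and we may drop the distance restriction, which gives the first inequality.

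\textbf{Step 2: reduction to a Schur test.} Let $x_Q=\|\Delta_{1,Q}f\|_2$ and $y_R=\|\Delta_{2,R}g\|_2$. By Lemma \ref{lema_descomposicio_L2}, $\sum_Q x_Q^2\le c_3\|f\|^2$ and $\sum_R y_R^2\le c_3\|g\|^2$. It therefore suffices to show that the matrix
$$T_{Q,R}=\frac{\ell(Q)^{\eta/2}\ell(R)^{\eta/2}}{D(Q,R)^{n+\eta}}\mu(Q)^{1/2}\mu(R)^{1/2}\chi_{\{\ell(Q)\le\ell(R)\}}$$
is bounded on $\ell^2$. To fit Lemma \ref{lema:schur}, index a square matrix by the disjoint union of both families and place $T$ in the off-diagonal blocks. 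Choose the weights $w_Q=\mu(Q)^{1/2}$ and $w_R=\mu(R)^{1/2}$. Then we must show
$$\sum_{R:\ell(R)\ge\ell(Q)}\frac{\ell(Q)^{\eta/2}\ell(R)^{\eta/2}}{D(Q,R)^{n+\eta}}\mu(R)\le c \qquad\text{and}\qquad \sum_{Q:\ell(Q)\le\ell(R)}\frac{\ell(Q)^{\eta/2}\ell(R)^{\eta/2}}{D(Q,R)^{n+\eta}}\mu(Q)\le c.$$

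\textbf{Step 3: the two Schur sums.} In the first sum, fix $\ell(R)=2^k\ell(Q)$, $k\ge0$. The cubes of that generation are disjoint, so
$$\sum_{R}\frac{\mu(R)}{D(Q,R)^{n+\eta}}\lesssim\int\frac{d\mu(y)}{(\ell(R)+|y-z_Q|)^{n+\eta}}\lesssim \ell(R)^{-\eta}.$$
The last bound follows from the growth $\mu(B(z_Q,r))\le c_0r^n$ for $r\ge\ell(Q)$, summed over dyadic annuli; this growth holds because $Q$ is transit (Lemma \ref{lema:comportament_cubs_transit}). The $k$-th term is then about $2^{-k\eta/2}$, which sums in $k$.

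For the second sum, fix $\ell(Q)=2^{-k}\ell(R)$. The cubes $Q$ of that generation are disjoint, so their contribution is at most
$$2^{-k\eta/2}\ell(R)^{\eta}\int\frac{d\mu(y)}{(\ell(R)+|y-z_R|)^{n+\eta}}\lesssim 2^{-k\eta/2},$$
now using the growth around $R$, since $R$ is transit. Summing over $k$ again gives a constant. The one subtlety is that $D(Q,R)\gtrsim \ell(R)+|y-z_R|$ for $y\in Q$, which holds because $\ell(Q)\le\ell(R)$.

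\textbf{Main obstacle.} The delicate point is that the growth bound is available only at scales $\ge\ell(\text{cube})$ around transit cubes. The argument must therefore always integrate around the transit cube at scales $\gtrsim$ its side. This is arranged by putting $\ell(R)$, respectively $\ell(Q)$, into the denominator before passing to the integral.
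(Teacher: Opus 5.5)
Your proposal is correct and follows the paper's proof essentially step for step. The first inequality comes from Lemma~\ref{lema:phi_psi} plus Cauchy--Schwarz, and the second from Schur's lemma with weights $\mu(\cdot)^{1/2}$, split by generation, using disjointness of same-generation cubes and the growth of transit cubes (Lemma~\ref{lema:comportament_cubs_transit}). The only cosmetic differences are that you bound the same-generation sums by $\int(\ell(R)+|y-z|)^{-n-\eta}\,d\mu(y)$ instead of the paper's discrete annuli $2^jR\setminus 2^{j-1}R$, and that you spell out the square-matrix embedding needed to apply Lemma~\ref{lema:schur} as stated.
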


\begin{proof}
    Recall that the cubes $Q,R$ that appear in the sum $S_{1,1}$ are distant and moreover $\ell(Q)\leq \ell(R)$. This means that
    $$
    \text{dist}(Q, \text{supp}(\Delta_{2,R}g))\geq \text{dist}(Q,R)\geq \ell(Q)^{\gamma}\ell(R)^{1-\gamma} \geq \ell(Q).
    $$
    Hence, we can apply the second estimate from Lemma \ref{lema:phi_psi} and we obtain that
    \begin{align*}
    |\langle  K_{\Theta}(\Delta_{1,Q}f), \Delta_{2,R}g\rangle| &\leq c \frac{\ell(Q)^{\frac{\eta}{2}}\ell(R)^{\frac{\eta}{2}}}{D(Q,R)^{n+\eta}}\|\Delta_{1,Q}f\|_{L^1(\mu)}\|\Delta_{2,R}g\|_{L^1(\mu)}\\
    &\leq c \frac{\ell(Q)^{\frac{\eta}{2}}\ell(R)^{\frac{\eta}{2}}}{D(Q,R)^{n+\eta}}\mu(Q)^{\frac{1}{2}}\mu(R)^{\frac{1}{2}}\|\Delta_{1,Q}f\|_{L^2(\mu)}\|\Delta_{2,R}g\|_{L^2(\mu)},
    \end{align*}
    where in the second inequality we have simply used Cauchy-Schwarz. Summing over $Q\in \mathcal{D}^{\text{tr},1}_1$, $R\in \mathcal{D}^{\text{tr},2}_2$ and $\ell(Q)\leq \ell(R)$, we obtain the first inequality in (\ref{seogona_desigualtat_s11}). To prove the second inequality in (\ref{seogona_desigualtat_s11}), we want to use Schur's lemma with the matrix $\{T_{Q,R}, {Q\in \mathcal{D}^{\text{tr},1}_1, R\in \mathcal{D}^{\text{tr},2}_2}\}$ defined by
    $$
    T_{Q,R} = \begin{cases}\displaystyle
        \frac{\ell(Q)^{\frac{\eta}{2}}\ell(R)^{\frac{\eta}{2}}}{D(Q,R)^{n+\eta}}\mu(Q)^{\frac{1}{2}}\mu(R)^{\frac{1}{2}}, \quad &\text{if }\,(Q,R)\in \mathcal{D}^{\text{tr},1}_1\times\mathcal{D}^{\text{tr},2}_2\,\text{ and }\, \ell(Q)\leq \ell(R),\\
        0&\text{otherwise.}
    \end{cases}
    $$
    Recall that by Lemma \ref{lema_descomposicio_L2},
    $$
    \sum_{Q\in \mathcal{D}^{\text{tr},1}_{1}}\|\Delta_{1,Q}f\|_{L^2(\mu)}^2\leq c\,\|f\|^2_{L^2(\mu)},\quad \sum_{R\in \mathcal{D}^{\text{tr},2}_2}\|\Delta_{2,R}g\|^2_{L^2(\mu)}\leq c\,\|g\|^2_{L^2(\mu)},
    $$
    so we only have to show that the matrix defined above generates a bounded operator in $\ell^2$, in the sense that
    $$
    \sum_{Q\in \mathcal{D}^{\text{tr},1}_1, R\in \mathcal{D}^{\text{tr},2}_2}T_{Q,R}\,a_Q\,b_R \leq \left(\sum_{Q\in \mathcal{D}^{\text{tr},1}_1}a_Q^2\right)^{\frac{1}{2}}\left(\sum_{R\in \mathcal{D}^{\text{tr},2}_2}b_Q^2\right)^{\frac{1}{2}},
    $$
    for all $\ell^2$ sequences $\{a_Q\}_{Q\in \mathcal{D}^{\text{tr},1}_1}$, $\{b_R\}_{R\in \mathcal{D}^{\text{tr},2}_2}$. Choosing 
    $$
    a_Q = \|\Delta_{1,Q}f\|_{L^2(\mu)} \quad \text{and}\quad b_R = \|\Delta_{2,R}g\|_{L^2(\mu)},
    $$
    we will obtain (\ref{seogona_desigualtat_s11}). Using Schur's lemma, it is enough to prove that
    \begin{equation}
    \sum_{Q\in \mathcal{D}^{\text{tr},1}_1: \ell(Q)\leq \ell(R)}T_{Q,R}\mu(Q)^{\frac{1}{2}}\leq c\, \mu(R)^{\frac{1}{2}}, \quad \text{for all }\, R\in \mathcal{D}^{\text{tr},2}_2,\label{primera_condicio_schur}
    \end{equation}
    and 
    \begin{equation}
    \sum_{R\in \mathcal{D}^{\text{tr},2}_2: \ell(Q)\leq \ell(R)}T_{Q,R}\mu(R)^{\frac{1}{2}}\leq c\, \mu(Q)^{\frac{1}{2}}, \quad \text{for all }\, R\in \mathcal{D}^{\text{tr},1}_1,\label{segona_condicio_schur}
    \end{equation}
    that is, we have chosen the numbers $w_Q = \mu(Q)>0$ that appear in the statement of Schur's lemma. First we prove (\ref{primera_condicio_schur}). Fix $R\in \mathcal{D}^{\text{tr},2}_2$ and write
    \begin{align*}
        \sum_{Q\in \mathcal{D}^{\text{tr},1}_1: \ell(Q)\leq \ell(R)}T_{Q,R}\mu(Q)^{\frac{1}{2}} &= \mu(R)^{\frac{1}{2}}\sum_{Q\in \mathcal{D}^{\text{tr},1}_1: \ell(Q)\leq \ell(R)}\frac{\ell(Q)^{\frac{\eta}{2}}\ell(R)^{\frac{\eta}{2}}}{D(Q,R)^{n+\eta}}\mu(Q)\\
        &=\mu(R)^{\frac{1}{2}}\sum_{k\geq 0}2^{-\frac{k\eta}{2}}\left(\sum_{Q\in \mathcal{D}^{\text{tr},1}_1: \ell(Q)=2^{-k}\ell(R)}\frac{\ell(R)^{\eta}}{D(Q,R)^{n+\eta}}\mu(Q)\right).
    \end{align*}
    We can rewrite the last sum as follows. If we denote $2^{-1}R=\varnothing$,
    \begin{align}
         \sum_{Q\in \mathcal{D}^{\text{tr},1}_1: \ell(Q)= 2^{-k}\ell(R)} \frac{\ell(R)^{\eta}}{D(Q,R)^{n+\eta}}\mu(Q) = \sum_{j\geq 0}\sum_{\substack{Q\in \mathcal{D}^{\text{tr},1}_1: Q\cap 2^jR\neq \varnothing\\
         Q\cap 2^{j-1}R= \varnothing\\
         \ell(Q)= 2^{-k}\ell(R)}}\frac{\ell(R)^{\eta}}{D(Q,R)^{n+\eta}}\mu(Q),\label{divisio_j}
    \end{align}
    that is, we are adding the contribution of each $Q$ depending on how big we have to make the fixed cube $R$ in order to meet $Q$. We claim that in this situation, for each $j\geq 0$, $D(Q,R)\geq c \ell(2^jR)$. Indeed, from the condition $Q\cap 2^{j-1}R=\varnothing$, we have that, if $z_R$ denotes the center of $R$, $\|x-z_R\|_{\infty}\geq \frac{1}{2}\ell(2^{j-1}R)$ for all $x\in Q$. Hence, for $x\in Q$ and $y\in R$,
    $$
    |x-y|\geq \|x-y\|_{\infty} \geq \|x-z_R\|_{\infty}- \|y-z_R\|_{\infty} \geq \frac{1}{2}\ell(2^{j-1}R)-\frac{1}{2}\ell(R).
    $$
    Therefore, taking infimum we obtain
    \begin{align*}
    D(Q,R) &= \text{dist}(Q,R) + \ell(Q) + \ell(R) \\
    &\geq \frac{1}{2}\ell(2^{j-1}R) - \frac{1}{2}\ell(R) + \ell(Q) + \ell(R)\geq \frac{1}{2}\ell(2^{j-1}R) = \frac{1}{4}\ell(2^jR).
    \end{align*}
    Hence, the right-hand side in (\ref{divisio_j}) is bounded above by
    \begin{align}
    c \sum_{j\geq 0}\sum_{\substack{Q\in \mathcal{D}^{\text{tr},1}_1: Q\cap 2^jR\neq \varnothing\\
         Q\cap 2^{j-1}R= \varnothing\\
         \ell(Q)= 2^{-k}\ell(R)}} \frac{\ell(R)^{\eta}}{\ell(2^{j}R)^{n+\eta}}\mu(Q) &= c \sum_{j\geq 0} \frac{\ell(R)^{\eta}}{\ell(2^jR)^{n+\eta}}\sum_{\substack{Q\in \mathcal{D}^{\text{tr},1}_1: Q\cap 2^jR\neq \varnothing\\
         Q\cap 2^{j-1}R= \varnothing\\
         \ell(Q)= 2^{-k}\ell(R)}} \mu(Q)\nonumber \\
         &\leq c\sum_{j\geq 0}\frac{\ell(R)^{\eta}}{\ell(2^jR)^{n+\eta}}\mu(2^{j+2}R) \nonumber \\
         & = c\sum_{j\geq 0}\frac{\mu(2^{j+2}R)}{\ell(2^jR)^n2^{j\eta}}.\label{suma_j}
    \end{align}
    Recall that by Lemma \ref{lema:comportament_cubs_transit}, since $R$ is transit,
    $$
    \mu(2^{j+2}R)\leq c_0 \,\ell(2^{j+2}R)^n=c \,\ell(2^jR)^n,
    $$
    so the sum in (\ref{suma_j}) is bounded above by some constant that depends on $c_0$ and $\eta$. If we plug this inequality in (\ref{divisio_j}), we obtain (\ref{primera_condicio_schur}). The last thing left is to prove (\ref{segona_condicio_schur}), which we will do by similar arguments as what we have just done. Fix $Q\in \mathcal{D}^{\text{tr},1}_1$ and write, as before,
    \begin{align*}
        \sum_{R\in \mathcal{D}^{\text{tr},2}_2: \ell(Q)\leq \ell(R)}T_{Q,R}\mu(R)^{\frac{1}{2}} &= \mu(Q)^{\frac{1}{2}}\sum_{R\in \mathcal{D}^{\text{tr},2}_2: \ell(Q)\leq \ell(R)}\frac{\ell(Q)^{\frac{\eta}{2}}\ell(R)^{\frac{\eta}{2}}}{D(Q,R)^{n+\eta}}\mu(R)\\
        &= \mu(Q)^{\frac{1}{2}}\sum_{k\geq 0}2^{-\frac{k\eta}{2}}\left(\sum_{R\in \mathcal{D}^{\text{tr},2}_2:\ell(Q)=2^{-k}\ell(R)}\frac{\ell(R)^{\eta}}{D(Q,R)^{n+\eta}}\mu(R)\right).
    \end{align*}
    Now, for each $k\geq0$, call $Q^k$ a cube concentric with $Q$ whose side length is $2^k\ell(Q)=\ell(R)$. Then, arguing exactly as before,
    \begin{align*}
         \sum_{R\in \mathcal{D}^{\text{tr},2}_2: \ell(R)= 2^{k}\ell(Q)} \frac{\ell(R)^{\eta}}{D(Q,R)^{n+\eta}}\mu(R) &= \sum_{j\geq 0}\sum_{\substack{R\in \mathcal{D}^{\text{tr},2}_2: R\cap 2^jQ^k\neq \varnothing\\
         R\cap 2^{j-1}Q^k= \varnothing\\
         \ell(R)= 2^{k}\ell(Q)}}\frac{\ell(R)^{\eta}}{D(Q,R)^{n+\eta}}\mu(R),\\
         &\leq c \sum_{j\geq 0} \frac{\ell(Q^k)^{\eta}}{\ell(2^jQ^k)^{n+\eta}}\sum_{\substack{R\in \mathcal{D}^{\text{tr},2}_2: R\cap 2^jQ^k\neq \varnothing\\
         R\cap 2^{j-1}Q^k= \varnothing\\
         \ell(R)= 2^{k}\ell(Q)}}\mu(R)\\
         &\leq c\sum_{j\geq 0}\frac{\mu(2^{j+2}Q^k)}{\ell(2^jQ^k)^n2^{j\eta}}.
    \end{align*}
    Again by Lemma \ref{lema:comportament_cubs_transit}, $\mu(2^{j+2}Q^k)\leq c\, \ell(2^jQ^k)^n$, so as before we obtain (\ref{segona_condicio_schur}).
\end{proof}

Now we recover the sum $S_{1,2}$. Recall that
$$
S_{1,2} = \sum_{\substack{Q\in \mathcal{D}^{\text{tr},1}_1, R\in \mathcal{D}^{\text{tr},2}_2\\Q,R \text{ distant }\\ \ell(Q)> \ell(R)}}|\langle K_{\Theta}(\Delta_{1,Q}f), \Delta_{2,R}g\rangle | = \sum_{\substack{Q\in \mathcal{D}^{\text{tr},1}_1, R\in \mathcal{D}^{\text{tr},2}_2\\Q,R \text{ distant }\\ \ell(Q)> \ell(R)}}|\langle \Delta_{1,Q}f, K_{\Theta}^*(\Delta_{2,R}g)\rangle |.
$$
From this we see that we need the analogous of Lemmas \ref{lema:phi_psi}, but interchanging the roles of $Q$ and $R$ and $K_{\Theta}$ and $K^*_{\Theta}$. One can go over the proof and see that in that case the same estimates hold by exactly the same arguments, only by changing the names of the cubes and using that the SIO $K^*_{\Theta}$ has the same relevant properties as $K_{\Theta}$. That is, we have the following result. 

\begin{lemma}
    \label{lema_phi_psi_2} Let $Q, R\subset \R^d$ be disjoint cubes and let $\varphi_Q,\psi_R\in L^2(\mu)$ be functions supported on $Q$ and $R$, respectively. If $\text{dist}(R, \text{supp}(\varphi_Q))\geq \ell(R)$ and $\int \psi_R\,d\mu =0$, then
    $$
    |\langle \varphi_Q, K^*_{\Theta}(\psi_R) \rangle | \leq c\frac{\ell(R)^{\eta}}{\text{dist}(R, \text{supp}(\varphi_Q))}\|\varphi_Q\|_{L^1(\mu)}\|\psi_R\|_{L^1(\mu)}.
    $$
    If, in addition, $\ell(R)\leq \ell(Q)$ and for some constant $c_6>0$,
    $$
    \text{dist}(R, \text{supp}(\varphi_Q))\geq c_6 \ell(R)^{\gamma}\ell(Q)^{1-\gamma},
    $$
    then we also have 
    $$
    |\langle \varphi_Q, K^*_{\Theta}(\psi_R)|\leq c\frac{\ell(Q)^{\frac{\eta}{2}}\ell(R)^{\frac{\eta}{2}}}{D(Q,R)^{n+\eta}}\|\varphi_Q\|_{L^1(\mu)}\|\psi_R\|_{L^1(\mu)}.
    $$
\end{lemma}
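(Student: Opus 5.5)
We will mirror the proof of Lemma \ref{lema:phi_psi}, interchanging the roles of $Q$ and $R$ and of $K_\Theta$ and $K^*_\Theta$. (The first displayed bound should carry the exponent $n+\eta$ in the denominator, as in (\ref{primera_estimacio_phi_psi}).)

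First, by Fubini we write
$$\langle \varphi_Q, K^*_\Theta\psi_R\rangle=\iint \widetilde{k}_\Theta(s,t)\,\psi_R(t)\varphi_Q(s)\,d\mu(t)\,d\mu(s),$$
with $s\in\operatorname{supp}(\varphi_Q)$ and $t\in R$. Let $z_R$ be the center of $R$. Since $\int\psi_R\,d\mu=0$, we may subtract $\widetilde{k}_\Theta(s,z_R)$ inside the integral in $t$ without changing its value.

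Second, we check the geometry. Because $\operatorname{dist}(R,\operatorname{supp}\varphi_Q)\ge \ell(R)$, we have $|t-z_R|\le c_d\,\ell(R)\le c_d|s-z_R|$, so $|s-t|\approx|s-z_R|$. If necessary, a harmless reduction (or the comparability remark after Definition \ref{definicio_CZ_kernel}) ensures $|t-z_R|\le \tfrac12|s-z_R|$.

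Third, we apply the regularity of $\widetilde{k}_\Theta$ in its second variable. This is the term $|k(y,x)-k(y,x')|$ in Definition \ref{definicio_CZ_kernel}, which holds for $\widetilde{k}_\Theta$ by the lemma showing that the suppressed kernel is Calderón-Zygmund. It gives
$$|\widetilde{k}_\Theta(s,t)-\widetilde{k}_\Theta(s,z_R)|\le c\,\frac{\ell(R)^\eta}{\operatorname{dist}(R,\operatorname{supp}\varphi_Q)^{n+\eta}}.$$
Integrating against $|\varphi_Q||\psi_R|$ yields the first estimate.

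For the second estimate, assume $\ell(R)\le\ell(Q)$ and split into two cases.

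If $\operatorname{dist}(R,\operatorname{supp}\varphi_Q)\ge\ell(Q)$, we use $\ell(R)^\eta\le\ell(R)^{\eta/2}\ell(Q)^{\eta/2}$. We also show
$$\operatorname{dist}(R,\operatorname{supp}\varphi_Q)\approx \operatorname{dist}(R,\operatorname{supp}\varphi_Q)+\ell(Q)+\ell(R)\approx D(Q,R),$$
using $\operatorname{supp}\varphi_Q\subset Q$ and $\operatorname{diam} Q\lesssim\ell(Q)$, exactly as in (\ref{coses_comparables}).

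If $\operatorname{dist}(R,\operatorname{supp}\varphi_Q)<\ell(Q)$, we insert the lower bound $c_6\ell(R)^\gamma\ell(Q)^{1-\gamma}$. Since $\gamma(n+\eta)=\eta/2$,
$$\frac{\ell(R)^\eta}{(\ell(R)^\gamma\ell(Q)^{1-\gamma})^{n+\eta}}=\frac{\ell(R)^{\eta/2}\ell(Q)^{\eta/2}}{\ell(Q)^{n+\eta}}.$$
In this case $D(Q,R)\le 3\ell(Q)$, because $\operatorname{dist}(Q,R)\le\operatorname{dist}(R,\operatorname{supp}\varphi_Q)$, and the bound follows.

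The only point requiring care is that the smoothness is now needed in the second variable of $\widetilde{k}_\Theta$ rather than the first. The earlier lemma establishes both variables, using the symmetry of the argument of $\widetilde{\chi}$, so no new work is needed. The rest is bookkeeping with constants that depend only on $d$, $n$, $\eta$ and $c_6$.
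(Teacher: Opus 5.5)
Your proposal is correct and is exactly the paper's route: the paper proves this lemma by rerunning the argument of Lemma~\ref{lema:phi_psi} with the roles of $Q,R$ and of $K_\Theta,K^*_\Theta$ interchanged, and you are right that the first bound should have $\text{dist}(R,\text{supp}(\varphi_Q))^{n+\eta}$ in the denominator. One harmless slip: by (\ref{k_theta_truncat}), $K^*_\Theta\psi_R(s)=\int\widetilde{k}_\Theta(t,s)\psi_R(t)\,d\mu(t)$, so your double integral should contain $\widetilde{k}_\Theta(t,s)$, you should subtract $\widetilde{k}_\Theta(z_R,s)$, and the regularity used is in the first variable (it is Lemma~\ref{lema:phi_psi} that uses the second); since Definition~\ref{definicio_CZ_kernel} and the lemma on the suppressed kernel control both variables under the same condition $|t-z_R|\le\frac12|s-z_R|$, nothing else in your argument changes.
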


Now, recall that the cubes $Q,R$ that appear in the sum $S_{1,2}$ are distant and moreover $\ell(Q)> \ell(R)$. This means that
    $$
    \text{dist}(R, \text{supp}(\Delta_{1,Q}f))\geq \text{dist}(Q,R)\geq \ell(R)^{\gamma}\ell(Q)^{1-\gamma} \geq \ell(R).
    $$
    Hence, we can apply the second estimate from the previous Lemma and we obtain that
    \begin{align*}
    |\langle  \Delta_{1,Q}f, K^*_{\Theta}(\Delta_{2,R}g)\rangle| &\leq c \frac{\ell(Q)^{\frac{\eta}{2}}\ell(R)^{\frac{\eta}{2}}}{D(Q,R)^{n+\eta}}\|\Delta_{1,Q}f\|_{L^1(\mu)}\|\Delta_{2,R}g\|_{L^1(\mu)}\\
    &\leq c \frac{\ell(Q)^{\frac{\eta}{2}}\ell(R)^{\frac{\eta}{2}}}{D(Q,R)^{n+\eta}}\mu(Q)^{\frac{1}{2}}\mu(R)^{\frac{1}{2}}\|\Delta_{1,Q}f\|_{L^2(\mu)}\|\Delta_{2,R}g\|_{L^2(\mu)}.
    \end{align*}
Since the last bound is symmetric in $Q$ and $R$ (note that here it does not matter whether the cubes are terminal of the first or second king), by the second estimate in Lemma \ref{lema:s11}, it is controlled above by $c\,\|f\|_{L^2(\mu)}\|g\|_{L^2(\mu)}$, which finishes the estimate of the sum $S_1$.

Before moving on to the next estimates, let us make a final remark about how we bounded the sum $S_1$. The fact that we were dealing only with transit cubes (of either kind) was crucial. Indeed, it allowed us to use Lemma \ref{lema:comportament_cubs_transit} and bound for every transit cube $S$ and $\lambda \geq 1$, $\mu(\lambda S)\leq c_0\ell(\lambda S)^n$, which would not have been possible if there had been terminal cubes, for which we do not have such a type of control, in the martingale decomposition given by Lemma \ref{lema_descomposicio_L2}.

\subsection{Estimate of the sum $S_4$}

As we did with the sum $S_1$, we can split the sum $S_4$ in two by considering
\begin{align*}
    S_4 = \sum_{\substack{Q\in \mathcal{D}^{\text{tr},1}_1, R\in \mathcal{D}^{\text{tr},2}_2\\ Q\cap R \neq \varnothing\\
    \ell(Q)<2^{-m}\ell(R)\text{ or }\ell(R)<2^{-m}\ell(Q)}}\langle K_{\Theta}(\Delta_{1,Q}f),\Delta_Rg\rangle &= \sum_{\substack{Q\in \mathcal{D}^{\text{tr},1}_1, R\in \mathcal{D}^{\text{tr},2}_2\\ Q\cap R \neq \varnothing\\
    \ell(Q)<2^{-m}\ell(R)}}\cdots  + \sum_{\substack{Q\in \mathcal{D}^{\text{tr},1}_1, R\in \mathcal{D}^{\text{tr},2}_2\\ Q\cap R \neq \varnothing\\
    \ell(R)<2^{-m}\ell(Q)}}\cdots\\
    &=:S_{4,1} + S_{4,2}.
\end{align*}
By duality, we can write the second sum as
$$
S_{4,2} = \sum_{\substack{Q\in \mathcal{D}^{\text{tr},1}_1, R\in \mathcal{D}^{\text{tr},2}_2\\ Q\cap R \neq \varnothing\\
    \ell(R)<2^{-m}\ell(Q)}}\langle \Delta_{1,Q}f, K^*_{\Theta}(\Delta_{2,R}g)\rangle.
$$
Since the assumptions that $K_{\Theta}^*$ satisfies are analogous to those for $K_{\Theta}$ (for example, recall how Lemma \ref{lema:phi_psi} has its corresponding version with $K^*_{\Theta}$, Lemma \ref{lema_phi_psi_2}), it is enough to prove the desired estimate only for $S_{4,1}$, which we are going to do in this section. 

It is important to recall that all these estimates are part of the proof of Lemma \ref{lemma:good_functions}, and so the functions $f$ and $g$ are supposed to be good, each with respect to the indicated lattices. This means that the cubes $Q\in \mathcal{D}_1$ appearing in the sum $S_{4,1}$ are $1$-$\mathcal{D}_1$-good with respect to $\mathcal{D}_2$, and analogously for the cubes $R\in \mathcal{D}_2$ with respect to $\mathcal{D}_2$. In particular, recalling condition (a) in the definition of bad cubes, we see that every $Q\in \mathcal{D}_1$ in the sum $S_{4,1}$ must satisfy that
\begin{equation}
\text{dist}(Q, \partial S) >\ell(Q)^{\gamma}\ell(R)^{1-\gamma}, \quad \text{for every }\, S\in \mathcal{D}_2\,\text{ such that }\, \ell(S)\geq 2^m\ell(Q).\label{condicio_bona}
\end{equation}
Since every cube $R$ in the sum $S_{4,1}$ satisfies that $\ell(R)\geq 2^{m+1}\ell(Q)$, we can choose the cube $S$ to be any of the $2^d$ children of $R$. Moreover, since $Q\cap R \neq \varnothing$ and the distance to the boundary is strictly positive, we see that $Q$ must be completely contained in one of these $2^d$ children, which we will denote from now on by $R_Q$.

Now, we can use the distinction between transit and terminal cubes to split further the sum $S_{4,1}$, by considering
\begin{align}
    S_{4,1} &= \sum_{\substack{Q\in \mathcal{D}_1^{\text{tr},1}, R \in \mathcal{D}^{\text{tr},2}_2\\
    Q\subset R\\
    \ell(Q)<2^{-m}\ell(R)\\
    R_Q\in \mathcal{D}^{\text{tr},2}_2}}\langle K_{\Theta}(\Delta_{1,Q}f),\Delta_{2,R}g\rangle + \sum_{\substack{Q\in \mathcal{D}_1^{\text{tr},1}, R \in \mathcal{D}^{\text{tr},2}_2\\
    Q\subset R\\
    \ell(Q)<2^{-m}\ell(R)\\
    R_Q\in \mathcal{D}^{\text{term},2}_2}}\langle K_{\Theta}(\Delta_{1,Q}f),\Delta_{2,R}g\rangle \label{s41}\\
    &:= S^{\text{tr}}_{4,1} + S^{\text{term}}_{4,1}.\nonumber
\end{align}

\subsubsection{Estimate of $S^{\text{\normalfont tr}}_{4,1}$}

We can use duality to write $S_{4,1}^{\text{tr}}$ as
\begin{align}
    S^{\text{tr}}_{4,1} &= \sum_{\substack{Q\in \mathcal{D}_1^{\text{tr},1}, R \in \mathcal{D}^{\text{tr},2}_2\\
    Q\subset R\\
    \ell(Q)<2^{-m}\ell(R)\\
    R_Q\in \mathcal{D}^{\text{tr},2}_2}} \langle \Delta_{1,Q}f, K^*_{\Theta}(\Delta_{2,R}g)\rangle \nonumber \\
    &= \sum_{\substack{Q\in \mathcal{D}_1^{\text{tr},1}, R \in \mathcal{D}^{\text{tr},2}_2\\
    Q\subset R\\
    \ell(Q)<2^{-m}\ell(R)\\
    R_Q\in \mathcal{D}^{\text{tr},2}_2}} \langle \Delta_{1,Q}f, K^*_{\Theta}(\chi_{R_Q}\Delta_{2,R}g)\rangle + \sum_{\substack{Q\in \mathcal{D}_1^{\text{tr},1}, R \in \mathcal{D}^{\text{tr},2}_2\\
    Q\subset R\\
    \ell(Q)<2^{-m}\ell(R)\\
    R_Q\in \mathcal{D}^{\text{tr},2}_2}}\langle \Delta_{1,Q}f, K^*_{\Theta}(\chi_{R\setminus R_Q}\Delta_{2,R}g)\rangle.\label{dues_sumes_s_tr_41}
\end{align}
The last sum on the right-hand side above will be easy to deal with, using our work from the previous section, as shown in the next lemma.

\begin{lemma}
    Let $f,g\in L^2(\mu)$ be very good. Then,
    \begin{equation}
    \sum_{\substack{Q\in \mathcal{D}_1^{\text{tr},1}, R \in \mathcal{D}^{\text{tr},2}_2\\
    Q\subset R\\
    \ell(Q)<2^{-m}\ell(R)\\
    R_Q\in \mathcal{D}^{\text{tr},2}_2}}|\langle \Delta_{1,Q}f, K_{\Theta}^*(\chi_{R\setminus R_Q}\Delta_{2,R}g)\rangle | \leq c\, \|f\|_{L^2(\mu)}\|g\|_{L^2(\mu)}.\label{segona_suma_s41_tr}
    \end{equation}
\end{lemma}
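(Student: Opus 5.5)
The plan is to reduce each term to the setting of Lemma \ref{lema_phi_psi_2} (with $\varphi_Q=\Delta_{1,Q}f$, which has zero mean by Lemma \ref{lema:propietats_delta}(b)) and then reuse the Schur-type bound from Lemma \ref{lema:s11}. I set $\psi=\chi_{R\setminus R_Q}\Delta_{2,R}g$. Since $f$ is $1$-good, condition (a) of the definition of bad cubes, applied with $S=R_Q$ (note $\ell(R_Q)=\ell(R)/2\geq 2^m\ell(Q)$), gives
$$
\text{dist}(Q,\text{supp}(\psi))\geq \text{dist}(Q,\partial R_Q)> \ell(Q)^{\gamma}\ell(R_Q)^{1-\gamma}\geq c\,\ell(Q)^{\gamma}\ell(R)^{1-\gamma}\geq \ell(Q),
$$
because $Q\subset R_Q$ and $\psi$ is supported outside $R_Q$. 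This is precisely the version of Lemma \ref{lema:phi_psi} in which the kernel acts on the zero-mean function $\varphi_Q$, with $\ell(Q)\leq\ell(R)$ and $c_6=2^{\gamma-1}$. The lemma is stated for disjoint cubes, but its proof only uses that $\varphi_Q$ is supported in $Q$, and it is applied here to the pair $(Q,\text{supp}\,\psi)$. Writing $\langle \Delta_{1,Q}f, K^*_\Theta\psi\rangle=\langle K_\Theta(\Delta_{1,Q}f),\psi\rangle$, I therefore obtain
$$
|\langle \Delta_{1,Q}f, K^*_\Theta(\chi_{R\setminus R_Q}\Delta_{2,R}g)\rangle|\leq c\,\frac{\ell(Q)^{\eta/2}\ell(R)^{\eta/2}}{D(Q,R)^{n+\eta}}\|\Delta_{1,Q}f\|_{L^1(\mu)}\|\Delta_{2,R}g\|_{L^1(\mu)}.
$$
Here I bound $\|\psi\|_{L^1}\leq\|\Delta_{2,R}g\|_{L^1}$. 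Since $Q\subset R$, $D(Q,R)\approx\ell(R)$, so the estimate is consistent.

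Next I apply Cauchy--Schwarz: $\|\Delta_{1,Q}f\|_{L^1}\leq\mu(Q)^{1/2}\|\Delta_{1,Q}f\|_{L^2}$, and similarly for $R$, using that $\Delta_{2,R}g$ is supported in $R$. This puts each term in exactly the form of the first line of (\ref{seogona_desigualtat_s11}). Summing over the restricted index set, which is contained in $\{\ell(Q)\leq\ell(R)\}$, I invoke the second inequality of Lemma \ref{lema:s11}: the Schur test with weights $\mu(Q)$ and $\mu(R)$, together with the growth of transit cubes from Lemma \ref{lema:comportament_cubs_transit}. Combined with the square-function bound of Lemma \ref{lema_descomposicio_L2}, this gives $c\|f\|_{L^2(\mu)}\|g\|_{L^2(\mu)}$. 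Since $f$ and $g$ are very good, all sums are finite, so there are no convergence issues.

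The only delicate step is the first: checking that goodness of $Q$ genuinely gives the separation $\text{dist}(Q,\partial R_Q)\gtrsim\ell(Q)^\gamma\ell(R)^{1-\gamma}$. This needs $\ell(R_Q)\geq 2^m\ell(Q)$, which follows from $\ell(Q)<2^{-m}\ell(R)$ together with dyadic side lengths. It also needs $\ell(R_Q)\leq 2^N$, which holds since transit cubes lie in $R^0$. Given that separation, the kernel estimate (using the Hölder regularity in the second variable, via the zero mean of $\Delta_{1,Q}f$) and the Schur bound are routine repetitions of Section \ref{sec:s1}.
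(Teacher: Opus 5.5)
Your proposal is correct and follows the paper's argument. Goodness of $Q$ (condition (a) applied to $R_Q$) gives $\text{dist}(Q,\text{supp}(\psi))\geq 2^{\gamma-1}\ell(Q)^{\gamma}\ell(R)^{1-\gamma}$; Lemma \ref{lema:phi_psi} then yields the $\ell(Q)^{\eta/2}\ell(R)^{\eta/2}/D(Q,R)^{n+\eta}$ bound, and Lemma \ref{lema:s11} finishes. The only difference is that the paper splits $R\setminus R_Q$ into the $2^d-1$ sibling children $R_j$, applies Lemma \ref{lema:phi_psi} literally to the disjoint pairs $(Q,R_j)$, and then uses $D(Q,R_j)\geq \frac12 D(Q,R)$, whereas you apply it directly to $(Q,R)$, correctly noting that its proof never uses disjointness (as the paper itself observes in Lemma \ref{lema:s41term1}).
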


\begin{proof}
    Let $Q$ and $R$ be as above and good. Moreover, since $\ell(Q)<2^{-m}\ell(R)$, we have
    \begin{align*}
        \text{dist}(Q, \text{supp}(\chi_{R\setminus R_Q}\Delta_{2,R}g))&\geq \text{dist}(Q,\partial R_{Q})\geq \ell(Q)^{\gamma}\ell(R_Q)^{1-\gamma} = 2^{\gamma-1}\ell(Q)^\gamma\ell(R)^{1-\gamma}.
    \end{align*}
    We can order the children of $R$ that do not contain $Q$ as $\{R_j\}_{j=1}^{2^d-1}$. Then,
    $$
    \left|\langle \Delta_{1,Q}f, K^*_{\Theta}(\chi_{R\setminus R_Q}\Delta_{2,R}g)\rangle\right| \leq \sum_{j=1}^{2^d-1}|\langle \Delta_{1,Q}f, K^*_{\Theta}(\chi_{R_j}\Delta_{2,Q}g)\rangle|.
    $$
    For each $1\leq j\leq 2^d-1$, since $R_j\subset R\setminus R_Q$,
    \begin{align*}
        \text{dist}(Q,\text{supp}(\chi_{R_j}\Delta_{2,R}g))\geq \text{dist}(Q, \text{supp}(\chi_{R\setminus R_Q}\Delta_{2,R}g))&\geq 2^{\gamma-1}\ell(Q)^{\gamma}\ell(R)^{1-\gamma}\\
        &\geq c \ell(Q)^{\gamma}\ell(R_j)^{1-\gamma}.
    \end{align*}
    So, for each $R_j$ we can apply Lemma \ref{lema:phi_psi} and we obtain that
    \begin{align}
        |\langle \Delta_{1,Q}f,K^{*}_{\Theta}(\chi_{R_j}\Delta_{2,R}g)\rangle| &=|\langle K_{\Theta}(\Delta_{1,Q}f), \chi_{R_j}\Delta_{2,R}g\rangle |\nonumber \\
        &\leq c\frac{\ell(Q)^{\frac{\eta}{2}}\ell(R_j)^{\frac{\eta}{2}}}{D(Q,R_j)^{n+\eta}}\|\Delta_{1,Q}f\|_{L^1(\mu)}\|\chi_{R_j}\Delta_{2,R}g\|_{L^2(\mu)}.\label{fill}
    \end{align}
    Taking into account that $\ell(R_j)^{\frac{\eta}{2}}=2^{-\frac{\eta}{2}}\ell(R)^{\frac{\eta}{2}}$, $\|\chi_{R_j}\Delta_{2,R}g\|_{L^1(\mu)}\leq \|\Delta_{2,R}g\|_{L^1(\mu)}$ and that
    $$
    D(Q,R_j)= \text{dist}(Q,R_j)+\ell(Q)+\ell(R_j)\geq \underbrace{\text{dist}(Q,R)}_{=0}+\ell(Q)+\frac{1}{2}\ell(R_j)\geq \frac{1}{2}D(Q,R),
    $$
    we have that the right-hand side of (\ref{fill}) is bounded above by
    \begin{align*}
        c'\frac{\ell(Q)^{\frac{\eta}{2}}\ell(R)^{\frac{\eta}{2}}}{D(Q,R)^{n+\eta}}\mu(Q)^{\frac{1}{2}}\mu(R)^{\frac{1}{2}}\|\Delta_{1,Q}f\|_{L^2(\mu)}\|\Delta_{2,R}g\|_{L^2(\mu)}.
    \end{align*}
    Summing over the children of $R$, we obtain that the sum in (\ref{segona_suma_s41_tr}) is bounded by a constant times the sum in (\ref{seogona_desigualtat_s11}), from the previous section. Hence, by Lemma \ref{lema:s11}, it is bounded by $c\,\|f\|_{L^2(\mu)}\|g\|_{L^2(\mu)}$, which is what we wanted to prove.
\end{proof}

Now it will be useful to consider, for $g\in L^2(\mu)$ as above, the number
$$
c_{R,Q}(g) = \frac{\langle g\rangle_{R_Q}}{\langle b_2\rangle_{R_Q}}- \frac{\langle g\rangle_{R_Q}}{\langle b_2\rangle_{R}},
$$
which is chosen so that $\chi_{R_{Q}}\Delta_{2,R}g = c_{R,Q}(g)\chi_{R_Q}b_2$ (recall that the cubes $R$ that appear in the sum $S^{\text{tr}}_{4,1}$ are in $\mathcal{D}^{\text{tr},2}_2$). With this, we can write the first sum on the right-hand side of (\ref{dues_sumes_s_tr_41}) as
\begin{align}
    \sum_{\substack{Q\in \mathcal{D}^{\text{tr},1}_1, R\in \mathcal{D}^{\text{tr,2}}_2\\ Q\subset R\\ \ell(Q)<2^{-m}\ell(R)\\ R_{Q}\text{ transit}}}c_{R,Q}(g)\langle \Delta_{1,Q}f, K^{*}_{\Theta}(\chi_{R_Q}b_2)\rangle &= \sum_{\substack{Q\in \mathcal{D}^{\text{tr},1}_1, R\in \mathcal{D}^{\text{tr,2}}_2\\ Q\subset R\\ \ell(Q)<2^{-m}\ell(R)\\ R_{Q}\text{ transit}}}c_{R,Q}(g)\langle \Delta_{1,Q}f, K^*_{\Theta}b_2\rangle \nonumber \\
    &\quad - \sum_{\substack{Q\in \mathcal{D}^{\text{tr},1}_1, R\in \mathcal{D}^{\text{tr,2}}_2\\ Q\subset R\\ \ell(Q)<2^{-m}\ell(R)\\ R_{Q}\text{ transit}}}c_{R,Q}(g)\langle \Delta_{1,Q}f, K^*_{\Theta}(\chi_{\R^d\setminus R_Q}b_2)\rangle .\label{pre_paraproduct}
\end{align}
Our next strategy will be to bound each of the two sums on the right-hand side above by $c\,\|f\|_{L^2(\mu)}\|g\|_{L^2(\mu)}$. Once we have done that, we will have proved that
$$
|S^{\text{tr}}_{4,1}|\leq c\,\|f\|_{L^2(\mu)}\|g\|_{L^2(\mu)}.
$$
Let us start by taking care of the second term in (\ref{pre_paraproduct}), which we do in the following lemma.
\begin{lemma}
    Let $f,g\in L^2(\mu)$ be very good. Then,
    $$
    \sum_{\substack{Q\in \mathcal{D}^{\text{tr},1}_1, R\in \mathcal{D}^{\text{tr,2}}_2\\ Q\subset R\\ \ell(Q)<2^{-m}\ell(R)\\ R_{Q}\text{ transit}}}|c_{R,Q}(g)\langle \Delta_{1,Q}f, K^*_{\Theta}(\chi_{\R^d\setminus R_Q}b_2)\rangle|\leq c\,\|f\|_{L^2(\mu)}\|g\|_{L^2(\mu)}.
    $$
\end{lemma}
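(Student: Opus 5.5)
The plan is to use the mean-zero property of $\Delta_{1,Q}f$ (Lemma \ref{lema:propietats_delta}(b)) together with the H\"older regularity of the suppressed kernel. The function $\chi_{\R^d\setminus R_Q}b_2$ lives at distance at least $\delta_{Q,R}:=\ell(Q)^{\gamma}\ell(R_Q)^{1-\gamma}$ from $Q$, by the goodness condition (\ref{condicio_bona}) applied to the child $R_Q$. Moreover $\delta_{Q,R}\geq \ell(Q)$, since $\ell(Q)<\ell(R_Q)$.

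\textbf{Step 1: bound each pairing.} Let $z_Q$ be the center of $Q$. By Lemma \ref{lema:propietats_delta}(b) we may subtract the value at $z_Q$:
$$
\langle \Delta_{1,Q}f, K^*_{\Theta}(\chi_{\R^d\setminus R_Q}b_2)\rangle=\int_Q \Delta_{1,Q}f(x)\int_{\R^d\setminus R_Q}\big(\widetilde{k}_{\Theta}(y,x)-\widetilde{k}_{\Theta}(y,z_Q)\big)b_2(y)\,d\mu(y)\,d\mu(x).
\]
Since $\widetilde{k}_\Theta$ is Calder\'on-Zygmund, the inner integral is at most $c\,c_b\,\ell(Q)^{\eta}\int_{|y-z_Q|\gtrsim \delta_{Q,R}}|y-z_Q|^{-n-\eta}\,d\mu(y)$. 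We split this last integral into dyadic annuli and use Lemma \ref{lema:comportament_cubs_transit} for the transit cube $Q$, which gives $\mu(\lambda Q)\le c_0\ell(\lambda Q)^n$ for $\lambda\ge1$. This yields the bound $c\,(\ell(Q)/\delta_{Q,R})^{\eta}=c\,(\ell(Q)/\ell(R))^{\eta(1-\gamma)}$. Hence, with $\alpha:=\eta(1-\gamma)>0$,
$$
|\langle \Delta_{1,Q}f, K^*_{\Theta}(\chi_{\R^d\setminus R_Q}b_2)\rangle|\le c\Big(\frac{\ell(Q)}{\ell(R)}\Big)^{\alpha}\mu(Q)^{1/2}\|\Delta_{1,Q}f\|_{L^2(\mu)}.
$$

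\textbf{Step 2: control $c_{R,Q}(g)$.} On $R_Q$ we have $\Delta_{2,R}g=c_{R,Q}(g)b_2$. By Cauchy-Schwarz and Remark \ref{obs:martingala_b_transit} (both $R$ and $R_Q$ are transit),
$$
\int_{R_Q}|b_2|^2d\mu\ge |\langle b_2\rangle_{R_Q}|^2\mu(R_Q)\ge c_{\text{acc}}^{-2}\mu(R_Q).
$$
Therefore $|c_{R,Q}(g)|\le c\,\mu(R_Q)^{-1/2}\|\Delta_{2,R}g\|_{L^2(\mu)}$.

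\textbf{Step 3: sum via Cauchy-Schwarz.} Write $a_Q=\|\Delta_{1,Q}f\|_{L^2(\mu)}$ and $b_R=\|\Delta_{2,R}g\|_{L^2(\mu)}$. Combining Steps 1 and 2, the sum is bounded by
$$
c\sum_{R}b_R\sum_{k>m}2^{-k\alpha}\sum_{P\in\mathcal{CH}(R)}\mu(P)^{-1/2}\sum_{\substack{Q\subset P\\ \ell(Q)=2^{-k}\ell(R)}}\mu(Q)^{1/2}a_Q.
$$
For fixed $R$, $P$ and $k$, the cubes $Q$ are pairwise disjoint and contained in $P$. Cauchy-Schwarz then gives
$$
\sum_{Q}\mu(Q)^{1/2}a_Q\le \mu(P)^{1/2}\Big(\sum_{Q}a_Q^2\Big)^{1/2}.
$$
The factors $\mu(P)^{\pm 1/2}$ cancel. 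Let $A_{R,k}:=\big(\sum_{Q\subset R,\ \ell(Q)=2^{-k}\ell(R)}a_Q^2\big)^{1/2}$. We are left with at most $2^d c\sum_R\sum_k 2^{-k\alpha}b_RA_{R,k}$. Applying Cauchy-Schwarz in $(R,k)$ with weights $2^{-k\alpha}$ bounds this by
$$
c\Big(\sum_k2^{-k\alpha}\sum_R b_R^2\Big)^{1/2}\Big(\sum_k 2^{-k\alpha}\sum_R A_{R,k}^2\Big)^{1/2}.
$$
For each fixed $k$, a given $Q$ lies in at most one $R\in\mathcal{D}_2$ with $\ell(R)=2^k\ell(Q)$. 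Hence $\sum_R A_{R,k}^2\le\sum_Q a_Q^2$. Lemma \ref{lema_descomposicio_L2} then gives the bound $c\|f\|_{L^2(\mu)}\|g\|_{L^2(\mu)}$.

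\textbf{Main obstacle.} The delicate point is Step 1: we need the polynomial growth of $\mu$ around $z_Q$ at all scales $\gtrsim\delta_{Q,R}$. This is exactly why transitivity of $Q$ and Lemma \ref{lema:comportament_cubs_transit} are needed. We also need $\delta_{Q,R}\ge \ell(Q)$ so that the smoothness estimate (\ref{gradient_condition}) applies for $x\in Q$. Both follow from the goodness of $Q$ and from $\ell(Q)<2^{-m}\ell(R)$. A secondary check is that Step 2 genuinely uses that $R_Q$ is transit, which is precisely the restriction defining $S^{\text{tr}}_{4,1}$.
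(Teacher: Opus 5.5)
Your proof is correct and follows essentially the same route as the paper. You use the mean zero of $\Delta_{1,Q}f$ together with the H\"older regularity of $\widetilde{k}_\Theta$ and the goodness distance $\ell(Q)^{\gamma}\ell(R_Q)^{1-\gamma}$; you bound $|c_{R,Q}(g)|\le c\,\mu(R_Q)^{-1/2}\|\Delta_{2,R}g\|_{L^2(\mu)}$ via transitivity of $R_Q$; and you close with Cauchy--Schwarz, using that equal-size cubes $Q$ inside a child of $R$ are disjoint and that each $Q$ lies in at most one $R$ of a given size. The only difference is cosmetic. You take annuli around $z_Q$ and use the growth of the transit cube $Q$, which gives the slightly better decay $(\ell(Q)/\ell(R))^{\eta(1-\gamma)}$. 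The paper instead uses the annuli $2^{k+1}R_Q\setminus 2^kR_Q$ and the growth of $R_Q$, obtaining $(\ell(Q)/\ell(R_Q))^{\eta/2}$; either exponent suffices.
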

\begin{proof}
    First, we will give bounds for the coefficients $c_{R,Q}(g)$ and $|\langle \Delta_{1,Q}f, K^*_{\Theta}(\chi_{\R^d\setminus R_Q}b_2)\rangle|$ separately. We have
    \begin{align*}
        \|\Delta_{2,R}f\|^2_{L^2(\mu)} \geq \int_{R_Q}|\Delta_{2,R}g|^2\,d\mu = |c_{R,Q}(g)|^2\int_{R_Q}|b_2|^2\,d\mu &\geq |c_{R,Q}(g)|^2 |\langle b_2\rangle_{R_Q}|^2 \mu(R_Q)\\
        &\geq  c_{\text{acc}}^{-2}|c_{R,Q}(g)|^2 \mu(R_Q),
    \end{align*}
    because the cube $R_Q$ is transit. So, we get that
    \begin{equation}
    |c_{R,Q}(g)|\leq c\,\|\Delta_{2,R}g\|_{L^2(\mu)}\mu(R_Q)^{-\frac{1}{2}}.\label{estimacio_crq}
    \end{equation}
    To estimate the terms $|\langle \Delta_{1,Q}f, K^*_{\Theta}(\chi_{\R^d\setminus R_Q}b_2)|$, we will argue in a similar way to what we did in the proof of Lemma \ref{lema:phi_psi}. Denote by $z_Q$ the center of $Q$. We have that for $y\in Q$ and $x\in \R^d\setminus R_Q$,
    $$
    |z_Q-x| \approx |y-x| \geq c\,\ell(Q) \geq c\, |y-z_Q|.
    $$
    Therefore,
    \begin{align*}
        |K_{\Theta}(\Delta_{1,Q}f)(x)| &= \left|\int \widetilde{k}_{\Theta}(x,y)\Delta_{1,Q}f(y)\,d\mu(y)\right|\\
        &= \left|\int\left(\widetilde{k}_{\Theta}(x,y)-\widetilde{k}_{\Theta}(x,z_Q)\right)\Delta_{1,Q}f(y)\,d\mu(y)\right|\\
        & \leq c\int\frac{|y-z_Q|^{\eta}}{|x-z_Q|^{n+\eta}}\Delta_{1,Q}f(y)\,d\mu(y)\\
        &\leq c\frac{\ell(Q)^{\eta}}{|x-z_Q|^{n+\eta}}\|\Delta_{1,Q}f\|_{L^1(\mu)}.
    \end{align*}
   Integrating this inequality, we find that
    \begin{align*}
        |\langle \Delta_{1,Q}f, K^*_{\Theta}(\chi_{\R^d\setminus R_Q}b_2)\rangle | &= |\langle K_{\Theta}(\Delta_{1,Q}f), \chi_{\R^d\setminus R_Q}b_2\rangle|\\
        &\leq c\,\|\Delta_{1,Q}f\|_{L^2(\mu)}\int_{\R^d\setminus R_Q} \frac{\ell(Q)^{\eta}}{|x-z_Q|^{n+\eta}}|b_2(x)|\,d\mu(x).
    \end{align*}
    Now we use the fact that $b_2$ is a bounded function and integrate over annuli to get that
    \begin{align}
        \int_{\R^d\setminus \R_Q}\frac{\ell(Q)^{\eta}}{|x-z_Q|^{n+\eta}}|b_2(x)|\,d\mu(x) &\leq c_b\sum_{k\geq 0}\int_{2^{k+1}R_Q\setminus 2^kR_Q}\frac{\ell(Q)^{\eta}}{|x-z_Q|^{n+\eta}}\,d\mu(x)\nonumber \\
        &\leq c\frac{\ell(Q)^{\eta}\mu(2R_Q)}{\text{dist}(z_Q, \partial R_Q)^{n+\eta}} + c\sum_{k\geq 1}\frac{\ell(Q)^{\eta}\mu(2^{k+1}R_Q)}{\ell(2^kR_Q)^{n+\eta}},\label{separacio_k}
    \end{align}
    where the first term on the right-hand side of the second line corresponds to $k=0$ in the sum in the first line. Recall that by Lemma \ref{lema:comportament_cubs_transit}, since $R_Q$ is transit, $\mu(\lambda R_Q)\leq c_0\,\ell(\lambda R_Q)^n$ for all $\lambda\geq 1$. Moreover, recall that by (\ref{condicio_bona}),
    $$
    \text{dist}(Q, \partial R_Q)\geq \ell(Q)^{\gamma}\ell(R_Q)^{1-\gamma} = 2^{\gamma-1}\ell(Q)^{\gamma}\ell(R)^{1-\gamma} \geq 2^{\gamma-1}\ell(Q).
    $$
    Using this, we can bound the right-hand side of (\ref{separacio_k}) by
    \begin{align*}
    c\frac{\ell(Q)^{\eta}\ell(R_Q)^n}{(\ell(Q)^\gamma\ell(R_Q)^{1-\gamma})^{n+\eta}} + c\sum_{k\geq 1}\frac{\ell(Q)^{\eta}\ell(2^{k+1}R_Q)^n}{\ell(2^kR_Q)^{n+\eta}} &= c\frac{\ell(Q)^{\frac{\eta}{2}}}{\ell(R_Q)^{\frac{\eta}{2}}} + c\,2^{\eta}\frac{\ell(Q)^{\eta}}{\ell(R_Q)^{\eta}}\sum_{k\geq 1} \frac{1}{2^{k\eta}}\\
    &\leq c\frac{\ell(Q)^{\frac{\eta}{2}}}{\ell(R_Q)^{\frac{\eta}{2}}}\left(1+ \frac{\ell(Q)^{\frac{\eta}{2}}}{\ell(R_{Q})^{\frac{\eta}{2}}}\right)\\
    &\leq 2c\frac{\ell(Q)^{\frac{\eta}{2}}}{\ell(R_Q)^{\frac{\eta}{2}}},
    \end{align*}
    where in the last line we have used that $\ell(Q)\leq \ell(R_q)$, as $Q$ is contained in $R_Q$. This leads to the bound
    $$
    |\langle \Delta_{1,Q}f, K^*_{\Theta}(\chi_{\R^d\setminus R_Q}b_2)\rangle |\leq c\, \|\Delta_{1,Q}f\|_{L^2(\mu)}\frac{\ell(Q)^{\frac{\eta}{2}}}{\ell(R_Q)^{\frac{\eta}{2}}}\leq c\frac{\ell(Q)^{\frac{\eta}{2}}}{\ell(R_Q)^{\frac{\eta}{2}}}\mu(Q)^{\frac{1}{2}}\|\Delta_{1,Q}f\|_{L^2(\mu)}.
    $$
    Combining this with (\ref{estimacio_crq}), we obtain
    $$
    |c_{R,Q}(g)\langle \Delta_{1,Q}f, K^*_{\Theta}(\chi_{\R^d\setminus R_Q}b_2)\rangle | \leq c\frac{\ell(Q)^{\frac{\eta}{2}}}{\ell(R_Q)^{\frac{\eta}{2}}} \frac{\mu(Q)^{\frac{1}{2}}}{\mu(R_Q)^{\frac{1}{2}}}\|\Delta_{1,Q}f\|_{L^2(\mu)}\|\Delta_{2,R}g\|_{L^2(\mu)}.
    $$
    Therefore, summing over the admissible cubes, if we write $Q\in I(R)$ to indicate that $Q$ is contained in one of the children of $R$,
    \begin{align}
        &\sum_{\substack{Q\in \mathcal{D}^{\text{tr},1}_1, R\in \mathcal{D}^{\text{tr,2}}_2\\ Q\subset R\\ \ell(Q)<2^{-m}\ell(R)\\ R_{Q}\text{ transit}}}|c_{R,Q}(g)\langle \Delta_{1,Q}f, K^*_{\Theta}(\chi_{\R^d\setminus R_Q}b_2)\rangle|\nonumber\\
        &\qquad\qquad  \leq c\sum_{\substack{Q\in \mathcal{D}^{\text{tr},1}_1, R\in \mathcal{D}^{\text{tr,2}}_2\\ Q\in I(R)}} \frac{\ell(Q)^{\frac{\eta}{2}}}{\ell(R_Q)^{\frac{\eta}{2}}} \frac{\mu(Q)^{\frac{1}{2}}}{\mu(R_Q)^{\frac{1}{2}}}\|\Delta_{1,Q}f\|_{L^2(\mu)}\|\Delta_{2,R}g\|_{L^2(\mu)}\nonumber\\
        &\qquad \qquad \leq c\sum_{R\in \mathcal{D}^{\text{tr},2}_2}\|\Delta_{2,R}g\|_{L^2(\mu)}\sum_{\substack{Q\in \mathcal{D}^{\text{tr},1}_1\\ Q\in I(R)}}\frac{\ell(Q)^{\frac{\eta}{2}}}{\ell(R)^{\frac{\eta}{2}}} \frac{\mu(Q)^{\frac{1}{2}}}{\mu(R_Q)^{\frac{1}{2}}}\|\Delta_{1,Q}f\|_{L^2(\mu)}.\label{separacio_IR}
    \end{align}
    For each $R\in \mathcal{D}^{\text{tr},2}_2$, we can bound, using Cauchy-Schwarz, 
    \begin{align*}
        &\sum_{\substack{Q\in \mathcal{D}^{\text{tr},1}_1\\Q\in I(R)}}\frac{\ell(Q)^{\frac{\eta}{2}}}{\ell(R)^{\frac{\eta}{2}}}\frac{\mu(Q)^{\frac{1}{2}}}{\mu(R_Q)^{\frac{1}{2}}}\|\Delta_{1,Q}f\|_{L^2(\mu)}\\
        &\qquad\qquad \leq\Bigg(\sum_{\substack{Q\in \mathcal{D}^{\text{tr},1}_1\\ Q\in I(R)}}\frac{\ell(Q)^{\frac{\eta}{2}}}{\ell(R)^{\frac{\eta}{2}}}||\Delta_{1,Q}f\|^2_{L^2(\mu)}\Bigg)^{\frac{1}{2}}\Bigg(\sum_{\substack{Q\in \mathcal{D}^{\text{tr},1}_1\\ Q\in I(R)}}\frac{\ell(Q)^{\frac{\eta}{2}}}{\ell(R)^{\frac{\eta}{2}}}\frac{\mu(Q)^{\eta}}{\mu(R_Q)^{\eta}}\Bigg)^{\frac{1}{2}}.
    \end{align*}
    If we split the sum in the last factor above according to the size of the cubes $Q$, we can bound it by an absolute constant. Indeed,
    \begin{align*}
        \sum_{\substack{Q\in \mathcal{D}^{\text{tr},1}_1\\ Q\in I(R)}}\frac{\ell(Q)^{\frac{\eta}{2}}}{\ell(R)^{\frac{\eta}{2}}}\frac{\mu(Q)^{\eta}}{\mu(R_Q)^{\eta}} &= \sum_{k\geq 1}\sum_{\substack{ Q\in \mathcal{D}^{\text{tr},1}_1\\ Q \in I(R)\\ \ell(Q)=2^{-k}\ell(R)}}\frac{\ell(Q)^{\frac{\eta}{2}}}{\ell(R)^{\frac{\eta}{2}}}\frac{\mu(Q)}{\mu(R_Q)}\\
        &\leq \sum_{k\geq 1}2^{-\frac{k\eta}{2}}\Bigg(\sum_{j=1}^{2^d-1}\frac{1}{\mu(R_j)}\Bigg(\sum_{\substack{Q\in \mathcal{D}^{\text{tr},1}_1\\ Q\subset R_j,\, \ell(Q)=2^{-k}\ell(R)}}\mu(Q)\Bigg)\Bigg)\\
        &\leq 2^{d-1}\sum_{k\geq 1}2^{-\frac{k\eta}{2}}\leq c,
    \end{align*}
    where we have used that the cubes $Q$ appearing in the innermost sum are pairwise disjoint. This means that the sum on the right-hand side of (\ref{separacio_IR}) does not exceed, by Cauchy-Schwarz,
    \begin{align*}
        &c\sum_{R\in \mathcal{D}^{\text{tr},2}_2}\|\Delta_{2,R}g\|_{L^2(\mu)}\left(\sum_{\substack{Q\in \mathcal{D}^{\text{tr},1}_1: Q\subset R}}\frac{\ell(Q)^{\frac{\eta}{2}}}{\ell(R)^{\frac{\eta}{2}}}\|\Delta_{1,Q}f\|_{L^2(\mu)}^2\right)^{\frac{1}{2}}\\
        &\qquad \qquad \leq c\left(\sum_{R\in \mathcal{D}^{\text{tr},2}_2}\|\Delta_{2,R}g\|^2_{L^2(\mu)}\right)^{\frac{1}{2}}\left(\sum_{R\in \mathcal{D}^{\text{tr},2}_2}\sum_{Q\in \mathcal{D}^{\text{tr},1}_1: Q\subset R}\frac{\ell(Q)^{\frac{\eta}{2}}}{\ell(R)^{\frac{\eta}{2}}}\|\Delta_{1,Q}f\|^2_{L^2(\mu)}\right)^{\frac{1}{2}}.
    \end{align*}
    By Lemma \ref{lema_descomposicio_L2}, the first factor is bounded by $c\,\|g\|_{L^2(\mu)}$, which means that the proof will be finished if we are able to show that the second factor, the one with the double sum, is bounded above by $c\,\|f\|_{L^2(\mu)}$. This is indeed the case, because we can write it as
    \begin{align*}
    \sum_{Q\in \mathcal{D}^{\text{tr},1}_1}\|\Delta_{1,Q}f\|^2_{L^2(\mu)}\sum_{R\in \mathcal{D}^{\text{tr},2}_2: R\supset Q}\frac{\ell(Q)^{\frac{\eta}{2}}}{\ell(R)^{\frac{\eta}{2}}} &=  \sum_{Q\in \mathcal{D}^{\text{tr},1}_1}\|\Delta_{1,Q}f\|^2_{L^2(\mu)}\Bigg(\sum_{k\geq 1}\sum_{\substack{R\in \mathcal{D}^{\text{tr},2}_2, R\supset  Q\\ \ell(R)=2^k\ell(Q)}}2^{-\frac{k\eta}{2}}\Bigg)\\
    &\leq\sum_{Q\in \mathcal{D}^{\text{tr},1}_1}\|\Delta_{1,Q}f\|^2_{L^2(\mu)}\left(\sum_{k\geq 1}2^{-\frac{k\eta}{2}}\right)\leq c\,\|f\|_{L^2(\mu)}^2,
    \end{align*}
    where we have used, first, that for each $Q$ and $k$, there is at most one $R\in \mathcal{D}^{\text{tr},2}_2$ that contains $Q$ and $\ell(R)=2^k\ell(Q)$, and lastly, Lemma \ref{lema_descomposicio_L2} for $f$.
        
\end{proof}

Lastly, we need to bound the first term in (\ref{pre_paraproduct}). Namely, we should be able to prove that
$$
\Bigg|\sum_{\substack{Q\in \mathcal{D}^{\text{tr},1}_1, R\in \mathcal{D}^{\text{tr,2}}_2\\ Q\subset R\\ \ell(Q)<2^{-m}\ell(R)\\ R_{Q}\text{ transit}}}c_{R,Q}(g)\langle \Delta_{1,Q}f, K^*_{\Theta}b_2\rangle\Bigg|\leq c\,\|f\|_{L^2(\mu)}\|g\|_{L^2(\mu)}.
$$
When estimating the previous terms, we always used the triangle inequality and we actually bounded the sum of the modulus of the terms. Now, we will not use this technique. Observe that the terms $\langle\Delta_{1,Q}f, K^{*}_{\Theta}b_2\rangle $ do not depend on $R$. This lets us separate the previous sum in two, by considering
\begin{equation}
\sum_{\substack{Q\in \mathcal{D}^{\text{tr},1}_1, R\in \mathcal{D}^{\text{tr,2}}_2\\ Q\subset R\\ \ell(Q)<2^{-m}\ell(R)\\ R_{Q}\text{ transit}}}c_{R,Q}(g)\langle \Delta_{1,Q}f, K^*_{\Theta}b_2\rangle = \sum_{Q\in \mathcal{D}^{\text{tr},1}_1}\langle \Delta_{1,Q}f, K^*_{\Theta}b_2\rangle \Bigg(\sum_{\substack{R\in \mathcal{D}^{\text{tr},2}_2: R\supset Q\\ \ell(R)>2^m\ell(Q)\\R_Q\text{ transit}}}c_{R,Q}(g)\Bigg).\label{separar_Q_i_R}
\end{equation}
Let us now consider the function $\widetilde{g}=g-\Xi_2g$. Fix any good cube $Q\in \mathcal{D}^{\text{tr},1}_1$ with $\ell(Q)<2^{-m}\ell(R^0)$. Since 
$$
\frac{\langle \Xi_2 g\rangle_{R_Q}}{\langle b_2\rangle_{R_Q}} = \frac{1}{\langle b_2\rangle_{R_Q}}\frac{1}{\mu(R_Q)}\int_{R_Q}\frac{\langle g\rangle_{R^0}}{\langle b_2\rangle_{R^0}}b_2\,d\mu = \frac{\langle g\rangle_{R^0}}{\langle b_2\rangle_{R^0}} =  \frac{\langle \Xi_2 g\rangle _{R}}{\langle b_2\rangle_R},
$$
we have that
$$
c_{R,Q}(\widetilde{g}) = \frac{\langle \widetilde{g}\rangle_{R_Q}}{\langle b_2\rangle_{R_Q}}- \frac{\langle \widetilde{g}\rangle_{R}}{\langle b_2\rangle_{R}} = \frac{\langle g\rangle_{R_Q}}{\langle b_2\rangle_{R_Q}}- \frac{\langle g\rangle_{R}}{\langle b_2\rangle_{R}}  = c_{R,Q}(g).\\
$$
Since $\langle \widetilde{g}\rangle_{R^0}=0$, by a telescoping argument we have that
\begin{equation}
\sum_{\substack{R\in \mathcal{D}^{\text{tr},2}_2: R\supset Q\\ \ell(R)>2^m\ell(Q)\\ R_Q\text{ transit}}} c_{R,Q}(g) = \sum_{\substack{R\in \mathcal{D}^{\text{tr},2}_2: R\supset Q\\ \ell(R)>2^m\ell(Q)\\ R_Q\text{ transit}}} c_{R,Q}(\widetilde{g}) = \frac{\langle \widetilde{g}\rangle_{R(Q)}}{\langle b_2\rangle_{R(Q)}},\label{suma_telescopica} 
\end{equation}
where $R(Q)$ denotes the smallest transit cube $R\in \mathcal{D}^{\text{tr},2}_2$ such that $\ell(R)\geq 2^m\ell(Q)$. Such a cube always exists because $Q$ is completely contained in $R^0$. Indeed, since $\ell(R^0)\geq2^m\ell(Q)$ and $R^0$ is transit, we have that
$$
\text{dist}(Q,\partial R^0)>\ell(Q)^{\gamma}\ell(R)^{1-\gamma}>0,
$$
so $Q\cap \partial R^0=\varnothing$. Since $F\cap Q \neq \varnothing$, $Q$ must be entirely contained within $R^0$. Also, in the special case where the smallest such cube $R(Q)$ is precisely $R^0$, the identity (\ref{suma_telescopica}) also holds, since both sides of the equality are zero. Hence, using the expression (\ref{separar_Q_i_R}), 
$$
\sum_{\substack{Q\in \mathcal{D}^{\text{tr},1}_1, R\in \mathcal{D}^{\text{tr,2}}_2\\ Q\subset R\\ \ell(Q)<2^{-m}\ell(R)\\ R_{Q}\text{ transit}}}c_{R,Q}(g)\langle \Delta_{1,Q}f, K^*_{\Theta}b_2\rangle =\sum_{\substack{Q\in \mathcal{D}^{\text{tr},1}_1, R\in \mathcal{D}^{\text{tr,2}}_2\\ Q\subset R\\ \ell(Q)<2^{-m}\ell(R)\\ R_{Q}\text{ transit}}}\frac{\langle \widetilde{g}\rangle_{R(Q)}}{\langle b_2\rangle_{R(Q)}}\langle f, \Delta^*_{1,Q}(K^*_{\Theta}b_2)\rangle.
$$
Let us write the sum above in a more compact form. For a function $\psi\in L^2(\mu)$ such that $\Delta_{2,R}\psi =0$ except for a finite number of cubes $R\in \mathcal{D}^{\text{tr},2}_2$, we consider the following paraproduct,
$$
\Pi_{K_{\Theta}^*b_2}(\psi ) = \sum_{\substack{Q\in \mathcal{D}^{\text{tr},1}_1, Q\text{ good}\\ \ell(Q)<2^{-m}\ell(R^0)}}\frac{\langle \psi\rangle_{R(Q)}}{\langle b_2\rangle_{R(Q)}}\Delta_{1, Q}^*(K^*_{\Theta}b_2),
$$
so that 
\begin{equation}
\sum_{\substack{Q\in \mathcal{D}^{\text{tr},1}_1, R\in \mathcal{D}^{\text{tr,2}}_2\\ Q\subset R\\ \ell(Q)<2^{-m}\ell(R)\\ R_{Q}\text{ transit}}}c_{R,Q}(g)\langle \Delta_{1,Q}f, K^*_{\Theta}b_2\rangle = \langle f, \Pi_{K_{\Theta}^*b_2}(\widetilde{g})\rangle.\label{prodesc_g_tilde}
\end{equation}

\begin{obs}
    The classes of functions 
    \begin{align*}
    \mathcal{T}_1 &= \{\varphi \in L^2(\mu): \Delta_{1,Q}\varphi\neq 0 \,\text{ for finitely many }\, Q\in \mathcal{D}^{\text{tr},1}_1\},\\
    \mathcal{T}_2&= \{\psi \in L^2(\mu): \Delta_{2,R}\psi \neq 0 \,\text{ for finitely many }\,R\in \mathcal{D}^{\text{tr},2}_2\},
    \end{align*}
    are both dense in $L^2(\mu)$. This is justified, in both cases, by Lemma \ref{lema_descomposicio_L2}, which tells us that we can write, for $i=1,2$, and any $f\in L^2(\mu)$,
    $$
    f = \Xi_i f + \sum_{Q\in \mathcal{D}^{\text{tr},i}_i}\Delta_{i,Q}f,
    $$
    with convergence in $L^2(\mu)$. This means that $f$ is the limit in $L^2(\mu)$ of functions of the form 
    $$
    f_k = \Xi_if + \sum_{j\in I_k}\Delta_{i,Q_j}f,
    $$
    where $I_k\subset \mathcal{D}^{\text{tr},i}_i$ is some finite subset. By the properties of $\Delta_{i,Q}$ (d) and (e), from Lemma \ref{lema:propietats_delta}, we have that
    $$
    \Delta_{i,Q}f_k = \begin{cases}
        \Delta_{i, Q_j}f\quad &\text{ if }\, Q = Q_j,\,\text{ for some }\, Q_j\in I_k,\\
        0 &\text{ otherwise},
    \end{cases}
    $$
    that is, $\Delta_{i,Q}f_k\neq 0$ for finitely many $Q\in \mathcal{D}^{\text{tr},i}_i$, equivalently, $f_k\in \mathcal{T}_i$.
\end{obs}

This last remark will be key in proving that the paraproduct that we have just defined extends to a bounded operator in $L^2(\mu)$.

\begin{lemma}
    The paraproduct $\Pi_{K^*_{\Theta}b_2}$ extends to a bounded operator in $L^2(\mu)$.
\end{lemma}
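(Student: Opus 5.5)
The plan is to bound $\|\Pi_{K^*_{\Theta}b_2}(\psi)\|_{L^2(\mu)}\le c\,\|\psi\|_{L^2(\mu)}$ for every $\psi\in\mathcal{T}_2$, with $c$ depending only on $c_0,c_b,c_*,c_{\text{acc}},\delta_0$ and not on $\varepsilon$. By the density of $\mathcal{T}_2$ (previous remark), $\Pi_{K^*_\Theta b_2}$ then extends by continuity to a bounded operator on $L^2(\mu)$. The argument has three steps: almost-orthogonality of the pieces $\Delta^*_{1,Q}$, a Carleson packing condition, and the dyadic Carleson embedding Theorem \ref{carleson_diadic} on the lattice $\mathcal{D}_2$.

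\textbf{Step 1: almost-orthogonality.} Write $\varphi:=K^*_{\Theta}b_2$ and $\lambda_Q:=\langle\psi\rangle_{R(Q)}/\langle b_2\rangle_{R(Q)}$. I would show that for finitely many coefficients $\lambda_Q$,
$$
\Big\|\sum_Q \lambda_Q\,\Delta^*_{1,Q}\varphi\Big\|_{L^2(\mu)}^2\le c\sum_Q|\lambda_Q|^2\,\|\Delta^*_{1,Q}\varphi\|^2_{L^2(\mu)} .
$$
The argument goes by duality. By Lemma \ref{lema:propietats_delta}(c), $\Delta_{1,Q}$ is a projection, hence $\Delta^*_{1,Q}=(\Delta^*_{1,Q})^2$. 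For any $h\in L^2(\mu)$ this gives
$$
\langle \Delta^*_{1,Q}\varphi,h\rangle=\langle\Delta^*_{1,Q}\varphi,\Delta_{1,Q}h\rangle .
$$
Pairing the sum with $h$ and applying Cauchy--Schwarz twice yields
$$
\Big|\Big\langle \sum_Q \lambda_Q\,\Delta^*_{1,Q}\varphi,\,h\Big\rangle\Big|\le \Big(\sum_Q|\lambda_Q|^2\|\Delta^*_{1,Q}\varphi\|^2\Big)^{1/2}\Big(\sum_Q\|\Delta_{1,Q}h\|^2\Big)^{1/2}.
$$
The last factor is at most $c\|h\|_{L^2(\mu)}$ by Lemma \ref{lema_descomposicio_L2}. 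Finally, $|\lambda_Q|\le c_{\text{acc}}|\langle\psi\rangle_{R(Q)}|$ by Remark \ref{obs:martingala_b_transit}, since $R(Q)$ is transit of the second kind.

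\textbf{Step 2: grouping and the packing condition.} Grouping the cubes $Q$ according to $R=R(Q)\in\mathcal{D}^{\text{tr},2}_2$, Step 1 gives
$$
\|\Pi_{K^*_{\Theta}b_2}(\psi)\|_{L^2(\mu)}^2\le c\sum_{R\in\mathcal{D}_2}a_R\,|\langle\psi\rangle_R|^2,\qquad a_R:=\sum_{Q:\,R(Q)=R}\|\Delta^*_{1,Q}\varphi\|^2_{L^2(\mu)} .
$$
It remains to verify the Carleson condition (\ref{condicio_carleson}) for $\{a_R\}$ with $\sigma=\mu$: for each $R'\in\mathcal{D}_2$ we need $\sum_{R\subset R'}a_R\le c\,\mu(R')$. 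Every $Q$ counted satisfies $Q\subset R(Q)\subset R'$, and distinct $Q$ are distinct cubes of $\mathcal{D}_1$. Moreover, $\Delta^*_{1,Q}$ only sees the values of its argument on $Q$ and is supported on $Q$, so $\Delta^*_{1,Q}\varphi=\Delta^*_{1,Q}(\chi_{R'}\varphi)$. Applying the upper bound in (\ref{desigualtats_descomposicio_transposat}) to $\chi_{R'}\varphi$ then gives
$$
\sum_{R\subset R'}a_R\le \sum_{Q\in\mathcal{D}^{\text{tr},1}_1}\|\Delta^*_{1,Q}(\chi_{R'}\varphi)\|^2\le c\,\|\chi_{R'}K^*_\Theta b_2\|^2_{L^2(\mu)}\le c\,c_4^2\,\mu(R') .
$$
Here $|K^*_\Theta b_2|\le c_4$ on $F$ by Lemma \ref{lema_distancia}, which applies because $\Theta\ge\Phi_{\mathcal{D}_2}\ge\operatorname{dist}(\cdot,\R^d\setminus(H_{\mathcal{D}_2}\cup S))$.

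\textbf{Step 3: conclusion.} Theorem \ref{carleson_diadic} applied on $\mathcal{D}_2$ gives $\sum_R a_R|\langle\psi\rangle_R|^2\le c\,\|\psi\|^2_{L^2(\mu)}$. Combined with Step 2 this proves the bound on $\mathcal{T}_2$, with constants independent of $\varepsilon$, and density finishes the proof.

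The main obstacle I expect is the localization claim in Step 2, namely that $\Delta^*_{1,Q}\varphi=\Delta^*_{1,Q}(\chi_{R'}\varphi)$. This needs $Q\subset R'$ exactly, not merely $Q\cap R'\neq\varnothing$. That containment should follow from $Q\subset R(Q)$, which in turn comes from the goodness condition (\ref{condicio_bona}): $Q$ does not meet the boundary of any $\mathcal{D}_2$-cube of side at least $2^m\ell(Q)$, and I would check this carefully for $R'$ of every size.
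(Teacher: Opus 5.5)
Your proposal is correct and is essentially the paper's argument: duality and Cauchy--Schwarz via the projection property of $\Delta_{1,Q}$, grouping the cubes $Q$ by $R(Q)$, verifying the packing condition through (\ref{desigualtats_descomposicio_transposat}) applied to $\chi_{R'}K^*_\Theta b_2$ together with $|K^*_\Theta b_2|\le c_4$ on $F$, and concluding with the dyadic Carleson embedding on $\mathcal{D}_2$. The containment you flag is not an obstacle: $Q\subset R(Q)$ holds by the definition of $R(Q)$, which is well posed because $Q$ is good, so $R(Q)\subset R'$ immediately gives $Q\subset R'$ for every cube $R'$ in the packing sum.
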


\begin{proof}
    Let $\varphi\in \mathcal{T}_1$ and $\psi\in \mathcal{T}_2$. These subsets, by the previous remark, are dense in $L^2(\mu)$. Then, we have
    $$
    \langle \varphi, \Pi_{K^*_{\Theta}b_2}(\psi)\rangle = \sum_{\substack{Q\in \mathcal{D}^{\text{tr},1}_1,\, Q \text{ good}\\ \ell(Q)<2^{-m}\ell(R)}}\frac{\langle \psi\rangle_{R(Q)}}{\langle b_2\rangle_{R(Q)}}\langle \Delta_{1,Q}\varphi, K_{\Theta}^*b_2\rangle.
    $$
    Of course, we can restrict the cubes $Q$ that appear in the sum above so that $\Delta_{1,Q}\varphi\not\equiv0$. Then, we can write, using that $R(Q)$ is transit,
    \begin{align}
        &|\langle \varphi, \Pi_{K_{\Theta}^*b_2}(\psi)\rangle | \leq \frac{1}{c_{\text{acc}}}\sum_{\substack{Q\in \mathcal{D}^{\text{tr},1}_1, Q\text{ good}\\ \ell(Q)<2^{-m}\ell(R^0)\\
        \Delta_{1,Q}\varphi\not\equiv0}}|\langle\psi\rangle_{R(Q)}|\frac{|\langle \Delta_{1,Q}\varphi, K^*_{\Theta}b_2\rangle|}{\|\Delta_{1,Q}\varphi\|_{L^2(\mu)}}\|\Delta_{1,Q}\varphi\|_{L^2(\mu)}\nonumber\\
        &\quad \leq \frac{1}{c_{\text{acc}}}\Bigg(\sum_{\substack{Q\in \mathcal{D}^{\text{tr},1}_1, Q\text{ good}\\ \ell(Q)<2^{-m}\ell(R^0)\\
        \Delta_{1,Q}\varphi\not\equiv0}} |\langle \psi\rangle_{R(Q)}|^2 \frac{|\langle \Delta_{1,Q}\varphi, K^*_{\Theta}b_2\rangle|^2}{\|\Delta_{1,Q}\varphi\|_{L^2(\mu)}^2}\Bigg)^{\frac{1}{2}}\Bigg(\sum_{Q\in \mathcal{D}^{\text{tr},1}_1}\|\Delta_{1,Q}\varphi\|^2_{L^2(\mu)}\Bigg)^{\frac{1}{2}}.\label{sumatori_alt}
    \end{align}
    Since $\varphi\in L^2(\mu)$, by Lemma \ref{lema_descomposicio_L2}, we know that the last factor on the right-hand side above does not exceed $c\,\|\varphi\|_{L^2(\mu)}$, so to prove the lemma it suffices to show that the first factor is bounded by $c\,\|\psi\|_{L^2(\mu)}$. To do so, we will denote, for $R\in \mathcal{D}^{\text{tr},2}_2$,
    $$
    F(R) = \{Q\in \mathcal{D}^{\text{tr},1}_1: Q\,\text{ is }1\text{-}\mathcal{D}_1\text{-good w.r.t. }\, \mathcal{D}_2, \Delta_{1,Q}\varphi \not\equiv 0,\,  R(Q)= R\}.
    $$
    With this notation, we can rewrite the first factor in (\ref{sumatori_alt}) as
    $$
    \sum_{R\in \mathcal{D}^{\text{tr},2}_2}|\langle \psi\rangle_R|^2 \sum_{Q\in F(R)}\frac{|\langle \Delta_{1,Q}\varphi, K^*_{\Theta}b_2\rangle|^2}{\|\Delta_{1,Q}\varphi\|^2_{L^2(\mu)}},
    $$
    which we want to bound by $c\,\|\varphi\|_{L^2(\mu)}^2$. By the dyadic Carleson embedding theorem \ref{carleson_diadic}, it is enough to check that the coefficients
    $$
    a_R:=\sum_{Q\in F(R)}\frac{|\langle \Delta_{1,Q}\varphi, K^*_{\Theta}b_2\rangle|^2}{\|\Delta_{1,Q}\varphi\|_{L^2}^2}
    $$
    satisfy the condition (\ref{condicio_carleson}). Since $\Delta_{1,Q}=\Delta^2_{1,Q}$, we have
    $$
    a_R=\frac{|\langle \Delta_{1,Q}\varphi, K^*_{\Theta}b_2\rangle |^2}{\|\Delta_{1,Q}\varphi\|_{L^2(\mu)}^2} = \frac{|\langle \Delta_{1,Q}\varphi, \Delta^*_{1,Q}(K^*_\Theta b_2)\rangle |^2}{\|\Delta_{1,Q}\varphi\|_{L^2(\mu)}^2}\leq \|\Delta^*_{1,Q}(K^*_{\Theta}b_2)\|^2_{L^2(\mu)}.
    $$
    Observe that the families $F(R)$ are not overlapping. This is due to the minimality of the side length of each $R(Q)$ and the fact that the cubes $R$ that we consider are from the same dyadic lattice. Moreover, since all the cubes from $F(R)$ are contained in $R$, we have that for every $S\in \mathcal{D}_2$,
    \begin{align*}
        \sum_{R\in \mathcal{D}^{\text{tr},2}_2:\, R\subset S}a_R&\leq \sum_{R\in \mathcal{D}^{\text{tr},2}_2:\, R\subset S}\sum_{Q\in F(R)}\|\Delta^*_{1,Q}(K^*_{\Theta}b_2)\|^2_{L^2(\mu)}\\
        &\leq \sum_{Q\in\mathcal{D}^{\text{tr},1}_1:\, Q\subset S}\|\Delta^*_{1,Q}(\chi_SK^*_{\Theta}b_2)\|^2_{L^2(\mu)}.
    \end{align*}
    By (\ref{desigualtats_descomposicio_transposat}), the last sum is bounded above by $c\,\|\chi_SK^*_{\Theta}b_2\|^2_{L^2(\mu)}$. Moreover, using that $\|K^*_{\Theta}b_2\|_{\infty}\leq c_4$ (recall Lemma \ref{lema_distancia}), we deduce that
    $$
    \sum_{R\in \mathcal{D}^{\text{tr},2}_2:\, R\subset S} a_R\leq c\,\|\chi_SK^*_{\Theta}b_2\|_{L^2(\mu)}^2 \leq c\,\mu(S),
    $$
    which verifies the Carleson condition (\ref{condicio_carleson}).
\end{proof}

\begin{lemma}
    If $f,g\in L^2(\mu)$ are as in Lemma \ref{lemma:good_functions} and very good, then we have
    $$
    |S^{\text{tr}}_{4,1}|\leq c\,\|f\|_{L^2(\mu)}\|g\|_{L^2(\mu)}.
    $$
\end{lemma}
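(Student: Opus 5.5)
The plan is to assemble the pieces already established in this subsection. By (\ref{dues_sumes_s_tr_41}), $S^{\text{tr}}_{4,1}$ splits into the sum with $\chi_{R\setminus R_Q}\Delta_{2,R}g$ and the sum with $\chi_{R_Q}\Delta_{2,R}g$. The first of these is bounded by $c\,\|f\|_{L^2(\mu)}\|g\|_{L^2(\mu)}$ using the lemma giving (\ref{segona_suma_s41_tr}). For the second, we write $\chi_{R_Q}\Delta_{2,R}g=c_{R,Q}(g)\chi_{R_Q}b_2$, since $R_Q$ is transit. This gives the decomposition (\ref{pre_paraproduct}). The term involving $K^*_\Theta(\chi_{\R^d\setminus R_Q}b_2)$ is handled, in absolute value, by the lemma proved right after (\ref{pre_paraproduct}).

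What remains is the term with $\langle \Delta_{1,Q}f,K^*_\Theta b_2\rangle$. We reorganize it as in (\ref{separar_Q_i_R}) and telescope over $R$ via (\ref{suma_telescopica}), after replacing $g$ by $\widetilde g=g-\Xi_2 g$. By (\ref{prodesc_g_tilde}), it then equals $\langle f,\Pi_{K^*_\Theta b_2}(\widetilde g)\rangle$. The hypotheses of the paraproduct definition require that $\widetilde g\in\mathcal T_2$. This holds because $g$ is very good and $\Delta_{2,R}\Xi_2 g=0$ by Lemma \ref{lema:propietats_delta}(d). By the last lemma, $\Pi_{K^*_\Theta b_2}$ is bounded on $L^2(\mu)$ with a constant independent of $\varepsilon$, because the Carleson constant only used $\|K^*_\Theta b_2\|_\infty\le c_4$ on $F$ and (\ref{desigualtats_descomposicio_transposat}). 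Hence
$$
|\langle f,\Pi_{K^*_\Theta b_2}(\widetilde g)\rangle|\le c\,\|f\|_{L^2(\mu)}\|\widetilde g\|_{L^2(\mu)}\le c\,\|f\|_{L^2(\mu)}\|g\|_{L^2(\mu)},
$$
using the $L^2$ boundedness of $\Xi_2$ from Lemma \ref{lema:propietats_xi}(a).

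The point needing the most care is the identification in (\ref{prodesc_g_tilde}). Two things must be checked. First, the cubes $Q$ appearing in the sum are good and satisfy $\ell(Q)<2^{-m}\ell(R^0)$, so they are indexed exactly as in the paraproduct; cubes with $\Delta_{1,Q}f=0$ contribute nothing. Second, the telescoping sum over $R\supset Q$ with $\ell(R)>2^m\ell(Q)$ and $R_Q$ transit collapses to $\langle\widetilde g\rangle_{R(Q)}/\langle b_2\rangle_{R(Q)}$. The latter uses the fact that the chain of transit ancestors of $R(Q)$ reaches $R^0$, where $\langle\widetilde g\rangle_{R^0}=0$. It also uses $\langle b_2\rangle_{R(Q)}\ne0$, which holds by Remark \ref{obs:martingala_b_transit}. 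Once this is in place, adding the three bounds gives $|S^{\text{tr}}_{4,1}|\le c\,\|f\|_{L^2(\mu)}\|g\|_{L^2(\mu)}$, with $c$ independent of $\varepsilon$ and of the finite families of cubes involved.
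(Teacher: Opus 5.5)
Your proposal is correct and follows the paper's argument. The paper likewise uses the splittings (\ref{dues_sumes_s_tr_41}) and (\ref{pre_paraproduct}) and the two preceding lemmas to reduce to $\langle f,\Pi_{K^*_\Theta b_2}(\widetilde g)\rangle$ as in (\ref{prodesc_g_tilde}), then bounds it by the $L^2(\mu)$-boundedness of the paraproduct and of $\Xi_2$; your extra checks ($\widetilde g\in\mathcal T_2$, and the telescoping over transit ancestors ending at $R^0$ with $\langle\widetilde g\rangle_{R^0}=0$) only make explicit what the paper leaves implicit.
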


\begin{proof}
    As remarked before, from our previous work, the statement in the lemma is equivalent to proving the estimate
    $$
\Bigg|\sum_{\substack{Q\in \mathcal{D}^{\text{tr},1}_1, R\in \mathcal{D}^{\text{tr,2}}_2\\ Q\subset R\\ \ell(Q)<2^{-m}\ell(R)\\ R_{Q}\text{ transit}}}c_{R,Q}(g)\langle \Delta_{1,Q}f, K^*_{\Theta}b_2\rangle\Bigg|\leq c\,\|f\|_{L^2(\mu)}\|g\|_{L^2(\mu)}.
$$
Remember that by our definition of the numbers $c_{R,Q}(g)$, the expression on the left-hand side of the previous inequality is, as noted in (\ref{prodesc_g_tilde}), $|\langle f, \Pi_{K^*_\Theta b_2}(\widetilde{g})\rangle |$. Using that, by the previous lemma, the paraproduct is bounded in $L^2(\mu)$, we obtain that
$$
|\langle f, \Pi_{K^*_{\Theta}b_2}(\widetilde{g})\rangle |\leq c\,\|f\|_{L^2(\mu)}\|\widetilde{g}\|_{L^2(\mu)} \leq c\,\|f\|_{L^2(\mu)}\|g\|_{L^2(\mu)},
$$
because $\Xi_2$ is bounded in $L^2(\mu)$.
\end{proof}

\subsubsection{Estimate of $S^{\text{\normalfont term}}_{4,1}$}

The sum $S^{\text{term}}_{4,1}$, which we defined in (\ref{s41}), can be rewritten, by changing $R_Q$ to $R$ and $R$ to $\widehat{R}$ (the parent of $R$), respectively, as
\begin{equation}
S^{\text{term}}_{4,1} = \sum_{\substack{Q\in \mathcal{D}^{\text{tr},1}_1, R\in \mathcal{D}^{\text{term},2}_2\\ \ell(Q)\leq 2^{-m}\ell(R)\\ Q\subset R}}\langle K_{\Theta}(\Delta_{1,Q}f), \Delta_{2,\widehat{R}}g\rangle,\label{s41term}
\end{equation}
where, to simplify the notation, we have omitted the condition that $\widehat{R}\in \mathcal{D}^{\text{tr},2}_2$, which we are still considering, since $\Delta_{2,\widehat{R}}$ has not been defined for terminal cubes.

For any cube $R$ as in the sum above, we consider a Whitney decomposition of such a cube. That is, a family $W(R)$ of cubes from $\mathcal{D}_2$ such that
\begin{enumerate}
    \item the family covers $R$, that is $\displaystyle R\subseteq \bigcup_{Q\in W(R)}Q$,
    \item if $Q_1,Q_2\in W(R)$, then either $Q_1=Q_2$ or $Q_1\cap Q_2=\varnothing$,
    \item for every $S\in W(R)$, $\text{dist}(S, \partial R)=\ell(S)$,
    \item for every $S\in W(R)$, $3S\subset R$,
    \item the family of dilated cubes, $\{2S\}_{S\in W(R)}$ has bounded overlap, that is, there exists an absolute constant $c>0$ such that
    $$
    \sum_{S\in W(R)}\chi_{2S}\leq c.
    $$
\end{enumerate}
Moreover, for each $S\in W(R)$, we will consider the following two functions,
$$
g_{R,S} = \chi_{2S}\Delta_{2,\widehat{R}}g, \qquad \widetilde{g}_{R,S} = \chi_{\widehat{R}\setminus 2S}\Delta_{2,\widehat{R}}g,
$$
so that $\Delta_{2,\widehat{R}}g = g_{R,S} + \widetilde{g}_{R,S}$. With all this in mind, we can write, for every $R\in \mathcal{D}^{\text{term},2}_2$, 
\begin{align*}
    &\sum_{\substack{Q\in \mathcal{D}^{\text{tr},1}_1\\ Q\subset R\\ \ell(Q)\leq 2^{-m}\ell(R)}}\langle K_{\Theta}(\Delta_{1,Q}f), \Delta_{2,\widehat{R}}g\rangle = \sum_{S\in W(R)}\sum_{\substack{Q\in \mathcal{D}^{\text{tr},1}_1\\Q\subset R,\, z_Q\in S\\
    \ell(Q)\leq 2^{-m}\ell(R)}} \langle K_{\Theta}(\Delta_{1,Q}f), \Delta_{2,\widehat{R}}g\rangle\\
    &=\sum_{S\in W(R)}\sum_{\substack{Q\in \mathcal{D}^{\text{tr},1}_1\\Q\subset R,\, z_Q\in S\\
    \ell(Q)\leq 2^{-m}\ell(R)}} \langle K_{\Theta}(\Delta_{1,Q}f), \Delta_{2,\widehat{R}}g_{R,S}\rangle + \sum_{S\in W(R)}\sum_{\substack{Q\in \mathcal{D}^{\text{tr},1}_1\\Q\subset R,\, z_Q\in S\\
    \ell(Q)\leq 2^{-m}\ell(R)}} \langle K_{\Theta}(\Delta_{1,Q}f), \Delta_{2,\widehat{R}}\widetilde{g}_{R,S}\rangle.
\end{align*}
The goal of this section is to provide a bound for each of the two sums above, after summing $R\in \mathcal{D}^{\text{term},2}_2$. We will do so in two lemmas, each one dealing with only one of the sums.

\begin{lemma}
    \label{lema:s41term1} If $f,g\in L^2(\mu)$ are as in Lemma \ref{lemma:good_functions}, very good and $m$, from the definition of bad cubes, is chosen big enough, we have that
    $$
    \sum_{R\in \mathcal{D}^{\text{term},2}_2}\sum_{S\in W(R)}\sum_{\substack{Q\in \mathcal{D}^{\text{tr},1}_1\\ Q\subset R,\,z_Q\in S\\\ell(Q)\leq 2^{-m}\ell(R)}} |\langle K_{\Theta}(\Delta_{1,Q}f), \widetilde{g}_{R,S}\rangle| \leq c\,\|f\|_{L^2(\mu)}\|g\|_{L^2(\mu)}.
    $$
\end{lemma}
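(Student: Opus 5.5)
We estimate each term with the first bound of Lemma \ref{lema:phi_psi}, using that $\widetilde g_{R,S}$ lives away from $Q$ at a scale comparable to $S$. Fix $R\in\mathcal{D}^{\text{term},2}_2$ with transit parent $\widehat R$, a Whitney cube $S\in W(R)$, and $Q\in\mathcal{D}^{\text{tr},1}_1$ with $Q\subset R$, $z_Q\in S$ and $\ell(Q)\le 2^{-m}\ell(R)$.

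\textbf{Step 1: separation.} Since $Q$ is good and $\ell(R)\ge 2^m\ell(Q)$, condition (\ref{condicio_bona}) applied to $R$ gives $\text{dist}(Q,\partial R)>\ell(Q)^\gamma\ell(R)^{1-\gamma}$. Combined with $z_Q\in S$ and $\text{dist}(S,\partial R)=\ell(S)$, this yields $\ell(S)\gtrsim \ell(Q)^\gamma\ell(R)^{1-\gamma}\ge 2^{m(1-\gamma)}\ell(Q)$. So for $m$ large, $Q\subset \tfrac32 S$ and
\[
\text{dist}(Q,\widehat R\setminus 2S)\gtrsim \ell(S)\ge \ell(Q).
\]
This is where the choice of $m$ enters.

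\textbf{Step 2: pointwise bound.} Since $\int\Delta_{1,Q}f\,d\mu=0$, Lemma \ref{lema:phi_psi} (its first estimate) gives
\[
|\langle K_\Theta(\Delta_{1,Q}f),\widetilde g_{R,S}\rangle|\le c\,\|\Delta_{1,Q}f\|_{L^1(\mu)}\int_{\widehat R\setminus 2S}\frac{\ell(Q)^\eta}{|x-z_Q|^{n+\eta}}|\Delta_{2,\widehat R}g(x)|\,d\mu(x).
\]
We control the integral by splitting into annuli around $z_Q$ at scales $2^k\ell(S)$ and using Cauchy–Schwarz on each annulus. On each ball we use $\mu(B(z_Q,r))\le c_0r^n$: $S$ is not contained in $H_{\mathcal{D}_2}$, since $3S\subset R$ and $R$ is not inside $H$. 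Summing the annuli bounds the integral by
\[
c\,\frac{\ell(Q)^\eta}{\ell(S)^{\eta+n/2}}\,\|\Delta_{2,\widehat R}g\|_{L^2(\mu)}.
\]
Using $\|\Delta_{1,Q}f\|_{L^1(\mu)}\le\mu(Q)^{1/2}\|\Delta_{1,Q}f\|_{L^2(\mu)}$, each term is therefore at most
\[
c\Big(\tfrac{\ell(Q)}{\ell(S)}\Big)^{\eta}\frac{\mu(Q)^{1/2}}{\ell(S)^{n/2}}\,\|\Delta_{1,Q}f\|_{L^2(\mu)}\|\Delta_{2,\widehat R}g\|_{L^2(\mu)}.
\]

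\textbf{Step 3: summation.} For fixed $R$ and $S$, group the cubes $Q$ by $\ell(Q)=2^{-k}\ell(S)$. Cauchy–Schwarz in $Q$, together with disjointness of same-size cubes inside $2S$ and $\mu(2S)\lesssim\ell(S)^n$, bounds the sum over $Q$ by
\[
c\,\|\Delta_{2,\widehat R}g\|\Big(\sum_{Q}(\ell(Q)/\ell(S))^{\eta}\|\Delta_{1,Q}f\|^2\Big)^{1/2}.
\]
We then need to sum over $S\in W(R)$ and over $R$. The delicate point is that $\|\Delta_{2,\widehat R}g\|$ is repeated for every $S\in W(R)$, so we cannot simply pay it once per $S$.

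To fix this, keep the localized norm $\|\Delta_{2,\widehat R}g\|_{L^2(\mu\lfloor \text{annulus})}$ in each annulus instead of the full norm. Because the dilates $\{2^jS\}$ have bounded overlap for $j$ up to about $\log_2(\ell(R)/\ell(S))$, the decay $2^{-k\eta}$ lets us sum $\sum_S\|\Delta_{2,\widehat R}g\|_{L^2(\mu\lfloor 2^jS)}^2\lesssim\|\Delta_{2,\widehat R}g\|^2$. Annuli reaching outside $R$ (to $\widehat R\setminus R$) contribute with the factor $(\ell(S)/\ell(R))^\eta$, which is summable over the Whitney cubes at each scale.

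Finally, each $Q$ lies in a unique $R$ at each generation, with $\ell(R)\ge 2^m\ell(Q)$, and then in a unique $S$. The terminal cubes $R$ of a fixed generation are disjoint. Hence Cauchy–Schwarz over $(R,Q)$ with the weights $(\ell(Q)/\ell(S))^{\eta}\le(\ell(Q)/\ell(R))^{\eta(1-\gamma)}$ reduces the total to
\[
\Big(\sum_R\|\Delta_{2,\widehat R}g\|^2\Big)^{1/2}\Big(\sum_Q\|\Delta_{1,Q}f\|^2\Big)^{1/2}.
\]
Each $\widehat R$ has at most $2^d$ children, and Lemma \ref{lema_descomposicio_L2} bounds both factors, giving $c\|f\|\|g\|$.

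The main obstacle is Step 3: the bookkeeping across the overlapping $S$, so that $\|\Delta_{2,\widehat R}g\|$ is not counted once for every Whitney cube.
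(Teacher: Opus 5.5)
Your Steps 1 and 2 are correct, with one fix to the justification. You cannot get the growth bound from ``$R$ is not inside $H$'', because $R$ is \emph{terminal} and may well satisfy $2R\subset H_{\mathcal{D}_2}$. The bound holds instead by Lemma \ref{lema:comportament_cubs_transit} applied to the transit cube $Q$: $B(z_Q,r)\subset \frac{2r}{\ell(Q)}Q$, so $\mu(B(z_Q,r))\lesssim r^n$ for all $r\geq \ell(Q)$, and this covers your annuli.

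The genuine gap is Step 3. Your repair rests on the claim that $\{2^jS\}_{S\in W(R)}$ has bounded overlap for $j$ up to about $\log_2(\ell(R)/\ell(S))$, and this is false. Since $\text{dist}(S,\partial R)=\ell(S)$, the dilate $4S$ already crosses $\partial R$. A point of $R$ at distance $t$ from $\partial R$ therefore lies in $4S$ for Whitney cubes of essentially every dyadic side length between $t$ and a fixed fraction of $\ell(R)$, so the overlap grows like $\log(\ell(R)/t)$. Nothing prevents $|\Delta_{2,\widehat R}g|^2\mu$ from concentrating near $\partial R$, and the decay in $j$ does not help at fixed $j$. The factor $(\ell(S)/\ell(R))^{\eta}$ is not summable over $W(R)$ for $d\geq 2$ either. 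So Step 3, as written, does not close.

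The obstruction comes from normalizing by $\ell(S)$ and applying Cauchy--Schwarz separately inside each Whitney cube. Your last paragraph converts the weight $(\ell(Q)/\ell(S))^\eta$ into a power of $\ell(Q)/\ell(R)$, but it keeps $\ell(S)^{-n/2}$, and that is what forces the per-$S$ bookkeeping. Convert both factors. From $\ell(S)\gtrsim \ell(Q)^\gamma\ell(R)^{1-\gamma}$ (your Step 1) and $\gamma(n+\eta)=\eta/2$,
\[
\Big(\frac{\ell(Q)}{\ell(S)}\Big)^{\eta}\frac{1}{\ell(S)^{n/2}}\lesssim \Big(\frac{\ell(Q)}{\ell(R)}\Big)^{a}\frac{1}{\ell(R)^{n/2}},\qquad a=\eta-\gamma\Big(\eta+\frac{n}{2}\Big)>\frac{\eta}{2},
\]
so each term is bounded independently of $S$. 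Since $W(R)$ is a disjoint cover of $R$, each $Q\subset R$ has $z_Q$ in exactly one $S$. The triple sum is thus a sum over pairs $(Q,R)$, and the repetition of $\|\Delta_{2,\widehat R}g\|$ disappears. Then finish as follows:
\begin{itemize}
\item Apply Cauchy--Schwarz over all $Q\subset R$ at once, using that equal-size $Q$'s are disjoint and that $\mu(R)\leq\mu(\widehat R)\lesssim \ell(R)^n$ because $\widehat R$ is transit.
\item Apply Cauchy--Schwarz over $R$: terminal cubes with transit parent are pairwise disjoint, so each $Q$ lies in at most one of them, and each $\widehat R$ has at most $2^d$ children.
\item Conclude with Lemma \ref{lema_descomposicio_L2}, which gives $c\,\|f\|_{L^2(\mu)}\|g\|_{L^2(\mu)}$.
\end{itemize}
No localization of $\Delta_{2,\widehat R}g$ is needed.

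This is the paper's route. It uses $\text{dist}(Q,\text{supp}\,\widetilde g_{R,S})\geq \ell(S)/4\geq c\,\ell(Q)^\gamma\ell(\widehat R)^{1-\gamma}$ to apply the second estimate of Lemma \ref{lema:phi_psi} with $\widehat R$ in place of $R$. This gives the $S$-independent bound $\frac{\ell(Q)^{\eta/2}\ell(\widehat R)^{\eta/2}}{D(Q,\widehat R)^{n+\eta}}\mu(Q)^{1/2}\mu(\widehat R)^{1/2}\|\Delta_{1,Q}f\|_{L^2(\mu)}\|\Delta_{2,\widehat R}g\|_{L^2(\mu)}$, and the paper then concludes with the Schur-type estimate of Lemma \ref{lema:s11}.
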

\begin{proof}
    For the sake of clarity, we will follow three distinct steps.

    \underline{\textbf{Step 1:} recall Lemma \ref{lema:phi_psi}}, in which we considered $Q,R\subset \R^d$ disjoint cubes and $\varphi_Q, \psi_R$ two functions in $L^2(\mu)$  supported on $Q$ and $R$, respectively. If $\text{dist}(Q,\text{supp}(\psi_R))\geq \ell(Q)$, $\int\varphi_Q=0$, $\ell(Q)\leq \ell(R)$ and for some constant $c_6>0$,
    \begin{equation*}
        \text{dist}(Q,\text{supp}(\psi_R))\geq c_6\,\ell(Q)^{\gamma}\ell(R)^{1-\gamma}, 
    \end{equation*}
    then
    \begin{equation}
        |\langle K_{\Theta}\varphi_Q, \psi_R \rangle| \leq c\frac{\ell(Q)^{\frac{\eta}{2}}\ell(R)^{\frac{\eta}{2}}}{D(Q,R)^{n+\eta}}\|\varphi_Q\|_{L^1(\mu)}\|\psi_R\|_{L^1(\mu)}.\label{segona_estimacio_phi_psi}
    \end{equation}
    Of course, we would like to use the Lemma with $\varphi_Q = \Delta_{1,Q}f$ and $\psi_R = \widetilde{g}_{R,S}$. We have that $\Delta_{1,Q}f$ is supported in $Q$, it has zero integral, and $\widetilde{g}_{R,S}$ is supported in $\widehat{R}$. The obstacle now is that $Q$ and $\widehat{R}$ are not disjoint. However, we claim, and will prove later, that in this situation we still have
    \begin{equation}
    \text{dist}(Q, \text{supp}(\widetilde{g}_{R,S}))\geq \text{dist}(Q,\partial 2S)\geq c\,\ell(Q)^{\gamma}\ell(\widehat{R})^{1-\gamma},\label{cosa_que_provare_despres}
    \end{equation}
    which is what we illustrate in Figure \ref{fig:lema_s41_term_2} (in dimension $d=2$, for simplicity).
    \begin{figure}[h]
    \centering

\tikzset{every picture/.style={line width=0.75pt}} 

\begin{tikzpicture}[x=0.75pt,y=0.75pt,yscale=-1,xscale=1]

\draw [color={rgb, 255:red, 0; green, 0; blue, 0 }  ,draw opacity=1 ][fill={rgb, 255:red, 245; green, 166; blue, 35 }  ,fill opacity=0.4 ][line width=1.5] [line join = round][line cap = round]   (42.47,24.65) .. controls (49.66,19.67) and (56.11,17.06) .. (66.13,20.73) .. controls (76.15,24.41) and (79.31,39.53) .. (94.06,37.38) .. controls (108.82,35.23) and (125.87,27.41) .. (139.39,41.21) .. controls (152.9,55.01) and (137.83,62.93) .. (144.78,71.61) .. controls (151.74,80.29) and (168.76,74.29) .. (176.24,84.21) .. controls (183.72,94.12) and (181.6,112.24) .. (179.21,121.15) .. controls (175.56,131.3) and (179.52,135.84) .. (175.62,139.11) .. controls (171.73,142.37) and (188.8,152.53) .. (178.18,162.56) .. controls (174.15,168.55) and (154.23,171.56) .. (149.45,179.36) .. controls (149.53,182.83) and (132.88,191.1) .. (122.89,178.84) .. controls (113.34,163.67) and (105.38,195.83) .. (89.5,186.53) .. controls (73.71,165.8) and (54.62,196.34) .. (55.88,190.61) .. controls (48.85,187.74) and (53.33,171.49) .. (41.59,177.31) .. controls (39.83,173.51) and (13.46,198.91) .. (15.29,190.37) .. controls (29.3,173.12) and (15.88,179.65) .. (7.68,173.66) .. controls (-0.53,167.67) and (17.68,131.01) .. (8.54,118.96) .. controls (-0.59,106.92) and (29.02,97.56) .. (25.97,91.49) .. controls (22.92,85.42) and (19.14,75.89) .. (20.03,68.26) .. controls (20.93,60.64) and (29.15,56.65) .. (32.9,50.9) .. controls (36.66,45.16) and (31.21,41.02) .. (34,34.8) .. controls (36.8,28.59) and (39.13,26.05) .. (42.47,24.65) -- cycle ;
\draw  [fill={rgb, 255:red, 255; green, 255; blue, 255 }  ,fill opacity=1 ] (15,125) -- (75,125) -- (75,185) -- (15,185) -- cycle ;
\draw  [color={rgb, 255:red, 128; green, 128; blue, 128 }  ,draw opacity=1 ][line width=1.5]  (-15.6,-19.2) -- (217.15,-19.2) -- (217.15,217.9) -- (-15.6,217.9) -- cycle ;
\draw  [line width=1.5]  (-15.6,99.35) -- (100.77,99.35) -- (100.77,217.9) -- (-15.6,217.9) -- cycle ;
\draw  [color={rgb, 255:red, 80; green, 227; blue, 194 }  ,draw opacity=1 ][fill={rgb, 255:red, 80; green, 227; blue, 194 }  ,fill opacity=0.4 ][line width=1.5]  (15,125) -- (75,125) -- (75,185) -- (15,185) -- cycle ;
\draw  [color={rgb, 255:red, 208; green, 2; blue, 27 }  ,draw opacity=1 ][fill={rgb, 255:red, 208; green, 2; blue, 27 }  ,fill opacity=1 ] (42.2,164.6) .. controls (42.2,164.05) and (42.65,163.6) .. (43.2,163.6) .. controls (43.75,163.6) and (44.2,164.05) .. (44.2,164.6) .. controls (44.2,165.15) and (43.75,165.6) .. (43.2,165.6) .. controls (42.65,165.6) and (42.2,165.15) .. (42.2,164.6) -- cycle ;
\draw [line width=1.5]    (15,125) -- (75,125) ;
\draw [line width=1.5]    (14.87,124) -- (15,176.5) ;
\draw [line width=1.5]    (17.31,185.08) -- (28.23,185.08) ;
\draw [line width=1.5]    (75,123.94) -- (75.06,179.94) ;
\draw [line width=1.5]    (51,185) -- (63.84,185.11) ;
\draw  [color={rgb, 255:red, 74; green, 144; blue, 226 }  ,draw opacity=1 ][line width=1.5]  (30,140) -- (60,140) -- (60,170) -- (30,170) -- cycle ;
\draw  [color={rgb, 255:red, 208; green, 2; blue, 27 }  ,draw opacity=1 ][line width=1.5]  (33.2,154.6) -- (53.2,154.6) -- (53.2,174.6) -- (33.2,174.6) -- cycle ;

\draw (165,0.79) node [anchor=north west][inner sep=0.75pt]  [color={rgb, 255:red, 128; green, 128; blue, 128 }  ,opacity=1 ]  {\Huge $\widehat{R}$};
\draw (-5.6,195.3) node [anchor=north west][inner sep=0.75pt]    {\large $R$};
\draw (17,128.4) node [anchor=north west][inner sep=0.75pt]  [color={rgb, 255:red, 74; green, 144; blue, 226 }  ,opacity=1 ]  {$S$};
\draw (28.4,190) node [anchor=north west][inner sep=0.75pt]  [color={rgb, 255:red, 80; green, 227; blue, 194 }  ,opacity=1 ]  {$2S$};
\draw (42,155.8) node [anchor=north west][inner sep=0.75pt]  [font=\tiny,color={rgb, 255:red, 208; green, 2; blue, 27 }  ,opacity=1 ]  {$z_{Q}$};
\draw (55.5,164.4) node [anchor=north west][inner sep=0.75pt]  [color={rgb, 255:red, 208; green, 2; blue, 27 }  ,opacity=1 ]  {$Q$};

\draw (50,60) node [anchor=north west][inner sep=0.75pt]  [opacity=1 ]  {$\text{supp}(\widetilde{g}_{R,S})$};

\end{tikzpicture}

    \caption{The cube $Q$ and $\text{supp}(\widetilde{g}_{R,S})$ are disjoint, whereas $Q$ and $\widehat{R}$ are not. Of course, the side length of $S$ in this picture is bigger than it should be, that is, $\text{dist}(S,\partial R)$, because of property $3$ from the Whitney decomposition. This is due to legibility reasons.}
    \label{fig:lema_s41_term_2}
\end{figure}

    Going over the proof of Lemma \ref{lema:phi_psi}, one can see that the same arguments apply in this case. Hence, we obtain that
    $$
    |\langle K_{\Theta}(\Delta_{1,Q}f), \widetilde{g}_{R,S}\rangle | \leq c\frac{\ell(Q)^{\frac{\eta}{2}}\ell(\widehat{R})^{\frac{\eta}{2}}}{D(Q, \widehat{R})^{n+\eta}}\|\Delta_{1,Q}f\|_{L^1(\mu)}\|\widetilde{g}_{R,S}\|_{L^1(\mu)}.
    $$
    \underline{\textbf{Step 2:} take the sum over the corresponding cubes}, also, using $\|\widetilde{g}_{R,S}\|_{L^1(\mu)}\leq \|\Delta_{2,\widehat{R}}g\|_{L^1(\mu)}$,
    \begin{align*}
        &\sum_{R\in \mathcal{D}^{\text{term},2}_2}\sum_{S\in W(R)}\sum_{\substack{Q\in \mathcal{D}^{\text{tr},1}_1\\ Q\subset R, \, z_Q\in S\\ \ell(Q)\leq 2^{-m}\ell(R)}}|\langle K_{\Theta}(\Delta_{1,Q}f), \widetilde{g}_{R,S}\rangle|\\
        &\quad  \leq c \sum_{R\in \mathcal{D}^{\text{term},2}_2}\sum_{S\in W(R)}\sum_{\substack{Q\in \mathcal{D}^{\text{tr},1}_1\\ Q\subset R, \, z_Q\in S\\ \ell(Q)\leq 2^{-m}\ell(R)}}\frac{\ell(Q)^{\frac{\eta}{2}}\ell(\widehat{R})^{\frac{\eta}{2}}}{D(Q, \widehat{R})^{n+\eta}}\|\Delta_{1,Q}f\|_{L^1(\mu)}\|\widetilde{g}_{R,S}\|_{L^1(\mu)}\\
        &\quad \leq c\sum_{\substack{Q\in \mathcal{D}^{\text{tr},1}_1, R\in \mathcal{D}^{\text{term},2}_2\\ \ell(Q)\leq 2^{-m}\ell(R)\\Q \subset R}}\frac{\ell(Q)^{\frac{\eta}{2}}\ell(\widehat{R})^{\frac{\eta}{2}}}{D(Q,\widehat{R})^{n+\eta}}\mu(Q)^{\frac{1}{2}}\mu(\widehat{R})^{\frac{1}{2}}\|\Delta_{1,Q}f\|_{L^2(\mu)}\|\Delta_{2,\widehat{R}}g\|_{L^2(\mu)}.
    \end{align*}
    Recall that $\widehat{R}\in \mathcal{D}^{\text{tr},2}_2$, so the right-hand side of the last inequality is bounded by the left-hand side of the first line in (\ref{seogona_desigualtat_s11}) from Lemma \ref{lema:s11}. By that same lemma, it is bounded by $c\,\|f\|_{L^2(\mu)}\|g\|_{L^2(\mu)}$.

    \underline{\textbf{Step 3:} proof of (\ref{cosa_que_provare_despres})}. First, we have that 
    $$
    \ell(Q) =  \ell(Q)^{\gamma}\ell(Q)^{1-\gamma} \leq \ell(Q)^{\gamma}2^{(1-\gamma)m}\,\ell(R)^{1-\gamma}.
    $$
    Hence, choosing $m$ big enough we have that
    $$
    8\,\ell(Q)\leq \ell(Q)^{\gamma}\ell(R)^{1-\gamma}\leq \text{dist}(Q,\partial R)\leq \text{dist}(z_Q, \partial R) \leq \ell(S),
    $$
    where we have used that $Q$ is good. Moreover, for any $x\in Q$ and $y\in \partial 2S$, since $z_Q\in S$,
    $$
    |x-y|\geq \|x-y\|_{\infty}\geq \|y-z_Q\|_{\infty} - \|z_Q-x\|_{\infty}\geq \frac{\ell(s)}{2}- \frac{\ell(Q)}{2},
    $$
    from which we deduce that
    $$
    \text{dist}(Q, \text{supp}(\widetilde{g}_{R,S}))\geq \text{dist}(Q,\partial 2S) \geq \frac{\ell(S)-\ell(Q)}{2} \geq \frac{\ell(S)}{4}\geq c\,\ell(Q)^{\gamma}\ell(\widehat{R})^{1-\gamma},
    $$
    which is (\ref{cosa_que_provare_despres}).
\end{proof}

Remark that in this proof we have used properties 1 and 2 of the Whitney decomposition of $R$, namely that the cubes from this decomposition cover $R$ and are pairwise disjoint. In the next Lemma, which will complete the proof of the bound for $S^{\text{term}}_{4,1}$ and hence for $S_4$, we will use, in addition, properties 3, 4 and 5.

\begin{lemma}
    \label{lema:s4g} If $f,g\in L^2(\mu)$ are as in Lemma \ref{lemma:good_functions}, very good and $m$ is chosen big enough, then
    $$
    \sum_{R\in \mathcal{D}^{\text{term},2}_2}\sum_{S\in W(R)}\sum_{\substack{Q\in \mathcal{D}^{\text{tr},1}_1\\Q\subset R, \,z_Q\in S\\ \ell(Q)\leq 2^{-m}\ell(R)}}|\langle K_{\Theta}(\Delta_{1,Q}f), g_{R,S}\rangle | \leq c\,\|f\|_{L^2(\mu)}\|g\|_{L^2(\mu)},
    $$
    and thus $|S^{\text{term}}_{4,1}|\leq c\,\|f\|_{L^2(\mu)}\|g\|_{L^2(\mu)}$.
\end{lemma}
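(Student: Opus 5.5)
The plan is to exploit the suppression of the kernel inside terminal cubes. The intended substitute for the missing cancellation of $g_{R,S}$ is the smallness of $\widetilde{k}_\Theta$ near the diagonal at scale $\ell(S)$.

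\textbf{Step 1: lower bound for $\Theta$.} Fix $R\in\mathcal{D}^{\text{term},2}_2$ and $S\in W(R)$. Since $R$ is terminal of the second kind, either $2R\subset H_{\mathcal{D}_2}$ or $R\subset T^2_{\mathcal{D}_2}$, so $R\subset W_{\mathcal{D}_2}$. Hence for $x\in 3S\subset R$,
$$\Theta(x)\ge \Phi_{\mathcal{D}_2}(x)\ge \text{dist}(x,\partial R)\ge c\,\ell(S),$$
using property 3 of the Whitney decomposition. As in Step 3 of Lemma \ref{lema:s41term1}, taking $m$ large gives $\ell(Q)\le \ell(S)/8$ for every $Q$ in the sum, so $Q\subset \frac32 S$.

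\textbf{Step 2: pointwise bound on $2S$.} The aim is
$$|K_\Theta(\Delta_{1,Q}f)(x)|\le c\,\frac{\ell(Q)^\eta}{\ell(S)^{n+\eta}}\,\|\Delta_{1,Q}f\|_{L^1(\mu)},\qquad x\in 2S.$$
Write $K_\Theta(\Delta_{1,Q}f)(x)=\int(\widetilde{k}_\Theta(x,y)-\widetilde{k}_\Theta(x,z_Q))\Delta_{1,Q}f(y)\,d\mu(y)$, using $\int\Delta_{1,Q}f\,d\mu=0$. There are two cases.
\begin{itemize}
\item If $|x-z_Q|\ge c'\ell(S)$ with $c'$ small, then $|x-z_Q|\gg\ell(Q)\ge |y-z_Q|$, and the Calderón-Zygmund smoothness of $\widetilde{k}_\Theta$ (proved earlier) applies directly.
\item If $|x-z_Q|<c'\ell(S)$, then for every $y\in Q$ we have $|x-y|\le 2c'\ell(S)\le \frac12\Theta(x)$, using Step 1. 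So both kernel values vanish by Lemma \ref{1/2_max}.
\end{itemize}

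\textbf{Step 3: one pair $(Q,S)$.} Integrate against $g_{R,S}$ and use Cauchy--Schwarz:
$$|\langle K_\Theta(\Delta_{1,Q}f),g_{R,S}\rangle|\le c\Big(\tfrac{\ell(Q)}{\ell(S)}\Big)^\eta \frac{\mu(Q)^{1/2}\mu(2S)^{1/2}}{\ell(S)^n}\,\|\Delta_{1,Q}f\|_{L^2(\mu)}\,\|\chi_{2S}\Delta_{2,\widehat R}g\|_{L^2(\mu)}.$$
Fix $k\ge 3$ and sum over the $Q$ with $\ell(Q)=2^{-k}\ell(S)$ and $z_Q\in S$. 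These cubes are disjoint and lie in $2S$, so Cauchy--Schwarz gives $\sum_Q\mu(Q)^{1/2}\|\Delta_{1,Q}f\|\le\mu(2S)^{1/2}(\sum_Q\|\Delta_{1,Q}f\|^2)^{1/2}$.

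The factor $\mu(2S)/\ell(S)^n$ is controlled by the growth of transit cubes in Lemma \ref{lema:comportament_cubs_transit}, applied to $Q$ itself rather than to $S$. Since $2S\subset\lambda Q$ with $\lambda\approx 2^k$, we get $\mu(2S)\le c_0\,\ell(\lambda Q)^n\le c\,\ell(S)^n$. This is the step I expect to be the most delicate: $S$ itself has no growth control, so the bound must come from some transit $Q$ at scale $\ell(S)$, which requires at least one $Q$ to be present (otherwise the term is empty).

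\textbf{Step 4: summation.} Summing over $k$ with the factor $2^{-k\eta}$ and applying Cauchy--Schwarz in $S$, the bounded overlap of $\{2S\}_{S\in W(R)}$ (property 5) gives a bound by
$$c\,\|\Delta_{2,\widehat R}g\|_{L^2(\mu)}\Big(\sum_{Q\subset R}\|\Delta_{1,Q}f\|^2_{L^2(\mu)}\Big)^{1/2}.$$
Finally sum over $R$. Terminal cubes with transit parent are pairwise disjoint, as shown in Step 1 of Lemma \ref{lema_descomposicio_L2}. So each $Q$ is counted for at most one $R$ and, since $z_Q$ lies in a unique Whitney cube, for one $S$. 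Each $\widehat R$ has at most $2^d$ children. A last Cauchy--Schwarz together with Lemma \ref{lema_descomposicio_L2} then yields $c\,\|f\|_{L^2(\mu)}\|g\|_{L^2(\mu)}$.

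Combining this with Lemma \ref{lema:s41term1} and the splitting $\Delta_{2,\widehat R}g=g_{R,S}+\widetilde g_{R,S}$ gives the bound for $S^{\text{term}}_{4,1}$.
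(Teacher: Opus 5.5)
Your proof is correct, but you obtain the key pointwise estimate by a different mechanism from the paper's.

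The paper does not use the cancellation of $\Delta_{1,Q}f$ here. It moves the operator to the other side and notes that $g_{R,S}$ is supported in $2S$, where $\Theta\ge \text{dist}(\cdot,\partial R)\ge \ell(S)/2$. The size bound of Lemma \ref{desigualtat_maxim_k_tilde} then gives $\|K^*_\Theta g_{R,S}\|_\infty\le c\,\ell(S)^{-n}\|g_{R,S}\|_{L^1(\mu)}$ everywhere. The paper then writes $\langle K_\Theta(\Delta_{1,Q}f),g_{R,S}\rangle=\langle \Delta_{1,Q}f,\Delta^*_{1,Q}(\chi_{2S}K^*_\Theta g_{R,S})\rangle$ and sums over all $Q\subset 2S$ at once. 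That summation uses Cauchy--Schwarz and the dual square-function bound (\ref{desigualtats_descomposicio_transposat}), and needs no decay in $\ell(Q)/\ell(S)$.

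You instead keep $K_\Theta$ acting on $\Delta_{1,Q}f$. You combine its zero mean with the H\"older regularity of $\widetilde k_\Theta$ away from $z_Q$, and with the vanishing of Lemma \ref{1/2_max} near $z_Q$. The resulting factor $(\ell(Q)/\ell(S))^\eta$ lets you sum scale by scale, using only the primal bound of Lemma \ref{lema_descomposicio_L2} and a geometric series. This is essentially the argument of Lemma \ref{lema:phi_psi}, with the suppression playing the role of separation.

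Your route trades the adjoint martingale inequality, which the paper only asserts in a remark, for the Calder\'on--Zygmund smoothness of the suppressed kernel. The paper's route needs only the size of $\widetilde k_\Theta$. The remaining steps are the same in both: growth of $\mu(2S)$ via a transit $Q$ inside it, disjointness of the Whitney cubes, and disjointness of terminal cubes with transit parent.

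Two small corrections.
\begin{enumerate}
\item The lower bound $\Theta\ge c\,\ell(S)$ holds on $2S$ but not on $3S$, which touches $\partial R$. This is harmless, since you only use it for $x\in 2S$.
\item Your case split needs $\frac{\sqrt d}{2}\ell(Q)\le c'\ell(S)$ with $c'$ small, which is stronger than $\ell(Q)\le\ell(S)/8$. It still follows from goodness by enlarging $m$, because $2^{(1-\gamma)m}\ell(Q)\le \text{dist}(Q,\partial R)\le \text{dist}(z_Q,\partial R)\le 2\,\ell(S)$.
\end{enumerate}
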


\begin{proof}
    As we have remarked before stating the Lemma, the second statement follows from the previous lemma and the first inequality, which we prove now. 

    By the second inequality in (\ref{cosa_que_provare_despres}) from the previous lemma, if $Q$ and $S\subset R$ are cubes from the sum in the lemma and $z_Q\in S$, it turns out that $\ell(Q)\leq \ell(S)/8$ and $Q\subset 2S$. Moreover, since $R$ is terminal and $2S\subset R$, for any $y\in 2S$ we have
    \begin{align*}
    \Theta(y)\geq \text{dist}(y, \R^d\setminus W_{\mathcal{D}_2}) \geq \text{dist}(y, \partial R) &\geq \text{dist}(2S, \partial R)\\
    &= \text{dist}(S, \partial R) - \frac{\ell(S)}{2} = \frac{\ell(S)}{2}.
    \end{align*}
    Hence, for all $y\in 2S$, $x\in \R^d$, recalling the pointwise estimate from Lemma \ref{desigualtat_maxim_k_tilde},
    $$
    \left|\widetilde{k}_{\Theta}(y,x)\right|\leq \frac{c}{\Theta(y)^n}\leq \frac{c}{\ell(S)^n}.
    $$
    Hence,
    \begin{equation}
    \left|K_{\Theta}^*g_{R,S}(x)\right| \leq c\,\frac{\|g_{R,S}\|_{L^1(\mu)}}{\ell(S)^n} \leq c\, \frac{\|f_{R,S}\|_{L^2(\mu)}\mu(2S)^{\frac{1}{2}}}{\ell(S)^n}.\label{desigualtat_whitney}
    \end{equation}
    Using that $\Delta^2_{1,Q}= \Delta_{1,Q}$, this leads us to
    \begin{align}
        &\sum_{\substack{Q\in \mathcal{D}^{\text{tr},1}_1\\ Q\subset R,\, z_Q\in S\\ \ell(Q)\leq 2^{-m}\ell(R)}}\left|\langle K_{\Theta}(\Delta_{1,Q}f), g_{R,S}\rangle \right| = \sum_{\substack{Q\in \mathcal{D}^{\text{tr},1}_1\\ Q\subset R,\, z_Q\in S\\ \ell(Q)\leq 2^{-m}\ell(R)}}\left|\langle \Delta_{1,Q}f, \Delta^*_{1,Q}(K^*_{\Theta}(g_{R,S}))\right| \label{cs_zq}\\
        &\quad \leq\Bigg(\sum_{Q\in \mathcal{D}^{\text{tr},1}_1: \,Q\subset 2S}\|\Delta_{1,Q}f\|^2_{L^2(\mu)}\Bigg)^{\frac{1}{2}}\Bigg(\sum_{Q\in \mathcal{D}^{\text{tr},1}_1, \, Q\subset 2S}\|\Delta^*_{1,Q}(K_{\Theta}^*(g_{R,S}))\|_{L^2(\mu)}^2\Bigg)^{\frac{1}{2}}.\nonumber
    \end{align}

    By Remark \ref{desigualtats_descomposicio_transposat} and using the inequality in (\ref{desigualtat_whitney}), we get
    \begin{align*}
        \sum_{Q\in \mathcal{D}^{\text{tr},1}_1:\, Q\subset 2S}\|\Delta^*_{1,Q}(K_{\Theta}^*(g_{R,S}))\|^2_{L^2(\mu)} &= \sum_{Q\in \mathcal{D}^{\text{tr},1}_1:\, Q\subset 2S}\|\Delta^*_{1,Q}(\chi_{2S}K^*_{\Theta}(g_{R,S}))\|^2_{L^2(\mu)}\\
        &\leq \|\chi_{2S}K_{\Theta}^*(g_{R,S})\|^2_{L^2(\mu)}\leq c\,\frac{\|g_{R,S}\|^2_{L^2(\mu)}\mu(2S)^2}{\ell(S)^{2n}}.
    \end{align*}
    Note that if for a cube $2S$ appearing in the inequality above there are no transit cubes $Q\in \mathcal{D}^{\text{tr},1}_1$, the summand is zero. Hence, we can assume that any cube $2S$ as above contains a transit cube $Q\in \mathcal{D}^{\text{tr},1}_1$. By Lemma \ref{lema:comportament_cubs_transit}, for such $Q$ we have $\mu(\lambda Q)\leq c_0\,\ell(\lambda Q)^n$, for all $\lambda \geq 1$, which implies that $\mu(2S)\leq c\,\ell(2S)^n$ (possibly with a different constant). Therefore, using this in the last inequality above,
    $$
    \sum_{Q\in \mathcal{D}^{\text{tr},1}_1: \, Q\subset 2S}\|\Delta^*_{1,Q}(K_{\Theta}^*(g_{R,S}))\|^2_{L^2(\mu)} \leq c\,\|g_{R,S}\|^2_{L^2(\mu)}.
    $$
    Applying this in (\ref{cs_zq}) and summing over $S\in W(R)$,
    \begin{align*}
        &\sum_{S\in W(R)}\sum_{\substack{Q\in \mathcal{D}^{\text{tr},1}_1\\ Q\subset R, \, z_Q\in S\\ \ell(Q)\leq 2^{-m}\ell(R)}}|\langle K_{\Theta}(\Delta_{1,Q}f), g_{R,S}\rangle|\\
        &\qquad \qquad \leq c\, \sum_{S\in W(R)}\Bigg[\Bigg(\sum_{Q\in \mathcal{D}^{\text{tr},1}_1: \, Q\subset 2S}\|\Delta_{1,Q}f\|^2_{L^2(\mu)}\Bigg)^{\frac{1}{2}}\|g_{R,S}\|_{L^2(\mu)}\Bigg]\\
        &\qquad \qquad \leq c\Bigg(\sum_{S\in W(R)}\sum_{Q\in \mathcal{D}^{\text{tr},1}_1:\, Q\subset 2S}\|\Delta_{1,Q}f\|^2_{L^2(\mu)}\Bigg)^{\frac{1}{2}}\Bigg(\sum_{S\in W(R)}\|g_{R,S}\|^2_{L^2(\mu)}\Bigg)^{\frac{1}{2}}.
    \end{align*}
    Remember that our goal is to bound this quantity, after summing over $R\in \mathcal{D}^{\text{term},2}_2$, by $c\,\|f\|_{L^2(\mu)}\|g\|_{L^2(\mu)}$. We look at each of the two factors above separately. First, using that the cubes $2S$ have bounded overlap,
    $$
    \sum_{S\in W(R)}\sum_{Q\in \mathcal{D}^{\text{tr},1}_1: \, Q\subset 2S}\|\Delta_{1,Q}f\|^2_{L^2(\mu)}  \leq c \sum_{Q\in \mathcal{D}^{\text{tr},1}_1: \,Q\subset R} \|\Delta_{1,Q}f\|^2_{L^2(\mu)},
    $$
    and also,
    $$
    \sum_{S\in W(R)}\|g_{R,S}\|^2_{L^2(\mu)} = \sum_{S\in W(R)}\int_{2S}|\Delta_{2,\widehat{R}}g|^2\,d\mu \leq c\|\Delta_{2,\widehat{R}}g\|^2_{L^2(\mu)},
    $$
    which means that
    \begin{align*}
    &\sum_{R\in \mathcal{D}^{\text{term},2}_2}\sum_{S\in W(R)}\sum_{\substack{Q\in \mathcal{D}^{\text{tr},1}_1\\ Q\subset R,\, z_Q\in S\\ \ell(Q)\leq 2^{-m}\ell(R)}}|\langle K_{\Theta}(\Delta_{1,Q}f), g_{R,S}\rangle |\\
    &\qquad \qquad \leq c\sum_{R\in \mathcal{D}^{\text{term},2}_2}\Bigg[\Bigg(\sum_{Q\in \mathcal{D}^{\text{tr},1}_1:\, Q\subset R}\|\Delta_{1,Q}f\|^2_{L^2(\mu)}\Bigg)^{\frac{1}{2}}\|\Delta_{2,\widehat{R}}g\|^2_{L^2(\mu)}\Bigg]\\
    &\qquad \qquad \leq c\Bigg(\sum_{R\in \mathcal{D}^{\text{term},2}_2}\sum_{Q\in \mathcal{D}^{\text{tr},1}_1:\,Q\subset R}\|\Delta_{1,Q}f\|^2_{L^2(\mu)}\Bigg)^{\frac{1}{2}}\Bigg(\sum_{R\in \mathcal{D}^{\text{term},2}_2}\|\Delta_{2,\widehat{R}}g\|^2_{L^2(\mu)}\Bigg)^{\frac{1}{2}}.
    \end{align*}
    Since the cubes $R\in \mathcal{D}^{\text{term},2}_2$ with $\widehat{R}\in \mathcal{D}^{\text{tr},2}_2$ are clearly pairwise disjoint, we have, for the first factor on the right-hand side above,
    $$
    \sum_{R\in \mathcal{D}^{\text{term},2}_2}\sum_{Q\in \mathcal{D}^{\text{tr},1}_1, \,Q\subset R}\|\Delta_{1,Q}f\|^2_{L^2(\mu)} \leq \sum_{Q\in \mathcal{D}^{\text{tr},1}_1}\|\Delta_{1,Q}f\|^2_{L^2(\mu)}\leq c\,\|f\|_{L^2(\mu)}^2.
    $$
    Lastly, since each cube has $2^d$ children and $\widehat{R}\in \mathcal{D}^{\text{tr},2}_2$,
    $$
    \sum_{R\in \mathcal{D}^{\text{term},2}_2}\|\Delta_{2,\widehat{R}}g\|^2_{L^2(\mu)}\leq 2^d \sum_{R\in \mathcal{D}^{\text{tr},2}_2}\|\Delta_{2,R}g\|^2_{L^2(\mu)}\leq c\,2^d\,\|g\|^2_{L^2(\mu)},
    $$
    thus concluding the proof.
\end{proof}

As a final remark for this section, let us highlight that, while the Whitney decomposition was key in the proof of the preceding Lemma \ref{lema:s4g}, the role of the suppressed kernel $\widetilde{k}_{\Theta}$ and our hypothesis on the function $\Theta$ were also essential. Indeed, combining all these, we were able to control
$$
\left|\widetilde{k}_{\Theta}(y,x)\right| \leq \frac{c}{\ell(S)^n}, \quad y\in 2S, x\in \R^d,
$$
which is the cornerstone of the argument used above. In contrast, the main inequality in Lemma \ref{lema:s41term1} was
$$
\text{dist}(Q, \text{supp}(\widetilde{g}_{R,S}))\geq c\,\ell(Q)^{\gamma}\ell(\widehat{R})^{1-\gamma}.
$$
This enabled us to apply a version of Lemma \ref{lema:phi_psi}, which depends not on the suppression of the kernel, but rather on the Calderón-Zygmund estimates that it satisfies.

\subsection{Bound for $S_2+ S_3$}
\label{sec:s2s3}

To finish the proof of Lemma \ref{lemma:good_functions}, recall that the two remaining terms from (\ref{4_termes}) are
$$
S_2+ S_3 = \sum_{\substack{Q\in \mathcal{D}^{\text{tr},1}_1, R\in \mathcal{D}^{\text{tr},2}_2\\ Q\cap R = \varnothing\\Q,R\text{ not distant} }}\langle K_{\Theta}(\Delta_{1,Q}f), \Delta_{2,R}g\rangle + \sum_{\substack{Q\in \mathcal{D}^{\text{tr},1}_1, R\in \mathcal{D}^{\text{tr},2}_2\\ Q\cap R \neq \varnothing\\ 2^{-m}\ell(R)\leq \ell(Q)\leq 2^m\ell(R) }}\langle K_{\Theta}(\Delta_{1,Q}f), \Delta_{2,R}g\rangle.
$$
Remember that the functions $f$ and $g$ that we consider are supposed to be good, which means that the cubes $Q$ and $R$ that appear in the sums above are good. Then, by Remark \ref{remark:not_distant_good}, the fact that the cubes $Q,R$ in $S_2$ are disjoint and not distant implies that $2^{-m}\ell(R)<\ell(Q)<2^m\ell(R)$. Hence, we can put together the two sums above and we get that
\begin{align}
|S_2+S_3|&\leq \sum_{\substack{Q\in \mathcal{D}^{\text{tr},1}_1, \,R\in \mathcal{D}^{\text{tr},2}_2\\ Q, R\text{ not distant}\\ 2^{-m}\ell(R)\leq \ell(Q)\leq 2^m\ell(R)}}|\langle K_{\Theta}(\Delta_{1,Q}f), \Delta_{2,R}g\rangle|\nonumber \\
&\leq \sum_{\substack{Q\in \mathcal{D}^{\text{tr},1}_1, \, R\in \mathcal{D}^{\text{tr},2}_2\\ \text{dist}(Q,R)<\max(\ell(Q),\ell(R))\\ 2^{-m}\ell(R)\leq \ell(Q)\leq 2^m\ell(R)}}|\langle K_{\Theta}(\Delta_{1,Q}f), \Delta_{2,R}g\rangle |,\label{s2+s3}
\end{align}
where in the last inequality we have used that if $Q,R$ are not distant,
$$
\text{dist}(Q,R)<\min(\ell(Q), \ell(R))^{\gamma}\max(\ell(Q), \ell(R))^{1-\gamma}\leq \max(\ell(Q), \ell(R)).
$$
The term in (\ref{s2+s3}) is exactly the bound that appears in the statement of Lemma \ref{lemma:good_functions}, thus finishing the proof.

\subsection{Partial bound for the extra term for good functions}
\label{sec:partial_bound}

The aim of this section is to control the additional term that appeared in the bound for the lemma about good functions \ref{lemma:good_functions}, which we recall below,
\begin{equation}
A:=\sum_{\substack{Q\in \mathcal{D}^{\text{tr},1}_1, \, R\in \mathcal{D}^{\text{tr},2}_2\\ \text{dist}(Q,R)<\max(\ell(Q),\ell(R))\\ 2^{-m}\ell(R)\leq \ell(Q)\leq 2^m\ell(R)}}|\langle K_{\Theta}(\Delta_{1,Q}f), \Delta_{2,R}g\rangle |.  \label{suma_A}
\end{equation}
Of course, we would like to find a bound of the type $A\leq c\,\|f\|_{L^2(\mu)}\|g\|_{L^2(\mu)}$. However, this goal is overly optimistic. We are going to propose a splitting of $A$, similarly to what we did in (\ref{4_termes}), and we will encounter a term that would vanish if our operator were antisymmetric. Since this is not our case, we will have to introduce a probabilistic argument to deal with it, which will be done in Section \ref{sec:acotacio_ai}.

For now, notice that in the sum in (\ref{suma_A}), each cube $Q$ can interact with a number of cubes $R$ bounded above by some constant depending only on the dimension $d$ and the number $m$. This is because we have control of the distance between $Q$ and $R$ and of the side length of $R$ in terms of $2^m\ell(Q)$, which is what we illustrate in Figure \ref{fig:distancies_entre_cubs} (for simplicity, in dimension $d=2$).

\begin{figure}[h]
    \centering

\tikzset{every picture/.style={line width=0.75pt}} 

\begin{tikzpicture}[x=0.75pt,y=0.75pt,yscale=-1,xscale=1]

\draw  [color={rgb, 255:red, 208; green, 2; blue, 27 }  ,draw opacity=1 ][fill={rgb, 255:red, 208; green, 2; blue, 27 }  ,fill opacity=0.1 ][line width=1.5]  (0,100) -- (30,100) -- (30,130) -- (0,130) -- cycle ;
\draw  [color={rgb, 255:red, 245; green, 166; blue, 35 }  ,draw opacity=1 ][fill={rgb, 255:red, 245; green, 166; blue, 35 }  ,fill opacity=0.1 ][line width=1.5]  (70,0) -- (160,0) -- (160,90) -- (70,90) -- cycle ;
\draw    (31.94,99.51) -- (68.06,90.49) ;
\draw [shift={(70,90)}, rotate = 165.96] [color={rgb, 255:red, 0; green, 0; blue, 0 }  ][line width=0.75]    (10.93,-3.29) .. controls (6.95,-1.4) and (3.31,-0.3) .. (0,0) .. controls (3.31,0.3) and (6.95,1.4) .. (10.93,3.29)   ;
\draw [shift={(30,100)}, rotate = 345.96] [color={rgb, 255:red, 0; green, 0; blue, 0 }  ][line width=0.75]    (10.93,-3.29) .. controls (6.95,-1.4) and (3.31,-0.3) .. (0,0) .. controls (3.31,0.3) and (6.95,1.4) .. (10.93,3.29)   ;
\draw [color={rgb, 255:red, 245; green, 166; blue, 35 }  ,draw opacity=1 ]   (180,2) -- (180,88) ;
\draw [shift={(180,90)}, rotate = 270] [color={rgb, 255:red, 245; green, 166; blue, 35 }  ,draw opacity=1 ][line width=0.75]    (10.93,-3.29) .. controls (6.95,-1.4) and (3.31,-0.3) .. (0,0) .. controls (3.31,0.3) and (6.95,1.4) .. (10.93,3.29)   ;
\draw [shift={(180,0)}, rotate = 90] [color={rgb, 255:red, 245; green, 166; blue, 35 }  ,draw opacity=1 ][line width=0.75]    (10.93,-3.29) .. controls (6.95,-1.4) and (3.31,-0.3) .. (0,0) .. controls (3.31,0.3) and (6.95,1.4) .. (10.93,3.29)   ;
\draw  [color={rgb, 255:red, 208; green, 2; blue, 27 }  ,draw opacity=1 ][fill={rgb, 255:red, 208; green, 2; blue, 27 }  ,fill opacity=1 ] (14,115) .. controls (14,114.45) and (14.45,114) .. (15,114) .. controls (15.55,114) and (16,114.45) .. (16,115) .. controls (16,115.55) and (15.55,116) .. (15,116) .. controls (14.45,116) and (14,115.55) .. (14,115) -- cycle ;

\draw (44.19,96.16) node [anchor=north west][inner sep=0.75pt]    {$\text{dist}( Q,R) \leq 2^{m} \ell ( Q)$};
\draw (183.71,35.11) node [anchor=north west][inner sep=0.75pt]  [color={rgb, 255:red, 245; green, 166; blue, 35 }  ,opacity=1 ]  {$\leq 2^{m+1} \ell ( Q)$};
\draw (4,115) node [anchor=north west][inner sep=0.75pt]  [color={rgb, 255:red, 208; green, 2; blue, 27 }  ,opacity=1 ]  {\small $z_{Q}$};
\draw (-19.6,107.34) node [anchor=north west][inner sep=0.75pt]  [color={rgb, 255:red, 208; green, 2; blue, 27 }  ,opacity=1 ]  {$Q$};
\draw (108.8,36.94) node [anchor=north west][inner sep=0.75pt]  [color={rgb, 255:red, 245; green, 166; blue, 35 }  ,opacity=1 ]  {$R$};

\end{tikzpicture}

    \caption{Once we fix $Q\in \mathcal{D}^{\text{tr},1}_1$, the admissible cubes $R\in \mathcal{D}_2$ cannot be too far away from it.}
    \label{fig:distancies_entre_cubs}
\end{figure}

The same reasoning applies if instead of fixing $Q\in \mathcal{D}^{\text{tr},1}_1$, we fix $R\in \mathcal{D}^{\text{tr},2}_2$. Hence, by Cauchy-Schwarz and Lemma \ref{lema_descomposicio_L2}, one way of proving the desired bound $A\leq c\,\|f\|_{L^2(\mu)}\|g\|_{L^2(\mu)}$ would be to show that for all $Q$ and $R$ in the sum in (\ref{s2+s3}),
$$
|\langle K_{\Theta}(\Delta_{1,Q}f), \Delta_{2,R}g\rangle| \leq c\, \|\Delta_{1,Q}f\|_{L^2(\mu)}\|\Delta_{2,R}g\|_{L^2(\mu)}.
$$
We can separate the right hand side above according to the children of the cubes that appear,
$$
\langle K_{\Theta}(\Delta_{1,Q}f), \Delta_{2,R}g\rangle = \sum_{P\in \mathcal{CH}(Q),\,S\in \mathcal{CH}(R)}\langle K_{\Theta}(\chi_P\Delta_Q f), \chi_S\Delta_R g\rangle.
$$
For each pair $P,S$, we can separate further,
\begin{align}
    \langle K_{\Theta}(\chi_P\Delta_{1,Q}f), \chi_S \Delta_{2,R}g\rangle &= \langle K_{\Theta}(\chi_{P\setminus S}\Delta_{1,Q}f), \chi_S\Delta_{2,R}g\rangle + \langle K_{\Theta}(\chi_{P\cap S}\Delta_{1,Q}f), \chi_{S\setminus P}\Delta_{2,R}g\rangle \nonumber\\
    &\quad + \langle K_{\Theta}(\chi_{P\cap S}\Delta_{1,Q}f), \chi_{P\cap S}\Delta_{2,R}g\rangle. \label{separacio_fills}
\end{align}
Our strategy now will be to distinguish whether the cubes $P,S$ appearing above are transit or terminal. In the first case, we will see that the first two terms can be bounded by $c\,\|\Delta_{1,Q}f\|_2\,\|\Delta_{2,R}g\|_2$, which, as we mentioned above, is enough for our purpose. In the second case, we will have to introduce a probabilistic argument. Let us first deal with the transit case.

\begin{lemma}\label{els_dos_primers_transit}
    Let $Q,R, P$ and $S$ be as in (\ref{separacio_fills}). If $P\in \mathcal{D}^{\text{tr},1}_1$ and $S\in \mathcal{D}^{\text{tr},2}_2$, then
    \begin{align*}
    |\langle K_{\Theta}(\chi_{P\setminus S}\Delta_{1,Q}f), \chi_S\Delta_{2,R}g\rangle|&\leq c\, \|\Delta_{1,Q}f\|_2\,\|\Delta_{2,R}g\|_2,\\
    |\langle K_{\Theta}(\chi_{P\cap S}\Delta_{1,Q}f), \chi_{S\setminus P}\Delta_{2,R}g\rangle|&\leq c\, \|\Delta_{1,Q}f\|_2\,\|\Delta_{2,R}g\|_2.
    \end{align*}
\end{lemma}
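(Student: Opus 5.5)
The plan is to exploit that, on transit children, the martingale differences are just multiples of $b_1$ and $b_2$. Then I reduce both estimates to a bound on $K_\Theta$ acting between $b_1\mu$ on $P\setminus S$ and $b_2\mu$ on $S$, and close it with the boundary-smallness part (b) of the definition of good cubes, together with the growth properties encoded in $\Theta$.

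\textbf{Step 1: reduction to constants.} Since $P\in\mathcal{D}^{\text{tr},1}_1$, by definition of $\Delta_{1,Q}$ we have $\chi_P\Delta_{1,Q}f=c_P\,\chi_P b_1$, with $c_P=\frac{\langle f\rangle_P}{\langle b_1\rangle_P}-\frac{\langle f\rangle_Q}{\langle b_1\rangle_Q}$. Likewise $\chi_S\Delta_{2,R}g=c_S\,\chi_S b_2$. Arguing exactly as in (\ref{estimacio_crq}), using Remark \ref{obs:martingala_b_transit} on the transit cubes $P,S$, I get
\[
|c_P|\le c\,\|\Delta_{1,Q}f\|_2\,\mu(P)^{-1/2},\qquad |c_S|\le c\,\|\Delta_{2,R}g\|_2\,\mu(S)^{-1/2}.
\]
Hence it suffices to prove
\[
|\langle K_\Theta(\chi_{P\setminus S}b_1),\chi_S b_2\rangle|\le c\,\mu(P)^{1/2}\mu(S)^{1/2},
\]
and the symmetric bound for $\langle K_\Theta(\chi_{P\cap S}b_1),\chi_{S\setminus P}b_2\rangle$. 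Since $b_1,b_2$ are bounded, I bound both crudely by $c_b^2\iint_{x\in A,\,y\in B}|\widetilde{k}_\Theta(x,y)|\,d\mu\,d\mu$, where $A$ and $B$ lie on opposite sides of $\partial S$ (respectively $\partial P$). By $\sqrt{ab}\ge\min(a,b)$, it is then enough to get a bound by $c\min(\mu(P),\mu(S))$ in the appropriate form.

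\textbf{Step 2: layer decomposition at the boundary.} Consider the first term. For $y\in P\setminus S$ set $t=\mathrm{dist}(y,\partial S)$, so that $|x-y|\ge t$ for all $x\in S$. Split $P\setminus S$ into layers $L_j=\{y: 2^{-j-1}\ell(S)<\mathrm{dist}(y,\partial S)\le 2^{-j}\ell(S)\}$, $j\ge 0$, plus the far part, which is trivial. Because $R$ is good, condition (b) in the definition of bad cubes applies: $Q$ is transit, its size is comparable to that of $R$, and $\mathrm{dist}(Q,R)\le 2^m\ell(Q)$. It gives that $\partial S$ is $(\mu,M,P)$-small, so $\mu(L_j)\le M2^{-j}\mu(P)$. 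For $y\in L_j$, I estimate $\int_S|\widetilde{k}_\Theta(x,y)|\,d\mu(x)$ by summing over dyadic annuli $|x-y|\approx 2^k t$ with $2^kt\lesssim\ell(S)$. This requires a growth bound $\mu(B(y,r))\le c\,r^n$ at each relevant scale, which I obtain as follows.
\begin{itemize}
\item If $B(y,r)$ is non-$n$-Ahlfors, then by hypothesis (a) it lies in $H_{\mathcal{D}}$, so $\Theta(y)\ge\Phi(y)\ge r$.
\item In that case $\widetilde{k}_\Theta(\cdot,y)$ vanishes on $B(y,\Theta(y)/2)$ by Lemma \ref{1/2_max}, and it is bounded by $c\,\Theta(y)^{-n}$ by Lemma \ref{desigualtat_maxim_k_tilde}.
\item At the top scale, Lemma \ref{lema:comportament_cubs_transit} applied to $S$ gives $\mu(\lambda S)\le c\,\ell(S)^n$.
\end{itemize}
This should give $\int_S|\widetilde{k}_\Theta(x,y)|\,d\mu(x)\le c(1+j)$ on $L_j$. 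Summing, $\sum_j M2^{-j}(1+j)\,\mu(P)\le c\,\mu(P)$.

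\textbf{Step 3: the second term.} This is symmetric: it uses layers around $\partial P$ inside $S$, the smallness of $\partial P$ relative to $S$ (condition (b) for the good cube $Q$), and the adjoint kernel, which obeys the same Calderón-Zygmund and suppression estimates. It gives a bound $c\,\mu(S)$.

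\textbf{Main obstacle.} The delicate point is reconciling $\min(\mu(P),\mu(S))$ with $\mu(P)^{1/2}\mu(S)^{1/2}$. A bound by $c\,\mu(P)$ for a term involving $c_Pc_S$ only yields $\|\Delta f\|\|\Delta g\|\,(\mu(P)/\mu(S))^{1/2}$. I would fix this by symmetrizing the layer estimate. Each pair $(x,y)$ is split according to whether $\mathrm{dist}(y,\partial S)$ or $\mathrm{dist}(x,\partial S)$ is larger. On the part where $x\in S$ is at distance $\ge|x-y|$ from $\partial S$ nothing happens, because $y\notin S$. On the remaining part, Cauchy–Schwarz in the form $\iint|k|\le(\iint|k|w)^{1/2}(\iint|k|w^{-1})^{1/2}$, with weights adapted to the two layer structures, should produce the geometric mean. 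I expect this step, together with controlling the small-scale growth through $\Theta$ uniformly in $\varepsilon$, to be where the real work lies.
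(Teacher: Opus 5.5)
Your Steps 1 and 2 match the first half of the paper's proof. You reduce, via accretivity of the transit cubes $P$ and $S$, to bounding $I:=\int_{P\setminus S}\int_S|\widetilde{k}_\Theta(x,y)|\,d\mu(x)\,d\mu(y)$ by $c\,\mu(P)^{1/2}\mu(S)^{1/2}$. The layer estimate around $\partial S$ (growth above scale $\Theta(y)$ plus (\ref{condicio_small})) then gives $I\le cM\mu(P)$.

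The gap is the step you postpone to ``Main obstacle''. This one-sided bound, and the one-sided bound by $\mu(S)$ in your Step 3, do not prove the lemma, and the symmetrization you sketch cannot supply what is missing. If you split the pairs $(x,y)$ by comparing $\mathrm{dist}(x,\partial S)$ with $\mathrm{dist}(y,\partial S)$, each piece forces one of two outcomes. Integrating first in $x$ again gives $\log(c\,\ell(S)/\mathrm{dist}(y,\partial S))$ against $\mu\lfloor P$, hence a bound by $\mu(P)$. Integrating first in $y$ leaves $\log(c\,\ell(S)/\mathrm{dist}(x,\partial S))$ to be integrated against $\mu\lfloor S$. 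Nothing in the goodness of $Q$ and $R$ controls $\mu\lfloor S$ near $\partial S$. You only have (\ref{condicio_small}), for $\mu\lfloor P$ near $\partial S$, and (\ref{primera_condicio_small}), for $\mu\lfloor S$ near $\partial P$. The latter is the only possible source of a factor $\mu(S)$, and it never enters your treatment of the first term.

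The missing idea is to split the $x$-variable according to $\partial P$.
\begin{itemize}
\item For $x\in S\setminus P$ and $y\in P\setminus S$, the points are separated by $\partial P$, so $|x-y|\ge\mathrm{dist}(x,\partial P)$. Integrating first in $y$ (growth above scale $\Theta(x)$) gives $\lesssim 1+\log(c\,\ell(P)/\mathrm{dist}(x,\partial P))$, and (\ref{primera_condicio_small}) makes this part $\lesssim M\mu(S)$.
\item For $x\in S\cap P$, only the $\partial S$-logarithm is available. Cauchy--Schwarz in $x$, together with (\ref{condicio_small}) and $S\cap P\subset P$, gives
\[
\int_{S\cap P}\log\frac{c\,\ell(S)}{\mathrm{dist}(x,\partial S)}\,d\mu(x)\le\mu(S\cap P)^{1/2}\Bigl(\int_{S\cap P}\log^2\frac{c\,\ell(S)}{\mathrm{dist}(x,\partial S)}\,d\mu(x)\Bigr)^{1/2}\lesssim\mu(S)^{1/2}\bigl(M\mu(P)\bigr)^{1/2}.
\]
The paper writes this piece with $\partial P$ and (\ref{primera_condicio_small}). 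Since here $x,y\in P$ are separated only by $\partial S$, it has to be run as above.
\end{itemize}
Hence $I\lesssim M^{1/2}\mu(P)^{1/2}\mu(S)^{1/2}+M\mu(S)$. Combine this with your $I\lesssim M\mu(P)$, using the first bound when $\mu(P)>\mu(S)$ and the second otherwise, to get $I\lesssim M\mu(P)^{1/2}\mu(S)^{1/2}$. So you should aim for the geometric mean, not for $\min(\mu(P),\mu(S))$ as in your reduction.

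The second term is easier. There $y$ ranges over $P\cap S$ and is separated from $x\in S\setminus P$ by $\partial P$. Integrating first in $x$ and applying the same Cauchy--Schwarz step with (\ref{primera_condicio_small}) gives the geometric mean directly.
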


\begin{proof}
We will only prove the first inequality because of the symmetry of the conditions on $P$ and $S$: they are both transit cubes, their side lengths are comparable and they are not too far away from each other. 

First, note that if we assume that both $P$ and $S$ are transit, we can write
\begin{equation}\label{coeficients_cp}
\begin{aligned}
\chi_P \Delta_{1,Q}f &= c_P(f)b_1, \qquad c_P(f) = \frac{\langle f\rangle_P}{\langle b_1\rangle_P} - \frac{\langle f\rangle_Q}{\langle b_1\rangle_Q},\\
\chi_S\Delta_{2,R}g&=c_S(g)b_2, \qquad c_S(g) = \frac{\langle g\rangle_S }{\langle b_2\rangle_S} - \frac{\langle g\rangle_R}{\langle b_2\rangle_R}.
\end{aligned}
\end{equation}
Hence, the first term from (\ref{separacio_fills}) becomes
\begin{alignat}{2}
    &&|\langle K_{\Theta}(\chi_{P\setminus S}\Delta_{1,Q}f),\chi_S\Delta_{2,R}g\rangle| &= |\langle K_{\Theta}(\chi_{P\setminus S}\,c_P(f)\,b_1), \chi_S \,c_S\,b_2\rangle|\nonumber\\
    && &\leq c_b^2\,|c_P(f)|\,|c_S(g)|\int_{y\in P\setminus S}\bigg(\int_{x\in S} |\widetilde{k}_{\Theta}(x,y)|\,d\mu(x)\bigg)\,d\mu(y).\label{primer_terme}
\end{alignat}
Using the properties of the suppressed kernel, we see that we need to obtain a bound of the sort
\begin{equation}
I:=\int_{y\in P\setminus S}\bigg(\int_{x\in S} \frac{1}{|x-y|^n+\Theta(x)^n+\Theta(y)^n}\,d\mu(x)\bigg)\,d\mu(y)\leq c\,\mu(P)^{\frac{1}{2}}\,\mu(S)^{\frac{1}{2}}.\label{acotacio_mu_mu}
\end{equation}
Indeed, since the cube $P$ is transit, 
$$
\mu(P)\leq c_\text{acc}\,|\nu_1(P)| = c_{\text{acc}}\bigg|\int_P b_1\,d\mu\,\bigg| \leq c_\text{acc}\,\|\chi_P\,b_1\|_{L^2(\mu)}\,\mu(P)^{\frac{1}{2}},
$$
so we find that
$$
|c_P(f)|\,\mu(P)^{\frac{1}{2}}\leq c_\text{acc}\, |c_P(f)|\,\|\chi_P\,b_1\|_{L^2(\mu)} = c_{\text{acc}}\, \|\chi_P\,\Delta_{1,Q}f\|_{L^2(\mu)}\leq c_{\text{acc}}\,\|\Delta_{1,Q}f\|_{L^2(\mu)},
$$
and the same holds for the coefficient $c_S(g)$ and the function $\Delta_{2,R}g$. So, let us prove (\ref{acotacio_mu_mu}). Recall that since both $Q$ and $R$ are good cubes, we have that for any $\lambda>0$,
\begin{align}
    &\mu\left(\left\{x\in S: \text{dist}(x,\partial P)\leq \lambda \ell(P)\right\}\right)\leq \,\lambda\, M \,\mu(S),\label{primera_condicio_small}\\
    & \mu\left(\{x\in P : \text{dist}(x, \partial S)\leq \lambda \ell(S)\}\right)\leq \lambda \, M\,\mu(P). \label{condicio_small}
\end{align}
We start with the inner integral in (\ref{primer_terme}). Let $y\in P\setminus S$, then
\begin{equation}
\int_{x\in S} \frac{1}{|x-y|^n + \Theta(x)^n + \Theta(y)^n}d\mu(x) \leq \int_{\text{dist}(y,\partial S)\leq |x-y|\leq c'\ell(S)}\frac{1}{|x-y|^n + \Theta(y)^n}d\mu(x).\label{integral_theta}
\end{equation}
Assume first that $\Theta(y)\leq \text{dist}(y, \partial S)$. We claim that the integral on the right-hand side above is bounded by $c\, \log \left(\frac{c'\ell(S)}{\text{dist}(y, \partial S)}\right)$, where $c$ is not necessarily the same constant as in the domain of integration above. Indeed, we can integrate over annuli, calling $r=\text{dist}(y,\partial S)$ and $R = c'\ell(S)$, and we get that
\begin{align}
\int_{r\leq |x-y|\leq R} \frac{1}{|x-y|^n}d\mu(x) = \sum_{k=0}^N\int_{2^k r\leq |x-y|\leq 2^{k+1}r}\frac{1}{|x-y|^n}d\mu(x)\leq \sum_{k=0}^N\frac{\mu(B(x, 2^{k+1}r))}{(2^k r)^n},\label{anells}
\end{align}
where $N$ is the biggest integer such that $2^Nr\leq R$, from which we see that $N \leq \log_2\left(\frac{R}{r}\right)$. Using our assumption,
$$
\text{dist}(y, \partial S)\geq \Theta(y) \geq \text{dist}(y, \R^d\setminus H_{\mathcal{D}_1}),
$$
so we see that $\mu(B(y, 2^{k+1}r))\leq c_0(2^{k+1}r)^n$ for each $k$, from which we get that the right hand side in the last inequality from (\ref{anells}) is bounded above by 
$$
c\left(\log_2\left(\frac{R}{r}\right)+1\right)\leq c\log \left(\frac{c' \ell(S)}{\text{dist}(y, \partial S)}\right),
$$
as wished. Assume now that $\Theta(y)\geq \text{dist}(y, \partial S)$. In this case, we can separate the integral in (\ref{integral_theta}) as
\begin{align*}
    \int_{\subalign{&\,x\in S, \\ &\text{dist}(y, \partial S)\leq |x-y|}}\frac{1}{|x-y|^n + \Theta(y)^n}d\mu(x) &= \int_{\subalign{&\,x\in S,\\& \text{dist}(y,\partial S) \leq |x-y|\leq \Theta(y)}}\frac{1}{|x-y|^n + \Theta(y)^n}d\mu(x) \\
    &\qquad + \int_{\subalign{&\,x\in S,\\ &\Theta(y)<|x-y|}} \frac{1}{|x-y|^n+\Theta(y)^n}d\mu(x):= J_1 + J_2.
\end{align*}
For $J_1$ we have an easy bound,
$$
J_1 \leq \int_{|x-y|\leq \Theta(y)}\frac{1}{\Theta(y)^n}d\mu(x) \leq \frac{\mu(B(y,\Theta(y)))}{\Theta(y)^n}\leq c_0,
$$
where we have used that, since $\Theta(y)\geq \text{dist}(y, \R^d\setminus H_{\mathcal{D}_1})$, the ball appearing above is not contained in $H_{\mathcal{D}_1}$. For $J_2$, we can obtain, through analogous techniques, a similar bound to that obtained for the case that $\Theta(y)\leq \text{dist}(y,\partial S)$. Indeed,
\begin{align*}
    J_2 =\int_{\subalign{\, &x\in S,\\ &\Theta(y)<|x-y|}}\frac{1}{|x-y|^n + \Theta(y)^n}d\mu(x) &\leq \int_{\Theta(y)<|x-y|\leq c'\ell(S)} \frac{1}{|x-y|^n}d\mu(x)\\
    &= \sum_{k=0}^N \int_{2^k\Theta(y) < |x-y|\leq 2^{k+1}\Theta(y)} \frac{1}{|x-y|^n}d\mu(x)\\
    &\leq \sum_{k=0}^{N}\frac{\mu(B(y, 2^{k+1}\Theta(y)))}{(2^k\Theta(y))^n}\leq (N+1)2^n c_0.
\end{align*}
Now, the bound we obtain for $N$ is slightly different than before, but we can retrieve the same bound easily,
$$
N \leq \log_2\left(\frac{c'\ell(S)}{\Theta(y)}\right) \leq c\log\left(\frac{c'\ell(S)}{\text{dist}(y, \partial S)}\right).
$$
Hence, the integral on the left-hand side of (\ref{acotacio_mu_mu}) is bounded above by
\begin{align*}
c\int_{P\setminus S} \log\left(\frac{c'\ell(S)}{\text{dist}(y, \partial S)}\right) \,d\mu(y) &= c\sum_{k\geq 0} \int_{\subalign{&\,y\in P,\\ &2^{k-1}\ell(P)\leq \text{dist}(y, \partial S)\leq 2^{-k}\ell(P)}} \log\left(\frac{c'\ell(S)}{\text{dist}(y, \partial S)}\right)\,d\mu(y)\\
& \leq c\sum_{k\geq 0} k \mu\left(\left\{y\in P : \text{dist}(y, \partial S)\leq 2^{-k}\ell(P)\right\}\right)\\
&\leq c\sum_{k\geq 0} k \mu\left(\left\{y\in P : \text{dist}(y, \partial S)\leq c\,2^{-k}\ell(S)\right\}\right)\\
&\leq c M\mu(P)\sum_{k\geq 0} k 2^{-k} \leq c M\mu(P),
\end{align*}
where we have used that $\ell(P)$ and $\ell(S)$ are comparable and the inequality (\ref{condicio_small}), which we obtained from the fact that $Q = \widehat{P}$ is good with respect to $\mathcal{D}_2$. However, this is not quite the bound that we were aiming for on the right-hand side of (\ref{acotacio_mu_mu}): we need both $\mu(P)$ and $\mu(S)$ to appear instead of only one of them. To obtain such a bound, we are going to use a similar strategy to what we have done now to estimate the same integral, but in terms of $\mu(S)$. Afterwards, we will take an average of these estimates, which will yield (\ref{acotacio_mu_mu}).

Let us write
\begin{align*}
    I &= \int_{x\in S \cap P}\bigg(\int_{y\in P\setminus S} \frac{1}{|x-y|^n+\Theta(x)^n+\Theta(y)^n}\,d\mu(y)\bigg)\,d\mu(x) \\
    &\qquad + \int_{x\in S \setminus P}\bigg(\int_{y\in P\setminus S} \frac{1}{|x-y|^n+\Theta(x)^n+\Theta(y)^n}\,d\mu(y)\bigg)\,d\mu(x) := J_3 + J_4.
\end{align*}
For $J_3$, via arguments analogous to those we used to obtain the first bound for $I$, we have that if $s\in S\cap P$, then
$$
\int_{y\in P\setminus S}\frac{1}{|x-y|^n+\Theta(x)^n + \Theta(y)^n}d\mu(y) \leq c\log\left(\frac{c'\ell(P)}{\text{dist}(x,\partial P)}\right).
$$
Thus,
$$
J_3 \leq c\,\int_{S\cap P} \log\left(\frac{c'\ell(P)}{\text{dist}(x,\partial P)}\right)\,d\mu(x)\leq c\mu(S\cap P)^{\frac{1}{2}}\left(\int_{S\cap P}\log\left(\frac{c'\ell(P)}{\text{dist}(x, \partial P)}\right)^2\,d\mu(x)\right)^{\frac{1}{2}}. 
$$
Using the small boundary condition, this time from (\ref{primera_condicio_small}), the last integral can be estimated by
\begin{align*}
    \int_{S\cap P}\log\left(\frac{c'\ell(P)}{\text{dist}(x,\partial P)}\right)^2 d\mu(x) &\leq \int_S\log\left(\frac{c'\ell(P)}{\text{dist}(x,\partial P)}\right)^2 d\mu(x) \\
    & = \sum_{k\geq 0}\int_{\subalign{&\, y\in S,\\ &2^{-k-1}\ell(S)\leq \text{dist}(x,\partial P)\leq 2^{-k}\ell(S)}}\log\left(\frac{c'\ell(P)}{\text{dist}(x,\partial P)}\right)^2 d\mu(x)\\
    &\leq c\sum_{k\geq 0} k^2 \mu\left(\left\{ x\in S : \text{dist}(x,\partial P)\leq 2^{-k}\ell(S)\right\}\right)\\
    &\leq c\sum_{k\geq 0}k^2 \mu\left(\left\{ x\in S : \text{dist}(x,\partial P)\leq c2^{-k}\ell(P)\right\}\right)\\
    &\leq cM\mu(S)\sum_{k\geq 0}k^22^{-k} \leq c M \mu(S).
\end{align*}
So we see that $J_3\leq cM \mu(P)^{\frac{1}{2}}\mu(S)^{\frac{1}{2}}$. Let us turn our attention to $J_4$. Arguing analogously as before, one bounds the inner integral, for $x\in S\setminus P$, by the logarithmic term involving $\ell(P)$ and $\text{dist}(x,\partial P)$. Afterwards, the small boundary condition (\ref{primera_condicio_small}) yields that $J_4\leq  cM \mu(S)$. 

Putting together all these estimates, we have found that both $I\leq cM \mu(P)$ and $I\leq c\mu(P)^{\frac{1}{2}}\mu(S)^{\frac{1}{2}} + c\mu(S)$. Distinguishing whether $\mu(P)\leq \mu(S)$ or the converse inequality is true, we obtain the desired bound (\ref{acotacio_mu_mu}), thus finishing the proof.
\end{proof}

The last term form (\ref{separacio_fills}) turns out to be more complicated, because the small boundary condition is not helpful when dealing with the intersections. Assuming that both $P$ and $S$ are transit, we can write (see (\ref{coeficients_cp})) 
$$
\langle K_{\Theta}(\chi_{P\cap S}\Delta_{1,Q}f), \chi_{P\cap S}\Delta_{2,R}g\rangle = c_P(f)c_S(g)\,\langle K_\Theta(\chi_{P\cap S}\,b_1), \chi_{P\cap S}\,b_2\rangle.
$$
Note that if our operator $K_\Theta$ were antisymmetric and instead of two functions $b_1$ and $b_2$ we were working with only one function, $b$, this problematic term would vanish.

Let us denote $\Delta = P\cap S$. Let $x_\Delta$ be its center, so we can write it as
$$
\Delta = x_\Delta +  \prod_{j=1}^d\bigg[-\frac{a_j}{2}, \frac{a_j}{2}\bigg],
$$
where $a_j>0$ for each $j = 1,\dots, d$. Now, for a (small) fixed number $0<\varepsilon_a$, we denote $\ell_{\varepsilon_a}= \varepsilon_a\,\min(\ell(P), \ell(S))$ and consider a smaller concentric hyperrectangle $\widetilde{\Delta}_{\varepsilon_a}$, defined by (see Figure \ref{fig:Delta_epsilon_a}, which, for simplicity, illustrates the situation in dimension $d=2$)
$$
\widetilde{\Delta}_{\varepsilon_a} = x_\Delta + \prod_{j=1}^d[-b_j, b_j], \quad \text{where }\, b_j = \frac{a_j}{2}-\ell_{\varepsilon_a},
$$
(see Figure \ref{fig:Delta_epsilon_a}, which, for simplicity, illustrates the situation in dimension $d=2$).

\begin{figure}[h]
    \centering

\tikzset{every picture/.style={line width=0.75pt}} 

\scalebox{0.8}{
\begin{tikzpicture}[x=0.75pt,y=0.75pt,yscale=-1,xscale=1]

\draw   (60,120) -- (200,120) -- (200,200) -- (60,200) -- cycle ;
\draw  [fill={rgb, 255:red, 245; green, 166; blue, 35 }  ,fill opacity=0.5 ] (80.14,139.86) -- (180,139.86) -- (180,180.14) -- (80.14,180.14) -- cycle ;
\draw  [line width=1.5]  (60,0) -- (260,0) -- (260,200) -- (60,200) -- cycle ;
\draw  [line width=1.5]  (0,120) -- (200,120) -- (200,320) -- (0,320) -- cycle ;
\draw  [dash pattern={on 0.84pt off 2.51pt}]  (200,120) -- (280,120) ;
\draw  [dash pattern={on 0.84pt off 2.51pt}]  (180,139.86) -- (280,140) ;
\draw   (280.14,139.57) .. controls (282.77,139.49) and (284.04,138.14) .. (283.97,135.51) -- (283.97,135.51) .. controls (283.86,131.76) and (285.11,129.84) .. (287.74,129.76) .. controls (285.11,129.84) and (283.74,128) .. (283.63,124.25)(283.68,125.94) -- (283.63,124.25) .. controls (283.55,121.62) and (282.2,120.35) .. (279.57,120.43) ;
\draw  [dash pattern={on 0.84pt off 2.51pt}]  (180,139.86) -- (180,80) ;
\draw  [dash pattern={on 0.84pt off 2.51pt}]  (200,140) -- (200,80.14) ;
\draw   (200,80.6) .. controls (200.05,77.82) and (198.68,76.41) .. (195.9,76.36) -- (195.9,76.36) .. controls (191.93,76.29) and (189.97,74.87) .. (190.02,72.09) .. controls (189.97,74.87) and (187.96,76.22) .. (183.99,76.15)(185.78,76.18) -- (183.99,76.15) .. controls (181.21,76.1) and (179.8,77.47) .. (179.75,80.25) ;

\draw (110.6,144) node [anchor=north west][inner sep=0.75pt]  [font=\LARGE]  {$\widetilde{\Delta}_{\varepsilon _{a}}$};
\draw (18.67,274.4) node [anchor=north west][inner sep=0.75pt]  [font=\LARGE]  {$P$};
\draw (225.33,17.07) node [anchor=north west][inner sep=0.75pt]  [font=\LARGE]  {$S$};
\draw (293.94,121.62) node [anchor=north west][inner sep=0.75pt]    {$\ell _{\varepsilon _{a}}$};
\draw (183.69,51.37) node [anchor=north west][inner sep=0.75pt]    {$\ell _{\varepsilon _{a}}$};

\end{tikzpicture}}

    \caption{The set $\Delta_{\varepsilon_a}$ inside $\Delta$.}
    \label{fig:Delta_epsilon_a}
\end{figure}

Now, we claim that for each $\Delta$ and $\widetilde{\Delta}_{\varepsilon_a}$, there are numbers $\gamma_j\in [b_j, b_j- \ell_{\varepsilon_a}/2]$ such that the hyperrectangle
$$
{\Delta}_{\varepsilon_a} := x_{\Delta} + \prod_{j=1}^d[-\gamma_j, \gamma_j],
$$
satisfies that there is some $t_{\Delta, \varepsilon_a}>0$, depending on $\varepsilon_a$ and $\Delta$, such that for any $\lambda>0$,
\begin{equation}
\mu\big(\big\{x\in \Delta : \text{dist}(x,\partial\, {\Delta}_{\varepsilon_a})\leq \lambda \,\text{diam}({\Delta}_{\varepsilon_a}\big)\big\}\big) \leq \, t_{\Delta,{\varepsilon_a}}\lambda \mu(\Delta).\label{t-small}
\end{equation}
The existence of such a hyperrectangle $\widetilde{\Delta}_{\varepsilon_a}\subset \Delta_{\varepsilon_a} \subset \Delta$ is justified by the following lemma, which is proved by a simple modification of Lemma 9.43 in \cite{llibre_xavi}. We omit the details.
\begin{lemma}
    Let $\mu$ be a Radon measure on $\R^d$ and $a>0$. Let $t_a$ be some constant big enough (depending only on $a$ and $d$) and let $R\subset \R^d$ be any fixed hyperrectangle. Then, there exists a concentric hyperrectangle $R'$ with $R\subset R'\subset aR$ such that for any $\lambda>0$,
    $$
    \mu\left(\left\{x\in aR: \text{dist}(x,\partial R')\leq \lambda \text{diam}(R')\right\}\right) \leq t_a\,\lambda\, \mu(aR).
    $$
\end{lemma}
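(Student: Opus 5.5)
The plan is to reduce to a one-dimensional covering argument for the push-forward of $\mu$ under a rectangular gauge, following the proof of Lemma 9.43 in \cite{llibre_xavi}. We may assume $a>1$. Write $R=x_R+\prod_{j=1}^d[-r_j,r_j]$ and define
$$
\rho(x)=\max_{1\le j\le d}\frac{|x_j-(x_R)_j|}{r_j},
$$
so that $sR=\{\rho\le s\}$ and $\partial(sR)=\{\rho=s\}$. We will look for $R'$ in the one-parameter family $R_s:=sR$ with $s\in[1,a]$. Every such $R_s$ is concentric with $R$ and satisfies $R\subset R_s\subset aR$.

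\textbf{Step 1 (from distance to the gauge).} For $x\in aR$ we compare $\mathrm{dist}(x,\partial R_s)$ with $|\rho(x)-s|$. If $x\in R_s$, then $\mathrm{dist}(x,\partial R_s)=\min_j\big(sr_j-|x_j-(x_R)_j|\big)\ge (s-\rho(x))\min_j r_j$. If $x\notin R_s$, a similar computation in the coordinate where the maximum in $\rho$ is attained gives $\mathrm{dist}(x,\partial R_s)\ge(\rho(x)-s)\min_j r_j$. Since $\mathrm{diam}(R_s)\le a\,\mathrm{diam}(R)$, we obtain
$$
\{x\in aR:\mathrm{dist}(x,\partial R_s)\le\lambda\,\mathrm{diam}(R_s)\}\subset\{x\in aR:|\rho(x)-s|\le \lambda\kappa\},\qquad \kappa:=\frac{a\,\mathrm{diam}(R)}{\min_j r_j}.
$$

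\textbf{Step 2 (choosing $s$).} Let $\sigma=\rho_\#(\mu\lfloor aR)$, a finite measure on $[0,a]$ with $\|\sigma\|=\mu(aR)$, and let $M\sigma(s)=\sup_{h>0}\sigma([s-h,s+h])/(2h)$ be its centred maximal function. By the one-dimensional weak-$(1,1)$ inequality (Theorem \ref{HL} with Lebesgue measure on $\R$),
$$
\big|\{s\in[1,a]:M\sigma(s)>A\}\big|\le \frac{c\,\mu(aR)}{A}.
$$
Taking $A=2c\,\mu(aR)/(a-1)$, this set has measure at most $(a-1)/2$, so we can pick $s\in[1,a]$ with $M\sigma(s)\le A$. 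Then, for every $\lambda>0$,
$$
\mu\{x\in aR:|\rho(x)-s|\le\lambda\kappa\}=\sigma([s-\lambda\kappa,s+\lambda\kappa])\le 2\lambda\kappa A=\frac{4c\,\kappa}{a-1}\,\lambda\,\mu(aR).
$$
Combining this with Step 1 and setting $R'=R_s$ gives the statement with $t=4c\kappa/(a-1)$. When $\lambda\kappa$ is large the estimate is trivial anyway, since then $t\lambda\ge 1$.

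\textbf{Main obstacle.} The dependence of the constant on the shape of $R$. Both inclusions in Step 1 are sharp only up to the eccentricity factor $\mathrm{diam}(R)/\min_j r_j$. Because the lemma normalizes the layer by $\mathrm{diam}(R')$ rather than by the shortest side, I do not expect this factor can be removed. For a very flat rectangle, the layer of thickness $\lambda\,\mathrm{diam}(R')$ already swallows all of $R'$ while $\lambda$ is still tiny.

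So I would state the constant as depending on $a$, $d$ and the eccentricity of $R$. For cubes this is a constant depending only on $a$ and $d$; for cubes the argument is exactly that of Lemma 9.43. This is also consistent with how the result is used in the paper: the constant there is written $t_{\Delta,\varepsilon_a}$ and explicitly allowed to depend on $\Delta$, and only the existence of an intermediate rectangle satisfying (\ref{t-small}) is needed.

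To apply the lemma in that setting, take the rectangle $R$ of the lemma to be $\widetilde\Delta_{\varepsilon_a}$ and choose $a>1$ so that $a\widetilde\Delta_{\varepsilon_a}\subset\Delta$. This is possible because $\widetilde\Delta_{\varepsilon_a}$ sits at distance $\ell_{\varepsilon_a}$ inside $\Delta$, so $\mu(aR)\le\mu(\Delta)$. The remaining point to check is the range $\gamma_j\in[b_j,b_j+\ell_{\varepsilon_a}/2]$, which follows from the choice of $a$.
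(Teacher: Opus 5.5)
Your proof is correct, and it is the route the paper points to: the paper omits the argument and refers to a modification of Lemma 9.43 of \cite{llibre_xavi}, which is what you carry out. You push $\mu\lfloor aR$ forward under the rectangular gauge $\rho$ and pick a good level $s$ using the one-dimensional maximal function, which gives $t=4c\kappa/(a-1)$. You are right that $a>1$ is needed, so the hypothesis $a>0$ in the statement is a misprint.

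Your diagnosis of the constant is also right, and the lemma as printed is in fact false for $d\geq 2$. Take $R=[-1,1]^{d-1}\times[-\varepsilon,\varepsilon]$ and let $\mu$ be surface measure on $[-1,1]^{d-1}\times\{0\}$. Every admissible $R'$ has $\mathrm{diam}(R')\geq 2$, and every point of $\operatorname{supp}\mu$ lies within distance $a\varepsilon$ of $\partial R'$. So $\lambda=a\varepsilon/2$ forces $t\geq 2/(a\varepsilon)$. Your $\kappa$ is of the same order $1/\varepsilon$, so linear dependence on the eccentricity is sharp. Your Step 1 also shows that normalizing the layer by the shortest side of $R'$, instead of $\mathrm{diam}(R')$, would give a constant depending only on $a$.

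Two of your closing remarks about the application need correcting.

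First, the paper does not use only existence. In the estimate of $A_4$, the factor $t_{\Delta_{\varepsilon_a}}$ is absorbed into a constant $c_{\varepsilon_a}$ that must be uniform over all $(P,S)\in\mathcal{A}_{1,1}$. Since $\Delta=P\cap S$ can be an arbitrarily thin slab, the eccentricity-dependent version does not justify that step. This is a defect of the paper, not of your proof.

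Second, (\ref{t-small}) is stated for all $x\in\Delta$, not just $x\in a\widetilde{\Delta}_{\varepsilon_a}$. If $s$ is allowed to approach $a$, mass of $\mu$ just outside $a\widetilde{\Delta}_{\varepsilon_a}$ is not controlled. The fix is to choose $s\in[1,(1+a)/2]$; doubling $A$ in Step 2 leaves enough room for this. Then $\Delta\setminus a\widetilde{\Delta}_{\varepsilon_a}$ stays at distance at least $\frac{a-1}{2}\min_j b_j$ from $\partial R'$. It therefore meets the layer only when $t\lambda\geq 1$, at the cost of a bounded factor in $t$.
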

With ${\Delta}_{\varepsilon_a}$ satisfying (\ref{t-small}), we consider two more subsets of $\Delta$ defined by the following relations. First, consider $\widetilde{P}$ a cube, concentric with $P$ and contained inside $P$, such that its boundary hyperplanes inside $\Delta$ go along the boundary hyperplanes of ${\Delta}_{\varepsilon_a}$. Then, we call $S_\partial := \Delta \setminus \widetilde{P}$, see Figure \ref{fig:definicio_S_partial}.

\begin{figure}[h]
    \centering

 
\tikzset{
pattern size/.store in=\mcSize, 
pattern size = 5pt,
pattern thickness/.store in=\mcThickness, 
pattern thickness = 0.3pt,
pattern radius/.store in=\mcRadius, 
pattern radius = 1pt}
\makeatletter
\pgfutil@ifundefined{pgf@pattern@name@_om93exylf}{
\pgfdeclarepatternformonly[\mcThickness,\mcSize]{_om93exylf}
{\pgfqpoint{0pt}{0pt}}
{\pgfpoint{\mcSize+\mcThickness}{\mcSize+\mcThickness}}
{\pgfpoint{\mcSize}{\mcSize}}
{
\pgfsetcolor{\tikz@pattern@color}
\pgfsetlinewidth{\mcThickness}
\pgfpathmoveto{\pgfqpoint{0pt}{0pt}}
\pgfpathlineto{\pgfpoint{\mcSize+\mcThickness}{\mcSize+\mcThickness}}
\pgfusepath{stroke}
}}
\makeatother
\tikzset{every picture/.style={line width=0.75pt}} 

\begin{tikzpicture}[x=0.75pt,y=0.75pt,yscale=-1,xscale=1]

\draw  [color={rgb, 255:red, 65; green, 117; blue, 5 }  ,draw opacity=1 ][fill={rgb, 255:red, 184; green, 233; blue, 134 }  ,fill opacity=0.4 ][dash pattern={on 1.69pt off 2.76pt}][line width=1.5]  (10,10) -- (190,10) -- (190,190) -- (10,190) -- cycle ;
\draw  [line width=1.5]  (0,0) -- (200,0) -- (200,200) -- (0,200) -- cycle ;
\draw  [line width=1.5]  (80,0) -- (200,0) -- (200,80) -- (80,80) -- cycle ;
\draw  [color={rgb, 255:red, 189; green, 16; blue, 224 }  ,draw opacity=1 ][fill={rgb, 255:red, 189; green, 16; blue, 224 }  ,fill opacity=0.35 ][line width=1.5]  (90,10) -- (190,10) -- (190,70) -- (90,70) -- cycle ;
\draw  [color={rgb, 255:red, 208; green, 2; blue, 27 }  ,draw opacity=1 ][pattern=_om93exylf,pattern size=6pt,pattern thickness=0.75pt,pattern radius=0pt, pattern color={rgb, 255:red, 208; green, 2; blue, 27}] (80,0) -- (200,0) -- (200,80) -- (190,80) -- (190,10) -- (80,10) -- cycle ;

\draw (127.6,24.8) node [anchor=north west][inner sep=0.75pt]   {\Large ${\Delta }_{\varepsilon _{a}}$};
\draw (84.4,-25) node [anchor=north west][inner sep=0.75pt]    {\Large $\Delta $};
\draw (209.2,182) node [anchor=north west][inner sep=0.75pt]    {\Large $P$};
\draw (162,157.4) node [anchor=north west][inner sep=0.75pt]  [color={rgb, 255:red, 65; green, 117; blue, 5 }  ,opacity=1 ]  {\Large $\widetilde{P}$};
\draw (210,20) node [anchor=north west][inner sep=0.75pt]  [color={rgb, 255:red, 208; green, 2; blue, 27 }  ,opacity=1 ]  {\Large $S_{\partial }$};

\end{tikzpicture}

    \caption{The definition of $S_\partial$.}
    \label{fig:definicio_S_partial}
\end{figure}

Analogously, we consider another cube, $\widetilde{S}\subset S$, concentric with $S$, asking for the same condition concerning its boundary hyperplanes and those of ${\Delta}_{\varepsilon_a}$, and now we define $P_\partial := \Delta \setminus \widetilde{S}$. Putting all this together, we can write
$$
\Delta = {\Delta}_{\varepsilon_a} \cup P_\partial \cup S_\partial,
$$
where $S_\partial$ and $P_\partial$ are both disjoint from ${\Delta}_{\varepsilon_a}$, but $P_\partial \cap S_\partial \neq \varnothing$ (see Figure \ref{fig:separacio_interseccio}).

\begin{figure}[h]

\centering

 
\tikzset{
pattern size/.store in=\mcSize, 
pattern size = 5pt,
pattern thickness/.store in=\mcThickness, 
pattern thickness = 0.3pt,
pattern radius/.store in=\mcRadius, 
pattern radius = 1pt}
\makeatletter
\pgfutil@ifundefined{pgf@pattern@name@_k31oeb2z9}{
\pgfdeclarepatternformonly[\mcThickness,\mcSize]{_k31oeb2z9}
{\pgfqpoint{0pt}{0pt}}
{\pgfpoint{\mcSize+\mcThickness}{\mcSize+\mcThickness}}
{\pgfpoint{\mcSize}{\mcSize}}
{
\pgfsetcolor{\tikz@pattern@color}
\pgfsetlinewidth{\mcThickness}
\pgfpathmoveto{\pgfqpoint{0pt}{0pt}}
\pgfpathlineto{\pgfpoint{\mcSize+\mcThickness}{\mcSize+\mcThickness}}
\pgfusepath{stroke}
}}
\makeatother

 
\tikzset{
pattern size/.store in=\mcSize, 
pattern size = 5pt,
pattern thickness/.store in=\mcThickness, 
pattern thickness = 0.3pt,
pattern radius/.store in=\mcRadius, 
pattern radius = 1pt}
\makeatletter
\pgfutil@ifundefined{pgf@pattern@name@_pfjxw0hn0}{
\pgfdeclarepatternformonly[\mcThickness,\mcSize]{_pfjxw0hn0}
{\pgfqpoint{0pt}{-\mcThickness}}
{\pgfpoint{\mcSize}{\mcSize}}
{\pgfpoint{\mcSize}{\mcSize}}
{
\pgfsetcolor{\tikz@pattern@color}
\pgfsetlinewidth{\mcThickness}
\pgfpathmoveto{\pgfqpoint{0pt}{\mcSize}}
\pgfpathlineto{\pgfpoint{\mcSize+\mcThickness}{-\mcThickness}}
\pgfusepath{stroke}
}}
\makeatother
\tikzset{every picture/.style={line width=0.75pt}} 

\scalebox{0.8}{
\begin{tikzpicture}[x=0.75pt,y=0.75pt,yscale=-1,xscale=1]

\draw   (60,120) -- (200,120) -- (200,200) -- (60,200) -- cycle ;
\draw    (60,140) -- (180,140) ;
\draw    (180,140) -- (180,200) ;
\draw    (80.14,119.71) -- (79.86,180.14) ;
\draw    (79.86,180.14) -- (200.14,180.14) ;
\draw  [color={rgb, 255:red, 189; green, 16; blue, 224 }  ,draw opacity=1 ][fill={rgb, 255:red, 189; green, 16; blue, 224 }  ,fill opacity=0.35 ][line width=1.5] (80.14,139.86) -- (180,139.86) -- (180,180.14) -- (80.14,180.14) -- cycle ;
\draw  [pattern=_k31oeb2z9,pattern size=6pt,pattern thickness=0.75pt,pattern radius=0pt, pattern color={rgb, 255:red, 208; green, 2; blue, 27}] (200,120) -- (200,200) -- (180,200) -- (180,140) -- (60,140) -- (60,120) -- cycle ;
\draw  [pattern=_pfjxw0hn0,pattern size=6pt,pattern thickness=0.75pt,pattern radius=0pt, pattern color={rgb, 255:red, 74; green, 144; blue, 226}] (60,119.57) -- (80.14,119.71) -- (80.14,180.14) -- (200.14,180.14) -- (200,199.57) -- (60,199.57) -- cycle ;
\draw  [line width=1.5]  (60,0) -- (260,0) -- (260,200) -- (60,200) -- cycle ;
\draw  [line width=1.5]  (0,120) -- (200,120) -- (200,320) -- (0,320) -- cycle ;

\draw (109.1,145) node [anchor=north west][inner sep=0.75pt]  [font=\LARGE]  {$\Delta _{\varepsilon_a }$};
\draw (18.67,274.4) node [anchor=north west][inner sep=0.75pt]  [font=\LARGE]  {$P$};
\draw (225.33,17.07) node [anchor=north west][inner sep=0.75pt]  [font=\LARGE]  {$S$};
\draw (21.33,165.4) node [anchor=north west][inner sep=0.75pt]  [font=\LARGE,color={rgb, 255:red, 74; green, 144; blue, 226 }  ,opacity=1 ]  {$P_{\partial }$};
\draw (207.33,124.4) node [anchor=north west][inner sep=0.75pt]  [font=\LARGE,color={rgb, 255:red, 208; green, 2; blue, 27 }  ,opacity=1 ]  {$S_{\partial }$};

\end{tikzpicture}}

\caption{The sets $P_\partial, S_\partial$ and $\Delta_{\varepsilon_a}$ inside $\Delta$.}

\label{fig:separacio_interseccio}

\end{figure}

This enables us to separate the additional term even further,
\begin{equation}
\langle K_\Theta(\chi_{P\cap S}\,b_1), \chi_{P\cap S}\,b_2\rangle = \langle K_{\Theta}(\chi_\Delta b_1), \chi_{\Delta_{\varepsilon_a}}b_2\rangle + \langle K_{\Theta}(\chi_{\Delta}b_1), \chi_{\Delta\setminus \Delta_{\varepsilon_a}}b_2\rangle.\label{proposta_xavi}
\end{equation}
In the following lemma, we will obtain a bound for the first term, which will depend on $\varepsilon_a$. This dependence will cause no problems in the final argument, since we will fix $\varepsilon_a$ and it will simply be a constant.
\begin{lemma}\label{lema:arreglo_xavi}
    Let $P,S,Q,R$ and $\Delta, \Delta_{\varepsilon_a}$ be as above. Then, we have,
    $$
    |\langle K_{\Theta}(\chi_\Delta \Delta_{1,Q}f), \chi_{\Delta_{\varepsilon_a}}\Delta_{2,R}g\rangle | \leq c_a \|\Delta_{1,Q}f\|_{L^2(\mu)}\|\Delta_{2,R}g\|_{L^2(\mu)},
    $$
    where $c_a$ is a constant that depends on $\varepsilon_a$ and on the constant $\alpha$ that we fixed in the last paragraph from Section \ref{sec:exceptional_set}, but neither on $P$ nor on $S$.
\end{lemma}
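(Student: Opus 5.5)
Since $P$ and $S$ are transit, by (\ref{coeficients_cp}) the pairing equals $c_P(f)\,c_S(g)\,\langle K_\Theta(\chi_\Delta b_1),\chi_{\Delta_{\varepsilon_a}} b_2\rangle$. As in the proof of Lemma \ref{els_dos_primers_transit}, we have $|c_P(f)|\,\mu(P)^{1/2}\le c_{\text{acc}}\|\Delta_{1,Q}f\|_{L^2(\mu)}$ and similarly for $c_S(g)$. It therefore suffices to prove
$$
\bigl|\langle K_\Theta(\chi_\Delta b_1),\chi_{\Delta_{\varepsilon_a}} b_2\rangle\bigr| \le c_a\,\mu(\Delta)\le c_a\,\mu(P)^{1/2}\mu(S)^{1/2}.
$$
The plan is to write $\chi_\Delta b_1=b_1-\chi_{\mathbb{R}^d\setminus\Delta}b_1$. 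This gives
$$
\langle K_\Theta(\chi_\Delta b_1),\chi_{\Delta_{\varepsilon_a}}b_2\rangle=\langle K_\Theta b_1,\chi_{\Delta_{\varepsilon_a}}b_2\rangle-\langle K_\Theta(\chi_{\mathbb{R}^d\setminus\Delta}b_1),\chi_{\Delta_{\varepsilon_a}}b_2\rangle .
$$
The first term is controlled through Lemma \ref{lema_distancia}. Since $\Theta\ge\max(\Phi_{\mathcal{D}_1},\Phi_{\mathcal{D}_2})$, we have $|K_\Theta b_1(x)|=|K_\Theta\nu_1(x)|\le c_4$ on $F$, with $c_4$ depending on $\alpha$. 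Hence the first term is at most $c_4c_b\,\mu(\Delta_{\varepsilon_a})\le c\,\mu(\Delta)$.

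For the second term, the points $x\in\Delta_{\varepsilon_a}$ and $y\notin\Delta$ are separated, because $\Delta_{\varepsilon_a}\subset\Delta$ with boundary hyperplanes at distance at least $\ell_{\varepsilon_a}/2$ inside. The integrand is bounded by $c/|x-y|^n$, so the second term is at most
$$
c\int_{\Delta_{\varepsilon_a}}\int_{|x-y|\ge \mathrm{dist}(x,\partial\Delta)}\frac{d\mu(y)}{|x-y|^n}\,d\mu(x).
$$
For the inner integral I will argue as in the proof of Lemma \ref{els_dos_primers_transit}. I split according to whether $\Theta(x)$ is smaller or larger than $\mathrm{dist}(x,\partial\Delta)$ and integrate over dyadic annuli, using that balls not contained in $H$ have $c_0$-growth. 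The annuli with radii up to $c\,\ell(P)$ give a logarithm $c\log\bigl(c\,\ell(P)/\mathrm{dist}(x,\partial\Delta)\bigr)$. The tail beyond $\ell(P)$ contributes a bounded amount: the dilates of the transit cube $P$ have $c_0$-growth by Lemma \ref{lema:comportament_cubs_transit}, and $\mu$ is finite. For $x\in\Delta_{\varepsilon_a}$ we have $\mathrm{dist}(x,\partial\Delta)\ge \ell_{\varepsilon_a}/2\ge c\,\varepsilon_a\min(\ell(P),\ell(S))$, and $\ell(P)\approx\ell(S)$ up to $2^{m}$. So the logarithm is bounded by $c\log(c_m/\varepsilon_a)$, and the second term is at most $c_a\,\mu(\Delta)$.

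The main obstacle I expect is this separation step. The growth bound $\mu(B(x,r))\le c_0r^n$ is available only for balls not contained in $H$. The dichotomy with $\Theta(x)$ repairs this: when $\Theta(x)$ is large, the suppressed kernel satisfies $|\widetilde k_\Theta|\le c/\Theta(x)^n$ by Lemma \ref{desigualtat_maxim_k_tilde}, and $B(x,\Theta(x))$ is not contained in $H$. A second point to check is that the constant depends on $m$ through $\ell(P)/\ell(S)$, which is harmless since $m$ is fixed. The improved small-boundary estimate (\ref{t-small}) for $\Delta_{\varepsilon_a}$ is not needed here. Only the uniform inner margin $\ell_{\varepsilon_a}$ is used, which is why the constant depends on $\varepsilon_a$.
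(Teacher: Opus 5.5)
Your reduction to $|\langle K_\Theta(\chi_\Delta b_1),\chi_{\Delta_{\varepsilon_a}}b_2\rangle|\le c_a\,\mu(\Delta)$ is fine. So are the splitting $\chi_\Delta b_1=b_1-\chi_{\R^d\setminus\Delta}b_1$ (which the paper also uses) and your bound for the $K_\Theta b_1$ term via Lemma \ref{lema_distancia}. The gap is in the term $K_\Theta(\chi_{\R^d\setminus\Delta}b_1)(x)$, which you estimate entirely with $|\widetilde{k}_\Theta(x,y)|\le c\,|x-y|^{-n}$ out to infinity.

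Your claim that the region $|x-y|\ge\ell(P)$ contributes a bounded amount is false. Growth of degree $n$ (from Lemma \ref{lema:comportament_cubs_transit} or from (a)) only gives $\mu(B(x,2^{k+1}\ell(P)))/(2^k\ell(P))^n\le c$ on each dyadic annulus, with no decay in $k$. Hence
\[
\int_{|x-y|\ge \ell(P)}\frac{d\mu(y)}{|x-y|^n}\le c\,\log\frac{\text{diam}(F)}{\ell(P)},
\]
and this is sharp in general, e.g.\ for $\mu$ the $n$-dimensional Hausdorff measure on a piece of an $n$-plane. Finiteness of $\mu$ only gives $\|\mu\|/\ell(P)^n$. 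Neither $b_1$ nor $\chi_{\Delta_{\varepsilon_a}}b_2$ has zero mean, so the H\"older regularity of the kernel gives no decay either. Your constant therefore depends on $\ell(P)$, which is exactly what the lemma excludes.

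The missing idea is to use cancellation for the far part; this is where the dependence on $\alpha$ genuinely enters.

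1. For $x\in\Delta_{\varepsilon_a}$ put $\rho=\text{diam}(\Delta)+\ell(P)$ (the paper's $\delta'$). Write $\chi_{\R^d\setminus\Delta}=\chi_{B(x,\rho)\setminus\Delta}+\chi_{\R^d\setminus B(x,\rho)}$.
2. Then $K_\Theta(\chi_{\R^d\setminus B(x,\rho)}b_1)(x)=K_{\Theta,\rho}\nu_1(x)$. This is at most $c_4$, because Lemma \ref{lema_distancia} controls the maximal operator $K_{\Theta,*}\nu_1$, not only $K_\Theta\nu_1$.
3. Only the shell $\ell_{\varepsilon_a}/2\le|x-y|<\rho\le c\,\ell(P)$ needs absolute values. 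There your dichotomy on $\Theta(x)$ and the annular count do work, giving $c\log(c_m/\varepsilon_a)$.

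The paper does the equivalent thing with the original truncations. It compares $K_\Theta(\chi_\Delta b_1)(x)$ with $T_{2\Theta(x)}(\chi_\Delta b_1)(x)$ via Lemma \ref{diferencia_truncat_suppressed}. It bounds the far piece by $|T_{\max(\delta,\delta')}\nu_1(x)|\le\alpha$ using the exceptional set. It bounds the near piece crudely by $c\,\varepsilon_a^{-n}$, after distinguishing $\Theta(x)\le\varepsilon_a\ell(P)$ from $\Theta(x)>\varepsilon_a\ell(P)$.
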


\begin{obs}
    Before proving the lemma, we make two easy but useful observations. If $\Theta$ is as in Lemma \ref{lemma:good_functions}, we have that
    $$
    \sup_{r\geq 2\Theta(x)}\frac{\mu(B(x,r))}{r^n}\leq c_0, \quad \text{and}\quad \sup_{\delta \geq 2\Theta(x)}|T_{\delta}(\nu_1)(x)|\leq \alpha,
    $$
    where $\alpha$ is the number that we fix in the definition of the exceptional set $S$, in Section \ref{sec:exceptional_set}.

    To prove the first assertion, we have to use the property that $\Theta(x)\geq \text{dist}(x,\R^d\setminus H_{\mathcal{D}_1})$, and distinguish two cases.
    \begin{itemize}
        \item If $\Theta(x)>0$, for $r>\Theta(x)$ we will have that $r> \text{dist}(x, \R^d\setminus H_{\mathcal{D}_1})$. Hence, $B(x,r)\not\subset H_{\mathcal{D}_1}\subset \cap_{w\in \R^d}H_{\mathcal{D}(w)}$. By hypothesis (a) from Theorem \ref{TEOREMA}, it follows that $\mu(B(x,r))\leq c_0r^n$.
        \item If $\Theta(x)=0$, we deduce that $\text{dist}(x, \R^d\setminus H_{\mathcal{D}_1})$, so for any $r>0$, $B(x,r)\cap (\R^d\setminus H_{\mathcal{D}_1})\neq \varnothing$ and we have the same control for $\mu(B(x,r))$.
    \end{itemize}
    Now we turn our attention to the second assertion. We have to show that for any $\delta\geq 2\Theta(x)$, we have that $|T_{\delta}(\nu_1)(x)|\leq \alpha$. Again, we will separate two cases. First, if $x\not\in S^1_0$, the inequality that we want to prove is clear. If $x\in S^1_0$, we can use the fact that $\Theta(x)\geq \text{dist}(x, \R^d\setminus S)\geq e_1(x)$. In this case, if $\Theta(x)=0$, then $e_1(x)=0$ as well and so obviously $|T_{\delta}(\nu_1)(x)|\leq \alpha$. If $\Theta(x)>0$, we have that $\delta >e_1(x)$, and so by definition of supremum it holds that $|T_{\delta}(\nu_1)(x)|\leq \alpha$. Since this works for any $\delta \geq 2\Theta$, taking supremum we obtain the inequality above.
\end{obs}

\begin{proof}[Proof of Lemma \ref{lema:arreglo_xavi}]
    First of all, note that it suffices to show that for all $x\in \Delta_{\varepsilon_a}$, we can bound
    \begin{equation}
    |K_{\Theta}(\chi_\Delta b_1)(x)| \leq \frac{c}{\varepsilon^n_a}.\label{cosa_que_ens_arregla_la_vida}
    \end{equation}
    Indeed, using this and the fact that both $b_1$ and $b_2$ are bounded functions, we obtain
    $$
    |\langle K_{\Theta}(\chi_\Delta b_1), \chi_{\Delta_{\varepsilon_a}}b_2\rangle| \leq \frac{c}{\varepsilon_a^n}\mu(\Delta_{\varepsilon_a}) \leq \frac{c}{\varepsilon_a^n}\mu(P\cap S).
    $$
    Hence,
    \begin{align*}
    |c_P(f)c_S(g)\langle K_\Theta(\chi_\Delta b_1), \chi_{\Delta_\varepsilon}b_2\rangle| &\leq \frac{c}{\varepsilon^n_a}|c_P(f)|\,|c_S(g)|\,\mu(P\cap S)\\
    &\leq \frac{c}{\varepsilon^n_a}|c_S(g)|\,\mu(S)^{\frac{1}{2}}\,|c_P(f)|\,\mu(P)^{\frac{1}{2}}\\
    &\leq \frac{c}{\varepsilon^n_a}\|\Delta_{1,Q}f\|_{L^2(\mu)}\|\Delta_{2,R}g\|_{L^2(\mu)},
    \end{align*}
    where we have used, of course, that the cubes appearing above are all accretive. This is the inequality that we wanted to prove. It remains to show (\ref{cosa_que_ens_arregla_la_vida}). Let $x\in \Delta_{\varepsilon_a}$, we have that 
    $$
    |K_{\Theta}(\chi_{\Delta}b_1)(x)- T_{2\Theta(x)}(\chi_{\Delta}b_1)(x)| \leq c\sup_{r\geq 2\Theta(x)}\frac{\mu(B(x,r))}{r^n}  <\infty.
    $$
    The first inequality is proved with an analogous reasoning as what we argued in Lemma \ref{diferencia_truncat_suppressed}, taking $\varepsilon=2\Theta(x)$. The fact that the supremum above is finite is the first part of the discussion in the previous remark. So, we obtain that
    $$
    |K_{\Theta}(\chi_{\Delta}b_1)(x)| \leq |T_{2\Theta(x)}(\chi_{\Delta} b_1)(x)| + c \leq \sup_{\delta \geq 2\Theta(x)}|T_{\delta}(\chi_{\Delta}b_1)(x)| +c.
    $$
    Now we focus on estimating the supremum on the right-hand side above. We can separate it by writing
    $$
    \sup_{\delta \geq 2\Theta(x)}|T_{\delta}(\chi_{\Delta}b_1)(x)| \leq \sup_{\delta\geq 2\Theta(x)}|T_{\delta}(\nu_1)(x)| + \sup_{\delta \geq 2\Theta(x)}|T_{\delta}(\chi_{\Delta^c}\nu_1)(x)|.
    $$
    The first supremum on the right-hand side is finite (in fact, it is bounded above by $\alpha$) by the second part of the discussion in the previous remark. To finish the proof, we have to show that the second term is bounded by $c/\varepsilon^n_a$. Let $\delta >0$. By denoting $\delta'=\text{diam}(\Delta) + \ell(P)$, we can separate
    \begin{equation}
    |T_{\delta}(\chi_{\Delta^c}\nu_1)(x)| \leq |T_{\delta}(\chi_{B(x, \delta')\setminus \Delta}\nu_1)(x)| + |T_{\delta}(\chi_{B(x, \delta')^c}\nu_1)(x)|.\label{separacio_delta_prima}
    \end{equation}
    The second term above can be bounded by
    \begin{align*}
    |T_{\delta}(\chi_{B(x, \delta')^c}\nu_1)(x)|&= \left|\int_{|x-y|\geq \delta}k(x,y)\chi_{B(x, \delta')^c}(y)\,d\nu_1(y)\right|\\
    &=\left|\int_{|x-y|\geq \max(\delta, \delta')}k(x,y)\,d\nu_1(y)\right|\\
    &\leq \left|T_{\max(\delta, \delta')}(\nu_1)(x)\right| \leq \sup_{r\geq 2\Theta(x)}|T_r(\nu_1)(x)|,
    \end{align*}
    which we already know to be bounded above, by the previous remark. It only remains to give a bound for the first term on the right-hand side of (\ref{separacio_delta_prima}). As usual, we will separate two cases and use an appropriate bound for each case.
    
    Assume first that $\Theta(x)\leq \varepsilon_a\,\ell(P)$. Using that for $y\in \Delta^c$ and $x\in \Delta_{\varepsilon_a}$, we have that $|x-y|\geq c\,\varepsilon_a \ell(P)$, we obtain
        \begin{align*}
            |T_\delta (\chi_{B(x, \delta')\setminus \Delta}\nu_1)(x)| &\leq \int_{|x-y|\geq \delta}|k(x,y)||\chi_{B(x,\delta')\setminus \Delta}(y)|\,d|\nu_1|(y)\\
            &\leq \frac{c}{(\varepsilon_a \ell(P))^n}|\nu_1|(B(x,\delta'))\leq \frac{c}{\varepsilon_a^n}\frac{\mu(B(x,\delta'))}{(\delta')^n}\leq \frac{c}{\varepsilon^n_a},
        \end{align*}
        because on the one hand, $\ell(P)\geq \frac{1}{2}\delta'$ and on the other hand, $\delta'\geq \ell \geq \frac{1}{\varepsilon_a}\Theta(x)>2\Theta(x)$, by choosing $\varepsilon_a<\frac{1}{2}$.

    Now, suppose that $\Theta(x)>\varepsilon_a\, \ell(P)$. In this case, $\delta'\leq 2\,\ell(P)<\frac{1}{2\varepsilon_a}\Theta(x)$. Hence, since $\delta \geq 2\Theta(x)$,
    \begin{align*}
        |T_{\delta}(\chi_{B(x, \delta')\setminus \Delta}\nu_1)(x)|&\leq \int_{|x-y|\geq \delta} |k(x,y)||\chi_{B(x, \delta')\setminus \Delta}(y)|\, d|\nu_1|(y)\\
        &\leq c\,\frac{|\nu_1|(B(x, \delta'))}{\delta^n} \leq \frac{c\mu(B(x, \frac{1}{2\varepsilon_a}\Theta(x)))}{\varepsilon^n_a\left(\frac{1}{2\varepsilon_a}\Theta(x)\right)^n} \leq \frac{c}{\varepsilon_a^n},
    \end{align*}
    choosing $\varepsilon_a<\frac{1}{4}$, so that $\frac{1}{2\varepsilon}\Theta(x)\geq 2\Theta(x)$ and so our previous reasoning can apply.
\end{proof}

Let us put together all the estimates from this section. For the sake of readability, let us denote
\begin{align*}
\mathcal{A}&= \{(Q,R)\in \mathcal{D}_1^{\text{tr},1}\times \mathcal{D}_2^{\text{tr},2}: \text{dist}(Q,R)<\max (\ell(Q),\ell(R)), \,2^{-m}\ell(R)\leq \ell(Q)\leq 2^m\ell(R)\}.
\end{align*}
Moreover, we can classify according to what type of cubes are the children of those $(Q,R)\in \mathcal{A}$, by letting
\begin{align*}
\mathcal{A}_{1,1}&= \{(P,S)\in \mathcal{CH}(Q)\times \mathcal{CH}(R): (Q,R)\in \mathcal{A},\, P\in \mathcal{D}^{\text{tr},1}_1, \,S\in \mathcal{D}^{\text{tr},2}_2 \},
\end{align*}
and, similarly, we define $\mathcal{A}_{2,1}$ if $P$ is terminal and $S$ is transit, $\mathcal{A}_{1,2}$ for $P$ transit and $S$ terminal and $\mathcal{A}_{2,2}$ for both $P$ and $S$ terminal. Combining Lemmas \ref{els_dos_primers_transit} and \ref{lema:arreglo_xavi}, we have found that
\begin{alignat}{2}
|S_2 + S_3| &\leq c_{a}\,\|f\|_{L^2(\mu)}\|g\|_{L^2(\mu)}&& + \sum_{\mathcal{A}_{1,2}\cup \mathcal{A}_{2,1}\cup \mathcal{A}_{2,2}} |\langle K_{\Theta}(\chi_{P\setminus S}\Delta_{1,Q}f), \chi_S\Delta_{2,R}g\rangle|\nonumber\\
&&&+ \sum_{\mathcal{A}_{1,2}\cup \mathcal{A}_{2,1}\cup \mathcal{A}_{2,2}} |\langle K_{\Theta}(\chi_{P\cap S}\Delta_{1,Q}f), \chi_{S\setminus P}\Delta_{2,R}g\rangle|\nonumber\\
& && + \sum_{\mathcal{A}_{1,2}\cup A_{2,1}\cup A_{2,2}}|\langle K_{\Theta}(\chi_{P\cap S}\Delta_{1,Q}f), \chi_{P\cap S}\Delta_{2,R}g\rangle|\nonumber\\
&&& +  \sum_{\mathcal{A}_{1,1}}|\langle K_{\Theta}(\chi_{\Delta_{P,S}}b_1), \chi_{\Delta_{P,S}\setminus (\Delta_{P,S})_{\varepsilon_a}}b_2\rangle|\nonumber\\
&=: c_{a}\,\|f\|_{L^2(\mu)}\|g\|_{L^2(\mu)} &&+ A_1 + A_2 + A_3+ A_4,\label{A_i}
\end{alignat}
where, of course, we denote $\Delta_{P,S}=P\cap S$ for each of the cubes $P,S$ appearing in the sum above. An upper bound for the sums $A_i$ will be obtained via the probabilistic argument in Section \ref{sec:acotacio_ai}.

\section{A Cotlar-type inequality}
\label{chap:cotlar}

Recall that for $x\in \R^d$, its $n$-Ahlfors radius is defined as
\begin{align*}
    \mathcal{R}(x) &= \sup\{r>0 : \mu(B(x,r))>c_0r^n\},
\end{align*}
and we put $\mathcal{R}(x)=0$ if there does not exist any $r>0$ satisfying the condition that defines the set above. Moreover, we set
\begin{equation}
H= \bigcup_{x\in \R^d\,:\, \mathcal{R}(x)>0} B(x, \mathcal{R}(x)).\label{definicio_conjunt_H}
\end{equation}
By assumption (a) from Theorem \ref{TEOREMA}, we have that $H\subset H_{\mathcal{D}(w)}$ for each $w\in \Omega$. In this section we are going to prove the following result, which will be key to finish the proof of Theorem \ref{TEOREMA} in the next section.

\begin{lemma}\label{lema:primer_cotlar}
    Let $\varepsilon>0$ and let $T$ be a Calderón-Zygmund operator of degree $n$ defined in $\R^d\times \R^d \setminus \{(x,y)\in \R^d\times \R^d: x=y\}$ with kernel $t(x,y)$ satisfying
    \begin{equation}
    |t(x,y)| \leq \frac{1}{\max(\varepsilon, \text{dist}(x, \R^d\setminus H), \text{dist}(y, \R^d\setminus H))^n}, \quad \text{for all }\, x,y\in \R^d.\label{condicio_nucli_t}
    \end{equation}
    Then, we have that
    $$
    \|T_*\|_{L^2(\mu)\to L^2(\mu)} \leq c(1+ \|T\|_{L^2(\mu)\to L^2(\mu)}),
    $$
    where $c$ depends only on $c_0$ and on the Calderón-Zygmund constants of the kernel $t(x,y)$.
\end{lemma}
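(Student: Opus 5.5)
The plan is to prove a pointwise Cotlar-type inequality for $T_*$ at every point of $\R^d$ and then integrate it using Theorem~\ref{HL}. The difficulty is that $\mu$ need not have growth of degree $n$, so Theorem~\ref{cotlar_referenciar} cannot be applied directly. The suppression hypothesis (\ref{condicio_nucli_t}) compensates for this: wherever growth fails, the kernel is already small. Concretely, set $\Phi(x)=\max(\varepsilon,\text{dist}(x,\R^d\setminus H))$.

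\textbf{Step 1: small truncations.} Fix $x$ and $f\in L^2(\mu)$, and write $\nu=f\mu$. For $\delta\le\Phi(x)$ we split
\[
T_\delta\nu(x)=T_{\Phi(x)}\nu(x)+\int_{\delta<|x-y|\le \Phi(x)}t(x,y)\,d\nu(y).
\]
By (\ref{condicio_nucli_t}), the second term is at most $\Phi(x)^{-n}|\nu|(B(x,\Phi(x)))$. If $\Phi(x)>\text{dist}(x,\R^d\setminus H)$, then $\Phi(x)=\varepsilon$, and we need $\mu(B(x,\varepsilon))\lesssim\varepsilon^n$. Otherwise $B(x,\Phi(x))$ meets $\R^d\setminus H$. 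In either case I would use the defining property of $H$: every ball $B(z,r)$ with $r\ge \text{dist}(z,\R^d\setminus H)$ satisfies $\mu(B(z,r))\le c_0r^n$, since $r\ge\mathcal R(z)$. Enlarging to a ball centered at a point $z\notin H$ with comparable radius then gives the bound $c\,M_\mu f(x')$ at some nearby point $x'$. To handle this cleanly, I would first reduce to $x\in\operatorname{supp}\mu$ and replace the centered maximal function by $c\,\widetilde M f(x)$, where
\[
\widetilde M f(x)=\sup_{r\ge\Phi(x)}\frac{1}{\mu(B(x,3r))}\int_{B(x,r)}|f|\,d\mu .
\]
This is a centered maximal operator with dilation; it is bounded in $L^2(\mu)$ by the usual Besicovitch argument, or by the radial maximal operator $M_R$ restricted to radii $\ge\Phi(x)$, which is controlled using the growth $\mu(B(x,r))\le c\,r^n$ for $r\ge\Phi(x)$.

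\textbf{Step 2: large truncations.} For $\delta\ge\Phi(x)$ the growth condition $\mu(B(x,r))\le c\,r^n$ holds for all $r\ge\delta$; this again follows from the definition of $H$ as a union of non-Ahlfors balls and from $\mu(B(x,r))\le c_0r^n$ at points outside $H$. This is exactly hypothesis (\ref{Cotlar:creixement}) of Theorem~\ref{cotlar_referenciar} with $\varepsilon$ replaced by $\Phi(x)$. The theorem also requires $T_\eta$ to be bounded from $M(\R^d)$ to $L^{1,\infty}(\mu)$ for some $\eta\le\Phi(x)$. For this step I would take $\eta$ tiny, or equivalently use $T$ itself, noting that $t$ is bounded by $\varepsilon^{-n}$ so all truncations below $\varepsilon$ agree up to a controlled term. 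The weak-type bound comes from $L^2(\mu)$ boundedness via the non-doubling Calderón--Zygmund decomposition, Lemma~\ref{descomposicio_calderon_zygmund}, together with Lemma~\ref{lema:2.15} to handle the non-doubling cubes $Q_i\subset R_i$. I expect $\|T\|_{M\to L^{1,\infty}}\le c(1+\|T\|_{L^2\to L^2})$; this is the standard non-homogeneous argument, using (\ref{condicio_nucli_t}) in place of global growth on the $\varphi_i$ terms.

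With these inputs, (\ref{Cotlar:primera_desigualtat}) with $s=1/2$ gives
\[
\sup_{\delta\ge\Phi(x)}|T_\delta\nu(x)|\le c\,M_\mu(|T\nu|^{1/2})(x)^2+c(1+\|T\|)\,M_\mu f(x).
\]

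\textbf{Step 3: conclusion and main obstacle.} Combining Steps 1 and 2 and taking $L^2(\mu)$ norms with Theorem~\ref{HL}, applied to $M_\mu$ on $L^4(\mu)$ for the $|T\nu|^{1/2}$ term, yields $\|T_*f\|_2\le c(1+\|T\|)\|f\|_2$. I expect the main obstacle to be the weak-type $(1,1)$ bound in Step 2. There, the failure of polynomial growth inside $H$ must be compensated solely through the kernel bound (\ref{condicio_nucli_t}). This arises when estimating $T(\sum\varphi_i-\nu\lfloor Q_i)$ away from $2Q_i$, and I would first try to split the cubes $Q_i$ according to whether $\ell(Q_i)$ is smaller or larger than $\text{dist}(Q_i,\R^d\setminus H)$.
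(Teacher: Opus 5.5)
Your architecture matches the paper's. You apply Theorem~\ref{cotlar_referenciar} at truncation levels where $\mu$ has growth at $x$, use the kernel bound (\ref{condicio_nucli_t}) for the smaller truncations, and rely on a weak-type $(1,1)$ estimate deduced from $L^2(\mu)$ boundedness via Lemma~\ref{descomposicio_calderon_zygmund}. Steps~1 and~2 are sound. Step~1 is simpler than you make it: since $\Phi(x)\ge\text{dist}(x,\R^d\setminus H)\ge\mathcal R(x)$, you already have $\mu(\overline{B}(x,\Phi(x)))\le c_0\Phi(x)^n$ at $x$ itself. So the error term is at most $c_0M_\mu f(x)$, and you need neither a point $z\notin H$ nor the operator $\widetilde M$. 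The paper does exactly this, with $\mathcal R(x)$ in place of $\Phi(x)$.

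\textbf{The gap.} The weak-type bound $\|T\|_{L^1(\mu)\to L^{1,\infty}(\mu)}\le c(1+\|T\|_{L^2(\mu)\to L^2(\mu)})$ is only announced. Yet this is exactly where the failure of growth must be overcome, and it is most of the paper's proof. Your diagnosis of where growth enters is also off:
\begin{itemize}
\item The term $\int_{2R_i}|T\varphi_i|\,d\mu\le\|T\|\,\|\varphi_i\|_{L^2(\mu)}\,\mu(2R_i)^{1/2}$ needs no growth, only (\ref{2.18}) and the doubling of $R_i$.
\item Growth is used for $T(w_if)$ on $2R_i\setminus 2Q_i$, where Lemma~\ref{lema:2.15} leaves you with $\mu(2R_i)/\ell(2R_i)^n$.
\item It is also used in the annular sum for the smoothness estimate of $Tb_i$ outside $2R_i$.
\end{itemize}
In both places you need growth of balls centred at $x_{Q_i}$ with radii $\gtrsim\ell(R_i)$. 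So the relevant comparison is between $\ell(R_i)$ and $\mathcal R(x_{Q_i})$, not between $\ell(Q_i)$ and $\text{dist}(Q_i,\R^d\setminus H)$. A small cube deep inside $H$ can have a large $R_i$, so your proposed dichotomy does not by itself tell you which estimate to use.

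\textbf{The missing idea.} Split the region of integration rather than the family of cubes, and apply (\ref{condicio_nucli_t}) in the variable where $Tb_i$ is evaluated. With $B_i=B(x_{Q_i},\mathcal R(x_{Q_i})/2)$, write $\R^d\setminus 2Q_i\subset B_i\cup\bigl(2R_i\setminus(2Q_i\cup B_i)\bigr)\cup\bigl(\R^d\setminus(2R_i\cup B_i)\bigr)$.
\begin{itemize}
\item On $B_i$: for $x\in B_i$ one has $\text{dist}(x,\R^d\setminus H)\ge\mathcal R(x_{Q_i})/2$, hence $|t(x,y)|\le 2^n\mathcal R(x_{Q_i})^{-n}$ for \emph{every} $y$, however large $\text{supp}\, b_i\subset R_i$ is. Combined with $\mu(B_i)\le c_0\mathcal R(x_{Q_i})^n$, this gives $\int_{B_i}|Tb_i|\,d\mu\lesssim\|b_i\|_{L^1(\mu)}\lesssim\|f\chi_{Q_i}\|_{L^1(\mu)}$.
\item On the middle region: it is empty unless $2R_i\not\subset B_i$. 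That forces $\sqrt d\,\ell(2R_i)\ge\mathcal R(x_{Q_i})$, hence $\mu(2R_i)\le c_0(\sqrt d\,\ell(2R_i))^n$, and Lemma~\ref{lema:2.15} closes the term.
\item On the far region: the annuli start at radius $\max(\ell(R_i),\mathcal R(x_{Q_i})/2)$, where growth around $x_{Q_i}$ holds. The usual smoothness estimate then sums to $\lesssim\|f\chi_{Q_i}\|_{L^1(\mu)}$.
\end{itemize}
This treats all cubes uniformly, with no case distinction.
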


\begin{obs}
    Since for any $x\in \R^d$, we have that $\text{dist}(x, \R^d\setminus H)\geq \mathcal{R}(x)$. if the kernel $t(x,y)$ satisfies the condition (\ref{condicio_nucli_t}), then
    $$
    |t(x,y)| \leq \frac{1}{\max(\varepsilon, \mathcal{R}(x), \mathcal{R}(y))^n}.
    $$
\end{obs}

The proof of Lemma \ref{lema:primer_cotlar} will require a Cotlar-type inequality, which we state and prove below.
\begin{lemma}\label{lema:segon_cotlar}
    Let $T$ be a singular integral operator satisfying the assumptions of Lemma \ref{lema:primer_cotlar}. Suppose that $T$ is bounded from $L^1(\mu)$ into $L^{1,\infty}(\mu)$. Then, for every $f\in L^2(\mu)$ and $x\in \R^d$, we have
    $$
    T_{*}f(x) \leq c \, M_{\mu}(Tf)(x) + c\left(\|T\|_{L^1(\mu)\to L^{1,\infty}(\mu)}+1\right) M_{\mu}f(x),
    $$
    where $c$ depends only on $c_0$ and on the Calderón-Zygmund constants of the kernel $t(x,y)$.
\end{lemma}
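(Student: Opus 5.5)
We split according to the size of the truncation parameter relative to the local scale at $x$. Fix $x\in\R^d$, $f\in L^2(\mu)$, and $\delta>0$, and set $r_x:=\max(\varepsilon,\mathcal{R}(x))$.

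\textbf{Small truncations.} Suppose $\delta\le r_x$. By the size condition (\ref{condicio_nucli_t}) and the Remark after Lemma \ref{lema:primer_cotlar}, $|t(x,y)|\le r_x^{-n}$ for all $y$. Hence
$$|T_\delta f(x)-T_{r_x}f(x)|\le r_x^{-n}\int_{B(x,r_x)}|f|\,d\mu .$$
We then need $\mu(B(x,r_x))\lesssim \mu(B(x,2r_x))$, up to a control by $M_\mu f$. One route is the bound $\frac1{r_x^n}\int_{B(x,r_x)}|f|\,d\mu\le \frac{\mu(B(x,r_x))}{r_x^n}M_\mu f(x)$. But $\mu(B(x,r_x))/r_x^n$ need not be bounded when $\mathcal{R}(x)$ is attained from below. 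To handle this, we would replace $r_x$ by $2r_x$ throughout: $|t(x,y)|\le r_x^{-n}=2^n(2r_x)^{-n}$, and $\mu(B(x,2r_x))\le c_0(2r_x)^n$ because $2r_x>\mathcal{R}(x)$. This gives $|T_\delta f(x)-T_{2r_x}f(x)|\le c\,M_\mu f(x)$. The reduction is therefore to $\delta\ge 2r_x$.

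\textbf{Large truncations.} For $\delta\ge 2r_x$, every radius $r\ge\delta$ satisfies $\mu(B(x,r))\le c_0r^n$. This is exactly hypothesis (\ref{Cotlar:creixement}) of Theorem \ref{cotlar_referenciar}, applied with $\varepsilon$ replaced by $\delta$. We then run the standard Cotlar argument, taking $s=1$ if the maximal-function bound permits, or else repeating the proof with $M_\mu(Tf)$.

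Concretely, put $B=B(x,\delta/2)$ and write $f=f\chi_{2B}+f\chi_{(2B)^c}$. For $z\in B$, the Calderón-Zygmund smoothness together with the growth condition at scales $\ge\delta$ gives
$$|T_\delta f(x)-T(f\chi_{(2B)^c})(z)|\le c\,M_\mu f(x).$$
Next write $T(f\chi_{(2B)^c})(z)=Tf(z)-T(f\chi_{2B})(z)$ and average over $z$ in a suitable subset of $B$. The term $Tf$ is controlled by $M_\mu(Tf)(x)$. For $T(f\chi_{2B})$, the weak $(1,1)$ bound gives a set of $z\in B$ of measure at least $\mu(B)/2$ on which $|T(f\chi_{2B})(z)|\le 2\|T\|_{L^1\to L^{1,\infty}}\|f\chi_{2B}\|_1/\mu(B)$.

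\textbf{Main obstacle.} The difficulty is the last step: we need $\mu(2B)\lesssim\mu(B)$, which non-doubling measures do not give. The standard fix, as in Tolsa's proof of Theorem \ref{cotlar_referenciar}, is to choose instead a $(2,2^{d+1})$-doubling ball $B(x,\rho)$ with $\rho\ge\delta/2$, via Lemma \ref{lema_cubs_doblants}-type reasoning at large scales, or from the growth bound. The annuli between $\delta$ and $\rho$ are then controlled as in Lemma \ref{lema:2.15} using the growth condition $\mu(B(x,r))\le c_0r^n$ for $r\ge\delta$. With this choice we conclude
$$|T_\delta f(x)|\le c\,M_\mu(Tf)(x)+c(1+\|T\|_{L^1\to L^{1,\infty}})M_\mu f(x).$$
Taking the supremum over $\delta$ and combining with the small-truncation case completes the argument.
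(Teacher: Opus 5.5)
Your proposal is correct and follows essentially the paper's route. You split the truncation parameter at the local scale, control small truncations by the kernel bound $|t(x,y)|\le \max(\varepsilon,\mathcal{R}(x))^{-n}$ plus growth at that scale, and treat large truncations with the $L^1(\mu)$ weak-$(1,1)$ version of the non-doubling Cotlar argument, which the paper cites as Theorem \ref{cotlar_referenciar} and you sketch via a doubling ball and Lemma \ref{lema:2.15}-type annulus estimates. Your passage to $2r_x$ is harmless but unnecessary, since $B(x,\mathcal{R}(x))\subset B(x,r)$ for every $r>\mathcal{R}(x)$ already gives $\mu(B(x,\mathcal{R}(x)))\le c_0\mathcal{R}(x)^n$, which is exactly the estimate the paper uses.
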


Using the boundedness of $M_\mu$ in $L^2(\mu)$ (recall Theorem \ref{HL}), to prove Lemma \ref{lema:primer_cotlar}, it suffices to apply the preceding result, after showing that
\begin{equation}
\|T\|_{L^1(\mu)\to L^{1,\infty}(\mu)} \leq c\left(\|T\|_{L^2(\mu)\to L^2(\mu)}+1\right).\label{cota_extra}
\end{equation}
\begin{proof}[Proof of Lemma \ref{lema:segon_cotlar}]
    Let $\varepsilon>0$. By Theorem \ref{cotlar_referenciar} (Cotlar's inequality), we have that if $T$ is bounded from $M(\R^d)$ to $L^{1,\infty}(\mu)$, and if
    \begin{equation}
    \mu(B(x,r)) \leq c_0r^n, \quad \text{for }\ r\geq \varepsilon,\label{condicio_que_ens_volem_ventilar}
    \end{equation}
    then
    $$
    |T_\varepsilon \nu(x)| \leq c M_\mu(T \nu)(x) + c\left(\|T\|_{M(\R^d)\to L^{1,\infty}(\mu)}+1\right) M_\mu\nu(x),
    $$
    for all $\nu\in M(\R^d)$. If we assume that $T$, instead of being bounded from $M(\R^d)$ into $L^{1,\infty}$, is of weak type (1,1) with respect to $\mu$, the same proof gives that for any $f\in L^1(\mu)$,
    \begin{equation}
    |T_\varepsilon f(x)| \leq c\, M_\mu(Tf)(x) + c\left(\|T\|_{L^1(\mu)\to L^{1,\infty}(\mu)}+1\right) M_\mu f(x).\label{conclusio_que_volem_amb_menys_hipotesis}
    \end{equation}
    So, in order to prove the lemma, we should be able to show that the preceding estimate also holds for our particular operator $T$ without the growth condition (\ref{condicio_que_ens_volem_ventilar}). Fix $x\in \R^d$. If $\mathcal{R}(x)=0$, then condition (\ref{condicio_que_ens_volem_ventilar}) holds and there is nothing to do. Hence, we can assume that $\mathcal{R}(x)>0$, and we have to show that (\ref{conclusio_que_volem_amb_menys_hipotesis}) holds for any $0<\varepsilon<\mathcal{R}(x)$. To do this, we set
    $$
    |T_\varepsilon f(x)| \leq |T_{\mathcal{R}(x)}f(x)| + \int_{0<|x-y|<\mathcal{R}(x)}|t(x,y)|\, |f(y)|\,d\mu(y).
    $$
    The first term, $|T_{\mathcal{R}(x)} f(x)|$, is bounded by the right-hand side of (\ref{conclusio_que_volem_amb_menys_hipotesis}). For the second term, we have to use the fact that $|t(x,y)|\leq \frac{1}{\mathcal{R}(x)^n}$, and so
    \begin{align*}
        \int_{0<|x-y|<\mathcal{R}(x)}|t(x,y)|\, |f(y)|\, d\mu(y) &\leq \frac{1}{\mathcal{R}(x)^n}\int_{B(x, \mathcal{R}(x))}|f(y)|\, d\mu(y)\\
        &\leq \frac{c_0}{\mu(B(x, \mathcal{R}(x)))}\int_{B(x,\mathcal{R}(x))}|f(y)|\,d\mu(y)\leq c_0\, M_\mu f(x),
    \end{align*}
    where we have used that $\mu(B(x, \mathcal{R}(x)))\leq c_0\mathcal{R}(x)^n$. Thus, (\ref{conclusio_que_volem_amb_menys_hipotesis}) holds for all $\varepsilon>0$ in our case. 
\end{proof}

The only task remaining to finish the proof of Lemma \ref{lema:primer_cotlar} is to prove (\ref{cota_extra}), which is done in the following lemma. 
\begin{lemma}
    Let $T$ be a singular integral operator satisfying the assumptions of Lemma \ref{lema:primer_cotlar}. If $T$ is bounded in $L^2(\mu)$, then it is also bounded from $L^1(\mu)$ to $L^{1,\infty}(\mu)$. Moreover, we have
    $$
    \|T\|_{L^1(\mu)\to L^{1,\infty}(\mu)} \leq c\left(\|T\|_{L^2(\mu)\to L^2(\mu)} +1\right).
    $$
\end{lemma}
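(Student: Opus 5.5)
This is the standard non-doubling weak $(1,1)$ argument built on the Calderón-Zygmund decomposition of Lemma \ref{descomposicio_calderon_zygmund}. The growth hypothesis on $\mu$ is missing in general, so we compensate with the kernel bound (\ref{condicio_nucli_t}) near $H$, exactly as in the proof of Lemma \ref{lema:segon_cotlar}.

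Fix $f\in L^1(\mu)$ (by density, bounded with compact support) and $\lambda>0$. We may assume $\lambda>2^{d+1}\|f\|_{L^1(\mu)}/\|\mu\|$, since otherwise $\mu(\{|T_\varepsilon f|>\lambda\})\le\|\mu\|\le 2^{d+1}\|f\|_1/\lambda$ trivially. Apply Lemma \ref{descomposicio_calderon_zygmund} to $\nu=f\mu$ to obtain cubes $Q_i$, doubling cubes $R_i$ and functions $\varphi_i$. Write $f=g+b$ with
$$g=f\chi_{\R^d\setminus\bigcup_iQ_i}+\sum_i\varphi_i,\qquad b=\sum_i(w_if-\varphi_i)=:\sum_ib_i.$$
Then $|g|\le c\lambda$ and $\|g\|_{L^1(\mu)}\le c\|f\|_{L^1(\mu)}$ by (\ref{2.15}), (\ref{2.17}) and (\ref{2.18}). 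The $L^2$ hypothesis gives
$$\mu(\{|T_\varepsilon g|>\lambda\})\le \|T\|_{2\to 2}^2\|g\|_2^2/\lambda^2\le c\|T\|^2_{2\to2}\|f\|_1/\lambda.$$
This is quadratic in the norm, which is bad. To get a linear dependence I would instead normalize, i.e.\ replace $\lambda$ by $\lambda/\|T\|$, or better use the standard trick of applying the decomposition at height $\lambda/(1+\|T\|)$, and accept $\|T\|_{L^1\to L^{1,\infty}}\le c(\|T\|_{2\to2}+1)$ from that scaling.

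For the bad part, note that $\mu(\bigcup_i 2Q_i)\le\sum_i\mu(2Q_i)\le c\|f\|_1/\lambda$ by (\ref{2.13}), so it suffices to bound $\int_{\R^d\setminus 2R_i}|T_\varepsilon b_i|\,d\mu$ and the contribution on $2R_i\setminus 2Q_i$. Outside $2R_i$, $\int b_i\,d\mu=0$ by (\ref{2.16}), so the Hölder smoothness of $t$ in the second variable gives $\int_{\R^d\setminus 2R_i}|T_\varepsilon b_i|\,d\mu\le c\|b_i\|_1\le c|\nu|(Q_i)$. This step needs the growth $\mu(B(z_{R_i},r))\le c r^n$ for $r\ge\ell(R_i)$, or a summation over annuli weighted by growth. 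For the piece $\int_{2R_i\setminus 2Q_i}|T_\varepsilon(w_if)|\,d\mu$ one uses $|t(x,y)|\le c|x-y|^{-n}$ together with Lemma \ref{lema:2.15} (choosing $R_i$ as the smallest doubling dilate, and using that there are no doubling cubes in between), giving a bound $c\,|\nu|(Q_i)\mu(2R_i)/\ell(R_i)^n$. Finally, $\int_{2R_i}|T_\varepsilon\varphi_i|\,d\mu\le\|T\|_{2\to2}\|\varphi_i\|_2\mu(2R_i)^{1/2}\le c\|T\|\,|\nu|(Q_i)$ by Cauchy--Schwarz, (\ref{2.18}) and the doubling of $R_i$. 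Summing over $i$ gives $c(1+\|T\|)\|f\|_1$.

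\textbf{Main obstacle: the growth of $\mu$.} Everywhere above we have used $\mu(B(x,r))\le c_0r^n$, and this fails on the balls $B(x,\mathcal{R}(x))\subset H$. The plan is to split each estimate according to scale. At scales $r\ge\mathcal{R}$ (or outside $H$) the growth condition holds by the definition of $\mathcal{R}$. At small scales, or for points of $H$, we use the kernel bound (\ref{condicio_nucli_t}): for $x$ or $y$ with $\operatorname{dist}(\cdot,\R^d\setminus H)\ge r$ we have $|t(x,y)|\le r^{-n}$, and integrating this bound costs only $\mu(B)/r^n$ with $B$ a ball of radius comparable to the relevant distance. This is handled as in the proof of Lemma \ref{lema:segon_cotlar}. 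I expect the delicate point to be the annuli sums for $T_\varepsilon b_i$ when $R_i$ meets $H$. There I would replace $\ell(R_i)$ by $\max(\ell(R_i),\operatorname{dist}(z_{R_i},\R^d\setminus H))$ and use the kernel bound for $y$ in that region.
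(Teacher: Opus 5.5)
Your skeleton is the paper's. It uses the Calderón--Zygmund decomposition of $f\mu$ and the splitting $f=g+b$. The $\varphi_i$ are handled on $2R_i$ by the $L^2$ bound and the doubling of $R_i$, and the $w_if$ piece on $2R_i\setminus 2Q_i$ by Lemma~\ref{lema:2.15}. Outside $2R_i$ it uses cancellation plus smoothness.

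Your rescaling is a real improvement, not cosmetic. The paper's own good-part estimate is $c\|T\|^2_{L^2(\mu)\to L^2(\mu)}\|f\|_{L^1(\mu)}/\lambda$, which does not give the linear bound stated, and linearity is what the absorption argument at the end of the paper relies on. Decomposing at height $\lambda/(1+\|T\|)$ gives $c\|T\|\,\|f\|_{L^1(\mu)}/\lambda$ for $g$ and $c(1+\|T\|)\|f\|_{L^1(\mu)}/\lambda$ for $\mu(\bigcup_i 2Q_i)$. The bound $\int_{\R^d\setminus 2Q_i}|Tb_i|\,d\mu\le c(1+\|T\|)\|f\chi_{Q_i}\|_{L^1(\mu)}$ does not depend on the height. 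So your version actually proves the statement as written.

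The part you call the main obstacle is only a plan as written; it closes as follows. Put $\rho_i=\operatorname{dist}(x_{Q_i},\R^d\setminus H)$. Since $B(x_{Q_i},\mathcal{R}(x_{Q_i}))\subset H$, we have $\rho_i\ge\mathcal{R}(x_{Q_i})$, hence $\mu(B(x_{Q_i},r))\le c_0r^n$ for every $r\ge\rho_i$. Your plan silently needs this fact.

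\textbf{Case $\rho_i\le\sqrt d\,\ell(R_i)$.} You get $\mu(B(x_{Q_i},r))\le c_0d^{n/2}r^n$ for all $r\ge\ell(R_i)$. This is all your estimates use, in particular $\mu(2R_i)\lesssim\ell(R_i)^n$.

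\textbf{Case $\rho_i>\sqrt d\,\ell(R_i)$.} Every $y\in R_i\supset\operatorname{supp}b_i$ satisfies $\operatorname{dist}(y,\R^d\setminus H)>\rho_i/2$. So (\ref{condicio_nucli_t}) gives $|t(x,y)|\le 2^n\rho_i^{-n}$ for \emph{every} $x$, and therefore $\int_{B(x_{Q_i},\rho_i)\setminus 2Q_i}|Tb_i|\,d\mu\le c\|b_i\|_{L^1(\mu)}$. This replaces Lemma~\ref{lema:2.15} on $2R_i\setminus 2Q_i$ as well, not only the annuli sums. Outside $B(x_{Q_i},\rho_i)$, the cancellation estimate with annuli starting at radius $\rho_i$ gives $c(\ell(R_i)/\rho_i)^\eta\le c$.

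The paper avoids this case split. It removes $B_i=B(x_{Q_i},\mathcal{R}(x_{Q_i})/2)$ and uses the kernel bound in the \emph{first} variable: for $x\in B_i$, $\operatorname{dist}(x,\R^d\setminus H)\ge\mathcal{R}(x_{Q_i})/2$, and $\mu(B_i)\le c_0\mathcal{R}(x_{Q_i})^n$. Off $B_i$ only growth at radii $\gtrsim\mathcal{R}(x_{Q_i})$ is needed. The region $2R_i\setminus(2Q_i\cup B_i)$ is nonempty only when $\ell(R_i)\gtrsim\mathcal{R}(x_{Q_i})$, which gives $\mu(2R_i)\lesssim\ell(R_i)^n$. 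Both routes work; yours exploits where $\operatorname{supp}b_i$ sits, the paper's where the output point sits.

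Finally, since the kernel is bounded you can work with $T$ itself, as the paper does. If you keep the truncations $T_\varepsilon$, the smoothness step outside $2R_i$ also needs the usual estimate on the shell where the sphere $|x-y|=\varepsilon$ cuts through $R_i$. That shell is controlled by the same growth or the same kernel bound.
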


\begin{proof}
    We have to show that for any $f\in L^1(\mu)$ and $\lambda>0$,
    $$
    \mu(\{x\in \R^d : |Tf(x)|>\lambda\}) \leq c\left(\|T\|_{L^2(\mu)\to L^2(\mu)}+1\right)\frac{\|f\|_{L^1(\mu)}}{\lambda}.
    $$
    Clearly, we may assume that $\lambda > 8 \|f\|_{L^1(\mu)}/\|\mu\|$, because otherwise the inequality above is obviously true. 

    Consider $\{Q_i\}_i$ the Calderón-Zygmund decomposition at level $\lambda$, given by Lemma \ref{descomposicio_calderon_zygmund}, for the measure $\nu = f\mu$. Moreover, let $R_i$ be the smallest cube of the of the form $R_i=6^k Q_i, \, k\geq 1$, such that $\mu(6R_i)\leq 6^{d+1}\mu(R_i)$ (recall Lemma \ref{lema_cubs_doblants}). Then, we write $f = g+b$, where
    $$
    g = \chi_{\R^d \setminus \bigcup_i Q_i}f+ \sum_i \varphi_i,
    $$
    and
    $$
    b= \sum_i b_i := \sum_i (w_i\,f-\varphi_i),
    $$
    where the functions $\varphi_i$ satisfy conditions (\ref{2.16}), (\ref{2.17}) and (\ref{2.18}) of Lemma \ref{descomposicio_calderon_zygmund}, and $w_i = \frac{\chi_{Q_i}}{\sum_k \chi_{Q_k}}$.

    Since each $Q_i$ satisfies that $|f\mu|(Q_i)>\frac{\lambda}{2^{d+1}}\mu(2Q_i)$, we have that
    $$
    \mu\bigg(\bigcup_i 2Q_i\bigg) \leq \sum_i \mu(2Q_i) \leq \sum_i \frac{2^{d+1}}{\lambda}|f\mu|(Q_i) \leq \frac{c}{\lambda}\sum_i \int_{Q_i}|f|\,d\mu \leq \frac{c}{\lambda}\|f\|_{L^1(\mu)},
    $$
    because the cubes from the family $\{Q_i\}_i$ are almost disjoint. So, it suffices to show that 
    \begin{equation}
    \mu\bigg(\bigg\{x\in \R^d\setminus \bigcup_i 2Q_i: |Tf(x)|>\lambda\bigg\}\bigg) \leq c\left(\|T\|_{L^2(\mu)\to L^2(\mu)}+1\right)\frac{\|f\|_{L^1(\mu)}}{\lambda}.\label{cotlar_el_que_provarem}
    \end{equation}
    Using the decomposition $f = g+b$, the right hand side of the inequality above does not exceed
    \begin{equation}
    \mu \bigg(\bigg\{\ x\in \R^d: |Tg(x)|>\frac{\lambda}{2}\bigg\}\bigg) + \mu\bigg(\bigg\{x\in \R^d \setminus \bigcup_i 2Q_i: |Tb(x)|>\frac{\lambda}{2}\bigg\}\bigg).\label{mesures_g_i_b}
    \end{equation}
    To estimate the first term above, first we use that $T$ is bounded in $L^2(\mu)$ and afterwards, that $|g|\leq c\lambda$, to get
    \begin{align*}
        \mu \bigg(\bigg\{\ x\in \R^d: |Tg(x)|>\frac{\lambda}{2}\bigg\}\bigg)  &= \mu \bigg(\bigg\{x\in \R^d : |Tg(x)|^2 >\frac{\lambda^2}{4}\bigg\}\bigg)\\
        &\leq \frac{4}{\lambda^2}\int_{\R^d}|Tg|^2 \,d\mu \leq \frac{4\|T\|^2_{L^2(\mu)\to L^2(\mu)}}{\lambda^2}\int_{\R^d}|g|^2\,d\mu.
    \end{align*}
    Furthermore, again using the properties of the cubes from the Calderón-Zygmund decomposition,
    \begin{align*}
        \int_{\R^d}|g|\,d\mu \leq\int_{\R^d\setminus \bigcup_i Q_i}|f|\,d\mu + \sum_i\int_{\R^d}|\varphi_i|\,d\mu  \leq \|f\|_{L^1(\mu)} + \sum_i\|f\chi_{Q_i}\|_{L^1(\mu)} \leq c\|f\|_{L^1(\mu)}.
    \end{align*}
    Hence,
    $$
    \mu \bigg(\bigg\{\ x\in \R^d: |Tg(x)|>\frac{\lambda}{2}\bigg\}\bigg) \leq \frac{c\|T\|_{L^2(\mu)\to L^2(\mu)}^2}{\lambda}\|f\|_{L^1(\mu)}.
    $$
    Let us examine now the second term in (\ref{mesures_g_i_b}). By Chevishev, we have that
    \begin{align}
        \mu\bigg(\bigg\{x\in \R^d \setminus \bigcup_i 2Q_i: |Tb(x)|>\frac{\lambda}{2}\bigg\}\bigg) \leq \frac{2}{\lambda}\int_{\R^d\setminus \bigcup_i 2Q_i}|Tb|\,d\mu \leq \frac{2}{\lambda}\sum_i \int_{\R^d\setminus 2Q_i}|Tb_i|\,d\mu. \label{cheby}
    \end{align}
    Now, we claim that for each $i$, the following inequality holds,
    \begin{equation}
        \int_{\R^d\setminus 2Q_i}|Tb_i|\,d\mu \leq c\,\|f\chi_{Q_i}\|_{L^1(\mu)}. \label{claim_per_cada_i}
    \end{equation}
    Notice that from this claim and (\ref{cheby}), we obtain that 
    $$
    \mu\bigg(\bigg\{x\in \R^d \setminus \bigcup_i 2Q_i: |Tb(x)|>\frac{\lambda}{2}\bigg\}\bigg) \leq \frac{c}{\lambda}\sum_{i}\|f\chi_{Q_i}\|_{L^1(\mu)} \leq \frac{c\,\|f\|_{L^1(\mu)}}{\lambda}.
    $$
    Let us prove (\ref{claim_per_cada_i}). Denote by $x_{Q_i}$ the center of the cube $Q_i$, and let $B_i = B(x_{Q_i}, \mathcal{R}(x_{Q_i})/2)$. We can write
    \begin{equation}
        \int_{\R^d \setminus 2Q_i}|Tb_i|\,d\mu \leq \int_{B_i}|Tb_i|\,d\mu + \int_{2R_i \setminus (2Q_i\cup B_i)}|Tb_i|\, d\mu + \int_{\R^d\setminus (2R_i \cup B_i)}|Tb_i|\,d\mu.\label{separacio_tres_integrals}
    \end{equation}
    We are now going to study each of the integrals on the right-hand side above separately. For the first one, notice that if $x\in B_i$, then $\text{dist}(x, \R^d\setminus H)\geq \mathcal{R}(x_{Q_i})/2$, so
    $$
    |t(x,y)| \leq \frac{2}{\mathcal{R}(x_{Q_i})^n}.
    $$
    Thus, using that for each $i$, we have $\|b_i\|_{L^1(\mu)}\leq c\, \|f\chi_{Q_i}\|_{L^1(\mu)}$ (this is a direct consequence of the definition of $b_i$ and the condition (\ref{2.18})), this yields
    $$
    |Tb_i(x)| \leq \frac{2}{\mathcal{R}(x_{Q_i})^n}\|b_i\|_{L^1(\mu)} \leq \frac{c}{\mathcal{R}(x_{Q_i})^n}\|f\chi_{Q_i}\|_{L^1(\mu)}.
    $$
    Lastly, using that $\mu(B_i)\leq \mu(B(x_{Q_i}, \mathcal{R}(x_{Q_i})))\leq c_0\,\mathcal{R}(x_{Q_i})^n$, we have that the first integral on the right-hand side of (\ref{separacio_tres_integrals}) is bounded by 
    $$
    \int_{B_i}|Tb_i|\,d\mu \leq  \frac{c}{\mathcal{R}(x_{Q_i})^n}\|f\chi_{Q_i}\|_{L^1(\mu)}\mu(B_i) \leq c\,\|f\chi_{Q_i}\|_{L^1(\mu)}.
    $$
    Now we turn our attention to the second integral on the right-hand side of (\ref{separacio_tres_integrals}). Using (\ref{2.18}), the $L^2(\mu)$ boundedness of $T$ and that $\mu(6R_i)\leq 6^{d+1}\mu(R_i)$, we have that
    \begin{align}
        \int_{2R_i\setminus (2Q_i \cup B_i)}|T\varphi_i|\,d\mu \leq \left(\int_{2R_i}|T\varphi|^2 \, d\mu\right)^{\frac{1}{2}}\mu(2R_i)^{\frac{1}{2}}&\leq c\left(\int_{\R^d}|\varphi_i|^2\,d\mu\right)^{\frac{1}{2}}\mu(R_i)^{\frac{1}{2}}\nonumber \\
        &\leq c\,\|f\chi_{Q_i}\|_{L^1(\mu)}.\label{la_segona_integral_part1}
    \end{align}
    Furthermore, since $\text{supp}(w_if)\subset Q_i$, if $x\in 2R_i\setminus 2Q_i$ and $y\in Q_i$, then $|x-x_{Q_i}|\leq c\,|x-y|$, so we have that
    \begin{align*}
        |T(w_if)(x)| &\leq \int_{\R^d}|t(x,y)|\, |w_i(y)|\,|f(y)|\,d\mu(y) \leq \int_{Q_i}|t(x,y)|\,|f(y)|\,d\mu(y)\\
        &\leq \frac{c}{|x-x_{Q_i}|^n}\int_{Q_i}|f(y)|\,d\mu(y) = \frac{c\,\|f\chi_{Q_i}\|_{L^1(\mu)}}{|x-x_{Q_i}|^n}.
    \end{align*}
    Integrating this inequality, 
    \begin{equation}
    \int_{2R_i \setminus (2Q_i \cup B_i)} |T(w_i f)|\,d\mu \leq c\,\|f\chi_{Q_i}\|_{L^1(\mu)} \int_{2R_i \setminus (2Q_i \cup B_i)}\frac{1}{|x-x_{Q_i}|^n}\,d\mu(x). \label{la_segona_integral}
    \end{equation}
    By Lemma \ref{lema:2.15}, since by our choice of $R_i$, there are no $(6,6^{d+1})$-doubling cubes of the form $6^kQ_i$ between $6Q_i$ and $R_i$, we have
    $$
    \int_{2R_i \setminus (2Q_i \cup B_i)} \frac{1}{|x-x_{Q_i}|^n}\,d\mu(x) \leq c\frac{\mu(2R_i)}{\ell(2R_i)^n}.
    $$
    Also, remark that if the integral above is non-zero, then we must have that $2R_i\not\subset B_i$, that is, if $\mathcal{R}(x_{Q_i})<\sqrt{d}\ell(2R_i)$. Hence, by the definition of $\mathcal{R}(x_{Q_i})$, it must be that
    $$
    \mu(2R_i) \leq \mu\left(B(x_{Q_i},\sqrt{d}\,\ell(2R_i) )\right) \leq c_0\left(\sqrt{d}\,\ell(2R_i)\right)^n,
    $$
    so we can bound the integral above by
    $$
    \int_{2R_i\setminus (2Q_i\cup B_i)} \frac{1}{|x-x_{Q_i}|^n}\,d\mu(x) \leq c\, c_0\,d^{\frac{n}{2}}.
    $$
    So, combining this with (\ref{la_segona_integral_part1}) and (\ref{la_segona_integral}), we have that 
    $$
    \int_{2R_i \setminus (2Q_i\cup B_i)} |Tb_i|\,d\mu \leq \int_{2R_i\setminus (2Q_i \cup B_i)}|T(w_i\,f)|\,d\mu + \int_{2R_i \setminus (2Q_i\cup B_i)}|T\varphi_i|\,d\mu \leq c\,\|f\chi_{Q_i}\|_{L^1(\mu)}.
    $$
    Lastly, we estimate the rightmost integral in (\ref{separacio_tres_integrals}). We can use the fact that $\int b_i \,d\mu=0$, $\text{supp}(b_i)\subset R_i$ and $\|b_i\|_{L^1(\mu)}\leq c\,\|f\chi_{Q_i}\|_{L^1(\mu)}$. This yields, for $x\not\in 2R_i$,
    \begin{align*}
    |Tb_i(x)| \leq \int |k(x,y)-k(x,x_{Q_i})|\,|b_i(y)|\, d\mu(y) &\leq \int c\,\frac{|x_{Q_i}-y|^{\eta}}{|x-x_{Q_i}|^{n+\eta}}\,|b_i(y)| \,d\mu(y) \\
    &\leq \frac{c\,\ell(R_i)^{\eta}}{|x-x_{Q_i}|^{n+\eta}}\|f\chi_{Q_i}\|_{L^1(\mu)}.
    \end{align*}
    Integrating the last inequality, 
    \begin{align}
        \int_{\R^d\setminus (2R_i \cup B_i)} |Tb_i|\,d\mu & \leq c\,\ell(R_i)^{\eta}\,\|f\chi_{Q_i}\|_{L^1(\mu)}\int_{\R^d\setminus (2R_i \cup B_i)}\frac{1}{|x-x_{Q_i}|^{n+\eta}}\,d\mu(x)\nonumber \\
        &\leq c\,\ell(R_i)^{\eta}\|f\chi_{Q_i}\|_{L^1(\mu)}\int_{|x-x_{Q_i}|>\max(\ell(R_i), \frac{1}{2}\mathcal{R}(x_{Q_i}))}\frac{1}{|x-x_{Q_i}|^{n+\eta}}\,d\mu(x).\label{terceraintegral}
    \end{align}
    As usual, we can estimate the last integral splitting the domain of integration into annuli. Denoting $\alpha_i = \max(\ell(R_i), \frac{1}{2}\mathcal{R}(x_{Q_i}))$, 
    $$
    \int_{|x-x_{Q_i}|>\alpha_i}\frac{1}{|x-x_{Q_i}|^{n+\eta}}\,d\mu(x) \leq \sum_{k\geq 0} \frac{\mu(B(x_{Q_i}, 2^{k+1}\alpha_i))}{(2^{k}\alpha_i)^{n+\eta}}.
    $$
    For each $k\geq 0$, $2^{k+1}\alpha_i\geq 2^k \mathcal{R}(x_{Q_i})\geq \mathcal{R}(x_{Q_i})$, so we deduce that the sum above does not exceed
    $$
    \sum_{k\geq 0}\frac{c_0\, (2^{k+1}\alpha_i)^n}{(2^k\alpha_i)^{n+\eta}} = \frac{2^n \,c_0}{\max(\ell(R_i), \frac{1}{2}\mathcal{R}(x_{Q_i}))^\eta}\sum_{k\geq 0}\frac{1}{2^{k\eta}}\leq \frac{c}{\ell(R_i)^{\eta}}.
    $$
    Plugging this in (\ref{terceraintegral}), we have proved (\ref{claim_per_cada_i}).
\end{proof}

\section{A probabilistic argument}

\label{chap:prob}

\subsection{Low probability of bad cubes and functions}

Let us remark that, up until now, all our efforts have been aimed towards estimating the behavior of the suppressed operators $K_{\Theta}$ when they act on good functions. Recall that our motivation for obtaining such estimates, as mentioned in the beginning of Section \ref{sec:good_functions}, was that the probability that a transit cube was bad with respect to a fixed dyadic lattice could be made arbitrarily small. The aim of this section is to, first, give a more precise statement of such claim and, afterwards, prove it. 

Remember that for a transit cube $Q\in \mathcal{D}^{\text{tr},1}_1$, we say that it is bad with respect to the dyadic lattice $\mathcal{D}_2$ if either
\begin{enumerate}[label=(\alph*)]
    \item there exists a cube $R\in \mathcal{D}_2$ such that $\text{dist}(Q,\partial R)\leq \ell(Q)^{\gamma}\ell(R)^{1-\gamma}$ and $2^m\ell(Q)\leq \ell(R)\leq 2^N$ (where $m\geq 1$ is some integer that we will fix below), or
    \item there exists a transit cube $R\in \mathcal{D}^{\text{tr}}_2$ with $2^{-m}\ell(Q)\leq \ell(R)\leq 2^m \ell(Q)$ and $\text{dist}(Q,R)\leq 2^m \ell(Q)$, such that for some $P\in \mathcal{CH}(Q)$, there is $S\in \mathcal{CH}(R)$ such that $\partial P$ is not $(\mu, M,S)$-small.
\end{enumerate}
Before estimating the probability that a given cube is bad with respect to another dyadic lattice, we need a couple of technical lemmas.

\begin{lemma}\label{lema_tecnic_probabilistic_1}
    Let $Q\in \mathcal{D}(w_1)= \mathcal{D}_1$ with $w_1\in \Omega$ and $k\geq m$ be fixed. For $w_2\in \Omega$, let $R(w_2)\in \mathcal{D}(w_2)=\mathcal{D}_2$ be the cube with side length $2^k\ell(Q)$ containing the center $z_Q$. Suppose that $2^k\ell(Q)\leq 2^N$. Denote by $z_{R(w_2)}$ the center of $R(w_2)$. Then, for any subset $A\subset 2^kQ$,
    $$
    P^{\Omega}\left(\{w_2\in \Omega : z_{R(w_2)}\in A\}\right) \leq c\,\frac{\mathcal{L}^d(A)}{\mathcal{L}^d(2^kQ)}.
    $$
\end{lemma}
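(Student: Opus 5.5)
The plan is to make the map $w_2\mapsto z_{R(w_2)}$ explicit and then compare measures. Set $L=2^k\ell(Q)$. Since $L\le 2^N$ is a power of $2$ (because $\ell(Q)$ is), the cubes of side $L$ in $\mathcal{D}(w_2)$ are $w_2+L(j+[0,1)^d)$, $j\in\mathbb{Z}^d$. Hence their centers form the lattice $w_2+L\mathbb{Z}^d+\frac{L}{2}(1,\dots,1)$, and $z_{R(w_2)}$ is the unique point of this lattice with $z_Q\in z_{R(w_2)}+[-L/2,L/2)^d$. Equivalently, $z_{R(w_2)}-z_Q$ is determined by $w_2 \bmod L\mathbb{Z}^d$. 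More precisely, writing $\pi(w)$ for the representative of $w+L(\tfrac12,\dots,\tfrac12)-z_Q$ in $[-L/2,L/2)^d$ modulo $L\mathbb{Z}^d$, we have $z_{R(w_2)}=z_Q+\pi(w_2)$ up to sign conventions. The key observation is that $\pi$ is a translation followed by reduction modulo $L\mathbb{Z}^d$, so it is locally measure preserving.

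Next, note that $z_{R(w_2)}$ always lies in $z_Q+[-L/2,L/2]^d\subset 2^kQ$, because $\ell(2^kQ)=L$ and $2^kQ$ is concentric with $Q$. It is therefore enough to bound $P^\Omega(\{w_2:\pi(w_2)\in A-z_Q\})$. The set $\{w_2\in\Omega:\pi(w_2)\in A'\}$, with $A'=A-z_Q\subset[-L/2,L/2]^d$, is contained in $\bigcup_{j}(A'+c_0+Lj)\cap\Omega$ for a fixed shift $c_0$. Its Lebesgue measure is therefore at most $\mathcal{L}^d(A')$ times the number of lattice translates $A'+c_0+Lj$ that meet $\Omega$.

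Since $\Omega$ is a cube of side $2^{N-3}$, the number of $j\in\mathbb{Z}^d$ with $(c_0+Lj+[-L/2,L/2]^d)\cap\Omega\neq\varnothing$ is at most $(2^{N-3}/L+2)^d\le c\,(2^{N}/L)^d$, using $L\le 2^N$. This gives
\[
P^\Omega(\cdots)\le \frac{c\,(2^N/L)^d\,\mathcal{L}^d(A)}{\mathcal{L}^d(\Omega)}=\frac{c\,(2^N/L)^d\,\mathcal{L}^d(A)}{2^{(N-3)d}}\le c\,\frac{\mathcal{L}^d(A)}{L^d}=c\,\frac{\mathcal{L}^d(A)}{\mathcal{L}^d(2^kQ)}.
\]

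The only point needing care is the regime where $L$ is comparable to or larger than the side of $\Omega$, which happens when $L$ is close to $2^N$. Here the counting must use the additive $+2$, so the bound by $(2^N/L)^d$ holds with a dimensional constant precisely because $L\le 2^N$; the hypothesis $2^k\ell(Q)\le 2^N$ is exactly what this step requires. Everything else is the routine identification of the center map modulo $L\mathbb{Z}^d$, with half-open boundary conventions affecting only null sets.
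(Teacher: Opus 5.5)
Your proof is correct and follows essentially the paper's route: you identify $w_2\mapsto z_{R(w_2)}$ as a translation followed by reduction modulo $2^k\ell(Q)\mathbb{Z}^d$, so the preimage of $A$ is a union of lattice translates of $A$, and then you count how many of these translates meet the parameter cube. The only difference is minor: the paper enlarges $\Omega$ to $\Omega'=[-2^{N-1},2^{N-1}]^d$, which is tiled exactly by $(2^N/(2^k\ell(Q)))^d$ cubes of side $2^k\ell(Q)$ at the cost of a factor $2^{3d}$, whereas you count the translates meeting $\Omega$ directly with the $+2$ slack; both arguments use $2^k\ell(Q)\le 2^N$ at exactly this point.
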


\begin{proof}
    Before proving the inequality in the statement, let us remark that $z_{R(w_2)}$ always belongs to $\overline{2^kQ}$. This is because $z_Q\in R(w_2)$ and so we have that
    $$
    \|z_Q-z_{R(w_2)}\|_{\infty} \leq \frac{\ell(R(w_2))}{2} = \frac{2^k}{2}\ell(Q) = \frac{\ell(2^kQ)}{2}.
    $$
    Now, let us recall some useful information concerning the set $\Omega$, which was defined in Section \ref{sec:random_dyadic_lattices}. Remember that we called $S^0= [0,2^N]^d$ and we assumed that $F\subset \frac{1}{8}S^0$. Then, $\Omega$ is defined as
    $$
    \Omega = [-2^{N-4}, 2^{N-4}]^d,
    $$
    and for $w\in \Omega$, we write $Q^0(w)=w+S^0$. By Lemma \ref{cub_1/4}, since $\ell(\Omega)= 2^{N-3}$, for each $w\in \Omega$ we have that $F\subset \frac{1}{4}Q^0(w)$. Lastly, we denote by $P^\Omega$ the normalized Lebesgue measure on the cube $\Omega$. We see that $\ell(\Omega) = 2^{N-3}$ could be smaller than $2^k\ell(Q)$. To fix this imbalance, we consider $\Omega':=[-2^{N-1}, 2^{N-1}]^d$, and we write
    \begin{align}
        P^{\Omega}\left(\{w_2\in \Omega : z_{R(w_2)}\in A\}\right) &= \frac{\mathcal{L}^d(\{w_2\in \Omega : z_{R(w_2)}\in A\})}{\mathcal{L}^d(\Omega)}\nonumber \\
        &\leq 2^{3d}\,\frac{\mathcal{L}^d(\{w_2\in \Omega': z_{R(w_2)}\in A\})}{\mathcal{L}^d(\Omega')},\label{pas_1_probabilitat}
    \end{align}
    because $\mathcal{L}^d(\Omega')=2^{3d}\mathcal{L}^d(\Omega)$. To estimate the measure above, we can think of the assignation $w_2\mapsto z_{R(w_2)}$ as a map defined not only on $\Omega$, but for $w_2\in \R^d$, $\text{z}\,\colon \R^d\to 2^kQ$, i.e. $\text{z}(w_2)= z_{R(w_2)}$. Using this, we can write
    $$
    \mathcal{L}^d(\{w_2\in \Omega': z_{R(w_2)}\in A\}) = \mathcal{L}^d(\Omega'\cap \text{z}^{-1}(A)).
    $$
    Let us now study the pre-image $\text{z}^{-1}(A)$. First, a simple computation shows that if we write, using coordinates, $z_Q = (z_Q^1,\dots, z_Q^d)$ and $w_2 = (w_2^1,\dots, w_2^d)$,
    \begin{align*}
    \text{z}(w_2) &= w_2 + 2^k\ell(Q)\left(\left\lfloor \frac{z_{Q}^1-w_2^1}{2^k\ell(Q)}\right\rfloor, \dots , \left\lfloor \frac{z_{Q}^d-w_2^d}{2^k\ell(Q)}\right\rfloor\right) + \frac{2^k\ell(Q)}{2}(1,\dots, 1)\\
    &= w_2 + 2^k\ell(Q)(n_1(w_2), \dots, n_d(w_2)) + 2^{k-1}\ell(Q)(1,\dots, 1).
    \end{align*}
    Hence,
    \begin{align*}
        \text{z}^{-1}(A) &=\left\{w_2: 2^k\ell(Q)(n_1(w_2), \dots, n_d(w_2)) + 2^{k-1}\ell(Q)(1,\dots, 1)\in A\right\} \\
        &=\bigcup_{(n_1, \dots, n_d)\in \mathbb{Z}^d}\left( A- 2^k\ell(Q)(n_1,\dots, n_d) - 2^{k-1}\ell(Q)(1,\dots, 1)\right),
    \end{align*}
    and the sets appearing in the last union are pairwise disjoint. We are interested in taking their intersection with $\Omega'$. Given any $y\in 2^kQ$, we have that $y=\text{z}(w_2)$ for some $w_2\in \Omega'$ if and only if there exist $n_1,\dots, n_d\in \mathbb{Z}$ such that
    $$
    y- 2^k\ell(Q)(n_1,\dots, n_d) -2^{k-1}\ell(Q)(1,\dots,1)\in \Omega'.
    $$
    Since $\ell(2^kQ)= 2^k\ell(Q)\leq 2^N = \ell(\Omega')$, it is clear that we can always choose such integers $n_1,\dots, n_d$ (possibly there is more than one choice). Write $\ell(R(w_2))=2^j$, with $j\leq N$. Then, we can divide $\Omega'$ into $\alpha = (2^{N-j})^d$ cubes of side length $2^j$. In this case, the image of each of these cubes via the function $\text{z}$ is $2^kQ$, which means that
    $$
    \text{z}^{-1}(A) \cap \Omega' = \bigcup_{l=1}^{\alpha} \left(A-2^k\ell(Q)(n_1^l, \dots, n_d^l)-2^{k-1}\ell(Q)(1,\dots, 1)\right) :=\bigcup_{l=1}^\alpha A_l.
    $$
    Since the sets $\{A_l\}_{l=1}^{\alpha}$ are pairwise disjoint translates of $A$, we find that
    $$
    \mathcal{L}^d(\text{z}^{-1}(A)\cap \Omega') = \sum_{l=1}^\alpha\mathcal{L}^d(A_l) = \alpha\, \mathcal{L}^d(A).
    $$
    Taking $A = 2^kQ$, we obtain $\mathcal{L}^d(\text{z}^{-1}(A)\cap \Omega')= \mathcal{L}^d(\Omega') = \alpha \,\mathcal{L}^d(2^kQ)$. Using this in (\ref{pas_1_probabilitat}), we obtain
    $$
    P^{\Omega}\left(\{w_2\in \Omega : z_{R(w_2)}\in A\}\right) \leq 2^{3d}\frac{\alpha \,\mathcal{L}^d(A)}{\alpha \,\mathcal{L}^d(2^kQ)} = c\,\frac{\mathcal{L}^d(A)}{\mathcal{L}^d(2^kQ)},
    $$
    which is what we wanted to prove.
\end{proof}

\begin{lemma}\label{lema_tecnic_probabilistic_2}
    Let $Q,k$ be as in Lemma \ref{lema_tecnic_probabilistic_1}. Using the same notation as in the lemma, the following are equivalent,
    \begin{enumerate}[label=(\arabic*)]
        \item there exists some $R'\in \mathcal{D}(w_2)$ with side length $2^k\ell(Q)$ such that $\text{dist}(Q,\partial R')\leq \ell(Q)^{\gamma}\ell(R')^{1-\gamma}$,
        \item $z_Q\in \overline{\mathcal{U}_s(\partial R(w_2))\cap R(w_2)}$, where $s = \ell(Q)^{\gamma}\ell(R(w_2))^{1-\gamma} + \frac{1}{2}\ell(Q)$,
        \item $z_{R(w_2)}\in \overline{\mathcal{U}_s(\partial (2^kQ))\cap 2^kQ}$.
    \end{enumerate}
\end{lemma}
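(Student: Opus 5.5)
The plan is to prove $(1)\Leftrightarrow(2)$ and then $(2)\Leftrightarrow(3)$. Throughout, $\mathcal{U}_s(E)$ denotes the open $s$-neighbourhood of $E$ in the $\|\cdot\|_\infty$ metric, consistent with the cube geometry used in Lemma \ref{lema_tecnic_probabilistic_1}. The only structural fact needed is that the cubes of $\mathcal{D}(w_2)$ of side length $2^k\ell(Q)$ tile $\mathbb{R}^d$. Consequently, the union of their boundaries is the grid $\bigcup_{R'}\partial R'$, and within $R(w_2)$ this grid coincides with $\partial R(w_2)$.

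For $(1)\Rightarrow(2)$, take $R'$ as in (1) and write $\ell:=2^k\ell(Q)$, so that $\ell(R')=\ell(R(w_2))$ and the threshold $\ell(Q)^{\gamma}\ell^{1-\gamma}$ is the same for every cube of that generation. Pick $x\in\overline Q$ and $p\in\partial R'$ with $\|x-p\|\le \ell(Q)^\gamma\ell^{1-\gamma}$. Then
$$\|z_Q-p\|\le \ell(Q)^\gamma\ell^{1-\gamma}+\tfrac12\ell(Q)=s.$$
The point $p$ lies on the grid, so $\operatorname{dist}(z_Q,\text{grid})\le s$. Since $z_Q\in R(w_2)$, the closest grid point in the sup norm can be taken on $\partial R(w_2)$: the segment from $z_Q$ to any grid point exits $R(w_2)$ through $\partial R(w_2)$, and the coordinatewise distance to the nearest face of $R(w_2)$ is at most the distance to any other grid hyperplane. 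This gives $z_Q\in\overline{\mathcal U_s(\partial R(w_2))\cap R(w_2)}$.

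For $(2)\Rightarrow(1)$, take $R'=R(w_2)$. We have $\operatorname{dist}(Q,\partial R(w_2))\le\operatorname{dist}(z_Q,\partial R(w_2))-\tfrac12\ell(Q)\le \ell(Q)^\gamma\ell^{1-\gamma}$. This step uses that $Q$ is a cube of half-side $\tfrac12\ell(Q)$ in the sup norm, and it needs some care when $Q$ crosses $\partial R(w_2)$, in which case the distance is $0$ and the inequality is trivial. The closure in (2) is harmless because the conditions are closed inequalities.

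For $(2)\Leftrightarrow(3)$ the idea is a symmetry. Both $R(w_2)$ and $2^kQ$ are cubes of side $\ell$, and $z_Q\in R(w_2)$ is equivalent to $z_{R(w_2)}\in\overline{2^kQ}$; the latter was already observed in the proof of Lemma \ref{lema_tecnic_probabilistic_1}. Then consider the reflection $x\mapsto z_Q+z_{R(w_2)}-x$, which swaps the two centres. This map sends $R(w_2)$ onto the cube of side $\ell$ centred at $z_Q$, which is $2^kQ$ up to boundary conventions. It is a sup-norm isometry, so it maps $\mathcal U_s(\partial R(w_2))\cap R(w_2)$ onto $\mathcal U_s(\partial(2^kQ))\cap 2^kQ$ up to boundary sets. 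The reflection sends $z_Q$ to $z_{R(w_2)}$, which yields the equivalence. More concretely, for each coordinate $j$ both conditions say that $\min\bigl(\tfrac\ell2-|z_Q^j-z_{R}^j|\bigr)\le s$ together with $|z_Q^j-z_R^j|\le\tfrac\ell2$ for all $j$.

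The main obstacle is making $(1)\Rightarrow(2)$ rigorous when $R'\neq R(w_2)$, that is, justifying that the nearest piece of the grid to $z_Q$ lies on $\partial R(w_2)$. I will handle this coordinatewise in the sup norm, where the grid is a union of hyperplanes $\{x^j\in w_2^j+\ell\mathbb Z\}$. With this description it is immediate that the distance from a point of $R(w_2)$ to the grid equals its distance to $\partial R(w_2)$. The remaining work is bookkeeping with closures and half-open cubes.
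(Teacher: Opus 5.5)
Your proposal is correct and follows essentially the paper's route: $(1)\Leftrightarrow(2)$ by passing between $Q$ and $z_Q$ at the cost of $\tfrac12\ell(Q)$, and $(2)\Leftrightarrow(3)$ through the symmetric condition $\tfrac12\ell(R(w_2))-s\le\|z_Q-z_{R(w_2)}\|_\infty\le\tfrac12\ell(R(w_2))$. That condition is exactly the paper's intermediate condition (4), and your reflection $x\mapsto z_Q+z_{R(w_2)}-x$ is a tidy way of seeing its symmetry. Your grid/segment argument for $(1)\Rightarrow(2)$ and your separate treatment of the case $Q\cap\partial R(w_2)\neq\varnothing$ are more careful than the paper, whose proof asserts $\text{dist}(Q,\partial R')=\text{dist}(Q,\partial R(w_2))$ and $\text{dist}(z_Q,\partial R(w_2))=\text{dist}(Q,\partial R(w_2))+\tfrac12\ell(Q)$ as equalities, although in general they hold only as inequalities (fortunately in the direction that is needed).
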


\begin{proof}
    We start by showing that $(2)$ implies $(1)$. Assume that $z_Q\in \overline{\mathcal{U}_s(\partial R(w_2))\cap R(w_2)}$. We can use that
    $$
    \text{dist}(Q,\partial R(w_2))  \leq \text{dist}(z_Q, \partial R(w_2)) \leq \ell(Q)^{\gamma}\ell(R(w_2))^{1-\gamma}. 
    $$
    Hence, there is $R'$ some neighbor of $R(w_2)$ such that
    $$
    \text{dist}(Q,\partial R') = \text{dist}(Q, \partial R(w_2)) \leq \ell(Q)^{\gamma}\ell(R(w_2))^{1-\gamma} = \ell(Q)^{\gamma}\ell(R')^{1-\gamma},
    $$
    so we can take $R'$ to be the cube from $\mathcal{D}(w_2)$ appearing in condition $(1)$. 
    
    In the converse direction, assume that there is $R'\in \mathcal{D}(w_2)$ with $\ell(R')=2^k\ell(Q)$ such that $\text{dist}(Q,\partial R')\leq \ell(Q)^{\gamma}\ell(R')^{1-\gamma}$. To check that $(2)$ holds, since we already know that $z_Q\in \overline{R(w_2)}$, we only need to check that $\text{dist}(z_Q, \partial R(w_2))\leq s$. 
    
    There are two possibilities for the cube $R'$. Either $R'=R(w_2)$ or it is a neighbor of $R(w_2)$. In either case, $\text{dist}(Q, \partial R')=\text{dist}(Q, \partial R(w_2)) \leq \ell(Q)^{\gamma}\ell(R(w_2))^{1-\gamma}$. Since our cubes have sides parallel to the coordinate axes, we find that
    $$
    \text{dist}(z_Q, \partial R(w_2)) = \text{dist}(Q, \partial R(w_2)) + \frac{1}{2}\ell(Q) \leq \ell(Q)^{\gamma}\ell(R(w_2))^{1-\gamma} + \frac{1}{2}\ell(Q)=s,
    $$
    which means that $(2)$ holds. Lastly, to show that $(2)$ and $(3)$ are equivalent, we will show that they are both equivalent to a fourth condition,
    $$
    (4)\quad 2^{k-1}\ell(Q)-s\leq \|z_Q-z_{R(w_2)}\|_{\infty} \leq 2^{k-1}\ell(Q).
    $$
    Assume that $(2)$ holds. In particular, $z_Q\in \overline{R(w_2)}$, so $\|z_Q-z_{R(w_2)}\|_{\infty} = 2^{k-1}\ell(Q)$. Since $z_Q\in \overline{\mathcal{U}_s(\partial R(w_2))}$, we can choose $x\in \partial R(w_2)$ such that $\|x-z_Q\|_{\infty}\leq s$. Using this auxiliary point,
    \begin{align*}
        \|z_Q-z_{R(w_2)}\|_{\infty} &\geq \|z_{R(w_2)}-x\|_\infty - \|x-z_{Q}\|_{\infty}\geq 2^{k-1}\ell(Q) -s.
    \end{align*}
    Conversely, assume that $(4)$ holds. Since by construction $z_Q\in R(w_2)$, we only have to check that $\text{dist}(z_Q, \partial R(w_2))\leq s$. Again, since the sides of our cubes are parallel to the axes, 
    $$
    \text{dist}(z_Q, \partial R(w_2)) = 2^{k-1}\ell(Q) - \|z_{R(w_2)}- z_Q\|_{\infty} \leq 2^{k-1}\ell(Q) - 2^{k-1}\ell(Q)+s = s.
    $$
    Lastly, to see that $(3)$ is equivalent to $(4)$, we argue as above, taking $x\in \partial(2^kQ)$ instead of $x\in \partial R(w_2)$, and the arguments are analogous.
\end{proof}

The following lemma is the main point of this section. It assures us that we can choose $m$ and $M$ in the definition of bad cubes so that the probability that a cube is bad is as small as we want.

\begin{lemma}\label{lema:probabilitat}
    Let $0<\varepsilon_b<1$ be fixed. Suppose that the constants $m$ and $M$ are big enough, depending only on $\varepsilon_b$, in the definition of bad cubes. Let $\mathcal{D}_1=\mathcal{D}(w_1)$, with $w_1\in \Omega$, be any fixed dyadic lattice. Then, for each fixed $Q\in \mathcal{D}^{\text{tr},1}_1$, we have that
    \begin{equation}
    P^{\Omega}(\{w_2\in \Omega : Q\,\text{ is bad with respect to }\, \mathcal{D}(w_2)=\mathcal{D}_2\})\leq \varepsilon_b.\label{probabilitat}
    \end{equation}
\end{lemma}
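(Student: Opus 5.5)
We bound separately the probability that $Q$ is bad by condition (a) and by condition (b), making each at most $\varepsilon_b/2$: first choose $m$ large for (a), then, with $m$ fixed, choose $M$ large for (b).

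\textbf{Condition (a).} Fix $k\geq m$ with $2^k\ell(Q)\leq 2^N$, and let $R(w_2)\in\mathcal{D}(w_2)$ be the cube of side length $2^k\ell(Q)$ containing $z_Q$. Any cube $R'$ of that generation with $\operatorname{dist}(Q,\partial R')\leq \ell(Q)^\gamma\ell(R')^{1-\gamma}$ is either $R(w_2)$ or a neighbour of it. By Lemma \ref{lema_tecnic_probabilistic_2}, the existence of such an $R'$ is equivalent to
$$
z_{R(w_2)}\in \overline{\mathcal{U}_s(\partial(2^kQ))\cap 2^kQ}, \qquad s=\ell(Q)^\gamma(2^k\ell(Q))^{1-\gamma}+\tfrac12\ell(Q)\leq 2\cdot 2^{k(1-\gamma)}\ell(Q).
$$
This boundary layer has Lebesgue measure at most $c\,s\,(2^k\ell(Q))^{d-1}$. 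Lemma \ref{lema_tecnic_probabilistic_1} therefore bounds the probability of this event by
$$
c\,\frac{s}{2^k\ell(Q)}\leq c\,2^{-k\gamma}.
$$
Summing over $k\geq m$ gives $c\,2^{-m\gamma}/(1-2^{-\gamma})$, which is at most $\varepsilon_b/2$ once $m$ is large depending on $\varepsilon_b$ (and on $\gamma$, $d$).

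\textbf{Condition (b).} Fix $m$. Since $R$ ranges over cubes of $\mathcal{D}(w_2)$ with $2^{-m}\ell(Q)\leq\ell(R)\leq 2^m\ell(Q)$ and $\operatorname{dist}(Q,R)\leq 2^m\ell(Q)$, the relevant children $S$ of these $R$ fall into at most $2m+1$ generations. Within each generation there are at most $C(m,d)$ such cubes, for every $w_2$.

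For a fixed child $P\in\mathcal{CH}(Q)$ and a fixed generation of side length $2^j\ell(Q)$, the children $S$ of that generation are pairwise disjoint. Hence
$$
\sum_S \mu\lfloor S\leq \mu\lfloor B, \qquad B:=B(z_Q,c_m\ell(Q)).
$$
If $\partial P$ fails to be $(\mu,M,S)$-small, there is $\lambda>0$ with
$$
\mu\bigl(S\cap\{\operatorname{dist}(\cdot,\partial P)\leq\lambda\ell(P)\}\bigr)>\lambda M\mu(S).
$$
I would define the maximal function
$$
\mathcal{M}(w_2)=\sup_{S}\sup_{\lambda>0}\frac{\mu\bigl(S\cap \mathcal{U}_{\lambda\ell(P)}(\partial P)\bigr)}{\lambda\,\mu(S)}
$$
and estimate $P^\Omega(\mathcal{M}>M)$. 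The sets $S$ move with $w_2$ while $P$ is fixed, so I would average over translations. By Fubini, the expected $\mu$-mass of $\mathcal{U}_{t}(\partial S)$ inside $B$ is $\lesssim t\,\mu(B)/\ell(S)$; this handles the random boundary. The fixed boundary $\partial P$, by contrast, requires a weak-type estimate for the maximal function above.

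\textbf{Main obstacle.} I expect this maximal weak-type estimate to be the main obstacle. Measures may concentrate near $\partial P$, and then no choice of $M$ helps. The definition, however, only asks smallness relative to $S$, and the ratio is scale-invariant in $\lambda$. I would try to reduce to the one-dimensional Hardy--Littlewood maximal function of the pushforward of $\mu\lfloor B$ under $x\mapsto\operatorname{dist}(x,\partial P)$, evaluated at the random translation parameter governing the faces of $S$. Its weak-$(1,1)$ bound (Theorem \ref{HL}) then gives
$$
P^\Omega(\mathcal{M}>M)\leq C(m,d)/M.
$$
If this fails, the fallback is to modify $P$ slightly, as in the lemma on concentric hyperrectangles with small boundary stated earlier, and absorb the change. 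Finally, choosing $M\geq 2C(m,d)(2m+1)2^d/\varepsilon_b$ and summing over $P$, the generations and the two conditions yields (\ref{probabilitat}).
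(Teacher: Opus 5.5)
Your treatment of condition (a) follows the paper's argument: Lemma \ref{lema_tecnic_probabilistic_2}, then Lemma \ref{lema_tecnic_probabilistic_1}, then a geometric sum in $k\geq m$. Your layer bound $c\,s\,(2^k\ell(Q))^{d-1}$ is the correct one. The paper writes $c\,s^{d-1}2^k\ell(Q)$, which is not an upper bound for $d\geq 3$; your decay $2^{-k\gamma}$ sums just as well.

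The gap is in (b), and it is the one you suspected. For a fixed $Q$, the boundary $\partial P$ in (b) does not depend on $w_2$; only the window $S$ moves. A weak-type $(1,1)$ bound for a one-dimensional maximal function controls the probability that a \emph{randomly placed} point lands where the maximal function of a \emph{fixed} measure is large. In your $\mathcal{M}(w_2)$, mass is only ever tested near the fixed set $\partial P$, so Theorem \ref{HL} has nothing random to act on. The reduction you sketch (pushforward under $\text{dist}(\cdot,\partial P)$ ``evaluated at the translation parameter of the faces of $S$'') does not describe the event.

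In fact the bound $P^\Omega(\mathcal{M}>M)\leq C(m,d)/M$ is false. Work in $\mathbb{R}^2$ with $n=1$ and let $\mu$ be arc length on a segment containing a side of $P$. This is compatible with Theorem \ref{TEOREMA}, e.g.\ for the Cauchy transform with $H_{\mathcal{D}}=T^i_{\mathcal{D}}=\varnothing$ and $b_i\equiv 1$, so every cube charging $\mu$ is transit. For every $w_2$, the cube $S\in\mathcal{D}(w_2)$ of side $\ell(P)$ containing the midpoint of that side has $\mu(S\cap\partial P)>0$. Hence $\mu(\{x\in S:\text{dist}(x,\partial P)\leq\lambda\ell(P)\})>\lambda M\mu(S)$ for small $\lambda$, and its parent is an admissible transit $R$. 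So $Q$ is bad with probability $1$, whatever $M$ is. Your fallback of perturbing $P$ changes the notion of badness rather than proving the estimate.

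What works is to test a \emph{random} boundary against a \emph{fixed} measure; this is the maximal-function version of your Fubini remark about $\partial S$. Fix $P$ and let $\sigma$ be the projection of $\mu\lfloor P$ onto the $e_j$-axis. The hyperplane $\{x_j=x_h\}$ containing a face of $S$ fails to be $(\mu,M,P)$-small exactly when $\sup_{r>0}\sigma(B(x_h,r))/r>M\mu(P)/\ell(S)$. By the weak $(1,1)$ bound this happens on a set of length at most $A\,\ell(S)/M$. Meanwhile $x_h\in w_2^j+\ell(S)\mathbb{Z}$, and $w_2^j$ has density $\lesssim 1/\ell(S)$ modulo $\ell(S)$ (as in Lemma \ref{lema_tecnic_probabilistic_1}). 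This gives probability $\lesssim 1/M$ per child, direction and generation, with $M$ replaced by $M/(2d)$ when passing from $\partial S$ to a single face.

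That proves the lemma for (b) read as ``$\partial S$ is not $(\mu,M,P)$-small''. The paper's proof does exactly this computation with $P$ and $S$ interchanged: it fixes $S$, projects $\mu\lfloor S$, and lets the faces of $P$ move by periodicity of $\mathcal{D}_1$. So, for fixed $Q$ and random $w_2$, it too only covers that reading. The reading costs nothing later. Lemma \ref{els_dos_primers_transit} uses both (\ref{primera_condicio_small}) and (\ref{condicio_small}), the goodness of $Q$ and of $R$ supplies one each, and the paper itself derives (\ref{condicio_small}) from the goodness of $Q$.
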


\begin{proof}
    Fix $Q\in \mathcal{D}^{\text{tr},1}_1$. Since it is bad if either (a) or (b) above in the definition happens, the probability in (\ref{probabilitat}) is controlled by 
    $$
    P^{\Omega}(\{Q\,\text{is bad w.r.t.} \mathcal{D}_2 \,\text{due to (a)}\}) + P^{\Omega}(\{Q\,\text{is bad w.r.t.} \mathcal{D}_2 \,\text{due to (b)}\}):=P_{{\text{(a)}}} + P_{{\text{(b)}}}.
    $$
    Our aim now is to choose the constants $m$ and $M$ so that each of the two probabilities above is small. Note that condition (a) only takes into account the integer $m$, while condition (b) depends on both $m$ and $M$. Due to this, our strategy will be to make $P_{\text{(a)}}$ small by choosing $m$ big enough, and for a fixed value of $m$, choose $M$ big enough so that $P_{\text{(b)}}$ is also small.

    {\underline{\textbf{Choice of} $m$:}} we start by estimating
    $$
    P_{\text{(a)}} = P^{\Omega}(\{w_2\in \Omega : \exists R\in \mathcal{D}_2, \, \text{dist}(Q, \partial R)\leq \ell(Q)^{\gamma}\ell(R)^{1-\gamma},\, 2^m\ell(Q)\leq \ell(R)\leq 2^N\}).
    $$
    For each $k\geq m$, by Lemmas \ref{lema_tecnic_probabilistic_2} and \ref{lema_tecnic_probabilistic_1}, we have that
    \begin{align}
        &P^{\Omega}\left(\left\{\omega_2\in \Omega : \exists R\in \mathcal{D}_2, \text{dist}(Q,\partial R)\leq \ell(Q)^{\gamma}\ell(R)^{1-\gamma }, \ell(R)=2^k\ell(Q)\right\}\right)\label{probabilitat_m} \\
        &\qquad \qquad = P^{\Omega}\left(\left\{w_2\in \Omega: z_{R(w_2)}\in \overline{\mathcal{U}_s(\partial (2^kQ))\cap 2^kQ}\right\}\right) \nonumber \\
        &\qquad \qquad \leq c\,\frac{\mathcal{L}^d(\overline{\mathcal{U}_s(\partial(2^kQ))\cap 2^kQ)})}{\mathcal{L}^d(2^kQ)} \leq c\, \frac{s^{d-1}\,2^k\ell(Q)}{(2^k\ell(Q))^d}.\nonumber
    \end{align}
    Since $s \leq 2\ell(Q)^{\gamma}\ell(R(w_2))^{1-\gamma} = 2^{k(1-\gamma)+1}\ell(Q)$, we have that the probability in (\ref{probabilitat_m}) is bounded above by
    $$
    c\,\frac{(\ell(Q)2^{k(1-\gamma)+1})^{d-1}2^k\ell(Q)}{(2^k \ell(Q))^d} = c\, 2^{-k\gamma(d-1)}.
    $$
    This means that
    $$
    P_{\text{(a)}}\leq \sum_{k\geq m}c\, 2^{-k\gamma(d-1)} = c\, \frac{2^{-m\gamma(d-1)}}{1-2^{-\gamma(d-1)}}\xrightarrow{m\to +\infty}0,
    $$
    so we choose $m$ so that $P_{\text{(a)}}\leq \frac{\varepsilon_b}{2}$.
    
    \underline{\textbf{Choice of }$M$:} we write 
    \begin{align*}
        P_{\text{(b)}} = P^{\Omega}(\{ w_2\in \Omega :\,& \exists R\in \mathcal{D}(w_2), 2^{-m}\ell(Q)\leq \ell(R)\leq 2^m\ell(Q), \text{dist}(Q,R)\leq 2^m\ell(Q)\\
        &\text{and for some }P\in \mathcal{CH}(Q),\text{ there is }S\in \mathcal{CH}(R) \text{ such that }\partial P\\
        & {\text{is not }(\mu,M,S)}\text{-small}\}).
    \end{align*}
    By writing $Q = \cup_{i=1}^{2^d}P_i$ as the union of its children, we have that the probability above is bounded by
    \begin{alignat}{2}
        &\sum_{i=1}^{2^d} P^{\Omega}(\{ w_2\in \Omega :\,&& \exists S\in \mathcal{D}(w_2), 2^{-m-1}\ell(Q)\leq \ell(S)\leq \min(2^m\ell(Q), 2^N),\nonumber\\ & &&\text{dist}(P_i,S)\leq 2^{m+2}\ell(Q)
        \text{ such that }\partial P_i\text{ is not }(\mu,M,S)\text{-small}\}).\label{probabilitat_fills}
    \end{alignat}
    Let us write the sum above as $\sum_{i=1}^{2^d}p_i$, where for each $i$, $p_i$ denotes the probability associated to the $i$-th child of $Q$ above. Since there is no canonical ordering of $\mathcal{CH}(Q)$, we will only estimate $p_1$ and obtain a bound that does not depend on that specific child of $Q$. For convenience, we will denote from now on $P:=P_1$. Note that the boundary of $P$ is the union of $2d$ $d-1$ dimensional cubes, its ``faces'', that is,
    $$
    \partial P = \bigcup_{i=1}^{2d} H_i (P),
    $$
    where each $H_i(P)$ is contained in a hyperplane from $\R^d$. Since
    $$
    \mu\left(\left\{x\in S : \text{dist}(x,\partial P)\leq \lambda \ell(P)\right\}\right) \leq \sum_{i=1}^{2d} \mu\left(\left\{x\in S : \text{dist}(x,H_i(P))\leq \lambda \ell(P)\right\}\right),
    $$
    we see that if $\partial P$ is not $(\mu,M,S)$-small, there is some $H_{i_0}(P)$ that is not $(\mu, M,S)$-small. This is the essential observation in order to show that $P_{\text{(b)}}$ can be made arbitrarily small by choosing $M$ large enough. Our strategy will be to show that, for a fixed $S$ satisfying (\ref{probabilitat_fills}), there are very few $H_i(P)$ that are not $(\mu,M,S)$-small. This will be enough to complete our estimate, because using the fact that both $\mathcal{D}_1$ and $\mathcal{D}_2$ are translates of the same dyadic lattice, fixing $Q$ and looking for $S$ as in (\ref{probabilitat_fills}) is the same as fixing $S$ and looking for a suitable $Q$.
    
    Since our dyadic cubes have sides parallel to the coordinate axes, the directions that are normal to their faces are those given by the canonical basis of $\R^d$, which we denote by $\{e_1,\dots,e_d\}$. It is enough to show that there are very few $H_i(P)$ with normal direction $e_1$. 
    
    Consider all the hyperplanes from $\R^d$ with normal direction $e_1$ such that they intersect $S$, and call this family $H$. Consider $\sigma $ the projection of $\mu \lfloor S$ into the line $L_1$ of direction $e_1$. That is, for $E\subset L_1$ a measurable set, if $\pi\,\colon\R^d\to \R $ denotes the projection into the first coordinate,
    $$
    \sigma(A) = \mu\lfloor S(\pi^{-1}(E)) = \mu(S\cap \pi^{-1}(E)) = \mu(\{x\in S: x_1\in E\}).
    $$
    The key observation now is that for a hyperplane $h\in H$,
    \begin{align*}
    \{x\in S : \text{dist}(x,h)\leq \lambda \ell(P)\} &= \{x\in S: |x_1-x_h|\leq \lambda \ell(P)\} \\
    &=\pi^{-1}([x_h-\lambda\ell(P), x_h+\lambda\ell(P)])\cap S,
    \end{align*}
    where we have denoted $\pi(h)=x_h$. From this, we see that
    $$
    \mu\left(\{x\in S : \text{dist}(x,h)\leq \lambda \ell(P)\}\right) = \sigma(\{y\in L_1: |y-x_h|\leq \lambda \ell(P)\}).
    $$
    Hence, if the hyperplane $h\in H$ is not $(\mu, M,S)$-small, then the point $x_h\in L_1$ satisfies that there is $\lambda>0$ such that
    $$
    \sigma (\{y\in L_1 : |y-x_h|\leq \lambda \ell(P)\})> \lambda M\mu(S).
    $$
    This means that $h$ is not $(\mu, M,S)$-small if and only if
    $$
    M_R\sigma(x_h) := \sup_{r>0}\frac{\sigma(B(x_h, r))}{r}> \frac{M\mu(S)}{\ell(P)} \geq \frac{c}{2^N}M\mu(S)=c M\mu(S),
    $$
    because we always only consider cubes with side length bounded above by $2^N$, which we absorb inside the constant $c$. Recall that the maximal operator $M_R$ is bounded from $M(\R^d)$ to $L^{1,\infty}(\R)$, and so
    \begin{align*}
        \mathcal{L}^1\left(\left\{x : \exists\, h \subset \pi^{-1}(\{x\}) \text{ not }(\mu, M, S)\text{-small}\right\}\right) &\leq \mathcal{L}^1\left(\left\{x : M_{R}\sigma(x)>c M\mu(S)\right\}\right)\\
        &\leq \frac{A}{c M \mu(S)}\|\sigma\| \leq \frac{A}{c M}.
    \end{align*}
    This reasoning works fixing any other of the $e_i$ as the normal direction to our hyperplanes. We will use the same estimate for the sum of all of them, since there are $d$ possibilities for $e_i$. 
    
    Let us call $SH$ the set of hyperplanes, parallel to one of the faces of $S$, intersecting $S$ and such that they are not $(\mu, M,S)$-thin. We can consider on each of them the projection of the Lebesgue measure onto the first coordinate, and from our reasoning above, we have that the measure of $SH$ is bounded above by $\frac{A}{c M}$.

    By periodicity of the dyadic lattice $\mathcal{D}_1$ at the fixed scale $\ell(S)$, the probability with respect to $P^{\Omega}$ that there is $P\in \mathcal{D}_1$ satisfying the condition from (\ref{probabilitat_fills}) has one of its faces in $SH$ is, at most, $\frac{A}{ cM}$. However, we also need to take into account the cubes $P$ satisfying $2^{-m-1}\ell(R)\leq \ell(P)\leq 2^{m+1}\ell(R)$. Again by periodicity of the lattices, for $\ell(P)>\ell(R)$, we have the same bound for the probability. For the smaller cubes, the bound is $\frac{2^{m+1}A}{c M}$. If we add the probabilities over all the possible sizes, we obtain that 
    $$
    P_{\text{(b)}} \leq \frac{2^{2m+2}A}{cM}.
    $$
    Since $m$ is fixed from the estimate of $P_{\text{(a)}}$, we can always choose $M$ large enough so that $P_{\text{(b)}}\leq \frac{\varepsilon_b}{2}$. 
\end{proof}

Using the previous lemma, we are going to obtain a useful estimate of the expected value of the norm in $L^2(\mu)$ of the bad part of a function. We give the precise statement in the lemma below.

\begin{lemma}\label{lema:esperança_part_dolenta}
    Let $f\in L^2(\mu)$. For $\text{w}=(w_1,w_2)\in \Omega^2$, set
    $$
    f_{\text{bad}}(\text{w}) = \sum_{Q\in \mathcal{D}^{\text{tr},1}(w_1)\cap \text{bad}(w_2)}\Delta_{1,Q}f,
    $$
    where ``$Q\in \text{bad}(w_2)$'' means that $Q$ is bad with respect to $\mathcal{D}(w_2)$. Then, we have that
    $$
    \mathbb{E}_{P^{\Omega^2}}\left(\|f_{\text{bad}}(\text{w})\|^2_{L^2(\mu)}\right) \leq c_3^2 \, \varepsilon_b \,\|f\|^2_{L^2(\mu)},
    $$
    where we have denoted by $\mathbb{E}_{P^{\Omega^2}}$ the expectation with respect to $P^{\Omega^2}=P^\Omega\times P^{\Omega}$.
\end{lemma}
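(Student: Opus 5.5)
Fix $w_1\in\Omega$ and work first with the inner expectation in $w_2$. The function $f_{\text{bad}}(\text{w})$ is itself a function of the form $\sum_{Q\in\mathcal{F}}\Delta_{1,Q}f$ with $\mathcal{F}=\mathcal{D}^{\text{tr},1}(w_1)\cap\text{bad}(w_2)\subset\mathcal{D}^{\text{tr},1}(w_1)$. By properties (c) and (e) of Lemma \ref{lema:propietats_delta}, together with (d), we have $\Xi_1 f_{\text{bad}}=0$, $\Delta_{1,Q}f_{\text{bad}}=\Delta_{1,Q}f$ if $Q\in\mathcal{F}$ and $\Delta_{1,Q}f_{\text{bad}}=0$ otherwise. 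Hence the first inequality in (\ref{desigualtats_descomposicio_L2}), applied to $f_{\text{bad}}(\text{w})$ in the lattice $\mathcal{D}(w_1)$, gives
$$
\|f_{\text{bad}}(\text{w})\|^2_{L^2(\mu)}\le c_3\sum_{Q\in \mathcal{D}^{\text{tr},1}(w_1)}\chi_{\{Q\text{ bad w.r.t. }\mathcal{D}(w_2)\}}\,\|\Delta_{1,Q}f\|^2_{L^2(\mu)}.
$$
(For infinite $\mathcal{F}$ this is justified by taking finite subfamilies and passing to the $L^2$ limit, using the unconditional convergence from Lemma \ref{lema_descomposicio_L2}.)

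Next I integrate in $w_2$ with respect to $P^{\Omega}$. Since the terms $\|\Delta_{1,Q}f\|^2_{L^2(\mu)}$ depend only on $w_1$, Tonelli's theorem lets me interchange the sum and the expectation, and Lemma \ref{lema:probabilitat} bounds each $P^\Omega(\{w_2: Q\text{ bad w.r.t. }\mathcal{D}(w_2)\})\le\varepsilon_b$. This yields
$$
\int_\Omega\|f_{\text{bad}}(w_1,w_2)\|^2_{L^2(\mu)}\,dP^\Omega(w_2)\le c_3\,\varepsilon_b\sum_{Q\in\mathcal{D}^{\text{tr},1}(w_1)}\|\Delta_{1,Q}f\|^2_{L^2(\mu)}\le c_3^2\,\varepsilon_b\,\|f\|^2_{L^2(\mu)},
$$
where the last step is the second inequality in (\ref{desigualtats_descomposicio_L2}). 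The bound is uniform in $w_1$, so integrating over $w_1$ (again by Fubini, since $P^{\Omega^2}=P^\Omega\times P^\Omega$) gives the claim.

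The only delicate point is measurability of $\text{w}\mapsto\|f_{\text{bad}}(\text{w})\|^2$ and of the indicator of badness, needed for Fubini; I would note that for each fixed $Q$ the set of $w_2$ making $Q$ bad is a Borel set (finite unions over scales of conditions on the translate), and handle the dependence on $w_1$ by the same reasoning, or simply work with the upper bound above, which is a countable sum of measurable functions. No further obstacle is expected; the argument is essentially orthogonality plus linearity of expectation.
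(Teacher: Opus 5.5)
Your proposal is correct and follows the paper's route. You apply the lower bound in Lemma \ref{lema_descomposicio_L2} to $f_{\text{bad}}(\text{w})$, interchange sum and expectation using $P^\Omega(\{w_2: Q \text{ bad w.r.t. } \mathcal{D}(w_2)\})\le\varepsilon_b$ from Lemma \ref{lema:probabilitat}, and close with the upper bound; your explicit use of properties (c)--(e) of Lemma \ref{lema:propietats_delta} and your measurability remark only make explicit what the paper leaves implicit.
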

\begin{proof}
    From the decomposition in $L^2(\mu)$, Lemma \ref{lema_descomposicio_L2}, we have that
    $$
    \|f_{\text{bad}}(\text{w})\|^2_{L^2(\mu)} \leq c_3 \sum_{Q\in \mathcal{D}^{\text{tr},1}(w_1)\cap \text{bad}(w_2)}\|\Delta_{1,Q}f\|^2_{L^2(\mu)}.
    $$
    Then, using the preceding lemma,
    \begin{align*}
        \mathbb{E}_{P^{\Omega^2}}\big(\|f_{\text{bad}}(&\text{w})\|^2_{L^2(\mu)}\big) \leq c_3\int\bigg(\int \sum_{Q\in \mathcal{D}^{\text{tr},1}(w_1)\cap \text{bad}(w_2)}\|\Delta_{1,Q}f\|^2_{L^2(\mu)}\,dP^{\Omega}(w_2)\bigg)\,dP^{\Omega}(w_1) \\
        &= c_3\int \sum_{Q\in \mathcal{D}^{\text{tr},1}(w_1)}\|\Delta_{1,Q}f\|^2_{L^2(\mu)}\bigg(\int \chi_{\{Q\in \text{bad}(w_2)\}}(w_2)\,dP^{\Omega}(w_2)\bigg)\,dP^{\Omega}(w_2)\\
        &= c_3\int\sum_{Q\in \mathcal{D}^{\text{tr},1}(w_1)}\|\Delta_{1,Q}f\|^2_{L^2(\mu)} \,P^{\Omega}(\{w_2: Q\in \text{bad}(w_2)\})\,dP^{\Omega}(w_1)\\
        &\leq c_3\,\varepsilon_b\int\sum_{Q\in \mathcal{D}^{\text{tr},1}(w_1)}\|\Delta_{1,Q}f\|^2_{L^2(\mu)}\, dP^{\Omega}(w_1)\leq c_3^2\, \varepsilon_b\|f\|^2_{L^2(\mu)},
    \end{align*}
    where in the last inequality we have used Lemma \ref{lema_descomposicio_L2} again.
\end{proof}

The estimate above will be key in the final argument, where we will take an average over the set $\Omega$.

Recall now that we obtained a partial bound for $|S_2+S_3|$ in terms of the sums $A_i$ in (\ref{suma_A}). We mentioned that the full bound would have to wait until the last steps, since it will involve a probabilistic argument. Now, we introduce some new concepts, which will be the correct tools to deal with the additional term that we found when trying to bound $|S_2+S_3|$.

For a cube $Q\subset \R^d$ and $\varepsilon>0$, we denote $\delta_Q =Q\setminus (1-2\varepsilon)Q$. That is, $\delta_Q$ consists of the points from $Q$ that are at distance greater than $\varepsilon$ from the center of $Q$, with respect to the distance induced by the $\infty$-norm in $\R^d$ (see Figure \ref{fig:delta_q}).

\begin{figure}[h]
    \centering

 
\tikzset{
pattern size/.store in=\mcSize, 
pattern size = 5pt,
pattern thickness/.store in=\mcThickness, 
pattern thickness = 0.3pt,
pattern radius/.store in=\mcRadius, 
pattern radius = 1pt}
\makeatletter
\pgfutil@ifundefined{pgf@pattern@name@_pfxydi5av}{
\pgfdeclarepatternformonly[\mcThickness,\mcSize]{_pfxydi5av}
{\pgfqpoint{-\mcThickness}{-\mcThickness}}
{\pgfpoint{\mcSize}{\mcSize}}
{\pgfpoint{\mcSize}{\mcSize}}
{
\pgfsetcolor{\tikz@pattern@color}
\pgfsetlinewidth{\mcThickness}
\pgfpathmoveto{\pgfpointorigin}
\pgfpathlineto{\pgfpoint{0}{\mcSize}}
\pgfusepath{stroke}
}}
\makeatother

 
\tikzset{
pattern size/.store in=\mcSize, 
pattern size = 5pt,
pattern thickness/.store in=\mcThickness, 
pattern thickness = 0.3pt,
pattern radius/.store in=\mcRadius, 
pattern radius = 1pt}
\makeatletter
\pgfutil@ifundefined{pgf@pattern@name@_n39x23tej}{
\pgfdeclarepatternformonly[\mcThickness,\mcSize]{_n39x23tej}
{\pgfqpoint{-\mcThickness}{-\mcThickness}}
{\pgfpoint{\mcSize}{\mcSize}}
{\pgfpoint{\mcSize}{\mcSize}}
{
\pgfsetcolor{\tikz@pattern@color}
\pgfsetlinewidth{\mcThickness}
\pgfpathmoveto{\pgfpointorigin}
\pgfpathlineto{\pgfpoint{0}{\mcSize}}
\pgfusepath{stroke}
}}
\makeatother
\tikzset{every picture/.style={line width=0.75pt}} 

\begin{tikzpicture}[x=0.75pt,y=0.75pt,yscale=-1,xscale=1]

\draw  [fill={rgb, 255:red, 74; green, 144; blue, 226 }  ,fill opacity=0.5 ][line width=1.5]  (0.5,0) -- (150.5,0) -- (150.5,150) -- (0.5,150) -- cycle ;
\draw  [fill={rgb, 255:red, 255; green, 255; blue, 255 }  ,fill opacity=1 ][line width=1.5]  (25,25) -- (125,25) -- (125,125) -- (25,125) -- cycle ;
\draw  [fill={rgb, 255:red, 0; green, 0; blue, 0 }  ,fill opacity=1 ] (75,76.5) .. controls (75,75.67) and (75.67,75) .. (76.5,75) .. controls (77.33,75) and (78,75.67) .. (78,76.5) .. controls (78,77.33) and (77.33,78) .. (76.5,78) .. controls (75.67,78) and (75,77.33) .. (75,76.5) -- cycle ;
\draw[<->]    (0,160) -- (150,160) ;

\draw [color={rgb, 255:red, 74; green, 144; blue, 226 }  ,draw opacity=1, <-> ]   (150,180) -- (125,180) ;

\draw [color={rgb, 255:red, 74; green, 144; blue, 226 }  ,draw opacity=1 ][pattern=_pfxydi5av,pattern size=6pt,pattern thickness=0.75pt,pattern radius=0pt, pattern color={rgb, 255:red, 0; green, 0; blue, 0}][line width=1.5]  [dash pattern={on 1.69pt off 2.76pt}]  (125,125) -- (125,180) ;
\draw [color={rgb, 255:red, 74; green, 144; blue, 226 }  ,draw opacity=1 ][pattern=_n39x23tej,pattern size=6pt,pattern thickness=0.75pt,pattern radius=0pt, pattern color={rgb, 255:red, 0; green, 0; blue, 0}][line width=1.5]  [dash pattern={on 1.69pt off 2.76pt}]  (150,180) -- (150,150) ;

\draw (69.5,60) node [anchor=north west][inner sep=0.75pt]    {$z_{Q}$};
\draw (56,165.4) node [anchor=north west][inner sep=0.75pt]    {$\ell ( Q)$};
\draw (119.25,186.65) node [anchor=north west][inner sep=0.75pt]  [color={rgb, 255:red, 74; green, 144; blue, 226 }  ,opacity=1 ]  {$\ \varepsilon \ell ( Q)$};

\end{tikzpicture}

    \caption{In dimension $d=2$, the set $\delta_Q$ is shaded in blue.}
    \label{fig:delta_q}
\end{figure}

Now, fix any $x\in \R^d$ and $k\in \mathbb{Z}$. We denote by $p_\varepsilon$ the probability, with respect to $w\in \Omega$, that $x\in \delta_R$ for some $R\in \mathcal{D}(w)^{\text{tr},2}$ and $2^{k-m}\leq \ell(R)\leq 2^{k+m}$, that is,
\begin{equation}
p_\varepsilon = \frac{\mathcal{L}^d(\{w\in \Omega: \exists R\in \mathcal{D}(w)^{\text{tr},2}, \, 2^{k-m}\leq \ell(R)\leq 2^{k+m}, \, x\in \delta_R\})}{\mathcal{L}^d(\Omega)}.\label{p_epsilon}
\end{equation}
Let us now show how $p_\varepsilon$ behaves as $\varepsilon\to0^+$.

\begin{lemma}
    For $x\in \R^d, \,k\in \mathbb{Z}$ and $\varepsilon>0$, define $p_\varepsilon$ as above. Then, $p_\varepsilon$ has limit $0$ as $\varepsilon\to 0^+$, independently of $x$ and $k$.\label{lemma:p_epsilon_a}
\end{lemma}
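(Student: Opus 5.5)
We bound $p_\varepsilon$ by a union bound over the finitely many scales and drop the transit condition, since it only shrinks the event:
$$
p_\varepsilon \leq \sum_{j=k-m}^{k+m} P^{\Omega}\big(\{w\in\Omega:\ x\in \delta_{R_j(w)}\}\big),
$$
where $R_j(w)$ denotes the unique cube of $\mathcal{D}(w)$ with side length $2^j$ containing $x$. This uses that, at a fixed scale, at most one cube of $\mathcal{D}(w)$ contains $x$, because cubes of a given generation are disjoint. There are $2m+1$ scales and $m$ is already fixed by Lemma \ref{lema:probabilitat}. It therefore suffices to show that each summand is at most $c\,\varepsilon$, with $c$ depending only on $d$. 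This yields $p_\varepsilon\le c(2m+1)\varepsilon\to0$, uniformly in $x$ and $k$.

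For a fixed scale $2^j$, the event $x\in\delta_{R_j(w)}$ means that some coordinate of $x$ lies within distance $\varepsilon 2^j$ of the boundary of the one-dimensional dyadic interval of length $2^j$ of the translated grid $w+2^j\mathbb{Z}^d$. In coordinates, this says that for some $i$ the fractional part of $(x_i-w_i)/2^j$ lies in $[0,\varepsilon]\cup[1-\varepsilon,1)$. Exactly as in the proof of Lemma \ref{lema_tecnic_probabilistic_1}, the map $w\mapsto R_j(w)$ is periodic of period $2^j$ in each coordinate. The set of bad $w_i$ is therefore a $2^j$-periodic union of intervals of total density $2\varepsilon$.

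The one delicate point is that $\Omega$ need not contain an integer number of periods. We handle it as in Lemma \ref{lema_tecnic_probabilistic_1}.
\begin{itemize}
\item If $2^j\le \ell(\Omega)=2^{N-3}$, then $\Omega$ is a union of whole periods, so the relative Lebesgue measure is exactly the density.
\item If $2^j>\ell(\Omega)$, we enlarge $\Omega$ to a cube $\Omega'$ of side $2^j$, which costs a factor $(2^j/\ell(\Omega))^d$. This factor is bounded only if $2^j\lesssim 2^N$, which holds for the relevant (transit, hence $\subset Q^0$) cubes, since transit cubes have $\ell(R)\le 2^N$.
\end{itemize}
Scales with $2^j>2^N$ contain no transit cube and contribute nothing.

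In either case each coordinate contributes probability at most $c\,2\varepsilon$, and a union bound over the $d$ coordinates gives a summand of at most $c\,d\,\varepsilon$. The main obstacle is precisely this bookkeeping of the period of the grid against the size of $\Omega$, and the restriction of scales to $\ell(R)\le 2^N$; I would mirror the $\Omega'$ trick of Lemma \ref{lema_tecnic_probabilistic_1} there.
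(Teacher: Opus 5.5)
Your proposal is correct and follows essentially the same route as the paper: you drop the transit condition but keep $\ell(R)\le 2^N$, take a union bound over the $2m+1$ scales and the $d$ coordinates, and reduce to a one-dimensional periodicity computation for the translated dyadic grid, which yields a bound of order $\varepsilon$ uniform in $x$ and $k$. Your two-case split ($2^j\le \ell(\Omega)$ versus $2^{N-3}<2^j\le 2^N$, the latter handled by enlarging $\Omega$) is a tidier packaging of the paper's case analysis $j=N-4$, $j<N-4$, $j>N-4$, and gives the same bound of $16\varepsilon$ per scale and coordinate.
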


\begin{proof}
    Our strategy will be to reduce the problem to the one-dimensional case, where cubes will be intervals and for an interval $I$ the set $I_\delta$ will be two smaller intervals sharing one endpoint each with the bigger one $I$. In this simpler setting, we will be able to compute the set we are measuring in the numerator in (\ref{p_epsilon}).

    If we denote by $\pi_j$ the projection into the $j$-th coordinate, that is, $\pi_j(x)=x_j$, we obviously have that
    \begin{align*}
        p_\varepsilon &\leq \frac{\mathcal{L}^d(\{w\in \Omega: \exists R\in \mathcal{D}(w), \, 2^{k-m}\leq \ell(R)\leq \min(2^{k+m}, 2^N), \, x\in \delta_R\})}{\mathcal{L}^d(\Omega)}\\
        &\leq \frac{1}{\mathcal{L}^d(\Omega)}\sum_{j=k-m}^{\min(k+m, 2^N)}\mathcal{L}^d(\{w\in \Omega: \exists R\in \mathcal{D}(w), \ell(R)=2^j, x\in \delta_R\})\\
        &\leq \frac{1}{\mathcal{L}^d(\Omega)}\sum_{j=k-m}^{\min(k+m, 2^N)}\sum_{i=1}^d\mathcal{L}^d(\{w\in \Omega: \exists R\in \mathcal{D}(w), \ell(R)=2^j, x_i\in \delta_{\pi_i(R)}\})\\
        &=: \frac{1}{\mathcal{L}^d(\Omega)}\sum_{j=k-m}^{\min(k+m,2^N)}\sum_{i=1}^d p_\varepsilon^{i,j}.
    \end{align*}
    It suffices to find a common bound, in terms of $\varepsilon$, for each $p_\varepsilon^i$, since $\Omega$ is a fixed cube and there are always $2md$ terms in the sum above, independently of $k$ and of $x$. We will only estimate $p_\varepsilon^{1,j}$, because all the others are analogous. We consider $y=x_1 \text{ mod } 2^j$, so that $y\in [0, 2^{j})$. If we denote by $\mathcal{D}_{0,j}^1$ the usual dyadic intervals of side length $2^j$ in the real line, we have that for $R\in \mathcal{D}(w)$, $\pi_1(R)\in \mathcal{D}_{0,j}^1+\pi_1(w)$. This means that 
    \begin{equation}
    p_{\varepsilon}^{1,j} = \mathcal{L}^{d-1}([-2^{N-4}, 2^{N-4}])\cdot \mathcal{L}^1(\{w_1\in [-2^{N-4}, 2^{N-4}]: \exists \,I\in \mathcal{D}_{0,j}^1+w_1, \, x_1\in \delta_{I} \}).\label{p_epsilon_1j}
    \end{equation}
    Clearly, the measure appearing in the first factor is $(2^{N-3})^{d-1}$. Our goal now is to describe the set appearing in the second factor. For this, we will distinguish three cases. We start with the simplest of the three: when $j=N-4$. We assume that $\varepsilon<\frac{1}{2}$ and consider an open interval of length $2^{N-3}\,\varepsilon$ centered at $y$, which we call $J$.
    
    Assume first that this interval is completely contained inside $[0, 2^{N-4}]$. We can extend it periodically with period $2^{N-4}$, that is, we consider the intervals $J_k = J + k\,2^{N-4}$, with $k\in \mathbb{Z}$. Obviously, there are only two which have non-empty intersection with $[-2^{N-4}, 2^{N-4}]$, namely, $J_0=J$ and $J_{-1}$ (see Figure \ref{fig:intervals_J}).

    \begin{figure}[h]
        \centering

\tikzset{every picture/.style={line width=0.75pt}} 

\begin{tikzpicture}[x=0.75pt,y=0.75pt,yscale=-1,xscale=1]

\draw [line width=1.5]    (0,0) -- (256,0) ;
\draw [shift={(256,0)}, rotate = 180] [color={rgb, 255:red, 0; green, 0; blue, 0 }  ][line width=1.5]    (0,6.71) -- (0,-6.71)   ;
\draw [shift={(0,0)}, rotate = 180] [color={rgb, 255:red, 0; green, 0; blue, 0 }  ][line width=1.5]    (0,6.71) -- (0,-6.71)   ;
\draw [line width=1.5]    (128,-5) -- (128,5) ;
\draw [color={rgb, 255:red, 74; green, 144; blue, 226 }  ,draw opacity=1 ][line width=1.5]    (204,-5) -- (204,5) ;
\draw [color={rgb, 255:red, 74; green, 144; blue, 226 }  ,draw opacity=0.65 ][line width=3.75]    (184,0) -- (224,0) ;
\draw [color={rgb, 255:red, 74; green, 144; blue, 226 }  ,draw opacity=0.65 ][line width=3.75]    (56,0) -- (96,0) ;

\draw (123.6,8.4) node [anchor=north west][inner sep=0.75pt]    {$0$};
\draw (244,9.4) node [anchor=north west][inner sep=0.75pt]    {$2^{N-4}$};
\draw (-18.4,8.6) node [anchor=north west][inner sep=0.75pt]    {$-2^{N-4}$};
\draw (199.6,5.4) node [anchor=north west][inner sep=0.75pt]    {$\textcolor[rgb]{0.29,0.56,0.89}{y}$};
\draw (184,-28.6) node [anchor=north west][inner sep=0.75pt]    {$\textcolor[rgb]{0.29,0.56,0.89}{J=J_{0}}$};
\draw (64.8,-28.2) node [anchor=north west][inner sep=0.75pt]    {$\textcolor[rgb]{0.29,0.56,0.89}{J}\textcolor[rgb]{0.29,0.56,0.89}{_{-1}}$};

\end{tikzpicture}

        \caption{The point $y=x_1$ mod $2^j$ and the intervals $J, J_{-1}$.}
        \label{fig:intervals_J}
    \end{figure}

    We claim that, up to the set of measure zero consisting of the $4$ endpoints of the intervals, the set appearing in the second factor in (\ref{p_epsilon_1j}) is $J_{-1}\cup J$. Indeed, for any $t\in J_{-1}\cup J$, there is a unique $J^t\in \mathcal{D}_{0,j}^1+t$ containing $y$, which will be separated no less than $2^{N-3}\varepsilon$ from one of its endpoints (see Figure \ref{fig:J_t}).

    \begin{figure}[H]
        \centering

\tikzset{every picture/.style={line width=0.75pt}} 

\begin{tikzpicture}[x=0.75pt,y=0.75pt,yscale=-1,xscale=1]

\draw  [dash pattern={on 0.84pt off 2.51pt}]  (212.95,64.44) -- (212.99,70.13) -- (212.96,89.29) ;
\draw  [dash pattern={on 0.84pt off 2.51pt}]  (193,65) -- (193.04,70.69) -- (193,89.84) ;
\draw [line width=1.5]    (0,0) -- (256,0) ;
\draw [shift={(256,0)}, rotate = 180] [color={rgb, 255:red, 0; green, 0; blue, 0 }  ][line width=1.5]    (0,6.71) -- (0,-6.71)   ;
\draw [shift={(0,0)}, rotate = 180] [color={rgb, 255:red, 0; green, 0; blue, 0 }  ][line width=1.5]    (0,6.71) -- (0,-6.71)   ;
\draw [line width=1.5]    (128,-5) -- (128,5) ;
\draw [color={rgb, 255:red, 74; green, 144; blue, 226 }  ,draw opacity=1 ][line width=1.5]    (204,-5) -- (204,5) ;
\draw [color={rgb, 255:red, 74; green, 144; blue, 226 }  ,draw opacity=0.65 ][line width=3.75]    (184,0) -- (203.86,0) -- (224,0) ;
\draw [color={rgb, 255:red, 74; green, 144; blue, 226 }  ,draw opacity=0.65 ][line width=3.75]    (56,0) -- (96,0) ;
\draw [line width=1.5]    (1.6,65) -- (257.6,65) ;
\draw [shift={(257.6,65)}, rotate = 180] [color={rgb, 255:red, 0; green, 0; blue, 0 }  ][line width=1.5]    (0,6.71) -- (0,-6.71)   ;
\draw [shift={(1.6,65)}, rotate = 180] [color={rgb, 255:red, 0; green, 0; blue, 0 }  ][line width=1.5]    (0,6.71) -- (0,-6.71)   ;
\draw [color={rgb, 255:red, 231; green, 146; blue, 241 }  ,draw opacity=1 ][line width=1.5]    (85.27,-4.76) -- (85.27,5.24) ;
\draw  [dash pattern={on 0.84pt off 2.51pt}]  (85.27,5.24) -- (84.99,60.67) ;
\draw  [dash pattern={on 0.84pt off 2.51pt}]  (204,5) -- (204,60) ;
\draw [color={rgb, 255:red, 231; green, 146; blue, 241 }  ,draw opacity=0.65 ][line width=3.75]    (85,65) -- (213,65) ;
\draw [line width=1.5]    (129.6,60) -- (129.6,70) ;
\draw [color={rgb, 255:red, 231; green, 146; blue, 241 }  ,draw opacity=1 ][line width=1.5]    (212.99,-4.76) -- (212.99,5.24) ;
\draw  [dash pattern={on 0.84pt off 2.51pt}]  (212.99,5.24) -- (212.99,60.24) ;
\draw [color={rgb, 255:red, 208; green, 2; blue, 27 }  ,draw opacity=1 ][line width=3.75]    (85,65) -- (105,65) ;
\draw [color={rgb, 255:red, 208; green, 2; blue, 27 }  ,draw opacity=1 ][line width=3.75]    (193,65) -- (213,65) ;
\draw [color={rgb, 255:red, 74; green, 144; blue, 226 }  ,draw opacity=1 ][line width=1.5]    (204,60) -- (204,68.6) -- (204,70) ;
\draw [color={rgb, 255:red, 231; green, 146; blue, 241 }  ,draw opacity=1 ][line width=1.5]    (212.99,60.13) -- (212.99,70.13) ;
\draw [color={rgb, 255:red, 231; green, 146; blue, 241 }  ,draw opacity=1 ][line width=1.5]    (84.99,60.67) -- (84.99,70.67) ;
\draw   (193,88.16) .. controls (193.05,90.9) and (194.45,92.24) .. (197.18,92.19) -- (197.18,92.19) .. controls (201.09,92.12) and (203.08,93.45) .. (203.13,96.19) .. controls (203.08,93.45) and (205.01,92.05) .. (208.92,91.98)(207.16,92.01) -- (208.92,91.98) .. controls (211.66,91.93) and (213.01,90.53) .. (212.96,87.79) ;

\draw (123.6,8.4) node [anchor=north west][inner sep=0.75pt]    {$0$};
\draw (244,9.4) node [anchor=north west][inner sep=0.75pt]    {$2^{N-4}$};
\draw (-18.4,8.6) node [anchor=north west][inner sep=0.75pt]    {$-2^{N-4}$};
\draw (188.7,2.26) node [anchor=north west][inner sep=0.75pt]    {$\textcolor[rgb]{0.29,0.56,0.89}{y}$};
\draw (125.2,73.4) node [anchor=north west][inner sep=0.75pt]    {$0$};
\draw (245.6,74.4) node [anchor=north west][inner sep=0.75pt]    {$2^{N-4}$};
\draw (-16.8,73.6) node [anchor=north west][inner sep=0.75pt]    {$-2^{N-4}$};
\draw (83.03,-23.5) node [anchor=north west][inner sep=0.75pt]  [color={rgb, 255:red, 231; green, 146; blue, 241 }  ,opacity=1 ]  {$\textcolor[rgb]{0.91,0.57,0.95}{t}$};
\draw (197.7,-27.93) node [anchor=north west][inner sep=0.75pt]  [color={rgb, 255:red, 231; green, 146; blue, 241 }  ,opacity=1 ]  {$\textcolor[rgb]{0.91,0.57,0.95}{t+2^{N+3}}$};
\draw (139.76,39.22) node [anchor=north west][inner sep=0.75pt]  [color={rgb, 255:red, 231; green, 146; blue, 241 }  ,opacity=1 ]  {$J_{t}$};
\draw (182.75,99.29) node [anchor=north west][inner sep=0.75pt]    {$2^{N-4} \varepsilon $};

\draw (83.5,70) node [anchor=north west][inner sep=0.75pt]  [color={rgb, 255:red, 208; green, 2; blue, 27 }  ,opacity=1 ]  {$\delta_{J_t} $};

\end{tikzpicture}

        \caption{$J_t$ is the dyadic interval from $\mathcal{D}_{0,j}^1+t$ that contains $x$.}
        \label{fig:J_t}
    \end{figure}

    This means that in this case we have that
\begin{align*}
    \mathcal{L}^1(\{w_1\in [-2^{N-4}, 2^{N-4}]: \exists \,I\in \mathcal{D}_{0,j}^1+w_1, \, x_1\in \delta_{I} \}) &= \mathcal{L}^1(J_{-1}\cup J_0) =  2^{N-2}\varepsilon.
\end{align*}

If $J$ is not completely contained inside $[0,2^{N-4}]$, by considering its periodic extension of period $2^{N-4}$ and its intersection with $[-2^{N-4}, 2^{N-4}]$, we still obtain a set of $\mathcal{L}^1$-measure $2^{N-4}\varepsilon$ (see Figure \ref{fig:intervals_tallats}).

\begin{figure}[H]
    \centering

\tikzset{every picture/.style={line width=0.75pt}} 

\begin{tikzpicture}[x=0.75pt,y=0.75pt,yscale=-1,xscale=1]

\draw [line width=1.5]    (0.4,0.58) -- (256.4,0.58) ;
\draw [shift={(256.4,0.58)}, rotate = 180] [color={rgb, 255:red, 0; green, 0; blue, 0 }  ][line width=1.5]    (0,6.71) -- (0,-6.71)   ;
\draw [shift={(0.4,0.58)}, rotate = 180] [color={rgb, 255:red, 0; green, 0; blue, 0 }  ][line width=1.5]    (0,6.71) -- (0,-6.71)   ;
\draw [line width=1.5]    (128.4,-4.42) -- (128.4,5.58) ;
\draw [color={rgb, 255:red, 74; green, 144; blue, 226 }  ,draw opacity=1 ][line width=1.5]    (246.4,-4.42) -- (246.4,5.58) ;
\draw [color={rgb, 255:red, 74; green, 144; blue, 226 }  ,draw opacity=0.65 ][line width=3.75]    (226,0.58) -- (245.86,0.58) -- (256.4,0.58) ;
\draw [color={rgb, 255:red, 74; green, 144; blue, 226 }  ,draw opacity=0.65 ][line width=3.75]    (98,0.58) -- (138,0.58) ;
\draw [color={rgb, 255:red, 74; green, 144; blue, 226 }  ,draw opacity=0.65 ][line width=3.75]    (0.4,0.58) -- (10,0.58) ;

\draw (124,8.98) node [anchor=north west][inner sep=0.75pt]    {$0$};
\draw (244.4,9.98) node [anchor=north west][inner sep=0.75pt]    {$2^{N-4}$};
\draw (-18,9.18) node [anchor=north west][inner sep=0.75pt]    {$-2^{N-4}$};
\draw (231.59,4) node [anchor=north west][inner sep=0.75pt]    {$\textcolor[rgb]{0.29,0.56,0.89}{y}$};
\draw (233.6,-30) node [anchor=north west][inner sep=0.75pt]  [color={rgb, 255:red, 74; green, 144; blue, 226 }  ,opacity=1 ]  {$J\cap \left[ -2^{N-4} ,\ 2^{N-4}\right]$};
\draw (115.2,-30) node [anchor=north west][inner sep=0.75pt]  [color={rgb, 255:red, 74; green, 144; blue, 226 }  ,opacity=1 ]  {$J_{-1}$};
\draw (-69.2,-30) node [anchor=north west][inner sep=0.75pt]  [color={rgb, 255:red, 74; green, 144; blue, 226 }  ,opacity=1 ]  {$J_{-2} \cap \left[ -2^{N-4} ,\ 2^{N-4}\right]$};

\end{tikzpicture}

    \caption{The interval $J$ is not necessarily completely contained in $[-2^{N-4},2^{N-4}]$.}
    \label{fig:intervals_tallats}
\end{figure}

Now, if $j<N-4$, the set we obtain is morally the same, but at a much smaller scale and instead of having the measure of $2$ intervals ($J_{-1}$ and $J_0$) of length $2^{N-3}\varepsilon$, we have $2^{N-3-j}$ intervals of length $2^{j+1}\varepsilon$. Hence, in this case we find that
$$
\mathcal{L}^1(\{w_1\in [-2^{N-4}, 2^{N-4}]: \exists \,I\in \mathcal{D}_{0,j}^1+w_1, \, x_1\in \delta_{I} \}) = 2^{N-3-j}\cdot 2^{j+1}\varepsilon = 2^{N-2}\varepsilon,
$$
which is the same that we found in the case that $j=N-4$. Lastly, if $j>N-4$, for every fixed $\varepsilon>0$, the measure that we obtain is $\text{min}(2^{N-3}, 2^{j+1}\varepsilon)\leq 2^{N+1}\varepsilon$. This means that in any case, 
$$
p_{\varepsilon}^{1,j}\leq (2^{N-3})^{d-1}\, 2^{N+1}\varepsilon,
$$
which has limit $0$ as $\varepsilon\to0^+$, independently of $x$ and of $k$, as we wanted to prove.
\end{proof}

Along with the notion of $p_\varepsilon$, we will also need to define, for $Q\in \mathcal{D}^{\text{tr},1}_1$, its \textbf{bad part} as
\begin{equation}
Q_\text{b} = Q\cap \bigg( \bigcup_{\substack{R\in \mathcal{D}^{\text{tr},2}_2\\ 2^{-m}\ell(R)\leq \ell(Q)\leq 2^m \ell(R)}}\delta_R\bigg),\label{part_dolenta_dun_cub}
\end{equation}
and the same swapping the roles of $\mathcal{D}_1$ and $\mathcal{D}_2$. The reason for the size condition in the second line is the analogous one that appears in the third line of (\ref{suma_A}), which is what we are trying to control.

Now, we are going to deal with transit cubes whose parents are also transit. This motivates us to write, for any $k\in \mathbb{Z}$,
$$
f^k = \sum_{\substack{Q\in \mathcal{D}^{\text{tr},1}_1\\ \ell(Q) = 2^k}} \Delta_{1,Q}f, \quad f^k_{b_1} = \sum_{\substack{Q,\widehat{Q}\in \mathcal{D}^{\text{tr},1}_1\\ \ell(Q)= 2^k}}\chi_{Q_\text{b}}\Delta_{1,Q}f = \sum_{\substack{Q,\widehat{Q}\in \mathcal{D}^{\text{tr},1}_1\\ \ell(Q)=2^k}}c_Q(f)\chi_{Q_\text{b}}b_1,
$$
where the coefficients $c_Q(f)$ are defined by the same rule as in (\ref{coeficients_cp}). We define $g^k$ and $g^k_{b_2}$ analogously. By Lemma \ref{lema_descomposicio_L2}, we can write
\begin{align*}
    f &= \Xi_1f + \sum_{k\in \mathbb{Z}}f^k = \Xi_1 f + \sum_{k\leq N}f^k,\\
    g &= \Xi_2 g + \sum_{k\in \mathbb{Z}}g^k = \Xi_2 g + \sum_{k\leq N}g^k,
\end{align*}
in $L^2(\mu)$, because the series above are unconditionally convergent and, by definition, transit cubes have side length bounded above by $2^N$. The importance of the functions $f^k_{b_1}$ is reflected in the following lemma.

\begin{lemma}\label{lema:part_dolenta_k}
    Denote by $P^{\Omega^2}= P^\Omega\times P^{\Omega}$ and for each $k\in \mathbb{Z}$, let $f^k_{b_1}$ be as above. Then,
    $$
    \mathbb{E}_{P^{\Omega^2}}\bigg(\sum_{k\leq N}\|f^k_{b_1}\|^2_{L^2(\mu)}\bigg)  \leq p_\varepsilon \,c_3\|f\|^2_{L^2(\mu)}.
    $$
\end{lemma}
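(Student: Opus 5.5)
The plan mirrors the proof of Lemma \ref{lema:esperança_part_dolenta}: compute $\|f^k_{b_1}\|^2_{L^2(\mu)}$ exactly, use Fubini, and replace the indicator of the bad part $Q_{\text b}$ by its probability, which is at most $p_\varepsilon$.

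First fix $\text{w}=(w_1,w_2)$ and $k\le N$. The cubes $Q\in\mathcal{D}^{\text{tr},1}_1$ with $\ell(Q)=2^k$ are pairwise disjoint. Hence
$$
\|f^k_{b_1}\|^2_{L^2(\mu)}=\sum_{\substack{Q,\widehat Q\in\mathcal{D}^{\text{tr},1}_1\\ \ell(Q)=2^k}}|c_Q(f)|^2\int_{Q}\chi_{Q_{\text b}}(x)\,|b_1(x)|^2\,d\mu(x).
$$
Here $c_Q(f)$ is the coefficient with $\chi_Q\Delta_{1,\widehat Q}f=c_Q(f)b_1$. This is the convention of (\ref{coeficients_cp}), with $Q$ playing the role of the child. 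The key observation is that the transit/terminal classification of $\mathcal{D}(w_1)$, and hence $Q$, $c_Q(f)$ and $b_1$, depend only on $w_1$. The set $Q_{\text b}$ defined in (\ref{part_dolenta_dun_cub}) depends on $w_2$ only through the family of cubes $R\in\mathcal{D}(w_2)^{\text{tr},2}$ with $2^{k-m}\le\ell(R)\le 2^{k+m}$.

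Next take expectations. Integrate first in $w_2$ with $w_1$ fixed, and exchange the $w_2$-integral with the sum and the $\mu$-integral by Tonelli, since everything is nonnegative. This gives
$$
\int \chi_{Q_{\text b}}(x)\,dP^\Omega(w_2)\le P^\Omega\big(\{w_2:\exists R\in\mathcal{D}(w_2)^{\text{tr},2},\,2^{k-m}\le\ell(R)\le2^{k+m},\,x\in\delta_R\}\big),
$$
which is exactly $p_\varepsilon$ as in (\ref{p_epsilon}) for this $x$ and $k$. By Lemma \ref{lemma:p_epsilon_a}, or rather its proof, this is bounded uniformly in $x$ and $k$. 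I therefore read $p_\varepsilon$ in the statement as the supremum over $x$ and $k$. Consequently
$$
\mathbb{E}_{P^\Omega}\big(\|f^k_{b_1}\|^2\big)\le p_\varepsilon\sum_{\ell(Q)=2^k}|c_Q(f)|^2\|\chi_Q b_1\|^2_{L^2(\mu)}=p_\varepsilon\sum_{\ell(Q)=2^k}\|\chi_Q\Delta_{1,\widehat Q}f\|^2_{L^2(\mu)}.
$$

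Then sum over $k\le N$. Each transit $Q$ whose parent is transit appears exactly once as a child of the transit cube $\widehat Q$. The right-hand side is therefore at most $p_\varepsilon\sum_{Q'\in\mathcal{D}^{\text{tr},1}_1}\|\Delta_{1,Q'}f\|^2_{L^2(\mu)}$. By (\ref{desigualtats_descomposicio_L2}) in Lemma \ref{lema_descomposicio_L2}, this is at most $p_\varepsilon c_3\|f\|^2_{L^2(\mu)}$, uniformly in $w_1$. Integrating in $w_1$ against $P^\Omega$ and using monotone convergence to interchange with the sum over $k$ yields the claim.

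The only delicate point is justifying that the conditional probability in $w_2$ is bounded by $p_\varepsilon$ independently of $x$. This requires the $w_2$-measurability of $\{x\in Q_{\text b}\}$, which holds because only finitely many scales are involved and the dependence on $w_2$ is piecewise constant. It also requires the uniform bound from Lemma \ref{lemma:p_epsilon_a}. Everything else is bookkeeping identical to Lemma \ref{lema:esperança_part_dolenta}.
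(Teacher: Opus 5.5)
Your proposal is correct and follows essentially the paper's argument. You bound $P^\Omega(\{w_2: x\in Q_{\text{b}}\})\le p_\varepsilon$ pointwise, interchange the expectation with the $\mu$-integral and the sums, and conclude with Lemma \ref{lema_descomposicio_L2}. Your bookkeeping is in fact slightly more careful than the paper's: you correctly identify $c_Q(f)\chi_Q b_1$ with $\chi_Q\Delta_{1,\widehat Q}f$, which is a piece of $f^{k+1}$ rather than $f^k$, an index shift that becomes harmless after summing over $k$.
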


\begin{proof}
    The key step to prove the inequality above is to notice that for any $x\in \R^d$ and $Q\in \mathcal{D}^{\text{tr},1}_1$, $P^{\Omega}(x\in Q_{\text{b}})\leq p_{\varepsilon}$. This is because if $x\in Q_\text{b}$, then there is $R\in \mathcal{D}_2^{\text{tr},2}$ such that $x\in \delta_R$. Hence, taking $k=\log_2\ell(Q)$ in (\ref{p_epsilon}), we see that the desired inequality holds.

    For any $x\in \text{supp}(\mu)$ and $k\leq N$, we have
    $$
f^k_{b_1}(x) = \begin{cases}
    c_Q(f)\chi_{Q_{\text{b}}}(x)b_1(x), \quad &\text{if }\,x\in Q\in \mathcal{D}^{\text{tr},1}_1, \text{ with }\, \widehat{Q}\in \mathcal{D}^{\text{tr},1}_1, \\
    0, &\text{otherwise.}
\end{cases}
$$
Hence, taking expectation over $w_2\in \Omega$, for $x$ as in the first case above,
\begin{align*}
    \mathbb{E}_{P^\Omega}\left(|f^k_{b_1}(x)|^2\right) & =  \int_{\Omega}|c_Q(f)\chi_{Q_\text{b}}(x)b_1(x)|^2 \,dP^{\Omega}(w_2) = |c_Q(f)b_1(x)|^2 \int_{\Omega}\chi_{Q_\text{b}}(x)\,dP^\Omega(w') \\
    &= |c_Q(f)b_1(x)|^2\, P^\Omega\left(x\in Q_\text{b}\right) \leq p_\varepsilon\,|\Delta_{1,Q}f(x)|^2 = p_\varepsilon\ |f^k(x)|^2,
\end{align*}
because of our first remark. For $x$ as in the second case, the expectation is trivially zero. This shows that
\begin{align*}
    \mathbb{E}_{P^{\Omega}}\left(\sum_{ k\leq N}\|f^k_{b_1}\|^2_{L^2(\mu)}\right) &= \sum_{k\leq N}\int\mathbb{E}_{P^{\Omega}}\left(|f^k_{b_1}(x)|^2\right)\,d\mu(x)\\
    &\leq p_\varepsilon \sum_{k\leq N}\int|f^k(x)|^2d\,\mu(x)\\
    &= p_\varepsilon \sum_{ k \leq N}\|f^k\|^2_{L^2(\mu)}\leq   p_\varepsilon\, c_3\|f\|^2_{L^2(\mu)},
\end{align*}
where in the last inequality we have used again Lemma \ref{lema_descomposicio_L2}. Taking expectation once again, we obtain the inequality in the statement of the current lemma.
\end{proof}
Moreover, for each $k\leq N$, we can write 
\begin{align*}
    \|f^k_{b_1}\|^2_{L^2(\mu)} &= \int\bigg|\sum_{\substack{Q,\,\widehat{Q}\in \mathcal{D}^{\text{tr},1}_1\\ \ell(Q)=2^k}} c_Q(f)\chi_{Q_{\text{b}}}(x)b_1(x)\bigg|^2\,d\mu(x) = \sum_{\substack{Q, \, \widehat{Q}\in \mathcal{D}^{\text{tr},1}_1\\ \ell(Q)=2^k}}\int_{Q_\text{b}}|c_Q(f)b_1(x)|^2\,d\mu(x)\\
    &= \sum_{\substack{Q, \widehat{Q}\in \mathcal{D}^{\text{tr},1}_1\\ \ell(Q)=2^k}}|c_Q(f)|^2\|\chi_{Q_{\text{b}}}b_2\|^2_{L^2(\mu)}.
\end{align*}
Combining this identity with the inequality given in the preceding lemma, we have found that
\begin{equation*}
    \mathbb{E}_{P^{\Omega^2}}\bigg(\sum_{\substack{Q,\,\widehat{Q}\in \mathcal{D}^{\text{tr},1}_1}}|c_Q(f)|^2\|\chi_{Q_\text{b}}b_1\|^2_{L^2(\mu)}\bigg) = \mathbb{E}_{P
^{\Omega^2}}\bigg(\sum_{k\leq N}\|f^k_{b_1}\|^2_{L^2(\mu)}\bigg)\leq p_\varepsilon\, c_3\|f\|^2_{L^2(\mu)}.
\end{equation*}
Clearly, the analogous estimate for the function $g$ also holds,
\begin{equation}
    \mathbb{E}_{P^{\Omega^2}}\bigg(\sum_{\substack{R,\,\widehat{R}\in \mathcal{D}^{\text{tr},2}_2}}|c_R(g)|^2\|\chi_{R_\text{b}}b_2\|^2_{L^2(\mu)}\bigg) \leq p_\varepsilon\, c_3\|g\|^2_{L^2(\mu)}.\label{desigualtat_p_epsilon_g}
\end{equation}
These two identities are going to be essential to bound the last sum in (\ref{A_i}). 

\subsection{The good set $G$}

Recall that the conclusion from Theorem \ref{TEOREMA} was that there exists a set 

$$
G\subset F\setminus \bigcap_{w\in \R^d}\left(H_{\mathcal{D}(w)}\cup T^1_{\mathcal{D}(w)}\cup T^2_{\mathcal{D}(w)}\right)
$$
satisfying properties (i)-(iii). In this section we will give the precise definition of $G$. For this, let $W_{\mathcal{D}_1}$ and $W_{\mathcal{D}_2}$ be the total exceptional sets, as defined in Section \ref{subsec:main_lemma_good_functions}, corresponding to two dyadic lattices $\mathcal{D}_1=\mathcal{D}(w_1)$ and $\mathcal{D}_2=\mathcal{D}(w_2)$. Using (\ref{mu_htt}), we get that for some $0<\delta_1<1$ and for all $w\in \Omega$,
\begin{align*}
    \mu(W_{\mathcal{D}(w)}) &= \mu(H_{\mathcal{D}(w)}\cup T^1_{\mathcal{D}(w)}\cup T^2_{\mathcal{D}(w)}\cup S)\\
    &\leq \mu(H_{\mathcal{D}(w)}\cup T^1_{\mathcal{D}(w)}\cup T^2_{\mathcal{D}(w)}) + \mu(S \setminus H_{\mathcal{D}(w)}) \leq \delta_1\mu(F),
\end{align*}
from which we deduce that $\mu(F\setminus W_{\mathcal{D}(w)})\geq (1-\delta)\mu(F)$. Using this inequality, we are going to define $G$ using, once again, a probabilistic argument. For each $x\in F$, we consider the probabilities
\begin{align*}
p_0(x)&:= P^{\Omega}\left(\left\{w\in \Omega: x\in F\setminus W_{\mathcal{D}(w)}\right\}\right),\\
p(x)&:= P^{\Omega^2}\left(\left\{(w_1,w_2)\in \Omega\times \Omega: x\in F\setminus (W_{\mathcal{D}(w_1)}\cup W_{\mathcal{D}(w_2)})\right\}\right).
\end{align*}
Recall that $P^{\Omega^2}= P^{\Omega}\times P^{\Omega}$, which is the product of the normalized $d$-dimensional Lebesgue measure in $\Omega$ with itself, so applying Fubini's theorem we obtain the relation $p(x)=p_0(x)^2$. Moreover, integrating with respect to $\mu$, we find that
\begin{align}
    \int_F p_0(x)\,d\mu(x) &= \int_F \left(\int_{\Omega\times \Omega}\chi_{F\setminus (W_{\mathcal{D}(w_1)}\cup W_{\mathcal{D}(w_2)})}(x)\, dP^{\Omega^2}(w_1,w_2)\right)\, d\mu(x)\nonumber\\
    &=\int_{\Omega\times \Omega} \mu(F\setminus (W_{\mathcal{D}(w_1)}\cup W_{\mathcal{D}(w_2)}))\,dP^{\Omega^2}(w_1,w_2)\geq (1-\delta_1)\mu(F).\label{integral_p0}
\end{align}
We define the sets
$$
G := \left\{x\in F : p_0(x)>\frac{(1-\delta_1)}{2}\right\},
$$
and $B:= F\setminus G$. The motivation for defining such a set $G$ is that for every $x\in G$, we have a strictly positive lower bound of $p(x)$, namely,
\begin{equation}
p(x) = p_0(x)^2 >\frac{(1-\delta_1)^2}{4}:=\beta>0.\label{definicio_beta}
\end{equation}
Informally, this number being strictly positive means that for a big proportion of dyadic lattices $\mathcal{D}(w_1), \mathcal{D}(w_2)$, the points from $G$ are in $F\setminus (W_{\mathcal{D}(w_1)}\cup W_{\mathcal{D}(w_2)})$.

We claim that $G$ is the desired set from Theorem \ref{TEOREMA}. First, if we take $x\in \bigcap_{w\in \mathbb{C}}(H_{\mathcal{D}(w)}\cup T^1_{\mathcal{D}(w)}\cup T^2_{\mathcal{D}(w)})$, in particular, $x\not \in F\setminus W_{\mathcal{D}(w)}$ for every $w\in \Omega$. Hence, $p_0(x)=0$. By definition, this means that $x\not\in G$. This proves that
$$
G\subset F \setminus \bigcap_{w\in \mathbb{C}}\left(H_{\mathcal{D}(w)}\cup T^1_{\mathcal{D}(w)}\cup T^2_{\mathcal{D}(w)}\right),
$$
as wished. Moreover, we have, for $x\in B$,
$$
p_0(x)\leq \frac{1-\delta_1}{2} \,\Leftrightarrow \, (1-p_0(x))\frac{2}{1+\delta_1} \geq 1,
$$
so we find, using the lower bound in (\ref{integral_p0}), that the $\mu$-measure of $B$ is controlled by
\begin{align*}
    \mu(B)&=\mu\left(\left\{x\in F : p_0(x)\leq \frac{1-\delta_1}{2}\right\}\right) \leq \frac{2}{1+\delta}\int_F(1-p_0(x))\,d\mu(x) \leq \frac{2\,\delta_1}{1+\delta_1}\mu(F).
\end{align*}
Hence, 
$$
\mu(G) = \mu(F)-\mu(B) \geq \frac{1-\delta_1}{1+\delta_1}\mu(F),
$$
which is condition (i) from Theorem \ref{TEOREMA}. Now, we define a function
\begin{alignat*}{2}
    &&\Omega\times \Omega\times \R^n &\longrightarrow [0,+\infty)\\
    &&(w_1,w_2,x)&\longmapsto \Phi_{(w_1,w_2)}(x)=\text{dist}(x, F\setminus(W_{\mathcal{D}(w_1)}\cup W_{\mathcal{D}(w_2)})).
\end{alignat*}
Observe that if $x\in G$, then
$$
P^{\Omega^2}(\{(w_1,w_2): \Phi_{(w_1,w_2)}(x)=0\}) \geq p(x)>\beta.
$$
From this, we deduce that 
$$
\mu(\{x\in F : p(x)>\beta\})\geq \mu(G) \geq \frac{1-\delta_1}{1+\delta_1}\mu(F),
$$
and
$$
\mu\left(\left\{x\in F : P^{\Omega^2}\left(\left\{(w_1,w_2): \Phi_{(w_1,w_2)}(x)=0\right\}\right)>\beta\right\}\right) \geq \frac{1-\delta_1}{1+\delta_1}\mu(F).
$$
The rest of the proof will be devoted to showing that $T$ is bounded in $L^2(\mu\lfloor G)$.

\subsection{The functions $\Phi, \Psi_{\text{w}}$ and the operators $\widetilde{K}$ and $\widetilde{T}$}

Recall that in our main lemma for good functions (Lemma \ref{lemma:good_functions}), we considered a function $\Theta$ satisfying certain conditions that allowed us to obtain the right bounds for $|\langle K_\Theta f, g\rangle|$, where $K_\Theta$ is the suppressed operator and $f$ and $g$ were good functions. In this section, we are going to choose an appropriate function $\Theta$, with which we will apply Lemma \ref{lemma:good_functions}. In addition, we will consider two operators, arising from the averages of our suppressed operator with respect to $P^{\Omega^2}$, which will be crucial in showing that $T$ is bounded in $L^2(\mu\lfloor G)$.

First, for a fixed $\varepsilon_0>0$, we define
\begin{equation}
    \Phi(x) = \varepsilon_0 + \inf_{\substack{B\subset \Omega^2\\ P^{\Omega^2}(B)=\beta}}\sup_{(w_1,w_2)\in B} \Phi_{(w_1,w_2)}(x)=: \varepsilon_0 + I(x) .\label{funcio_phi}
\end{equation}
In the following lemma, we show three relevant characteristics of this function.
\begin{lemma}\label{lema:propietats_phi}
    Let $\Phi$ be the function defined in (\ref{funcio_phi}). Then, the following three properties hold.
    \begin{enumerate}
        \item $\Phi$ is $1$-Lipschitz.
        \item $\Phi(x)=\varepsilon_0$ for all $x\in G$.
        \item For any $x\in F$,
        $$
        \Phi(x)\geq \max(\text{dist}(x, \mathbb{C}\setminus H), e_1(x),e_2(x), \varepsilon_0) \geq \max(\mathcal{R}(x), e_1(x), e_2(x), \varepsilon_0),
        $$
        where $e_1(x), e_2(x)$ are as we defined them in (\ref{e1}) and (\ref{e2}), respectively, and $H$ and $\mathcal{R}(x)$ in (\ref{definicio_conjunt_H}).
    \end{enumerate}
\end{lemma}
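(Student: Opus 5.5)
The three properties can all be read off from the corresponding properties of the individual functions $\Phi_{(w_1,w_2)}$, because $I(x)$ is an $\inf$–$\sup$ of them. For each fixed $\text{w}=(w_1,w_2)$, the function $x\mapsto \Phi_{\text{w}}(x)=\text{dist}(x,F\setminus(W_{\mathcal{D}(w_1)}\cup W_{\mathcal{D}(w_2)}))$ is a distance function. It is therefore $1$-Lipschitz whenever it is finite, and it is $+\infty$ identically when $F\setminus(W_{\mathcal{D}(w_1)}\cup W_{\mathcal{D}(w_2)})=\varnothing$.

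\textbf{Finiteness of $I$.} Since $\mu(G)>0$ by (i), we may fix some $y\in G$. The set $E_y=\{\text{w}:\Phi_{\text{w}}(y)=0\}$ has $P^{\Omega^2}(E_y)\geq p(y)>\beta$. Because $P^{\Omega^2}$ is normalized Lebesgue measure, it is non-atomic, so we can choose $B\subset E_y$ with $P^{\Omega^2}(B)=\beta$. For every $\text{w}\in B$ the point $y$ lies in $F\setminus(W_{\mathcal{D}(w_1)}\cup W_{\mathcal{D}(w_2)})$, hence $\Phi_{\text{w}}(x)\le |x-y|$. This gives $I(x)\le \text{dist}(x,G)<\infty$ for every $x$, so $\Phi$ is finite.

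\textbf{Property 1.} Let $x,y\in\R^d$ and fix any admissible $B$. The pointwise bound $\Phi_{\text{w}}(x)\le \Phi_{\text{w}}(y)+|x-y|$ gives $\sup_B\Phi_{\text{w}}(x)\le \sup_B\Phi_{\text{w}}(y)+|x-y|$. This also holds when the right side is infinite. Taking the infimum over $B$ yields $I(x)\le I(y)+|x-y|$, and by symmetry $\Phi=\varepsilon_0+I$ is $1$-Lipschitz.

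\textbf{Property 2.} For $x\in G$ we have $p(x)>\beta$, so the construction in the finiteness step with $y=x$ produces a $B$ with $P^{\Omega^2}(B)=\beta$ on which $\Phi_{\text{w}}(x)=0$. Hence $I(x)=0$ and $\Phi(x)=\varepsilon_0$.

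\textbf{Property 3.} The idea is to find a lower bound for $\Phi_{\text{w}}(x)$ that does not depend on $\text{w}$. Since $W_{\mathcal{D}(w_1)}\supset H_{\mathcal{D}(w_1)}\cup S\supset H\cup S$ (using hypothesis (a) of Theorem \ref{TEOREMA} for $H\subset H_{\mathcal{D}(w_1)}$), we have
$$F\setminus(W_{\mathcal{D}(w_1)}\cup W_{\mathcal{D}(w_2)})\subset \R^d\setminus(H\cup S).$$
Consequently $\Phi_{\text{w}}(x)\ge \max\big(\text{dist}(x,\R^d\setminus H),\text{dist}(x,\R^d\setminus S)\big)$ for every $\text{w}$.

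It remains to compare these two distances with $e_1,e_2$ and $\mathcal{R}$. If $x\in S_0^i$, then $B(x,e_i(x))\subset S_i\subset S$, so $\text{dist}(x,\R^d\setminus S)\ge e_i(x)$; if $x\notin S_0^i$ then $e_i(x)=0$ and there is nothing to prove. This is the same argument as in Lemma \ref{lema_distancia}. Likewise, if $\mathcal{R}(x)>0$ then $B(x,\mathcal{R}(x))\subset H$ by (\ref{definicio_conjunt_H}), so $\text{dist}(x,\R^d\setminus H)\ge\mathcal{R}(x)$.

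This lower bound holds for every $\text{w}$, and every admissible $B$ is nonempty since $\beta>0$. So it survives the supremum over $B$ and then the infimum over $B$. Adding $\varepsilon_0$ (and using $I\ge 0$) gives the stated chain of inequalities.

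The only delicate point is the possible infinite values of $\Phi_{\text{w}}$. I will handle it by establishing finiteness first, as above, via the existence of a point of $G$.
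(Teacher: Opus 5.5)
Your proposal is correct and follows essentially the same route as the paper. Each $\Phi_{\text{w}}$ is $1$-Lipschitz and bounded below by $\max(\text{dist}(x,\R^d\setminus H),e_1(x),e_2(x))$ independently of $\text{w}$; these bounds pass through the $\sup$ and $\inf$ over $B$, and for $x\in G$ one picks $B$ with $P^{\Omega^2}(B)=\beta$ on which $\Phi_{\text{w}}(x)=0$. Your explicit finiteness check for $I$ and the direct comparison of infima in Property 1 are minor clean-ups of the paper's $\varepsilon$-argument, and you correctly conclude $\Phi(x)=\varepsilon_0$ on $G$, where the paper's write-up has typos.
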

\begin{proof}
    For the first part, note that since for any fixed $(w_1,w_2)\in \Omega\times \Omega$, $x\mapsto \Phi_{(w_1,w_2)}(x)$ is $1$-Lipschitz, for a fixed $B\subset \Omega^2$, taking supremum in $(w_1,w_2)\in B$ still has the Lipschitz property. To see that it is also preserved after taking the infimum, for $x,y\in \R^d$ fixed and $\varepsilon>0$, let $B_y\subset \Omega^2$ be such that $P^{\Omega^2}(B)=\beta$ and 
$$
S_{B_y}(y) = \sup_{(w_1,w_2)\in B}\Phi_{(w_1,w_2)}(y) \leq \inf_{\substack{B\subset \Omega^2\\ P^{\Omega^2}(B)=\beta}}\sup_{(w_1,w_2)\in B} \Phi_{(w_1,w_2)}(y) +  \varepsilon.
$$
Then,
$$
I(x)-I(y) \leq S_{B_y}(x) - I(y) = (S_{B_y}(x) -S_{B_y}(y)) + (S_{B_y}(y) -I(y)) \leq |x-y|+\varepsilon.
$$
Since $\varepsilon$ was arbitrary, we obtain that $I(x)-I(y)\leq |x-y|$. Since $y,x$ were also arbitrary, the function $x\mapsto I(x)$ is $1$-Lipschitz, and so is $\Phi$.

Secondly, if $x\in G$, by definition we have that the set $A_x=\{(w_1,w_2): \Phi_{(w_1,w_2)}(x)>0\}$ satisfies that $P^{\Omega^2}(A_x)>0$. Recalling that $P^{\Omega^2}= P^{\Omega}\times P^{\Omega}$, and each of these is a normalization of $d$-dimensional Lebesgue measure, this implies that there exists some $B_x\subset A_x$ with $P(B_x)=\beta$. Then, it is clear that
$$
\sup_{(w_1,w_2)\in B_x}\Phi_{(w_1,w_2)}(x)=0,
$$
which implies that $I(x)=0$, as defined above during the proof of the first part. Hence, $\Phi(x)=0$ for such $x$. 

For the proof of the third point, it suffices to notice that for all $x\in F$ and $(w_1,w_2)\in \Omega\times \Omega$,
$$
\Phi_{(w_1,w_2)}(x) \geq \max(\text{dist}(x, \mathbb{C}\setminus H), e_1(x),e_2(x)).
$$
This is because if $B(x,r)> c_0r^n$, then this ball must be contained in $H_{\mathcal{D}}$ for any dyadic lattice $\mathcal{D}$, in particular, for $\mathcal{D}(w_1)$ and $\mathcal{D}(w_2)$, and, of course, the set $S$ is independent of $\mathcal{D}(w_1)$ and $\mathcal{D}(w_2)$.
\end{proof}
Given $\text{w}=(w_1,w_2)\in \Omega^2$, we set $\Psi_{\text{w}}(x) = \max(\Phi(x), \Phi_{\text{w}}(x))$, and we consider the operator obtained from the average
$$
\widetilde{K}f(x) = \int K_{\Psi_{\text{w}}}f(x)dP^{\Omega^2}(\text{w}).
$$
That is $\widetilde{K}$ is an integral operator with kernel given by
$$
\widetilde{k}(x,y) = \int \widetilde{k}_{\Psi_{\text{w}}}(x,y)\, dP^{\Omega^2}(\text{w}). 
$$
Analogously, we define an operator via averaging the truncated operators,
$$
\widetilde{T}f(x) = \int T_{\Psi_{\text{w}}(x)}f(x)\, dP^{\Omega^2}(\text{w}),
$$
with kernel
\begin{equation}
\widetilde{t}(x,y) = \int k_{\Psi_{\text{w}}}(x,y)\, dP^{\Omega^2}(\text{w}), \quad \text{where }\, k_{\Psi_\text{w}}(x,y) = k(x,y)\chi_{|x-y|\geq \Psi_{\text{w}}(x)}.\label{nucli_mitjana}
\end{equation}
Note that $\widetilde{k}(x,y)$ is a Calderón-Zygmund kernel, because it is the average of the kernels $k_{\Psi_{\text{w}}}(x,y)$, which we know that are also of this type. In contrast, this is not the case for the truncated kernel $k_{\Psi_{\text{w}}}(x,y)$, in general. 

Below, we are going to show, using the operators $\widetilde{K}$ and $\widetilde{T}$, that the suppressed operator $K_{\Phi}$ is bounded in $L^2(\mu)$, with norm independent of $\varepsilon_0$. The $L^2(\mu\lfloor G)$
boundedness of our SIO $T$ will be a consequence of the fact that $\Phi\equiv \varepsilon_0$ on the set $G$, which does not depend on $\varepsilon_0$. 

\begin{lemma}\label{lema:desigualtats_maximal}
    Consider the truncated kernels,
    $$
    k_\Phi(x,y) = k(x,y)\chi_{|x-y|\geq \Phi(x)}, \qquad k_{\Psi_{\text{w}}}(x,y) = k(x,y)\chi_{|x-y|\geq \Psi_{\text{w}}(x)},
    $$
    and $\widetilde{t}$ as in (\ref{nucli_mitjana}). Denote the associated operators by $T_\Phi$, $T_{\Psi_{\text{w}}}$, and $\widetilde{T}$, respectively. The maximal operator associated with the kernel $\widetilde{t}(x,y)$ is defined as
    $$
    \widetilde{T}_{*}f(x) = \sup_{\delta>0}|\widetilde{T}_\delta f(x)| = \sup_{\delta>0}\left| \int_{|x-y|\geq \delta}\widetilde{t}(x,y)\, f(x)\, d\mu(y)\right|, \quad \text{for }f\in L^2(\mu), x\in \R^d.
    $$
    Then, for any $f\in L^2(\mu)$ and $x\in \R^d$, we have that
    \begin{equation}
    |T_{\Phi}f(x)| \leq \frac{2}{\beta}\widetilde{T}_{*}f(x), \qquad |T_{\Psi_{\text{w}}}f(x)| \leq \frac{2}{\beta}\widetilde{T}_*f(x), \quad \text{for each }\, \text{w}\in \Omega^2,\label{desigualtats_maximal}
    \end{equation}
    where $\beta$ is the constant defined in (\ref{definicio_beta}).
\end{lemma}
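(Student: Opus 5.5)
The idea is to write each truncated value as a single truncation of the averaged kernel at the right radius. This is the argument used for the Cauchy transform in \cite{llibre_xavi}.

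Fix $x$ and $f$. Set
$$
\rho(\delta):=P^{\Omega^2}(\{\text{w}: \Psi_{\text{w}}(x)\le \delta\}),
$$
which is a nondecreasing, right-continuous function of $\delta$. Since $k$ does not depend on $\text{w}$, Fubini in (\ref{nucli_mitjana}) gives
$$
\widetilde t(x,y)=k(x,y)\,\rho(|x-y|).
$$
Two facts about $\rho$ follow from the definition (\ref{funcio_phi}) of $\Phi$. First, $\Psi_{\text{w}}(x)\ge\Phi(x)$, so $\rho(\delta)=0$ for $\delta<\Phi(x)$. Second, the infimum in (\ref{funcio_phi}) and the fact that the $P^{\Omega^2}$ are non-atomic give
$$
P^{\Omega^2}(\{\text{w}:\Phi_{\text{w}}(x)\le \Phi(x)\})\ge\beta.
$$
Indeed, if this probability were $<\beta$, every set $B$ of measure $\beta$ would contain $\text{w}$ with $\Phi_{\text{w}}(x)>\Phi(x)$, up to the $\varepsilon_0$ shift, and a small perturbation argument gives the inequality. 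Since $\Psi_{\text{w}}=\max(\Phi,\Phi_{\text{w}})$, this means $\rho(\delta)\ge\beta$ for all $\delta\ge\Phi(x)$. So $\rho$ jumps from $0$ to at least $\beta$ at $\delta=\Phi(x)$ and then increases to $1$.

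The key step is a representation formula. Since $\rho$ is monotone, define the probability measure $d\rho$ on $[\Phi(x),\infty)$. For any $r$, Fubini gives
$$
\int_{|x-y|\ge r}\widetilde t(x,y)f(y)\,d\mu(y)=\int_{[\Phi(x),\infty)}\Bigl(\int_{|x-y|\ge\max(r,s)}k(x,y)f(y)\,d\mu(y)\Bigr)\,d\rho(s).
$$
To control $T_\Phi f(x)=T_{\Phi(x)}f(x)$ (the truncation at $\Phi(x)$), use a differencing argument. Take $r=\Phi(x)$:
$$
\widetilde T_{\Phi(x)}f(x)=\int T_s f(x)\,d\rho(s),
$$
and this integral carries mass at least $\beta$ at $s=\Phi(x)$. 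The atom by itself does not isolate $T_{\Phi(x)}f(x)$. The paper's target constant $2/\beta$ suggests the following route instead. Write
$$
T_{\delta}f(x)=\frac{1}{\rho(\delta)}\bigl(\widetilde T_{\delta}f(x)-\widetilde{T}^{(>\delta)}\bigr),
$$
where $\widetilde{T}^{(>\delta)}=\int_{(\delta,\infty)}\bigl(T_s f(x)-T_\delta f(x)\bigr)\,d\rho(s)$ collects the part of the average coming from larger radii. This identity follows from $\widetilde t(x,y)=k(x,y)\rho(|x-y|)$ on $\{|x-y|\ge\delta\}$. The term $\widetilde T^{(>\delta)}$ is then an integral of differences of truncated $\widetilde T$'s. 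More cleanly, a layer-cake/Abel summation expresses $k\chi_{|x-y|\ge\delta}=\widetilde t/\rho$ there, and integration by parts in $|x-y|$ gives
$$
T_\delta f(x)=\frac{\widetilde T_\delta f(x)}{\rho(\delta)}-\int_{(\delta,\infty)}\widetilde T_s f(x)\,d\bigl(1/\rho\bigr)(s).
$$
The total variation of $1/\rho$ on $[\delta,\infty)$ is $\le 1/\rho(\delta)\le 1/\beta$. Hence
$$
|T_\delta f(x)|\le \frac{2}{\beta}\,\widetilde T_*f(x)\qquad\text{for every }\delta\ge\Phi(x).
$$
Taking $\delta=\Phi(x)$ gives the first inequality in (\ref{desigualtats_maximal}), and $\delta=\Psi_{\text{w}}(x)\ge\Phi(x)$ gives the second.

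The main obstacle is justifying $\rho(\Phi(x))\ge\beta$ rigorously from the inf–sup definition, including how the additive $\varepsilon_0$ enters. I would handle it by showing that the set $\{\text{w}:\Phi_{\text{w}}(x)\le I(x)+\eta\}$ has measure $\ge\beta$ for each $\eta>0$, then letting $\eta\to0$, and noting that the $\varepsilon_0$ slack only helps. The integration-by-parts step needs $f\in L^2(\mu)$ with $\mu$ finite so that all truncations converge absolutely, which the earlier remarks on truncated operators provide.
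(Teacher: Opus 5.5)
Your proposal is correct and follows the same route as the paper. The paper also writes $\widetilde t(x,y)=k(x,y)\,v_x(|x-y|)$ with $v_x=\rho$, shows $v_x=0$ below $\Phi(x)$ and $v_x\geq\beta$ from $\Phi(x)$ on (via a set $B$ with $P^{\Omega^2}(B)=\beta$ and $\sup_B\Phi_{\text{w}}(x)<\Phi(x)$, which is your $\eta$-argument). It then expands $1/v_x$ against the Lebesgue--Stieltjes measure $\sigma(a,b]=v_x(a)^{-1}-v_x(b)^{-1}$, which gives $|T_\delta f(x)|\leq \bigl(v_x(\delta)^{-1}+\sigma((\delta,\infty))\bigr)\widetilde T_*f(x)\leq \frac{2}{\beta}\widetilde T_*f(x)$.

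There are only cosmetic slips. Your intermediate identity involving $\widetilde T^{(>\delta)}$ is wrong as written, but you discard it anyway. In your final formula the correction term should read $+\int_{(\delta,\infty)}\widetilde T_sf(x)\,d(1/\rho)(s)$, i.e.\ $-\int_{(\delta,\infty)}\widetilde T_sf(x)\,d\sigma(s)$ with $\sigma=-d(1/\rho)\geq 0$; this sign does not affect the total-variation bound.
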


\begin{proof}
    We start by proving the first inequality in (\ref{desigualtats_maximal}). To do so, we write the kernel $\widetilde{t}$ as
    \begin{equation}
    \widetilde{t}(x,y)  = k(x,y)P^{\Omega^2}\left(\left\{\text{w}\in \Omega^2 : |x-y|\geq \Psi_{\text{w}}(x)\right\}\right) = k(x,y)v_x(|x-y|),\label{re-escriptura_t}
    \end{equation}
    where we define
    $$
    v_x(t) = P^{\Omega^2}\left(\left\{ \text{w}\in \Omega^2 : t\geq \Psi_{\text{w}}(x) = \max(\Phi(x), \Phi_\text{w}(x))\right\}\right).
    $$
    We claim that the new function satisfies the following properties:
    \begin{enumerate}
        \item If $t<\Phi(x)$, then $v_x(t)=0$.
        \item $v_x$ is non-decreasing in $t$ and right-continuous.
        \item If $t\geq \Phi(x)$, then $v_x(t)\geq \beta$.
    \end{enumerate}
    The first property is clear: using that $\Phi(x)\leq \Psi_{\text{w}}$ for any $\text{w}\in \Omega^2$, if $t\leq \Phi(x)$, the probability in the definition of $v_x$ is obviously zero. For the second property, let $t\leq s$. Since
    $$
    \{\text{w}\in \Omega^2: s\geq \Psi_{\text{w}}(x)\} \supset \{\text{w}\in \Omega^2: t\geq \Psi_{\text{w}}(x)\},
    $$
    we have that $v_x(s)\geq v_x(t)$ and so $v_x$ is non-decreasing. The right-continuity follows from the fact that, for each $x$, $v_x$ is the distribution function of the random variable $\Psi_{\text{w}}(x)$. To see that the last property holds, note that if $t\geq \Phi(x)$, then
    \begin{equation}
    v_x(t) = P^{\Omega^2}\left(\left\{\text{w}\in \Omega^2 : t\geq \Phi_\text{w}(x)\right\}\right).\label{v_x}
    \end{equation}
    Moreover, recalling how we defined $\Phi(x)$ in (\ref{funcio_phi}), if $t\geq \Phi(x)$, then it also holds that
    $$
    t\geq \inf_{\substack{B\subset \Omega^2\\ P^{\Omega^2}(B)\geq \beta}}\sup_{\text{w}'\in B}\Phi_{\text{w}'}(x).
    $$
    Now, it can be that either the infimum is strictly smaller than $t$ or that it is equal to $t$. We will separate these two cases and give a different argument for each one in order to see that $v_x(t)\geq \beta$.
    \begin{itemize}
        \item If $t>I(x)$, let $0<\varepsilon<\frac{t-I(x)}{2}$. By definition of infimum, there is some $B_\varepsilon\subset \Omega^2$ with $P^{\Omega^2}(B_\varepsilon)\geq \beta$ such that
        $$
        \sup_{\text{w}'\in B_{\varepsilon}}\Phi_{\text{w}'}(x) \leq \inf_{\substack{B\in \Omega^2\\ P^{\Omega^2}(B)\geq \beta}}\sup_{\text{w}'\in B}\Phi_{\text{w}'}(x)+ \varepsilon =I(x)+\varepsilon <t.
        $$
        This means that $B_{\varepsilon}$ must be contained in the subset appearing on the right-hand side of (\ref{v_x}). Then, taking probabilities, we obtain that $v_x(t)\geq P^{\Omega^2}(B_\varepsilon)\geq \beta$. 
        \item If $t= I(x)$, let us call $s = \{s\in \R : v_x(s)\geq \beta\}$. We claim that $t= \inf S$. Indeed, if $s\in S$, we call $B_s = \{\text{w}\in\Omega^2: \Phi_{\text{w}}(x)\leq s\}$. Then, $P^{\Omega^2}(B_s)= v_x(s)\geq \beta$. Hence, $t\leq s$. For the converse inequality, let $B\subset \Omega^2$ be such that $P^{\Omega^2}(B)\geq \beta$, and denote $s_B= \sup_{\text{w}\in B}\Phi_\text{w}(x)$, so obviously $B\subset \{\text{w}\in \Omega^2 : \Phi_\text{w}(x)\leq s_B\}$. Taking probabilities we have that
        $$
        \beta \leq P^{\Omega^2}\left(\left\{\text{w}\in \Omega^2: \Phi_{\text{w}}(x)\leq s_B\right\}\right) = v_x(s_B),
        $$
        from which we have that $s_B\in S$. Since $t$ is the infimum of the $s_B$, we obtain the desired inequality. Lastly, by definition of infimum, we can take $(s_n)\subset S$ a decreasing sequence with $t = \lim_n s_n$. Since for all $n$, $v_x(s_n)\geq \beta$, by right-continuity of $v_x$ we obtain that $v_x(t)\geq \beta$, which is what we wanted to show.
    \end{itemize}
    As well as these three properties of the function $v_x$, we will require an additional identity, which we deduce now. From (\ref{re-escriptura_t}) and the first property above, we infer that
    $$
        \widetilde{t}(x,y)= \widetilde{t}(x,y)\chi_{|x-y|\geq \Phi(x)} = k(x,y)\,v_x(|x-y|)\chi_{|x-y|\geq \Phi(x)} = v_x(|x-y|)\,k_{\Phi}(x,y).
    $$
    Rearranging, 
    \begin{equation}
    k_{\Phi}(x,y) = v_x(|x-y|)^{-1}\,\widetilde{t}(x,y)\,\chi_{|x-y|\geq \Phi(x)}.\label{k_phi}
    \end{equation}
    Using this identity and the previous three properties, we will be able to write the kernel $k_\Phi$ as a convex combination of the kernels $\widetilde{t}(x,y)\chi_{|x-y|\geq t}$, for $t\geq \Phi(x)$, which we will integrate with respect to $\mu$ to obtain our result. To do so, since $v_x(t)^{-1}$, as a function of $t$, is continuous from the right and non-increasing, we can define a Lebesgue-Stieljes measure $\sigma$ on $\R$ by considering
    \begin{equation*}
        \begin{cases}
            \sigma(a,b] = v_x(a)^{-1}- v_x(b)^{-1}, \qquad \text{if }\,(a,b]\subset (\Phi(x),\infty),\\
            \sigma(-\infty, \Phi(x)] = 0.
        \end{cases}
    \end{equation*}
    From the non-increasing property, we see that this is indeed a positive measure. Moreover, it is finite, because
    $$
    \sigma(\R)\leq \sup_{t\geq \Phi(x)}\frac{1}{v_x(t)}\leq \beta,
    $$
    by the third property from the start of the proof. Now, we write
    \begin{align*}
    \int_{t>\Phi(x)}\widetilde{t}(x,y)\chi_{|x-y|\geq t}\, d\sigma(t) &= \widetilde{t}(x,y)\,\sigma((\Phi(x), |x-y|])\\
    &=\widetilde{t}(x,y)\,\chi_{|x-y|\geq \Phi(x)}(v_x(\Phi(x))^{-1}- v_x(|x-y|)^{-1}),
    \end{align*}
    which, using (\ref{k_phi}), is the same as
    $$
    k_{\Phi}(x,y) = v_x(\Phi(x))^{-1}\widetilde{t}(x,y)\,\chi_{|x-y|\geq \Phi(x)} -  \int_{t>\Phi(x)}\widetilde{t}(x,y)\chi_{|x-y|\geq t}\, d\sigma(t),
    $$
    which is the convex combination that we mentioned above. If we multiply the left hand side of this identity by $f(y)$ and integrate with respect to $\mu$ on $y$, we get $T_{\Phi}f(x)$. If we do the same for the right hand side, we obtain
    \begin{align*}
        &v_x(\Phi(x))^{-1}\int_{|x-y|\geq \Phi(x)} \widetilde{t}(x,y) f(y)\,d\mu(y) -\int_{t>\Phi(x)}\bigg(\int_{|x-y|\geq t}\widetilde{t}(x,y)f(y)\,d\mu(y)\bigg)\,d\sigma(t)\\
        &\qquad \qquad = v_x(\Phi(x))^{-1}\widetilde{T}_{\Phi(x)}f(x) -\int_{t>\Phi(x)}\widetilde{T}_{t}f(x)\,d\sigma(t),
    \end{align*}
    where $\widetilde{T}_t$ and $\widetilde{T}_{\Phi(x)}$ are the $t$-truncated and $\Phi(x)$-truncated versions of the operator $\widetilde{T}$ defined in the statement of the lemma, respectively. From this, since $\sigma(\R)=v_x(\Phi(x))^{-1}\leq  \beta^{-1}$, we obtain the first inequality in (\ref{desigualtats_maximal}). Indeed,
    $$
    |T_{\Phi}f(x)| \leq \frac{1}{\beta}\widetilde{T}_*f(x) + \int_{t>\Phi(x)}\widetilde{T}_*f(x)\,d\sigma(x) \leq \frac{2}{\beta}\,\widetilde{T}_*f(x).
    $$
    It remains to prove the second inequality in (\ref{desigualtats_maximal}). The strategy to do so will be analogous to what we have just argued. Indeed, notice that by the definition of $T_{\Psi_\text{w}}$, we have that
    $$
    k_{\Psi_\text{w}}(x,y) = k_{\Phi}(x,y)\,\chi_{|x-y|\geq \Psi_\text{w} (x)} = v_x(|x-y|)^{-1}\,\widetilde{t}(x,y)\,\chi_{|x-y|\geq \Psi_\text{w}(x)}.
    $$
    Observe that this is the same identity as (\ref{k_phi}), replacing $\Phi$ by $\Psi_\text{w}$. Arguing analogously as before, integrating with respect to $\sigma$ for $t>\Psi_\text{w}(x)$, we obtain that
    $$
    T_{\Psi_\text{w}}f(x) = v_x(\Psi_\text{w}(x))^{-1}\widetilde{T}_{\Psi_\text{w}(x)}f(x)- \int_{t>\Psi_\text{w}(x)}\widetilde{T}_{t}f(x)\,d\sigma(x).
    $$
    Therefore, 
    $$
    |T_{\Psi_\text{w}}f(x)| \leq \frac{1}{\beta}\widetilde{T}_{*}f(x) + \int_{t>\Psi_\text{w}(x)}\widetilde{T}_*f(x)\,d\sigma(t) \leq \frac{2}{\beta}\widetilde{T}_*f(x),
    $$
    which is exactly the second inequality in (\ref{desigualtats_maximal}).
\end{proof}

Notice that the operator $\widetilde{T}$ is the average of the operators $T_{\Psi_{\text{w}}}$ over $\text{w}\in \Omega^2$. From this, we see that (\ref{desigualtats_maximal}) asserts that each of the operators $T_{\Psi_{\text{w}}}$ is controlled by the maximal version of the average operator $\widetilde{T}$.

Lastly, we are going to see that we can control the operator norms of both $K_\Phi$ and $K_{\Psi_\text{w}}$ in $L^2(\mu)$ by that of $\widetilde{K}$.

\begin{lemma}\label{desigualtats_tilde}
    We have that
    \begin{equation}
    \|K_\Phi\|_{L^2(\mu)\to L^2(\mu)} \leq c\beta^{-1}\|\widetilde{K}\|_{L^2(\mu)\to L^2(\mu)} + c\beta^{-1},\label{desigualtat_k_phi_beta}
    \end{equation}
    and
    \begin{equation}
    \|K_{\Psi_\text{w}}\|_{L^2(\mu)\to L^2(\mu)} \leq c\beta^{-1}\|\widetilde{K}\|_{L^2(\mu)\to L^2(\mu)} + c\beta^{-1}, \quad \text{for each w}\in \Omega^2.\label{desigualtat_k_psi_beta}
    \end{equation}
\end{lemma}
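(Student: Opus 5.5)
The plan is to pass from the suppressed operators to truncated ones, control those by the maximal operator of the averaged kernel, and then return. The tools are Lemma \ref{lema:desigualtats_maximal}, the Cotlar-type Lemma \ref{lema:primer_cotlar}, and the comparison between truncated and suppressed kernels from Lemma \ref{diferencia_truncat_suppressed}.

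\textbf{Step 1: from $K_\Phi$ to $T_\Phi$.} For a fixed $x$, the kernels $\widetilde{k}_\Phi(x,\cdot)$ and $k_\Phi(x,\cdot)=k(x,\cdot)\chi_{|x-\cdot|\geq\Phi(x)}$ differ only where $\Theta$-suppression is active, at scales comparable to $\Phi(x)$ or larger. Repeating the computation of Lemma \ref{diferencia_truncat_suppressed} with $\varepsilon=\Phi(x)$, and treating the annulus $\frac12\Phi(x)\le|x-y|<\Phi(x)$ as in Lemma \ref{lema_r0}, gives
\[
|K_\Phi f(x)-T_\Phi f(x)|\le c\sup_{r\ge \Phi(x)/2}\frac{|f|\mu(B(x,r))}{r^n}.
\]
By Lemma \ref{lema:propietats_phi}(3), $\Phi(x)\ge \mathcal R(x)$, so this supremum is bounded by $cM_\mu f(x)$. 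The same bound holds with $\Psi_{\text w}$ in place of $\Phi$, since $\Psi_{\text w}\ge\Phi$. The identical argument, averaged over $\text w$, shows $|\widetilde K f-\widetilde T f|\le cM_\mu f$. Theorem \ref{HL} then gives $\|\widetilde T\|_{L^2\to L^2}\le \|\widetilde K\|_{L^2\to L^2}+c$.

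\textbf{Step 2: pointwise control by the maximal operator.} Lemma \ref{lema:desigualtats_maximal} gives $|T_\Phi f|,|T_{\Psi_{\text w}}f|\le 2\beta^{-1}\widetilde T_*f$. It therefore remains to prove
\[
\|\widetilde T_*\|_{L^2\to L^2}\le c(1+\|\widetilde T\|_{L^2\to L^2}).
\]
This follows from Lemma \ref{lema:primer_cotlar} applied to the kernel $\widetilde t$, after rescaling constants. Two things must be checked.

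First, $\widetilde t$ must be Calderón--Zygmund. The size bound is clear. The H\"older estimate is the delicate point, because each $k_{\Psi_{\text w}}$ has a sharp cutoff in $x$. Instead, I would verify that $\widetilde t(x,y)=k(x,y)v_x(|x-y|)$ and use the Lipschitz dependence of $\Psi_{\text w}$ on $x$, averaged over $\text w$, to get smoothness. If this fails, I would run the Cotlar argument directly with $\widetilde k$, which is Calderón--Zygmund since it is an average of the kernels $\widetilde k_{\Psi_{\text w}}$, and transfer to $\widetilde T$ via Step 1.

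Second, $\widetilde t$ must satisfy the size condition (\ref{condicio_nucli_t}). Since $\widetilde t(x,y)=0$ unless $|x-y|\ge\Phi(x)\ge\max(\varepsilon_0,\operatorname{dist}(x,\R^d\setminus H))$, the factors in $x$ are handled. For the factor in $y$, I would use that $\Phi$ is $1$-Lipschitz: either $|x-y|\gtrsim \Phi(y)$ directly, or the bound follows from $\Phi(y)\le\Phi(x)+|x-y|\le 2|x-y|$.

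\textbf{Step 3: assemble.} Combining the previous steps,
\[
\|K_\Phi\|\le c+\frac{2}{\beta}\|\widetilde T_*\|\le c+\frac{c}{\beta}\bigl(1+\|\widetilde K\|\bigr),
\]
and since $\beta<1$ this gives (\ref{desigualtat_k_phi_beta}). The same chain, using the second inequality of Lemma \ref{lema:desigualtats_maximal}, gives (\ref{desigualtat_k_psi_beta}) uniformly in $\text w$.

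The main obstacle is the regularity of $\widetilde t$ needed to invoke Lemma \ref{lema:primer_cotlar}. If the smoothness via $v_x$ does not work out, the fallback is the route through $\widetilde k$ described in Step 2.
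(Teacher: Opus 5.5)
\textbf{Where the proposal breaks.} The main line of your Step~2 fails: Lemma \ref{lema:primer_cotlar} cannot be applied to $\widetilde T$, because $\widetilde t$ is not a Calder\'on--Zygmund kernel, and averaging in $\text{w}$ does not smooth the cutoff. By the properties of $v_x$ shown in the proof of Lemma \ref{lema:desigualtats_maximal}, $v_x(t)=0$ for $t<\Phi(x)$, while $v_x(\Phi(x))\geq\beta$. Since $\Psi_{\text{w}}\geq\Phi$, the event $\{\Psi_{\text{w}}(x)=\Phi(x)\}$ therefore has probability at least $\beta$, so a sharp truncation at radius $\Phi(x)$ survives the average with weight at least $\beta$. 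Consequently, for fixed $x$ the function $y\mapsto\widetilde t(x,y)=k(x,y)\,v_x(|x-y|)$ jumps by at least $\beta|k(x,y)|$ across the sphere $|x-y|=\Phi(x)$. Taking $y,y'$ on opposite sides with $|y-y'|=\delta\to0$ violates the H\"older condition in the second variable, for any kernel not vanishing there. That condition is used explicitly in the weak $(1,1)$ step of the proof of Lemma \ref{lema:primer_cotlar}, namely the bound for $|Tb_i(x)|$ off $2R_i$. No Lipschitz dependence of $\Psi_{\text{w}}$ on $x$ can rescue this route.

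\textbf{What the fallback needs.} Your fallback is the correct argument, and it is the one the paper uses. However, Step~1 as written does not supply the link it needs. You only compare the untruncated operators, whereas passing from $\widetilde T_*$ to $\widetilde K_*$ requires the comparison uniformly in the truncation: for every $\varepsilon\geq0$,
\[
\bigl|K_{\Psi_{\text{w}},\varepsilon}f(x)-T_{\max(\Psi_{\text{w}}(x),\varepsilon)}f(x)\bigr|\leq c\sup_{r\geq\Psi_{\text{w}}(x)}\frac{1}{r^n}\int_{B(x,r)}|f|\,d\mu\leq c\,c_0\,M_\mu f(x).
\]
For $\varepsilon\geq\Psi_{\text{w}}(x)$ this is Lemma \ref{diferencia_truncat_suppressed}. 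For $\varepsilon<\Psi_{\text{w}}(x)$ it follows by adding the contribution of $B(x,\Psi_{\text{w}}(x))$, which Lemma \ref{desigualtat_maxim_k_tilde} controls. Averaging in $\text{w}$ and taking the supremum over $\varepsilon$ gives $\widetilde T_*f\leq\widetilde K_*f+c\,M_\mu f$.

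Then apply Lemma \ref{lema:primer_cotlar} to $\widetilde K$ itself. Its kernel $\widetilde k$ is an average of the kernels $\widetilde k_{\Psi_{\text{w}}}$, which are Calder\'on--Zygmund with constants independent of $\text{w}$. It also satisfies (\ref{condicio_nucli_t}) with $\varepsilon=\varepsilon_0$, by Lemma \ref{desigualtat_maxim_k_tilde} and $\Psi_{\text{w}}\geq\Phi$. This gives $\|\widetilde K_*\|\leq c\|\widetilde K\|+c$. Combined with your pointwise bound $|K_\Phi f|\leq 2\beta^{-1}\widetilde T_*f+cM_\mu f$ and its analogue for $K_{\Psi_{\text{w}}}$, both inequalities follow. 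The intermediate estimate $\|\widetilde T\|\leq\|\widetilde K\|+c$ is then unnecessary.
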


\begin{proof}
    We start by applying Lemma \ref{diferencia_truncat_suppressed}, with the measure $\sigma = f\mu$, for $f\in L^2(\mu)$. If $\Theta\,\colon\R^d\to (0,+\infty) $ is a $1$-Lipschitz function, for $\varepsilon\geq \Theta(x)$, we have that
    \begin{equation}
    |T_{\varepsilon}\sigma(x)-K_{\Theta,\varepsilon}\sigma(x)| \leq c\sup_{r\geq \varepsilon}\frac{1}{r^n}\int_{B(x,r)}|f|\,d\mu,\label{deduccio_5.4}
    \end{equation}
    where we denote by $K_{\Theta, \varepsilon}$ the $\varepsilon$-truncated version of the suppressed operator $K_\Theta$. For an arbitrary $\varepsilon\geq 0$, we denote $T_{\Theta, \varepsilon}f(x) = T_{\max(\Theta(x), \varepsilon)}f(x)$. We claim that then,
    $$
    |K_{\Theta,\varepsilon}f(x)-T_{\Theta,\varepsilon}f(x)| \leq c\sup_{r\geq \Theta(x)}\frac{1}{r^n}\int_{B(x,r)}|f|\,d\mu.
    $$
    Indeed, for $\varepsilon\geq \Theta(x)$ the preceding inequality is simply (\ref{deduccio_5.4}). For $\varepsilon<\Theta(x)$, we can write
    \begin{align*}
        |K_{\Theta,\varepsilon}f(x)-T_{\Theta, \varepsilon}f(x)| &= |K_{\Theta, \varepsilon}f(x)-T_{\Theta(x)}f(x)|\\
        &\leq |K_{\Theta, \Theta(x)}f(x)-T_{\Theta(x)}f(x)| + |K_{\Theta, \varepsilon}f(x)-K_{\Theta, \Theta(x)}f(x)|\\
        &=|K_{\Theta, \Theta(x)}f(x)-T_{\Theta(x)}f(x)| + \int_{B(x, \Theta(x))}|\widetilde{k}_{\Theta}(x,y)|\,|f(y)|\,d\mu(y)\\
        &\leq c\,\sup_{r\geq \Theta(x)}\frac{1}{r^n}\int_{B(x,r)}|f(y)|\,d\mu(y) + \int_{B(x,\Theta(x))}\frac{c}{\Theta(x)^n}|f(y)|\,d\mu(y)\\
        &\leq c\sup_{r\geq \Theta(x)}\frac{1}{r^n}\int_{B(x,r)}|f|\,d\mu
    \end{align*}
    where we have used (\ref{deduccio_5.4}) with $\varepsilon=\Theta(x)$. This settles our claim. Suppose now that $\Theta$ is one of the Lipschitz functions $\Phi$ or $\Psi_\text{w}$, defined in the beginning of this section. Since, in any case, $\mu(B(x,r))\leq c_0r^n$ for any $r\geq \Theta(x)$, the supremum above is bounded by $c_0 M_{\mu}f(x)$, the maximal function. So, we have proved that
    \begin{equation}
    |K_{\Theta, \varepsilon}f(x)- T_{\Theta, \varepsilon}f(x)|\leq c\,c_0 M_{\mu}f(x), \quad \text{for any }\varepsilon\geq 0.\label{diferencia_hl}
    \end{equation}
    Choosing $\Theta=\Phi$ and $\varepsilon=0$, from Lemma \ref{lema:desigualtats_maximal}, we infer that
    \begin{equation}
    |K_{\Phi}f(x)| \leq |T_{\Phi}f(x)| + c\,c_0M_{\mu}f(x) \leq 2\beta^{-1}\widetilde{T}_*f(x) + c\,c_0M_\mu f(x).\label{desigualtat_k_phi_maximal}
    \end{equation}
    Thus, knowing that the centered maximal Hardy-Littlewood operator is bounded in $L^2(\mu)$, we obtain that
    $$
    \|K_{\Phi}\|_{L^2(\mu)\to L^2(\mu)} \leq 2\beta^{-1}\| \widetilde{T}_*\|_{L^2(\mu)\to L^2(\mu)} + c.
    $$
    Moreover, setting $\Theta= \Psi_{\text{w}}$ in (\ref{diferencia_hl}), and taking the mean on $\text{w}\in \Omega^2$, we get
    \begin{align*}
        |\widetilde{K}_{\varepsilon}f(x)- \widetilde{T}_\varepsilon f(x)| &=\bigg|\int_{\Omega^2}\left(K_{\Psi_{\text{w}},\varepsilon}f(x)- T_{\Psi_{\text{w}},\varepsilon}f(x)\right)dP^{\Omega^2}(\text{w})\bigg| \leq c\,c_0 M_\mu f(x).
    \end{align*}
    Taking supremum in $\varepsilon\geq 0$, we deduce that
    $$
    \widetilde{T}_*f(x) \leq \widetilde{K}_*f(x) + c\,c_0 M_\mu f(x),
    $$
    where we denote $\widetilde{K}_*f(x) = \sup_{\varepsilon>0}|\widetilde{K}_\varepsilon f(x)|$. If we plug this in (\ref{desigualtat_k_phi_maximal}), 
    $$
    |K_{\Phi}f(x)|\leq 2\beta^{-1}\widetilde{T}_*f(x) + c\,c_0M_\mu f(x)\leq 2\beta^{-1}\widetilde{K}_*f(x) + c\, c_0\,\beta^{-1} M_\mu f(x),
    $$
    where we also used the fact that $0<\beta<1$. Then, noticing that each $\widetilde{k}_{\Psi_\text{w}}$ satisfies condition (\ref{condicio_nucli_t}) with $\varepsilon=\varepsilon_0$ and so the same holds for the average kernel $\widetilde{k}$, applying Lemma \ref{lema:primer_cotlar}, we know that
    $$
    \|\widetilde{K}_*\|_{L^2(\mu)\to L^2(\mu)} \leq c\,\|\widetilde{K}\|_{L^2(\mu)\to L^2(\mu)}+c.
    $$
    Therefore,
    $$
    \|K_{\Phi}\|_{L^2(\mu)\to L^2(\mu)} \leq c\beta^{-1}\|\widetilde{K}\|_{L^2(\mu)\to L^2(\mu)} + c\beta^{-1},
    $$
    which is (\ref{desigualtat_k_phi_beta}). The proof of (\ref{desigualtat_k_psi_beta}) is analogous and we omit it.
\end{proof}

\subsection{The key estimates for $A_i$}

\label{sec:acotacio_ai}

Recall that for a function $\Theta$ and good functions $f,g\in L^2(\mu)$ satisfying the conditions of Lemma \ref{lemma:good_functions}, we were only able to bound
$$
|\langle K_{\Theta}f, g\rangle| \leq c_{a}\,\|f\|_{L^2(\mu)}\|g\|_{L^2(\mu)} + A_1 + A_2 + A_3+ A_4,
$$
where $c_a$ is a constant that depends on some small number $\varepsilon_a$, to be fixed later, and the constant $\alpha$ from Section \ref{sec:exceptional_set}, and the sums $A_{i}$ were defined in (\ref{A_i}). This section is devoted to giving a probabilistic bound for the terms $A_i$. That is, we are going to bound their expectation with respect to $P^{\Omega^2}$. From now on, we are going to choose $\Theta= \Psi_{\text{w}}$, for $\text{w}\in \Omega^2$. 

Remember that the terms $A_1, A_2$ and $A_3$ concern the case where one of the cubes is terminal, while in $A_4$, all cubes that appear are transit. We will now see that the estimates for the first three terms are very similar. Recall that a cube $P\in \mathcal{D}_1$, we say that it is terminal (of the first kind) if either $2P\subset H_{\mathcal{D}_1}$ or if $P\subset T^1_{\mathcal{D}_1}$. In the first case, note that then, for any $x\in P$, $y\in \R^d$,
\begin{equation*}
    |\widetilde{k}_{\Psi_\text{w}}(x,y)| \leq \frac{c}{\Psi_{\text{w}}(x)^n}\leq \frac{c}{\Phi(x)^n} \leq \frac{c}{\text{dist}(x, \R^d\setminus H)^n} \leq \frac{c}{\ell(P)^n}.
\end{equation*}
Analogously, if $S\in \mathcal{D}_2$ is terminal because $2S\subset H_{\mathcal{D}_2}$,
$$
|\widetilde{k}_{\Psi_\text{w}}(x,y)| \leq \frac{c}{\ell(S)^n}, \quad \text{for all }x\in S, y\in \R^d.
$$
\begin{lemma}\label{lema:esperança_a1}
    Let $A_1$ be as in (\ref{A_i}), with $\Theta = \Psi_{\text{w}}$, for $\text{w}\in \Omega^2$. Then, for any $\gamma>0$, and $f,g$ as in Lemma \ref{lemma:good_functions},
    \begin{align*}
    \mathbb{E}_{P^{\Omega^2}}(A_1) &= \mathbb{E}_{P^{\Omega^2}}\bigg(\sum_{\mathcal{A}_{1,2}\cup \mathcal{A}_{2,1}\cup \mathcal{A}_{2,2}} |\langle K_{\Psi_{\text{w}}}(\chi_{P\setminus S}\Delta_{1,Q}f), \chi_S\Delta_{2,R}g\rangle|\bigg) \\
    &\leq c\,\sqrt{\gamma}\sup_{\text{w}\in \Omega^2}\|K_{\Psi_{\text{w}}}\|_{L^2(\mu)\to L^2(\mu)} \|f\|_{L^2(\mu)}\|g\|_{L^2(\mu)}.
    \end{align*}
\end{lemma}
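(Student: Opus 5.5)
The plan is to adapt the standard NTV terminal-cube argument: split each pair $(P,S)$ in $\mathcal{A}_{1,2}\cup\mathcal{A}_{2,1}\cup\mathcal{A}_{2,2}$ according to which cube is terminal and why. The pairs in which some terminal cube is terminal because $P\subset T^1_{\mathcal{D}_1}$ (or $S\subset T^2_{\mathcal{D}_2}$) will not be controlled by kernel size. For them I will use that, with $\Theta=\Psi_{\text{w}}$ and $\Psi_{\text{w}}\ge\Phi_{\text{w}}\ge\text{dist}(\cdot,\R^d\setminus W_{\mathcal{D}})$, the points of a cube $P\subset T^1_{\mathcal{D}_1}$ lie in $W_{\mathcal{D}_1}$. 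The idea is then to bound such a term by the full operator norm times the $L^2$ norms of the pieces. This gives
\[
|\langle K_{\Psi_{\text{w}}}(\chi_{P\setminus S}\Delta_{1,Q}f),\chi_S\Delta_{2,R}g\rangle|\le \|K_{\Psi_{\text{w}}}\|\,\|\chi_P\Delta_{1,Q}f\|_{2}\,\|\chi_S\Delta_{2,R}g\|_{2}.
\]
This bound is then weighted by a small factor coming from randomness. The key observation is that a point $x$ belongs to such a terminal child $P$ only when $x\in W_{\mathcal{D}(w_1)}$, an event whose occurrence is tied to $w_1$ alone. Meanwhile $g$'s martingale pieces depend on $w_2$. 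So I will not aim at a deterministic estimate but at one in expectation.

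\textbf{Step 1.} For pairs where the terminal cube is of type $2P\subset H_{\mathcal{D}_1}$ (or $2S\subset H_{\mathcal{D}_2}$), I will use the pointwise bound $|\widetilde{k}_{\Psi_{\text{w}}}(x,y)|\le c\,\ell(P)^{-n}$ derived just before the lemma. Because $\ell(P)\approx\ell(S)$ and $\text{dist}(P,S)\lesssim 2^m\ell(P)$, this gives
\[
|\langle\cdot,\cdot\rangle|\le c\,\ell(P)^{-n}\|\chi_P\Delta_{1,Q}f\|_{1}\|\chi_S\Delta_{2,R}g\|_{1}.
\]
It is not clear that $\mu(S)\lesssim \ell(S)^n$, so I would instead pass through the transit parent $R$ (respectively $Q$), which satisfies $\mu(\lambda R)\le c_0\ell(\lambda R)^n$ by Lemma~\ref{lema:comportament_cubs_transit}. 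That yields $|\langle\cdot,\cdot\rangle|\le c\|\Delta_{1,Q}f\|_2\|\Delta_{2,R}g\|_2$. These terms are then small only through a small factor, so even they must be absorbed by the probabilistic step below, or else moved to the constant $c_a$ if the statement allows. I would try to treat every case uniformly by the operator-norm bound instead.

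\textbf{Step 2 (main step).} I will introduce a splitting parameter: $x$ is \emph{$\gamma$-bad} for a pair if the terminal child it lies in sits at relative distance at most $\gamma$ from the other lattice's boundary structure. Alternatively, and more simply, I will write the terminal pieces as $\chi_{P}\Delta_{1,Q}f$ with $P$ terminal. Summed over $Q$, these are disjoint pieces whose squared $L^2$ norms sum to at most $c\|f\|^2$ (Step 1 of Lemma~\ref{lema_descomposicio_L2}). The plan is to apply Cauchy--Schwarz in the form
\[
A_1\le \|K_{\Psi_{\text{w}}}\|\Big(\sum \|\chi_P\Delta_{1,Q}f\|_2^2\,\mathbf 1_{E}\Big)^{1/2}\Big(\sum\|\chi_S\Delta_{2,R}g\|_2^2\Big)^{1/2},
\]
where $E$ is the event that the partner cube from the independent lattice actually meets $P$ with a terminal/transit mismatch. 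The bounded interaction count (Figure~\ref{fig:distancies_entre_cubs}) controls multiplicities. Taking expectation in $w_2$ with $w_1$ fixed and using Jensen ($\mathbb E\sqrt{X}\le\sqrt{\mathbb E X}$), I need $\mathbb E\,\mathbf 1_E\le\gamma$. This is to be obtained exactly as in Lemma~\ref{lema:part_dolenta_k}: pointwise in $x$, the probability that $x$ lies in the relevant boundary layer is at most $p_\gamma$, which tends to $0$ by Lemma~\ref{lemma:p_epsilon_a}. This produces the $\sqrt{\gamma}$.

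\textbf{Main obstacle.} The hard part is identifying an event of probability $\lesssim\gamma$ that genuinely covers the terminal-pair contributions. Terminality of $P$ depends only on $w_1$, so the smallness must come from the geometric interaction with $\mathcal{D}_2$, that is from $P\setminus S$ being a thin layer. I would first try reducing to the set $\delta_S$ of Lemma~\ref{lema:part_dolenta_k}, and treat the complement via the $\varepsilon_a$-type interior estimate used in Lemma~\ref{lema:arreglo_xavi}. If that fails, I would fall back on choosing the layer width $\gamma\ell(S)$ and accepting a $\gamma$-dependent deterministic constant for the interior part.
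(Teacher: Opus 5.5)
Your Step~2 contains no working source for the factor $\sqrt{\gamma}$. You rely on the observation that points of a $T^1$-terminal child lie in $W_{\mathcal{D}(w_1)}$, and on the event $E$ that the partner cube of the other lattice meets $P$ ``with a terminal/transit mismatch''. Neither is small in probability. Terminality is dictated by the sets $H_{\mathcal{D}}$, $T^i_{\mathcal{D}}$, which may fill a fixed proportion of the region near $P$ for every translation, and no parameter in the construction makes it rare.

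The suppression does not rescue you either. On $S\subset T^2_{\mathcal{D}_2}$ it only gives $\Psi_{\text{w}}(x)\ge\text{dist}(x,\partial S)$, which degenerates exactly along $\partial S$, where the supports $P\setminus S$ and $S$ touch.

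The smallness is purely geometric, and the decomposition that works is the one you list only as a fallback, which is also the paper's route. Split $P\setminus S=P_0\cup\delta_{PS}$ with $\delta_{PS}=\mathcal{U}_{\gamma\ell(P)}(\partial S)\cap(P\setminus S)$.
\begin{itemize}
\item On $P_0\times S$ one has $|x-y|\ge\gamma\ell(P)$, so the plain size estimate gives $|\widetilde{k}_{\Psi_{\text{w}}}(x,y)|\le c\gamma^{-n}\ell(P)^{-n}$. Passing through the transit parents as in your Step~1, this yields a contribution $c\gamma^{-n}\|\Delta_{1,Q}f\|_{L^2(\mu)}\|\Delta_{2,R}g\|_{L^2(\mu)}$. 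The interior estimate of Lemma~\ref{lema:arreglo_xavi} is not needed. It would not even apply when $P$ is terminal, since then $\chi_P\Delta_{1,Q}f$ is not a multiple of $b_1$.
\item On $\delta_{PS}$ you use $\sup_{\text{w}}\|K_{\Psi_{\text{w}}}\|$, then Cauchy--Schwarz in the sum and in the expectation. The $g$-factor is $\le c\|g\|_{L^2(\mu)}$, because terminal children of transit parents are disjoint and interactions have bounded multiplicity. The $f$-factor is $\le c\gamma\|f\|^2_{L^2(\mu)}$ by Fubini in $w_2$ with $w_1$ fixed: for each fixed $y$, the probability that $y$ lies within $\gamma\ell(P)$ of the boundary of a cube of $\mathcal{D}(w_2)$ of one of the boundedly many admissible sizes is $\lesssim\gamma$.
\end{itemize}
The layer must sit outside $S$. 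The set $\delta_S$ of Lemma~\ref{lema:part_dolenta_k} lies inside $S$, where $\chi_{P\setminus S}\Delta_{1,Q}f$ vanishes. The piece $\Delta_{2,R}g$ living there is a function of $w_2$, just like $\partial S$, so averaging over $w_2$ gains nothing.

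Second, your intention to handle every case by the operator-norm bound, so as to avoid an additive deterministic term, cannot succeed. $A_1$ does not depend on $\gamma$, and $\sup_{\text{w}}\|K_{\Psi_{\text{w}}}\|<\infty$ because $\Psi_{\text{w}}\ge\varepsilon_0$. So a bound of the literal form $c\sqrt{\gamma}\sup_{\text{w}}\|K_{\Psi_{\text{w}}}\|\,\|f\|_{L^2(\mu)}\|g\|_{L^2(\mu)}$ for all $\gamma>0$ would force $\mathbb{E}(A_1)=0$. What the paper's argument actually yields, and what the final absorption argument for $\|\widetilde{K}\|$ uses, is
\[
\mathbb{E}_{P^{\Omega^2}}(A_1)\le \bigl(c_\gamma+c\sqrt{\gamma}\,\sup_{\text{w}\in\Omega^2}\|K_{\Psi_{\text{w}}}\|_{L^2(\mu)\to L^2(\mu)}\bigr)\|f\|_{L^2(\mu)}\|g\|_{L^2(\mu)}.
\]
Here $c_\gamma\approx\gamma^{-n}$ comes from your Step~1 (the case $2S\subset H_{\mathcal{D}_2}$ or $2P\subset H_{\mathcal{D}_1}$) and from $P_0$. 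Only the coefficient of $\sup_{\text{w}}\|K_{\Psi_{\text{w}}}\|$ needs to be small. So keep Step~1 as an additive term and replace Step~2 by the layer argument above.
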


\begin{proof}
    We will only show the details for $(P,S)\in \mathcal{A}_{1,2}$. All the other cases are analogous. Assume first that $S$ is terminal because $2S\subset H_{\mathcal{D}_2}$. Then,
    \begin{align*}
        |\langle K_{\Psi_\text{w}}(\chi_{P\setminus S}\Delta_{1,Q}f), \chi_S\Delta_{2,R}g\rangle| &\leq \int_{x\in S}|\Delta_{2,R}g(x)|\bigg(\int_{y\in P\setminus S}|\widetilde{k}_{\Psi_{\text{w}}}(x,y)|\,|\Delta_{1,Q}f(y)|\,d\mu(y)\bigg)d\mu(x)\\
        &\leq \frac{c}{\ell(S)^n}\|\Delta_{2,R}g\|_{L^2(\mu)}\|\Delta_{1,Q}f\|_{L^2(\mu)}\mu(Q)^{\frac{1}{2}}\mu(R)^{\frac{1}{2}}\\
        &\leq c \|\Delta_{2,R}g\|_{L^2(\mu)}\|\Delta_{1,Q}f\|_{L^2(\mu)}\frac{\mu(Q)^{\frac{1}{2}}\mu(R)^{\frac{1}{2}}}{\ell(Q)^{\frac{n}{2}}\ell(R)^{\frac{n}{2}}}\\
        &\leq c \|\Delta_{2,R}g\|_{L^2(\mu)}\|\Delta_{1,Q}f\|_{L^2(\mu)}.
    \end{align*}
    where we have used that $P\subset\mathcal{CH}(Q), S\in \mathcal{CH}(R)$, $\ell(P)\approx \ell(S)$ and, lastly, that since both $Q$ and $R$ are transit, $\mu(Q)\leq c_0 \ell(Q)^n, \mu(R)\leq c_0\ell(R)^n$. Recalling that the cubes $Q$ can only interact with a uniformly bounded number of cubes $R$ and vice versa, we get that the sum over the terminal cubes $2S\subset H_{\mathcal{D}_2}$ is bounded above by $c\,\|f\|_{L^2(\mu)}\|g\|_{L^2(\mu)}$. The same holds if $2P \subset H_{\mathcal{D}_1}$.

    Now, assume that $S$ is terminal because $S\subset T^2_\mathcal{D}$. In this case, we divide $P\setminus S = P_0 \cup \delta_{PS}$, where (see Figure \ref{fig:P_menys_S}).
    $$
    \delta_{PS} = \mathcal{U}_{\gamma \ell(P)}(\partial S)\cap (P\setminus S).
    $$
    \begin{figure}[h]
        \centering

 
\tikzset{
pattern size/.store in=\mcSize, 
pattern size = 5pt,
pattern thickness/.store in=\mcThickness, 
pattern thickness = 0.3pt,
pattern radius/.store in=\mcRadius, 
pattern radius = 1pt}
\makeatletter
\pgfutil@ifundefined{pgf@pattern@name@_eqhpdhg45}{
\pgfdeclarepatternformonly[\mcThickness,\mcSize]{_eqhpdhg45}
{\pgfqpoint{0pt}{0pt}}
{\pgfpoint{\mcSize+\mcThickness}{\mcSize+\mcThickness}}
{\pgfpoint{\mcSize}{\mcSize}}
{
\pgfsetcolor{\tikz@pattern@color}
\pgfsetlinewidth{\mcThickness}
\pgfpathmoveto{\pgfqpoint{0pt}{0pt}}
\pgfpathlineto{\pgfpoint{\mcSize+\mcThickness}{\mcSize+\mcThickness}}
\pgfusepath{stroke}
}}
\makeatother
\tikzset{every picture/.style={line width=0.75pt}} 

\scalebox{1.2}{

\begin{tikzpicture}[x=0.75pt,y=0.75pt,yscale=-1,xscale=1]

\draw  [fill={rgb, 255:red, 74; green, 144; blue, 226 }  ,fill opacity=0.4 ] (99.73,50.18) -- (100.09,110.18) -- (160.09,110.36) -- (160,140) -- (70,140) -- (70,50) -- cycle ;
\draw  [fill={rgb, 255:red, 248; green, 231; blue, 28 }  ,fill opacity=0.45 ] (110,50) -- (110,100) -- (160,100) -- (160.09,110.36) -- (100.09,110.18) -- (99.73,50.18) -- cycle ;
\draw   (110,0) -- (210,0) -- (210,100) -- (110,100) -- cycle ;
\draw   (70,50) -- (160,50) -- (160,140) -- (70,140) -- cycle ;
\draw  [pattern=_eqhpdhg45,pattern size=2.0250000000000004pt,pattern thickness=0.1pt,pattern radius=0pt, pattern color={rgb, 211:red, 214; green, 219; blue, 0}] (110,50) -- (160,50) -- (160,100) -- (110,100) -- cycle ;
\draw [color={rgb, 255:red, 208; green, 2; blue, 27 }  ,draw opacity=1 ][line width=1.5]  [dash pattern={on 1.69pt off 2.76pt}]  (99.73,49.27) -- (100.09,110.18) ;
\draw [color={rgb, 255:red, 208; green, 2; blue, 27 }  ,draw opacity=1 ][line width=1.5]  [dash pattern={on 1.69pt off 2.76pt}]  (100.09,110.18) -- (160.09,110.36) ;
\draw [color={rgb, 255:red, 74; green, 144; blue, 226 }  ,draw opacity=1 ][fill={rgb, 255:red, 74; green, 144; blue, 226 }  ,fill opacity=1 ]   (49.9,97.64) -- (82.1,97.88) ;
\draw [shift={(84.1,97.9)}, rotate = 180.44] [color={rgb, 255:red, 74; green, 144; blue, 226 }  ,draw opacity=1 ][line width=0.75]    (10.93,-3.29) .. controls (6.95,-1.4) and (3.31,-0.3) .. (0,0) .. controls (3.31,0.3) and (6.95,1.4) .. (10.93,3.29)   ;
\draw [color={rgb, 255:red, 208; green, 2; blue, 27 }  ,draw opacity=1 ]   (58,69.88) -- (105,70.11) ;
\draw [shift={(107,70.13)}, rotate = 180.29] [color={rgb, 255:red, 208; green, 2; blue, 27 }  ,draw opacity=1 ][line width=0.75]    (10.93,-3.29) .. controls (6.95,-1.4) and (3.31,-0.3) .. (0,0) .. controls (3.31,0.3) and (6.95,1.4) .. (10.93,3.29)   ;
\draw [color={rgb, 255:red, 208; green, 2; blue, 27 }  ,draw opacity=1 ]   (152.25,100.05) -- (152.25,109.88) ;
\draw [color={rgb, 255:red, 208; green, 2; blue, 27 }  ,draw opacity=1 ]   (179.75,119.63) -- (154.24,105.11) ;
\draw [shift={(152.5,104.13)}, rotate = 29.63] [color={rgb, 255:red, 208; green, 2; blue, 27 }  ,draw opacity=1 ][line width=0.75]    (10.93,-3.29) .. controls (6.95,-1.4) and (3.31,-0.3) .. (0,0) .. controls (3.31,0.3) and (6.95,1.4) .. (10.93,3.29)   ;

\draw (50,121.73) node [anchor=north west][inner sep=0.75pt]    {$P$};
\draw (217.5,3.9) node [anchor=north west][inner sep=0.75pt]    {$S$};
\draw (112,64.89) node [anchor=north west][inner sep=0.75pt]    {$P\cap S$};
\draw (29.35,88.4) node [anchor=north west][inner sep=0.75pt]  [color={rgb, 255:red, 74; green, 144; blue, 226 }  ,opacity=1 ]  {$P_{0}$};
\draw (29.25,60.7) node [anchor=north west][inner sep=0.75pt]  [color={rgb, 255:red, 208; green, 2; blue, 27 }  ,opacity=1 ]  {$\delta _{PS}$};
\draw (182.5,114.45) node [anchor=north west][inner sep=0.75pt]  [color={rgb, 255:red, 208; green, 2; blue, 27 }  ,opacity=1 ]  {$\gamma \ell ( P)$};

\end{tikzpicture}}

        \caption{The sets $P_0$ and $\delta_{PS}$ inside $P\setminus S$.}
        \label{fig:P_menys_S}
    \end{figure}

    Using this way of writing $P\setminus S$ as a disjoint union of two subsets, we have that each term $|\langle K_{\Psi_{\text{w}}}(\chi_{P\setminus S}\Delta_{1,Q}f), \chi_S\Delta_{2,R}g\rangle|$ appearing in the sum $A_1$ is controlled by
    \begin{align*}
        &\int_{y\in P_0}\bigg(\int_{x\in S}|\widetilde{k}_{\Psi_{\text{w}}}(x,y)|\,|\Delta_{1,Q}f(y)|\,|\Delta_{2,R}g(x)|\,d\mu(x)\bigg)d\mu(y)\\
        &\qquad +\int_{y\in \delta_{PS}}\bigg(\int_{x\in S} |\widetilde{k}_{\Psi_{\text{w}}}(x,y)|\,|\Delta_{1,Q}f(y)|\,|\Delta_{2,R}g(x)|\,d\mu(x)\bigg)d\mu(y) := I_1 + I_2.
    \end{align*}
    The bound for $I_1$ is simple, due to the distance to the boundary of $S$ that we have created. Indeed, for $y\in P_0$ and $x\in S$, we have $|x-y|\geq \gamma \ell(P)$, so
    $$
    |\widetilde{k}_{\Psi_\text{w}}(x,y)| \leq \frac{c}{|x-y|^n} \leq \frac{c}{\gamma^n}\frac{1}{\ell(P)^n},
    $$
    and so we obtain the same bound as in the case that $2S\subset H_{\mathcal{D}_1}$, with a constant that, of course, depends on the constant $\gamma$, which will be fixed later. For the second integral, we have that
    $$
    I_2 = I_2(P,S)\leq\sup_{\text{w}\in \Omega^2} \|K_{\Psi_{\text{w}}}\|_{L^2(\mu)\to L^2(\mu)}\|\chi_{\delta_{PS}}\Delta_{1,Q}f\|_{L^2(\mu)}\|\chi_S\Delta_{2,R}g\|_{L^2(\mu)}.
    $$
    Let us take expectations over all these terms. Applying Cauchy-Scwharz twice, denoting $\mathbb{E}_{P^{\Omega^2}}= \mathbb{E}_{(w_1,w_2)}$,
    \begin{align*}
        &\mathbb{E}_{P^{\Omega^2}}\bigg(\sum_{(P,S)\in\mathcal{A}_{1,2}}\|\chi_{\delta_{PS}}\Delta_{1,Q}f\|_{L^2(\mu)}\|\chi_S\Delta_{2,R}g\|_{L^2(\mu)}\bigg)\\
        &\qquad \leq \bigg[\mathbb{E}_{(w_1,w_2)}\bigg(\sum_{(P,S)\in \mathcal{A}_{1,2}}\|\chi_{\delta_{PS}}\Delta_{1,Q}f\|_{L^2(\mu)}^2\bigg)\bigg]^{\frac{1}{2}}\bigg[\mathbb{E}_{(w_1,w_2)}\bigg(\sum_{(P,S)\in \mathcal{A}_{1,2}}\|\chi_{S}\Delta_{2,R}g\|_{L^2(\mu)}^2\bigg)\bigg]^{\frac{1}{2}}\\
        &\qquad =: A\cdot B.
    \end{align*}
    Our goal now is to bound $A \leq c\sqrt{\gamma}\,\|f\|_{L^2(\mu)}$ and $B \leq c\,\|g\|_{L^2(\mu)}$. We start with the inequality involving the term $B$. Notice that the cubes $S\in \mathcal{D}^{\text{term},2}_2$ whose parent is transit are pairwise disjoint. Moreover, since each cube $R$ can only interact with a bounded number of cubes $Q$ (satisfying the conditions defining $\mathcal{A}_{1,2}$), we can bound, for each pair $(w_1, w_2)\in \Omega^2$,
    \begin{align*}
        \sum_{(P,S)\in \mathcal{A}_{1,2}}\|\chi_S\Delta_{2,\widehat{S}}g\|_{L^2(\mu)}^2 &= \sum_{P, \widehat{P}\in \mathcal{D}^{\text{tr},1}_1} \sum_{S: (P,S)\in \mathcal{A}_{1,2}}\|\chi_S \Delta_{2,\widehat{S}}g\|^2_{L^2(\mu)}\\
        &= \sum_{\substack{S\in \mathcal{D}^{\text{term},2}_2\\ \widehat{S}\in \mathcal{D}^{\text{tr},2}_2}}\|\chi_S\Delta_{2,\widehat{S}}g\|_{L^2(\mu)}^2\bigg(\sum_{P: (P,S)\in \mathcal{A}_{1,2}}1\bigg)\\
        &\leq c\sum_{\substack{S\in \mathcal{D}^{\text{term},2}_2\\ \widehat{S}\in \mathcal{D}^{\text{tr},2}_2}}\|\chi_S\Delta_{2,\widehat{S}}g\|_{L^2(\mu)}^2 \\
        &\leq c\sum_{\substack{S\in \mathcal{D}^{\text{term},2}_2\\ \widehat{S}\in \mathcal{D}^{\text{tr},2}_2}}\|\Delta_{2,\widehat{S}}g\|_{L^2(\mu\lfloor S)}^2 \leq c\,\|g\|^2_{L^2(\mu)},
    \end{align*}
    from which we deduce that $B\leq c\,\|g\|_{L^2(\mu)}$. Let us turn our attention to the term $A$. For this one, it will be useful to write
    \begin{align}
        A^2 &= \mathbb{E}_{(w_1,w_2)}\bigg(\sum_{(P,S)\in \mathcal{A}_{1,2}}\|\chi_{\delta_{PS}}\Delta_{1,\widehat{P}}f\|^2_{L^2(\mu)}\bigg)\nonumber\\
        &\leq \mathbb{E}_{(w_1,w_2)}\bigg(\sum_{(P,S)\in \mathcal{A}_{1,2}}\|\chi_{P\cap \mathcal{U}_{\gamma \ell(P)}(\partial S)}\Delta_{1,\widehat{P}}f\|^2_{L^2(\mu)}\bigg) \nonumber\\
        &= \mathbb{E}_{w_1}\bigg(\sum_{P,\widehat{P}\in \mathcal{D}^{\text{tr},1}_1}\bigg[\mathbb{E}_{w_2}\bigg(\sum_{S: (P,S)\in \mathcal{A}_{1,2}}\|\chi_{P\cap \mathcal{U}_{\gamma \ell(P)}(\partial S)}\Delta_{1,\widehat{P}}f\|^2_{L^2(\mu)}\bigg)\bigg]\bigg).\label{desigualtat_A_QUADRAT}
    \end{align}
    We claim that for each $P, \widehat{P}\in \mathcal{D}^{\text{tr},1}_1$,
    $$
    \mathbb{E}_{w_2}\bigg(\sum_{S: (P,S)\in \mathcal{A}_{1,2}}\|\chi_{P\cap \mathcal{U}_{\gamma \ell(P)}(\partial S)}\Delta_{1,Q}f\|^2_{L^2(\mu)}\bigg) \leq c\sqrt{\gamma}\,\|\Delta_{1,\widehat{P}}f\|_{L^2(\mu)}^2.
    $$
    For each $S$ in the sum above, we can write its boundary as the sum of $2d$ $(d-1)$-dimmensional faces, $\partial S = \cup_{k=1}^{2d}c_k$. Hence,
    $$
    \mathcal{U}_{\gamma \ell(P)}(\partial S)  = \bigcup_{k=1}^{2d}\mathcal{U}_{\gamma \ell(P)}(c_k).
    $$
    Therefore,
    $$
    \|\chi_{P\cap \mathcal{U}_{\gamma\ell(P)}(\partial S)}\Delta_{1,\widehat{P}}f\|_{L^2(\mu)}^2 \leq \sum_{k=1}^{2d}\|\chi_{P\cap \mathcal
    U_{\gamma\ell(P)}(c_k)}\Delta_{1,\widehat{P}}f\|^2_{L^2(\mu)}. 
    $$
    To estimate each of the terms in the sum above, let us consider, for each fixed $P,S$, the measure $d\sigma = |\chi_P\Delta_{1,\widehat{P}}f|^2\,d\mu$. With this notation,
    \begin{align*}
        \|\chi_{P\cap \mathcal{U}_{\gamma \ell(P)}}\Delta_{1,\widehat{P}}f\|_{L^2(\mu)}^2 &= \int_{P\cap \mathcal{U}_{\gamma \ell(P)}(c_k)}|\Delta_{1,\widehat{P}}f(x)|^2\,d\mu(x) = \sigma (\mathcal{U}_{\gamma \ell(P)}(c_k)). 
    \end{align*}
    Again, since each $S$ in the sum that we are estimating can only interact with a finite number of cubes $P$, we can swap the order of the expectation and the sum, to obtain
    \begin{align*}
        \mathbb{E}_{w_2} \bigg(\sum_{S: (P,S)\in \mathcal{A}_{1,2}}\|\chi_{P\cap \mathcal{U}_{\gamma \ell(P)}(\partial S)}\Delta_{1,\widehat{P}}f\|_{L^2(\mu)}^2\bigg) &= \sum_{S: (P,S)\in \mathcal{A}_{1,2}}\sum_{k=1}^{2d}\mathbb{E}_{w_2}(\sigma(\mathcal{U}_{\gamma\ell(P)}(c_k))).
    \end{align*}
    Note that on the right-hand side above, the random term inside the expectation is $c_k$, which, strictly speaking, is $c_k(w_2)$, but for commodity we have never written it in this form. Fix $1\leq k\leq 2d$, we are going to estimate the corresponding expectation above. We can assume that $c_k$ is contained in the hyperplane $H_1(s) = \{x_1=s\}$, for $s\in \pi(P)=I$, where $\pi$ denotes the projection into the first coordinate. This enables us to parametrize $c_k(w_2)=c_k(s)$ and because of the periodicity of the dyadic lattice,
    $$
    dP^{\Omega}  \leq \frac{c}{\mathcal{L}^1(I)} d(\mathcal{L}^1\lfloor  I) = \frac{c}{\ell(P)}d(\mathcal{L}^1\lfloor I),
    $$
    and so we can compute the expectation as
    \begin{align*}
        \mathbb{E}_{w_2}\left[\sigma\left(\mathcal{U}_{\gamma \ell(P)}(c_k)\right)\right]&= \int_{\Omega}\sigma\left(\mathcal{U}_{\gamma \ell(P)}(c_k(w_2))\right)dP^{\Omega}(w_2)\\
        &\leq c\,\frac{1}{\ell(P)}\int_{x_1\in I}\bigg(\int_{\subalign{&\,y\in P\,:\\ &|x_1-y_1|\leq \gamma \ell(P) }}d\sigma(y)\bigg)\,dx_1\\
        &= c\,\frac{1}{\ell(P)}\int_{y\in P}\bigg(\int_{\subalign{&\, x_1\in I\,:\\ &|x_1-y_1|\leq \gamma \ell(P)}}dx_1\bigg)\,d\sigma(y)\\
        &= 2\gamma \int_{y\in P}d\sigma (y) =  2\gamma\, \|\sigma\| = 2\gamma\, \|\chi_P\Delta_{1,\widehat{P}}f\|^2_{L^2(\mu)}.
    \end{align*}
    This means that for each $P,\widehat{P}\in \mathcal{D}^{\text{tr},1}_1$,
    \begin{align*}
        &\mathbb{E}_{w_2} \bigg(\sum_{S: (P,S)\in \mathcal{A}_{1,2}}\|\chi_{P\cap \mathcal{U}_{\gamma \ell(P)}(\partial S)}\Delta_{1,\widehat{P}}f\|_{L^2(\mu)}^2\bigg)\\
        &\qquad\quad \leq \sum_{S: (P,S)\in \mathcal{A}_{1,2}}\sum_{k=1}^{2d}2\gamma\,\|\chi_P\Delta_{1,\widehat{P}}f\|_{L^2(\mu)}^2 \leq C\gamma \,\|\chi_P\Delta_{1,\widehat{P}}f\|_{L^2(\mu)}^2,
    \end{align*}
    because each $P$ can only interact with a finite number of cubes $S$. If we plug this in (\ref{desigualtat_A_QUADRAT}), we have been able to bound
    \begin{align*}
        A^2 \leq \mathbb{E}_{w_1}\bigg(\sum_{P,\widehat{P}\in \mathcal{D}^{\text{tr},1}_1}c\,\gamma\,\|\chi_P\Delta_{1,\widehat{P}}f\|^2_{L^2(\mu)}\bigg)  \leq c\,\gamma\, \mathbb{E}_{w_1}\bigg(\sum_{Q\in \mathcal{D}^{\text{tr},1}_1}\|\Delta_{1,Q}f\|^2_{L^2(\mu)}\bigg) \leq c\,\gamma\,\|f\|_{L^2(\mu)}^2,
    \end{align*}
    where we have used Lemma \ref{lema_descomposicio_L2}. This finishes the proof that $A\leq c\,\sqrt{\gamma}\,\|f\|_{L^2(\mu)}$ and thus that of Lemma \ref{lema:esperança_a1}, for the sum over the pairs $(P,S)\in \mathcal{A}_{1,2}$. For $(P,S)\in \mathcal{A}_{2,1}$, we would simply interchange the roles of $P$ and $S$ in the arguments above and we obtain the same bound. For $(P,S)\in \mathcal{A}_{2,2}$, instead of using Lemma \ref{lema_descomposicio_L2} in the last step, we would use the fact that the terminal cubes $P$ whose parent is transit are pairwise disjoint, exactly as we have done for the estimate of the term $B$.
\end{proof}

For the second and third terms $A_2+ A_3$, almost the same arguments, changing the roles of the cubes $P$ and $S$ whenever we need to, yield the same bound, which is the following lemma, the proof of which we omit in order to avoid unnecessary repetition.

\begin{lemma}\label{lema:esperança_a2_a3}
    Let $A_2$ and $A_3$ be as in (\ref{A_i}), with $\Theta = \Psi_{\text{w}}$, for $\text{w}\in \Omega^2$. Then, for any $\gamma>0$ and $f,g$ as in Lemma \ref{lemma:good_functions},
    \begin{align*}
    \mathbb{E}_{P^{\Omega^2}}(A_2) &= \mathbb{E}_{P^{\Omega^2}}\bigg(\sum_{\mathcal{A}_{1,2}\cup \mathcal{A}_{2,1}\cup \mathcal{A}_{2,2}} |\langle K_{\Psi_{\text{w}}}(\chi_{P\cap S}\Delta_{1,Q}f), \chi_{S\setminus P}\Delta_{2,R}g\rangle|\bigg) \\
    &\leq c\,\sqrt{\gamma}\sup_{\text{w}\in \Omega^2}\|K_{\Psi_{\text{w}}}\|_{L^2(\mu)\to L^2(\mu)} \|f\|_{L^2(\mu)}\|g\|_{L^2(\mu)}.
    \end{align*}
    Furthermore,
    \begin{align*}
    \mathbb{E}_{P^{\Omega^2}}(A_3) &= \mathbb{E}_{P^{\Omega^2}}\bigg(\sum_{\mathcal{A}_{1,2}\cup \mathcal{A}_{2,1}\cup \mathcal{A}_{2,2}} |\langle K_{\Psi_{\text{w}}}(\chi_{P\cap S}\Delta_{1,Q}f), \chi_{P\cap S}\Delta_{2,R}g\rangle|\bigg) \\
    &\leq c\,\sqrt{\gamma}\sup_{\text{w}\in \Omega^2}\|K_{\Psi_{\text{w}}}\|_{L^2(\mu)\to L^2(\mu)} \|f\|_{L^2(\mu)}\|g\|_{L^2(\mu)}.
    \end{align*}
\end{lemma}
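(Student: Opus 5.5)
The plan is to follow the proof of Lemma \ref{lema:esperança_a1} almost verbatim, using the same case distinctions on why the terminal cube is terminal. By symmetry I will treat $(P,S)\in\mathcal{A}_{1,2}$ in detail, and the other families are handled by swapping roles.

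\textbf{Case 1: the terminal cube has its double in $H_{\mathcal{D}_i}$.} Suppose $2S\subset H_{\mathcal{D}_2}$ (or $2P\subset H_{\mathcal{D}_1}$). Then $|\widetilde{k}_{\Psi_{\text{w}}}(x,y)|\le c\,\ell(S)^{-n}$ for $x\in S$ (respectively $c\,\ell(P)^{-n}$ for $y\in P$). This bound applies to both $A_2$ and $A_3$, since in each term one variable lies in $P\cap S$. Cauchy--Schwarz gives
\[
|\langle\cdot,\cdot\rangle|\le c\,\ell(S)^{-n}\mu(Q)^{1/2}\mu(R)^{1/2}\|\Delta_{1,Q}f\|_2\|\Delta_{2,R}g\|_2 .
\]
Because $Q,R$ are transit and $\ell(P)\approx\ell(S)$, this is bounded by $c\|\Delta_{1,Q}f\|_2\|\Delta_{2,R}g\|_2$. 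Bounded interaction between the cubes and Lemma \ref{lema_descomposicio_L2} then finish this case deterministically.

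\textbf{Case 2: $S\subset T^2_{\mathcal{D}_2}$ (or $P\subset T^1_{\mathcal{D}_1}$).} Here there is no decay, so I introduce a $\gamma\ell(P)$-collar around a boundary and split the relevant piece into a far part and a collar part.

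For $A_2$, the function $\chi_{S\setminus P}\Delta_{2,R}g$ lives outside $P$. I split $P\cap S=(P\cap S)_0\cup(P\cap S\cap\mathcal{U}_{\gamma\ell(P)}(\partial P))$.
\begin{itemize}
\item On the far part, $|x-y|\ge\gamma\ell(P)$, so the size bound $c\gamma^{-n}\ell(P)^{-n}$ reduces it to Case 1.
\item On the collar, I bound the term by $\sup_{\text{w}}\|K_{\Psi_{\text{w}}}\|\,\|\chi_{\mathcal{U}_{\gamma\ell(P)}(\partial P)\cap P\cap S}\Delta_{1,Q}f\|_2\,\|\chi_S\Delta_{2,R}g\|_2$.
\end{itemize}
It is cleaner to put the collar on the terminal cube's side, so that the random boundary is $\partial S$. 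So in the $\mathcal{A}_{1,2}$ case I instead use the collar $\mathcal{U}_{\gamma\ell(S)}(\partial P)\cap(S\setminus P)$ and the function $\chi_{S\setminus P}\Delta_{2,R}g$. Then I take expectations exactly as before, with the randomness in $w_1$:
\begin{itemize}
\item Cauchy--Schwarz in the expectation separates the two factors.
\item The factor with $\Delta_{2,\widehat S}g$ over terminal $S$ with transit parent is controlled by the disjointness of those cubes.
\item For the factor with $\Delta_{1,Q}f$, I restrict to the collar near a random face. Averaging over translations of $\mathcal{D}_1$ with the measure $\sigma=|\chi_P\Delta_{1,\widehat P}f|^2d\mu$ gives $\mathbb{E}\,\sigma(\mathcal{U}_{\gamma\ell}(c_k))\le c\gamma\|\sigma\|$.
\end{itemize}
Whichever of $P$ or $S$ is the fixed transit cube whose function gets the collar, the translation average is taken over the other lattice.

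For $A_3$, the domain is $(P\cap S)\times(P\cap S)$, and both variables may approach each other. I cannot get far-field decay from a boundary collar alone. Instead I split the \emph{terminal} side's function by whether the point lies within $\gamma\ell$ of $\partial(\text{transit cube})$, where $\partial$ of the transit cube is random with respect to the other lattice, and control the collar term again by $\sup\|K_{\Psi_{\text{w}}}\|$ times the collar $L^2$ norm.

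\textbf{Main obstacle.} I expect the main obstacle to be the complementary piece of $A_3$. It is an interaction of two functions both supported deep inside $P\cap S$ with no separation, so a plain size bound fails. I would try to handle it by noting that deep points of $P\cap S$ with $S\subset T^2$ and $Q,R$ transit still give $\mu(P\cap S)\lesssim\ell^n$. I would also try to replace the role of the collar with the one around $\partial S$ applied to the $g$-side function, using $L^2$ boundedness of $K_{\Psi_{\text{w}}}$ on the collar and Case 1-type bounds elsewhere. If this fails, I would absorb this term into the $\sup_{\text{w}}\|K_{\Psi_{\text{w}}}\|$ term with a small factor coming from the low probability (as $\gamma\to0$) that the deep region of a terminal cube meets the corresponding child. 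This mirrors the $p_\varepsilon$ argument of Lemma \ref{lemma:p_epsilon_a}.
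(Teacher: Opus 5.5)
\textbf{The gap is in $A_3$, as you suspect.} Your treatment of $A_2$ is essentially the ``swap $P$ and $S$'' version of Lemma \ref{lema:esperança_a1} that the paper intends, and it works, but the bookkeeping needs correcting.

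\textbf{Bookkeeping for $A_2$.} The collar $\mathcal{U}_{\gamma\ell(S)}(\partial P)\cap(S\setminus P)$ sits on the $g$-side. So the factor that becomes small is the $g$-factor. You average over $w_1$, which moves $\partial P$, against $\sigma=|\chi_S\Delta_{2,\widehat S}g|^2\,d\mu$, which does not depend on $w_1$. The $f$-factor is only bounded via Lemma \ref{lema_descomposicio_L2}. Your bullets instead average $|\chi_P\Delta_{1,\widehat P}f|^2\,d\mu$ over translations of $\mathcal{D}_1$. That gains nothing, because $P$ and this measure move together. The correct rule is not ``collar on the transit (or terminal) side''. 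It is this: the collar surrounds the boundary of a cube of one lattice and is placed on the function attached to the other lattice.

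\textbf{Why your $A_3$ split fails.} You split the terminal side's function by distance to the boundary of the transit cube. This leaves a piece with no geometric separation and no kernel bound. Your fallbacks do not close it:
\begin{itemize}
\item The growth $\mu(P\cap S)\lesssim\ell(P)^n$ is useless without a pointwise bound on $\widetilde{k}_{\Psi_{\text{w}}}$.
\item The event that the deep part of a terminal cube meets the other child has probability close to $1$, not small.
\end{itemize}
More fundamentally, an argument that never uses \emph{why} the cube is terminal cannot work. It would also dispose of the diagonal $\mathcal{A}_{1,1}$ term, which genuinely needs Lemma \ref{lema:arreglo_xavi} and the estimate for $A_4$.

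\textbf{The missing ingredient: suppression on $T$-terminal cubes.} Take $(P,S)\in\mathcal{A}_{1,2}$ with $S\subset T^2_{\mathcal{D}_2}\subset W_{\mathcal{D}_2}$. For $z\in S$ we have $\Psi_{\text{w}}(z)\ge\Phi_{\text{w}}(z)\ge\text{dist}(z,\R^d\setminus S)$. By Lemma \ref{desigualtat_maxim_k_tilde}, $|\widetilde{k}_{\Psi_{\text{w}}}(x,y)|\le c\,\gamma^{-n}\ell(S)^{-n}$ as soon as one of $x,y\in P\cap S$ lies at distance at least $\gamma\ell(S)$ from $\partial S$.

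So split the \emph{transit} side's function $\chi_{P\cap S}\Delta_{1,Q}f$ by the collar $\mathcal{U}_{\gamma\ell(S)}(\partial S)$ of the \emph{terminal} cube.
\begin{itemize}
\item The deep part gives $c_\gamma\|\Delta_{1,Q}f\|_{L^2(\mu)}\|\Delta_{2,R}g\|_{L^2(\mu)}$, using the growth of the transit cubes $Q$ and $R$.
\item The collar part is at most $\sup_{\text{w}}\|K_{\Psi_{\text{w}}}\|\,\|\chi_{P\cap\mathcal{U}_{\gamma\ell(S)}(\partial S)}\Delta_{1,\widehat P}f\|_{L^2(\mu)}\|\chi_S\Delta_{2,\widehat S}g\|_{L^2(\mu)}$. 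Its expectation is handled exactly like the two Cauchy--Schwarz factors in the proof of Lemma \ref{lema:esperança_a1}: average over $w_2$, then use disjointness of terminal $S$ with transit parent.
\end{itemize}
If instead $P\subset T^1_{\mathcal{D}_1}$, put the collar of $\partial P$ on the $g$-side and average over $w_1$. If the terminal cube has its double in $H$, your Case 1 applies.

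Your second fallback (``collar around $\partial S$ \dots{} Case 1-type bounds elsewhere'') is close to this. However, it places the collar of $\partial S$ on $\Delta_{2,R}g$, which is determined by the same lattice as $\partial S$, so its expectation is not small.

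\textbf{A remark on the statement.} Your Case 1 and the deep or far pieces produce an additive term $c_\gamma\|f\|_{L^2(\mu)}\|g\|_{L^2(\mu)}$. This is not dominated by the right-hand side as stated. The same happens in the paper's proof for $A_1$, and it is harmless in the final absorption step, but the lemma should be read with that term included.
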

Lastly, we deal with the sum $A_4$, in which all the cubes that appear are transit. Unfortunately, the same technique as in the two preceding lemmas fails, because the family of transit cubes whose parents are also transit is not pairwise disjoint, unlike in the case where the children are terminal and the parents are transit. To get an appropriate bound in this case, we will need to separate each summand even further. We do this in the following lemma.

\begin{lemma}\label{lema:esperança_a4}
    Let $A_4$ be as in (\ref{A_i}), with $\Theta=\Psi_{\text{w}}$ for $\text{w}\in \Omega^2$. Then, for $f,g$ as in Lemma \ref{lemma:good_functions},
    \begin{align*}
        \mathbb{E}_{P^{\Omega^2}}(A_4) &= \mathbb{E}_{P^{\Omega^2}}\bigg(\sum_{\mathcal{A}_{1,1}}|c_P(f)||c_S(g)||\langle K_{\Psi_{\text{w}}}(\chi_{\Delta_{P,S}}b_1), \chi_{\Delta_{P,S}\setminus (\Delta_{P,S})_{\varepsilon_a}}b_2\rangle|\bigg) \\
        &\leq c\,\|f\|_{L^2(\mu)}\|g\|_{L^2(\mu)}\left(p_{\varepsilon_a}^{\frac{1}{2}}\sup_{\text{w}\in \Omega^2}\|K_{\Psi_{\text{w}}}\|_{L^2(\mu)\to L^2(\mu)}+c_{\varepsilon_a}\right),
    \end{align*}
    where $p_{\varepsilon_a}\to 0$ as $\varepsilon_a\to 0$, as discussed in Lemma \ref{lemma:p_epsilon_a}, and $c_{\varepsilon_a}$ is a constant that depends on $\varepsilon_a$.
\end{lemma}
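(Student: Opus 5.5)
We fix $(P,S)\in\mathcal{A}_{1,1}$, write $\Delta=\Delta_{P,S}=P\cap S$, and use the decomposition $\Delta\setminus\Delta_{\varepsilon_a}\subset P_\partial\cup S_\partial$ introduced before Lemma \ref{lema:arreglo_xavi}. By construction, $S_\partial=\Delta\setminus\widetilde P$ lies in an $\varepsilon_a\ell$-neighbourhood of $\partial P$, and $P_\partial=\Delta\setminus\widetilde S$ lies in an $\varepsilon_a\ell$-neighbourhood of $\partial S$, with $\ell\approx\ell(P)\approx\ell(S)$. For each summand we will write
\begin{align*}
|\langle K_{\Psi_{\text{w}}}(\chi_\Delta b_1),\chi_{\Delta\setminus\Delta_{\varepsilon_a}}b_2\rangle| &\le |\langle K_{\Psi_{\text{w}}}(\chi_{\Delta\cap\delta}b_1),\chi_{\Delta\setminus\Delta_{\varepsilon_a}}b_2\rangle| \\
&\quad + |\langle K_{\Psi_{\text{w}}}(\chi_{\Delta\setminus\delta}b_1),\chi_{\Delta\setminus\Delta_{\varepsilon_a}}b_2\rangle|,
\end{align*}
where $\delta$ is a boundary layer of relative width comparable to $\varepsilon_a$ around $\partial P\cup\partial S$, i.e.\ of the type $\delta_P\cup\delta_S$ used in the definition of $p_\varepsilon$.

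\textbf{First piece.} Both functions are then supported in $P_{\text{b}}$ or $S_{\text{b}}$, the bad parts defined in (\ref{part_dolenta_dun_cub}). A point close to $\partial S$ with $S\in\mathcal{D}_2^{\text{tr},2}$ of comparable size lies in $P_{\text{b}}$, and symmetrically. Using the $L^2$ operator bound, each such term is at most
$$\sup_{\text{w}}\|K_{\Psi_{\text{w}}}\|\,|c_P(f)||c_S(g)|\,\|\chi_{P_{\text{b}}}b_1\|_{L^2(\mu)}\,\|\chi_S b_2\|_{L^2(\mu)},$$
or the analogous bound with the roles of $P$ and $S$ exchanged. We then apply Cauchy--Schwarz in the sum over $(P,S)$ and in the expectation, using that each $P$ meets boundedly many $S$. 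The factor $\sum|c_S(g)|^2\|\chi_Sb_2\|^2\lesssim\|g\|^2$ is controlled by $|c_S(g)|\mu(S)^{1/2}\le c\|\Delta_{2,R}g\|_{L^2(\mu)}$ together with Lemma \ref{lema_descomposicio_L2}. The factor $\mathbb{E}\sum|c_P(f)|^2\|\chi_{P_{\text{b}}}b_1\|^2\le p_{\varepsilon_a}c_3\|f\|^2$ is controlled by Lemma \ref{lema:part_dolenta_k}, or by (\ref{desigualtat_p_epsilon_g}) in the symmetric case. Together these give the term $p_{\varepsilon_a}^{1/2}\sup_{\text{w}}\|K_{\Psi_{\text{w}}}\|\,\|f\|\|g\|$.

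\textbf{Second piece.} Here the source $\Delta\setminus\delta$ and the target $\Delta\setminus\Delta_{\varepsilon_a}$ are either separated by a distance $\gtrsim\varepsilon_a\ell$, which requires choosing $\delta$ slightly wider than the strip $\Delta\setminus\Delta_{\varepsilon_a}$, or the target lies in the region controlled by the $t_{\Delta,\varepsilon_a}$-small boundary condition (\ref{t-small}). On the separated part the kernel satisfies $|\widetilde k|\le c(\varepsilon_a\ell)^{-n}$. Using the growth $\mu(P),\mu(S)\le c_0\ell^n$ of transit cubes, the term is bounded by $c_{\varepsilon_a}|c_P(f)||c_S(g)|\mu(P)^{1/2}\mu(S)^{1/2}\le c_{\varepsilon_a}\|\Delta_{1,Q}f\|\|\Delta_{2,R}g\|$. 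Bounded interaction among the pairs then sums this to $c_{\varepsilon_a}\|f\|\|g\|$.

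\textbf{Main obstacle.} The delicate step is the geometric bookkeeping: showing that every point of $\Delta\setminus\Delta_{\varepsilon_a}$ falls either into a bad part $P_{\text{b}}$/$S_{\text{b}}$, whose thickness matches the $\varepsilon$ entering $p_\varepsilon$, or at distance $\gtrsim\varepsilon_a\ell$ from the source. If a clean separation fails, we would first fall back on the logarithmic annular estimate of Lemma \ref{els_dos_primers_transit}, using the small-boundary property (\ref{t-small}) of $\Delta_{\varepsilon_a}$, to absorb the near-boundary interaction into $c_{\varepsilon_a}$.
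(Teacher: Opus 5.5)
Your argument is sound, but your decomposition differs from the paper's, and in one respect it is simpler. The paper splits the target strip as $\Delta\setminus\Delta_{\varepsilon_a}=S_\partial\cup(P_\partial\setminus S_\partial)$. For the second part it splits the source as $P_\partial\cup(\Delta\setminus P_\partial)$. The pieces with target in $S_\partial\subset S_{\text{b}}$ or source in $P_\partial\subset P_{\text{b}}$ are handled exactly as in your first piece. The remaining interaction, between $\Delta\setminus P_\partial$ and $P_\partial\setminus S_\partial$, is \emph{adjacent} across $\partial\widetilde S$. The paper therefore needs the logarithmic annular estimate together with the $t_{\Delta,\varepsilon_a}$-small boundary property (\ref{t-small}) of $\Delta_{\varepsilon_a}$.

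You take the source layer about twice as wide as the strip, which separates the residual interaction by $\gtrsim\ell_{\varepsilon_a}$. The size bound $|\widetilde k_{\Psi_{\text{w}}}(x,y)|\le c\,\ell_{\varepsilon_a}^{-n}$, the inequality $\mu(\Delta)^2\le\mu(P)\mu(S)$, and the growth of transit cubes then directly give $c\,\varepsilon_a^{-n}|c_P(f)||c_S(g)|\mu(P)^{1/2}\mu(S)^{1/2}$. So the second alternative you allow for, and the fallback in your last paragraph, never occur, and (\ref{t-small}) is not needed. The price is that the bad parts must have width about $2\varepsilon_a$, so you obtain $p_{2\varepsilon_a}^{1/2}$ rather than $p_{\varepsilon_a}^{1/2}$. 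This is harmless, since $p_\varepsilon\to0$ (the proof of Lemma \ref{lemma:p_epsilon_a} even gives $p_\varepsilon\lesssim\varepsilon$). The gain is that your residual constant is uniform in $P,S$. The paper's constant passes through $t_{\Delta,\varepsilon_a}$, and its uniformity over all shapes of $P\cap S$, which can be an arbitrarily thin box, is not addressed there.

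One step of your first piece needs repair. Source and target each lie in $P_{\text{b}}\cup S_{\text{b}}$, but your two bounds require the source to lie in $P_{\text{b}}$ or the target to lie in $S_{\text{b}}$. Consider the mixed configuration: the source is near $\partial P$ only (hence in $S_{\text{b}}$) and the target is near $\partial S$ only (hence in $P_{\text{b}}$). Neither bound applies as stated, and Lemma \ref{lema:part_dolenta_k} says nothing about $|c_P(f)|\,\|\chi_{P\cap S_{\text{b}}}b_1\|_{L^2(\mu)}$. There are two easy fixes.
\begin{enumerate}
\item Order the splitting as the paper does. First remove the target within $\ell_{\varepsilon_a}$ of $\partial P$, which lies in $S_{\text{b}}$. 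Then split the source into its part within $2\ell_{\varepsilon_a}$ of $\partial S$, which lies in $P_{\text{b}}$, and the rest. Because $\text{dist}(\cdot,\partial S)$ is $1$-Lipschitz, the rest is at distance $\ge\ell_{\varepsilon_a}$ from the remaining target.
\item Alternatively, bound the mixed term by $\|K_{\Psi_{\text{w}}}\|\,c_b^2\,\bigl(|c_P(f)|\mu(P_{\text{b}})^{1/2}\bigr)\bigl(|c_S(g)|\mu(S_{\text{b}})^{1/2}\bigr)$. Then use $\mathbb{E}_{w_2}\mu(P_{\text{b}})\le p\,\mu(P)$ together with $|c_P(f)|\mu(P)^{1/2}\le c_{\text{acc}}\|\Delta_{1,Q}f\|_{L^2(\mu)}$, and symmetrically for $S$.
\end{enumerate}
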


\begin{proof}
    For commodity of the reader, we recall that for $(P,S)\in \mathcal{A}_{1,1}$, we denote $\Delta_{P,S}=P\cap S$, but whenever there is no possible confusion, we simple write $\Delta$ instead of $\Delta_{P,S}$. Moreover, we recall that we considered three subsets of $\Delta$: $\Delta_{\varepsilon_a}$, $P_\partial$, and $S_\partial$ (see Figure \ref{fig:separacio_interseccio} and the discussion above).

This allows us to separate further, for each $(P,S)\in \mathcal{A}_{1,1}$,
\begin{equation}\label{separacio_final}
\begin{aligned}
    \langle K_{\Psi_{\text{w}}}(\chi_\Delta b_1), \chi_{\Delta\setminus \Delta_{\varepsilon_a}}b_2 \rangle &= \langle K_{\Psi_\text{w}}(\chi_\Delta b_1), \chi_{S_\partial}b_2\rangle + \langle K_{\Psi_{\text{w}}}(\chi_{\Delta}b_1), \chi_{P_\partial \setminus (P_\partial \cap S_\partial)}b_2\rangle\\
    &= \langle K_{\Psi_\text{w}}(\chi_\Delta b_1), \chi_{S_\partial}b_2\rangle + \langle K_{\Psi_\text{w}}(\chi_{P_\partial}b_1), \chi_{P_\partial \setminus (P_\partial \cap S_\partial )}b_2\rangle \\
    &\qquad + \langle K_{\Psi_\text{w}}(\chi_{\Delta\setminus P_\partial}b_1), \chi_{P_\partial \setminus (P_\partial \cap S_\partial)}b_2\rangle. 
\end{aligned}
\end{equation}
To bound the first two terms on the right-hand side of the last inequality, we are going to use Lemma \ref{lema:part_dolenta_k} (recall the definition of the bad part of a cube in (\ref{part_dolenta_dun_cub})). For the last term, we will introduce yet another probabilistic argument, which in this case will take advantage of the growth condition on our Calderón-Zygmund kernel $\widetilde{k}_{\Psi_{\text{w}}}$.

Note that we can bound the first term in (\ref{separacio_final}) by
$$
|\langle K_{\Psi_\text{w}}(\chi_\Delta b_1), \chi_{S_\partial}b_2\rangle| \leq \|K_{\Psi_{\text{w}}}\|_{L^2(\mu)\to L^2(\mu)}\|\chi_Pb_1\|_{L^2(\mu)}\|\chi_{S_{\text{b}}}b_2\|_{L^2(\mu)},
$$
because $S_\partial \subset S_\text{b}$ (remember our choice of the width $\ell_1$). We take the sum over the cubes that we are considering,
\begin{align}
    &\sum_{(P,S)\in \mathcal{A}_{1,1}}|c_P(f)|\, |c_S(g)|\,\|K_{\Psi_\text{w}}\|_{L^2(\mu)\to L^2(\mu)}\|\chi_Pb_1\|_{L^2(\mu)}\|\chi_{S_\text{b}}g\|_{L^2(\mu)}\nonumber \\
    &\quad \leq \|K_{\Psi_{\text{w}}}\|_{L^2(\mu)\to L^2(\mu)}\bigg(\sum_{P,\widehat{P}\in \mathcal{D}^{\text{tr},1}_1} |c_P(f)|^2\|\chi_Pb_1\|^2_{L^2(\mu)}\bigg)^{\frac{1}{2}}\bigg(\sum_{S,\widehat{S}\in \mathcal{D}^{\text{tr},2}_2} |c_S(g)|^2\|\chi_{S_{\text{b}}}b_2\|^2_{L^2(\mu)}\bigg)^{\frac{1}{2}}. \label{desigualtat_sb}
\end{align}
By Lemma \ref{lema_descomposicio_L2}, the middle factor is controlled by
$$
\sum_{P, \widehat{P}\in \mathcal{D}^{\text{tr},1}_1}|c_P(f)|^2\|\chi_Pb_1\|^2_{L^2(\mu)}\leq \sum_{Q\in \mathcal{D}^{\text{tr},1}_1}\|\Delta_{1,Q}f\|^2_{L^2(\mu)}\leq c_3\|f\|^2_{L^2(\mu)},
$$
Hence, taking expectation in (\ref{desigualtat_sb}), and using the inequality (\ref{desigualtat_p_epsilon_g}), we have
\begin{align}
&\mathbb{E}_{P^{\Omega^2}}\bigg(\sum_{(P,S)\in \mathcal{A}_{1,1}}|c_P(f)||c_S(g)||\langle K_{\Psi_{\text{w}}}(\chi_{\Delta}b_1), \chi_{S_\partial}b_2\rangle|\bigg)\nonumber \\
&\qquad \qquad\leq c\, p_{\varepsilon_a}^{\frac{1}{2}}\, \sup_{\text{w}\in \Omega^2}\|K_{\Psi_{\text{w}}}\|_{L^2(\mu)\to L^2(\mu)}\|f\|_{L^2(\mu)}\|g\|_{L^2(\mu)},\label{cota_que_encaixa}
\end{align}
The second term in (\ref{separacio_final}) can be bounded by
$$
|\langle K_{\Psi_\text{w}}(\chi_{P_\partial}b_1), \chi_{P_\partial \setminus (P_\partial \cap S_\partial )}b_2\rangle|\leq \|K_{\Psi_{\text{w}}}\|_{L^2(\mu)\to L^2(\mu)}\|\chi_{P_\text{b}}b_1\|_{L^2(\mu)}\|\chi_Sb_2\|_{L^2(\mu)}.
$$
Arguing analogously as above, taking sums and then expectation, we obtain the same bound for it as in (\ref{cota_que_encaixa}). Let us turn our attention to term from the last line of (\ref{separacio_final}), which is
\begin{equation}
\langle K_{\Psi_\text{w}}(\chi_{\Delta\setminus P_\partial}b_1), \chi_{P_\partial \setminus (P_\partial \cap S_\partial)}b_2\rangle,\label{ultima_cosa_que_he_de_separar}
\end{equation}
with $\Delta = P\cap S$ and $P_\partial$ and $S_\partial$ as chosen in Section \ref{sec:partial_bound}. Clearly, this term is bounded above, in absolute value, by
$$
c_b^2 \int_{P_\partial \setminus S_\partial}\bigg(\int_{\Delta\setminus P_\partial }|\widetilde{k}_\Theta(x,y)|\,d\mu(y)\bigg)\,d\mu(x).
$$
Since, as we have argued in previous lemmas, for any fixed $S$ as above, the number of cubes $P$ with which it can interact is always bounded, and the same is true swapping the roles of $S$ and $P$, it is enough to bound the integral above by $c_{\varepsilon_a}\,\mu(P)^{\frac{1}{2}}\mu(S)^{\frac{1}{2}}$. This bound is going to be obtained using the small boundary condition that $\Delta_{\varepsilon_a}$ satisfies. This condition was stated in (\ref{t-small}), but for commodity of the reader we recall it now. We have that $\Delta_{\varepsilon_a}$ satisfies that for any $\lambda>0$,
\begin{equation}
\mu\big(\big\{x\in \Delta : \text{dist}(x,\partial\, {\Delta}_{\varepsilon_a})\leq \lambda \,\text{diam}({\Delta}_{\varepsilon_a}\big)\big\}\big) \leq \, t_{\Delta,{\varepsilon_a}}\lambda \mu(\Delta).\label{t-small2}
\end{equation}
The technique that we will use is almost the same as what we did in Lemma \ref{els_dos_primers_transit}. First, we fix $x\in P_\partial \setminus S_\partial$, and we need to estimate
\begin{align*}
&\int_{y\in \Delta \setminus P_\partial} \frac{1}{|x-y|^n + \Theta(x)^n+ \Theta(y)^n}d\mu(y)\\
&\qquad \qquad \leq \int_{\text{dist}(x, \partial \Delta_{\varepsilon_a})\leq |x-y|\leq c'\text{diam}(\Delta_{\varepsilon_a})} \frac{1}{|x-y|^n + \Theta(x)^n}d\mu(y).
\end{align*}
We can estimate the last integral by arguments analogous to the bounds in (\ref{integral_theta}), and so we obtain that
\begin{align*}
    \int_{P_\partial \setminus S_\partial} \bigg(\int_{\Delta\setminus P_\partial}|\widetilde{k}_{\Theta}(x,y)|\,&d\mu(y)\bigg)\,d\mu(x) \leq c\int_{P_\partial \setminus S_\partial} \log\bigg(\frac{c'\text{diam}(\Delta_{\varepsilon_a})}{\text{dist}(x, \partial \Delta_{\varepsilon_a})}\bigg)\,d\mu(x)\\
    & \leq c\sum_{k\geq 0} k\,\mu\left(\left\{y\in P_\partial \setminus S_\partial : \text{dist}(x, \partial \Delta_{\varepsilon_a})\leq 2^{-k}\text{diam}(\Delta_{\varepsilon_a})\right\}\right)\\
    &\leq c\sum_{k\geq 0} k\,2^{-k}\,t_{\Delta_{\varepsilon_a}}\,\mu(\Delta) = c_{\varepsilon_a}\mu(\Delta) \leq c_{\varepsilon_a}\, \mu(P)^{\frac{1}{2}}\mu(S)^{\frac{1}{2}}.
\end{align*}

Combining this inequality with the ones that we obtained for the sums of the corresponding other terms in (\ref{separacio_final}), we have obtained that
$$
\mathbb{E}_{P^{\Omega^2}}(A_4) \leq c\, \|f\|_{L^2(\mu)}\|g\|_{L^2(\mu)}\left(p_{\varepsilon_a}^{\frac{1}{2}}\sup_{\text{w}\in \Omega^2}\|K_{\Psi_{\text{w}}}\|_{L^2(\mu)\to L^2(\mu)}+c_{\varepsilon_a}\right),
$$
which proves the assertion of the lemma.
\end{proof}

\subsection{The final step}

In this section, we are going to complete the proof of Theorem \ref{TEOREMA}. The main ingredient in this final step will be to show that the operator $\widetilde{K}$, whose kernel was the average of the kernels $\widetilde{k}_{\Psi_{\text{w}}}$, with respect to $P^{\Omega^2}$, is bounded in $L^2(\mu)$. The key tools to achieve this will be, of course, our result for good functions, Lemma \ref{lemma:good_functions}, combined with the probabilistic estimates from the preceding section.

\begin{lemma}
    The operator $\widetilde{K}$ is bounded in $L^2(\mu)$.
\end{lemma}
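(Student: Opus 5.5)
The plan is a bootstrap on the a priori finite quantity
\[
\mathcal{N} := \sup_{\text{w}\in\Omega^2}\|K_{\Psi_{\text{w}}}\|_{L^2(\mu)\to L^2(\mu)},
\]
which is finite (depending on $\varepsilon_0$) because $\Psi_{\text{w}}\geq \Phi\geq\varepsilon_0$, by the remark after Lemma \ref{lemma:good_functions}. By Lemma \ref{desigualtats_tilde}, $\mathcal{N}\le c\beta^{-1}\|\widetilde K\|+c\beta^{-1}$. It therefore suffices to prove an estimate of the form
\[
\|\widetilde K\|\le C+\tfrac{\beta}{2c}\,\mathcal{N},
\]
with $C$ independent of $\varepsilon_0$. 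Combining the two and absorbing (using that $\|\widetilde K\|\le\mathcal N<\infty$) gives $\|\widetilde K\|\le C'$ independent of $\varepsilon_0$.

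Fix $f,g\in L^2(\mu)$ and write $\langle\widetilde Kf,g\rangle=\mathbb{E}_{P^{\Omega^2}}\langle K_{\Psi_{\text{w}}}f,g\rangle$. For each $\text{w}=(w_1,w_2)$, decompose $f=f_{\text{good}}+f_{\text{bad}}$ with respect to $\mathcal D_1$ relative to $\mathcal D_2$ (first kind), and $g=g_{\text{good}}+g_{\text{bad}}$ with respect to $\mathcal D_2$ relative to $\mathcal D_1$ (second kind), as in (\ref{esperances_good/bad}). Here $\Xi_1f$ and $\Xi_2g$ are included in the good parts; this is harmless since $\Xi_i\Delta_{i,Q}=0$. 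The bad pieces are handled by Cauchy--Schwarz and Lemma \ref{lema:esperança_part_dolenta}:
\[
\mathbb E|\langle K_{\Psi_{\text{w}}}f_{\text{bad}},g\rangle|+\mathbb E|\langle K_{\Psi_{\text{w}}}f_{\text{good}},g_{\text{bad}}\rangle|\le c\,\varepsilon_b^{1/2}\,\mathcal N\,\|f\|\|g\|,
\]
using $\|f_{\text{good}}\|\le\|f\|+\|f_{\text{bad}}\|$ together with $\mathbb E\|f_{\text{bad}}\|\le(\mathbb E\|f_{\text{bad}}\|^2)^{1/2}$.

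For the good–good term, first check that $\Theta=\Psi_{\text{w}}$ satisfies the hypotheses of Lemma \ref{lemma:good_functions}. It is $1$-Lipschitz and at least $\varepsilon_0$, and $\Psi_{\text{w}}\ge\Phi_{\text{w}}$. The condition $\Theta\ge\max(\Phi_{\mathcal D_1},\Phi_{\mathcal D_2})$ should follow from the definition of $\Phi_{\text{w}}$ as a distance to the complement of $W_{\mathcal D(w_1)}\cup W_{\mathcal D(w_2)}$, at least on $F$; if this fails off $F$, I would enlarge $\Phi_{\text{w}}$ by that distance, since nothing used later changes. Then Lemma \ref{lemma:good_functions}, in the refined form (\ref{A_i}) with Lemmas \ref{els_dos_primers_transit} and \ref{lema:arreglo_xavi}, gives
\[
|\langle K_{\Psi_{\text{w}}}f_{\text{good}},g_{\text{good}}\rangle|\le (c_5+c_a)\|f\|\|g\|+A_1+A_2+A_3+A_4 .
\]
Taking expectations and applying Lemmas \ref{lema:esperança_a1}, \ref{lema:esperança_a2_a3} and \ref{lema:esperança_a4} bounds $\mathbb E(A_1+\dots+A_4)$ by
\[
c\bigl(\sqrt{\gamma}+p_{\varepsilon_a}^{1/2}\bigr)\,\mathcal N\,\|f\|\|g\|+c_{\varepsilon_a}\|f\|\|g\|.
\]

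Altogether,
\[
\|\widetilde K\|\le C(\varepsilon_a,\gamma,m,M)+c\bigl(\varepsilon_b^{1/2}+\sqrt\gamma+p_{\varepsilon_a}^{1/2}\bigr)\mathcal N .
\]
Now choose $\gamma$ small, $\varepsilon_a$ small (using Lemma \ref{lemma:p_epsilon_a}), and $\varepsilon_b$ small (which fixes $m$ and $M$ via Lemma \ref{lema:probabilitat}), so that the coefficient of $\mathcal N$ is at most $\beta/(2c)$. Then close the bootstrap as described in the first paragraph.

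The main obstacle I expect is measurability and uniformity. One must ensure that the good/bad decomposition, and the function $f_{\text{good}}$ fed into Lemma \ref{lemma:good_functions}, depend measurably on $\text{w}$ so that the expectations make sense. One must also ensure that every constant ($c_5$, $c_a$, $c_{\varepsilon_a}$) is independent of $\text{w}$ and of $\varepsilon_0$, since the absorption step needs all constants uniform in $\varepsilon_0$ for the bound on $\|K_\Phi\|$ to pass to $T$ on $G$.
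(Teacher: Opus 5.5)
Your proposal is correct and is essentially the paper's proof: the same splitting of $\langle K_{\Psi_{\text{w}}}f,g\rangle$ into good--good and bad pieces, Lemma~\ref{lemma:good_functions} together with the expectation bounds for $A_1,\dots,A_4$, the bad-part estimate, and absorption through (\ref{desigualtat_k_psi_beta}); you also usefully make explicit the a priori finiteness $\|\widetilde{K}\|\le\mathcal{N}<\infty$, which the paper uses tacitly when it absorbs. Two small corrections: the hedge about $\Psi_{\text{w}}\ge\max(\Phi_{\mathcal{D}_1},\Phi_{\mathcal{D}_2})$ is unnecessary, since $F\setminus(W_{\mathcal{D}_1}\cup W_{\mathcal{D}_2})\subset\R^d\setminus W_{\mathcal{D}_i}$ gives this inequality on all of $\R^d$; and the parameters must be fixed in a definite order, first $\varepsilon_b$ (hence $m$ and $M$) and only then $\gamma$ and $\varepsilon_a$, because the constants multiplying $\sqrt{\gamma}$ and $p_{\varepsilon_a}^{1/2}$, as well as $p_{\varepsilon_a}$ itself, depend on $m$.
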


\begin{proof}
    Let $f,g\in L^2(\mu)$. Our goal is to estimate $\langle \widetilde{K}f, g\rangle$, which, by definition, is
    $$
    \langle \widetilde{K}f, g\rangle = \int_{\R^d} \bigg(\int_{\Omega^2} K_{\Psi_\text{w}}f(x)\,dP^{\Omega^2}(\text{w})\bigg)g(x)\,d\mu(x) = \mathbb{E}_{P^{\Omega^2}}(\langle K_{\Psi_\text{w}}f,g \rangle).
    $$
    Let us estimate, for each $\text{w}= (w_1,w_2)\in \Omega^2$, the term inside the expectation on the right-hand side above. As we mentioned before the definition of bad cubes, we are going to write
    $$
    f_{\text{good}}(\text{w}) = \Xi_1f + \sum_{Q\in \mathcal{D}^{\text{tr},1}(w_1)\cap \text{good}(w_2)}\Delta_{1,Q}f, \qquad f_{\text{bad}}(\text{w}) = \sum_{Q\in\mathcal{D}^{\text{tr},1}(w_1)\cap \text{bad}(w_2)} \Delta_{1,Q}f,
    $$
    and
    $$
    g_{\text{good}}(\text{w}) = \Xi_2g + \sum_{R\in \mathcal{D}^{\text{tr},2}(w_2)\cap \text{good}(w_1)}\Delta_{2,R}g, \qquad g_{\text{bad}}(\text{w}) = \sum_{R\in\mathcal{D}^{\text{tr},2}(w_2)\cap \text{bad}(w_1)} \Delta_{2,R}g,
    $$
    where ``good$(w_i)$'' means ``good with respect to the dyadic lattice $\mathcal{D}(w_i)$'', and the analogous notion for ``bad$(w_i)$''. since, by Lemma \ref{lema_descomposicio_L2}, we have that $f = f_{\text{good}}(w) + f_{\text{bad}}(\text{w})$ and $g = g_{\text{good}}(\text{w}) +g_{\text{bad}}(\text{w})$ in $L^2(\mu)$, we can write
    \begin{align}
        \langle K_{\Psi_\text{w}}f,g\rangle &= \langle K_{\Psi_\text{w}}f_{\text{good}}(\text{w}), g_{\text{good}}(\text{w})\rangle + \langle K_{\Psi_\text{w}}f_{\text{good}}(\text{w}), g_{\text{bad}}(\text{w})\rangle\label{separacio_good_bad}\\
        &\qquad + \langle K_{\Psi_\text{w}}f_{\text{bad}}(\text{w}), g\rangle. \nonumber
    \end{align}
    Of course, the plan now will be to apply Lemma \ref{lemma:good_functions} for the first term and, for the other two, we can take advantage of the fact that the probability of bad functions can be made arbitrarily small. Indeed, by Lemma \ref{lemma:good_functions} and the results from Section \ref{sec:partial_bound}, we have that for each $\text{w}\in \Omega^2$, and for $\varepsilon_a>0$ small,
    $$
    |\langle K_{\Psi_\text{w}}f_{\text{good}}(\text{w}), g_{\text{good}}(\text{w})\rangle| \leq c_{\varepsilon_a}\|f\|_{L^2(\mu)}\|g\|_{L^2(\mu)} + R(\text{w}),
    $$
    where $R(\text{w})$ corresponds to the additional terms that we were not able to bound in Section \ref{sec:partial_bound}. For these terms, gathering the results from Lemmas \ref{lema:esperança_a1}, \ref{lema:esperança_a2_a3} and \ref{lema:esperança_a4}, we have that for any $\gamma>0$,
    $$
    \mathbb{E}_{P^{\Omega^2}}(R) \leq c\, \|f\|_{L^2(\mu)}\|g\|_{L^2(\mu)} \left[\left(\sqrt{\gamma}+ p_{\varepsilon_a}^{\frac{1}{2}}\right)\sup_{\text{w}\in \Omega^2}\|K_{\Psi_\text{w}}\|_{L^2(\mu)\to L^2(\mu)}+ c_{\varepsilon_a}\right], 
    $$
    where $p_{\varepsilon_a}$ has limit $0$ as $\varepsilon_a$ approaches $0$. Hence, 
    $$
    \mathbb{E}_{P^{\Omega^2}}\left(|\langle K_{\Psi_\text{w}}f_{\text{good}}(\text{w}), g_{\text{good}}(\text{w})\rangle|\right) \leq c\,\|f\|_{L^2(\mu)}\|g\|_{L^2(\mu)}\left(\text{E}(\varepsilon_a,\gamma)\sup_{\text{w}\in \Omega^2}\|K_{\Psi_{\text{w}}}\|_{L^2(\mu)\to L^2(\mu)} + c_{\varepsilon_a}\right),
    $$
    where $\text{E}(\varepsilon_a,\gamma)\to 0$ as both $\varepsilon_a$ and $\gamma$ tend to $0$. For the second term in (\ref{separacio_good_bad}), we have
    $$
    |\langle K_{\Psi_\text{w}}f_{\text{good}}(\text{w}), g_{\text{bad}}(\text{w})\rangle| \leq c_3^2\,\|K_{\Psi_{\text{w}}}\|_{L^2(\mu)\to L^2(\mu)} \|f\|_{L^2(\mu)}\|g_\text{bad}(\text{w})\|_{L^2(\mu)},
    $$
    because, applying Lemma \ref{lemma:good_functions} two times, 
    \begin{align*}
        c_{3}^{-1}\|f_{\text{good}}(\text{w})\|_{L^2(\mu)}^2 &\leq \|\Xi_1f_{\text{good}}(\text{w})\|^2_{L^2(\mu)} + \sum_{Q\in \mathcal{D}^{\text{tr},1}(w_1)}\|\Delta_{1,Q}f_{\text{good}}(\text{w})\|^2_{L^2(\mu)}\\
        &\leq \|\Xi_1f\|_{L^2(\mu)}^2 + \sum_{Q\in \mathcal{D}^{\text{tr},1}(w_1)\cap \text{good}(w_2)}\|\Delta_{1,Q}f\|^2_{L^2(\mu)}\leq c_3 \,\|f\|_{L^2(\mu)}^2.
    \end{align*}
    Moreover, by Lemma \ref{lema:esperança_part_dolenta},
    $$
    \mathbb{E}_{P^{\Omega^2}}\left(\|g_{\text{bad}}(\text{w})\|^2_{L^2(\mu)}\right) \leq c_3^2\, \varepsilon_b\,\|g\|_{L^2(\mu)}^2.
    $$
    Therefore, 
    \begin{align*}
        |\mathbb{E}_{P^{\Omega^2}}\left(\langle K_{\Psi_\text{w}}f_{\text{good}}(\text{w}), g_{\text{bad}}(\text{w}\rangle )\right)| &\leq \mathbb{E}_{P^{\Omega^2}}\left(\|K_{\Psi_{\text{w}}}f_{\text{good}}(\text{w}) \|_{L^2(\mu)}^2\right)^{\frac{1}{2}}\mathbb{E}_{P^{\Omega^2}}\left(\|g_\text{bad}(\text{w})\|_{L^2(\mu)}^2\right)^{\frac{1}{2}}\\
        &\leq c_3^3\,\varepsilon_b^{\frac{1}{2}}\sup_{\text{w}\in \Omega^2}\|K_{\Psi_{\text{w}}}\|_{L^2(\mu)\to L^2(\mu)}\|f\|_{L^2(\mu)}\|g\|_{L^2(\mu)}.
    \end{align*}
    Arguing analogously, for the last term in (\ref{separacio_good_bad}), we have that
    $$
    |\mathbb{E}_{P^{\Omega^2}}\left(\langle K_{\Psi_\text{w}}f_{\text{bad}}(\text{w}), g\rangle\right)| \leq c_3\,\varepsilon_b^{\frac{1}{2}}\sup_{\text{w}\in \Omega^2}\|K_{\Psi_\text{w}}\|_{L^2(\mu)\to L^2(\mu)}\|f\|_{L^2(\mu)}\|g\|_{L^2(\mu)}.
    $$
    Since, by the inequality in (\ref{desigualtat_k_psi_beta}),
    $$
    \sup_{\text{w}\in \Omega^2}\|K_{\Psi_\text{w}}\|_{L^2(\mu)\to L^2(\mu)} \leq c\,\beta^{-1}\|\widetilde{K}\|_{L^2(\mu)\to L^2(\mu)} + c\beta^{-1},
    $$
    from the previous estimates, taking averages in (\ref{separacio_good_bad}) over $\text{w}\in \Omega^2$, we obtain
    $$
    |\langle \widetilde{K}f, g\rangle| \leq c\,\|f\|_{L^2(\mu)}\|g\|_{L^2(\mu)}\left(\beta^{-1}\|\widetilde{K}\|_{L^2(\mu)\to L^2(\mu)}\widetilde{\text{E}}(\varepsilon_a,\varepsilon_b,\gamma)+ \beta^{-1}\widetilde{\text{E}}(\varepsilon_a,\varepsilon_b,\gamma)+ c_{\varepsilon_a}\right),
    $$
    where, for commodity, we have written $\widetilde{\text{E}}(\varepsilon_a,\varepsilon_b,\gamma) = \text{E}(\varepsilon_a,\gamma) +c_3^2\varepsilon_b^{\frac{1}{2}}+ c_3\varepsilon_b^{\frac{1}{2}}$, so that, of course, $\widetilde{\text{E}}(\varepsilon_a, \varepsilon_b,\gamma)$ has limit $0$ as $\varepsilon_a,\varepsilon_b$ and $\gamma$ tend to $0$. Taking suprema in $f,g\in L^2(\mu)$, we infer that
    $$
    \|\widetilde{K}\|_{L^2(\mu)\to L^2(\mu)} \leq c\,\left(\beta^{-1}\|\widetilde{K}\|_{L^2(\mu)\to L^2(\mu)}\widetilde{\text{E}}(\varepsilon_a,\varepsilon_b,\gamma)+ \beta^{-1}\widetilde{\text{E}}(\varepsilon_a,\varepsilon_b,\gamma)+ c_{\varepsilon_a}\right).
    $$
    From this, recalling that $\beta$ is independent of $\varepsilon_a,\varepsilon_b$ and $\gamma$, we see that if we take $\varepsilon_a, \varepsilon_b$ and $\gamma$ small enough,
    $$
    \|\widetilde{K}\|_{L^2(\mu)\to L^2(\mu)} \leq 2c(1+c_{\varepsilon_a}),
    $$
    which is finite, as we wanted to show.
\end{proof}

Using the fact that $\widetilde{K}\,\colon L^2(\mu)\to L^2(\mu)$ is bounded, in the next lemma we finish the proof of Theorem \ref{TEOREMA}.

\begin{lemma}
    The operator $K_{\Phi}$ is bounded in $L^2(\mu)$, with a bound independent of $\varepsilon_0$, and thus the SIO $T$ is bounded in $L^2(\mu\lfloor G)$.
\end{lemma}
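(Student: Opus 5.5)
The first assertion follows directly from the two preceding results. By the previous lemma, $\|\widetilde{K}\|_{L^2(\mu)\to L^2(\mu)}\le 2c(1+c_{\varepsilon_a})$. Here $\varepsilon_a,\varepsilon_b,\gamma$ are fixed small depending only on the structural constants, so this bound is independent of $\varepsilon_0$. The first estimate (\ref{desigualtat_k_phi_beta}) of Lemma \ref{desigualtats_tilde} then gives
$$
\|K_\Phi\|_{L^2(\mu)\to L^2(\mu)}\le c\beta^{-1}\|\widetilde{K}\|_{L^2(\mu)\to L^2(\mu)}+c\beta^{-1}.
$$
The quantity $\beta=(1-\delta_1)^2/4$ depends only on $\delta_0$. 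I will check that none of the constants used to reach this point (Lemma \ref{lemma:good_functions}, the Cotlar Lemma \ref{lema:primer_cotlar}, Lemma \ref{lema:desigualtats_maximal}) depends on $\varepsilon_0$. The only role of $\varepsilon_0$ was to make $K_{\Psi_{\text{w}}}$ a priori bounded, and Lemma \ref{lemma:good_functions} is stated with constants independent of the lower bound $\varepsilon$. So $\|K_\Phi\|\le C$ uniformly in $\varepsilon_0>0$.

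Next I pass to $T$ on $G$. By Lemma \ref{lema:propietats_phi}(2), $\Phi\equiv\varepsilon_0$ on $G$. I will compare $T_{\varepsilon}$ with $K_{\Phi,\varepsilon}$ for $x\in G$ and $\varepsilon\ge\varepsilon_0=\Phi(x)$. Lemma \ref{diferencia_truncat_suppressed} applies with $\sigma=f\mu\lfloor G$, since its hypothesis $\Theta(x)\le\varepsilon$ holds, and gives
$$
|T_\varepsilon(f\mu\lfloor G)(x)-K_{\Phi,\varepsilon}(f\mu\lfloor G)(x)|\le c\sup_{r\ge\varepsilon}\frac{1}{r^n}\int_{B(x,r)\cap G}|f|\,d\mu.
$$
By property (ii), $\mu\lfloor G$ has $c_0$-growth, so the right side is at most $c\,c_0M_{\mu\lfloor G}f(x)$, which is bounded in $L^2(\mu\lfloor G)$ by Theorem \ref{HL}.

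It remains to control $K_{\Phi,\varepsilon}$ for $\varepsilon\ge\varepsilon_0$ on $G$ by the untruncated $K_\Phi$. Since $\widetilde k_\Phi(x,y)=0$ whenever $|x-y|\le\frac12\max(\Phi(x),\Phi(y))$, and $\Phi$ is a $1$-Lipschitz function with $\Phi(x)=\varepsilon_0$, I will handle this with the maximal operator. The kernel $\widetilde k_\Phi$ satisfies (\ref{condicio_nucli_t}) up to constants, because $\Phi\ge\max(\varepsilon_0,\text{dist}(\cdot,\R^d\setminus H))$ by Lemma \ref{lema:propietats_phi}(3) and Lemma \ref{desigualtat_maxim_k_tilde}. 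Lemma \ref{lema:primer_cotlar} then gives $\|K_{\Phi,*}\|_{L^2(\mu)}\le c(1+\|K_\Phi\|)\le C$. Applying this to $f\chi_G$ and restricting to $G$ shows that $\sup_{\varepsilon\ge\varepsilon_0}\|T_{\varepsilon}\|_{L^2(\mu\lfloor G)\to L^2(\mu\lfloor G)}\le C$, with $C$ independent of $\varepsilon_0$.

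Finally, for any fixed $\varepsilon>0$, I choose $\varepsilon_0\le\varepsilon$. This gives uniform boundedness of all the $T_{\mu\lfloor G,\varepsilon}$, which is the definition of $L^2(\mu\lfloor G)$-boundedness of $T$.

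The main obstacle is the bookkeeping that no constant secretly depends on $\varepsilon_0$, especially in the paraproduct and Cotlar steps. Everything else is a direct combination of results already proved.
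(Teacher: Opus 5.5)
Your argument is correct. Its first half is exactly the paper's: you combine the previous lemma with (\ref{desigualtat_k_phi_beta}) and check that $\beta$ and all constants are independent of $\varepsilon_0$.

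The passage to $T$ differs. The paper uses only the single truncation $\varepsilon=\varepsilon_0$. It compares $T_{\varepsilon_0}$ with $K_{\Phi,\varepsilon_0}$ via Lemma \ref{diferencia_truncat_suppressed}, and then compares $K_{\Phi,\varepsilon_0}$ with the untruncated $K_\Phi$ by an elementary pointwise estimate. For $x\in G$ one has $\Phi(x)=\varepsilon_0$, so $|\widetilde k_\Phi(x,y)|\le c\,\varepsilon_0^{-n}$, and the discarded piece over $|x-y|<\varepsilon_0$ is bounded by $c\,c_0M_\mu f(x)$. Since the resulting bound is uniform in the arbitrary parameter $\varepsilon_0$, all truncations of $T$ are covered.

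You instead fix $\varepsilon_0$ and control every $K_{\Phi,\varepsilon}$ with $\varepsilon\ge\varepsilon_0$ through the Cotlar Lemma \ref{lema:primer_cotlar}. This is legitimate: the paper applies the same lemma to $\widetilde K$ in Lemma \ref{desigualtats_tilde}, and $\widetilde k_\Phi$ satisfies (\ref{condicio_nucli_t}) up to a constant, with Calder\'on-Zygmund constants independent of $\varepsilon_0$. It is, however, redundant, because your last step chooses $\varepsilon_0\le\varepsilon$ anyway. Taking $\varepsilon_0=\varepsilon$ together with the local estimate above already suffices.

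What your route gains is that, for a single fixed $\varepsilon_0$, you get the pointwise bound $\sup_{\varepsilon\ge\varepsilon_0}|T_\varepsilon(f\mu\lfloor G)|\le K_{\Phi,*}(f\chi_G)+c\,M_{\mu\lfloor G}f$ on $G$. This gives $L^2(\mu\lfloor G)$-boundedness of the maximal operator $T_*$ directly. That conclusion also follows afterwards from Theorem \ref{cotlar_referenciar}, since $\mu\lfloor G$ has growth of degree $n$. So the paper's more economical route, which avoids the Cotlar machinery at this stage, loses nothing.
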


\begin{proof}
    First, by Lemma \ref{diferencia_truncat_suppressed}, we have that 
    $$
    |T_{\varepsilon_0}f(x) - K_{\Phi, \varepsilon_0}f(x)| \leq c\,\sup_{r\geq \varepsilon_0}\frac{1}{r^n}\int_{B(x,r)}|f|\,d(\mu\lfloor G).
    $$
    Arguing as in Lemma \ref{desigualtats_tilde}, the right hand side above is bounded by $c\, c_0\, M_{\mu\lfloor G}f(x)$. Since the maximal operator is bounded in $L^2(\mu\lfloor G)$, we deduce that $T_{\varepsilon_0}$ is bounded in $L^2(\mu\lfloor G)$ if and only if $K_{\Phi,\varepsilon_0}$ is bounded in $L^2(\mu\lfloor G)$. 

    Moreover, for $x\in G$,
    \begin{align*}
        |K_{\Phi, \varepsilon_0}f(x) - K_{\Phi}f(x)|&\leq \int_{|x-y|\leq \varepsilon_0}|\widetilde{k}_{\Phi}(x,y)|\,|f(y)|\,d\mu(y)\\
        & \leq \frac{1}{\varepsilon_0^n}\int_{|x-y|\leq \varepsilon_0}|f(y)|\,d\mu(y)\leq c\, c_0 \,M_{\mu}f(x),
    \end{align*}
    again, arguing as in Lemma \ref{desigualtats_tilde} and using that for $x\in G$, $\Phi(x)=\varepsilon_0$ (recall Lemma \ref{lema:propietats_phi}), combined with the fact that $|\widetilde{k}_{\Phi}(x,y)|\leq \frac{1}{\Phi(x)^n}$. By the boundedness of the maximal operator, we see that $K_{\Phi,\varepsilon_0}$ is bounded in $L^2(\mu)$ if and only if $K_{\Phi}$ is bounded in $L^2(\mu)$.

    Furthermore, combining inequality (\ref{desigualtat_k_phi_beta}) with the previous lemma, we see that $K_{\Phi}$ is bounded in $L^2(\mu)$ independently of $\varepsilon_0$, and so, by our previous arguments, $T_{\varepsilon_0}$ is bounded in $L^2(\mu\lfloor G)$, independently of $\varepsilon_0$. Equivalently, $T\,\colon L^2(\mu\lfloor G)\to L^2(\mu\lfloor G)$ is bounded, which is what we wanted to prove.
\end{proof}

\newpage

\pagestyle{plain}
\printbibliography
\addcontentsline{toc}{section}{Bibliography}

\end{document}